\newcommand{\limitset}{\mathcal J}
\newcommand{\rep}{\pi}
\newcommand{\refl}{\mathrm{ref \hspace{0 in} l}}
\begin{document}
\title[Diophantine approximation in hyperbolic metric spaces]{Diophantine approximation and the geometry of limit sets in Gromov hyperbolic metric spaces}

\authorlior\authordavid\authormariusz

\keywords{Diophantine approximation, Schmidt's game, hyperbolic geometry, Gromov hyperbolic metric spaces.}

\begin{Abstract}
In this paper, we provide a complete theory of Diophantine approximation in the limit set of a group acting on a Gromov hyperbolic metric space. This summarizes and completes a long line of results by many authors, from Patterson's classic '76 paper to more recent results of Hersonsky and Paulin ('02, '04, '07). Concrete examples of situations we consider which have not been considered before include geometrically infinite Kleinian groups, geometrically finite Kleinian groups where the approximating point is not a fixed point of any element of the group, and groups acting on infinite-dimensional hyperbolic space. Moreover, in addition to providing much greater generality than any prior work of which we are aware, our results also give new insight into the nature of the connection between Diophantine approximation and the geometry of the limit set within which it takes place. Two results are also contained here which are purely geometric: a generalization of a theorem of Bishop and Jones ('97) to Gromov hyperbolic metric spaces, and a proof that the uniformly radial limit set of a group acting on a proper geodesic Gromov hyperbolic metric space has zero Patterson--Sullivan measure unless the group is quasiconvex-cocompact. The latter is an application of a Diophantine theorem.
\end{Abstract}
\maketitle

\tableofcontents

\draftnewpage\section{Introduction}
Four of the greatest classical theorems of Diophantine approximation are Dirichlet's theorem on the approximability of every point with respect to the function $1/q^2$ (supplemented by Liouville's result regarding the optimality of this function), Jarn\'ik's theorem stating that the set of badly approximable numbers has full dimension in $\R$ (which we will henceforth call the Full Dimension Theorem), Khinchin's theorem on metric Diophantine approximation, and the theorem of Jarn\'ik and Besicovitch regarding the dimension of certain sets of well approximable numbers. As observed S. J. Patterson \cite{Patterson1}, these theorems can be put in the context of Fuchsian groups by noting that the set of rational numbers is simply the orbit of $\infty$ under the Fuchsian group $\SL_2(\Z)$ acting by M\"obius transformations on the upper half-plane $\H^2$. In this context, they admit natural generalizations by considering a different group, a different orbit, or even a different space. Beginning with Patterson, many authors \cite{BDV, DMPV, HillVelani, Patterson1, Stratmann3, Stratmann4, StratmannVelani, Velani1, Velani2, Velani3} have considered generalizations of these theorems to the case of a nonelementary geometrically finite group acting on standard hyperbolic space $X = \H^{d + 1}$ for some $d\in\N$, considering a parabolic fixed point if $G$ has at least one cusp, and a hyperbolic fixed point otherwise. Moreover, many theorems in the literature concerning the asymptotic behavior of geodesics in geometrically finite manifolds \cite{Aravinda, AravindaLeuzinger, Dani3, FernandezMelian, McMullen_absolute_winning, MelianPestana, Stratmann1, Sullivan_disjoint_spheres} can be recast in terms of Diophantine approximation. To date, the most general setup is that of S. D. Hersonsky and F. Paulin, who generalized Dirichlet and Khinchin's theorems to the case of pinched Hadamard manifolds \cite{HP_Dirichlet, HP_Khinchin} and proved an analogue of Khinchin's theorem for uniform trees \cite{HP_Khinchin_trees}; nevertheless, they still assume that the group is geometrically finite, and they only approximate by parabolic orbits.\Footnote{Although they do not fall into our framework, let us mention in passing Hersonsky and Paulin's paper \cite{HP_Jarnik-Besicovitch_internal}, which estimates the dimension of the set of geodesic rays in a pinched Hadamard manifold $X$ which return exponentially close to a given point of $X$ infinitely often, and the two papers \cite{AGP, KleinbockMargulis}, which generalize Khinchin's theorem in a somewhat different direction than us.}


In this paper we provide a far-reaching generalization of these results to the situation of a strongly discrete group $G$ of general type acting by isometries on an arbitrary Gromov hyperbolic metric space $X$ (so that $X = \H^{d + 1}$ is a special case), and we consider the approximation of a point $\eta$ in the radial limit set of $G$ by an arbitrary point $\xi$ in the Gromov boundary of $X$. Thus we are generalizing
\begin{itemize}
\item the type of group being considered - we do not assume that $G$ is geometrically finite,
\item the space being acted on - we do not even assume that $X$ is proper or geodesic, and
\item the point being approximated with - we do not assume that $\xi$ is a parabolic or hyperbolic fixed point of $G$.
\end{itemize}
We note that any one of these generalizations would be new by itself. For example, although the Fuchsian group $\SL_2(\Z)$ is a geometrically finite group acting on $\H^2$, the approximation of points in its limit set by an orbit other than the orbit of $\infty$ has not been considered before; cf. Examples \ref{exampleSL2Zjarniknew} and \ref{exampleSL2Zkhinchinnew}. As another example, we remark that losing the properness assumption allows us to consider infinite-dimensional hyperbolic space as a concrete example of a space for which our theorems have not previously been proven. There are a large variety of groups acting on infinite-dimensional hyperbolic spaces which are not reducible in any way to finite-dimensional examples, see e.g. Appendix \ref{appendixorbitalcounting}.

In addition to generalizing known results, the specific form of our theorems sheds new light on the interplay between the geometry of a group acting on a hyperbolic metric space and the Diophantine properties of its limit set. In particular, our formulation of the Jarn\'ik--Besicovitch theorem (Theorem \ref{theoremjarnikbesicovitch}) provides a clear relation between the density of the limit set around a particular point (measured as the rate at which the Hausdorff content of the radial limit set shrinks as smaller and smaller neighborhoods of the point are considered) and the degree to which other points can be approximated by it. Seeing how this reduces to the known result of R. M. Hill and S. L. Velani \cite{HillVelani} is instructive; cf. Theorem \ref{theoremboundedparabolicjarnik} and its proof. The same is true for our formulation of Khinchin's theorem (Theorem \ref{theoremkhinchin}), although to a lesser extent since we are only able to provide a complete analogue of Khinchin's theorem in special cases (which are nevertheless more general than previous results). Although our formulation of the Full Dimension Theorem (Theorem \ref{theoremfulldimension}) is more or less the same as previous results (we have incorporated the countable intersection property of winning sets into the statement of the theorem), the geometrically interesting part here is the proof. It depends on constructing Ahlfors regular subsets of the uniformly radial limit set with dimension arbitrarily close to the Poincar\'e exponent $\delta$ (Theorem \ref{theorembishopjones}); in this way, we also generalize the seminal theorem of C. J. Bishop and P. W. Jones \cite{BishopJones} which states that the radial and uniformly radial limit sets $\Lr(G)$ and $\Lur(G)$ each have dimension $\delta$. Theorem \ref{theorembishopjones} will appear in a stronger form in \cite[Theorem 1.2.1]{DSU}, whose authors include the second- and third-named authors of this paper.\Footnote{Let us make it clear from the start that although we cite the preprint \cite{DSU} frequently for additional background, the results which we quote from \cite{DSU} are not used in any of our proofs.}

Finally, as an application of our generalization of Khinchin's theorem we show that for any group acting on a proper geodesic hyperbolic metric space, the uniformly radial limit set has zero Patterson--Sullivan measure if and only if the group is not quasiconvex-cocompact (Proposition \ref{propositionmuLurzero}). In the case where $X = \H^{d + 1}$ and $G$ is geometrically finite, this follows from the well-known ergodicity of the geodesic flow \cite[Theorem 1$'$]{Sullivan_entropy}, see also \cite[Th\'eor\`eme 1.7]{Roblin1}.

{\bf Convention 1.} In the introduction, propositions which are proven later in the paper will be numbered according to the section they are proven in. Propositions numbered as 1.\# are either straightforward, proven in the introduction, or quoted from the literature.

{\bf Convention 2.} The symbols $\lesssim$, $\gtrsim$, and $\asymp$ will denote asymptotics; a subscript of $\plus$ indicates that the asymptotic is additive, and a subscript of $\times$ indicates that it is multiplicative. For example, $A\lesssim_{\times,K} B$ means that there exists a constant $C > 0$ (the \emph{implied constant}), depending only on $K$, such that $A\leq C B$. $A\lesssim_{\plus,\times}B$ means that there exist constants $C_1,C_2 > 0$ so that $A\leq C_1 B + C_2$. In general, dependence of the implied constant(s) on universal objects such as the metric space $X$, the group $G$, and the distinguished points $\zero\in X$ and $\xi\in\del X$ will be omitted from the notation.

{\bf Convention 3.} The notation $x_n\tendsto n x$ means $x_n\to x$ as $n\to \infty$. The notation $x_n\tendsto{n,\plus} x$ means
\[
x \asymp_\plus \limsup_{n\to\infty}x_n \asymp_\plus \liminf_{n\to\infty} x_n,
\]
and similarly for $x_n\tendsto{n,\times} x$.

{\bf Convention 4.} Let $G$ be a geometrically finite subgroup of $\Isom(\H^{d + 1})$ for some $d\in\N$. In the literature, authors have often considered ``a parabolic fixed point of $G$ if such a point exists, and a hyperbolic fixed point otherwise''. To avoid repeating this long phrase we shall call such a point a \emph{conventional} point.

{\bf Convention 5.} The symbol $\triangleleft$ will be used to indicate the end of a nested proof.
\\

{\bf Acknowledgements.} The first-named author was supported in part by the Simons Foundation grant \#245708. The third-named author was supported in part by the NSF grant DMS-1001874. The authors thank two referees who gave valuable suggestions and comments on earlier versions of this paper.

\subsection{Preliminaries}

\subsubsection{Visual metrics}
Let $(X,\dist)$ be a hyperbolic metric space (see Definition \ref{definitiongromovhyperbolic}). In the theorems below, we will consider its \emph{Gromov boundary} $\del X$ (see Definition \ref{definitiongromovboundary}), together with a \emph{visual metric} $\Dist = \Dist_{b,\zero}$ on $\del X$ satisfying
\begin{repequation}{distanceasymptotic}
\Dist_{b,\zero}(\xi,\eta) \asymp_\times b^{-\lb\xi|\eta\rb_\zero},
\end{repequation}
where $\zero\in X$ and $b > 1$, and where $\lb\cdot|\cdot\rb$ denotes the Gromov product (see Definition \ref{definitiongromovproduct}). For $b > 1$ sufficiently close to $1$, such a metric exists for all $\zero\in X$ (Proposition \ref{propositionDist}). We remark that when $X$ is the ball model of $\H_\R^{d + 1}$, $b = e$, and $\zero = \0$, then we have $\del X = S^d$ and we may take
\begin{equation}
\label{euclideanball}
\Dist_{e,\0}(\xx,\yy) = \frac{1}{2}\|\yy - \xx\| \all \xx,\yy\in S^d.
\end{equation}
(See Proposition \ref{propositioneuclideanball} below.)

\subsubsection{Strongly discrete groups}
Let $(X,\dist)$ be a metric space, and let $G\leq\Isom(X)$.\Footnote{Here and from now on $\Isom(X)$ denotes the group of isometries of a metric space $(X,\dist)$. The notation $G\leq\Isom(X)$ means that $G$ is a subgroup of $\Isom(X)$.} There are several notions of what it means for $G$ to be discrete, which are inequivalent in general but agree in the case $X = \H^{d + 1}$; see \cite[\65]{DSU} for details. For the purposes of this paper, the most useful notion is \emph{strong discreteness}.
\begin{definition}
A group $G$ is \emph{strongly discrete} if for every $r > 0$, the set
\[
\{g\in G: g(\zero)\in B(\zero,r)\}
\]
is finite, where $\zero\in X$ is a distinguished point.\Footnote{Here and from now on $B(x,r)$ denotes the closed ball centered at $x$ of radius $r$.}
\end{definition}
\begin{remark}
Strongly discrete groups are known in the literature as \emph{metrically proper}. However, we prefer the term ``strongly discrete'', among other reasons, because it emphasizes that the notion is a generalization of discreteness.
\end{remark}

\subsubsection{Groups of general type}
\begin{definition}
An isometry $g$ of a hyperbolic metric space $(X,\dist)$ is \emph{loxodromic} if it has two fixed points $g_+$ and $g_-$ in $\del X$, which are attracting and repelling in the sense that for every neighborhood $U$ of $g_-$, the iterates $g^n$ converge to $g_+$ uniformly on $\del X\setminus U$, and if $g^{-1}$ has the same property.
\end{definition}

\begin{definition}
Let $(X,\dist)$ be a hyperbolic metric space. A group $G\leq\Isom(X)$ is \emph{of general type}\Footnote{This terminology was introduced in \cite{CCMT} to describe the cases of a classification of Gromov \cite[\sectionsymbol 3.1]{Gromov3}.\label{footnoteCCMT}} if it contains two loxodromic isometries $g_1$ and $g_2$, such that the fixed points of $g_1$ are disjoint from the fixed points of $g_2$.
\end{definition}

As in the case of $X = \H^{d + 1}$, we say that a group $G\leq\Isom(X)$ is \emph{nonelementary} if its limit set (see Definition \ref{definitionlimitset}) contains at least three points.\Footnote{This definition is \emph{not} equivalent to the also common definition of a nonelementary group as one which does not preserve any point or geodesic in the compactification/bordification $\bord X$; rather, a group satisfies this second definition if and only if it is of general type.} Since the fixed points of a loxodromic isometry are clearly in the limit set, we have:

\begin{observation}
Every group of general type is nonelementary.
\end{observation}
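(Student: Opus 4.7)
The statement is essentially a one-line consequence of the definitions together with the parenthetical remark immediately preceding it, namely that every fixed point of a loxodromic isometry lies in the limit set. The plan is therefore to unpack the definition of \emph{general type}, extract four distinct boundary points, and apply that remark.

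First, I would invoke the hypothesis: since $G$ is of general type, it contains loxodromic isometries $g_1, g_2$ whose fixed-point sets $\{g_{1,+}, g_{1,-}\}$ and $\{g_{2,+}, g_{2,-}\}$ are disjoint. By the definition of loxodromic, the two points in each pair are themselves distinct (each $g_i$ has \emph{two} fixed points, one attracting and one repelling). Together with the disjointness hypothesis, this yields four pairwise distinct points $g_{1,+}, g_{1,-}, g_{2,+}, g_{2,-} \in \del X$.

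Second, I would appeal to the remark preceding the observation, to the effect that the fixed points of a loxodromic isometry lie in the limit set. If one wanted to expand this remark rather than quote it, the key point is that for any basepoint $\zero \in X$ and any loxodromic $g$ with attracting fixed point $g_+$, the orbit $g^n(\zero)$ converges to $g_+$ in $\cl X$ (this is immediate from the uniform convergence property in the definition of loxodromic, applied to any neighborhood $U$ of $g_-$ not containing $\zero$), and hence $g_+ \in \Lambda(G)$; similarly for $g_-$ using $g^{-1}$.

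Combining the two steps, $\Lambda(G)$ contains the four distinct points $g_{1,\pm}$ and $g_{2,\pm}$, and in particular at least three points, so $G$ is nonelementary by definition. There is no real obstacle here; the only minor point worth being careful about is ensuring that the argument for "fixed points lie in $\Lambda(G)$" only uses strong discreteness if that is how the limit set has been defined in the paper — but as long as $\Lambda(G)$ is the accumulation set of a $G$-orbit in $\cl X$, the convergence $g^n(\zero) \to g_+$ derived from the definition of loxodromic suffices without further hypotheses on $G$.
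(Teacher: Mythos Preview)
Your proposal is correct and follows exactly the paper's approach: the paper treats this observation as an immediate consequence of the sentence preceding it (``the fixed points of a loxodromic isometry are clearly in the limit set''), and you unpack that inference just as intended. One small caveat on your optional expansion: the convergence in the paper's definition of loxodromic is stated only on $\del X\setminus U$, not on $\cl X\setminus U$, so concluding $g^n(\zero)\to g_+$ directly from that clause (with $\zero\in X$) requires a short additional argument rather than being literally ``immediate''---but since you are invoking the paper's remark rather than reproving it, this does not affect the validity of your main line.
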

The converse, however, is not true; there exists groups which are nonelementary and yet are not of general type. Such a group has a unique fixed point on the boundary of $X$, but it is not the only point in the limit set. A good example is the subgroup of $\Isom(\H^2)$ generated by the isometries
\[
g_1(z) := 2z,\hspace{.5 in} g_2(z) := z + 1.
\]
A group which is nonelementary but not of general type is called a \emph{focal} group.\ignorespaces
\Footnote{See footnote \ref{footnoteCCMT}.}
\comdavid{If the pagebreaks move, switch which of the preceding lines is commented out.}
\begin{proposition}[{\cite[Proposition 6.4.1]{DSU}}] \label{propositiongeneraltype}
Every nonelementary strongly discrete group is of general type, or in other words, no strongly discrete group is focal.
\end{proposition}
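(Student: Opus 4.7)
The plan is to prove the contrapositive: a focal group cannot be strongly discrete. So let $G \leq \Isom(X)$ be focal with unique global fixed point $\xi \in \del X$, and aim to produce infinitely many distinct group elements lying in a fixed ball around $\zero$.

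I first select a useful pair of elements. Because $G$ is nonelementary, Gromov's classification forces $G$ to contain a loxodromic element $g$, whose fixed points $\{g_+, g_-\}$ form a $G$-invariant set of size two. The unique global fixed point must be one of them, so after replacing $g$ by $g^{-1}$ if necessary I may assume $g_+ = \xi$ and set $\xi' := g_- \neq \xi$. Since $\xi'$ is not globally fixed, some $h \in G$ satisfies $h(\xi') \neq \xi'$; note that $h(\xi) = \xi$.

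The heart of the proof is the family $h_n := g^{-n} h g^n \in G$, for which I claim (i) $\dist(\zero, h_n\zero)$ remains bounded as $n\to\infty$, and (ii) infinitely many of the $h_n$ are pairwise distinct. Together these contradict strong discreteness.

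For (i), I compute using the Gromov product at $\zero$. Since $g$ is loxodromic with attracting fixed point $\xi$, one has $\dist(\zero, g^n\zero)\asymp_\plus n\tau(g)$ where $\tau(g) > 0$ is the translation length, and $\lb g^n\zero \,\vert\, \xi\rb_\zero\asymp_\plus n\tau(g)$ because $g^n\zero$ tends to $\xi$ along a quasigeodesic ray. Since $h$ is an isometry fixing $\xi$, the same estimates hold for $hg^n\zero$ in place of $g^n\zero$, with an additive error depending only on $\dist(\zero, h\zero)$. The hyperbolic inequality $\lb x\,\vert\, y\rb_\zero\gtrsim_\plus \min(\lb x\,\vert\,z\rb_\zero,\lb y\,\vert\,z\rb_\zero)$ applied with $x = g^n\zero$, $y = hg^n\zero$, $z = \xi$ then gives $\lb g^n\zero\,\vert\, hg^n\zero\rb_\zero\gtrsim_\plus n\tau(g)$. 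Combining this with the identity $\dist(x,y) = \dist(\zero,x) + \dist(\zero,y) - 2\lb x\,\vert\,y\rb_\zero$ and the isometry-invariance of distance yields $\dist(\zero, h_n\zero) = \dist(g^n\zero, hg^n\zero)\lesssim_\plus 1$, as required.

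For (ii), suppose $h_n = h_m$ with $n \neq m$. Then $h$ commutes with $g^{n-m}$, which is itself loxodromic with the same fixed point set $\{\xi, \xi'\}$ (a loxodromic element has infinite order, since $\tau(g) > 0$). Therefore $h$ preserves $\{\xi,\xi'\}$, and combined with $h(\xi) = \xi$ this forces $h(\xi') = \xi'$, contradicting the choice of $h$. Hence the $h_n$ are pairwise distinct, and $\{h_n : n \in \N\}$ is an infinite subset of $\{\gamma \in G : \dist(\zero, \gamma\zero)\leq R\}$ for some $R < \infty$, violating strong discreteness.

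The principal obstacle is the Gromov-product estimate in part (i): one must formalize the familiar north-south dynamics of a loxodromic purely in terms of Gromov products and translation length, without access to a geodesic axis or to local compactness of $X$. Once the linear-growth estimates for $\lb g^n\zero\,\vert\,\xi\rb_\zero$ and $\dist(\zero, g^n\zero)$ are established, the rest of the argument reduces to the short symbolic computations sketched above.
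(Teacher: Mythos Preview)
The paper does not supply its own proof of this proposition; it is quoted from \cite{DSU} without argument. So there is nothing in-paper to compare against, and I can only assess your proposal on its own merits.

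Your argument is correct and follows the standard route for this fact. Two remarks on the points you flag:

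\textbf{Existence of a loxodromic.} In this paper ``focal'' is \emph{defined} as nonelementary but not of general type, so the existence of a loxodromic element is not definitional here; your appeal to Gromov's classification is exactly what is needed (and is standard).

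\textbf{The Gromov-product estimate in (i).} The cleanest way to get $\lb g^n\zero \,\vert\, \xi\rb_\zero \asymp_\plus \dist(\zero, g^n\zero)$ uniformly in $n$, without introducing translation length, is to observe that $\lb \zero \,\vert\, \xi\rb_{g^n\zero} = \lb g^{-n}\zero \,\vert\, \xi\rb_\zero$ (since $g$ fixes $\xi$), and that $g^{-n}\zero \to \xi' \neq \xi$ forces $\lb g^{-n}\zero \,\vert\, \xi\rb_\zero$ to remain bounded by Lemma~\ref{lemmanearcontinuity}; then apply (b) of Proposition~\ref{propositionbasicidentities}. This works in an arbitrary hyperbolic metric space with no geodesic or properness assumption, as you anticipate. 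The rest of your computation (Gromov's inequality plus the identity for $\dist(x,y)$ in terms of the Gromov product) then gives the uniform bound on $\dist(g^n\zero, hg^n\zero)$.

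Claim (ii) is fine as written: commuting with a nontrivial power of $g$ forces $h$ to permute $\{\xi,\xi'\}$, and $h(\xi)=\xi$ gives the contradiction.
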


\begin{remark}
Due to Proposition \ref{propositiongeneraltype}, in our standing assumptions below we could replace the hypothesis that $G$ is of general type with the hypothesis that $G$ is nonelementary. We have elected not to do so since we would prefer this paper to be as self-contained as possible.
\end{remark}

\begin{observation}
\label{observationnoglobalfixedpoint}
If $G$ is a group of general type then $G$ has no global fixed points; i.e. for every $\xi\in\del X$ there exists $h\in G$ such that $h(\xi)\neq\xi$.
\end{observation}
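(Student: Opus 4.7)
The plan is to prove the contrapositive statement by contradiction: suppose $\xi \in \del X$ is a global fixed point of $G$, and derive a contradiction from the general-type hypothesis.

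First I would establish the auxiliary fact that the only fixed points of a loxodromic isometry $g$ on $\del X$ are its attracting and repelling fixed points $g_+$ and $g_-$. Suppose $\eta \in \del X$ satisfies $g(\eta) = \eta$ and $\eta \neq g_-$. Choose a neighborhood $U$ of $g_-$ with $\eta \notin U$. By the definition of loxodromic, the iterates $g^n$ converge uniformly to $g_+$ on $\del X \setminus U$, so in particular $g^n(\eta) \to g_+$. But $g(\eta) = \eta$ implies $g^n(\eta) = \eta$ for every $n$, hence $\eta = g_+$. Thus the fixed-point set of $g$ on $\del X$ is exactly $\{g_+, g_-\}$.

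Now let $g_1, g_2$ be the two loxodromic isometries in $G$ with disjoint fixed-point sets guaranteed by the general-type hypothesis. If $\xi \in \del X$ were fixed by every element of $G$, then in particular $g_1(\xi) = \xi$ and $g_2(\xi) = \xi$. By the auxiliary fact above, $\xi \in \{g_1^+, g_1^-\}$ and $\xi \in \{g_2^+, g_2^-\}$. This contradicts the disjointness of the two fixed-point sets.

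There is no real obstacle here beyond unpacking the definition of loxodromic; the argument is essentially a one-step pigeonhole once one knows that a loxodromic element has no boundary fixed points other than $g_+$ and $g_-$. Note also that the statement only concerns $\xi \in \del X$, so we do not need to rule out global fixed points in $X$ itself (which in any case would be impossible for loxodromic isometries in the usual sense, but is not asserted here).
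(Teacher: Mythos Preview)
Your proof is correct and is exactly the natural argument; the paper in fact states this as an observation with no proof at all, so your write-up simply fills in the details the authors leave implicit (namely, that a loxodromic isometry has no boundary fixed points besides $g_+$ and $g_-$, which follows immediately from the convergence dynamics in the definition).
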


\subsubsection{Standing assumptions}\label{standingassumptions}
In the statements below, we will have the following standing assumptions:
\begin{itemize}
\item[(I)] $(X,\dist)$ is a hyperbolic metric space
\item[(II)] $\zero\in X$ is a distinguished point
\item[(III)] $b > 1$ is a parameter close enough to $1$ to guarantee the existence of a visual metric $\Dist = \Dist_{b,\zero}$
\item[(IV)] $G\leq\Isom(X)$ is a strongly discrete group of general type.
\end{itemize}
If $X = \H^{d + 1} = \{(x_1,\ldots,x_{d + 1}):x_{d + 1} > 0\}$ for some $d\in\N$, then we will moreover assume that $\dist = \frac{\|\dee\xx\|}{x_{d + 1}}$, $\zero = \ee_{d + 1}$, and $b = e$. Similarly, if $X = \B^{d + 1}$, we will assume that $\dist = \frac{2\|\dee\xx\|}{1 - \|\xx\|^2}$, $\zero = \0$, and $b = e$.

\subsection{The Bishop--Jones theorem and its generalization}
Before stating our main results about Diophantine approximation, we present the Bishop--Jones theorem, a theorem which is crucial for understanding the geometry of the limit set of a Kleinian group, and its generalization to hyperbolic metric spaces.

\begin{theorem}[C. J. Bishop and P. W. Jones, \cite{BishopJones}]
\label{theorembishopjonesclassical}
Fix $d\in\N$, and let $G$ be a nonelementary discrete subgroup of $\Isom(\H^{d + 1})$. Let $\delta$ be the Poincar\'e exponent of $G$, and let $\Lr$ and $\Lur$ be the radial and uniformly radial limit sets of $G$, respectively. Then
\begin{equation}
\label{bishopjones}
\HD(\Lr) = \HD(\Lur) = \delta.\HDfootnote
\end{equation}
\end{theorem}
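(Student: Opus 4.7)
The plan is to establish the two inequalities $\HD(\Lr)\le\delta$ and $\HD(\Lur)\ge\delta$ separately; together with the obvious inclusion $\Lur\subseteq\Lr$, these force equalities throughout and prove \eqref{bishopjones}.

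The upper bound $\HD(\Lr)\le\delta$ is a soft covering argument. Fix $\sigma>0$ large enough that every $\eta\in\Lr$ lies in infinitely many shadows $\mathrm{Shad}(g\zero,\sigma)$ of orbit points; each such shadow has diameter $\asymp_\times e^{-d(\zero,g\zero)}$ in the Euclidean visual metric \eqref{euclideanball}. For every $R>0$ the family $\{\mathrm{Shad}(g\zero,\sigma):d(\zero,g\zero)>R\}$ covers $\Lr$ by sets of diameter $\lesssim_\times e^{-R}$, so for any $s>\delta$,
\[
\sum_{g:\, d(\zero,g\zero)>R}\operatorname{diam}(\mathrm{Shad}(g\zero,\sigma))^s\;\lesssim_\times\;\sum_{g:\, d(\zero,g\zero)>R}e^{-s\,d(\zero,g\zero)}\;\longrightarrow\;0\quad\text{as }R\to\infty,
\]
by convergence of the Poincar\'e series above the critical exponent. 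Hence $\mathcal H^s(\Lr)=0$ for every $s>\delta$.

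The lower bound $\HD(\Lur)\ge\delta$ is the genuinely nontrivial direction; for each $s<\delta$ I would construct a Cantor-like set $K\subseteq\Lur$ with $\HD(K)\ge s$. Divergence of the Poincar\'e series at exponent $s$, after a tail/annular decomposition, yields a scale $R>0$ and $c>0$ with $\#\{h\in G:d(\zero,h\zero)\in[R,2R]\}\ge c\,e^{sR}$. Starting from an initial shadow supplied by nonelementarity, I would inductively build a rooted tree in which each parent orbit point $g_\alpha\zero$ at level $n$ has $\gtrsim e^{sR}$ children of the form $g_\alpha h\zero$, with $h$ drawn from a Vitali-thinned subfamily of the above annulus, arranged so that the child shadows are pairwise disjoint, nested inside the parent shadow, and the geodesic $[\zero,g_\alpha h\zero]$ passes within a uniformly bounded distance of $g_\alpha\zero$ (so that infinite branches produce \emph{uniformly} radial limit points). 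The natural Bernoulli measure $\mu$ on $K$, equidistributing mass among siblings, then satisfies $\mu(B(\xi,r))\lesssim_\times r^s$, and the mass distribution principle delivers $\HD(K)\ge s$; letting $s\nearrow\delta$ completes the proof.

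The principal obstacle is the lower bound: (i) ensuring that Cantor limits lie in $\Lur$ rather than merely $\Lr$ forces uniform shadow-separation and a common generation scale $R$ across all levels, which propagates the implied constant correctly in the local mass bound; and (ii) the orbital counting at the critical exponent must be extracted from the \emph{divergence} of a summation rather than from any pointwise lower bound on orbit growth, which requires an averaging argument --- especially since $G$ is only assumed nonelementary and discrete, not geometrically finite. The paper's subsequent generalization (Theorem \ref{theorembishopjones}) strengthens this picture by producing genuinely Ahlfors regular subsets of $\Lur$, which in addition requires a matching upper bound on $\mu$.
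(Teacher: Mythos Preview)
Your proposal is correct and follows the same skeleton as the paper's proof of its generalization (Theorem~\ref{theorembishopjones}): shadow covering for $\HD(\Lr)\le\delta$, and a tree of nested shadows built from annular orbital counting for $\HD(\Lur)\ge\delta$, closed out by the mass distribution principle. One small slip: no single $\sigma$ makes \emph{every} point of $\Lr$ lie in infinitely many $\sigma$-shadows, but since $\Lr=\bigcup_{\sigma\in\N}\Lrsigma$ and Hausdorff dimension is countably stable, this is harmless.

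The paper packages your tree construction as a \emph{partition structure} (Lemma~\ref{lemmastructure}, via the ``construction of children'' Lemma~\ref{lemmaDSU}) and then runs it through an abstract extraction result (Theorem~\ref{theoremahlforsgeneral}) to obtain genuinely Ahlfors $s$-regular subsets of $\Lur$---stronger than a bare dimension bound, and exactly the ``moreover'' clause you mention. The nesting difficulty you flag, namely getting child shadows to sit inside the parent shadow at \emph{every} stage, is handled not by a single annular family but by two, located near distinct $t$-divergent boundary points (Lemma~\ref{lemmatdivergent}, Sublemma~\ref{sublemmaDSU}); at each parent $w=g_w(\zero)$ one selects the family on the far side of $g_w^{-1}(\zero)$. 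In $\H^{d+1}$ your direct approach can be made to work via compactness of $\del X$, but this two-family device is what allows the paper to drop properness of $X$.
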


The following theorem is a generalization of the Bishop--Jones theorem:
\begin{reptheorem}{theorembishopjones}
Let $(X,\dist,\zero,b,G)$ be as in \sectionsymbol\ref{standingassumptions}.  Let $\delta$ be the Poincar\'e exponent of $G$ relative to $b$ (see \eqref{poincareexponent}), and let $\Lr$ and $\Lur$ be the radial and uniformly radial limit sets of $G$, respectively (see Definition \ref{definitionlimitset}). Then \eqref{bishopjones} holds; moreover, for every $0 < s < \delta$ there exists an Ahlfors $s$-regular (see Definition \ref{definitionahlforsregular}) set $\limitset_s\subset\Lur$.
\end{reptheorem}

The ``moreover'' clause is new even in the case which Bishop and Jones considered, demonstrating that the limit set $\Lur$ can be approximated by subsets which are particularly well distributed from a geometric point of view. It does not follow from their theorem since it is possible for a set to have large Hausdorff dimension without having any closed Ahlfors regular subsets of positive dimension (much less full dimension); in fact it follows from the work of D. Y. Kleinbock and B. Weiss \cite{KleinbockWeiss1} that the set of well approximable numbers forms such a set.\Footnote{It could be objected that this set is not closed and so should not constitute a counterexample; however, since it has full measure, it has closed subsets of arbitrarily large measure (which in particular still have dimension 1).}

\begin{remark}
In Theorem \ref{theorembishopjones} and in Theorem \ref{theoremfulldimension} below, the case $\delta = \infty$ is possible; see \cite[\613.2]{DSU} for spme examples.
\end{remark}

\begin{remark}
The equation $\HD(\Lr) = \delta$ was proven in \cite{Paulin} if $X$ is a word-hyperbolic group, a CAT(-1) manifold,\Footnote{See e.g. \cite[Part II]{BridsonHaefliger} for the definition of a CAT(-1) space.} or a uniform tree.\Footnote{A locally finite tree is \emph{uniform} if it admits a discrete cocompact action.}
\end{remark}

\begin{remark}
\label{remarkmayedamerrill}
A weaker form of the ``moreover'' clause was cited in \cite{MayedaMerrill} as a private communication from the authors of this paper \cite[Lemma 5.2]{MayedaMerrill}.
\end{remark}



As an application of Theorem \ref{theorembishopjones}, we deduce the following result:
\begin{reptheorem}{theoremextrinsic}
Fix $d\in\N$, let $X = \H^{d + 1}$, and let $G\leq\Isom(X)$ be a discrete group acting irreducibly on $X$. Then
\[
\HD(\BA_d\cap \Lur) = \delta = \HD(\Lr),
\]
where $\BA_d$ denotes the set of badly approximable vectors in $\R^d$.
\end{reptheorem}
For the definitions and proof see Section \ref{sectionextrinsic}.

\subsection{Dirichlet's theorem, generalizations, and optimality}
The most fundamental result in Diophantine approximation is the following:

\begin{theorem}[Dirichlet 1842]
\label{theoremdirichletclassical}
For every irrational $x\in\R$,
\[
\label{dirichletclassical}
\left|x - \frac{p}{q}\right| < \frac{1}{q^2}\textup{ for infinitely many }p/q\in\Q.
\]
\end{theorem}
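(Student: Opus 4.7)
The plan is to apply the pigeonhole principle in Dirichlet's classical form, then bootstrap to infinitely many approximations using irrationality. For the first step, I would fix a positive integer $N$ and consider the $N+1$ fractional parts $\{nx\} \in [0,1)$ for $n = 0, 1, \ldots, N$. Partitioning $[0,1)$ into $N$ half-open subintervals of length $1/N$, the pigeonhole principle yields two indices $0 \le n_1 < n_2 \le N$ whose fractional parts lie in the same subinterval. Setting $q = n_2 - n_1$ and $p = \lfloor n_2 x \rfloor - \lfloor n_1 x \rfloor$ then gives $1 \le q \le N$ together with $|qx - p| < 1/N$, which I rearrange to $|x - p/q| < 1/(qN) \le 1/q^2$.

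For the second step, I would upgrade this single approximation to infinitely many. Since $x$ is irrational, $|x - p/q| > 0$ for every rational $p/q$. Given any finite collection of approximations $p_1/q_1, \ldots, p_k/q_k$ previously constructed, I would choose a new integer $N' > 1/\min_{i \le k} |x - p_i/q_i|$ and apply the pigeonhole argument with $N'$ in place of $N$ to extract a new approximation $p'/q'$ satisfying $|x - p'/q'| < 1/N' < |x - p_i/q_i|$ for all $i \le k$. In particular $p'/q'$ is distinct from each previously chosen approximation, and iterating produces the desired infinite sequence of rationals satisfying the inequality.

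There is no genuine obstacle here — this is the standard textbook proof of Dirichlet's theorem, and the argument is entirely elementary. The only point requiring care is the invocation of the irrationality hypothesis at the bootstrapping step, since without it one might keep re-extracting the trivial approximation $p/q = x$ and produce only finitely many distinct rationals.
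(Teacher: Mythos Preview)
Your proof is correct and is the classical textbook argument. The paper, however, does not give a self-contained proof of Theorem~\ref{theoremdirichletclassical} at all: it is quoted as a known result of Dirichlet. What the paper does is prove the generalized version, Theorem~\ref{theoremdirichlet}, for an arbitrary strongly discrete group of general type acting on a Gromov hyperbolic metric space, and then explain (via Observations~\ref{observationSL2Z}--\ref{observationSL2Z three}) how specializing to $X = \H^2$, $G = \SL_2(\Z)$, $\xi = \infty$ recovers the classical statement \emph{up to an unspecified multiplicative constant}. The paper explicitly notes that the sharp constant $1$ is lost in this deduction, since that constant ``depends on information about $\SL_2(\Z)$ which is not available in the general case.''

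So the two approaches buy different things: your pigeonhole argument is elementary, self-contained, and yields the optimal numerator $1$, but is specific to $\R$ and $\Q$. The paper's route sacrifices the sharp constant but places the result inside a uniform framework where the same geometric argument (finding two orbit points of $\xi$ and tracking their images under a radially converging sequence via Gromov products) works for any group and any boundary point $\xi$.
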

Dirichlet's theorem can be viewed (up to a multiplicative constant) as the special case of the following theorem which occurs when
\begin{equation}
\label{SL2Z}
X = \H^2, \hspace{.5 in}
G = \SL_2(\Z), \hspace{.5 in}
\xi = \infty:
\end{equation}
\begin{reptheorem}{theoremdirichlet}[Generalization of Dirichlet's Theorem]
Let $(X,\dist,\zero,b,G)$ be as in \sectionsymbol\ref{standingassumptions}, and fix a distinguished point $\xi\in\del X$. Then for every $\sigma > 0$, there exists a number $C = C_\sigma > 0$ such that for every $\eta\in\Lrsigma(G)$,
\begin{repequation}{dirichlet}
\Dist(g(\xi),\eta) \leq Cb^{-\dist(\zero,g(\zero))}\textup{ for infinitely many }g\in G.
\end{repequation}
Here $\Lrsigma(G)$ denotes the $\sigma$-radial limit set of $G$ (see Definition \ref{definitionlimitset}).

Furthermore, there exists a sequence of isometries $(g_n)_1^\infty$ in $G$ such that
\[
g_n(\xi) \tendsto n \eta \text{ and } g_n(\zero) \tendsto n \eta,
\]
and such that \eqref{dirichlet} holds with $g = g_n$ for all $n\in\N$.
\end{reptheorem}

\begin{remark}
In some sense, it is not surprising that a point in the radial limit set should be able to be approximated well, since by definition a radial limit point already has a sequence of something resembling ``good approximations'', although they are in the interior rather than in the boundary. (Indeed, radial limit points are sometimes called \emph{points of approximation}.) The proof of Theorem \ref{theoremdirichlet} is little more than simply taking advantage of this fact. Nevertheless, Theorem \ref{theoremdirichlet} is important because of its place in our general framework; in particular, its optimality (Theorem \ref{theoremfulldimension} below) is not obvious.\comdavid{Is it clear?}
\end{remark}

The fact that Theorem \ref{theoremdirichlet} reduces to Dirichlet's theorem in the special case \eqref{SL2Z} can be deduced from the following well-known observations:

\begin{observation}
\label{observationSL2Z}
The orbit of $\infty$ under $\SL_2(\Z)$ is precisely the extended rationals $\what\Q$. For each $K > 0$ and for each $p/q\in\Q$ in reduced form with $|p/q|\leq K$ we have
\begin{equation}
\label{q2edg}
q^2 \asymp_{\times,K} \min_{\substack{g\in G \\ g(\infty) = p/q}}e^{\dist(\zero,g(\zero))}.
\end{equation}
\end{observation}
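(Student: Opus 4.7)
The plan is to handle the two claims independently. For the orbit identification, note that $g = \begin{pmatrix} a & b \\ c & d \end{pmatrix} \in \SL_2(\Z)$ sends $\infty$ to $a/c$ (interpreted as $\infty$ if $c = 0$), and since $ad - bc = 1$ forces $\gcd(a, c) = 1$, this quotient is already in reduced form. Conversely, given $p/q \in \Q$ in reduced form, Bezout produces $b, d \in \Z$ with $pd - qb = 1$, so $\begin{pmatrix} p & b \\ q & d \end{pmatrix} \in \SL_2(\Z)$ realizes $\infty \mapsto p/q$. Together these show that the orbit of $\infty$ is exactly $\what\Q$.

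For the asymptotic \eqref{q2edg}, the workhorse is the classical identity
\[
\cosh \dist(\zero, g(\zero)) = \tfrac{1}{2}(a^2 + b^2 + c^2 + d^2),
\]
with $\zero = i$. I would derive this via the Cartan (KAK) decomposition $g = k_1 a_t k_2$ in $\SL_2(\R)$, where $k_1, k_2$ are rotations (which stabilize $i$), $a_t = \begin{pmatrix} e^{t/2} & 0 \\ 0 & e^{-t/2} \end{pmatrix}$, and $t = \dist(\zero, g(\zero))$ (the last equality because $a_t$ translates by $t$ along the vertical geodesic through $i$); the Frobenius norm is rotation-invariant and satisfies $\|a_t\|_F^2 = 2\cosh t$. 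Exponentiating this identity yields $e^{\dist(\zero, g(\zero))} \asymp_\times a^2 + b^2 + c^2 + d^2$.

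It then remains to minimize this expression over the fiber $\{g \in G : g(\infty) = p/q\}$, which consists of the matrices $\begin{pmatrix} \epsilon p & b \\ \epsilon q & d \end{pmatrix}$ with $\epsilon \in \{\pm 1\}$ and $\epsilon(pd - qb) = 1$. For each sign, the admissible pairs $(b, d)$ form a coset of $\Z(p, q)$ in $\Z^2$; selecting the representative with $|d| \le |q|/2$ and then solving $b = (pd - 1)/q$ yields $|b| \le |p|/2 + 1/|q|$. Combined with the hypothesis $|p| \le K|q|$ this gives $b^2 + d^2 \lesssim_{\times, K} q^2$, hence $a^2 + b^2 + c^2 + d^2 \lesssim_{\times, K} q^2$, while the matching lower bound $a^2 + b^2 + c^2 + d^2 \ge c^2 = q^2$ is immediate. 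The only nontrivial input throughout is the Frobenius-norm identity for the hyperbolic distance; everything else reduces to Bezout and an elementary coset minimization.
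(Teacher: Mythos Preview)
Your proof is correct but proceeds by a genuinely different route from the paper's. You compute the hyperbolic distance via the Frobenius-norm identity $2\cosh\dist(i,g(i)) = a^2+b^2+c^2+d^2$ (obtained from the KAK decomposition) and then minimize this quadratic form over the fiber $\{g:g(\infty)=p/q\}$ by hand using Bezout and a coset-representative choice. The paper instead argues geometrically with the $\SL_2(\Z)$-invariant family of Ford circles: since the orbit $G_\infty(\zero)$ stays within bounded Hausdorff distance of the horosphere $\{x_2=1\}$, the set $\{\w g(\zero):\w g(\infty)=p/q\}$ stays within bounded distance of the Ford circle at $p/q$, and the hyperbolic distance from $\zero$ to that circle is computed directly as $\asymp_\plus \log(q^2) + \log(1+(p/q)^2) \asymp_{\plus,K} \log(q^2)$. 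Your approach is entirely self-contained and requires no geometric machinery beyond the distance formula; the paper's approach ties the observation to the horoball picture used throughout (and generalizes more readily, cf.\ the remark there comparing with Hersonsky--Paulin's ``depth'' function).
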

\begin{proof}
It is well-known and easy to check that the Ford circles $\big(B_\euc((p/q,1/(2q^2)),1/(2q^2))\big)_{p/q\in\Q}$ together with the set $H := \{(x_1,x_2):x_2 > 1\}$ constitute a disjoint $G = \SL_2(\Z)$-invariant collection of horoballs. (Here a subscript of $\euc$ denotes the Euclidean metric on $\R^2$.) Now note that
\[
\dist_\mathrm H(\del H,G_\infty(\zero)) < \infty,
\]
where $\dist_\mathrm H$ denotes the Hausdorff metric induced by the hyperbolic metric on $\H^2$, and $G_\infty$ is the stabilizer of $\infty$ in $G$. Thus for all $g\in G$
\[
\dist_\mathrm H(\del g(H),\{\w g\in G:\w g(\infty) = g(\infty)\}) \asymp_\plus 0;
\]
letting $p/q = g(\infty)$,
\begin{align*}
\min_{\substack{\w g\in G \\ \w g(\infty) = p/q}}\dist(\zero,\w g(\zero))
&=_\pt \dist(\zero,\{\w g\in G:\w g(\infty) = g(\infty)\})\\
&\asymp_\plus \dist(\zero,\del g(H))\\
&=_\pt \dist(\zero,B_\euc((p/q,1/(2q^2)),1/(2q^2)))\\
&\asymp_\plus \log(q^2) + \log(1 + (p/q)^2)
\asymp_{\plus,K} \log(q^2).
\end{align*}
Exponentiating finishes the proof.
\end{proof}

\begin{observation}
\label{observationSL2Z two}
In the special case \eqref{SL2Z}, the visual metric $\Dist$ is equal to the spherical metric $\Dist_\sph = |\dee x|/(1 + |x|^2)$, and is therefore bi-Lipschitz equivalent to the Euclidean metric $\Dist_\euc = |\dee x|$ on compact subsets of $\R$.
\end{observation}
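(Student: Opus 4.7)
The plan is to transport the visual metric from the ball model to the upper half-plane model via the Cayley transform. The map $\phi \colon \H^2 \to \B^2$ defined by $\phi(z) = (z-i)/(z+i)$ is a hyperbolic isometry sending $\zero = \ee_2 = i$ to $\0$, and extends to a homeomorphism $\del\H^2 = \R \cup \{\infty\} \to \del\B^2 = S^1$. A hyperbolic isometry $\phi$ preserves Gromov products in the sense that $\lb \phi(\xi)|\phi(\eta)\rb_{\phi(\zero)} = \lb\xi|\eta\rb_\zero$; combined with the defining asymptotic \eqref{distanceasymptotic}, this implies that the pullback under $\phi$ of any visual metric on $\del\B^2$ (with the same parameter $b = e$) is a visual metric on $\del\H^2$ based at $\ee_2$.

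By \eqref{euclideanball} together with Proposition \ref{propositioneuclideanball}, the metric $(\xx,\yy) \mapsto \tfrac12\|\xx - \yy\|$ on $S^1$ is such a visual metric. A direct computation gives
\[
\phi(x) - \phi(y) = \frac{2i(x-y)}{(x+i)(y+i)}, \qquad \text{so} \qquad \tfrac12|\phi(x) - \phi(y)| = \frac{|x-y|}{\sqrt{(1+x^2)(1+y^2)}}
\]
for $x,y \in \R$, and $\tfrac12|\phi(x) - \phi(\infty)| = 1/\sqrt{1+x^2}$. These right-hand sides constitute the chord form of the spherical metric, which is comparable with universal constants to the arc-length spherical metric $\Dist_\sph(x,y) = \int_x^y |ds|/(1+s^2) = |\arctan y - \arctan x|$. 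Thus $\Dist_\sph$ represents the visual metric in the sense of \eqref{distanceasymptotic}, which is the only sense the paper's convention requires (visual metrics are only canonical up to $\asymp_\times$ by Proposition \ref{propositionDist}).

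The bi-Lipschitz statement then follows: on any compact $K \subset \R$, the weight $1/\sqrt{(1+x^2)(1+y^2)}$ is bounded between two positive constants as $x,y$ range over $K$, so $\Dist_\sph$ and the Euclidean metric $|dx|$ differ on $K$ by a bounded multiplicative factor. The main obstacle, or rather the only subtle point, is purely conceptual: the observation's phrase ``is equal to'' must be read as equality within the equivalence class of visual metrics, since \eqref{distanceasymptotic} determines $\Dist$ only up to $\asymp_\times$; the route through the Cayley transform makes this interpretation transparent, and no nontrivial technical step is needed beyond the elementary computation above.
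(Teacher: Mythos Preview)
Your proof is correct and follows exactly the paper's approach: the paper's proof is the single sentence ``Apply the Cayley transform to \eqref{euclideanball} above,'' and you have simply carried out that computation in full. Your remark about ``is equal to'' versus $\asymp_\times$ is a fair clarification (the Cayley pullback of the chordal metric on $S^1$ is the chord form $|x-y|/\sqrt{(1+x^2)(1+y^2)}$, which is bi-Lipschitz but not literally identical to the arc-length spherical metric), though the paper evidently does not fuss over this distinction.
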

\begin{proof}
Apply the Cayley transform to \eqref{euclideanball} above.
\end{proof}

\begin{observation}
\label{observationSL2Z three}
The radial limit set of $\SL_2(\Z)$ is precisely the set of irrational numbers; in fact $\R\butnot\Q = \Lrsigma$ for all $\sigma > 0$ sufficiently large.
\end{observation}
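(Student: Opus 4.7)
The plan is to prove the two containments $\R\butnot\Q\subseteq\Lrsigma$ for some uniform $\sigma>0$ and $\widehat{\Q}\cap\Lr=\emptyset$; combined with the trivial inclusion $\Lrsigma\subseteq\Lr$ and Observation~\ref{observationSL2Z} (which identifies $\widehat{\Q}$ with the $G$-orbit of $\infty$), these give $\Lrsigma=\Lr=\R\butnot\Q$ for every sufficiently large $\sigma$.

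For the first containment, I would combine Dirichlet's theorem with the Ford-circle analysis from the proof of Observation~\ref{observationSL2Z}. Given an irrational $\eta$, Dirichlet's theorem supplies infinitely many reduced fractions $p_n/q_n$ with $|\eta-p_n/q_n|<1/q_n^2$. For each $n$, one may choose $g_n\in G$ with $g_n(\infty)=p_n/q_n$ and $g_n(\zero)$ within uniformly bounded hyperbolic distance of the ``top of the Ford circle'' at $p_n/q_n$, namely the Euclidean point $(p_n/q_n,\,1/q_n^2)$; this is precisely the minimizer realizing \eqref{q2edg}. The geodesic ray from $\zero=i$ to $\eta$ is a Euclidean semicircle perpendicular to $\R$, and at Euclidean height $h=1/q_n^2$ it passes through a point at horizontal Euclidean distance $|\eta-p_n/q_n|+O(h^2)=O(h)$ from $p_n/q_n$. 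Since two points at common Euclidean height $h$ whose horizontal separation is $O(h)$ are at bounded hyperbolic distance apart, $g_n(\zero)$ lies within a uniformly bounded $\sigma$ of the geodesic $[\zero,\eta)$. Uniformity of $\sigma$ in $\eta$ follows since one may conjugate $\eta$ into a fixed bounded interval at the cost of a bounded error.

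For the second containment, I would use that every point of $\widehat{\Q}$ is a parabolic fixed point of $G$. The disjointness of the Ford-circle horoballs in Observation~\ref{observationSL2Z} produces, for each $\eta\in\widehat{\Q}$, a horoball $H_\eta$ at $\eta$ whose $G$-translates modulo $G_\eta$ are pairwise disjoint; a standard Shimizu--Leutbecher argument then shows any orbit point inside $H_\eta$ lies at bounded hyperbolic distance from the horocycle $\del H_\eta$. Conjugating so that $\eta=\infty$, the horocycle is $\{y=C\}$, a point $(x,C)$ converges to $\infty$ in the Gromov boundary iff $|x|\to\infty$, and its hyperbolic distance to the vertical geodesic $[\zero,\infty)$ equals $\operatorname{arcsinh}(|x|/C)$, which diverges. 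Thus any sequence $g_n(\zero)\to\eta$ must leave every bounded neighborhood of $[\zero,\eta)$, contradicting radiality.

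The main technical point is producing the uniform $\sigma$ in the first step; the exact matching between Dirichlet's $1/q_n^2$ and the height $1/q_n^2$ of the Ford-circle top makes this transparent, so no finer continued-fraction estimates are required.
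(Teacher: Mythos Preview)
Your argument is correct, but it takes a genuinely different route from the paper's proof. The paper simply invokes Bowditch's structural theorem for geometrically finite groups: for any such $G$ there is $\sigma>0$ with $\Lambda=\Lrsigma\sqcup\Lbp$, and specializing to $G=\SL_2(\Z)$ (where $\Lambda=\what\R$ and $\Lbp=\what\Q$) gives the result in one line. Your approach instead proves the two inclusions by hand, using Dirichlet's theorem and the Ford circle geometry for $\R\setminus\Q\subset\Lrsigma$, and a horoball argument for $\what\Q\cap\Lr=\emptyset$.

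Two small remarks on the execution. First, the conjugation step for uniformity is not actually needed: the geodesic from $\zero$ to $p_n/q_n$ meets the real axis perpendicularly, so it intersects the Ford horocycle at $p_n/q_n$ within uniformly bounded hyperbolic distance of the top $(p_n/q_n,1/q_n^2)$ regardless of $|p_n/q_n|$, and the orbit on that horocycle is uniformly ($\operatorname{arccosh}(3/2)$-) dense; hence the minimizer of \eqref{q2edg} already lies uniformly close to the top, and your height-$h$ comparison yields a $\sigma$ independent of $\eta$. Second, the Shimizu--Leutbecher invocation in the parabolic step is unnecessary: since $G(\zero)\subset\{y\le 1\}$ by disjointness of the Ford horoballs, any $g_k(\zero)\to\infty$ satisfies $\lb\zero|\infty\rb_{g_k(\zero)}=\tfrac12[d(\zero,g_k(\zero))-\log\operatorname{Im}g_k(\zero)]\ge\tfrac12 d(\zero,g_k(\zero))\to\infty$, contradicting $\sigma$-radiality; then $G$-invariance of $\Lr$ (Observation~\ref{observationlimitsets}) handles the other rationals.

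What each approach buys: the paper's proof is short and places the observation in its natural structural context, at the cost of importing a nontrivial external result. Your proof is self-contained and makes the link with Dirichlet's theorem explicit, which is thematically apt since this observation is used precisely to recover the classical Dirichlet theorem from Theorem~\ref{theoremdirichlet}.
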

\begin{proof}
The proof of (GF1 \implies GF2) in \cite{Bowditch_geometrical_finiteness} shows that for any geometrically finite group $G$, there exists $\sigma > 0$ such that $\Lambda = \Lrsigma \sqcup \Lbp$, where $\Lbp$ is the set of (bounded) parabolic points of $G$. (Here $\sqcup$ denotes a disjoint union.) In particular, when $G = \SL_2(\Z)$, we have $\Lambda = \what\R$ and $\Lbp = \what\Q$, so $\R\butnot\Q = \Lrsigma$.
\end{proof}

Note that in deducing Theorem \ref{theoremdirichletclassical} from Theorem \ref{theoremdirichlet}, the numerator of (\ref{dirichletclassical}) must be replaced by a constant.\Footnote{This constant does not depend on $x$ by Observation \ref{observationSL2Z three}.} This sacrifice is necessary since the constant $1$ depends on information about $\SL_2(\Z)$ which is not available in the general case.

\ \newline{\bf History of Theorem \ref{theoremdirichlet}.}
Theorem \ref{theoremdirichlet} was previously proven
\begin{itemize}
\item in the case where $X = \H^2$, $G$ is finitely generated, and with no assumption on $\xi$ by S. J. Patterson in 1976 \cite[Theorem 3.2]{Patterson1}.
\item in the case where $X = \H^{d + 1}$, $G$ is geometrically finite with exactly one cusp,\Footnote{If $G$ has more than one cusp, then Stratmann and Velani (and later Hersonsky and Paulin) essentially prove that for every $\eta\in\Lr(G)$, there exists $\xi\in P$ such that \eqref{dirichlet} holds, where $P$ is a maximal set of inequivalent parabolic points.} and $\xi$ is a parabolic fixed point by B. O. Stratmann and S. L. Velani in 1995 \cite[Theorem 1]{StratmannVelani}.
\item in the case where $X$ is a pinched Hadamard manifold, $G$ is geometrically finite with exactly one cusp, and $\xi$ is a parabolic fixed point by S. D. Hersonsky and F. Paulin in 2002 \cite[Theorem 1.1]{HP_Dirichlet}.\Footnote{It should be noted that the metric which Hersonsky and Paulin use, the Hamenst\"adt metric, corresponds to the visual metric coming from $\xi$ rather than the visual metric coming from a point in $X$; however, these metrics are locally bi-Lipschitz equivalent on $\del X\butnot\{\xi\}$. Moreover, their ``depth'' function is asymptotic to the expression $\min_{g(\xi) = \eta}\dist(\zero,g(\zero))$, by an argument similar to the proof of Observation \ref{observationSL2Z} above. Because our setup is only equivalent asymptotically to theirs, it makes no sense for us to be interested in their computation of the exact value of the constant in Dirichlet's theorem, which of course we do not claim to generalize.} \comdavid{Is the preceding footnote clear?}
\item in the case where $X$ is a locally finite tree, $G$ is geometrically finite, and $\xi$ is a parabolic fixed point by F. Paulin \cite[Th\'eor\`eme 1.2]{Paulin2}
\end{itemize}

\subsubsection{Optimality}
A natural question is whether the preceding theorems, Dirichlet's theorem and its generalization, can be improved. In each case we do not consider improvement of the constant factor to be significant; the question is whether it is possible for the functions on the right hand side of (\ref{dirichletclassical}) and (\ref{dirichlet}) to be multiplied by a factor tending to zero, and for the theorems to remain true. More formally, we will say that \emph{Theorem \ref{theoremdirichlet} is optimal} for $(X,\dist,\zero,b,G)$ if there does not exist a function $\phi\to 0$ such that for all $\eta\in\Lr(G)$, there exists a constant $C = C_\eta > 0$ such that
\[
\Dist(g(\xi),\eta)\leq C b^{-\dist(\zero,g(\zero))}\phi(b^{\dist(\zero,g(\zero))}) \text{ for infinitely many $g\in G$.}
\]
\begin{definition}
\label{definitionbadlyapproximable}
Let $(X,\dist,\zero,b,G)$ be as in \sectionsymbol\ref{standingassumptions}, and fix a distinguished point $\xi\in\del X$. A point $\eta\in\Lr(G)$ is called \emph{badly approximable with respect to $\xi$} if there exists $\varepsilon > 0$ such that 
\[
\Dist(g(\xi),\eta) \geq \varepsilon b^{-\dist(\zero,g(\zero))}
\]
for all $g\in G$. We denote the set of points that are badly approximable with respect to $\xi$ by $\BA_\xi$, and those that are well approximable by $\WA_\xi = \Lr\butnot \BA_\xi$. In the special case \eqref{SL2Z}, the set $\BA := \BA_\infty$ is the classical set of badly approximable numbers.
\end{definition}
Trivially (using strong discreteness), the existence of badly approximable points in the radial limit set of $G$ implies that Theorem \ref{theoremdirichlet} is optimal.\Footnote{In some contexts, a Dirichlet theorem can be optimal even when badly approximable points do not exist; see \cite{FishmanSimmons5} for such an example.} In fact, we have the following:

\begin{theorem}[Full Dimension Theorem, Jarn\'ik 1928]
$\HD(\BA) = 1$, so Theorem \ref{theoremdirichletclassical} is optimal.
\end{theorem}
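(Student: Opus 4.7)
My plan is to exhibit, for every $\varepsilon > 0$, a compact subset $F \subset \BA \cap [0,1]$ with $\HD(F) \geq 1 - \varepsilon$; once this is done, optimality of Theorem \ref{theoremdirichletclassical} is immediate, since any $x \in \BA$ satisfies $|x - p/q| \geq \varepsilon_x/q^2$ and so fails to admit approximations of the form $\phi(q)/q^2$ with $\phi \to 0$.

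The cleanest route is via continued fractions. Writing $x \in (0,1) \butnot \Q$ as $x = [0;a_1,a_2,\ldots]$, the standard inequality $1/((a_{n+1}+2)q_n^2) \leq |x - p_n/q_n| \leq 1/(a_{n+1} q_n^2)$ together with the fact that every best rational approximation is a convergent shows that $\BA$ coincides with the set of irrationals with bounded partial quotients. It therefore suffices to work with the closed set
\[
F_M = \{x \in [0,1] : 1 \leq a_i(x) \leq M \text{ for all } i \geq 1\} \subset \BA
\]
and prove $\HD(F_M) \to 1$ as $M \to \infty$.

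The second step is to lower-bound $\HD(F_M)$ via the mass distribution principle. I would define a probability measure $\mu_M$ on $F_M$ by assigning, at each level of the continued-fraction Cantor construction, equal weight $1/M$ to each branch $a_{n+1} \in \{1,\ldots,M\}$. The cylinder $C(a_1,\ldots,a_n) = \{x : a_i(x) = a_i, 1 \leq i \leq n\}$ is an interval of length $\asymp_\times 1/q_n^2$, where $q_n$ is the denominator of the $n$th convergent, and the recursion $q_{n+1} = a_{n+1} q_n + q_{n-1}$ gives $q_n \leq (M+1)^n$. Hence on cylinders
\[
\mu_M(C(a_1,\ldots,a_n)) = M^{-n} \lesssim_\times |C(a_1,\ldots,a_n)|^{s_M},
\qquad s_M := \frac{\log M}{2\log(M+1)},
\]
and $s_M \to 1$ as $M \to \infty$.

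The main obstacle is upgrading this cylinder-based Frostman estimate to a ball-based one, since the Hausdorff dimension bound requires $\mu_M(B(x,r)) \lesssim_\times r^{s_M}$ for \emph{all} sufficiently small balls. I would handle this using the bounded-distortion property of the Gauss map: any interval of length $r$ meets at most $O(1)$ cylinders of the unique level $n$ at which cylinder lengths are comparable to $r$, because adjacent level-$n$ cylinders have comparable lengths. Comparing the measure to the cylinder estimate then yields $\mu_M(B(x,r)) \lesssim_\times r^{s_M}$, and the mass distribution principle delivers $\HD(F_M) \geq s_M$. Letting $M \to \infty$ completes the proof.
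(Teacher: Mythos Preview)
Your continued-fraction approach is classical and in principle works, but there is a genuine computational error that makes the argument as written fail. You define the measure $\mu_M$ by giving each of the $M$ branches equal weight $1/M$, and then claim that $s_M := \dfrac{\log M}{2\log(M+1)} \to 1$. In fact $s_M \to \tfrac12$, so your argument only yields $\HD(F_M) \geq \tfrac12$ in the limit. The underlying problem is that the uniform measure is badly adapted to the geometry: the cylinders $C(a_1,\ldots,a_n)$ at level $n$ do \emph{not} have comparable lengths --- those with all $a_i = M$ have length $\asymp M^{-2n}$, while those with all $a_i = 1$ have length $\asymp \phi^{-2n}$ --- and the Frostman condition $\mu_M(C) \lesssim |C|^s$ is controlled by the \emph{smallest} cylinders, forcing $s \leq \tfrac12$. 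The fix is to replace the uniform weights by weights proportional to $|C(a_1,\ldots,a_n)|^s$ (a Gibbs/conformal measure for the Gauss map), or equivalently to determine $\HD(F_M)$ via the pressure equation $\sum_{k=1}^M \|T_k'\|^{s} \approx 1$; the root of this equation does tend to $1$ as $M\to\infty$, and the bounded-distortion argument you sketch then goes through.

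For comparison, the paper does not give a direct proof of this classical statement at all: it deduces it as the special case $X=\H^2$, $G=\SL_2(\Z)$, $\xi=\infty$ of Theorem~\ref{theoremfulldimension}, which is proved by showing that $\BA_\xi$ is absolute winning (in McMullen's sense) on each of the Ahlfors $s$-regular sets $\limitset_s\subset\Lur$ constructed in Corollary~\ref{corollarystructure}, and then invoking the dimension and countable-intersection properties of winning sets. So the two routes are quite different: yours (once repaired) is elementary and specific to the continued-fraction structure of $\R$, while the paper's is game-theoretic and yields the far more general statement, including the countable-intersection strengthening, at the cost of heavier machinery.
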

\begin{reptheorem}{theoremfulldimension}
[Generalization of the Full Dimension Theorem]
Let $(X,\dist,\zero,b,G)$ be as in \sectionsymbol\ref{standingassumptions}, and let $(\xi_k)_1^\ell$ be a countable (finite or infinite) sequence in $\del X$. Then
\[
\HD\left(\bigcap_{k = 1}^\ell\BA_{\xi_k}\cap\Lur\right) = \delta = \HD(\Lr).
\]
In particular $\BA_\xi\neq\emptyset$ for every $\xi\in\del X$, so Theorem \ref{theoremdirichlet} is optimal.
\end{reptheorem}

Note that the Full Dimension Theorem is simply the special case \eqref{SL2Z} of Theorem \ref{theoremfulldimension}.

We shall prove Theorem \ref{theoremfulldimension} by proving that the sets $\BA_{\xi_k}$ are absolute winning in the sense of C. T. McMullen \cite{McMullen_absolute_winning} on the sets $\limitset_s$ described in Theorem \ref{theorembishopjones}. Our theorem then can be deduced from the facts that the countable intersection of absolute winning sets is absolute winning, and that an absolute winning subset of an Ahlfors $s$-regular set has dimension $s$ ((i), (ii), and (iii) of Proposition \ref{propositionwinningproperties} below).

\begin{remark}
We use the invariance of McMullen's game under quasisymmetric maps to prove Theorem \ref{theoremfulldimension}; cf. Lemma \ref{lemmaquasisymmetric}. For this reason, it is necessary in our proof to use McMullen's game rather than Schmidt's original game which is not invariant under quasisymmetric maps (see \cite{McMullen_absolute_winning}).
\end{remark}

\subsubsection{Geometrically finite groups and approximation by parabolic points}
In the following, we shall assume that $X = \H^{d + 1}$ for some $d\in\N$, and that $G\leq\Isom(X)$ is a nonelementary geometrically finite group. We will say that a point $\eta\in\del X$ is \emph{badly approximable by parabolic points} if $\eta\in\bigcap_{k = 1}^\ell\BA_{\xi_k}$ where $\{\xi_1,\ldots,\xi_\ell\}$ is a complete set of inequivalent parabolic fixed points. The set of points which are badly approximable by parabolic points admits a number of geometric interpretations:

\begin{proposition}
\label{propositionuniformlyradialequivalent}
For a point $\eta\in\del X$, the following are equivalent:
\begin{itemize}
\item[(A)] $\eta$ is badly approximable by parabolic points.
\item[(B)] $\geo\zero\eta$\geofootnote avoids some disjoint $G$-invariant collection of horoballs centered at the parabolic points of $G$.
\item[(C)] The image of $\geo\zero\eta$ in the quotient manifold $X/G$ is bounded.
\item[(D)] $\eta$ is a uniformly radial limit point of $G$.
\end{itemize}
\end{proposition}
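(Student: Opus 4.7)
The plan is to prove the equivalences (C)$\Leftrightarrow$(D), (B)$\Leftrightarrow$(C), and (A)$\Leftrightarrow$(B) in turn. The equivalence (C)$\Leftrightarrow$(D) is essentially tautological: letting $\pi\colon X\to X/G$ denote the projection, the image $\pi(\geo\zero\eta)$ lies in a ball of radius $\sigma$ about $\pi(\zero)$ if and only if every point of $\geo\zero\eta$ lies within distance $\sigma$ of the orbit $G(\zero)$, which is precisely the definition of a uniformly radial limit point.

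For (B)$\Leftrightarrow$(C), invoke the thick--thin decomposition of the geometrically finite manifold $X/G$: its convex core decomposes into a compact piece together with finitely many \emph{standard cusp regions}, each of which is the quotient of a horoball centered at some parabolic fixed point by its stabilizer. A geodesic ray from $\pi(\zero)$ into the convex core has bounded image in $X/G$ if and only if it only penetrates the cusps to some bounded depth, which lifts to the statement that $\geo\zero\eta$ avoids a $G$-invariant disjoint family of horoballs obtained by pushing the boundaries of the standard horoballs inward by that depth.

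The main content is (A)$\Leftrightarrow$(B). Let $\{\xi_1,\ldots,\xi_\ell\}$ be a complete set of inequivalent parabolic fixed points and, using the bounded parabolic property, choose horoballs $H_k$ centered at $\xi_k$ with $\zero\notin H_k$ so that $\{g(H_k) : g\in G,\, k\leq\ell\}$ is pairwise disjoint. The crux is the following quantitative dictionary between Diophantine approximation of $\eta$ by orbits of the $\xi_k$ and geometric penetration of $\geo\zero\eta$ into the horoballs $g(H_k)$:
\[
\Dist(g(\xi_k),\eta) \asymp_\times b^{-\dist(\zero,g(\zero))-t_{g,k}(\eta)},
\]
where $t_{g,k}(\eta)\geq 0$ denotes the penetration depth (set to $0$ when $\geo\zero\eta$ does not enter $g(H_k)$). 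This generalizes the $\SL_2(\Z)$ computation in Observation \ref{observationSL2Z} and reduces to a standard hyperbolic estimate comparing the visual metric to the Busemann function at $g(\xi_k)$, using that $g(\zero)$ lies within bounded distance of $\partial g(H_k)$. Given this dictionary, $\eta\in\bigcap_k\BA_{\xi_k}$ is equivalent to a uniform upper bound $T$ on $\{t_{g,k}(\eta)\}_{g,k}$, which is in turn equivalent to $\geo\zero\eta$ avoiding the $G$-invariant disjoint family in which each $g(H_k)$ has been pushed inward by depth $T+1$.

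The main obstacle is the horoball penetration estimate above: its multiplicative constants must be uniform across all $g\in G$ and $k\leq\ell$, and robust in the borderline case where $\geo\zero\eta$ nearly grazes a horosphere without entering it. Beyond this, the argument is a careful synthesis of standard geometrically finite hyperbolic geometry with the visual-metric conventions set up in Section \ref{standingassumptions}.
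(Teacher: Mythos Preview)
The paper explicitly omits the proof of this proposition, stating only that it ``is not difficult and will be omitted'' and that certain equivalences (e.g.\ (B)$\Leftrightarrow$(C)) appear in the literature. So there is nothing to compare against; your proposal is being assessed on its own merits.

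Your argument is essentially correct and follows the natural route. Two points deserve attention. First, your characterization of (D) as ``every point of $\geo\zero\eta$ lies within distance $\sigma$ of $G(\zero)$'' is the standard geodesic-space formulation, but it is not literally the paper's definition of $\Lur$, which is phrased in terms of sequences $(g_n(\zero))$ with bounded consecutive gaps lying in shadows. The equivalence of the two formulations in $\H^{d+1}$ is routine but perhaps deserves a sentence.

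Second, and more substantively, your penetration estimate
\[
\Dist(g(\xi_k),\eta)\ \asymp_\times\ b^{-\dist(\zero,g(\zero))-t_{g,k}(\eta)}
\]
is only valid when $g$ is minimal in its coset $gG_{\xi_k}$; for arbitrary $g$, the quantity $\dist(\zero,g(\zero))$ can vastly exceed the distance from $\zero$ to the horoball $g(H_k)$, and the asymptotic fails. You are implicitly aware of this---you invoke the hypothesis that $g(\zero)$ lies within bounded distance of $\partial g(H_k)$, which is exactly the minimality condition, and you cite Observation~\ref{observationSL2Z}---but the estimate should be stated for minimal representatives from the outset. The reduction of (A) to minimal $g$ is then immediate: the inequality $\Dist(g(\xi_k),\eta)\geq\varepsilon b^{-\dist(\zero,g(\zero))}$ is hardest to satisfy precisely when $\dist(\zero,g(\zero))$ is smallest. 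With this clarification your (A)$\Leftrightarrow$(B) argument goes through cleanly.
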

The proof of this proposition is not difficult and will be omitted. Certain equivalences have been noted in the literature (e.g. the the equivalence of (B) and (C) was noted in \cite{Stratmann1}).

Note that according to Theorem \ref{theoremfulldimension}, the set of points which are badly approximable by parabolic points has full dimension in the radial limit set.

\ \newline{\bf History of Theorem \ref{theoremfulldimension}.}
Theorem \ref{theoremfulldimension} was previously proven
\begin{itemize}
\item in the case where $X = \H^2$, $G$ is a lattice, and $\{\xi_1,\ldots,\xi_\ell\}$ is a finite set of conventional points by Patterson in 1976 \cite[the Theorem of \sectionsymbol 10]{Patterson1}.
\item in the case where $X = \H^{d + 1}$, $G$ is convex-cocompact, and $\xi$ is a hyperbolic fixed point by B. O. Stratmann in 1994 \cite[Theorem B]{Stratmann3}.
\item in the case where $X = \H^{d + 1}$, $G$ is a lattice, and $\{\xi_1,\ldots,\xi_\ell\}$ is a finite set of parabolic points by B. O. Stratmann in 1994 \cite[Theorem C]{Stratmann3}.
\end{itemize}
Moreover, using Proposition \ref{propositionuniformlyradialequivalent}, one can easily deduce some special cases of Theorem \ref{theoremfulldimension} from known results:
\begin{itemize}
\item The case where $X = \H^{d + 1}$, $G$ is a lattice, and $\{\xi_1,\ldots,\xi_\ell\}$ is a finite set of parabolic points can be deduced from \cite[Corollary 5.2]{Dani3}.
\item The case where $X = \H^{d + 1}$, $G$ is geometrically finite, and $\{\xi_1,\ldots,\xi_\ell\}$ is a finite set of parabolic points can be deduced from either \cite{FernandezMelian}, \cite{Stratmann1}, or \cite{BishopJones}.
\end{itemize}
Also note that some further results on winning properties of the set $\BA_\xi$ can be found in \cite{Aravinda}, \cite{AravindaLeuzinger}, \cite{Dani3}, \cite{McMullen_absolute_winning}, and \cite{MayedaMerrill}.

\begin{remark}
It is likely that the techniques used in \cite{MayedaMerrill} can be used to prove Theorem \ref{theoremfulldimension} in the case where $X$ is a proper geodesic hyperbolic metric space, $G$ is any strongly discrete group of general type, and $\{\xi_1,\ldots\}$ is a set of bounded parabolic points (see Subsection \ref{subsectionparabolic}). 
However, note that such a proof would depend on our Theorem \ref{theorembishopjones}; cf. Remark \ref{remarkmayedamerrill}.\comdavid{Should I say more? or less?}
\end{remark}

\subsection{The Jarn\'ik--Besicovitch theorem and its generalization}
For any irrational $x\in\R$, we can define the \emph{exponent of irrationality}
\[
\omega(x) := \limsup_{\substack{p/q\in\Q \\ q\to\infty}}\frac{-\log|x - p/q|}{\log(q)}\cdot
\]
Then Dirichlet's theorem (Theorem \ref{theoremdirichletclassical}) implies that $\omega(x)\geq 2$ for every irrational $x\in\R$. For each $c > 0$ let
\[
W_c := \{x\in\R\butnot\Q:\omega(x)\geq 2 + c\}.
\]
Furthermore, let
\begin{align*}
\VWA &:= \{x\in\R\butnot\Q:\omega(x) > 2\} = \bigcup_{c > 0}W_c\\
\Liou &:= \{x\in\R\butnot\Q:\omega(x) = \infty\} = \bigcap_{c > 0}W_c
\end{align*}
be the set of \emph{very well approximable} numbers and \emph{Liouville} numbers, respectively.
\begin{theorem}[Jarn\'ik 1929 and Besicovitch 1934]
\label{theoremjarnikbesicovitchclassical}
For every $c > 0$, we have
\[
\HD(W_c) = \frac{2}{2 + c}.
\]
In particular $\HD(\VWA) = 1$ and $\HD(\Liou) = 0$.
\end{theorem}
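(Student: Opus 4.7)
The plan is to prove the central equality $\HD(W_c) = 2/(2+c)$ by matching upper and lower bounds; the corollaries $\HD(\VWA) = 1$ and $\HD(\Liou) = 0$ then follow formally from $\VWA = \bigcup_{c > 0} W_c$ (sending $c \to 0^+$) and $\Liou \subseteq W_c$ for every $c$ (sending $c \to \infty$), via countable stability of Hausdorff dimension.

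For the upper bound, fix $N \in \N$; it suffices to bound $\HD(W_c \cap [-N,N])$. If $x \in W_c$ and $\varepsilon > 0$, the definition of $\omega(x) \geq 2+c$ supplies infinitely many reduced $p/q$ with $|x - p/q| \leq q^{-(2+c-\varepsilon)}$, so for every $Q \in \N$ the balls $B(p/q, q^{-(2+c-\varepsilon)})$ with $q \geq Q$ and $|p/q| \leq N + 1$ cover $W_c \cap [-N,N]$. The $s$-Hausdorff pre-measure is controlled by
\[
\sum_{q \geq Q} \bigl(2(N+1)q + 1\bigr) \cdot \bigl(2 q^{-(2+c-\varepsilon)}\bigr)^s \asymp_{N,s} \sum_{q \geq Q} q^{1 - s(2+c-\varepsilon)},
\]
which tends to $0$ as $Q \to \infty$ whenever $s > 2/(2+c-\varepsilon)$. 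Sending $\varepsilon \to 0$ and $N \to \infty$ gives $\HD(W_c) \leq 2/(2+c)$.

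For the lower bound, fix $\tau := 2 + c$ and any $s < 2/\tau$; I will build a Cantor-like subset $K \subseteq W_c$ with $\HD(K) \geq s$ via the mass distribution principle. Choose a rapidly increasing integer sequence $Q_0 < Q_1 < \cdots$ with $Q_{n+1} > Q_n^{2\tau}$. Inductively construct a nested family of closed intervals: at stage $n$, each $I \in \mathcal{F}_n$ is the $Q_n^{-\tau}$-neighborhood of some reduced fraction $p/q$ with $q \in [Q_n, 2Q_n]$. Inside each such $I$, select a maximal $3 Q_{n+1}^{-\tau}$-separated collection of reduced fractions $p'/q'$ with $q' \in [Q_{n+1}, 2Q_{n+1}]$ lying in $I$, and take the $Q_{n+1}^{-\tau}$-neighborhoods of those fractions as the children of $I$ in $\mathcal{F}_{n+1}$. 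A Farey-type count shows that the number of such children is $N_n \asymp Q_n^{-\tau} Q_{n+1}^{2}$. Setting $K := \bigcap_n \bigcup_{I \in \mathcal{F}_n} I$, every $x \in K$ lies in infinitely many balls $B(p/q, q^{-\tau})$ with $q \to \infty$, so $\omega(x) \geq \tau$ and hence $K \subseteq W_c$. Define $\mu$ by splitting mass equally among children at each stage, so $\mu(I) = \prod_{k < n} N_k^{-1}$ for $I \in \mathcal{F}_n$. For $r$ with $Q_{n+1}^{-\tau} \leq r \leq Q_n^{-\tau}$, the ball $B(x,r)$ meets $O\bigl(1 + r Q_{n+1}^{\tau}\bigr)$ children of the stage-$n$ interval containing $x$; a direct calculation then yields $\mu(B(x,r)) \lesssim r^s$ whenever $s\tau < 2$, and the mass distribution principle gives $\HD(K) \geq s$. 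Sending $s \nearrow 2/\tau$ completes the proof.

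The main obstacle is the counting-and-separation step in the Cantor construction: one must produce sufficiently many well-spaced reduced fractions with denominator in $[Q_{n+1}, 2Q_{n+1}]$ inside each parent interval, while keeping the children of distinct parents from overlapping. This is handled by the classical Farey gap estimate (distinct reduced $p_1/q_1, p_2/q_2$ satisfy $|p_1/q_1 - p_2/q_2| \geq 1/(q_1 q_2)$) together with the fast growth $Q_{n+1} > Q_n^{2\tau}$, which guarantees $|I| \cdot Q_{n+1}^{2} \gg Q_{n+1}^{\tau}$ and therefore an abundance of candidate denominators in each parent interval as well as automatic disjointness between children of different parents.
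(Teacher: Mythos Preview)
The paper does not prove this classical theorem directly; it recovers it (Remark~\ref{remarkSL2Zjarnik}) by specializing the general Theorem~\ref{theoremjarnikbesicovitch} and the bounded-parabolic computation (Theorem~\ref{theoremboundedparabolicjarnik}) to $X=\H^2$, $G=\SL_2(\Z)$, $\xi=\infty$. Your approach is the standard direct one and is genuinely different in method.

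Your upper bound is correct. The lower bound, however, has a real gap at the bottom scale $r \asymp Q_{n+1}^{-\tau}$. There $B(x,r)$ meets $O(1)$ children, so you need
\[
\mu(\text{child at stage }n{+}1) \;=\; \prod_{k=0}^{n} N_k^{-1} \;\lesssim\; Q_{n+1}^{-s\tau}.
\]
Telescoping $N_k \asymp Q_k^{-\tau}Q_{k+1}^{2}$ gives $\prod_{k\le n} N_k \asymp Q_{n+1}^{2}\prod_{k=1}^{n} Q_k^{\,2-\tau}$, and since $\tau>2$ the product $\prod Q_k^{\,2-\tau}$ is a genuine loss. With your growth condition $Q_{n+1}>Q_n^{2\tau}$ one has $\sum_{k\le n}\log Q_k \le \tfrac{1}{2\tau-1}\log Q_{n+1}$, and the inequality $\prod N_k \gtrsim Q_{n+1}^{s\tau}$ only holds for
\[
s \;\le\; \frac{3}{2\tau-1} \;<\; \frac{2}{\tau}\qquad(\tau>2),
\]
so the ``direct calculation'' you invoke does \emph{not} give $\mu(B(x,r))\lesssim r^s$ for all $s<2/\tau$. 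The fix is standard but must be said: let the growth exponent depend on $s$, e.g.\ $Q_{n+1}=Q_n^{M}$; then the same computation gives $s\le (2M-\tau)/\big(\tau(M-1)\big)\to 2/\tau$ as $M\to\infty$, so choosing $M=M(s)$ large enough recovers every $s<2/\tau$.

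A smaller slip: the bound ``$O(1+rQ_{n+1}^{\tau})$ children'' should read $O(1+rQ_{n+1}^{2})$, since the Farey gap gives separation $\ge 1/(4Q_{n+1}^{2})\gg Q_{n+1}^{-\tau}$; your explicit $3Q_{n+1}^{-\tau}$-separation is automatically satisfied by all the reduced fractions and is therefore not the operative spacing.
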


Let $(X,\dist,\zero,b,G)$ be as in \sectionsymbol\ref{standingassumptions}. For any point $\xi\in\del X$, we can analogously define
\begin{align*}
\omega_\xi(\eta) &:= \limsup_{\substack{g\in G \\ \dist(\zero,g(\zero))\to\infty}}\frac{-\log_b\Dist(g(\xi),\eta))}{\dist(\zero,g(\zero))}\\
W_{c,\xi} &:= \{\eta\in\Lr:\omega_\xi(\eta)\geq 1 + c\}\\
\VWA_\xi &:= \{\eta\in\Lr:\omega_\xi(\eta) > 1\} = \bigcup_{c > 0}W_{c,\xi}\\
\Liou_\xi &:= \{\eta\in\Lr:\omega_\xi(\eta) = \infty\} = \bigcap_{c > 0}W_{c,\xi}.
\end{align*}
Note that in the special case \eqref{SL2Z}, we have $W_{c,\infty} = W_{2c}$, $\VWA_\infty = \VWA$, and $\Liou_\infty = \Liou$.

To formulate our generalization of Theorem \ref{theoremjarnikbesicovitchclassical}, we need some preparation. Let $(X,\dist,\zero,b,G)$ be as in \sectionsymbol\ref{standingassumptions}, and fix a distinguished point $\xi\in\del X$. We recall that for $s \geq 0$ and $S\subset \del X$, the \emph{Hausdorff content} of $S$ in dimension $s$ is defined as
\begin{equation}
\label{Hausdorffcontentdef}
\HH_\infty^s(S) := \inf\left\{\sum_{A\in\CC}\Diam^s(A):\CC\text{ is a countable cover of $S$}\right\}.\Footnote{The subscript of $\infty$ is meant to indicate that the diameter of elements of the cover may be any number $\leq \infty$; more generally, if $\delta\in \OC 0\infty$ then $\HH_\delta^s(S)$ denotes the same infimum taken only over covers whose elements all have diameter $\leq\delta$.}
\end{equation}
Let $\delta$ be the Poincar\'e exponent of $G$, and fix $s \in [0,\delta]$. For each $r,\sigma > 0$ we define
\begin{align} \tag{\ref{Hxisigmadef}}
H_{\xi,\sigma,s}(r) &:= \HH_\infty^s(B(\xi,r)\cap \Lrsigma(G))\\ \tag{\ref{Pxidef}}
P_\xi(s) &:= \lim_{\sigma\to\infty}\liminf_{r\to 0}\frac{\log_b(H_{\xi,\sigma,s}(r))}{\log_b(r)}\cdot
\end{align}

\begin{reptheorem}{theoremjarnikbesicovitch}[Generalization of the Jarn\'ik--Besicovitch Theorem\Footnote{The fact that Theorem \ref{theoremjarnikbesicovitch} is truly a generalization of Theorem \ref{theoremjarnikbesicovitchclassical} is demonstrated in Remark \ref{remarkSL2Zjarnik} below.}]
Let $(X,\dist,\zero,b,G)$ be as in \sectionsymbol\ref{standingassumptions}, and fix a distinguished point $\xi\in\del X$. Let $\delta$ be the Poincar\'e exponent of $G$, and suppose that $\delta < \infty$. Then for each $c > 0$,
\begin{repequation}{jarnikbesicovitch}
\begin{split}
\HD(W_{c,\xi}) &= \sup\left\{s\in(0,\delta): P_\xi(s) \leq \frac{\delta - s}{c}\right\}\\
&= \inf\left\{s\in(0,\delta): P_\xi(s) \geq \frac{\delta - s}{c}\right\}
\leq \frac{\delta}{c + 1}\cdot
\end{split}
\end{repequation}
In particular
\begin{repequation}{HDVWAxi}
\HD(\VWA_\xi) = \sup\{s\in (0,\delta):P_\xi(s) < +\infty\} = \inf\left\{s\in(0,\delta): P_\xi(s) = +\infty\right\}
\end{repequation}
and $\HD(\Liou_\xi) = 0$. In these formulas we let $\sup\emptyset = 0$ and $\inf\emptyset = \delta$.
\end{reptheorem}

\begin{remark}
For the remainder of this subsection, we assume that $\delta < \infty$.
\end{remark}

\begin{corollary}
\label{corollaryHDVWA}
$\HD(\VWA_\xi) = \delta$ if and only if $P_\xi(s) < +\infty$ for all $s\in(0,\delta)$.
\end{corollary}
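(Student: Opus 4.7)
The corollary is an immediate consequence of equation \eqref{HDVWAxi} in Theorem \ref{theoremjarnikbesicovitch}, once one verifies the monotonicity of $s \mapsto P_\xi(s)$. My plan is therefore very short.

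First I would observe that for any bounded set $S \subset \del X$ (which $B(\xi,r) \cap \Lrsigma(G)$ certainly is, being contained in $B(\xi,r)$), the Hausdorff content $\HH_\infty^s(S)$ is non-increasing in $s$: if one rescales so that the diameters in the covers defining \eqref{Hausdorffcontentdef} are all at most $1$, then $\Diam^s(A)$ decreases as $s$ increases. Applying this to $S = B(\xi,r) \cap \Lrsigma(G)$ for $r$ smaller than some fixed constant, we conclude that $H_{\xi,\sigma,s}(r)$ is non-increasing in $s$. Since $\log_b(r) < 0$ for small $r$, dividing by $\log_b(r)$ reverses the inequality, so $s \mapsto \log_b(H_{\xi,\sigma,s}(r))/\log_b(r)$ is non-decreasing. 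Taking $\liminf_{r \to 0}$ and then $\lim_{\sigma \to \infty}$ preserves this, so $P_\xi$ is non-decreasing on $[0,\delta]$.

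Next I would deduce the two implications. The reverse implication is trivial: if $P_\xi(s) < +\infty$ for every $s \in (0,\delta)$, then $\{s \in (0,\delta) : P_\xi(s) < +\infty\} = (0,\delta)$, and its supremum is $\delta$, so by \eqref{HDVWAxi} we get $\HD(\VWA_\xi) = \delta$. For the forward implication, suppose $\HD(\VWA_\xi) = \delta$, so that by \eqref{HDVWAxi}
\[
\sup\{s \in (0,\delta) : P_\xi(s) < +\infty\} = \delta.
\]
Fix any $s_0 \in (0,\delta)$. By the supremum characterization, there exists $s \in (s_0,\delta)$ with $P_\xi(s) < +\infty$. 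By the monotonicity established above, $P_\xi(s_0) \leq P_\xi(s) < +\infty$. Since $s_0 \in (0,\delta)$ was arbitrary, we conclude $P_\xi(s) < +\infty$ for all $s \in (0,\delta)$.

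There is no real obstacle here; the only substantive content beyond Theorem \ref{theoremjarnikbesicovitch} is the monotonicity of $P_\xi$, which in turn reduces to the elementary monotonicity of Hausdorff content in $s$. The corollary could even be stated as a pair of inequalities for $P_\xi$ at individual values of $s$ using the first line of \eqref{HDVWAxi}, but the form given is the most natural way to package this consequence.
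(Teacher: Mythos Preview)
Your proof is correct and follows essentially the same approach the paper takes. The paper treats this corollary as an immediate consequence of \eqref{HDVWAxi} (note that per Convention 1 it is numbered 1.\#, meaning ``straightforward''), and the only nontrivial ingredient you need is the monotonicity of $P_\xi$, which the paper records separately as Proposition \ref{propositionPxifacts}. You could simply cite that proposition rather than re-deriving monotonicity from the definition of Hausdorff content, but your direct argument is also fine.
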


\begin{repcorollary}{corollaryjarnikbesicovitchcontinuous}
The function $c\mapsto \HD(W_{c,\xi})$ is continuous on $(0,\infty)$.
\end{repcorollary}

It is obvious from (\ref{jarnikbesicovitch}) that $\HD(W_{c,\xi})$ depends on the function $P_\xi$, which in turn depends on the geometry of the limit set of $G$ near the point $\xi$. We will therefore consider special cases for which the function $P_\xi$ can be computed explicitly.

\subsubsection{Special cases}\label{subsubsectionspecialcasesjarnik}
The simplest case for which the function $P_\xi$ can be computed explicitly is the case where $\xi$ is a radial limit point. Indeed, we have the following:

\begin{repexample}{exampleradialjarnik}
If $\xi\in\Lr$ then $P_\xi(s) = s$. Thus in this case (\ref{jarnikbesicovitch}) reduces to
\begin{repequation}{radialjarnik}
\HD(W_{c,\xi}) = \frac{\delta}{c + 1},
\end{repequation}
i.e. the inequality of (\ref{jarnikbesicovitch}) is an equality.
\end{repexample}
This example demonstrates that the expression $\HD(W_{c,\xi})$ is maximized when $\xi$ is a radial limit point. The intuitive reason for this is that there are a lot of ``good approximations'' to $\xi$, so by a sort of duality principle there are many points which are approximated very well by $\xi$.

We remark that if $G$ is of divergence type, then a point in $\Lr$ represents a ``typical'' point; see \cite[Theorem 1.4.1]{DSU}. Thus, generically speaking, the Hausdorff dimension of the set $W_{c,\xi}$ depends only on $c$ and $\delta$, and depends neither on the point $\xi$ nor the group $G$.

\begin{example}
\label{exampleSL2Zjarniknew}
Let $X = \H^2$ and let $G = \SL_2(\Z)$. Example \ref{exampleradialjarnik} shows that for every $\xi\in \Lr = \R\butnot\Q$,
\[
\HD(W_{c,\xi}) = \frac{1}{c + 1}.
\]
This does not follow from any known result.
\end{example}

Next we consider the case where $\xi$ is a bounded parabolic point (see Subsection \ref{subsectionparabolic}):

\begin{reptheorem}{theoremboundedparabolicjarnik}
Let $(X,\dist,\zero,b,G)$ be as in \sectionsymbol\ref{standingassumptions}, and let $\xi$ be a bounded parabolic point of $G$. Let $\delta$ and $\delta_\xi$ be the Poincar\'e exponents of $G$ and $G_\xi$, respectively. Then for all $s\in(0,\delta)$
\begin{repequation}{prevelanihill}
P_\xi(s) = \max(s,2s - 2\delta_\xi)
= \begin{cases}
s & s \geq 2\delta_\xi\\
2s - 2\delta_\xi & s\leq 2\delta_\xi
\end{cases}.
\end{repequation}
Thus for each $c > 0$
\begin{repequation}{velanihillgeneral}
\HD(W_{c,\xi}) = \begin{cases}
\delta/(c + 1) & c + 1 \geq \delta/(2\delta_\xi)\\
\displaystyle\frac{\delta + 2\delta_\xi c}{2c + 1} & c + 1\leq \delta/(2\delta_\xi)
\end{cases}.
\end{repequation}
\end{reptheorem}

\begin{remark}
\label{remarksurprising}
It is surprising that the formula \eqref{prevelanihill} depends only on the Poincar\'e exponent $\delta_\xi$. One would expect it to depend only on the orbital counting function of $G_\xi$ (see \eqref{orbitalcountingfunction}), but this function can be quite wild (see Appendix \ref{appendixorbitalcounting}), even sometimes failing to satisfy a doubling condition.
\end{remark}

\begin{remark}
\label{remarkvelaihill}
If $X = \H^{d + 1}$, then $\delta_\xi = k_\xi/2$, where $k_\xi$ is the rank of $\xi$. Thus \eqref{velanihillgeneral} reduces to
\begin{equation}
\label{velanihill}
\HD(W_{c,\xi}) = \begin{cases}
\delta/(c + 1) & c + 1 \geq \delta/k_\xi\\
\displaystyle\frac{\delta + k_\xi c}{2c + 1} & c + 1\leq \delta/k_\xi
\end{cases}
\end{equation}
in this case.
\end{remark}

\begin{remark}
\label{remarkreductionjarnik}
If $\delta_\xi\geq\delta/2$, then $c + 1 \geq \delta/(2\delta_\xi)$ for all $c > 0$, so \eqref{velanihillgeneral} reduces to \eqref{radialjarnik}. In particular, this happens if
\begin{itemize}
\item[(1)] $X$ is a real, complex, or quaternionic hyperbolic space, and $G$ is a lattice,
\item[(2)] $X$ is a uniform tree which contains no vertex of degree $\leq 2$, and $G$ is a lattice, or if
\item[(3)] $X = \H^2$.
\end{itemize}
\end{remark}
\begin{proof}
(1) is (i) of Proposition \ref{propositionreduction}; (2) is the fifth part of \cite[Proposition 3.1]{Broise-AlamichelPaulin}. To demonstrate (3), note that $\delta_\xi = k_\xi/2 = 1/2$ since 1 is the only possible rank of a parabolic point in a Fuchsian group. On the other hand, it is well known that $\delta\leq 1$ for a Fuchsian group (e.g. it follows from the Bishop--Jones theorem).
\end{proof}

\begin{remark}
\label{remarkSL2Zjarnik}
Theorem \ref{theoremjarnikbesicovitchclassical} follows from Remark \ref{remarkreductionjarnik} applied to the special case \eqref{SL2Z}.
\end{remark}

\begin{remark}
If $G$ is geometrically finite, then Theorem \ref{theoremboundedparabolicjarnik} and Example \ref{exampleradialjarnik} together cover all possible cases for $\xi$.
\end{remark}

Moving beyond the geometrically finite case, let us return to the idea of a duality principle between approximations of a point and the size of the set of points which are well approximable by that point. Another common notion of approximability is the notion of a \emph{horospherical} limit point, i.e. a point $\xi$ for which every horoball centered at $\xi$ (see Definition \ref{definitionhoroball}) is penetrated by the orbit of $\zero$.

\begin{repexample}{examplehorosphericaljarnik}
If $\xi$ is a horospherical limit point of $G$ then $P_\xi(s) \leq 2s$. Thus in this case we have
\[
\frac{\delta}{2c + 1}\leq \HD(W_{c,\xi}) \leq \frac{\delta}{c + 1}.
\]
More generally, let $\lb\cdot|\cdot\rb$ denote the Gromov product (see Definition \ref{definitiongromovproduct}). If we let
\begin{repequation}{betadef}
\beta = \beta(\xi) := \limsup_{\substack{g\in G \\ \dist(\zero,g(\zero))\to\infty}}\frac{\lb g(\zero)|\xi\rb_\zero}{\dist(\zero,g(\zero))} \ ,
\end{repequation}
then $P_\xi(s)\leq \beta^{-1}s$ and so
\[
\frac{\delta}{\beta^{-1} c + 1}\leq \HD(W_{c,\xi}) \leq \frac{\delta}{c + 1}.
\]
In particular, if $\beta > 0$ then $\HD(\VWA_\xi) = \delta$.
\end{repexample}

\begin{remark}
If $X = \B^{d + 1}$, then $\lb \xx|\yy\rb_\0 \asymp_\plus -\log(\|\yy - \xx\|)$ whenever at least one of $\xx,\yy$ is in $S^d$ (e.g. \cite[Lemma 3.5.1]{DSU}). Thus we may rewrite \eqref{betadef} as
\[
\beta(\xi) = \limsup_{\substack{g\in G \\ \|g(\0)\|\to 1}}\frac{\log\|\xi - g(\0)\|}{\log(1 - \|g(\0)\|)}\cdot
\]
From this equation, it can be seen that $\beta(\xi)$ is the supremum of $\alpha\in[0,1]$ for which $\xi\in\LL(\alpha)$, where $\LL(\alpha)$ is defined as in \cite{Lundh}.\comdavid{Is this remark too confusing?}
\end{remark}
In particular, notice that if there is a sequence $g_n(\0)\tendsto n \xi$ such that $1/\|\xi - g_n(\0)\|$ is bounded polynomially in terms of $1/(1 - \|g_n(\0)\|)$, then $\HD(\VWA_\xi) = \delta$. On the other hand,

\begin{repproposition}{propositionVWAHDzero}
There exists a (discrete) nonelementary Fuchsian group $G$ and a point $\xi\in\Lambda$ such that $\HD(\VWA_\xi) = 0$.
\end{repproposition}

\ \newline{\bf History of Theorem \ref{theoremjarnikbesicovitch}.}
Although Theorem \ref{theoremjarnikbesicovitch} has not been stated in the literature before (we are the first to define the function $P_\xi$), the equations \eqref{radialjarnik} and \eqref{velanihill} have both appeared in the literature. \eqref{radialjarnik} was proven\Footnote{Recall that \eqref{radialjarnik} holds whenever $\xi$ is either a radial limit point (e.g. a hyperbolic fixed point) or a (bounded) parabolic point satisfying $\delta_\xi\geq \delta/2$.}

\begin{itemize}
\item in the case where $X = \H^{d + 1}$, $G$ is a lattice, and $\xi$ is a parabolic fixed point by M. V. Meli\'an and D. Pestana in 1993 \cite[Theorems 2 and 3]{MelianPestana}.
\item in the case where $X = \H^{d + 1}$, $G$ is a lattice, and $\xi$ is a conventional point by S. L. Velani in 1994 \cite[Theorem 2]{Velani2} (case $d = 1$ earlier in 1993 \cite[Theorem 2]{Velani1}).
\item in the case where $X = \H^{d + 1}$, $G$ is convex-cocompact, and $\xi$ is a hyperbolic fixed point by M. M. Dodson, M. V. Meli\'an, D. Pestana, and S. L. Velani in 1995 \cite[Theorem 3]{DMPV}.
\item in the case where $X = \H^{d + 1}$, $G$ is geometrically finite and satisfies $\delta\leq k_{\max}$,\Footnote{Since $k_\xi = 2\delta_\xi$, the condition $\delta\leq k_{\max}$ is equivalent to the condition that $\delta_\xi\geq \delta/2$ for every parabolic point $\xi$, i.e. the necessary and sufficient condition for \eqref{velanihillgeneral} to reduce to \eqref{radialjarnik} (Remark \ref{remarkreductionjarnik}). When $\delta > k_{\max}$, Stratmann proves an upper and a lower bound for $\HD(W_{c,\xi})$, neither of which are equal to the correct value \eqref{velanihillgeneral}.} and $\xi$ is a parabolic fixed point by B. O. Stratmann in 1995 \cite[Theorem B]{Stratmann4}.
\end{itemize}
Additionally, the $\leq$ direction of \eqref{radialjarnik} was proven
\begin{itemize}
\item in the case where $X = \H^{d + 1}$, $G$ is geometrically finite, and $\xi$ is a conventional point by S. L. Velani in 1994 \cite[Theorem 1]{Velani2} (case $d = 1$ earlier in 1993 \cite[Theorem 1]{Velani1}).
\end{itemize}
Finally, \eqref{velanihill} has been proven in the literature only once before,
\begin{itemize}
 \item in the case where $X = \H^{d + 1}$, $G$ is geometrically finite, and $\xi$ is a parabolic fixed point by R. M. Hill and S. L. Velani in 1998 \cite[Theorem 1]{HillVelani}.\Footnote{To verify that \eqref{velanihillgeneral} is equal to the expression given in \cite{HillVelani}, note that $k_\xi = 2\delta_\xi$ for a parabolic point $\xi\in\del\H^{d + 1}$.}
\end{itemize}
\begin{remark}
Theorem \ref{theoremboundedparabolicjarnik} is more general than \cite[Theorem 1]{HillVelani} not only because it applies to spaces $X\neq\H^{d + 1}$, but also because it applies to geometrically infinite groups $G$ acting on $\H^{d + 1}$; $\xi$ must be a bounded parabolic point, but a group can have one bounded parabolic point without being geometrically finite.
\end{remark}

\subsection{Khinchin's theorem and its generalization}
We now consider metric Diophantine approximation, or the study of the Diophantine properties of a point chosen at random with respect to a given measure.\Footnote{In this paper, measures are assumed to be finite and Borel.} In this case, we would like to consider a function $\Psi:\N\to(0,\infty)$ and consider the points which are well or badly approximable with respect to $\Psi$. The classical definition and theorem are as follows:
\begin{definition}
An irrational point $x\in\R$ is \emph{$\Psi$-approximable} if there exist infinitely many $p/q\in\Q$ such that
\[
\left|x - \frac{p}{q}\right| \leq \Psi(q).
\]
\end{definition}

\begin{theorem}[Khinchin 1924]
\label{theoremkhinchinclassical}
Let $\Psi:\N\to(0,\infty)$ be any function.
\begin{itemize}
\item[(i)] If the series
\begin{equation}
\label{khinchinclassical}
\sum_{q = 1}^\infty q\Psi(q)
\end{equation}
converges, then almost no point is $\Psi$-approximable.
\item[(ii)] Suppose that the function $q\mapsto q^2\Psi(q)$ is nonincreasing.\Footnote{There are a number of results which allow this restriction to be weakened; see \cite{Bugeaud} for a detailed account.} If the series \eqref{khinchinclassical} diverges, then almost every point is $\Psi$-approximable.
\end{itemize}
In both cases the implicit measure is Lebesgue.
\end{theorem}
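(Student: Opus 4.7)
For (i), the plan is a direct application of the first Borel-Cantelli lemma. Restricting attention to a bounded window $[-N, N]$ and letting $A_q$ be the $\Psi(q)$-neighborhood of $\{p/q : p \in \Z,\ |p| \leq qN+1\}$, the obvious bound $|A_q| \leq 2(2qN+3)\Psi(q) \lesssim_N q\Psi(q)$ makes $\sum_q |A_q|$ convergent by hypothesis, and Borel-Cantelli delivers $|\limsup_q A_q| = 0$. Taking $N \to \infty$ concludes part (i).

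\textbf{Divergence direction, setup.} For (ii), the plan is to apply the Paley-Zygmund form of the second Borel-Cantelli lemma to the restricted sets
\[
A_q^* := \bigcup_{\substack{0 \leq p \leq q \\ \gcd(p,q) = 1}} (p/q - \Psi(q), p/q + \Psi(q)) \cap [0,1],
\]
where restricting to reduced fractions is essential for the quasi-independence estimate below. We may assume $q^2 \Psi(q) \leq 1/2$ for all sufficiently large $q$, since otherwise monotonicity of $q^2\Psi(q)$ forces $\Psi(q) \gtrsim 1/q^2$ for all $q$, and Dirichlet's theorem directly gives that every irrational is $\Psi$-approximable. Under this assumption the intervals in the union defining $A_q^*$ are pairwise disjoint, so $|A_q^*| = 2\phi(q)\Psi(q)$. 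A partial summation argument using the elementary estimate $\sum_{q \leq N}\phi(q) \asymp N^2$, together with the monotonicity of $q^2\Psi(q)$, then yields $\sum_q \phi(q)\Psi(q) \asymp \sum_q q\Psi(q) = \infty$.

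\textbf{Quasi-independence and zero-one step.} The core technical input is the quasi-independence bound $|A_p^* \cap A_q^*| \lesssim |A_p^*| \cdot |A_q^*|$ for $p \neq q$. This exploits the fact that distinct reduced fractions $a/p \neq b/q$ satisfy $|a/p - b/q| \geq 1/(pq)$, so each component interval of $A_p^*$ can overlap only boundedly many component intervals of $A_q^*$; the counting that converts this into the asserted product bound is where the monotonicity of $q^2\Psi(q)$ enters a second time. Feeding this into Paley-Zygmund gives $|\limsup_q A_q^*| > 0$ on $[0,1]$.

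\textbf{Upgrading to full measure, and main obstacle.} To promote positive measure to full measure, the plan is to rerun the Paley-Zygmund argument on each subinterval $I \subset [0,1]$: because the rationals equidistribute, the implicit constants are uniform in $I$, yielding $|\limsup_q A_q^* \cap I| \gtrsim |I|$, which forces full measure by the Lebesgue density theorem (alternatively, one may invoke ergodicity of the Gauss map, which preserves the approximability set). Translation invariance then extends the result from $[0,1]$ to all of $\R$. I expect the quasi-independence estimate in the third paragraph to be the main obstacle, as it is the one step where the monotonicity hypothesis is genuinely needed; removing it would push the problem toward the considerably deeper Duffin-Schaeffer regime.
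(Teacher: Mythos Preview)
Your outline is the standard classical proof and is essentially correct; the quasi-independence step you flag is indeed the crux, and with the monotonicity of $q^2\Psi(q)$ it goes through as in Sprind\v{z}uk or Harman.

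However, this is not how the paper handles the statement. The paper does not prove Theorem~\ref{theoremkhinchinclassical} directly at all: it is stated as a classical result, and later (Example~\ref{exampleSL2Zkhinchin}) the difficult direction (ii) is recovered as the special case $X=\H^2$, $G=\SL_2(\Z)$, $\xi=\infty$ of the paper's general Theorem~\ref{theoremkhinchin}. The convergence direction (i) is explicitly \emph{not} deduced from the general theorem (see the footnote to Example~\ref{exampleSL2Zkhinchin}), since the hypotheses there are weaker and the two notions of approximability do not quite match.

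The methods are therefore genuinely different. Your divergence argument is arithmetic: restrict to reduced fractions, use the spacing $|a/p-b/q|\geq 1/(pq)$ for quasi-independence, and apply Paley--Zygmund. The paper's divergence argument for Theorem~\ref{theoremkhinchin} is geometric: assuming $\mu(\WA_{\Phi,\xi})=0$, it builds a tree of nested shadows in $\del X$, classifies orbit points into three types according to how they sit relative to this tree, and uses Sullivan's Shadow Lemma together with the Intersecting Shadows Lemma to bound $\sum_x \mu(B_x)$, contradicting divergence of~\eqref{khinchindiverges}. Your approach is shorter and self-contained for the classical statement; the paper's approach buys generality to arbitrary Gromov hyperbolic spaces and arbitrary $\xi\in\del X$, where there is no arithmetic structure to exploit.
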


Again, we would like to consider a generalization of this theorem to the setting of \sectionsymbol\ref{standingassumptions}. We will replace Lebesgue measure in the above theorem by an ergodic $\delta$-quasiconformal measure (see Definition \ref{definitionquasiconformal}), where $\delta$ is the Poincar\'e exponent of $G$. Sufficient conditions for the existence and uniqueness of such a measure are given in Subsection \ref{subsectionquasiconformal}. Thus, we will assume that we have such a measure, and that we want to determine the Diophantine behavior of almost every point with respect to this measure.


We will now define our generalization of the notion of a $\Psi$-approximable point:

\begin{definition}
Let $(X,\dist,\zero,b,G)$ be as in \sectionsymbol\ref{standingassumptions}, and fix a distinguished point $\xi\in\del X$. Let $\Phi:\Rplus\to(0,\infty)$ be a function such that the function $t\mapsto t\Phi(t)$ is nonincreasing.\Footnote{Although this is not the weakest assumption which we could have used here, it has the advantage of avoiding technicalities about what regularity hypotheses on $\Phi$ are necessary for which statements; this hypothesis suffices for every statement we will consider. We have added the hypothesis to the definition itself because if a weaker hypothesis is used, then the equation \eqref{Phixiwellapproximable} may need to be modified.} We say that a point $\eta\in \Lambda(G)$ is \emph{$\Phi,\xi$-well approximable} if for every $K > 0$ there exists $g\in G$ such that
\begin{equation}
\label{Phixiwellapproximable}
\Dist(g(\xi),\eta) \leq \Phi(K b^{\dist(\zero,g(\zero))}).
\end{equation}
We denote the set of all $\Phi,\xi$-well approximable points by $\WA_{\Phi,\xi}$, and its complement by $\BA_{\Phi,\xi}$. 
\end{definition}

Note that our definition differs from the classical one in that a factor of $K$ has been added.\Footnote{The reason that we have called points satisfying our definition $\Phi,\xi$-\emph{well} approximable rather than $\Phi,\xi$-approximable is to indicate this distinction.} This is in keeping with the philosophy of hyperbolic metric spaces, since most quantities are considered to be defined only ``up to a constant''. Furthermore, our definition is independent of the chosen fixed point $\zero$. The relation between our definition and the classical one is as follows:

\begin{observation}
\label{observationPsiPhiapproximable}
Let $\Phi:\Rplus\to(0,\infty)$ be a function such that the function $t\mapsto t\Phi(t)$ is nonincreasing. In the case \eqref{SL2Z}, a point $x\in\R = \Lambda(G)\butnot\{\infty\}$ is $\Phi,\infty$-well approximable if and only if for every $K > 0$ it is $\Psi$-approximable where $\Psi(q) = \Phi(K q^2)$.
\end{observation}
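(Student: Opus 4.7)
The plan is to treat the observation as a translation exercise between two frameworks, using the three facts about the special case \eqref{SL2Z} that have already been assembled: (i) the orbit $G(\infty) = \what{\Q}$ parametrizes the rationals, (ii) for $g \in G$ chosen to minimize $\dist(\zero,g(\zero))$ among elements sending $\infty$ to a fixed $p/q \in \Q$, we have $e^{\dist(\zero,g(\zero))} \asymp_{\times,K_0} q^2$ whenever $|p/q| \leq K_0$ (Observation \ref{observationSL2Z}), and (iii) $\Dist$ and $|\cdot|$ are bi-Lipschitz equivalent on compact subsets of $\R$ (Observation \ref{observationSL2Z two}). Up to multiplicative factors, the inequality $\Dist(g(\infty),x) \leq \Phi(K e^{\dist(\zero,g(\zero))})$ is therefore the same as $|x - p/q| \leq \Phi(K' q^2)$ for $K' \asymp_\times K$. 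The crucial use of the hypothesis that $t \mapsto t\Phi(t)$ is nonincreasing is that for any constants $a \geq 1$ and $c \geq 0$, one has $a\Phi(ct) \leq \Phi(ct/a)$ for all $t > 0$; this lets us absorb any fixed multiplicative constant into the parameter $K$, which in the definitions on both sides ranges over all positive reals.

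For the forward direction, suppose $x \in \R \setminus \Q$ is $\Phi,\infty$-well approximable (the rational case is trivial); fix $K > 0$ and choose a sequence $K_n \to \infty$. For each $n$ let $g_n \in G$ be as in the definition for $K_n$, minimized in its $G_\infty$-coset, and write $g_n(\infty) = p_n/q_n$ in reduced form. I first show that $q_n \to \infty$: if $q_n$ stayed bounded along some subsequence, then the rationals $p_n/q_n$ would take only finitely many values, while by (ii) the bound $\Phi(K_n e^{\dist(\zero,g_n(\zero))}) \to 0$ (since $t\Phi(t) = O(1)$ forces $\Phi(t) \to 0$) would drive $\Dist(g_n(\infty),x) \to 0$, forcing $x$ to be rational. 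Once $q_n \to \infty$, the $p_n/q_n$ are eventually distinct and bounded (since $p_n/q_n \to x$), and combining (ii), (iii) with the monotonicity of $t\Phi(t)$ yields $|x - p_n/q_n| \leq \Phi(K q_n^2)$ for all $n$ large enough that $K_n$ dominates the relevant constants. This furnishes infinitely many $\Psi$-approximations with $\Psi(q) = \Phi(Kq^2)$.

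For the backward direction, suppose $x$ is $\Psi$-approximable for every $\Psi$ of the form $q \mapsto \Phi(K'q^2)$. Given $K > 0$, apply this with $K'$ large enough to dominate the product of the two multiplicative constants from (ii) and (iii), and extract a single rational $p/q$ with $|x - p/q| \leq \Phi(K'q^2)$; pick $g \in G$ with $g(\infty) = p/q$ minimizing $\dist(\zero,g(\zero))$. Then (ii) gives $e^{\dist(\zero,g(\zero))} \leq C q^2$ and (iii) gives $\Dist(g(\infty),x) \lesssim_\times |x - p/q|$, and one final application of the monotonicity of $t\Phi(t)$ rearranges this to $\Dist(g(\infty),x) \leq \Phi(K e^{\dist(\zero,g(\zero))})$, as required.

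The only real obstacle is bookkeeping the multiplicative constants through both directions so that the universal quantifier "for every $K > 0$" on each side exactly matches that on the other; the hypothesis on $t\Phi(t)$ is precisely what makes this flexibility possible, and the forward direction requires the additional trick of sending $K_n \to \infty$ to extract infinitely many distinct approximations from a definition that only guarantees a single $g$ per value of $K$.
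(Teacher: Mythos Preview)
Your argument is correct and follows exactly the route the paper indicates: the paper's own ``proof'' is the single sentence ``Of course, this observation follows from Observations \ref{observationSL2Z} -- \ref{observationSL2Z three},'' and you have simply supplied the bookkeeping that makes that sentence an honest proof. In particular, your use of the monotonicity of $t\mapsto t\Phi(t)$ to absorb multiplicative constants into the parameter $K$, and your trick of sending $K_n\to\infty$ to upgrade ``one $g$ per $K$'' to ``infinitely many distinct rationals,'' are precisely the details the paper is leaving implicit.
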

\noindent Of course, this observation follows from Observations \ref{observationSL2Z} - \ref{observationSL2Z three}.

\begin{reptheorem}{theoremkhinchin}[Generalization of Khinchin's Theorem\Footnote{The fact that Theorem \ref{theoremkhinchin} is truly a generalization of Theorem \ref{theoremkhinchinclassical} is proven in Example \ref{exampleSL2Zkhinchin} below.}]
Let $(X,\dist,\zero,b,G)$ be as in \sectionsymbol\ref{standingassumptions}, and fix a distinguished point $\xi\in\del X$. Let $\delta$ be the Poincar\'e exponent of $G$, and suppose that $\mu\in\MM(\Lambda)$ is an ergodic $\delta$-quasiconformal probability measure satisfying $\mu(\xi) = 0$. Let
\[
\Delta_{\mu,\xi}(r) = \mu(B(\xi,r)).
\]
Let $\Phi:\Rplus\to(0,\infty)$ be a function such that the function $t\mapsto t\Phi(t)$ is nonincreasing. Then:
\begin{itemize}
\item[(i)] If the series
\begin{repequation}{khinchindiverges}
\sum_{g\in G}b^{-\delta\dist(\zero,g(\zero))} \Delta_{\mu,\xi} \left(b^{\dist(\zero,g(\zero))} \Phi(K b^{\dist(\zero,g(\zero))})\right)
\end{repequation}
diverges for all $K > 0$, then $\mu(\WA_{\Phi,\xi}) = 1$.
\item[(iiA)] If the series
\begin{repequation}{khinchinconverges}
\sum_{g\in G}(g'(\xi))^\delta \Delta_{\mu,\xi} \left(\Phi(K b^{\dist(\zero,g(\zero))})/g'(\xi)\right)
\end{repequation}
converges for some $K > 0$, then $\mu(\WA_{\Phi,\xi}) = 0$.
\item[(iiB)] If $\xi$ is a bounded parabolic point of $G$ (see Definition \ref{definitionboundedparabolic}), and if the series \eqref{khinchindiverges} converges for some $K > 0$, then $\mu(\WA_{\Phi,\xi}) = 0$.
\end{itemize}
\end{reptheorem}
\begin{remark}
The fact that $\mu(\WA_{\Phi,\xi})$ is either $0$ or $1$ is an easy consequence of the assumption that $\mu$ is ergodic, since $\WA_{\Phi,\xi}$ is a measurable $G$-invariant set.
\end{remark}
\begin{remark}
(iiB) is not a special case of (iiA) as demonstrated by Proposition \ref{propositionparabolic43} below. In particular, the proof of (iiB) depends crucially on the fact that if $\xi$ is a parabolic point, then the map $g\mapsto g(\xi)$ is highly non-injective, which reduces the number of potential approximations to consider. In general the map $g\mapsto g(\xi)$ is usually injective, so this reduction cannot be used. Thus, it seems unlikely that the converse of (i) of Theorem \ref{theoremkhinchin} should hold in general.
\end{remark}

\subsubsection{Special cases} \label{subsubsectionspecialcaseskhinchin}
When considering special cases of Theorem \ref{theoremkhinchin}, we will consider the following three questions:
\begin{itemize}
\item[1.] Under what hypotheses are the series \eqref{khinchindiverges} and \eqref{khinchinconverges} asymptotic? That is, when do parts (i) and (ii) of Theorem \ref{theoremkhinchin} give a value for $\mu(\WA_{\Phi,\xi})$ for every possible function $\Phi$?
\item[2.] Under what hypotheses is it possible to simplify (asymptotically) the series \eqref{khinchindiverges}?
\item[3.] Under what hypotheses can we prove that $\mu(\BA_\xi) = 0$ and/or $\mu(\VWA_\xi) = 0$?
\end{itemize}
To answer these questions we consider the following asymptotic formulas:
\begin{align} \tag{\ref{Deltamuxiasymp}}
\Delta_{\mu,\xi}(r) &\asymp_\times r^\delta \all r > 0 \\ \tag{\ref{orbitalcountingfunctionasymp}}
f_G(t) := \#\{g\in G:\dist(\zero,g(\zero))\leq t\} &\asymp_\times b^{\delta t} \all t > 0.\footnotemark{}
\end{align}\footnotetext{We remark that if either formula holds with $\delta$ replaced by some $s > 0$, then $s = \delta$.}
These asymptotics will not be satisfied for every group $G$ and every point $\xi$, but we will see below that there are many reasonable hypotheses which ensure that they hold. When the asymptotics are satisfied, we have the following:

\begin{repproposition}{propositionkhinchinpowerlaw}[Reduction of \eqref{khinchindiverges} and \eqref{khinchinconverges} assuming power law asymptotics]
If \eqref{orbitalcountingfunctionasymp} holds then for each $K > 0$
\begin{repequation}{orbitalcountingfunctionasympimplies}
\eqref{khinchindiverges} \asymp_{\plus,\times} \int_{t = 0}^\infty \Delta_{\mu,\xi}(K^{-1} e^t \Phi(e^t))\dee t.
\end{repequation}
If \eqref{Deltamuxiasymp} holds then for each $K > 0$
\begin{repequation}{Deltamuxiasympimplies}
\eqref{khinchindiverges} \asymp_\times \eqref{khinchinconverges} \asymp_\times \sum_{g\in G}\Phi^\delta(K b^{\dist(\zero,g(\zero))}).
\end{repequation}
If both \eqref{orbitalcountingfunctionasymp} and \eqref{Deltamuxiasymp} hold then for each $K > 0$
\begin{repequation}{perfectkhinchin}
\eqref{khinchindiverges} \asymp_\times \eqref{khinchinconverges} \asymp_{\plus,\times} \int_{t = 0}^\infty e^{\delta t} \Phi^\delta(e^t) \dee t,
\end{repequation}
and so by Theorem \ref{theoremkhinchin}
\begin{repequation}{perfectkhinchin2}
\mu(\WA_{\Phi,\xi}) = 0 \;\;\Leftrightarrow\;\; \int_{t = 0}^\infty e^{\delta t} \Phi^\delta(e^t) \dee t < \infty.
\end{repequation}
\end{repproposition}
Note in particular that the last integral depends only on the Poincar\'e exponent $\delta$, and on no other geometric information.

\begin{repcorollary}{corollarykhinchinpowerlaw}[Measures of $\BA_\xi$ and $\VWA_\xi$ assuming power law asymptotics]
Assume $\xi\in\Lambda$. If \eqref{Deltamuxiasymp} holds, then $\mu(\VWA_\xi) = 0$, and $\mu(\BA_\xi) = 0$ if and only if $G$ is of divergence type. If \eqref{orbitalcountingfunctionasymp} holds, then $\mu(\BA_\xi) = 0$. If both hold, then $\mu(\BA_\xi) = \mu(\VWA_\xi) = 0$.
\end{repcorollary}

\begin{remark}
\label{remarknonlimitpoint}
The hypothesis $\xi\in\Lambda$ is necessary in Corollary \ref{corollarykhinchinpowerlaw}. Indeed, if $\xi\in\del X\butnot\Lambda$, then $\BA_\xi = \Lr$ (see Proposition \ref{propositionnonlimitpoint}).
\end{remark}

\subsubsection{Verifying the hypotheses \eqref{Deltamuxiasymp} and \eqref{orbitalcountingfunctionasymp}}
\label{subsubsectionverifying}
Let us now consider (more or less) concrete examples of pairs $(G,\xi)$ for which \eqref{Deltamuxiasymp} and/or \eqref{orbitalcountingfunctionasymp} are satisfied. As for the Jarn\'ik--Besicovitch theorem, the simplest case is when $\xi$ is a uniformly radial point, in which case Sullivan's Shadow Lemma (Lemma \ref{Sullivan's Shadow Lemma}) yields the following:

\begin{repexample}{exampleuniformlyradialkhinchin}
If $\xi$ is a uniformly radial limit point, then \eqref{Deltamuxiasymp} holds.
\end{repexample}

Let us call a group $G$ \emph{perfect} if \eqref{Deltamuxiasymp} and \eqref{orbitalcountingfunctionasymp} (and thus \eqref{perfectkhinchin2}) are satisfied for all $\xi\in\Lambda_G$.

\begin{repexample}{examplequasiconvexcocompact}[Quasiconvex-cocompact group]
Let $X$ be a proper and geodesic hyperbolic metric space, and let $G\leq\Isom(X)$. The group $G$ is called \emph{quasiconvex-cocompact} if the set $G(\zero)$ is \emph{quasiconvex}, i.e. there exists $\rho > 0$ such that for all $x,y\in G(\zero)$,
\begin{repequation}{quasiconvexcocompact}
\geo xy \subset B(G(\zero),\rho).
\end{repequation}
Any quasiconvex-cocompact group is perfect.
\end{repexample}

\begin{example}[Lattice on a real, complex, or quaternionic hyperbolic space]
\label{examplekhinchinlattice1}
Let $X$ be a real, complex, or quaternionic hyperbolic space, and let $G\leq\Isom(X)$ be a lattice. Then $G$ is perfect.
\end{example}
\begin{proof}
\eqref{Deltamuxiasymp} follows from (ii) of Proposition \ref{propositionreduction}; \eqref{orbitalcountingfunctionasymp} is a special case of Example \ref{examplepinched} below.
\end{proof}

\begin{example}[The classical Khinchin theorem]
\label{exampleSL2Zkhinchin}
Since $\SL_2(\Z)$ is a lattice, Example \ref{examplekhinchinlattice1} implies that $\SL_2(\Z)$ is perfect. Moreover, using the substitution $t = \log(q^2)$ and the value $\delta = 1$, we have
\begin{equation}
\label{qPhiq2}
\int_{t = 0}^\infty e^{\delta t} \Phi^\delta(e^t)\dee t
= \int_{t = 0}^\infty e^t \Phi(e^t)\dee t
= 2\int_{q = 1}^\infty q^2 \Phi(q^2)\frac{\dee q}{q}
\asymp_{\plus,\times} \sum_{q = 1}^\infty q\Phi(q^2).
\end{equation}
Thus if we let $\Psi(q) = \Phi(q^2)$, then the series \eqref{khinchindiverges}, \eqref{khinchinconverges}, and \eqref{khinchinclassical} are all $\plus,\times$-asymptotic. Thus, by applying Observation \ref{observationPsiPhiapproximable}, we are able to deduce the difficult direction (ii) of Theorem \ref{theoremkhinchinclassical} from Theorem \ref{theoremkhinchin} .\Footnote{(i) of Theorem \ref{theoremkhinchinclassical} is not deduced from Theorem \ref{theoremkhinchin} for two trivial reasons: first of all, the hypotheses of (i) Theorem \ref{theoremkhinchinclassical} are weaker; second of all, $\Phi,\infty$-well-approximability implies $\Psi$-approximability but not vice-versa.}
\end{example}

\begin{example}
\label{exampleSL2Zkhinchinnew}
Let $X = \H^2$ and let $G = \SL_2(\Z)$. Theorem \ref{theoremkhinchin} shows that for every $\xi\in\del X = \what\R$ and for every $\Phi:\Rplus\to(0,\infty)$ such that $t\mapsto t\Phi(t)$ is nonincreasing, $\WA_{\Phi,\xi}$ is either a Lebesgue null set or of full Lebesgue measure according to whether the series \eqref{khinchinclassical} converges or diverges. This result is new except when $\xi$ is rational.
\end{example}

Our last example of a perfect group is a little more exotic:
\begin{example}[Lattice on a uniform tree]
\label{examplekhinchinlattice2}
Let $X$ be a uniform tree which contains no vertex of degree $\leq 2$. Then if $G\leq\Isom(X)$ is a lattice, then $G$ is perfect.
\end{example}
\begin{proof}
The estimate \eqref{orbitalcountingfunctionasymp} is \cite[Theorem 2.1(2)]{HP_Khinchin_trees}. To demonstrate \eqref{Deltamuxiasymp}, let $H$ be a discrete cocompact subgroup of $\Isom(X)$. By Example \ref{examplequasiconvexcocompact}, the Patterson--Sullivan measure of $H$ satisfies \eqref{Deltamuxiasymp}, so it remains to show that the Patterson--Sullivan measure of $H$ is also $\delta$-conformal with respect to $G$. But by \cite[Theorem 6.3]{HersonskyHubbard}, the Patterson--Sullivan measure of $H$ is a constant multiple of the $\delta$-dimensional Hausdorff measure on $\del X$, and it is well known (e.g. \cite[p. 174]{Sullivan_density_at_infinity}) that this is $\delta$-conformal for any group.
\end{proof}

Note that for each $n\geq 3$, the unique tree such that each vertex has degree $n$ is a uniform tree.

In our next example, \eqref{orbitalcountingfunctionasymp} is satisfied, but \eqref{Deltamuxiasymp} is usually not satisfied. Of course, \eqref{Deltamuxiasymp} is still satisfied if $\xi$ is a uniformly radial point, so in this case we have \eqref{perfectkhinchin2}. Moreover, by Corollary \ref{corollarykhinchinpowerlaw} we have $\mu(\BA_\xi) = 0$ for all $\xi\in\Lambda_G$.

\begin{example}[Geometrically finite group on a pinched Hadamard manifold]
\label{examplepinched}
Let $X$ be a pinched Hadamard manifold, and let $G\leq\Isom(X)$ be geometrically finite (meaning that $\Lambda = \Lr\cup\Lbp$). Suppose that one of the following conditions holds:
\begin{itemize}
\item[(A)] $X$ is a real, complex, or quaternionic hyperbolic space.
\item[(B)] For each $\xi\in\Lbp$, the orbital counting function $f_\xi$ (see \eqref{orbitalcountingfunction}) of $G_\xi$ with respect to $\Dist_\euc$ satisfies a \emph{power law} i.e.
\begin{equation}
\label{powerlaw}
f_\xi(R) \asymp_\times R^{2\delta_\xi},
\end{equation}
where $\delta_\xi$ is the Poincar\'e exponent of the parabolic subgroup $G_\xi$.
\item[(C)] For each $\xi\in\Lbp$, $G_\xi$ is of divergence type.
\item[(D)] For each $\xi\in\Lbp$, $\delta_\xi < \delta$.
\end{itemize}
Then \eqref{orbitalcountingfunctionasymp} is satisfied. In particular, if \textup{(B)} holds, then for each $\xi\in\Lbp$ we have
\begin{equation}
\label{pinched}
\mu(\WA_{\Phi,\xi}) = 0 \;\;\Leftrightarrow\;\; \int_{t = 0}^\infty (e^t\Phi(e^t))^{2(\delta - \delta_\xi)}\dee t < \infty
\end{equation}
for every function $\Phi:\Rplus\to(0,\infty)$ such that $t\mapsto t\Phi(t)$ is nonincreasing.
\end{example}
\begin{proof}
The implication (D) \implies \eqref{orbitalcountingfunctionasymp} was proven by S. D. Hersonsky and F. Paulin \cite[Lemma 3.6]{HP_Khinchin} using a result of T. Roblin \cite[Corollaire 2 of Th\'eor\`eme 4.1.1]{Roblin1}. We claim that (A) \implies (B) \implies (C) \implies (D). Indeed, it is obvious that (B) \implies (C). For (A) \implies (B), see e.g. \cite[Lemma 3.5]{Newberger}. Finally, (C) \implies (D) is \cite[Proposition 2]{DOP}.

To demonstrate \eqref{pinched}, note that since \eqref{orbitalcountingfunctionasymp} holds, by Proposition \ref{propositionkhinchinpowerlaw} we have
\begin{align*}
\eqref{khinchindiverges} &\asymp_{\plus,\times} \int_{t = 0}^\infty \Delta_{\mu,\xi}(e^t \Phi(e^t))\dee t\\
&\asymp_{\times\phantom{,\plus}}  \int_{t = 0}^\infty (e^t\Phi(e^t))^{2(\delta - \delta_\xi)}\dee t. \by{\cite[Th\'eor\`eme 3.2]{Schapira}}
\end{align*}
We remark that in the last step, we used the hypothesis (B) to apply B. Schapira's Global Measure Formula for pinched Hadamard manifolds.

Now since $\xi$ is assumed to be a bounded parabolic point, Theorem \ref{theoremkhinchin} (parts (i) and (iiB)) tells us that $\mu(\WA_{\Phi,\xi}) = 0$ or $1$ according to whether the series \eqref{khinchindiverges} converges or diverges, respectively.
\end{proof}

\begin{remark}
The implication (A) \implies \eqref{orbitalcountingfunctionasymp} was proven in the case of real hyperbolic space by B. O. Stratmann and S. L. Velani \cite[Corollary 4.2]{StratmannVelani}. The idea of their proof, combined with the more general global measure formulas found in \cite{Newberger} and \cite{Schapira}, can be used to prove that (A) \implies \eqref{orbitalcountingfunctionasymp} (in the general case) and that (B) \implies \eqref{orbitalcountingfunctionasymp}, respectively.
\end{remark}

\begin{remark}
\label{remarkreductionkhinchin}
If $\delta_\xi = \delta/2$, then \eqref{pinched} reduces to \eqref{perfectkhinchin2} (cf. Remark \ref{remarkreductionjarnik}). In particular this happens if $X$ is a real, complex, or quaternionic hyperbolic space and $G$ is a lattice.
\end{remark}
\begin{proof}
See (i) of Proposition \ref{propositionreduction}.
\end{proof}

\subsubsection{A remark on $\mu(\VWA_\xi)$ in the above example}
In Example \ref{examplepinched}, since \eqref{orbitalcountingfunctionasymp} is satisfied but not \eqref{Deltamuxiasymp}, Corollary \ref{corollarykhinchinpowerlaw} says that $\mu(\BA_\xi) = 0$ but gives no information on $\mu(\VWA_\xi)$ (unless $\xi$ is a uniformly radial point). 

\begin{proposition}
Let $X = \H^{d + 1}$ for some $d\in\N$, and let $G\leq\Isom(X)$ be geometrically finite. Then $\mu(\VWA_\xi) = 0$.
\end{proposition}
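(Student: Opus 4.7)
The plan is to handle three cases based on the location of $\xi$, applying the convergence direction of the generalized Khinchin theorem in each nontrivial case. If $\xi\notin\Lambda$, then Remark \ref{remarknonlimitpoint} gives $\BA_\xi=\Lr$, which forces $\omega_\xi(\eta)\leq 1$ for every $\eta\in\Lr$ and hence $\VWA_\xi=\emptyset$. If $\xi\in\Lr$, the geometric finiteness of $G$ implies that $\Lr=\Lur$ (radial limit points have bounded excursions into the cusps of $X/G$), so $\xi$ is a uniformly radial point; by Example \ref{exampleuniformlyradialkhinchin} the asymptotic \eqref{Deltamuxiasymp} holds, and Corollary \ref{corollarykhinchinpowerlaw} immediately gives $\mu(\VWA_\xi)=0$. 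Since $G$ is geometrically finite in $\H^{d+1}$, $\Lambda=\Lr\cup\Lbp$, so the only remaining case is $\xi\in\Lbp$.

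For $\xi\in\Lbp$, I would show $\mu(W_{c,\xi})=0$ for every $c>0$; then $\mu(\VWA_\xi)=\mu\bigl(\bigcup_n W_{1/n,\xi}\bigr)=0$ by countable subadditivity. Fix $c>0$ and set $\Phi(t)=t^{-(1+c/2)}$; then $t\Phi(t)=t^{-c/2}$ is nonincreasing, and one checks directly that $W_{c,\xi}\subset\WA_{\Phi,\xi}$: if $\omega_\xi(\eta)\geq 1+c$, there are infinitely many $g\in G$ with $\Dist(g(\xi),\eta)\leq b^{-(1+3c/4)\dist(\zero,g(\zero))}$, and for those $g$ with sufficiently large displacement (depending on $K$) the right-hand side is at most $\Phi(Kb^{\dist(\zero,g(\zero))})=K^{-(1+c/2)}b^{-(1+c/2)\dist(\zero,g(\zero))}$. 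It therefore suffices to establish $\mu(\WA_{\Phi,\xi})=0$.

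By Example \ref{examplepinched} the orbital counting asymptotic \eqref{orbitalcountingfunctionasymp} holds, and by the Stratmann-Velani Global Measure Formula one has $\Delta_{\mu,\xi}(r)\asymp_\times r^{2(\delta-\delta_\xi)}$ at small scales, where Beardon's inequality guarantees $\delta>\delta_\xi$. Applying Proposition \ref{propositionkhinchinpowerlaw},
\[
\eqref{khinchindiverges}\asymp_{\plus,\times}\int_0^\infty\Delta_{\mu,\xi}(K^{-1}e^t\Phi(e^t))\,dt=\int_0^\infty\Delta_{\mu,\xi}(K^{-1}e^{-ct/2})\,dt\asymp_\times\int_0^\infty e^{-ct(\delta-\delta_\xi)}\,dt<+\infty.
\]
Part (iiB) of Theorem \ref{theoremkhinchin}, applicable precisely because $\xi$ is a bounded parabolic point, then yields $\mu(\WA_{\Phi,\xi})=0$, completing the argument. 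The main technical obstacle is invoking the Global Measure Formula correctly and pinning down the containment $W_{c,\xi}\subset\WA_{\Phi,\xi}$; the exponent $1+c/2$ (rather than $1+c$) is chosen expressly to absorb the multiplicative constant $K$ as $\dist(\zero,g(\zero))\to\infty$, and the case split $\Lambda=\Lr\cup\Lbp$ must genuinely use geometric finiteness since for infinitely generated groups the equality $\Lr=\Lur$ can fail.
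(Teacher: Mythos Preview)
Your argument contains a genuine gap in the case $\xi\in\Lr$. The claim that geometric finiteness implies $\Lr=\Lur$ is false whenever $G$ has parabolic points. Indeed, Proposition~\ref{propositionuniformlyradialequivalent} characterizes $\Lur$ as exactly the set of points badly approximable by parabolic points, so any radial limit point which is well approximable by parabolics lies in $\Lr\setminus\Lur$. More quantitatively, Proposition~\ref{propositionmuLurzero} shows that $\mu(\Lur)=0$ unless $G$ is quasiconvex-cocompact; thus for a geometrically finite group with at least one cusp, $\Lur$ is $\mu$-null while $\Lr$ has full measure, so the two sets are very far from equal. For $\xi\in\Lr\setminus\Lur$ the asymptotic \eqref{Deltamuxiasymp} fails (the geodesic ray to $\xi$ makes arbitrarily deep excursions into cusps, and the Global Measure Formula then shows $\Delta_{\mu,\xi}(r)/r^\delta$ is unbounded above or below), so Example~\ref{exampleuniformlyradialkhinchin} does not apply. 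The paper explicitly remarks just after this proposition that handling such $\xi$ directly via Theorem~\ref{theoremkhinchin} appears difficult, since one would have to analyze the series \eqref{khinchinconverges} without knowing it is asymptotic to \eqref{khinchindiverges}.

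Your treatment of $\xi\notin\Lambda$ and $\xi\in\Lbp$ is fine, and the case $\xi\in\Lur$ would work as you say; but the intermediate case $\xi\in\Lr\setminus\Lur$ is not covered. The paper's own proof sidesteps this entirely by a dimension argument: Theorem~\ref{theoremjarnikbesicovitch} gives $\HD(W_{c,\xi})\leq\delta/(c+1)<\delta$ for every $c>0$ and every $\xi\in\del X$, and then \cite[Proposition~4.10]{StratmannVelani} (any set of full $\mu$-measure has Hausdorff dimension at least $\delta$) forces $\mu(W_{c,\xi})=0$. This uniform Hausdorff-dimension bound avoids any case analysis on $\xi$.
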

\begin{proof}
For each $c > 0$, by Theorem \ref{theoremjarnikbesicovitch} we have $\HD(W_{c,\xi})\leq\frac{\delta}{c + 1} < \delta$. But by \cite[Proposition 4.10]{StratmannVelani}, any set of full $\mu$-measure has Hausdorff dimension at least $\delta$. Thus $\mu(W_{c,\xi}) = 0$ for all $c > 0$, and so $\mu(\VWA_\xi) = 0$.
\end{proof}

\begin{remark}
It appears to be difficult to prove that $\mu(\VWA_\xi) = 0$ directly from Theorem \ref{theoremkhinchin} (assuming that \eqref{Deltamuxiasymp} is not satisfied and that $\xi\notin\Lbp$), since then the series \eqref{khinchinconverges} needs to be analyzed directly, as it is not clear whether it is asymptotic to \eqref{khinchindiverges}.
\end{remark}

\subsubsection{An example where $\eqref{khinchindiverges}\not\asymp \eqref{khinchinconverges}$}
In Example \ref{examplepinched}, we used (iiB) of Theorem \ref{theoremkhinchin} to prove \eqref{pinched}. A natural question is whether we could have used (iiA) instead, as it is more general. However, we could not, as the following proposition shows:

\begin{repproposition}{propositionparabolic43}
Let $X = \H^{d + 1}$ for some $d\in\N$, and let $G\leq\Isom(X)$ be geometrically finite. If $\xi$ is a bounded parabolic point whose rank is strictly greater than $4\delta/3$, then there exists a function $\Phi$ such that $t\mapsto t\Phi(t)$ is nonincreasing and such that for every $K > 0$, \eqref{khinchindiverges} converges but \eqref{khinchinconverges} diverges.
\end{repproposition}
If $G$ is a nonelementary geometrically finite group and if $\xi$ is a parabolic point of rank $k_\xi$, then $\delta > k_\xi/2$ (e.g. \cite{Beardon1}), and this bound is optimal (e.g. \cite[Theorem 1]{Patterson3}). In particular, it may happen that $\delta < 3k_\xi/4$ or equivalently $k_\xi > 4\delta/3$. This shows that Proposition \ref{propositionparabolic43} is not vacuous.

\subsubsection{The measure of $\Lur$}
The following result is an application of Theorem \ref{theoremkhinchin}:

\begin{repproposition}{propositionmuLurzero}
Let $(X,\dist,\zero,b,G)$ be as in \sectionsymbol\ref{standingassumptions}, with $X$ proper and geodesic. Let $\mu$ be a $\delta_G$-quasiconformal measure on $\Lambda$. Then $\mu(\Lur) = 0$ if and only if $G$ is not quasiconvex-cocompact.
\end{repproposition}

\subsubsection{Relation between Theorems \ref{theoremkhinchin} and \ref{theoremjarnikbesicovitch} via the mass transference principle}
We now remark on the relation between Theorems \ref{theoremkhinchin} and \ref{theoremjarnikbesicovitch} provided by the Mass Transference Principle of V. V. Beresnevich and S. L. Velani \cite{BeresnevichVelani}. Informally, the Mass Transference Principle states that given any setup of Diophantine approximation taking place in an Ahlfors regular metric space, the analogue of Khinchin's theorem implies the analogue of the Jarn\'ik--Besicovitch theorem. In particular, applying \cite[Theorem 3 on p.991]{BeresnevichVelani} to Theorem \ref{theoremkhinchin} (together with \eqref{Deltamuxiasympimplies}) immediately yields the divergence case of the following:

\begin{theorem}
Let $(X,\dist,\zero,b,G)$ be as in \sectionsymbol\ref{standingassumptions}, and fix a distinguished point $\xi\in\del X$. Let $\delta$ be the Poincar\'e exponent of $G$, and suppose that there exists an ergodic $\delta$-quasiconformal probability measure on $\Lambda$ which is Ahlfors $\delta$-regular (see Definition \ref{definitionahlforsregular}). Let $f,\Phi:(0,\infty)\to(0,\infty)$ be functions such that the function $t\mapsto t^{-\delta}f(t)$ is nonincreasing and the function $t\mapsto t^\delta f\circ\Phi(t)$ is monotonic. Then
\[
\HH^f(\WA_{\Phi,\xi}) = \begin{cases}
0 &\text{if } \sum_{g\in G} f\circ\Phi(K b^{\dist(\zero,g(\zero))}) < \infty \text{ for some $K > 0$}\\
\infty &\text{if } \sum_{g\in G} f\circ\Phi(K b^{\dist(\zero,g(\zero))}) = \infty \text{ for all $K > 0$}
\end{cases}.
\]
\end{theorem}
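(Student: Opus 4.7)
The plan is to handle the convergence and divergence cases separately: the convergence case is a direct Hausdorff-measure upper bound, while the divergence case combines Theorem~\ref{theoremkhinchin} with the Beresnevich--Velani Mass Transference Principle (henceforth MTP).

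For the convergence direction, assume $\sum_{g\in G}f\circ\Phi(Kb^{\dist(\zero,g(\zero))})<+\infty$ for some $K>0$. For each $T>0$, the set $\WA_{\Phi,\xi}$ is covered by the family $\big\{B(g(\xi),\Phi(Kb^{\dist(\zero,g(\zero))})):\dist(\zero,g(\zero))\geq T\big\}$, whose radii tend to zero as $T\to\infty$ by strong discreteness of $G$ together with $t\Phi(t)$ being nonincreasing (which forces $\Phi(t)\to 0$). Plugging this cover into the definition of $\HH^f$ and letting $T\to\infty$ yields $\HH^f(\WA_{\Phi,\xi})=0$.

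For the divergence direction, introduce the auxiliary function $\widetilde\Phi(t):=(f\circ\Phi(t))^{1/\delta}$. The identity $t\widetilde\Phi(t)=(t^\delta f\circ\Phi(t))^{1/\delta}$ together with the monotonicity hypothesis on $t^\delta f\circ\Phi(t)$ shows that $\widetilde\Phi$ itself satisfies the hypothesis $t\mapsto t\widetilde\Phi(t)$ nonincreasing needed to apply Theorem~\ref{theoremkhinchin} (modulo a minor check when $t^\delta f\circ\Phi(t)$ turns out to be nondecreasing, which forces $\widetilde\Phi$ to be of order at least $1/t$, so that by Theorem~\ref{theoremdirichlet} the inclusion $\WA_{\widetilde\Phi,\xi}\supseteq\Lr$ is automatic). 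The Ahlfors $\delta$-regularity of $\mu$ supplies $\Delta_{\mu,\xi}(r)\asymp_\times r^\delta$, so applying \eqref{Deltamuxiasympimplies} to $\widetilde\Phi$ shows that the series \eqref{khinchindiverges} for $\widetilde\Phi$ is $\asymp_\times\sum_{g\in G}\widetilde\Phi^\delta(Kb^{\dist(\zero,g(\zero))})=\sum_{g\in G}f\circ\Phi(Kb^{\dist(\zero,g(\zero))})$, which diverges for every $K>0$ by assumption. Theorem~\ref{theoremkhinchin}(i) then gives $\mu(\WA_{\widetilde\Phi,\xi})=1$, and Ahlfors regularity upgrades this to $\HH^\delta(B\cap\WA_{\widetilde\Phi,\xi})=\HH^\delta(B)$ for every ball $B$ meeting $\Lambda$.

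Finally, writing $r_g:=\Phi(Kb^{\dist(\zero,g(\zero))})$, one observes that $\WA_{\Phi,\xi}=\limsup_g B(g(\xi),r_g)$ while $\WA_{\widetilde\Phi,\xi}=\limsup_g B(g(\xi),f(r_g)^{1/\delta})$; these are precisely the ``original'' and ``$f$-rescaled'' families of balls appearing in the statement of the Beresnevich--Velani MTP, with their parameter $k$ replaced by our $\delta$. Invoking MTP (in its Ahlfors $\delta$-regular metric-space version) promotes the full $\HH^\delta$-measure of the rescaled limsup $\WA_{\widetilde\Phi,\xi}$ into full $\HH^f$-measure of the original limsup $\WA_{\Phi,\xi}$, giving $\HH^f(\WA_{\Phi,\xi})=+\infty$. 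The main technical obstacle is confirming that the BV MTP, originally formulated in $\R^k$, transfers cleanly to the Ahlfors $\delta$-regular boundary $(\Lambda,\Dist)$; this reduces to checking the local mass distribution hypothesis of BV, which is automatic under Ahlfors regularity but deserves a careful statement.
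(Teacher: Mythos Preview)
Your proposal is correct and follows essentially the same approach as the paper: the convergence case is the Hausdorff--Cantelli lemma (the paper cites \cite[Lemma~3.10]{BernikDodson}), and the divergence case applies the Beresnevich--Velani Mass Transference Principle \cite[Theorem~3 on p.~991]{BeresnevichVelani} to Theorem~\ref{theoremkhinchin} via \eqref{Deltamuxiasympimplies}. Your explicit introduction of the auxiliary function $\widetilde\Phi(t)=(f\circ\Phi(t))^{1/\delta}$ is exactly how one unpacks the paper's one-line citation, and your observation that the metric-space version of MTP is what is needed here is the correct technical point to flag.
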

\noindent The convergence case follows immediately from the Hausdorff--Cantelli lemma (see e.g. \cite[Lemma 3.10]{BernikDodson}).

In particular, letting $f(t) = t^s$ and $\Phi_c(t) = t^{-(1 + c)}$, we get
\[
\HH^s(\WA_{\Phi_c,\xi}) = \begin{cases}
0 &\text{if } s > \frac{\delta}{1 + c}\\
\infty &\text{if } s < \frac{\delta}{1 + c}\\
0 &\text{if } s = \frac{\delta}{1 + c} \text{ and $G$ is of convergence type}\\
\infty &\text{if } s = \frac{\delta}{1 + c} \text{ and $G$ is of divergence type}\\
\end{cases}
\]
for all $s \leq \delta$. Since $W_{c,\xi} = \bigcap_{c' < c}\WA_{\Phi_{c'},\xi}\cap\Lr \supset \WA_{\Phi_c,\xi}\cap\Lr$, if $\HH^s(\Lambda\butnot\Lr) = 0$ then we have
\[
\HH^s(W_{c,\xi}) = \begin{cases}
0 &\text{if } s > \frac{\delta}{1 + c}\\
\infty &\text{if } s \leq \frac{\delta}{1 + c}
\end{cases}\cdot
\]
Summarizing, we have the following:

\begin{corollary}
\label{corollaryahlforsjarnik}
Let $(X,\dist,\zero,b,G)$ be as in \sectionsymbol\ref{standingassumptions}, and fix a distinguished point $\xi\in\del X$. Let $\delta$ be the Poincar\'e exponent of $G$, and suppose that there exists an ergodic $\delta$-quasiconformal probability measure on $\Lambda$ which is Ahlfors $\delta$-regular. Then for $c > 0$,
\[
\HD(W_{c,\xi}) = \frac{\delta}{1 + c}
\]
as long as $\HD(\Lambda\butnot\Lr) < \frac{\delta}{1 + c}$.
\end{corollary}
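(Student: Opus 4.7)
The plan is to extract the dimension statement directly from the Hausdorff measure dichotomy that the Mass Transference Principle yields for the sets $\WA_{\Phi_c,\xi}$ (displayed immediately before the corollary), exploiting the two-sided sandwich
\[
\WA_{\Phi_c,\xi}\cap\Lr \;\subseteq\; W_{c,\xi} \;\subseteq\; \bigcap_{c'<c}\WA_{\Phi_{c'},\xi}
\]
noted in the preceding discussion. Both the upper and lower bound then reduce to elementary monotonicity properties of Hausdorff measure; the hypothesis $\HD(\Lambda\butnot\Lr) < \delta/(1+c)$ will enter only in the lower bound, where it is used to rule out that the Hausdorff mass of $\WA_{\Phi_c,\xi}$ is concentrated off the radial limit set.

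First I would prove the upper bound $\HD(W_{c,\xi}) \leq \delta/(1+c)$ using only the right-hand containment. Fix any $c' \in (0,c)$. The dichotomy gives $\HH^s(\WA_{\Phi_{c'},\xi}) = 0$ for every $s > \delta/(1+c')$, hence $\HD(W_{c,\xi}) \leq \HD(\WA_{\Phi_{c'},\xi}) \leq \delta/(1+c')$. Letting $c' \nearrow c$ yields the desired inequality; note that this direction is completely unconditional and does not invoke the hypothesis on $\HD(\Lambda\butnot\Lr)$.

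Next I would prove the matching lower bound $\HD(W_{c,\xi}) \geq \delta/(1+c)$ via the left-hand containment. Pick any $s$ in the (nonempty, by hypothesis) interval $(\HD(\Lambda\butnot\Lr),\, \delta/(1+c))$. By the definition of Hausdorff dimension, $\HH^s(\Lambda\butnot\Lr) = 0$, while the dichotomy forces $\HH^s(\WA_{\Phi_c,\xi}) = +\infty$. Since $\WA_{\Phi_c,\xi}\subseteq\Lambda$ decomposes as the disjoint union of $\WA_{\Phi_c,\xi}\cap\Lr$ and $\WA_{\Phi_c,\xi}\cap(\Lambda\butnot\Lr)$, subadditivity of $\HH^s$ then forces $\HH^s(\WA_{\Phi_c,\xi}\cap\Lr) = +\infty$, and therefore $\HH^s(W_{c,\xi}) = +\infty$. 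Consequently $\HD(W_{c,\xi}) \geq s$, and taking the supremum over admissible $s$ yields $\HD(W_{c,\xi}) \geq \delta/(1+c)$.

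There is no genuine obstacle once the Mass Transference dichotomy is in hand; the only delicate point is that the lower-bound argument degrades the instant $\HD(\Lambda\butnot\Lr)$ reaches the critical value $\delta/(1+c)$. This is precisely the role of the hypothesis: it guarantees that the infinite Hausdorff mass of $\WA_{\Phi_c,\xi}$ at subcritical dimensions cannot be absorbed entirely by the exceptional set $\Lambda\butnot\Lr$, and so must localize on the radial limit set where $W_{c,\xi}$ actually lives.
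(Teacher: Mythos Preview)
Your proof is correct and follows exactly the paper's primary derivation: the Hausdorff measure dichotomy coming from the Mass Transference Principle is combined with the sandwich $\WA_{\Phi_c,\xi}\cap\Lr \subset W_{c,\xi} = \bigcap_{c'<c}\WA_{\Phi_{c'},\xi}\cap\Lr$, and the hypothesis $\HD(\Lambda\setminus\Lr)<\delta/(1+c)$ is used precisely to ensure the infinite $\HH^s$-mass of $\WA_{\Phi_c,\xi}$ survives intersection with $\Lr$. The paper also records an alternative route via Theorem~\ref{theoremjarnikbesicovitch}, showing directly that Ahlfors regularity forces $P_\xi(s)=s$; this second argument requires only the weaker hypothesis $\mu(\Lambda\setminus\Lr)=0$.
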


Corollary \ref{corollaryahlforsjarnik} can be deduced independently from Theorem \ref{theoremjarnikbesicovitch}; see below. Moreover, in this case the hypothesis that $\HD(\Lambda\butnot\Lr) < \frac{\delta}{1 + c}$ may be weakened to the hypothesis that $\mu(\Lambda\butnot\Lr) = 0$. However, we remark that the assumption $\HD(\Lambda\butnot\Lr) = 0$ is satisfied in all the examples of \sectionsymbol\ref{subsubsectionverifying} which are Ahlfors regular (in fact, $\Lambda\butnot\Lr$ is countable in these examples).

\begin{proof}[Proof of Corollary \ref{corollaryahlforsjarnik} using Theorem \ref{theoremjarnikbesicovitch}]
Fix $\sigma > 0$ such that $\mu(\Lrsigma) > 0$. Since $\mu$ is ergodic, $\mu(\Lambda\butnot G(\Lrsigma)) = 0$. By Lemma \ref{lemmanearinvariance}, for all $\tau > 0$ sufficiently large we have $G(\Lrsigma)\subset\Lrtau$; for such $\tau$, we have $\mu(\Lambda\butnot\Lrtau) = 0$.

Since $\mu$ is Ahlfors $\delta$-regular, it follows from the mass distribution principle that
\[
\HH_\infty^\delta(B(\xi,r)\cap\Lrtau)\gtrsim_\times \mu(B(\xi,r)\cap\Lrtau) \asymp r^\delta,
\]
and so $P_\xi(\delta) = \delta$. By Proposition \ref{propositionPxifacts}, we have $P_\xi(s) = s$ for all $0 < s < \delta$.
\end{proof}

\ \newline{\bf History of Theorem \ref{theoremkhinchin}.}
Although Theorem \ref{theoremkhinchin} has not been stated in the literature previously, the equivalences \eqref{perfectkhinchin2} and \eqref{pinched} have both appeared in the literature. \eqref{perfectkhinchin2} was proven
\begin{itemize}
\item in the case where $X = \H^2$, $G$ is a lattice, and $\xi$ is a conventional point by S. J. Patterson in 1976 \cite[the Theorem of \sectionsymbol 9]{Patterson1}.
\item in the case where $X = \H^{d + 1}$, $G$ is a lattice, $\xi$ is a parabolic point, and $\Phi(e^t) = e^{-t}(t\vee 1)^{-(1 + \varepsilon)/\delta}$ for some $\varepsilon\geq 0$ (the ``logarithm law'') by D. P. Sullivan in 1982 \cite[Theorem 6]{Sullivan_disjoint_spheres}.
\item in the case where $X = \H^{d + 1}$, $G$ is convex-cocompact, and $\xi$ is a hyperbolic point by B. O. Stratmann in 1994 \cite[Theorem A]{Stratmann3}.
\item in the case where $X$ is a uniform tree which contains no vertex of degree $\leq 2$, $G$ is a lattice, and $\xi$ is a parabolic point by S. D. Hersonsky and F. Paulin in 2007 \cite[Theorem 1.1]{HP_Khinchin_trees}.
\end{itemize}
\eqref{pinched} was proven
\begin{itemize}
\item in the case where $X = \H^{d + 1}$, $G$ is geometrically finite, and $\xi$ is a parabolic point by B. O. Stratmann and S. L. Velani in 1995 \cite[Theorem 4]{StratmannVelani}.
\item in the case where $X$ is a pinched Hadamard manifold, $G$ is geometrically finite, and $\xi$ is a parabolic point satisfying \eqref{powerlaw} by S. D. Hersonsky and F. Paulin in 2004 \cite[Theorem 3]{HP_Khinchin}.
\end{itemize}
Note that in the above lists, we ignore all differences between regularity hypotheses on the function $\Phi$.

\draftnewpage\section{Gromov hyperbolic metric spaces} \label{sectionhyperbolic}
In this section we review the theory of hyperbolic metric spaces and discrete groups of isometries acting on them. There is a vast literature on this subject; see for example \cite{BridsonHaefliger, KapovichBenakli, Oshika, Vaisala}. We would like to single out \cite{BonkSchramm}, \cite{BridsonHaefliger}, and \cite{Vaisala} as exceptions that work in not necessarily proper settings and make reference to infinite-dimensional hyperbolic space.

\begin{definition}
\label{definitiongromovproduct}
Let $(X,\dist)$ be a metric space. For any three points $x,y,z\in X$, the \emph{Gromov product} of $x$ and $y$ with respect to $z$ is defined by
\[
\lb x|y\rb_z := \frac{1}{2}[\dist(x,z) + \dist(y,z) - \dist(x,y)].
\]
\end{definition}
Intuitively, the Gromov product measures the ``defect in the triangle inequality''.

\begin{definition}
\label{definitiongromovhyperbolic}
A metric space $(X,\dist)$ is called \emph{hyperbolic} (or \emph{Gromov hyperbolic}) if for every four points $x,y,z,w\in X$ we
have 
\begin{equation}
\label{gromov}
\lb x|z\rb_w \gtrsim_\plus \min(\lb x|y\rb_w,\lb y|z\rb_w).
\end{equation}
We will refer to this inequality as \emph{Gromov's inequality}.
\end{definition}
Hyperbolic metric spaces should be thought of as ``those which have uniformly negative curvature''. Note that many authors require $X$ to be a geodesic metric space in order to be hyperbolic; we do not. If $X$ is a geodesic metric space, then the condition of hyperbolicity can be reformulated in several different ways, including the \emph{thin triangles condition}; see \cite[\sectionsymbol III.H.1]{BridsonHaefliger} for details.

For each $z\in X$, let $\busemann_z$ denote the \emph{Busemann function}
\begin{equation}
\label{busemanndef}
\busemann_z(x,y) := \dist(z,x) - \dist(z,y).
\end{equation}
\begin{proposition}
\label{propositionbasicidentities}
The Gromov product satisfies the following identities and inequalities:
\begin{align*}\tag{a}
\lb x|y\rb_z &= \lb y|x\rb_z \\ \tag{b}
\dist(y,z) &= \lb y|x\rb_z + \lb z|x\rb_y\\ \tag{c}
0\leq \lb x|y\rb_z &\leq \min(\dist(x,z),\dist(y,z))\\ \tag{d}
\lb x|y\rb_z &\leq \lb x|y\rb_w + \dist(z,w)\\ \tag{e}
\lb x|y\rb_w &\leq \lb x|z\rb_w + \dist(y,z)\\ \tag{f}
\lb x|y\rb_z &= \lb x|y\rb_w + \frac{1}{2}[\busemann_x(z,w) + \busemann_y(z,w)]\\ \tag{g}
\lb x|y\rb_z &= \frac{1}{2}[\dist(x,z) + \busemann_y(z,x)]\\ \tag{h}
\busemann_x(y,z) &= \lb z|x\rb_y - \lb y|x\rb_z\\ \tag{i}
\lb x|y\rb_z &= \lb x|y\rb_w + \dist(z,w) - \lb x|z\rb_w - \lb y|z\rb_w\\ \tag{j}
\lb x|y\rb_z &= \lb x|y\rb_w + \lb x|w\rb_z - \lb y|z\rb_w 
\end{align*}
\end{proposition}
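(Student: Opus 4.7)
The plan is to verify each of identities (a)--(j) by direct calculation from the two definitions
$\lb x|y\rb_z = \tfrac{1}{2}[\dist(x,z) + \dist(y,z) - \dist(x,y)]$ and $\busemann_z(x,y) = \dist(z,x) - \dist(z,y)$. There is no genuine obstacle here: everything follows from expanding both sides and using symmetry of the metric, plus the triangle inequality for the three inequalities. Since hyperbolicity is not invoked, no Gromov constant enters.

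I would start with the purely algebraic identities, where no triangle inequality is needed. Identity (a) is immediate from the symmetry of $\dist$ in the definition. Identity (b) follows by adding the two Gromov products and noting the $\dist(x,y)$ terms cancel, leaving $\dist(y,z)$. Identity (g) is just a rewriting of the definition using the Busemann function, and (h) is an immediate consequence of (b) combined with the definition of $\busemann_x$. Identity (f) then drops out by applying (g) twice (once with base point $z$, once with base point $w$) and comparing, or equivalently by direct expansion, and (i) follows by combining (f) with the identity $\busemann_x(z,w) + \busemann_y(z,w) = 2\dist(z,w) - 2\lb x|z\rb_w - 2\lb y|z\rb_w$ derivable from (b). Identity (j) follows similarly by expansion; one checks that both sides equal $\tfrac{1}{2}[\dist(x,z) + \dist(y,z) - \dist(x,y)]$.

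For the three inequalities (c), (d), (e), the triangle inequality does the work. Nonnegativity in (c) is equivalent to $\dist(x,y) \leq \dist(x,z) + \dist(y,z)$, and the upper bound $\lb x|y\rb_z \leq \dist(x,z)$ rearranges to $\dist(y,z) \leq \dist(x,y) + \dist(x,z)$; by symmetry (a) one also gets $\lb x|y\rb_z \leq \dist(y,z)$. For (d), after cancelling the $\dist(x,y)$ terms, the inequality reduces to $\tfrac{1}{2}[(\dist(x,z)-\dist(x,w)) + (\dist(y,z)-\dist(y,w))] \leq \dist(z,w)$, which is two applications of the triangle inequality. Inequality (e) is analogous: after cancellation it reduces to $\tfrac{1}{2}[(\dist(y,w)-\dist(z,w)) + (\dist(x,z)-\dist(x,y))] \leq \dist(y,z)$, again two triangle inequalities.

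Since each part is a short one- or two-line computation, I would simply present them in the order above, perhaps bundling (a), (b), (g), (h) as ``immediate from the definitions'', (f), (i), (j) as ``obtained by expanding and cancelling'', and (c), (d), (e) as ``the triangle inequality''. The only step worth pausing on is making sure the sign conventions in the Busemann identities (f)--(h) line up correctly with the definition $\busemann_z(x,y) = \dist(z,x) - \dist(z,y)$, since this is the one place a sign error could easily creep in.
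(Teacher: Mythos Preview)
Your proposal is correct and matches the paper's approach exactly: the paper simply says ``The proof is a straightforward computation'' and cites V\"ais\"al\"a for (a)--(e), so your plan of expanding both sides from the definitions and invoking the triangle inequality for (c), (d), (e) is precisely what is intended. Your grouping and the caution about Busemann sign conventions are sensible additions of detail to what the paper leaves implicit.
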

The proof is a straightforward computation. We remark that (a)-(e) may be found in \cite[Lemma 2.8]{Vaisala}.

\subsection{Examples of hyperbolic metric spaces} \label{sectionexamples}
The class of hyperbolic metric spaces is rich and well-studied. For example, any subspace of a hyperbolic metric space is again a hyperbolic metric space, and any geodesic metric space quasi-isometric to a geodesic hyperbolic metric space is again a hyperbolic metric space \cite[Theorem III.H.1.9]{BridsonHaefliger}. Moreover, for every $\kappa < 0$, every CAT($\kappa$) space is hyperbolic \cite[Proposition III.H.1.2]{BridsonHaefliger}. We list below some well-known celebrated examples with no desire for completeness. Items with $*$ are CAT($\kappa$) spaces for some $\kappa < 0$, and items with \textdagger\ are proper metric spaces, i.e. those for which closed balls are compact.

\begin{itemize}
\item Standard hyperbolic space $\H^{d + 1}$ or $\B^{d + 1}$ (*,\textdagger).
\item Infinite-dimensional hyperbolic space, i.e. the open unit ball $\B^\infty$ in a separable Hilbert space with the Poincar\'e metric (*). This space and groups of isometries acting on it is investigated at length in \cite{DSU}.
\item Trees (*,\textdagger\ if locally finite)
\item $\R$-trees (*, sometimes \textdagger).
\item Word-hyperbolic groups together with their Cayley metrics (\textdagger). We remark that this includes almost every finitely presented group \cite[p.78]{Gromov4}, \cite{Olshanskii}; see also \cite{Champetier}.
\item All Riemannian manifolds with negative sectional curvature uniformly bounded away from zero (*,\textdagger).
\item Complex and quaternionic hyperbolic spaces (*,\textdagger). These will be discussed in some detail in Appendix \ref{appendixsymmetricspaces}.
\item Green metrics on word-hyperbolic groups \cite[Corollary 1.2]{BHM}
\item Quasihyperbolic metrics of uniform domains in Banach spaces \cite[Theorem 2.12]{Vaisala2}
\item Arc graphs and curve graphs \cite{HPW} and arc complexes \cite{MasurSchleimer, HilionHorbez} of finitely punctured oriented surfaces
\item Free splitting complexes \cite{HandelMosher, HilionHorbez} and free factor complexes \cite{BestvinaFeighn, KapovichRafi, HilionHorbez}
\end{itemize}

\subsection{The boundary of a hyperbolic metric space}
In this subsection, we define the Gromov boundary of a hyperbolic metric space $(X,\dist)$. If $X = \H^{d + 1}$ or $\B^{d + 1}$ for some $d\in\N$, then the boundary of $X$ is exactly what you would expect, i.e. it is isomorphic to the topological boundary of $X$ relative to $\what{\R^{d + 1}}$.

In the following, we fix a distinguished point $\zero\in X$. However, it will be clear that the definitions given are independent of which point is chosen.


\begin{definition}
A sequence $(x_n)_1^\infty$ in $X$ is called a \emph{Gromov sequence} if
\[
\lb x_n|x_m\rb_\zero \tendsto{n,m} \infty.
\]
Two Gromov sequences $(x_n)_1^\infty$ and $(y_n)_1^\infty$ are called \emph{equivalent} if
\[
\lb x_n|y_n\rb_\zero \tendsto n \infty.
\]
It is readily verified using Gromov's inequality that equivalence of Gromov sequences is indeed an equivalence relation. 
\end{definition}
\begin{definition}
\label{definitiongromovboundary}
The \emph{Gromov boundary} of $X$ is the set of Gromov sequences modulo equivalence. It is denoted $\del X$. The \emph{Gromov bordification} of $X$ is the set $\bord X := X\sqcup\del X$. (The term ``bordification'' is similar to ``compactification'', except that the resulting space is not necessarily compact.)
\end{definition}

\begin{remark}
\label{remarkisometryextension}
Let $g$ be an isometry of $X$. Then $g$ takes Gromov sequences to Gromov sequences, and equivalent Gromov sequences to equivalent Gromov sequences. Taking the quotient yields a map $g_{\del X}:\del X\to\del X$. Let $g_{\bord X}:\bord X\to\bord X$ be the piecewise combination of $g$ with $g_{\del X}$.\comdavid{Is it clear?} Abusing notation, we will in the sequel denote the maps $g$, $g_{\del X}$, and $g_{\bord X}$ by the same symbol $g$.
\end{remark}

\subsubsection{Extending the Gromov product and Busemann function to the boundary}
For all points $\xi,\eta\in\del X$ and $y,z\in X$, the appropriate Gromov products and Busemann function are defined as follows:
\begin{align} \label{gromovboundarydef1}
\lb \xi|\eta\rb_z
&:= \inf\{\liminf_{n,m\to\infty}\lb x_n|y_m\rb_z:(x_n)_1^\infty\in\xi,(y_m)_1^\infty\in\eta\}\\ \label{gromovboundarydef2}
\lb \xi|y\rb_z
&:= \lb y|\xi\rb_z := \inf\{\liminf_{n\to\infty}\lb x_n|y\rb_z:(x_n)_1^\infty\in\xi\}\\ \label{gromovboundarydef3}
\busemann_\xi(y,z) &:= \lb z|\xi\rb_y - \lb y|\xi\rb_z,
\end{align}
keeping in mind that $\xi,\eta\in\del X$ are collections of Gromov sequences.

The main properties of the Gromov product on the boundary come from the following lemma:
\begin{lemma}[{\cite[Lemma 5.11]{Vaisala}}]
\label{lemmaboundaryasymptotic}
Fix $\xi,\eta\in\del X$ and $y,z\in X$. For all $(x_n)_1^\infty\in\xi$ and $(y_m)_1^\infty\in\eta$ we have 
\begin{align} \label{boundaryasymptotic1}
\lb x_n|y_m\rb_z &\tendsto{n,m,\plus} \lb \xi|\eta\rb_z\\ \label{boundaryasymptotic2}
\lb x_n|y\rb_z &\tendsto{\;\; n,\plus \;\;} \lb \xi|y\rb_z\\ \label{boundaryasymptotic3}
\busemann_{x_n}(y,z) &\tendsto{\;\; n,\plus \;\;} \busemann_\xi(y,z).
\end{align}
\end{lemma}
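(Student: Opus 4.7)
\medskip\noindent\textbf{Proof proposal.} All three assertions share a common template: each compares the asymptotic behavior of a Gromov product or Busemann function along an arbitrary representing sequence with a boundary quantity defined as an infimum-of-$\liminf$s, and the gap is controlled using Gromov's inequality \eqref{gromov}. As a preparatory step I would first verify that equivalent Gromov sequences $(x_n),(x_n')\in\xi$ satisfy $\lb x_n|x_m'\rb_z\to\infty$ as $n,m\to\infty$ \emph{independently}, not merely diagonally; this follows by interleaving the two sequences into a single sequence and applying \eqref{gromov} to the mixed products.

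For \eqref{boundaryasymptotic2}: by (c) of Proposition \ref{propositionbasicidentities}, $\lb x_n|y\rb_z\leq\dist(y,z)$, so the quantity is uniformly bounded. Given any second representing sequence $(x_n')\in\xi$, Gromov's inequality applied to $(x_n',x_n,y)$ at $z$ gives $\lb x_n'|y\rb_z\gtrsim_\plus\min(\lb x_n|x_n'\rb_z,\lb x_n|y\rb_z)$; since $\lb x_n|x_n'\rb_z$ eventually exceeds the uniform bound $\dist(y,z)\geq\lb x_n|y\rb_z$, the min is realized by $\lb x_n|y\rb_z$, yielding $\lb x_n|y\rb_z\lesssim_\plus\lb x_n'|y\rb_z$ for all sufficiently large indices. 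Now choose $(x_n')$ with $\liminf_n\lb x_n'|y\rb_z\leq\lb\xi|y\rb_z+\varepsilon$; passing to a subsequence on which this $\liminf$ is realized and letting $n\to\infty$, one obtains $\limsup_n\lb x_n|y\rb_z\lesssim_\plus\lb\xi|y\rb_z+\varepsilon$. The matching lower bound $\liminf_n\lb x_n|y\rb_z\geq\lb\xi|y\rb_z$ is immediate from the infimum definition \eqref{gromovboundarydef2}, and sending $\varepsilon\to 0$ gives \eqref{boundaryasymptotic2}.

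For \eqref{boundaryasymptotic1}: the same template, but with two sequences and a double application of \eqref{gromov}. Choose $(x_n')\in\xi$, $(y_m')\in\eta$, and a subsequence $(n_k,m_k)$ along which $\lb x_{n_k}'|y_{m_k}'\rb_z$ converges to some $L\leq\lb\xi|\eta\rb_z+\varepsilon$. Applying Gromov's inequality to the triple $(x_{n_k}',x_n,y_{m_k}')$ at $z$, and using that $\lb x_n|x_{n_k}'\rb_z\to\infty$ while $\lb x_{n_k}'|y_{m_k}'\rb_z$ stays bounded, propagates the bound to $\lb x_n|y_{m_k}'\rb_z\lesssim_\plus L$. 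A second application to $(x_n,y_{m_k}',y_m)$, using that $\lb y_m|y_{m_k}'\rb_z\to\infty$, propagates it further to $\lb x_n|y_m\rb_z\lesssim_\plus L$. Taking $\limsup_{n,m}$ and then $\varepsilon\to 0$, and combining with the trivial lower bound from \eqref{gromovboundarydef1}, proves \eqref{boundaryasymptotic1}. Finally, \eqref{boundaryasymptotic3} is an immediate corollary of \eqref{boundaryasymptotic2}: identity (h) of Proposition \ref{propositionbasicidentities} rewrites $\busemann_{x_n}(y,z)=\lb z|x_n\rb_y-\lb y|x_n\rb_z$, and applying \eqref{boundaryasymptotic2} at basepoints $y$ and $z$ in turn yields $\busemann_{x_n}(y,z)\tendsto{n,\plus}\lb z|\xi\rb_y-\lb y|\xi\rb_z=\busemann_\xi(y,z)$, since $\tendsto{n,\plus}$ convergence is preserved under subtraction.

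The main obstacle I anticipate is the careful index management in \eqref{boundaryasymptotic1}: the auxiliary indices $n_k,m_k$ must be fixed (large) before $n,m$ are sent to infinity, and each of the two applications of \eqref{gromov} requires confirming that the ``small'' side of the min is in fact the smaller of the two. The independent-convergence fact from the preparatory step is what makes these verifications uniform in the free indices.
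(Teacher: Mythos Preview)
The paper does not prove this lemma: parts \eqref{boundaryasymptotic1} and \eqref{boundaryasymptotic2} are quoted directly from V\"ais\"al\"a \cite[Lemma 5.11]{Vaisala}, and the paper's only contribution is the parenthetical remark that \eqref{boundaryasymptotic3} ``is not found in \cite{Vaisala} but it follows immediately from \eqref{boundaryasymptotic2}.'' Your derivation of \eqref{boundaryasymptotic3} from \eqref{boundaryasymptotic2} via identity (h) of Proposition \ref{propositionbasicidentities} is exactly this immediate argument.

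Your proofs of \eqref{boundaryasymptotic1} and \eqref{boundaryasymptotic2} are the standard ones (essentially what is in V\"ais\"al\"a) and are correct. One small point of presentation: in your proof of \eqref{boundaryasymptotic2} you write the comparison $\lb x_n|y\rb_z\lesssim_\plus\lb x_n'|y\rb_z$ with the \emph{same} index $n$ on both sides, and then pass to a subsequence of $(x_n')$ realizing the $\liminf$. Taken literally this only bounds $\lb x_n|y\rb_z$ along that same subsequence, not the full $\limsup_n$. The fix is exactly the independent-index fact you isolated in your preparatory step: for fixed large $n$ and a separately chosen large $m$, Gromov's inequality gives $\lb x_n|y\rb_z\lesssim_\plus\lb x_m'|y\rb_z$, and one then sends $m\to\infty$ along the subsequence before varying $n$. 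You flag precisely this issue in your final paragraph, so you clearly have the right argument in mind; it just needs to be written with independent indices in \eqref{boundaryasymptotic2} as you already do in \eqref{boundaryasymptotic1}.
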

(\eqref{boundaryasymptotic3} is not found in \cite{Vaisala} but it follows immediately from \eqref{boundaryasymptotic2}.)

A simple but useful consequence of Lemma \ref{lemmaboundaryasymptotic} is the following:

\begin{corollary}
\label{corollaryboundaryasymptotic}
The formulas of Proposition \ref{propositionbasicidentities} together with Gromov's inequality hold for points on the boundary as well, if the equations and inequalities there are replaced by additive asymptotics.\comdavid{Ignored proof below}
\end{corollary}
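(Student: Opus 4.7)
The plan is to derive each boundary statement by approximating every boundary point appearing in it by a representative Gromov sequence, applying the corresponding interior identity from Proposition \ref{propositionbasicidentities} (or Gromov's inequality) term-by-term, and then passing to the limit via Lemma \ref{lemmaboundaryasymptotic}. The additive slack in \eqref{boundaryasymptotic1}--\eqref{boundaryasymptotic3} is precisely what converts an exact equality at the interior level into an $\asymp_\plus$ statement on the boundary, and analogously for the inequalities.

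More concretely, fix a statement from Proposition \ref{propositionbasicidentities} (or Gromov's inequality) involving variables $u_1,\ldots,u_k$ ranging over $\cl X$. For each index $i$ with $u_i \in \del X$, choose a Gromov sequence $(u_i^{(n)})_{n=1}^\infty$ representing $u_i$; for $u_i \in X$ set $u_i^{(n)} := u_i$. Since the identity holds without error for interior points, it is valid at every stage $n$ when each $u_i$ is replaced by $u_i^{(n)}$. Each term appearing in the identity is either a Gromov product $\lb \cdot|\cdot\rb_z$, a Busemann difference $\busemann_{\cdot}(y,z)$, or a distance $\dist(\cdot,\cdot)$; for the first two, Lemma \ref{lemmaboundaryasymptotic} gives convergence to the corresponding boundary quantity up to an additive error controlled by the implied constant in \eqref{boundaryasymptotic1}--\eqref{boundaryasymptotic3}. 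Taking the limit $n\to\infty$ (sequentially in the indices, when several boundary points appear in a single expression) yields the boundary version of the identity, with $=$ replaced by $\asymp_\plus$ and $\leq$ replaced by $\lesssim_\plus$.

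A few entries require minor care. Identities (b), (c), (d), (e) contain a distance $\dist(\cdot,\cdot)$ of one of the variables; whenever that variable lies on the boundary the distance is infinite, so those particular relations are vacuous or should be interpreted in the natural way (for instance (c), (d), (e) become useful only when the relevant base/target point is in $X$). Identity (h) rearranges to the \emph{definition} \eqref{gromovboundarydef3} of the Busemann function at a boundary point, so in that case it actually holds on the nose. Gromov's inequality \eqref{gromov} is handled identically: apply it to $(x^{(n)},y^{(n)},z^{(n)},w)$ (with $w\in X$ fixed), then take $\liminf$ on both sides and invoke \eqref{boundaryasymptotic1}--\eqref{boundaryasymptotic2}; the $\min$ on the right is compatible with $\liminf$ after absorbing a bounded additive error.

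The only real obstacle is bookkeeping the additive errors when an expression contains several boundary Gromov products simultaneously (as in (f), (i), (j)): one must take the sequential limits in an order that lets Lemma \ref{lemmaboundaryasymptotic} apply to each factor with a \emph{uniform} error, rather than letting errors accumulate unboundedly. This is straightforward because the implied constants in \eqref{boundaryasymptotic1}--\eqref{boundaryasymptotic3} depend only on the hyperbolicity constant of $X$, not on the chosen representing sequences. With this observation the additive constants combine into a single constant depending only on $X$, which is exactly what $\asymp_\plus$ and $\lesssim_\plus$ permit.
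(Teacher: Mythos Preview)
Your proposal is correct and follows essentially the same approach as the paper's (suppressed) proof: represent each boundary point by a Gromov sequence, apply the interior identity at each stage, and pass to the limit using Lemma \ref{lemmaboundaryasymptotic}. The paper's version phrases the limiting step as ``take a subsequence on which each term converges,'' while you appeal directly to the uniform additive error in \eqref{boundaryasymptotic1}--\eqref{boundaryasymptotic3}; these are equivalent, and your added remarks about which identities become vacuous when a distance term blows up are a welcome clarification.
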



\subsubsection{A topology on $\bord X$}\label{subsubsectiontopologyclX}
One can endow $\bord X$ with a topological structure. We will call a set $S\subset\bord X$ open if $S\cap X$ is open and if for each $\xi\in S\cap\del X$ there exists $t > 0$ such that $N_t(\xi)\subset S$, where
\begin{equation} \label{nbh}
N_t(\xi) := \{y\in \bord X:\lb y|\xi\rb_\zero > t\}.
\end{equation}
We remark that with the topology defined above, $\bord X$ is metrizable; see \cite[Exercise III.H.3.18(4)]{BridsonHaefliger} if $X$ is proper and geodesic, and \cite[Corollary 3.6.14]{DSU} for the general case.

\begin{observation}
\label{observationboundaryconvergence}
A sequence $(x_n)_1^\infty$ in $\bord X$ converges to a point $\xi\in \del X$ if and only if
\begin{equation}\label{boundaryconvergence}
\lb x_n|\xi\rb_\zero \tendsto n \infty.
\end{equation}
\end{observation}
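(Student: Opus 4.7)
The $(\Leftarrow)$ direction is immediate from the definition of openness in \S\ref{subsubsectiontopologyclX}. Assuming $\lb x_n|\xi\rb_\zero \to +\infty$, let $U$ be any open set containing $\xi$. By definition there exists $t > 0$ with $N_t(\xi) \subset U$; since $\lb x_n|\xi\rb_\zero > t$ for all $n$ sufficiently large, $x_n \in N_t(\xi) \subset U$ eventually, so $x_n \to \xi$.

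For $(\Rightarrow)$, the core claim is that $\{N_t(\xi)\}_{t > 0}$ is a neighborhood basis at $\xi$ in $\cl X$: for every $t > 0$ there exists an open set $V$ with $\xi \in V \subset N_t(\xi)$. Granting this, $x_n \to \xi$ implies that for each $t$ eventually $x_n \in V \subset N_t(\xi)$, i.e.\ $\lb x_n|\xi\rb_\zero > t$; letting $t \to \infty$ yields $\lb x_n|\xi\rb_\zero \to +\infty$.

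To establish the neighborhood-basis claim, I would work with the quasi-metric $\rho(y,z) := b^{-\lb y|z\rb_\zero}$ on $\cl X$ (with $\rho(y,y) := 0$). Corollary \ref{corollaryboundaryasymptotic} translates Gromov's inequality on $\cl X$ into the quasi-ultrametric estimate
\[
\rho(x,z) \;\leq\; b^c \max\bigl(\rho(x,y),\,\rho(y,z)\bigr)
\]
for some constant $c \geq 0$, so $\rho$ is a quasi-metric. The set $N_t(\xi)$ is exactly the $\rho$-ball $B_\rho(\xi, b^{-t})$, and in any quasi-metric space the interior of such a ball contains $B_\rho(\xi, b^{-t}/K)$ where $K = b^c$ is the quasi-ultrametric constant; in our notation this inclusion reads $\operatorname{int}(N_t(\xi)) \supset N_{t+c}(\xi) \ni \xi$, so the interior of $N_t(\xi)$ furnishes the desired open set $V$. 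For points $y \in X$ the relevant ``smaller $\rho$-ball'' is just a metric ball in $X$ (open by the $1$-Lipschitz property of $z \mapsto \lb z|y\rb_\zero$, which is property (e) of Proposition \ref{propositionbasicidentities}); for points $y \in \del X$ it is an $N_s(y)$-set. Alternatively, one may simply appeal to the metrizability of $\cl X$ noted in \S\ref{subsubsectiontopologyclX}, under which the claim reduces to the routine fact that metric balls about $\xi$ form a neighborhood basis.

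The main obstacle is precisely this neighborhood-basis step: verifying that the $N_t$-sets form a neighborhood basis despite the additive slack $c$ in the extension of Gromov's inequality to $\cl X$. A naive attempt to show $N_t(\xi)$ is itself open fails at boundary points $\eta \in N_t(\xi)$ with $\lb \eta|\xi\rb_\zero$ only marginally above $t$, since the $N_s(\eta)$-neighborhoods of such $\eta$ spill outside $N_t(\xi)$. Shrinking the radius by the factor $b^{-c}$ lets the quasi-ultrametric inequality absorb the slack and produces a genuine open neighborhood of $\xi$ inside $N_t(\xi)$.
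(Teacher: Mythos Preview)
The paper gives no proof: the statement is an \emph{Observation} immediately after the topology on $\cl X$ is defined, and is left to the reader. Your argument is therefore already more than the paper offers, and the $(\Leftarrow)$ direction and the overall architecture of $(\Rightarrow)$ are fine.

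There is, however, a subtle circularity in your justification of the neighborhood-basis claim. You argue that $\operatorname{int}(N_t(\xi)) \supset N_{t+c}(\xi)$ by observing that each $\eta \in N_{t+c}(\xi)\cap\del X$ has some $N_s(\eta) \subset N_t(\xi)$. But this places $\eta$ in $\operatorname{int}(N_t(\xi))$ only if $N_s(\eta)$ is already known to be a neighborhood of $\eta$ in the given topology---which is exactly the claim in question, with $\eta$ in place of $\xi$. (Shrinking by $b^{-c}$ does not produce an open set: $N_{t+c}(\xi)$ fails to be open for the same reason $N_t(\xi)$ does.) The metrizability shortcut does not help either, since knowing only that \emph{some} metric generates the topology says nothing about whether the specific sets $N_t(\xi)$ are neighborhoods. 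What your computation actually establishes is the \emph{coherence axiom} for the system of basic neighborhoods (metric balls at points of $X$, sets $N_t(\eta)$ at points of $\del X$), and the clean way to finish is to invoke the standard general-topology fact that a coherent neighborhood system is a genuine neighborhood base for the topology it generates. Concretely: set
\[
U := \{\,y \in N_t(\xi) : \text{some basic neighborhood of } y \text{ lies in } N_t(\xi)\,\};
\]
your quasi-ultrametric and Lipschitz estimates give $N_{t+c}(\xi) \subset U$, and applying the \emph{same} estimates once more, now centered at each $y\in U$, shows $U$ is open. That second application of Gromov's inequality is the missing step.
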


\begin{observation}
\label{observationboundaryconvergence2}
A sequence $(x_n)_1^\infty$ in $X$ converges to a point $\xi\in \del X$ if and only if $(x_n)_1^\infty$ is a Gromov sequence and $(x_n)_1^\infty \in \xi$.
\end{observation}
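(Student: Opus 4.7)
Observation~\ref{observationboundaryconvergence} reduces the claim to showing that, for a sequence $(x_n)$ in $X$, one has $\lb x_n|\xi\rb_\zero\to +\infty$ if and only if $(x_n)$ is a Gromov sequence with $(x_n)\in\xi$. The rest of the argument is a double application of Gromov's inequality, using the interior version for one direction and the boundary version (Corollary~\ref{corollaryboundaryasymptotic}) for the other.

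For the ``if'' direction, fix any representative $(y_m)\in\xi$. Gromov's inequality applied inside $X$ to the triple $(x_n,x_m,y_m)$ gives
\[
\lb x_n|y_m\rb_\zero \gtrsim_\plus \min\bigl(\lb x_n|x_m\rb_\zero,\;\lb x_m|y_m\rb_\zero\bigr),
\]
with an additive constant depending only on $X$ (not on the chosen $(y_m)$). Since $(x_m)$ is a Gromov sequence, for every $M$ there exists $N$ such that $\lb x_n|x_m\rb_\zero\geq M$ whenever $n,m\geq N$, while $\lb x_m|y_m\rb_\zero\to+\infty$ because $(x_m)\sim (y_m)$. Taking $\liminf$ in $m$ therefore produces a lower bound on $\liminf_m\lb x_n|y_m\rb_\zero$ that is uniform in $(y_m)$ and tends to $+\infty$ with $n$. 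Passing to the infimum over $(y_m)\in\xi$ in definition~\eqref{gromovboundarydef2} preserves this, so $\lb x_n|\xi\rb_\zero\to +\infty$, and Observation~\ref{observationboundaryconvergence} gives $x_n\to\xi$.

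For the ``only if'' direction, assume $x_n\to\xi$, so $\lb x_n|\xi\rb_\zero\to+\infty$ by Observation~\ref{observationboundaryconvergence}. The extended Gromov inequality (Corollary~\ref{corollaryboundaryasymptotic}), applied with $\xi$ in the middle slot, gives
\[
\lb x_n|x_m\rb_\zero \gtrsim_\plus \min\bigl(\lb x_n|\xi\rb_\zero,\;\lb \xi|x_m\rb_\zero\bigr),
\]
whose right-hand side tends to $+\infty$ as $n,m\to\infty$; hence $(x_n)$ is a Gromov sequence. To show $(x_n)\in\xi$, pick any $(y_n)\in\xi$; by the ``if'' direction already established, $\lb y_n|\xi\rb_\zero\to+\infty$, and a second application of the extended Gromov inequality yields
\[
\lb x_n|y_n\rb_\zero \gtrsim_\plus \min\bigl(\lb x_n|\xi\rb_\zero,\;\lb \xi|y_n\rb_\zero\bigr)\to+\infty,
\]
so $(x_n)\sim(y_n)$ and therefore $(x_n)\in\xi$.

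The only delicate point is the ``if'' direction: because $\lb x_n|\xi\rb_\zero$ is defined by taking an infimum over representatives, a lower bound on $\liminf_m\lb x_n|y_m\rb_\zero$ is useful only if it is independent of $(y_m)$. This uniformity is exactly what Gromov's inequality supplies, via the universality of its additive constant; once that observation is in place, the rest of the argument is a mechanical application of Corollary~\ref{corollaryboundaryasymptotic}.
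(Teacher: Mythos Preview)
Your argument is correct. The paper does not actually supply a proof for this observation; it is stated immediately after Observation~\ref{observationboundaryconvergence} and treated as a routine consequence of the definitions together with Lemma~\ref{lemmaboundaryasymptotic}. Your detailed verification is the natural one, and the one genuinely nontrivial point---that the lower bound on $\liminf_m\lb x_n|y_m\rb_\zero$ must be uniform over representatives $(y_m)\in\xi$ before one can pass to the infimum in definition~\eqref{gromovboundarydef2}---is handled correctly via the universality of the Gromov constant. A slightly shorter route for the ``if'' direction would be to invoke \eqref{boundaryasymptotic2} of Lemma~\ref{lemmaboundaryasymptotic} directly (with $(x_m)$ itself as a representative of $\xi$), which already absorbs the uniformity issue; but what you wrote is fine.
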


\begin{lemma}[Near-continuity of the Gromov product and Busemann function]
\label{lemmanearcontinuity}
Suppose that $(x_n)_1^\infty$ and $(y_n)_1^\infty$ are sequences in $\bord X$ which converge to points $x_n\tendsto n x\in\bord X$ and $y_n\tendsto n y\in\bord X$. Suppose that $(z_n)_1^\infty$ and $(w_n)_1^\infty$ are sequences in $X$ which converge to points $z_n\tendsto n z\in X$ and $w_n\tendsto n w\in X$. Then
\begin{align} \label{continuousextension1}
\lb x_n|y_n\rb_{z_n} &\tendsto{n,\plus} \lb x|y\rb_z\\ \label{continuousextension2}
\busemann_{x_n}(z_n,w_n) &\tendsto{n,\plus} \busemann_x(z,w),
\end{align}
\end{lemma}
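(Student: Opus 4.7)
The plan is to reduce the statement to a one-variable substitution lemma that can be proved using the extended Gromov inequality of Corollary~\ref{corollaryboundaryasymptotic}. For \eqref{continuousextension1}, I would first remove the dependence on $z_n$: identity (d) of Proposition~\ref{propositionbasicidentities}, extended to $\cl X$, gives $|\lb x_n|y_n\rb_{z_n}-\lb x_n|y_n\rb_z|\lesssim_\plus\dist(z_n,z)\to 0$, so it suffices to prove $\lb x_n|y_n\rb_z\tendsto{n,\plus}\lb x|y\rb_z$. I would do this by a two-step substitution along the chain $\lb x_n|y_n\rb_z\tendsto{n,\plus}\lb x|y_n\rb_z\tendsto{n,\plus}\lb x|y\rb_z$.

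Both substitutions follow from an auxiliary lemma: whenever $a_n\to a$ in $\cl X$ and $b\in\cl X$, $z\in X$, one has $\lb a_n|b\rb_z\tendsto{n,\plus}\lb a|b\rb_z$, with error depending only on the hyperbolicity constant of $X$. If $a\in X$, then eventually $a_n\in X$ with $\dist(a_n,a)\to 0$ (neighborhoods of interior points in $\cl X$ are just metric balls), and extended identity (e) gives $|\lb a_n|b\rb_z-\lb a|b\rb_z|\le\dist(a_n,a)+O(1)$. If $a\in\del X$, then Observation~\ref{observationboundaryconvergence} together with (d) yields $\lb a_n|a\rb_z\to +\infty$; I would apply the extended Gromov inequalities $\lb a_n|b\rb_z\gtrsim_\plus\min(\lb a_n|a\rb_z,\lb a|b\rb_z)$ and $\lb a|b\rb_z\gtrsim_\plus\min(\lb a|a_n\rb_z,\lb a_n|b\rb_z)$. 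Provided $\lb a|b\rb_z$ is finite, the fact that $\lb a_n|a\rb_z$ eventually exceeds any prescribed threshold forces both minima to land on the second branch, producing $|\lb a_n|b\rb_z-\lb a|b\rb_z|=O(1)$. The residual case $\lb a|b\rb_z=+\infty$ forces $a=b\in\del X$, in which case $\lb a_n|b\rb_z=\lb a_n|a\rb_z\to+\infty$ directly.

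The main obstacle is iterating cleanly, because the first substitution $\lb x_n|y_n\rb_z\to\lb x|y_n\rb_z$ is applied with $b=y_n$ depending on $n$, so the implied constant must be genuinely uniform in $b$. Inspection of the previous argument shows that the only dependence on $b$ is through the ``threshold'' $\lb x|y_n\rb_z$ that $\lb x_n|x\rb_z$ must eventually exceed; as long as $\lb x|y_n\rb_z$ is bounded in $n$, the same universal constant works. When $\lb x|y\rb_z<+\infty$, the second substitution (which has the fixed value $b=x$ and is therefore unproblematic) gives $\lb x|y_n\rb_z=\lb x|y\rb_z+O(1)$, so the threshold is bounded and the uniformity goes through. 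When $\lb x|y\rb_z=+\infty$, equivalently $x=y\in\del X$, I would bypass the chain and argue directly: Gromov's inequality applied to the triple $(x_n,x,y_n)$ gives $\lb x_n|y_n\rb_z\gtrsim_\plus\min(\lb x_n|x\rb_z,\lb x|y_n\rb_z)$, and both quantities in the minimum tend to $+\infty$ (the first because $x_n\to x\in\del X$, the second because $y_n\to y=x\in\del X$), matching $\lb x|y\rb_z=+\infty$.

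For \eqref{continuousextension2}, I would use identity (h) of Proposition~\ref{propositionbasicidentities}, which reads $\busemann_x(z,w)=\lb w|x\rb_z-\lb z|x\rb_w$. Applying \eqref{continuousextension1} separately to each of the two Gromov products (with the roles of the three arguments appropriately permuted, noting that the hypotheses still hold because $z_n,w_n$ continue to play the role of the ``interior'' base point and the remaining two arguments live in $\cl X$) yields $\lb w_n|x_n\rb_{z_n}\tendsto{n,\plus}\lb w|x\rb_z$ and $\lb z_n|x_n\rb_{w_n}\tendsto{n,\plus}\lb z|x\rb_w$. Since $\tendsto{n,\plus}$ is preserved under subtraction up to doubling the additive constant, the conclusion $\busemann_{x_n}(z_n,w_n)\tendsto{n,\plus}\busemann_x(z,w)$ follows immediately.
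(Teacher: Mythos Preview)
Your argument is correct. The underlying mechanism---substituting one argument at a time via Gromov's inequality, with the interior cases handled by (d) and (e)---is the same as the paper's, but the organization differs. Instead of your two-step chain $\lb x_n|y_n\rb_z\to\lb x|y_n\rb_z\to\lb x|y\rb_z$, the paper (in the case $x,y\in\del X$) replaces each $x_n,y_n$ by an interior point $\hat x_n,\hat y_n$ with $\lb\hat x_n|x_n\rb_z\geq n$, so that $(\hat x_n)\in x$ and $(\hat y_n)\in y$ are honest Gromov sequences and Lemma~\ref{lemmaboundaryasymptotic} gives $\lb\hat x_n|\hat y_n\rb_z\tendsto{n,\plus}\lb x|y\rb_z$ in one stroke; a single Gromov-inequality step then compares $\lb\hat x_n|\hat y_n\rb_z$ with $\lb x_n|y_n\rb_{z_n}$. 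This bypasses the uniformity-in-$b$ issue you had to address. Conversely, your route avoids the replace-by-interior trick and works entirely through Corollary~\ref{corollaryboundaryasymptotic}, and you are more explicit about the degenerate case $x=y\in\del X$ (which the paper's phrasing ``\eqref{whatxwhaty} ensures that (2) fails'' glosses over). Your derivation of \eqref{continuousextension2} from \eqref{continuousextension1} via (h) is exactly what the paper does.
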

\begin{proof}
In the proof of \eqref{continuousextension1}, there are three cases:
\begin{itemize}
\item[Case 1:] $x,y\in X$. In this case, \eqref{continuousextension1} follows directly from (d) and (e) of Proposition \ref{propositionbasicidentities}.
\item[Case 2:] $x,y\in\del X$. In this case, for each $n\in\N$, choose $\what x_n\in X$ such that either
\begin{itemize}
\item[(1)] $\what x_n = x_n$ (if $x_n\in X$), or
\item[(2)] $\lb \what x_n|x_n\rb_z\geq n$ (if $x_n\in\del X$).
\end{itemize}
Choose $\what y_n$ similarly. Clearly, $\what x_n\tendsto n x$ and $\what y_n \tendsto n y$. By Observation \ref{observationboundaryconvergence2}, $(\what x_n)_1^\infty \in x$ and $(\what y_n)_1^\infty \in y$. Thus by Lemma \ref{lemmaboundaryasymptotic},
\begin{equation}
\label{whatxwhaty}
\lb \what x_n|\what y_n\rb_z \tendsto{n,\plus} \lb x|y\rb_z.
\end{equation}
Now by Gromov's inequality and (e) of Proposition \ref{propositionbasicidentities}, either
\begin{align*} \tag{1}
\lb \what x_n|\what y_n\rb_z &\asymp_\plus \lb x_n|y_n\rb_{z_n} \text{ or }\\ \tag{2}
\lb \what x_n|\what y_n\rb_z &\gtrsim_\plus n,
\end{align*}
with which asymptotic is true depending on $n$. But for $n$ sufficiently large, \eqref{whatxwhaty} ensures that the (2) fails, so (1) holds.
\item[Case 3:] $x\in X$, $y\in\del X$, or vice-versa. In this case, a straightforward combination of the above arguments demonstrates \eqref{continuousextension1}.
\end{itemize}
Finally, note that \eqref{continuousextension2} is an immediate consequence of \eqref{continuousextension1}, \eqref{gromovboundarydef3}, and (h) of Proposition \ref{propositionbasicidentities}.
\end{proof}

\begin{remark}
If $g$ is an isometry of $X$, then the map $g_{\bord X}:\bord X\to\bord X$ defined in Remark \ref{remarkisometryextension} is a homeomorphism of $\bord X$.
\end{remark}

\subsection{Different (meta)metrics}
\label{subsectionmetametrics}

\begin{definition}
Recall that a \emph{metric} on a set $Z$ is a map $\Dist:Z\times Z\to\Rplus$ which satisfies:
\begin{itemize}
\item[(I)] Reflexivity: $\Dist(x,x) = 0$.
\item[(II)] Reverse reflexivity: $\Dist(x,y) = 0$ \implies $x = y$.
\item[(III)] Symmetry: $\Dist(x,y) = \Dist(y,x)$.
\item[(IV)] Triangle inequality: $\Dist(x,z) \leq \Dist(x,y) + \Dist(y,z)$.
\end{itemize}
Now we can define a \emph{metametric} on $Z$ to be a map $\Dist: Z\times Z\to\Rplus$ which satisfies (II), (III), and (IV), but not necessarily (I). This concept is not to be confused with the more common notion of a \emph{pseudometric}, which satisfies (I), (III), and (IV), but not necessarily (II). The term ``metametric'' was introduced by J. V\"ais\"al\"a in \cite{Vaisala}.

If $\Dist$ is a metametric, we define its \emph{domain of reflexivity} as the set $Z_\refl := \{x\in Z:\Dist(x,x) = 0\}$. Obviously, $\Dist$ restricted to its domain of reflexivity is a metric.
\end{definition}

Now let $(X,\dist)$ be a hyperbolic metric space. We will consider various metametrics on $\bord X$.

\subsubsection{The visual metametric based at a point $z\in X$}
\begin{proposition}
\label{propositioneuclideanball}
If $X = \B^{d + 1}$, then for all $\xx,\yy\in\del X$ we have
\begin{equation}
\label{euclideanball2}
\frac{1}{2}\|\yy - \xx\| = e^{-\lb \xx|\yy\rb_\0}.
\end{equation}
\end{proposition}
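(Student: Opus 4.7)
The plan is to derive an exact closed form for $e^{-\lb x|y\rb_\0}$ when $x,y\in\B^{d+1}$ are interior points, and then pass to the limit along arbitrary Gromov sequences representing $\xx,\yy\in\del X$.

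First I would use the standard identities for the Poincaré ball metric, namely
\[
e^{d(\0,x)} = \frac{1+\|x\|}{1-\|x\|}, \qquad \sinh^{2}\!\tfrac{d(x,y)}{2} = \frac{\|x-y\|^{2}}{(1-\|x\|^{2})(1-\|y\|^{2})},
\]
together with $\cosh^{2}(d/2)-\sinh^{2}(d/2)=1$. Setting $[x,y] := (1-\|x\|^{2})(1-\|y\|^{2}) + \|x-y\|^{2}$, this gives $2\cosh(d(x,y)/2) = 2\sqrt{[x,y]/((1-\|x\|^{2})(1-\|y\|^{2}))}$ and solving the quadratic in $e^{d(x,y)/2}$ yields
\[
e^{d(x,y)} = \frac{\bigl(\sqrt{[x,y]} + \|x-y\|\bigr)^{2}}{(1-\|x\|^{2})(1-\|y\|^{2})}.
\]
Substituting into the definition $2\lb x|y\rb_{\0} = d(\0,x) + d(\0,y) - d(x,y)$ and simplifying using $1-\|x\|^{2} = (1-\|x\|)(1+\|x\|)$, the asymmetric factors cancel and one is left with the exact identity
\[
e^{-\lb x|y\rb_{\0}} = \frac{\sqrt{[x,y]} + \|x-y\|}{(1+\|x\|)(1+\|y\|)}.
\]

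Second, I would let $(x_{n})\in\xx$ and $(y_{m})\in\yy$ be arbitrary Gromov sequences. In the ball model, the Gromov topology on $\cl X$ agrees with the Euclidean topology on $\overline{\B^{d+1}}$ (this is immediate from Observation~\ref{observationboundaryconvergence2} together with the explicit expression for the Gromov product just derived, applied with $y$ fixed and $\|x\|\to 1$). Hence $x_{n}\to\xx$ and $y_{m}\to\yy$ in Euclidean norm. Since $\xx,\yy\in S^{d}$, we have $[\xx,\yy] = 0\cdot 0 + \|\xx-\yy\|^{2} = \|\xx-\yy\|^{2}$, so letting $n,m\to\infty$ in the formula above yields
\[
e^{-\lb x_{n}|y_{m}\rb_{\0}} \longrightarrow \frac{\|\xx-\yy\| + \|\xx-\yy\|}{2\cdot 2} = \frac{\|\yy-\xx\|}{2}.
\]

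Third, since the double limit exists and does not depend on the choice of Gromov sequences, every liminf appearing in the definition \eqref{gromovboundarydef1} of $\lb\xx|\yy\rb_{\0}$ equals $\log(2/\|\yy-\xx\|)$, and therefore so does the infimum. This gives the desired exact equality $\tfrac12\|\yy-\xx\| = e^{-\lb\xx|\yy\rb_{\0}}$.

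The main obstacle is the algebra in the first step: one needs the closed form for $e^{d(x,y)}$ in the ball model rather than just the usual asymptotic, so that after subtracting $d(\0,x)+d(\0,y)$ the factors of $(1\pm\|x\|),(1\pm\|y\|)$ collapse cleanly enough that the resulting expression extends continuously to the Euclidean boundary. Once this identity is in hand, the passage to the boundary is routine continuity.
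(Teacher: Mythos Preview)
Your proof is correct and complete. The paper itself does not give a proof of this proposition; it simply cites \cite[\S III.H.3.19]{BridsonHaefliger} for the asymptotic $\asymp_\times$ and \cite{DSU} for the exact equality. Your approach --- deriving the closed form
\[
e^{-\lb x|y\rb_{\0}} = \frac{\sqrt{[x,y]} + \|x-y\|}{(1+\|x\|)(1+\|y\|)}
\]
for interior points and then passing to the Euclidean boundary --- is the natural direct computation, and your verification that Gromov sequences in the ball model are exactly Euclidean-convergent sequences to sphere points (so that the infimum in \eqref{gromovboundarydef1} is actually a limit) correctly handles the one subtlety. There is nothing to compare against in the paper beyond the citations.
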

\begin{proof}
See \cite[\sectionsymbol III.H.3.19]{BridsonHaefliger} for $\asymp_\times$ and \cite[Lemma 3.5.1]{DSU} for equality.
\end{proof}
The following proposition generalizes \eqref{euclideanball2} to the setting of hyperbolic metric spaces:
\begin{proposition}[{\cite[Propositions 5.16 and 5.31]{Vaisala}}]
\label{propositionDist}
For each $b > 1$ sufficiently close to $1$ and for each $z\in X$, there exists a complete metametric $\Dist_z = \Dist_{b,z}$ on $\del X$ satisfying
\begin{equation}
\label{distanceasymptotic}
\Dist_{b,z}(y_1,y_2) \asymp_\times b^{-\lb y_1|y_2\rb_z}
\end{equation}
for all $y_1,y_2\in\bord X$. The implied constant may depend on $b$ but not on $z$.
\end{proposition}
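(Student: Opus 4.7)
The strategy is the classical one of Frink: build a metametric via a chain infimum applied to the ``quasi-metametric'' given by $b^{-\lb\cdot|\cdot\rb_z}$, then show this infimum is comparable to the original expression provided $b$ is close enough to $1$.

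First I would define, for $y_1,y_2\in\cl X$,
\[
\rho(y_1,y_2) := b^{-\lb y_1|y_2\rb_z},
\]
with the conventions $b^{-\infty} = 0$ and $\rho(y_1,y_1) = 0$ when $y_1\in X$. By Corollary \ref{corollaryboundaryasymptotic}, Gromov's inequality (with an additive constant $\delta'\geq 0$ depending only on $X$, not on $z$) extends to $\cl X$, so
\[
\lb y_1|y_3\rb_z \geq \min\bigl(\lb y_1|y_2\rb_z,\lb y_2|y_3\rb_z\bigr) - \delta',
\]
which exponentiates to
\[
\rho(y_1,y_3) \leq K\max\bigl(\rho(y_1,y_2),\rho(y_2,y_3)\bigr),\qquad K := b^{\delta'}.
\]
Since $K\to 1$ as $b\to 1^+$, by choosing $b$ close enough to $1$ I may assume $K\leq 2$.

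Next I would apply Frink's lemma: define
\[
\Dist_{b,z}(y_1,y_2) := \inf\left\{\sum_{i=1}^n \rho(x_{i-1},x_i) : n\in\N,\; x_0 = y_1,\; x_n = y_2,\; x_i\in\cl X\right\}.
\]
Symmetry and the triangle inequality for $\Dist_{b,z}$ are immediate from the definition. The upper bound $\Dist_{b,z}(y_1,y_2)\leq\rho(y_1,y_2)$ comes from the trivial one-step chain. The lower bound $\Dist_{b,z}(y_1,y_2)\geq \tfrac{1}{2}\rho(y_1,y_2)$ is the content of Frink's lemma and is proved by induction on the length of a chain: given a chain $x_0,\ldots,x_n$ with total $\rho$-length $S$, split it at the smallest index $j$ where $\sum_{i\leq j}\rho(x_{i-1},x_i)>S/2$; the hypothesis $K\leq 2$ together with the induction hypothesis applied to $x_0,\ldots,x_{j-1}$ and $x_j,\ldots,x_n$ yields $\rho(x_0,x_n)\leq 2S$. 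This establishes \eqref{distanceasymptotic} with an implicit constant depending only on $K$ (hence only on $b$), and in particular independent of $z$.

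It then remains to verify the metametric axioms and completeness. Property (II) (reverse reflexivity) when $y_1,y_2\in\del X$ are distinct: if $\Dist_{b,z}(y_1,y_2)=0$ then $\rho(y_1,y_2)=0$, so $\lb y_1|y_2\rb_z = \infty$; picking Gromov sequences representing $y_1$ and $y_2$, the definition \eqref{gromovboundarydef1} would then force $y_1=y_2$, a contradiction. For completeness, I would take a $\Dist_{b,z}$-Cauchy sequence $(\xi_n)\subset\del X$; then $\lb\xi_n|\xi_m\rb_z\to\infty$. Choosing Gromov representatives $(x_n^{(k)})_k\in\xi_n$ and passing to a diagonal subsequence $y_n := x_n^{(k_n)}$ with $k_n\to\infty$ fast enough, Lemma \ref{lemmaboundaryasymptotic} gives $\lb y_n|y_m\rb_z\to\infty$, so $(y_n)$ is a Gromov sequence defining a boundary point $\xi\in\del X$; by Observation \ref{observationboundaryconvergence} and one more application of Lemma \ref{lemmaboundaryasymptotic}, $\xi_n\to\xi$ in $\Dist_{b,z}$.

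The main obstacle I anticipate is the careful bookkeeping in the Frink chain argument to get the constant $K\leq 2$ threshold precisely, and in verifying that the implicit constant in \eqref{distanceasymptotic} depends only on $b$ (equivalently, on $K$ and the Gromov constant $\delta'$) and not on the basepoint $z$. The completeness argument is standard once the diagonalization lemma above is set up carefully using the near-continuity of the Gromov product on $\cl X$ (Lemma \ref{lemmanearcontinuity}).
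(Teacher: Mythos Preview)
The paper does not prove this proposition; it simply cites V\"ais\"al\"a \cite[Propositions 5.16 and 5.31]{Vaisala}. Your chain/Frink construction is exactly the standard argument carried out there (and in Ghys--de la Harpe, Bridson--Haefliger, etc.), so your approach matches the cited proof in all essential respects.

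One small slip: you should \emph{not} impose the convention $\rho(y_1,y_1)=0$ for $y_1\in X$. With that convention the chain infimum gives $\Dist_{b,z}(y_1,y_1)=0$, which would violate the asymptotic \eqref{distanceasymptotic} on the diagonal in $X$ (where $b^{-\lb y_1|y_1\rb_z}=b^{-\dist(y_1,z)}>0$) and would make $\Dist_{b,z}$ a genuine metric rather than a metametric. Simply take $\rho(y_1,y_2)=b^{-\lb y_1|y_2\rb_z}$ everywhere; Frink's lower bound $\Dist_{b,z}\geq\tfrac12\rho$ then also handles reverse reflexivity uniformly, and the domain of reflexivity is exactly $\del X$, as intended.
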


We will refer to $\Dist_{b,z}$ as the ``visual (meta)metric from the point $z$ with respect to the parameter $b$''.

The metric $\Dist_{b,z}\given\del X$ has been referred to in the literature as the \emph{Bourdon metric}. We remark that this metric is compatible with the topology defined in \sectionsymbol\ref{subsubsectiontopologyclX} restricted to $\del X$.

\begin{remark}
For CAT(-1) spaces, Proposition \ref{propositionDist} holds for any $1 < b \leq e$; moreover, the asymptotic in (\ref{distanceasymptotic}) may be replaced by an equality; see \cite{Bourdon} if $X$ is proper and \cite[Proposition 3.3.4]{DSU} for the general case.
\end{remark}

\subsubsection{Standing assumptions for Section \ref{sectionhyperbolic}}\label{standingassumptions2}
For the remainder of Section \ref{sectionhyperbolic}, we will have the following standing assumptions:
\begin{itemize}
\item[(I)] $(X,\dist)$ is a hyperbolic metric space
\item[(II)] $\zero\in X$ is a distinguished point
\item[(III)] $b > 1$ is a parameter close enough to $1$ to guarantee for every $z\in X$ the existence of a visual metametric $\Dist = \Dist_{b,z}$ via Proposition \ref{propositionDist} above.
\end{itemize}

\subsubsection{The visual metametric based at a point $\xi\in\del X$}

Our next metametric is supposed to generalize the Euclidean metric on the boundary of the upper half-plane model $\H^{d + 1}$. This metric should be thought of as ``seen from the point $\infty$''.

\begin{notation}
If $X$ is a hyperbolic metric space and $\xi\in\del X$, then let $\EE_\xi := \bord X\butnot\{\xi\}$.
\end{notation}

We will motivate the visual metametric based at a point $\xi\in\del X$ by considering a sequence $(z_n)_1^\infty$ in $X$ converging to $\xi$, and taking the limits of their visual metametrics.

In fact, $\Dist_{b,z_n}(y_1,y_2)\tendsto n 0$ for every $y_1, y_2\in\EE_\xi$. Some normalization is needed.
\begin{lemma}
\label{lemmaeuclideanmetametric}
Let $(X,\dist,\zero,b)$ be as in \sectionsymbol\ref{standingassumptions2}, and suppose $z_n\tendsto n\xi\in\del X$. Then for each $y_1, y_2\in\EE_\xi$,
\[
b^{d(\zero,z_n)} \Dist_{b,z_n} (y_1,y_2) \tendsto{n,\times} b^{-[\lb y_1 | y_2 \rb_\zero - \sum_{i = 1}^2 \lb y_i | \xi \rb_\zero]} .
\]
\end{lemma}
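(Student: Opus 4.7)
The goal is to relate $\lb y_1|y_2\rb_{z_n} - d(\zero,z_n)$ to quantities computed at the basepoint $\zero$, and then let $z_n \to \xi$. The key algebraic identity is item (i) of Proposition \ref{propositionbasicidentities}, which (for points in $X$) reads
\[
\lb x|y\rb_z = \lb x|y\rb_w + d(z,w) - \lb x|z\rb_w - \lb y|z\rb_w.
\]
Setting $x = y_1$, $y = y_2$, $z = z_n$, $w = \zero$ yields, after rearranging,
\[
\lb y_1|y_2\rb_{z_n} - d(\zero,z_n) = \lb y_1|y_2\rb_\zero - \lb y_1|z_n\rb_\zero - \lb y_2|z_n\rb_\zero.
\]
First I would invoke Corollary \ref{corollaryboundaryasymptotic} to justify that this identity remains valid (up to an additive error whose implied constant is independent of $n$) when $y_1,y_2$ are allowed to lie on $\del X$.

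Next, since $z_n \to \xi$ in $\cl X$ and both $y_i$ and $\zero$ are constant sequences in $\cl X$ (with $\zero\in X$ so that the basepoint hypothesis of Lemma \ref{lemmanearcontinuity} is satisfied), the near-continuity statement \eqref{continuousextension1} of Lemma \ref{lemmanearcontinuity} gives
\[
\lb y_i|z_n\rb_\zero \;\tendsto{n,\plus}\; \lb y_i|\xi\rb_\zero \qquad (i=1,2).
\]
Substituting these into the identity above produces
\[
\lb y_1|y_2\rb_{z_n} - d(\zero,z_n) \;\tendsto{n,\plus}\; \lb y_1|y_2\rb_\zero - \lb y_1|\xi\rb_\zero - \lb y_2|\xi\rb_\zero.
\]
Multiplying by $-1$, exponentiating with base $b$, and absorbing the additive error as a multiplicative constant gives
\[
b^{d(\zero,z_n) - \lb y_1|y_2\rb_{z_n}} \;\tendsto{n,\times}\; b^{-[\lb y_1|y_2\rb_\zero - \lb y_1|\xi\rb_\zero - \lb y_2|\xi\rb_\zero]}.
\]

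Finally, Proposition \ref{propositionDist} tells us that $\Dist_{b,z_n}(y_1,y_2) \asymp_\times b^{-\lb y_1|y_2\rb_{z_n}}$ with implied constant depending only on $b$, hence independent of $n$. Multiplying through by $b^{d(\zero,z_n)}$ and combining with the previous display yields the desired conclusion. There is no real obstacle here, the only subtlety being to verify that the implied additive/multiplicative constants in Corollary \ref{corollaryboundaryasymptotic} and Proposition \ref{propositionDist} are uniform in $n$, which they are because each depends only on $b$ and the hyperbolicity constant of $X$; this uniformity is exactly what allows the multiplicative asymptotic $\tendsto{n,\times}$ rather than merely a bounded ratio.
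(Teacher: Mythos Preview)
Your proof is correct and follows essentially the same approach as the paper: both use item (i) of Proposition \ref{propositionbasicidentities} to rewrite $\lb y_1|y_2\rb_{z_n} - d(\zero,z_n)$ in terms of Gromov products based at $\zero$, then invoke Lemma \ref{lemmanearcontinuity} to pass to the limit, and combine with the visual-metric asymptotic of Proposition \ref{propositionDist}. Your discussion of uniformity of the implied constants is a welcome clarification that the paper leaves implicit.
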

\begin{proof}
\begin{align*}
b^{\dist(\zero, z_n)} \Dist_{b,z_n}(y_1,y_2)
&\asymp_\times b^{-[\lb y_1 | y_2 \rb_{z_n} - \dist(\zero,z_n)]} \\
&\asymp_\times b^{-[\lb y_1 | y_2 \rb_\zero - \sum_{i = 1}^2 \lb y_i | z_n \rb_\zero]} \by{(i) of Proposition \ref{propositionbasicidentities}}\\
&\tendsto{n,\times} b^{-[\lb y_1 | y_2 \rb_\zero - \sum_{i = 1}^2 \lb y_i | \xi \rb_\zero]}. \by{Lemma \ref{lemmanearcontinuity}}
\end{align*}
\end{proof}

\begin{corollary}
\label{corollaryeuclideanmetametric}
Let $(X,\dist,\zero,b)$ be as in \sectionsymbol\ref{standingassumptions2}. Then for each $\xi\in\del X$, there exists a metametric $\Dist_{\xi,\zero} = \Dist_{b,\xi,\zero}$ on $\EE_\xi$ satisfying
\begin{equation}
\label{euclideanmetametric}
\Dist_{b,\xi,\zero}(y_1,y_2) \asymp_\times b^{-[\lb y_1 | y_2 \rb_\zero - \sum_{i = 1}^2 \lb y_i | \xi \rb_\zero]} \ .
\end{equation}
\end{corollary}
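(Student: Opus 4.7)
The plan is to piggyback on Proposition \ref{propositionDist}: I will produce the desired metametric by taking a suitably rescaled ``limit'' of the visual metametrics $\Dist_{b,z_n}$ as $z_n\to\xi$. First, fix any sequence $(z_n)_1^\infty$ in $X$ with $z_n\tendsto n \xi$, and define rescaled functions $\Dist^{(n)} := b^{\dist(\zero,z_n)}\Dist_{b,z_n}$ on $\EE_\xi\times\EE_\xi$. Each $\Dist^{(n)}$ is a positive scalar multiple of a metametric, so it is itself a metametric; in particular it satisfies the triangle inequality exactly. By Lemma \ref{lemmaeuclideanmetametric}, for every pair $y_1,y_2\in\EE_\xi$ we have $\Dist^{(n)}(y_1,y_2)\tendsto{n,\times}\rho(y_1,y_2)$, where
\[
\rho(y_1,y_2) := b^{-[\lb y_1|y_2\rb_\zero - \sum_{i=1}^2\lb y_i|\xi\rb_\zero]},
\]
with the implied multiplicative constant from \eqref{distanceasymptotic} independent of the pair.

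Next, I will verify that $\rho$ has the properties of a metametric up to a multiplicative constant. Symmetry is immediate from (a) of Proposition \ref{propositionbasicidentities}. For reverse reflexivity, note that $\rho(y_1,y_2)=0$ forces $\lb y_1|y_2\rb_\zero = +\infty$, while $y_1,y_2\in\EE_\xi$ guarantees $\lb y_i|\xi\rb_\zero<+\infty$; together with Gromov's inequality on the boundary (Corollary \ref{corollaryboundaryasymptotic}) this forces $y_1=y_2$. The quasi-triangle inequality is obtained by passing to the limit (using the two-sided asymptotic from Lemma \ref{lemmaeuclideanmetametric}) in the triangle inequality $\Dist^{(n)}(y_1,y_3) \leq \Dist^{(n)}(y_1,y_2)+\Dist^{(n)}(y_2,y_3)$; this yields
\[
\rho(y_1,y_3) \lesssim_\times \rho(y_1,y_2) + \rho(y_2,y_3),
\]
with a universal implied constant $K\geq 1$ depending only on the constant in \eqref{distanceasymptotic}.

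The final step is to convert $\rho$ into an honest metametric $\Dist_{b,\xi,\zero}$ bi-Lipschitz equivalent to $\rho$. This is done by the standard Frink-type chaining construction
\[
\Dist_{b,\xi,\zero}(y_1,y_2) := \inf\left\{\sum_{i=0}^{m-1}\rho(w_i,w_{i+1}) : w_0=y_1,\ w_m=y_2,\ w_i\in\EE_\xi\right\},
\]
identical to the one used in the proof of Proposition \ref{propositionDist} in \cite[Proposition 5.16]{Vaisala}. This always produces a pseudometric dominated by $\rho$; the quasi-triangle inequality above ensures that, provided $b>1$ is close enough to $1$ so that $K$ is correspondingly tame (which is guaranteed by the standing assumptions of \S\ref{standingassumptions2}), the chained infimum stays comparable to $\rho$, i.e.\ $\Dist_{b,\xi,\zero}\asymp_\times \rho$. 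Reverse reflexivity of $\Dist_{b,\xi,\zero}$ is then inherited from $\rho$, and symmetry and the triangle inequality hold exactly by the chain construction, so $\Dist_{b,\xi,\zero}$ is the required metametric satisfying \eqref{euclideanmetametric}.

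The main obstacle is the last step: transferring the quasi-triangle inequality for $\rho$ into a genuine triangle inequality for a comparable function. Everything else is essentially bookkeeping applied to Lemma \ref{lemmaeuclideanmetametric}. Since the chaining argument is identical in spirit to the one already invoked in Proposition \ref{propositionDist}, the cleanest write-up would simply cite \cite[Proposition 5.16]{Vaisala} rather than repeat the computation.
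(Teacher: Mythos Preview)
Your argument is correct in outline but takes an unnecessarily circuitous route. The paper's proof is essentially one line: define
\[
\Dist_{b,\xi,\zero}(y_1,y_2) := \limsup_{z\to\xi} b^{\dist(\zero,z)}\Dist_{b,z}(y_1,y_2).
\]
Each rescaled function $b^{\dist(\zero,z)}\Dist_{b,z}$ is already a genuine metametric (a positive scalar multiple of one), and symmetry and the exact triangle inequality are preserved under pointwise suprema and pointwise limits, hence under $\limsup$. So $\Dist_{b,\xi,\zero}$ satisfies the triangle inequality exactly, with no chaining required; the asymptotic \eqref{euclideanmetametric} and reverse reflexivity then follow from Lemma \ref{lemmaeuclideanmetametric}.

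Your route---proving a quasi-triangle inequality for $\rho$ by passing to the limit, then applying a Frink-type chain construction---does work, but it buys nothing here and costs you a subtlety you gloss over. You assert that the standing hypothesis on $b$ from \S\ref{standingassumptions2} suffices to make the chained infimum comparable to $\rho$, but the quasi-triangle constant $K$ you obtain for $\rho$ is $C^2$, where $C$ absorbs both the implied constant of Proposition \ref{propositionDist} and the additive error from Lemma \ref{lemmanearcontinuity}; it is not automatic that the same threshold on $b$ that makes Proposition \ref{propositionDist} work also forces $K$ small enough for the chain lemma. The paper's $\limsup$ trick avoids this entirely by never leaving the class of exact metametrics, and you had all the ingredients for it in your first paragraph.
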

The metric $\Dist_{b,\xi,\zero}\given\EE_\xi\cap\del X$ has been referred to in the literature as the \emph{Hamenst\"adt metric}.

\begin{proof}[Proof of Corollary \ref{corollaryeuclideanmetametric}]
Let
\[
\Dist_{b,\xi,\zero}(y_1,y_2) = \limsup_{z\to\xi}b^{\dist(\zero,z)}\Dist_{b,z}(y_1,y_2).
\]
Since the class of metametrics is closed under suprema and limits, it follows that $\Dist_{b,\xi,\zero}$ is a metametric.
\end{proof}

From Lemma \ref{lemmaeuclideanmetametric} and Corollary \ref{corollaryeuclideanmetametric} it immediately follows that
\begin{equation}
\label{euclideanmetrictendsasymp}
b^{d(\zero,z_n)} \Dist_{b,z_n} (y_1,y_2) \tendsto{n,\times} \Dist_{b,\xi,\zero}(y_1,y_2)
\end{equation}
whenever $(z_n)_1^\infty \in \xi$.

\begin{lemma}
\label{lemmaeuclideancomparison}
Let $(X,\dist,\zero,b)$ be as in \sectionsymbol\ref{standingassumptions2}. For all $x\in X$ and $\xi \in \del X$, we have
\begin{equation}
\label{euclideancomparison}
\Dist_{\xi,\zero}(\zero,x) \asymp_\times b^{\lb x | \xi \rb_\zero} \asymp_\times \frac{1}{\Dist_\zero(x,\xi)} .
\end{equation}
\end{lemma}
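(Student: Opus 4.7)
The plan is to verify each of the two asymptotics separately, both essentially by unwinding the definitions.

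For the second asymptotic $b^{\lb x|\xi\rb_\zero} \asymp_\times 1/\Dist_\zero(x,\xi)$, I would simply invoke the defining property \eqref{distanceasymptotic} of the visual metametric from Proposition \ref{propositionDist} with $z = \zero$, $y_1 = x$, and $y_2 = \xi$. This gives $\Dist_\zero(x,\xi) \asymp_\times b^{-\lb x|\xi\rb_\zero}$ directly, and taking reciprocals yields the claim. Note that this requires knowing that the Gromov product $\lb x|\xi\rb_\zero$ is the ``correct'' extended Gromov product from \eqref{gromovboundarydef2}, which is exactly what \eqref{distanceasymptotic} refers to.

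For the first asymptotic $\Dist_{\xi,\zero}(\zero,x) \asymp_\times b^{\lb x|\xi\rb_\zero}$, I would substitute $y_1 = \zero$ and $y_2 = x$ into the defining asymptotic \eqref{euclideanmetametric} of the metametric $\Dist_{b,\xi,\zero}$ from Corollary \ref{corollaryeuclideanmetametric}:
\[
\Dist_{b,\xi,\zero}(\zero,x) \asymp_\times b^{-[\lb \zero|x\rb_\zero - \lb \zero|\xi\rb_\zero - \lb x|\xi\rb_\zero]}.
\]
The two Gromov products involving $\zero$ in the base-and-argument position vanish: by (c) of Proposition \ref{propositionbasicidentities} (extended to the boundary via Corollary \ref{corollaryboundaryasymptotic}), one has $0 \leq \lb \zero|y\rb_\zero \leq \dist(\zero,\zero) = 0$ for any $y \in X$, and the same near-equality $\lb \zero|\xi\rb_\zero \asymp_\plus 0$ holds for $\xi \in \del X$ via the boundary version. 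The surviving term in the exponent is $\lb x|\xi\rb_\zero$, which gives $b^{\lb x|\xi\rb_\zero}$ as desired (the bounded additive errors from Corollary \ref{corollaryboundaryasymptotic} exponentiate to multiplicative constants absorbed by $\asymp_\times$).

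There is no real obstacle here; the lemma is essentially a bookkeeping exercise. The only subtle point is making sure that the Gromov products involving the boundary point $\xi$ are interpreted in the sense of \eqref{gromovboundarydef2} and that the ``exact'' identities from Proposition \ref{propositionbasicidentities} are replaced by additive asymptotics on $\cl X$ as guaranteed by Corollary \ref{corollaryboundaryasymptotic}, so that all error terms accumulate in a single multiplicative constant.
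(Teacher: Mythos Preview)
Your proof is correct and follows exactly the paper's approach: substitute $y_1=\zero$, $y_2=x$ into \eqref{euclideanmetametric} and observe that the two Gromov products involving $\zero$ vanish, then invoke \eqref{distanceasymptotic} for the second asymptotic. One minor remark: $\lb \zero|x\rb_\zero = 0$ and $\lb \zero|\xi\rb_\zero = 0$ hold exactly (the latter directly from the definition \eqref{gromovboundarydef2}, since $\lb x_n|\zero\rb_\zero = 0$ for every term of any Gromov sequence), so you need not pass through Corollary~\ref{corollaryboundaryasymptotic} for these terms.
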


\begin{proof}
By \eqref{euclideanmetametric},
\[
\Dist_{\xi,\zero}(\zero,x) \asymp_\times b^{-[\lb \zero|x\rb_\zero - \lb \zero|\xi\rb_\zero - \lb x|\xi\rb_\zero]} = b^{\lb x|\xi\rb_\zero}.
\]
\end{proof}

\subsubsection{Comparison of different metametrics}

\begin{observation}
\label{observationGMVT}
Let $(X,\dist,\zero,b)$ be as in \sectionsymbol\ref{standingassumptions2}, and fix $y_1,y_2\in\bord X$.
\begin{itemize}
\item[(i)] For all $z_1,z_2\in X$, we have
\begin{equation}
\label{GMVT1}
\frac{\Dist_{z_1}(y_1,y_2)}{\Dist_{z_2}(y_1,y_2)} \asymp_\times b^{-\frac{1}{2}[\busemann_{y_1}(z_1,z_2) + \busemann_{y_2}(z_1,z_2)]}.
\end{equation}
\item[(ii)] For all $z\in X$ and $\xi\in\del X$, we have
\begin{equation}
\label{GMVT2}
\frac{\Dist_{\xi,z}(y_1,y_2)}{\Dist_z(y_1,y_2)} \asymp_\times b^{-[\lb y_1|\xi\rb_z + \lb y_2|\xi\rb_z]}.
\end{equation}
\item[(iii)] For all $z_1,z_2\in X$ and $\xi\in\del X$, we have
\begin{equation}
\label{GMVT3}
\frac{\Dist_{\xi,z_1}(y_1,y_2)}{\Dist_{\xi,z_2}(y_1,y_2)} \asymp_\times b^{\busemann_\xi(z_1,z_2)}.
\end{equation}
\end{itemize}
\end{observation}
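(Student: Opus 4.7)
All three estimates are algebraic consequences of two basic inputs. First, the defining visual metametric asymptotic $\Dist_z(y_1,y_2)\asymp_\times b^{-\lb y_1|y_2\rb_z}$ from \eqref{distanceasymptotic} together with the Hamenst\"adt formula \eqref{euclideanmetametric}. Second, identity (f) of Proposition~\ref{propositionbasicidentities},
\[
\lb x|y\rb_z = \lb x|y\rb_w + \tfrac{1}{2}[\busemann_x(z,w)+\busemann_y(z,w)],
\]
extended to boundary points via Corollary~\ref{corollaryboundaryasymptotic} and Lemma~\ref{lemmanearcontinuity}, where it holds only up to a universal additive error. My plan is to prove (i) and (ii) by direct substitution and then combine them to obtain (iii).

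For (i), I take the ratio and apply \eqref{distanceasymptotic} at both basepoints:
\[
\frac{\Dist_{z_1}(y_1,y_2)}{\Dist_{z_2}(y_1,y_2)}\asymp_\times b^{-(\lb y_1|y_2\rb_{z_1}-\lb y_1|y_2\rb_{z_2})}.
\]
Identity (f) with $(x,y,z,w)=(y_1,y_2,z_1,z_2)$ gives $\lb y_1|y_2\rb_{z_1}-\lb y_1|y_2\rb_{z_2}=\tfrac{1}{2}[\busemann_{y_1}(z_1,z_2)+\busemann_{y_2}(z_1,z_2)]$, so the exponent becomes $-\tfrac{1}{2}[\busemann_{y_1}(z_1,z_2)+\busemann_{y_2}(z_1,z_2)]$, which is exactly \eqref{GMVT1}. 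For (ii), I take the ratio of \eqref{euclideanmetametric} to \eqref{distanceasymptotic} at the common basepoint $z$; the shared $\lb y_1|y_2\rb_z$ terms cancel, and reading off the remaining exponent produces \eqref{GMVT2}.

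For (iii) I telescope as
\[
\frac{\Dist_{\xi,z_1}(y_1,y_2)}{\Dist_{\xi,z_2}(y_1,y_2)}
= \frac{\Dist_{\xi,z_1}(y_1,y_2)/\Dist_{z_1}(y_1,y_2)}{\Dist_{\xi,z_2}(y_1,y_2)/\Dist_{z_2}(y_1,y_2)}\cdot\frac{\Dist_{z_1}(y_1,y_2)}{\Dist_{z_2}(y_1,y_2)}.
\]
Substituting (i) and (ii) produces an exponent involving $\sum_{i=1}^2[\lb y_i|\xi\rb_{z_1}-\lb y_i|\xi\rb_{z_2}]$ together with $-\tfrac{1}{2}\sum_{i=1}^2\busemann_{y_i}(z_1,z_2)$. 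Applying identity (f) to each difference yields $\lb y_i|\xi\rb_{z_1}-\lb y_i|\xi\rb_{z_2}=\tfrac{1}{2}[\busemann_{y_i}(z_1,z_2)+\busemann_\xi(z_1,z_2)]$, and the $\tfrac{1}{2}\sum_i\busemann_{y_i}$ contribution thereby produced exactly cancels the one from (i), leaving $b^{\busemann_\xi(z_1,z_2)}$, as required by \eqref{GMVT3}.

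The only point requiring care is that identity (f) must be invoked with arguments on $\del X$, where it holds only up to a bounded additive error (by Corollary~\ref{corollaryboundaryasymptotic} and Lemma~\ref{lemmanearcontinuity}). Since every displayed estimate is stated up to a multiplicative constant, exponentiating these bounded additive errors yields bounded multiplicative factors that are absorbed harmlessly into $\asymp_\times$. No deeper obstacle is anticipated: the argument is essentially bookkeeping with the identities of Proposition~\ref{propositionbasicidentities} and their boundary extensions.
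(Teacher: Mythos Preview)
Your argument is correct. Parts (i) and (ii) match the paper exactly: the paper's one-line proof cites identity (f) of Proposition~\ref{propositionbasicidentities} for (i) and \eqref{euclideanmetametric} for (ii), which is precisely what you do.

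For (iii) you take a slightly different route. The paper deduces (iii) directly from the limit relation \eqref{euclideanmetrictendsasymp}: since $\Dist_{\xi,z}(y_1,y_2)$ arises (up to $\asymp_\times$) as $\lim_{w\to\xi} b^{\dist(z,w)}\Dist_w(y_1,y_2)$, taking the ratio for two basepoints $z_1,z_2$ leaves $\lim_{w\to\xi} b^{\dist(z_1,w)-\dist(z_2,w)} \asymp_\times b^{\busemann_\xi(z_1,z_2)}$ by near-continuity of the Busemann function. Your approach instead telescopes through (i) and (ii) and then invokes identity (f) once more for each $\lb y_i|\xi\rb_{z_1}-\lb y_i|\xi\rb_{z_2}$, after which the $\busemann_{y_i}$ terms cancel. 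Both are short and valid; the paper's route is marginally cleaner because it bypasses the extra bookkeeping with the $y_i$-dependent Busemann terms, while yours has the virtue of deriving (iii) purely from the already-established (i), (ii), and the algebraic identities, without returning to the limiting definition of $\Dist_{\xi,z}$.
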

\begin{proof}
(i) follows from the (f) of Proposition \ref{propositionbasicidentities}, (ii) follows from \eqref{euclideanmetametric}, and (iii) follows from \eqref{euclideanmetrictendsasymp}.
\end{proof}

\begin{corollary}
\label{corollaryGMVT}
Let $(X,\dist,\zero,b)$ be as in \sectionsymbol\ref{standingassumptions2}, and fix $y_1,y_2\in\bord X$. Then for all $g\in\Isom(X)$,
\begin{equation}
\label{GMVT4}
\frac{\Dist(g(y_1),g(y_2))}{\Dist(y_1,y_2)} \asymp_\times b^{(1/2)[\busemann_{y_1}(\zero,g^{-1}(\zero)) + \busemann_{y_2}(\zero,g^{-1}(\zero))]}.
\end{equation}
\end{corollary}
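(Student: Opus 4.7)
The strategy is to combine two invariance-type facts: (a) the isometry invariance of the Gromov product (and hence of the visual metametric, up to the choice of basepoint), and (b) part (i) of Observation \ref{observationGMVT}, which tells us the multiplicative error incurred when we change the basepoint of the visual metametric. The point $g(\zero)$ provides the bridge.

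First, since $g$ is an isometry and the Gromov product depends only on distances, $\lb g(y_1)|g(y_2)\rb_{g(\zero)} = \lb y_1|y_2\rb_\zero$; combining this with \eqref{distanceasymptotic} for both basepoints $g(\zero)$ and $\zero$ yields
\[
\Dist_{b,g(\zero)}(g(y_1),g(y_2)) \asymp_\times b^{-\lb y_1|y_2\rb_\zero} \asymp_\times \Dist_{b,\zero}(y_1,y_2).
\]
For $y_1,y_2\in\del X$ the same identity for the extended Gromov product is obtained by taking Gromov sequences and invoking Lemma \ref{lemmaboundaryasymptotic}.

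Next I apply Observation \ref{observationGMVT}(i) with the two basepoints $z_1 = \zero$ and $z_2 = g(\zero)$ and with the points $g(y_1), g(y_2)$. This gives
\[
\frac{\Dist(g(y_1),g(y_2))}{\Dist_{b,g(\zero)}(g(y_1),g(y_2))} \asymp_\times b^{-\frac{1}{2}[\busemann_{g(y_1)}(\zero,g(\zero)) + \busemann_{g(y_2)}(\zero,g(\zero))]}.
\]
Now I use the isometry invariance of the Busemann function: from the definition \eqref{busemanndef} applied to $y_i\in X$ we have $\busemann_{g(y_i)}(\zero,g(\zero)) = \dist(g(y_i),\zero) - \dist(y_i,\zero) = -\busemann_{y_i}(\zero,g^{-1}(\zero))$, and for $y_i\in\del X$ the same identity follows by passing to Gromov sequences and applying \eqref{boundaryasymptotic3} together with Lemma \ref{lemmanearcontinuity}.

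Finally, multiplying the two asymptotics together (the first relates $\Dist_{b,g(\zero)}(g(y_1),g(y_2))$ to $\Dist(y_1,y_2)$, the second relates $\Dist(g(y_1),g(y_2))$ to $\Dist_{b,g(\zero)}(g(y_1),g(y_2))$) yields exactly \eqref{GMVT4}. I do not anticipate any real obstacle here: the only mildly delicate point is that the Busemann function on the boundary was defined via \eqref{gromovboundarydef3} rather than directly by differences of distances, so the isometry invariance $\busemann_{g(\xi)}(g(u),g(v)) = \busemann_\xi(u,v)$ must be checked through Lemma \ref{lemmaboundaryasymptotic}; but this is immediate once one writes $\busemann_\xi$ as a limit of $\busemann_{x_n}$ along any Gromov sequence $(x_n)\in\xi$.
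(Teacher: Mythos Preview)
Your proof is correct and follows essentially the same idea as the paper's: combine the isometry invariance of the Gromov product with Observation~\ref{observationGMVT}(i). The paper's version is a one-liner: noting that $\Dist\circ g = \Dist_{g^{-1}(\zero)}$, one applies \eqref{GMVT1} directly with $z_1 = g^{-1}(\zero)$, $z_2 = \zero$ and the \emph{original} points $y_1,y_2$, so the Busemann terms $\busemann_{y_i}(\zero,g^{-1}(\zero))$ appear immediately without the extra conversion step $\busemann_{g(y_i)}(\zero,g(\zero)) = -\busemann_{y_i}(\zero,g^{-1}(\zero))$ that you perform.
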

\begin{proof}
Note that $\Dist\circ g = \Dist_{g^{-1}(\zero)}$, and apply \eqref{GMVT1}.
\end{proof}

\begin{notation}
From now on we will write
\[
g'(y) = b^{\busemann_y(\zero,g^{-1}(\zero))},
\]
so that \eqref{GMVT4} may be rewritten as the \emph{geometric mean value theorem}
\begin{equation}
\label{GMVT5}
\frac{\Dist(g(y_1),g(y_2))}{\Dist(y_1,y_2)} \asymp_\times \left(g'(y_1) g'(y_2)\right)^{1/2}.
\end{equation}
\end{notation}
To justify this notation somewhat, note that if $y_1\in\bord X$ is not an isolated point then
\[
g'(y_1) \asymp_\times \lim_{y_2\to y_1}\frac{\Dist(g(y_1),g(y_2))}{\Dist(y_1,y_2)}
\]
by \eqref{GMVT5} together with Lemma \ref{lemmanearcontinuity}.

We end this subsection with a proposition which shows the relation between the derivative of an isometry $g\in\Isom(X)$ at a point $\xi\in\Fix(g)$ and the action on the metametric space $(\EE_\xi,\Dist_{\xi,\zero})$:

\begin{proposition}
\label{propositioneuclideansimilarity}
Fix $g\in\Isom(X)$ and $\xi\in\Fix(g)$. Then for all $y_1,y_2\in\EE_\xi$,
\[
\Dist_{\xi,\zero}(g(y_1), g(y_2)) \asymp_\times g'(\xi)^{-1} \Dist_{\xi,\zero}(y_1, y_2).
\]
\end{proposition}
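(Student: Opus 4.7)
The plan is to reduce the left-hand side to a $\xi$-based metametric with a shifted base-\emph{point}, and then apply part (iii) of Observation \ref{observationGMVT} to convert the basepoint shift into a factor of $g'(\xi)^{-1}$.

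First I would unfold $\Dist_{\xi,\zero}(g(y_1),g(y_2))$ via \eqref{euclideanmetametric} and exploit the fact that $g$ is an isometry fixing $\xi$. Since $g$ is an isometry one has $\lb g(u)\mid g(v)\rb_{\zero} = \lb u\mid v\rb_{g^{-1}(\zero)}$ for all $u,v\in\cl X$ (this extends from $X$ to $\cl X$ by Lemma \ref{lemmaboundaryasymptotic} applied to representing Gromov sequences). Applying this to the three pairs $(y_1,y_2)$, $(y_1,\xi)$, $(y_2,\xi)$ and using $g(\xi)=\xi$, the exponent in \eqref{euclideanmetametric} transforms as
\[
\lb g(y_1)\mid g(y_2)\rb_{\zero} - \sum_{i=1}^{2}\lb g(y_i)\mid\xi\rb_{\zero} = \lb y_1\mid y_2\rb_{g^{-1}(\zero)} - \sum_{i=1}^{2}\lb y_i\mid\xi\rb_{g^{-1}(\zero)},
\]
and a second application of \eqref{euclideanmetametric} yields
\[
\Dist_{\xi,\zero}(g(y_1),g(y_2)) \asymp_\times \Dist_{\xi,g^{-1}(\zero)}(y_1,y_2).
\]

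Next I would apply part (iii) of Observation \ref{observationGMVT} with $z_1 = g^{-1}(\zero)$ and $z_2 = \zero$ to get
\[
\Dist_{\xi,g^{-1}(\zero)}(y_1,y_2) \asymp_\times b^{\busemann_\xi(g^{-1}(\zero),\zero)}\,\Dist_{\xi,\zero}(y_1,y_2).
\]
By the antisymmetry of the Busemann function in its arguments (immediate from \eqref{busemanndef} on $X$ and from \eqref{gromovboundarydef3} on $\del X$), $\busemann_\xi(g^{-1}(\zero),\zero) = -\busemann_\xi(\zero,g^{-1}(\zero))$, so that $b^{\busemann_\xi(g^{-1}(\zero),\zero)} = g'(\xi)^{-1}$ by definition of $g'(\xi)$. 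Combining the two displayed asymptotics gives the claim.

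I expect no real obstacle here: everything is a bookkeeping exercise once the identification $\Dist_{\xi,\zero}\circ (g\times g) \asymp_\times \Dist_{\xi,g^{-1}(\zero)}$ is observed. The only mild subtlety is invoking isometry invariance of the Gromov product for arguments that may lie in $\del X$ rather than $X$, which is handled uniformly by \eqref{euclideanmetametric} holding on all of $\EE_\xi\subset\cl X$.
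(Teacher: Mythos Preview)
Your proof is correct and follows essentially the same route as the paper's: first identify $\Dist_{\xi,\zero}\circ(g\times g)$ with $\Dist_{\xi,g^{-1}(\zero)}$, then apply \eqref{GMVT3} and the definition of $g'(\xi)$. The only cosmetic difference is that the paper records the first step as an exact equality (directly from the limsup definition of $\Dist_{\xi,\zero}$ and the fact that $g$ is an isometry fixing $\xi$), whereas you obtain it as an $\asymp_\times$ via \eqref{euclideanmetametric}; either suffices.
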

\begin{proof}
\begin{align*}
\Dist_{\xi,\zero}(g(y_1), g(y_2)) &=_\pt \Dist_{\xi,g^{-1}(\zero)}(y_1, y_2) \\
&\asymp_\times b^{- \busemann_\xi(\zero,g^{-1}(\zero))} \Dist_{\xi,\zero}(y_1,y_2) \by{\eqref{GMVT3}} \\
&\asymp_\times g'(\xi)^{-1} \Dist_{\xi,\zero}(y_1,y_2) \ .
\end{align*}
\end{proof}

\subsection{The geometry of shadows} \label{subsectionshadows}
Recall that if $X = \H^{d + 1}$ or $\B^{d + 1}$, then for each $z\in X$ the projection map $\pi_z:X\setminus\{z\}\to\del X$ is defined to be the unique map so that for all $x\in X\setminus\{z\}$, $x$ is on the geodesic ray joining $z$ and $\pi_z(x)$. For $x\in X$ and $\sigma > 0$, it is useful to consider the set $\pi_z(B(x,\sigma))$, which is called the ``shadow'' of the ball $B(x,\sigma)$ with respect to the point $z$. This definition does not make sense in our setting, since a hyperbolic metric space does not necessarily have geodesics. Thus we replace it with the following definition which uses only the Gromov product:

\begin{definition}
Let $(X,\dist)$ be a hyperbolic metric space. For each $\sigma > 0$ and $x,z\in X$, let
\[
\Shad_z(x,\sigma) = \{\eta\in\del X:\lb z|\eta\rb_x\leq\sigma\}.
\]
We say that $\Shad_z(x,\sigma)$ is the \emph{shadow cast by $x$ from the light source $z$, with parameter $\sigma$}. For shorthand we will write $\Shad(x,\sigma) = \Shad_\zero(x,\sigma)$.
\end{definition}
Although we will not need it in our proofs, let us remark that the sets $\Shad_z(x,\sigma)$ are closed \cite[Observation 4.5.2]{DSU}.

\begin{proposition}[{\cite[Corollary 4.5.5]{DSU}}]
Let $X = \H^{d + 1}$ or $\B^{d + 1}$. For every $\sigma > 0$, there exists $\tau = \tau_\sigma > 0$ such that for any $x,z\in X$ we have
\[
\pi_z(B(x,\sigma)) \subset \Shad_z(x,\sigma) \subset \pi_z(B(x,\tau)).
\]
\end{proposition}

Let us establish up front some geometric properties of shadows.
\begin{observation}\label{observationmetricshadow}
If $\eta\in \Shad_z(x,\sigma)$, then
\[
\lb x|\eta\rb_z \asymp_{\plus,\sigma} \dist(z,x).
\]
\end{observation}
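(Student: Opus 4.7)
The plan is to derive the asymptotic directly from the basic identity (b) of Proposition~\ref{propositionbasicidentities}, extended to boundary points via Corollary~\ref{corollaryboundaryasymptotic}. Writing (b) with $y$ replaced by $x$ and $x$ replaced by $\eta\in\del X$ yields
\[
\dist(x,z) \asymp_\plus \lb x|\eta\rb_z + \lb z|\eta\rb_x,
\]
so the observation reduces to showing that the term $\lb z|\eta\rb_x$ is bounded in terms of $\sigma$.

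For the upper bound on $\lb z|\eta\rb_x$, I would simply invoke the hypothesis $\eta\in\Shad_z(x,\sigma)$, which by definition gives $\lb z|\eta\rb_x\leq \sigma$. For the lower bound, I would use the boundary version of item (c) of Proposition~\ref{propositionbasicidentities}: the sharp inequality $\lb z|\eta\rb_x\geq 0$ holds for points in $X$, and Corollary~\ref{corollaryboundaryasymptotic} promotes it to $\lb z|\eta\rb_x\gtrsim_\plus 0$ once $\eta$ is on the boundary. Combining these two bounds gives $\lb z|\eta\rb_x \asymp_{\plus,\sigma} 0$.

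Substituting this back into the identity above yields $\lb x|\eta\rb_z \asymp_{\plus,\sigma} \dist(z,x)$, as desired. I do not foresee any real obstacle; the only subtlety is keeping track of the fact that the identities of Proposition~\ref{propositionbasicidentities} hold only additively up to a universal constant once boundary points are involved, so the final implied constant in $\asymp_{\plus,\sigma}$ depends on both the hyperbolicity constant of $X$ (absorbed into the universal implied constants of the asymptotics) and on $\sigma$ through the upper bound on $\lb z|\eta\rb_x$.
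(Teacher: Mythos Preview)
Your proof is correct and is essentially the same as the paper's. The paper writes the chain $\lb x|\eta\rb_z \leq \dist(z,x) \asymp_\plus \lb x|\eta\rb_z + \lb z|\eta\rb_x \leq \lb x|\eta\rb_z + \sigma$, which uses exactly the same ingredients (identity (b), the bound from (c), and the shadow hypothesis $\lb z|\eta\rb_x\leq\sigma$) as your argument, just arranged slightly differently.
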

\begin{proof}
We have $\lb x|\eta\rb_z
\leq \dist(z,x)
\asymp_\plus \lb x|\eta\rb_z + \lb z|\eta\rb_x
\leq \lb x|\eta\rb_z + \sigma$.
\end{proof}

\begin{lemma}[Intersecting Shadows Lemma]
\label{lemmatau}
Let $(X,\dist)$ be a hyperbolic metric space. For each $\sigma > 0$, there exists $\tau = \tau_\sigma > 0$ such that for all $x,y,z\in X$ satisfying $\dist(z,y)\geq \dist(z,x)$ and $\Shad_z(x,\sigma)\cap \Shad_z(y,\sigma)\neq\emptyset$, we have
\begin{equation}
\label{shadcontainment}
\Shad_z(y,\sigma) \subset\Shad_z(x,\tau)
\end{equation}
and
\begin{equation}
\label{distbusemann}
\dist(x,y) \asymp_{\plus,\sigma} \busemann_z(y,x).
\end{equation}
\end{lemma}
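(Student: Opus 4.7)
The plan is to pick a witness $\eta_0 \in \Shad_z(x,\sigma) \cap \Shad_z(y,\sigma)$, use it to establish that $\lb x|y\rb_z \asymp_{\plus,\sigma} \dist(z,x)$, and then derive both conclusions from this single estimate via the standard identities of Proposition \ref{propositionbasicidentities} and Gromov's inequality.

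First I would apply Observation \ref{observationmetricshadow} (extended to boundary points via Corollary \ref{corollaryboundaryasymptotic}) to the witness $\eta_0$ to obtain
\[
\lb x|\eta_0\rb_z \asymp_{\plus,\sigma} \dist(z,x), \qquad \lb y|\eta_0\rb_z \asymp_{\plus,\sigma} \dist(z,y).
\]
Since $\dist(z,y) \geq \dist(z,x)$, the minimum of these two Gromov products is asymptotically $\dist(z,x)$. Gromov's inequality (applied at the boundary point $\eta_0$) then gives $\lb x|y\rb_z \gtrsim_{\plus,\sigma} \dist(z,x)$; the reverse bound $\lb x|y\rb_z \leq \dist(z,x)$ comes for free from (c) of Proposition \ref{propositionbasicidentities}. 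So $\lb x|y\rb_z \asymp_{\plus,\sigma} \dist(z,x)$.

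For \eqref{distbusemann}, I would simply expand using the definition of the Gromov product:
\[
\dist(x,y) = \dist(z,x) + \dist(z,y) - 2\lb x|y\rb_z \asymp_{\plus,\sigma} \dist(z,y) - \dist(z,x) = \busemann_z(y,x),
\]
which is exactly what is required.

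For \eqref{shadcontainment}, fix an arbitrary $\eta \in \Shad_z(y,\sigma)$. By Observation \ref{observationmetricshadow} we have $\lb y|\eta\rb_z \asymp_{\plus,\sigma} \dist(z,y) \geq \dist(z,x)$, and we already know $\lb x|y\rb_z \asymp_{\plus,\sigma} \dist(z,x)$. Gromov's inequality applied to the triple $(x,y,\eta)$ seen from $z$ yields
\[
\lb x|\eta\rb_z \gtrsim_\plus \min\bigl(\lb x|y\rb_z, \lb y|\eta\rb_z\bigr) \gtrsim_{\plus,\sigma} \dist(z,x).
\]
Then by (b) of Proposition \ref{propositionbasicidentities}, $\lb z|\eta\rb_x = \dist(z,x) - \lb x|\eta\rb_z \lesssim_{\plus,\sigma} 0$, so choosing $\tau = \tau_\sigma$ to absorb the resulting additive constant gives $\eta \in \Shad_z(x,\tau)$.

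The only subtlety I anticipate is bookkeeping: all the identities from Proposition \ref{propositionbasicidentities} only hold up to additive error when one of the arguments lies on $\del X$ (as per Corollary \ref{corollaryboundaryasymptotic}), so the implied constants in each $\asymp_{\plus,\sigma}$ must be tracked and combined into the final $\tau_\sigma$. There is no conceptual difficulty here — once $\lb x|y\rb_z \asymp_{\plus,\sigma} \dist(z,x)$ is established, both conclusions fall out of a single application of Gromov's inequality plus the triangle-type identity (b).
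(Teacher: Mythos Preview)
Your proposal is correct and follows essentially the same approach as the paper: pick a witness in the intersection, use Observation~\ref{observationmetricshadow} and Gromov's inequality together with (c) to obtain $\lb x|y\rb_z \asymp_{\plus,\sigma} \dist(z,x)$, then derive \eqref{distbusemann} by expanding the Gromov product and \eqref{shadcontainment} via a second application of Gromov's inequality. The only cosmetic difference is that you invoke (b) explicitly to convert $\lb x|\eta\rb_z \gtrsim_{\plus,\sigma} \dist(z,x)$ into $\lb z|\eta\rb_x \lesssim_{\plus,\sigma} 0$, whereas the paper leaves this step implicit.
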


\begin{proof}
Fix $\eta\in\Shad_z(x,\sigma)\cap \Shad_z(y,\sigma)$, so that by Observation \ref{observationmetricshadow}
\[
\lb x|\eta\rb_z \asymp_{\plus,\sigma} \dist(z,x) \text{ and } \
\lb y|\eta\rb_z \asymp_{\plus,\sigma} \dist(z,y) \geq \dist(z,x).
\]
Gromov's inequality along with (c) of Proposition \ref{propositionbasicidentities} then gives
\begin{equation}\label{distbusemann2}
\lb x|y\rb_z \asymp_{\plus,\sigma} \dist(z,x).
\end{equation}

Rearranging yields (\ref{distbusemann}). In order to show (\ref{shadcontainment}), fix $\xi\in\Shad_z(y,\sigma)$, so that $\lb y|\xi\rb_z \asymp_{\plus,\sigma} \dist(z,y)\geq \dist(z,x)$. Gromov's inequality and \eqref{distbusemann2} then give
\[
\lb x|\xi\rb_z \asymp_{\plus,\sigma} \dist(z,x),
\]
i.e. $\xi\in\Shad_z(x,\tau)$ for some $\tau > 0$ sufficiently large (depending on $\sigma$).
\end{proof}

\begin{lemma}[Bounded Distortion Lemma]
\label{lemmaboundeddistortion}
Let $(X,\dist,\zero,b)$ be as in \sectionsymbol\ref{standingassumptions2}, and fix $\sigma > 0$. Then for every $g\in\Isom(X)$ and for every $\xi\in\Shad_{g^{-1}(\zero)}(\zero,\sigma)$ we have
\begin{equation}
\label{boundeddistortion1}
g'(\xi) \asymp_{\times,\sigma} b^{-\dist(\zero,g(\zero))}.
\end{equation}
Moreover, for every $\xi_1,\xi_2\in \Shad_{g^{-1}(\zero)}(\zero,\sigma)$, we have
\begin{equation}
\label{boundeddistortion2}
\frac{\Dist (g(\xi_1),g(\xi_2) )}{\Dist(\xi_1,\xi_2)} \asymp_{\times,\sigma} b^{-\dist(\zero,g(\zero))}.
\end{equation}
\end{lemma}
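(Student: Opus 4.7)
The plan is to unwind the definitions: the notation introduced just before the lemma records that
\[
g'(\xi) = b^{\busemann_\xi(\zero, g^{-1}(\zero))},
\]
so (a) is equivalent to proving
\[
\busemann_\xi(\zero, g^{-1}(\zero)) \asymp_{\plus,\sigma} -\dist(\zero, g(\zero)) = -\dist(\zero, g^{-1}(\zero)).
\]
By the defining formula \eqref{gromovboundarydef3}, the left-hand side equals $\lb g^{-1}(\zero) | \xi \rb_\zero - \lb \zero | \xi \rb_{g^{-1}(\zero)}$, so the task reduces to estimating these two Gromov products individually.

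First, the hypothesis $\xi \in \Shad_{g^{-1}(\zero)}(\zero,\sigma)$ says by definition that $\lb g^{-1}(\zero) | \xi \rb_\zero \leq \sigma$, so this first term is $\asymp_{\plus,\sigma} 0$. Second, I would invoke identity (b) of Proposition~\ref{propositionbasicidentities} (extended to boundary points via Corollary~\ref{corollaryboundaryasymptotic}) with the triple $(\zero, \xi, g^{-1}(\zero))$, which gives
\[
\dist(\zero, g^{-1}(\zero)) \asymp_\plus \lb \zero | \xi \rb_{g^{-1}(\zero)} + \lb g^{-1}(\zero) | \xi \rb_\zero.
\]
Combined with the shadow bound on the second summand, this yields $\lb \zero | \xi \rb_{g^{-1}(\zero)} \asymp_{\plus,\sigma} \dist(\zero, g^{-1}(\zero))$. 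Subtracting and then exponentiating base $b$ establishes \eqref{boundeddistortion1}.

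For the second assertion \eqref{boundeddistortion2}, I would simply apply the geometric mean value theorem \eqref{GMVT5} to write
\[
\frac{\Dist(g(\xi_1), g(\xi_2))}{\Dist(\xi_1,\xi_2)} \asymp_\times (g'(\xi_1) g'(\xi_2))^{1/2},
\]
and then plug in the estimate \eqref{boundeddistortion1} for each of $g'(\xi_1)$ and $g'(\xi_2)$; the geometric mean of two quantities each $\asymp_{\times,\sigma} b^{-\dist(\zero,g(\zero))}$ is again $\asymp_{\times,\sigma} b^{-\dist(\zero,g(\zero))}$.

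There is no serious obstacle here; the main conceptual point—worth highlighting in the write-up—is that the shadow condition was precisely designed to force the Gromov product $\lb g^{-1}(\zero)|\xi\rb_\zero$ to be uniformly bounded, which by identity (b) forces the complementary product $\lb \zero|\xi\rb_{g^{-1}(\zero)}$ to essentially saturate the distance $\dist(\zero, g^{-1}(\zero))$, and it is this saturation that turns the Busemann function into the desired translation length. Everything else is routine bookkeeping with the asymptotic notation $\asymp_{\plus,\sigma}$ and $\asymp_{\times,\sigma}$.
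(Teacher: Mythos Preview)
Your proof is correct and essentially identical to the paper's: the paper computes $\busemann_\xi(\zero,g^{-1}(\zero)) \asymp_\plus 2\lb g^{-1}(\zero)|\xi\rb_\zero - \dist(\zero,g(\zero))$ directly from identity (g) of Proposition~\ref{propositionbasicidentities}, whereas you obtain the same formula by combining the Busemann definition \eqref{gromovboundarydef3} with identity (b), and both then use the shadow bound and the geometric mean value theorem \eqref{GMVT5} exactly as you do.
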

\begin{proof}
By (g) of Proposition \ref{propositionbasicidentities},
\[
g'(\xi) = b^{\busemann_\xi(\zero,g^{-1}(\zero))}
\asymp_\times b^{2\lb g^{-1}(\zero)|\xi\rb_\zero - \dist(\zero,g(\zero))}
\asymp_{\times,\sigma} b^{-\dist(\zero,g(\zero))}.
\]

Now \eqref{boundeddistortion2} follows from \eqref{boundeddistortion1} and the geometric mean value theorem \eqref{GMVT5}.
\end{proof}

\begin{lemma}[Big Shadows Lemma]
\label{lemmabigshadow}
Let $(X,\dist,\zero,b)$ be as in \sectionsymbol\ref{standingassumptions2}. For every $\varepsilon > 0$, for every $\sigma > 0$ sufficiently large, and for every $z\in X$, we have
\begin{equation}
\label{bigshadow}
\Diam(\del X\butnot\Shad_z(\zero,\sigma)) \leq \varepsilon.
\end{equation}
\end{lemma}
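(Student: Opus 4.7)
The approach is a direct application of Gromov's inequality combined with the visual-metric asymptotic. By definition,
\[
\del X \setminus \Shad_z(\zero, \sigma) = \{\eta \in \del X : \lb z|\eta\rb_\zero > \sigma\},
\]
so the goal is to show that any two points $\xi,\eta$ with both Gromov products $\lb z|\xi\rb_\zero,\lb z|\eta\rb_\zero$ exceeding $\sigma$ must be close in the visual metric $\Dist = \Dist_{b,\zero}$, uniformly in the auxiliary point $z \in X$.

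The plan is as follows. First, apply Gromov's inequality to the triple $\xi,z,\eta$ with basepoint $\zero$. Since two of these vertices lie on $\del X$, I invoke the boundary version (Corollary \ref{corollaryboundaryasymptotic}) to obtain a universal additive constant $C_1 \geq 0$ (depending only on $X$) such that
\[
\lb \xi|\eta\rb_\zero \geq \min\bigl(\lb \xi|z\rb_\zero,\,\lb z|\eta\rb_\zero\bigr) - C_1 > \sigma - C_1.
\]
Next, apply the visual-metric estimate \eqref{distanceasymptotic}, which provides a constant $C_2 > 0$ with $\Dist(\xi,\eta) \leq C_2 b^{-\lb \xi|\eta\rb_\zero}$, to conclude
\[
\Dist(\xi,\eta) \leq C_2 \, b^{C_1 - \sigma}.
\]
The right-hand side is independent of $z$, $\xi$, and $\eta$, and tends to $0$ as $\sigma \to \infty$. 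Hence, picking $\sigma$ large enough to make $C_2 b^{C_1 - \sigma} \leq \varepsilon$ produces the desired diameter bound uniformly over $z \in X$.

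There is no serious obstacle; the only subtlety is that Gromov's inequality degenerates by a bounded additive error when passing to boundary points, but this is exactly what Corollary \ref{corollaryboundaryasymptotic} handles, and the error is universal (in particular independent of $z$), so it is harmlessly absorbed into the choice of $\sigma$. Note also that the constants $C_1, C_2$ depend on the visual parameter $b$ but not on $z$, which is precisely what is needed for the ``for every $z \in X$'' clause.
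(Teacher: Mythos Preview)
Your proof is correct and follows essentially the same approach as the paper: apply Gromov's inequality to bound $\lb \xi|\eta\rb_\zero$ from below by $\sigma$ (up to an additive constant), then convert to a distance bound via the visual-metric asymptotic \eqref{distanceasymptotic}, noting that the resulting bound $\lesssim_\times b^{-\sigma}$ is uniform in $z$. The paper's version is slightly terser (using the $\gtrsim_\plus$ and $\lesssim_\times$ notation rather than naming the constants $C_1, C_2$), but the argument is identical.
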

\begin{proof}
If $\xi, \eta \in \del X \setminus \Shad_z(\zero, \sigma)$, then $\lb z | \xi \rb_\zero > \sigma$ and $\lb z | \eta \rb_\zero > \sigma$. Thus by Gromov's inequality we have
\[
\lb \xi | \eta \rb_\zero \gtrsim_\plus \sigma.
\]
Exponentiating gives $\Dist(\xi,\eta) \lesssim_\times b^{-\sigma}$. Thus
\[
\Diam (\del X \setminus \Shad_z (\zero,\sigma)) \lesssim_\times b^{-\sigma} \tendsto\sigma 0,
\]
and the convergence is uniform in $z$.
\end{proof}

\begin{lemma}[Diameter of Shadows Lemma]
\label{lemmadiameterasymptotic}
Let $(X,\dist,\zero,b)$ be as in \sectionsymbol\ref{standingassumptions2}, and let $G\leq\Isom(X)$. Then for all $\sigma > 0$ sufficiently large, we have for all $x\in G(\zero)$ and for all $z\in X$
\[
\Diam_z(\Shad_z(x,\sigma)) \lesssim_{\times,\sigma} b^{-\dist(z,x)},
\]
with $\asymp_{\times,\sigma}$ if $\#(\del X)\geq 3$.
\end{lemma}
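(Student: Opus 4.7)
The plan is to handle the two inequalities separately, treating the upper bound (which needs no hypothesis beyond the standing assumptions) and the lower bound (which uses both $\#\del X \geq 3$ and $x\in G(\zero)$) independently.

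\emph{Upper bound.} Given $\eta_1,\eta_2\in\Shad_z(x,\sigma)$, Observation~\ref{observationmetricshadow} applied to boundary points (which is legitimate by the near-continuity of the Gromov product, Lemma~\ref{lemmaboundaryasymptotic}) yields $\lb x|\eta_i\rb_z \asymp_{\plus,\sigma}\dist(z,x)$ for $i=1,2$. Gromov's inequality at $z$ then gives $\lb \eta_1|\eta_2\rb_z \gtrsim_{\plus,\sigma}\dist(z,x)$, and exponentiating via \eqref{distanceasymptotic} produces $\Dist_z(\eta_1,\eta_2)\lesssim_{\times,\sigma} b^{-\dist(z,x)}$. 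Taking the supremum over $\eta_1,\eta_2$ yields the upper bound.

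\emph{Lower bound.} Assume $\#\del X\geq 3$, and write $x=g(\zero)$ for some $g\in G$. Isometry-invariance of the Gromov product gives the identity $\Shad_z(x,\sigma)=g\bigl(\Shad_{g^{-1}(z)}(\zero,\sigma)\bigr)$, so it suffices to locate two points in $\Shad_{g^{-1}(z)}(\zero,\sigma)$ whose $g$-images are $\Dist_z$-far apart. Fix once and for all three distinct points $\xi_1,\xi_2,\xi_3\in\del X$, and let $c>0$ be the minimum of their pairwise $\Dist_\zero$-distances; this $c$ is a universal constant. By the Big Shadows Lemma, $\sigma$ may be taken large enough that $\Diam_\zero(\del X\butnot\Shad_{g^{-1}(z)}(\zero,\sigma))<c$ \emph{uniformly} in the base point $g^{-1}(z)$; a pigeonhole argument then forces two of the $\xi_i$, which we relabel $\xi_1,\xi_2$, to lie in $\Shad_{g^{-1}(z)}(\zero,\sigma)$.

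To estimate $\Dist_z(g(\xi_1),g(\xi_2))$, I use isometry-invariance of the visual metametric to reduce to $\Dist_{g^{-1}(z)}(\xi_1,\xi_2)$, and then Observation~\ref{observationGMVT}(i) to rewrite
\[
\frac{\Dist_{g^{-1}(z)}(\xi_1,\xi_2)}{\Dist_\zero(\xi_1,\xi_2)}\asymp_\times b^{-\frac12[\busemann_{\xi_1}(g^{-1}(z),\zero)+\busemann_{\xi_2}(g^{-1}(z),\zero)]}.
\]
Identity (h) of Proposition~\ref{propositionbasicidentities} expresses each Busemann value as $\lb\zero|\xi_i\rb_{g^{-1}(z)}-\lb g^{-1}(z)|\xi_i\rb_\zero$; the first term is $\asymp_{\plus,\sigma}\dist(\zero,g^{-1}(z))=\dist(z,x)$ by Observation~\ref{observationmetricshadow}, while the second is pinned between $0$ and $\sigma$ by the defining condition of the shadow. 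Hence each Busemann value is $\asymp_{\plus,\sigma}\dist(z,x)$, substituting gives $\Dist_z(g(\xi_1),g(\xi_2))\asymp_{\times,\sigma} b^{-\dist(z,x)}\Dist_\zero(\xi_1,\xi_2)\geq c\,b^{-\dist(z,x)}$, and the lower bound on the diameter follows.

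No step is genuinely difficult; the only mild subtlety is ensuring the Busemann estimate is \emph{two-sided} so that the resulting $b$-power becomes $\asymp_{\times,\sigma}$ and not merely one-sided, which is precisely where both inequalities implicit in the shadow condition $0\leq\lb g^{-1}(z)|\xi_i\rb_\zero\leq\sigma$ get used.
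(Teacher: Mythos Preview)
Your proof is correct and follows essentially the same approach as the paper: the upper bound is identical, and for the lower bound both you and the paper fix three boundary points, invoke the Big Shadows Lemma uniformly in the light source, and pigeonhole two of the $\xi_i$ into $\Shad_{g^{-1}(z)}(\zero,\sigma)$. The only divergence is in the final estimate: the paper bounds $\lb\xi_1|\xi_2\rb_{g^{-1}(z)}$ directly via inequality (d) of Proposition~\ref{propositionbasicidentities}, giving $\lb\xi_1|\xi_2\rb_{g^{-1}(z)} \leq \lb\xi_1|\xi_2\rb_\zero + \dist(\zero,g^{-1}(z))$ in one line, whereas you route through Observation~\ref{observationGMVT}(i) and a Busemann computation. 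Your route produces a two-sided asymptotic on $\Dist_z(g(\xi_1),g(\xi_2))$, but only the $\gtrsim$ direction is needed here (the upper diameter bound was already done), so the paper's use of (d) is slightly more economical; correspondingly, your closing remark about needing \emph{both} inequalities $0\leq\lb g^{-1}(z)|\xi_i\rb_\zero\leq\sigma$ overstates matters---the shadow condition is really needed only to ensure $g(\xi_i)\in\Shad_z(x,\sigma)$, not for the size estimate itself.
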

\begin{proof}
For any $\xi,\eta\in \Shad_z(x,\sigma)$, we have
\begin{align*}
\Dist_{b,z}(\xi,\eta) \asymp_\times b^{-\lb\xi|\eta\rb_z}
&\lesssim_{\times\phantom{,\sigma}} b^{-\min\left(\lb x|\xi\rb_z,\lb x,\eta\rb_z\right)}\\
&\lesssim_{\times,\sigma} b^{-\dist(z,x)}
\end{align*}
which proves the $\lesssim$ direction.

Now let us prove the $\gtrsim$ direction, assuming $\#(\del X)\geq 3$. Fix $\xi_1,\xi_2,\xi_3\in\del X$ distinct, let $\varepsilon = \min_{i\neq j}\Dist(\xi_i,\xi_j)/2$, and fix $\sigma > 0$ large enough so that \eqref{bigshadow} holds for every $z\in X$.

Now fix $x = g(\zero)\in G(\zero)$ and $z\in X$; by \eqref{bigshadow} we have
\[
\Diam(\del X\butnot \Shad_{g^{-1}(z)}(\zero,\sigma)) \leq \varepsilon,
\]
and thus
\[
\#\big\{i = 1,2,3: \xi_i\in \Shad_{g^{-1}(z)}(\zero,\sigma)\big\}\geq 2.
\]
Without loss of generality suppose that $\xi_1,\xi_2\in \Shad_{g^{-1}(z)}(\zero,\sigma)$. By applying $g$, we have $g(\xi_1),g(\xi_2)\in \Shad_z(x,\sigma)$. Then
\begin{align*}
\Diam_z(\Shad_z(x,\sigma))
&\geq_\pt \Dist_{b,z}(g(\xi_1),g(\xi_2))\\
&\asymp_\times b^{-\lb g(\xi_1)|g(\xi_2)\rb_z}
= b^{-\lb \xi_1|\xi_2\rb_{g^{-1}(z)}}\\
&\gtrsim_\times b^{-\lb\xi_1|\xi_2\rb_\zero}b^{-\dist(\zero,g^{-1}(z))}
\asymp_{\times,\xi_1,\xi_2}b^{-\dist(z,x)}.
\end{align*}
\end{proof}

\begin{remark}
If $G$ is nonelementary then $\#(\del X)\geq\#(\Lambda_G)\geq 3$ (see Definition \ref{definitionlimitset} below). Thus in our applications, we will always have $\asymp_{\times,\sigma}$ in Lemma \ref{lemmadiameterasymptotic}.
\end{remark}

We end this subsection with the definition of a horoball:

\begin{definition}
\label{definitionhoroball}
A \emph{horoball} centered at a point $\xi\in\del X$ is a sublevelset of the Busemann function of $\xi$, i.e. a set of the form
\[
H = \{x: \busemann_\xi(x,\zero) < t\}.
\]
\end{definition}

Such sets are called horoballs because in the case $X = \H^{d + 1}$, they are balls tangent to the boundary.

\subsection{Modes of convergence to the boundary; limit sets}
Let $(X,\dist)$ be a hyperbolic metric space, and fix a distinguished point $\zero\in X$. We recall (Observation \ref{observationboundaryconvergence}) that a sequence $(x_n)_1^\infty$ in $X$ converges to a point $\xi\in\del X$ if and only if
\[
\lb x_n|\xi\rb_\zero \tendsto{n} \infty.
\]
In this subsection we define more restricted modes of convergence. To get an intuition let us consider the case $X = \H^2$.

\begin{proposition}[{\cite[Proposition 7.1.1]{DSU}}]
Let $(x_n)_1^\infty$ be a sequence in $\H^2$ converging to a point $\xi\in\del\H^2$. Then the following are equivalent:
\begin{itemize}
\item[(A)] The sequence $(x_n)_1^\infty$ lies within a bounded distance of the geodesic ray $\geo\zero\xi$.
\item[(B)] There exists $\sigma > 0$ such that for all $n\in\N$,
\[
\lb\zero|\xi\rb_{x_n}\leq\sigma,
\]
or equivalently
\begin{equation}
\label{sigmaradially}
\xi\in\Shad(x_n,\sigma).
\end{equation}
\end{itemize}
\end{proposition}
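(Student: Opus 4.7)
My plan is to reduce both implications to the single identity
\[
\lb \zero | \xi \rb_x \asymp_\plus \dist(x, \geo\zero\xi),
\]
which holds uniformly for all $x \in \H^2$ and $\xi \in \del \H^2$. Given this identity, (A) $\Leftrightarrow$ (B) is immediate: both conditions amount to the assertion that the numerical sequence $\big(\lb \zero | \xi \rb_{x_n}\big)_1^\infty$ is bounded.

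To establish the key identity, let $y$ denote the nearest-point projection of $x$ onto $\geo\zero\xi$, which is well-defined and unique in $\H^2$ by the CAT($0$) property. I would approximate $\xi$ by points $z_t$ on $\geo\zero\xi$ with $\dist(\zero, z_t) = t$, compute
\[
\lb \zero | z_t \rb_x = \tfrac{1}{2}\big[\dist(\zero, x) + \dist(x, z_t) - \dist(\zero, z_t)\big]
\]
directly, and pass to the limit $t \to \infty$ via Lemma \ref{lemmaboundaryasymptotic}. The upper bound $\lb \zero | \xi \rb_x \lesssim_\plus \dist(x, y)$ is immediate from the triangle inequality $\dist(x, z_t) \leq \dist(x, y) + \dist(y, z_t)$ together with the equality $\dist(\zero, z_t) = \dist(\zero, y) + \dist(y, z_t)$, plus the triangle inequality $\dist(\zero, x) \leq \dist(\zero, y) + \dist(y, x)$.

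The lower bound requires genuine use of negative curvature. By the defining property of nearest-point projection, both $\triangle \zero y x$ and $\triangle z_t y x$ are right-angled at $y$, so the hyperbolic Pythagorean theorem $\cosh c = \cosh a \cosh b$ yields the reverse-triangle estimate $c \geq a + b - O(1)$ for the hypotenuse. This gives $\dist(x, z_t) \geq \dist(x, y) + \dist(y, z_t) - O(1)$ and likewise $\dist(\zero, x) \geq \dist(\zero, y) + \dist(y, x) - O(1)$. Substituting into the displayed formula for $\lb \zero | z_t\rb_x$ and letting $t \to \infty$ gives $\lb \zero | \xi \rb_x \gtrsim_\plus \dist(x, y)$, completing the identity.

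The main obstacle is the lower bound: the triangle inequality trivially yields the upper bound, but the lower bound on $\dist(x, z_t)$ requires a quantitative use of negative curvature. Once this is in place, everything else is routine unwinding of definitions. This also foreshadows the natural generalization of the proposition to any proper geodesic Gromov hyperbolic space, where the hyperbolic Pythagorean estimate would be replaced by the thin triangles condition.
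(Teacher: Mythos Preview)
The paper does not supply its own proof of this proposition: it is stated with the citation \cite{DSU} and no argument follows. So there is no ``paper's proof'' to compare against.

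That said, your argument is correct and is the natural one. The identity $\lb \zero | \xi \rb_x \asymp_\plus \dist(x,\geo\zero\xi)$ is exactly the content of the proposition, and reducing to it is the right move. Your upper bound is clean; for the lower bound, the hyperbolic Pythagorean theorem $\cosh c = \cosh a \cosh b$ indeed yields $c \geq a + b - 2\ln 2$ uniformly. One small gap worth patching: the Pythagorean identity requires the angle at $y$ to be exactly $\pi/2$, which holds when $y$ lies in the interior of the ray $\geo\zero\xi$; when $y = \zero$ (i.e.\ the foot of the perpendicular is the basepoint) you should note separately that $\lb \zero|\xi\rb_x \asymp_\plus \dist(\zero,x) = \dist(x,\geo\zero\xi)$, which is immediate since the projection angle is then $\geq \pi/2$ and the hyperbolic law of cosines gives the same estimate. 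Your closing remark about the thin-triangles generalization is also on point: in a general Gromov hyperbolic geodesic space the same identity holds with the Pythagorean step replaced by the $\delta$-thinness of $\triangle \zero x z_t$.
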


Condition (A) motivates calling this kind of convergence \emph{radial}; we shall use this terminology henceforth. However, condition (B) is best suited to a general hyperbolic metric space.

\begin{definition}
Let $X$ be a hyperbolic metric space, and let $(x_n)_1^\infty$ be a sequence in $X$ converging to a point $\xi\in\del X$. We will say that $(x_n)_1^\infty$ converges to $\xi$
\begin{itemize}
\item \emph{$\sigma$-radially} if \eqref{sigmaradially} holds for all $n\in\N$,
\item \emph{radially} if it converges $\sigma$-radially for some $\sigma > 0$,
\item \emph{$\sigma$-uniformly radially} if it converges $\sigma$-radially, if
\begin{equation}
\label{sigmauniformlyradially}
\dist(x_n,x_{n + 1}) \leq \sigma \all n\in\N,
\end{equation}
and if $x_1 = \zero$,
\item \emph{uniformly radially} if it converges $\sigma$-uniformly radially for some $\sigma > 0$, and
\item \emph{horospherically} if
\[
\busemann_\xi(\zero,x_n)\tendsto n +\infty,
\]
or equivalently, if $\{x_n:n\in\N\}$ intersects every horoball centered at $\xi$.
\end{itemize}
\end{definition}

\begin{observation}
\label{observationdependenceonbasepoint}
The concepts of convergence, radial convergence, uniformly radial convergence, and horospherical convergence are independent of the basepoint $\zero$, whereas the concept of $\sigma$-radial convergence depends on the basepoint. (However, see Lemma \ref{lemmanearinvariance} below.)
\end{observation}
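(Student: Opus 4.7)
The plan is to verify each clause of the observation by using the basic identities of Proposition \ref{propositionbasicidentities} (extended to the boundary via Corollary \ref{corollaryboundaryasymptotic}) to show that the relevant quantities change by at most an additive constant depending on $\dist(\zero,\zero')$ when the basepoint is changed. The key tools are parts (d) and (e) of Proposition \ref{propositionbasicidentities}, which together give $|\lb y_1|y_2\rb_\zero - \lb y_1|y_2\rb_{\zero'}| \lesssim_\plus \dist(\zero,\zero')$ in the situations we care about, and the cocycle identity $\busemann_\xi(\zero,x) = \busemann_\xi(\zero,\zero') + \busemann_\xi(\zero',x)$, which follows directly from \eqref{busemanndef} when $\xi\in X$ and holds up to additive error on the boundary by Corollary \ref{corollaryboundaryasymptotic}.

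Using these tools, each concept will be handled in turn. For \emph{convergence}, since $\lb x_n|\xi\rb_\zero$ and $\lb x_n|\xi\rb_{\zero'}$ differ by a bounded amount, $\lb x_n|\xi\rb_\zero \to \infty$ iff $\lb x_n|\xi\rb_{\zero'} \to \infty$. For \emph{radial convergence}, applying (e) of Proposition \ref{propositionbasicidentities} with third argument $x_n$ yields $\lb \zero'|\xi\rb_{x_n} \lesssim_\plus \lb \zero|\xi\rb_{x_n} + \dist(\zero,\zero')$, so if $(x_n)$ is $\sigma$-radial with respect to $\zero$ then it is $\sigma'$-radial with respect to $\zero'$ for some $\sigma' = \sigma + O(\dist(\zero,\zero'))$. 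For \emph{horospherical convergence}, the cocycle identity gives $\busemann_\xi(\zero,x_n) \to \infty$ iff $\busemann_\xi(\zero',x_n) \to \infty$, since $\busemann_\xi(\zero,\zero')$ is a constant. For \emph{uniformly radial convergence}, if $(x_n)$ is $\sigma$-uniformly radial with respect to $\zero$, define the shifted sequence $(y_n)_1^\infty$ by $y_1 := \zero'$ and $y_n := x_{n-1}$ for $n \geq 2$. Then $\dist(y_n,y_{n+1}) \leq \max(\sigma,\dist(\zero,\zero'))$ and $(y_n)$ is radial with respect to $\zero'$ by the previous step, so $(y_n)$ witnesses uniformly radial convergence to $\xi$ with respect to $\zero'$ for a suitable parameter.

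To see that $\sigma$-radial convergence genuinely depends on the basepoint, observe that the estimate above forces $\sigma$ to increase by as much as $\dist(\zero,\zero')$, and a concrete example in $\H^2$ with a geodesic ray from $\zero$ to $\xi$ stays $0$-radial from $\zero$ but only $O(\dist(\zero,\zero'))$-radial from $\zero'$. The main (modest) subtlety in the argument lies in the uniformly radial case, where the definition requires $x_1 = \zero$: the sequence itself must be modified by prepending $\zero'$, and the bound on consecutive distances therefore requires enlarging $\sigma$ to $\max(\sigma,\dist(\zero,\zero'))$; once this bookkeeping is done, no further difficulty arises.
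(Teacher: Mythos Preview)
Your argument is correct. The paper does not actually supply a proof of this observation; it is stated without justification, presumably because the authors regard it as routine. Your verification via parts (d) and (e) of Proposition \ref{propositionbasicidentities} (extended by Corollary \ref{corollaryboundaryasymptotic}) and the Busemann cocycle identity is exactly the kind of check one would expect, and all the details you supply are sound. In particular, your handling of the uniformly radial case---prepending $\zero'$ to the sequence and enlarging the parameter to $\max(\sigma,\dist(\zero,\zero'))$---correctly addresses the bookkeeping forced by the requirement $x_1 = \zero$ in the definition.
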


Let $G\leq\Isom(X)$. We define the limit set of $G$, a subset of $\del X$ which encodes geometric information about $G$. We also define a few important subsets of the limit set.

\begin{definition}
\label{definitionlimitset}
Let
\begin{align*}
\Lambda(G) &:= \{\eta\in\del X: g_n(\zero)\to \eta \text{ for some
             sequence $(g_n)_1^\infty$ in $G$}\}\\
\Lr(G) &:= \{\eta\in\del X: g_n(\zero)\to \eta\text{ radially for some
             sequence $(g_n)_1^\infty$ in $G$}\}\\
\Lur(G) &:= \{\eta\in\del X: g_n(\zero)\to \eta\text{ uniformly
             radially for some sequence $(g_n)_1^\infty$ in $G$}\}\\
\Lrsigma(G) &:= \{\eta\in\del X: g_n(\zero)\to \eta\text{ $\sigma$-radially for some
             sequence $(g_n)_1^\infty$ in $G$}\}\\
\Lursigma(G) &:= \{\eta\in\del X: g_n(\zero)\to \eta\text{ $\sigma$-uniformly radially for some
             sequence $(g_n)_1^\infty$ in $G$}\}\\
\Lh(G) &:= \{\eta\in\del X: g_n(\zero)\to \eta\text{ horospherically for some
             sequence $(g_n)_1^\infty$ in $G$}\}.
\end{align*}
These sets are respectively called the \emph{limit set}, \emph{radial limit set},
\emph{uniformly radial limit set}, \emph{$\sigma$-radial limit set}, \emph{$\sigma$-uniformly radial limit set}, and \emph{horospherical limit set} of the group $G$.
\end{definition}
Note that
\begin{align*}
\Lr &= \bigcup_{\sigma > 0}\Lrsigma\\
\Lur &= \bigcup_{\sigma > 0}\Lursigma\\
\Lur &\subset \Lr \subset \Lh \subset \Lambda.
\end{align*}

\begin{observation}
\label{observationlimitsets}
The sets $\Lambda$, $\Lr$, $\Lur$, and $\Lh$ are invariant under the action of $G$, and are independent of the basepoint $\zero$. The set $\Lambda$ is closed.
\end{observation}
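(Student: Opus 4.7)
The plan is to verify each part of the observation separately, treating the four limit sets in turn for the first two claims. The unifying theme is that isometries preserve the Gromov product and the Busemann function exactly, while a change of basepoint from $\zero$ to $\zero'$ perturbs them by at most $\dist(\zero, \zero')$, so each defining condition persists after enlarging the parameter $\sigma$ appropriately. The one genuine subtlety is the normalization $x_1 = \zero$ in the definition of $\sigma$-uniformly radial convergence.

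For $G$-invariance, let $h \in G$ and $\eta \in \Lambda$. Since $h$ extends to a homeomorphism of $\cl X$ (Remark \ref{remarkisometryextension}), $g_n(\zero) \to \eta$ implies $hg_n(\zero) \to h(\eta)$, giving $h(\Lambda) \subset \Lambda$. For $\Lr$, the identity
\[
\lb \zero | h(\eta) \rb_{hg_n(\zero)} = \lb h^{-1}(\zero) | \eta \rb_{g_n(\zero)} \leq \lb \zero | \eta \rb_{g_n(\zero)} + \dist(\zero, h^{-1}(\zero))
\]
(isometry invariance followed by the elementary triangle-type bound on Gromov products) converts $\sigma$-radial convergence into $\sigma'$-radial convergence with $\sigma' = \sigma + \dist(\zero, h^{-1}(\zero))$. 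Horospherical convergence is preserved because $\busemann_{h(\eta)}(y, z) = \busemann_\eta(h^{-1}(y), h^{-1}(z))$, which differs from $\busemann_\eta(\zero, g_n(\zero))$ by at most $\dist(\zero, h^{-1}(\zero))$. For $\Lur$ the same arguments yield a sequence of the correct form except possibly starting at $h(\zero)$ rather than $\zero$; prepending $\zero$ and inflating $\sigma$ to at least $\dist(\zero, h(\zero))$ restores the normalization.

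Basepoint-independence follows from the same inequalities with $h^{-1}(\zero)$ replaced by $\zero'$; the step-size condition in the definition of $\Lur$ is itself basepoint-free. Finally, to show $\Lambda$ is closed, let $\eta_n \in \Lambda$ with $\eta_n \to \eta \in \del X$, and for each $n$ choose $g_n \in G$ with $\lb g_n(\zero) | \eta_n \rb_\zero \geq n$. Gromov's inequality, extended to the boundary via Corollary \ref{corollaryboundaryasymptotic}, gives
\[
\lb g_n(\zero) | \eta \rb_\zero \gtrsim_\plus \min\bigl( \lb g_n(\zero) | \eta_n \rb_\zero, \ \lb \eta_n | \eta \rb_\zero \bigr) \geq \min\bigl(n, \lb \eta_n | \eta \rb_\zero\bigr) \to +\infty,
\]
where $\lb \eta_n | \eta \rb_\zero \to +\infty$ because $\eta_n \to \eta$ (Observation \ref{observationboundaryconvergence}). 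Hence $g_n(\zero) \to \eta$ and $\eta \in \Lambda$. I expect no step to be a serious obstacle; the main bookkeeping is tracking the additive constants produced by isometries and basepoint changes, and handling the $x_1 = \zero$ normalization in $\Lur$.
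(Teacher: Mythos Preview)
Your proof is correct and takes essentially the same approach as the paper. The paper's own justification is much terser: it simply notes that the first assertion follows from Observation~\ref{observationdependenceonbasepoint} (basepoint-independence of the various convergence modes, which also yields $G$-invariance since applying $h\in G$ amounts to moving the basepoint to $h^{-1}(\zero)$), and that $\Lambda$ is closed because it is the intersection of $\del X$ with the set of accumulation points of $G(\zero)$ --- your explicit Gromov-inequality argument for closedness and your bookkeeping with the $x_1=\zero$ normalization spell out exactly what the paper leaves implicit.
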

The first assertion follows from Observation \ref{observationdependenceonbasepoint} and the second follows directly from the definition of $\Lambda$ as the intersection of $\del X$ with the set of accumulation points of the set $G(\zero)$.

\begin{lemma}[Near-invariance of the sets $\Lrsigma$]
\label{lemmanearinvariance}
For every $\sigma > 0$, there exists $\tau > 0$ such that for every $g\in G$, we have
\[
g(\Lrsigma) \subset \Lrtau.
\]
\end{lemma}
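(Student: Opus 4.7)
The plan is to show that for $\eta \in \Lrsigma$ the natural candidate sequence $(gh_n(\zero))$ witnesses $g(\eta) \in \Lrtau$ once we pass to a sufficiently late tail, with $\tau$ depending only on $\sigma$ and the hyperbolicity constant of $X$. First I would fix a sequence $(h_n)_1^\infty$ in $G$ realizing $\eta \in \Lrsigma$, i.e., $h_n(\zero) \to \eta$ and $\lb \zero | \eta \rb_{h_n(\zero)} \leq \sigma$ for all $n$. Given $g\in G$, by Remark \ref{remarkisometryextension} and the fact that $g$ acts as a homeomorphism on $\cl X$, we have $gh_n(\zero) \to g(\eta)$. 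Since isometries preserve Gromov products, the inequality $\lb \zero | g(\eta) \rb_{gh_n(\zero)} \leq \tau$ is equivalent to $\lb g^{-1}(\zero) | \eta \rb_{h_n(\zero)} \leq \tau$, so writing $a := g^{-1}(\zero)$ we are reduced to bounding $\lb a | \eta \rb_{h_n(\zero)}$ purely intrinsically.

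Next I would apply identity (i) of Proposition \ref{propositionbasicidentities} (extended to boundary points by Corollary \ref{corollaryboundaryasymptotic}) with $x = a$, $y = \eta$, $z = h_n(\zero)$, $w = \zero$, and then use identity (b) in the form $\dist(\zero, h_n(\zero)) - \lb \eta | h_n(\zero) \rb_\zero = \lb \zero | \eta \rb_{h_n(\zero)} \leq \sigma$. This gives
\[
\lb a | \eta \rb_{h_n(\zero)} \lesssim_\plus \lb a | \eta \rb_\zero + \sigma - \lb a | h_n(\zero) \rb_\zero.
\]
Now by (c) we have $\lb a | \eta \rb_\zero \leq \dist(\zero, g(\zero))$, a fixed (though $g$-dependent) number, whereas $\lb \eta | h_n(\zero) \rb_\zero \geq \dist(\zero, h_n(\zero)) - \sigma \to \infty$. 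So for all $n$ past some threshold $N = N(g)$ the product $\lb \eta | h_n(\zero) \rb_\zero$ strictly dominates $\lb a | \eta \rb_\zero$ by more than the implied constant of Gromov's inequality. Applying Gromov's inequality to the triple $\{a, \eta, h_n(\zero)\}$ from the viewpoint $\zero$ once in each of the two relevant directions then forces the two smaller Gromov products to agree additively, yielding
\[
\lb a | h_n(\zero) \rb_\zero \asymp_\plus \lb a | \eta \rb_\zero \quad \text{for all } n \geq N.
\]

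Plugging this asymptotic into the previous display gives $\lb a | \eta \rb_{h_n(\zero)} \lesssim_\plus \sigma$ for $n \geq N$, where the implied additive constant depends only on the hyperbolicity constant of $X$. Translating back via the isometry $g$, we conclude that $g(\eta) \in \Shad(gh_n(\zero), \tau)$ for every $n \geq N$, with $\tau := \sigma + C$ and $C$ a universal constant. Reindexing the tail $(gh_n)_{n \geq N}$ as a sequence in $G$ then witnesses $g(\eta) \in \Lrtau$. The main subtlety is that the index $N$ at which the bound kicks in depends on $g$, while $\tau$ itself must not; this is reconciled by the observation that $\Lrtau$ only demands the \emph{existence} of some $\tau$-radial witnessing sequence, so we are free to discard the $g$-dependent initial segment on which we do not control the Gromov product.
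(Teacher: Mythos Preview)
Your proof is correct and follows essentially the same approach as the paper's: reduce to bounding $\lb g^{-1}(\zero)\,|\,\eta\rb_{h_n(\zero)}$ for large $n$, and use Gromov's inequality together with the fact that the $g$-dependent quantities are bounded while $h_n(\zero)\to\eta$. The only difference is bookkeeping: you pass through identity~(i) of Proposition~\ref{propositionbasicidentities} and then apply Gromov's inequality twice at the basepoint $\zero$, whereas the paper applies Gromov's inequality once directly at the basepoint $h_n(\zero)$ (using $\lb\zero\,|\,g^{-1}(\zero)\rb_{h_n(\zero)}\to\infty$ and $\lb\zero\,|\,\eta\rb_{h_n(\zero)}\leq\sigma$ to force $\lb g^{-1}(\zero)\,|\,\eta\rb_{h_n(\zero)}\lesssim_\plus\sigma$), which is slightly more direct.
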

\begin{proof}
Fix $\xi\in\Lrsigma$. There exists a sequence $(h_n)_1^\infty$ so that $h_n(\zero)\tendsto n \xi$ $\sigma$-radially, i.e.
\[
\lb \zero|\xi\rb_{h_n(\zero)} \leq \sigma\all n\in\N
\]
and $h_n(\zero)\tendsto n \xi$. Now
\[
\lb \zero|g^{-1}(\zero)\rb_{h_n(\zero)} \geq \dist(\zero,h_n(\zero)) - \dist(\zero,g^{-1}(\zero)) \tendsto n \infty.
\]
Thus, for $n$ sufficiently large, Gromov's inequality gives
\[
\lb g^{-1}(\zero)|\xi\rb_{h_n(\zero)} \lesssim_\plus \sigma \asymp_{\plus,\sigma} 0,
\]
i.e.
\[
\lb \zero|g(\xi)\rb_{g\circ h_n(\zero)} \asymp_{\plus,\sigma} 0.
\]
So $g\circ h_n(\zero)\tendsto n g(\xi)$ $\tau$-radially, where $\tau$ is the implied constant of this asymptotic. Thus, $g(\xi)\in\Lrtau$.

\end{proof}

\subsection{Poincar\'e exponent}
Let $(X,\dist,\zero,b)$ be as in \sectionsymbol\ref{standingassumptions2}, and let $G\leq\Isom(X)$. For each $s\geq 0$, the series
\[
\Sigma_s(G):=\sum_{g\in G}b^{-sd(\zero,g(\zero))}
\]
is called the \emph{Poincar\'e series} of the group $G$ in dimension $s$ (or ``evaluated at $s$'') relative to $b$. The number
\begin{equation}
\label{poincareexponent}
\delta_G := \inf\{s\geq 0:\Sigma_s(G) < \infty\}
\end{equation}
is called the \emph{Poincar\'e exponent} of the group $G$ relative to $b$. Here, we let $\inf\emptyset = \infty$.

\begin{definition}
A group of isometries $G$ with $\delta_G < \infty$ is said to be of \emph{convergence type} if $\Sigma_{\delta_G}(G) < \infty$. Otherwise, it is said to be of \emph{divergence type}. In the case where $\delta_G = \infty$, we say that the group is neither of convergence type nor of divergence type.
\end{definition}

\subsection{Parabolic points}
\label{subsectionparabolic}
Let $(X,\dist,\zero,b)$ be as in \sectionsymbol\ref{standingassumptions2}, and let $G\leq\Isom(X)$.

\begin{definition}
Fix $\xi\in\del X$, and let $G_\xi$ be the stabilizer of $\xi$ relative to $G$. We say that $\xi$ is a \emph{parabolic fixed point} of $G$ if $G_\xi(\zero)$ is unbounded and if
\begin{equation}
\label{parabolic}
g'(\xi) \asymp_\times 1 \all g\in G_\xi.
\end{equation}
\end{definition}

Note that this definition together with Proposition \ref{propositioneuclideansimilarity} yields the following observation:
\begin{observation}
\label{observationuniformlyLipschitz}
Let $\xi$ be a parabolic fixed point of $G$. Then the action of $G_\xi$ on $(\EE_\xi,\Dist_{\xi,\zero})$ is uniformly Lipschitz, i.e.
\begin{equation}
\label{uniformlyLipschitz}
\Dist_{\xi,\zero}(g(y_1),g(y_2)) \asymp_\times \Dist_{\xi,\zero} (y_1,y_2) \all y_1,y_2\in\EE_\xi\all g\in G_\xi.
\end{equation}
\end{observation}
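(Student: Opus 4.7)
The statement follows almost immediately by combining Proposition \ref{propositioneuclideansimilarity} with the definition of a parabolic fixed point, so my plan is just to spell out the one-line verification.

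First, I would observe that since $\xi$ is a parabolic fixed point of $G$, by definition $G_\xi$ is the stabilizer of $\xi$, so every $g \in G_\xi$ satisfies $g(\xi) = \xi$, i.e.\ $\xi \in \Fix(g)$. This is precisely the hypothesis needed to apply Proposition \ref{propositioneuclideansimilarity}, which gives
\[
\Dist_{\xi,\zero}(g(y_1), g(y_2)) \asymp_\times g'(\xi)^{-1} \Dist_{\xi,\zero}(y_1, y_2)
\]
for all $y_1, y_2 \in \EE_\xi$ and all $g \in G_\xi$.

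Next, the defining property \eqref{parabolic} of a parabolic fixed point states that $g'(\xi) \asymp_\times 1$ for all $g \in G_\xi$, hence also $g'(\xi)^{-1} \asymp_\times 1$. Substituting this into the previous display yields
\[
\Dist_{\xi,\zero}(g(y_1), g(y_2)) \asymp_\times \Dist_{\xi,\zero}(y_1, y_2),
\]
which is exactly \eqref{uniformlyLipschitz}. There is no real obstacle here — the observation is essentially a re-packaging of Proposition \ref{propositioneuclideansimilarity} under the parabolic hypothesis, and its content is simply that ``parabolic'' means ``derivative comparable to $1$ at the fixed point'', which translates directly into uniformly Lipschitz behavior in the Hamenst\"adt-type metametric $\Dist_{\xi,\zero}$.
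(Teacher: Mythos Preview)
Your proof is correct and matches the paper's approach exactly: the paper simply states that the observation follows from the definition of parabolic together with Proposition \ref{propositioneuclideansimilarity}, which is precisely what you have spelled out.
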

\begin{notation}
The implied constant of \eqref{uniformlyLipschitz} will be denoted $C_0$.
\end{notation}

\begin{observation}
\label{observationeuclideanparabolicasymp}
Let $\xi$ be a point satisfying \eqref{parabolic}. Then for all $g\in G_\xi$,
\begin{equation}
\label{euclideanparabolicasymp}
\Dist_{\xi,\zero}(\zero,g(\zero)) \asymp_\times b^{(1/2)\dist(\zero,g(\zero))}.
\end{equation}
\end{observation}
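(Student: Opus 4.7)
The plan is to unwind the hypothesis $g'(\xi) \asymp_\times 1$ into a statement about Gromov products and then plug into the formula for $\Dist_{\xi,\zero}$. The key identities I will use are the definition $g'(\xi) = b^{\busemann_\xi(\zero,g^{-1}(\zero))}$, identity (h) of Proposition \ref{propositionbasicidentities} (and its boundary extension via Corollary \ref{corollaryboundaryasymptotic}), and the isometry-invariance $\lb g(y_1)|g(y_2)\rb_{g(z)} = \lb y_1|y_2\rb_z$ combined with $g(\xi) = \xi$ for $g\in G_\xi$.

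First I would apply Lemma \ref{lemmaeuclideancomparison} with $x = g(\zero)$ to reduce the claim to showing
\[
\lb g(\zero)|\xi\rb_\zero \asymp_\plus \tfrac12 \dist(\zero,g(\zero))
\]
for every $g\in G_\xi$; exponentiating this with base $b$ gives the desired $\asymp_\times$ asymptotic for $\Dist_{\xi,\zero}(\zero,g(\zero))$.

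Next I would translate the hypothesis \eqref{parabolic}. Since $g'(\xi) = b^{\busemann_\xi(\zero,g^{-1}(\zero))}$, having $g'(\xi) \asymp_\times 1$ is equivalent to $\busemann_\xi(\zero,g^{-1}(\zero)) \asymp_\plus 0$. By (h) of Proposition \ref{propositionbasicidentities} (applied on the boundary, which is legal by Corollary \ref{corollaryboundaryasymptotic}),
\[
\busemann_\xi(\zero,g^{-1}(\zero)) = \lb g^{-1}(\zero)|\xi\rb_\zero - \lb \zero|\xi\rb_{g^{-1}(\zero)}.
\]
Using isometry-invariance of the Gromov product together with $g(\xi) = \xi$, one has $\lb \zero|\xi\rb_{g^{-1}(\zero)} = \lb g(\zero)|g(\xi)\rb_\zero = \lb g(\zero)|\xi\rb_\zero$, and so the hypothesis becomes the symmetry
\[
\lb g^{-1}(\zero)|\xi\rb_\zero \asymp_\plus \lb g(\zero)|\xi\rb_\zero.
\]

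Finally I would apply (b) of Proposition \ref{propositionbasicidentities} (again boundary-extended) with $y = \zero$, $z = g(\zero)$, $x = \xi$:
\[
\dist(\zero,g(\zero)) \asymp_\plus \lb \zero|\xi\rb_{g(\zero)} + \lb g(\zero)|\xi\rb_\zero.
\]
Another isometry-invariance move gives $\lb \zero|\xi\rb_{g(\zero)} = \lb g^{-1}(\zero)|\xi\rb_\zero$, and by the symmetry established above this is $\asymp_\plus \lb g(\zero)|\xi\rb_\zero$. Thus $\dist(\zero,g(\zero)) \asymp_\plus 2\lb g(\zero)|\xi\rb_\zero$, which after dividing by $2$ is exactly what step one reduced the claim to.

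There is no real obstacle here: the whole argument is a short chain of Gromov-product identities, and the only mildly subtle point is recognizing that the parabolic hypothesis $g'(\xi)\asymp_\times 1$ is equivalent to the symmetry $\lb g^{-1}(\zero)|\xi\rb_\zero \asymp_\plus \lb g(\zero)|\xi\rb_\zero$, which when combined with the additive identity $\dist(\zero,g(\zero)) \asymp_\plus \lb \zero|\xi\rb_{g(\zero)} + \lb g(\zero)|\xi\rb_\zero$ forces each summand to be (additively) half of $\dist(\zero,g(\zero))$.
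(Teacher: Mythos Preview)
Your proof is correct and follows essentially the same approach as the paper: reduce via Lemma \ref{lemmaeuclideancomparison} to showing $\lb g(\zero)|\xi\rb_\zero \asymp_\plus \tfrac12\dist(\zero,g(\zero))$, and derive this from \eqref{parabolic} using basic Gromov-product identities. The only cosmetic difference is that the paper invokes identity (g) of Proposition \ref{propositionbasicidentities} directly (which packages your use of (b) and (h) into a single step), applying \eqref{parabolic} to $g^{-1}$ to conclude $\busemann_\xi(\zero,g(\zero))\asymp_\plus 0$ immediately.
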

\begin{proof}
This is a direct consequence of Lemma \ref{lemmaeuclideancomparison}, (g) of Proposition \ref{propositionbasicidentities}, and \eqref{parabolic}.
\end{proof}

As a corollary we have the following:
\begin{observation}
\label{observationparabolic}
Let $\xi$ be a point satisfying \eqref{parabolic}. Then for any sequence $(g_n)_1^\infty$ in $G_\xi$,
\begin{equation}
\label{parabolicequivalent}
\dist(\zero,g_n(\zero))\tendsto n \infty \;\;\Leftrightarrow\;\; g_n(\zero)\tendsto n\xi.
\end{equation}
In particular, if $\xi$ is parabolic then $\xi\in\Lambda_G$.
\end{observation}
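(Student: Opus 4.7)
The plan is to combine the two preceding observations to convert the condition $\dist(\zero,g_n(\zero))\to +\infty$ into a statement about the visual distance $\Dist_\zero(g_n(\zero),\xi)$, which in turn controls convergence in $\cl X$.

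First I would chain the already-established asymptotics. By Observation \ref{observationeuclideanparabolicasymp}, for every $g\in G_\xi$ we have
\[
\Dist_{\xi,\zero}(\zero, g(\zero)) \asymp_\times b^{(1/2)\dist(\zero,g(\zero))}.
\]
On the other hand, Lemma \ref{lemmaeuclideancomparison} applied with $x = g(\zero)$ gives
\[
\Dist_{\xi,\zero}(\zero, g(\zero)) \asymp_\times \frac{1}{\Dist_\zero(g(\zero),\xi)}.
\]
Equating the two and rearranging yields
\[
\Dist_\zero(g(\zero),\xi) \asymp_\times b^{-(1/2)\dist(\zero,g(\zero))}
\]
for every $g\in G_\xi$.

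From this asymptotic both directions of the equivalence are immediate. If $\dist(\zero,g_n(\zero))\to+\infty$, then $\Dist_\zero(g_n(\zero),\xi)\to 0$, and since the visual metric induces the topology on $\cl X$ (Subsection \ref{subsubsectiontopologyclX}), $g_n(\zero)\to\xi$. Conversely, if $g_n(\zero)\to\xi$, then $\Dist_\zero(g_n(\zero),\xi)\to 0$, so $b^{-(1/2)\dist(\zero,g_n(\zero))}\to 0$, and since $b>1$ this forces $\dist(\zero,g_n(\zero))\to+\infty$.

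For the ``in particular'' clause: if $\xi$ is parabolic, then by definition $G_\xi(\zero)$ is unbounded, so there exists a sequence $(g_n)_1^\infty$ in $G_\xi$ with $\dist(\zero,g_n(\zero))\to+\infty$; the equivalence just proved gives $g_n(\zero)\to\xi$, so $\xi\in\Lambda_G$ by Definition \ref{definitionlimitset}. There is essentially no obstacle here, since all the real work is packaged into Observations \ref{observationeuclideanparabolicasymp} and Lemma \ref{lemmaeuclideancomparison}; the only thing to verify carefully is that the multiplicative implied constants are uniform in $g\in G_\xi$, which they are because the defining property \eqref{parabolic} is uniform over $G_\xi$.
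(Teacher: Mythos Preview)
Your proof is correct and follows essentially the same approach as the paper: both combine Observation \ref{observationeuclideanparabolicasymp} with Lemma \ref{lemmaeuclideancomparison} to get the chain of equivalences, and then invoke unboundedness of $G_\xi(\zero)$ for the final clause. The paper compresses this into the single displayed line
\[
g_n(\zero)\tendsto n\xi \;\;\Leftrightarrow\;\; \Dist_{\xi,\zero}(\zero,g_n(\zero))\tendsto n +\infty \;\;\Leftrightarrow\;\; \dist(\zero,g_n(\zero))\tendsto n +\infty,
\]
which is exactly your argument in condensed form. One small remark: your phrase ``the visual metric induces the topology on $\cl X$'' is slightly loose, since $\Dist_\zero$ is only a metametric on $\cl X$; the precise justification is Observation \ref{observationboundaryconvergence} together with \eqref{distanceasymptotic}, which give $x_n\to\xi\in\del X$ iff $\Dist_\zero(x_n,\xi)\to 0$.
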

\begin{proof}
\[
g_n(\zero)\tendsto n\xi \;\;\Leftrightarrow\;\; \Dist(g_n(\zero),\xi) \tendsto n 0 \;\;\Leftrightarrow\;\; \Dist_{\xi,\zero}(\zero,g_n(\zero))\tendsto n \infty \;\;\Leftrightarrow\;\; \dist(\zero,g_n(\zero))\tendsto n \infty.
\]
If $\xi$ is parabolic, then $G_\xi(\zero)$ is unbounded, so there exists a sequence $(g_n)_1^\infty$ in $G_\xi$ satisfying the left hand side of \eqref{parabolicequivalent}. It also satisfies the right hand side of \eqref{parabolicequivalent}, which implies $\xi\in\Lambda_G$.
\end{proof}

\subsubsection{$\xi$-bounded sets}
To define the notion of a \emph{bounded} parabolic point, we first introduce the following definition:

\begin{definition}
\label{definitionxibounded}
A set $S\subset\bord X\butnot\{\xi\}$ is \emph{$\xi$-bounded} if $\xi\notin\cl{S}$.
\end{definition}
The motivation for this definition is that if $X = \H^{d + 1}$ and $\xi = \infty$, then $\xi$-bounded sets are exactly those which are bounded in the Euclidean metric.

\begin{observation}
\label{observationxibounded}
Fix $S\subset\bord X\butnot\{\xi\}$. The following are equivalent:
\begin{itemize}
\item[(A)] $S$ is $\xi$-bounded.
\item[(B)] $\lb x|\xi\rb_\zero\asymp_\plus 0$ uniformly for all $x\in S$.
\item[(C)] $\Dist_{\xi,\zero}(\zero,x)\lesssim_\times 1$ uniformly for all $x\in S$.
\item[(D)] $S$ has bounded diameter in the $\Dist_{\xi,\zero}$ metametric.
\end{itemize}
\end{observation}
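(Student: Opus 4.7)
The plan is to prove a cyclic chain of equivalences (A) $\Leftrightarrow$ (B) $\Leftrightarrow$ (C) $\Leftrightarrow$ (D), each step a direct translation via a tool already developed in Section \ref{sectionhyperbolic}. I read the conditions as pertaining to $x \in S$ (rather than $x \in X$, which would make (B) and (C) independent of $S$).

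For (A) $\Leftrightarrow$ (B), I would appeal to the definition of the topology on $\cl X$ given in \sectionsymbol\ref{subsubsectiontopologyclX}: the basic neighborhoods of $\xi$ have the form $N_t(\xi) = \{y \in \cl X : \lb y|\xi\rb_\zero > t\}$. Hence $\xi \in \cl S$ if and only if, for every $t > 0$, some $x \in S$ satisfies $\lb x|\xi\rb_\zero > t$; negating, $\xi \notin \cl S$ amounts to $\sup_{x \in S} \lb x|\xi\rb_\zero < +\infty$. Since by part (c) of Proposition \ref{propositionbasicidentities} (extended to the boundary via Corollary \ref{corollaryboundaryasymptotic}) the Gromov product satisfies $\lb x|\xi\rb_\zero \gtrsim_\plus 0$, the uniform upper bound $\lb x|\xi\rb_\zero \lesssim_\plus 0$ is equivalent to the two-sided asymptotic $\lb x|\xi\rb_\zero \asymp_\plus 0$.

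For (B) $\Leftrightarrow$ (C), I would simply invoke Lemma \ref{lemmaeuclideancomparison}, which furnishes
\[
\Dist_{\xi,\zero}(\zero,x) \asymp_\times b^{\lb x|\xi\rb_\zero}.
\]
Taking logarithms shows that uniform boundedness of the left-hand side by a constant (condition (C)) is identical to uniform boundedness of $\lb x|\xi\rb_\zero$ from above, i.e. condition (B), in view of the previous paragraph.

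For (C) $\Leftrightarrow$ (D), I would use only the fact that $\Dist_{\xi,\zero}$ is a metametric and hence obeys the triangle inequality. The implication (C) $\Rightarrow$ (D) is immediate from $\Dist_{\xi,\zero}(x,y) \leq \Dist_{\xi,\zero}(x,\zero) + \Dist_{\xi,\zero}(\zero,y)$. For the converse (assuming $S \neq \emptyset$, the empty case being trivial), fix any reference point $x_0 \in S$; then for every $x \in S$,
\[
\Dist_{\xi,\zero}(\zero,x) \leq \Dist_{\xi,\zero}(\zero,x_0) + \Dist_{\xi,\zero}(x_0,x) \leq \Dist_{\xi,\zero}(\zero,x_0) + \Diam_{\Dist_{\xi,\zero}}(S),
\]
and the right-hand side is a finite constant depending only on $S$. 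There is no genuine obstacle here; the only subtlety worth flagging is that the Gromov product used in (B) is the boundary-extended one from \eqref{gromovboundarydef1}--\eqref{gromovboundarydef2}, so one must invoke Corollary \ref{corollaryboundaryasymptotic} to apply the positivity property (c) of Proposition \ref{propositionbasicidentities} throughout $\cl X \butnot \{\xi\}$.
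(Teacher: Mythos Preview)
Your proposal is correct and follows exactly the same route as the paper's own proof, which tersely states that (A) $\Leftrightarrow$ (B) follows from the definition of the topology on $\cl X$, (B) $\Leftrightarrow$ (C) from Lemma \ref{lemmaeuclideancomparison}, and (C) $\Leftrightarrow$ (D) is obvious. Your reading of ``$x \in X$'' as ``$x \in S$'' is the intended one, and your added detail (the triangle-inequality argument for (C) $\Leftrightarrow$ (D), and the use of Corollary \ref{corollaryboundaryasymptotic} to carry (c) of Proposition \ref{propositionbasicidentities} to boundary points) simply spells out what the paper leaves implicit.
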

Condition (D) motivates the terminology ``$\xi$-bounded''.

\begin{proof}[Proof of Observation \ref{observationxibounded}]
(A) \iff (B) follows from the definition of the topology on $\bord X$, (B) \iff (C) follows from Lemma \ref{lemmaeuclideancomparison}, and (C) \iff (D) is obvious.
\end{proof}

\subsubsection{Bounded parabolic points}

\begin{definition}\label{definitionboundedparabolic}
A parabolic point $\xi\in\Lambda_G$ is a \emph{bounded} parabolic point if there exists a $\xi$-bounded set $S\subset\bord X\butnot\{\xi\}$ such that
\[
G(\zero) \subset G_\xi(S).
\]
We denote the set of bounded parabolic points by $\Lbp$.
\end{definition}

\begin{proposition}
\label{propositionboundedparabolic}
Let $\xi$ be a parabolic point of a nonelementary group $G\leq\Isom(X)$. Then the following are equivalent:
\begin{itemize}
\item[(A)] $\xi$ is a bounded parabolic point.
\item[(B)]
\begin{itemize}
\item[(i)] There exists a $\xi$-bounded set $S$ such that $\Lambda_G\setminus\{\xi\}\subset G_\xi(S)$, and
\item[(ii)] There exists a horoball $H$ centered at $\xi$ which is disjoint from $G(\zero)$, i.e. $\xi$ is not a horospherical limit point.
\end{itemize}
\end{itemize}
\end{proposition}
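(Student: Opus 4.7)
The plan is to handle (A) $\Rightarrow$ (B) directly from the definitions and to derive (B) $\Rightarrow$ (A) by contradiction, using the two hypotheses to rule out escape of $G(\zero)$-orbits modulo $G_\xi$ either into the bulk of $X$ or tangentially along $\xi$.

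For (A) $\Rightarrow$ (B), start from $G(\zero)\subset G_\xi(S)$ with $S$ being $\xi$-bounded. For part (ii), write $g(\zero)=h(s)$ with $h\in G_\xi$ and $s\in S$ and use the $G_\xi$-invariance of the Busemann function at $\xi$ to decompose
\[
\busemann_\xi(g(\zero),\zero) \;=\; \busemann_\xi(s,\zero) \,+\, \busemann_\xi(h(\zero),\zero).
\]
The second summand is $\asymp_\plus 0$ since \eqref{parabolic} translates to $\busemann_\xi(h(\zero),\zero)\asymp_\plus 0$, and the first equals $\dist(s,\zero)-2\lb s|\xi\rb_\zero$ by (g) of Proposition~\ref{propositionbasicidentities}, which is bounded below because $\lb s|\xi\rb_\zero$ is uniformly bounded on the $\xi$-bounded set $S$ (Observation~\ref{observationxibounded}); consequently $G(\zero)$ avoids every sufficiently deep horoball at $\xi$. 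For part (i), pick $\eta\in\Lambda_G\setminus\{\xi\}$ with $g_n(\zero)\to\eta$ and write $g_n(\zero)=h_n(s_n)$. Proposition~\ref{propositioneuclideansimilarity} combined with the change-of-basepoint formula \eqref{GMVT3} yields $\Dist_{\xi,\zero}(h_n^{-1}(\eta),s_n)\asymp_\times\Dist_{\xi,\zero}(\eta,g_n(\zero))\to 0$, so each $h_n^{-1}(\eta)$ lies in a fixed bounded enlargement of $S$ in the Hamenst\"adt metametric; this enlargement is again $\xi$-bounded and serves as the required $S'$.

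For (B) $\Rightarrow$ (A), suppose for contradiction that there is a sequence $g_n\in G$ with $\inf_{h\in G_\xi}\Dist_{\xi,\zero}(h(\zero),g_n(\zero))\to+\infty$. If a subsequence satisfies $g_n(\zero)\to\eta\in\del X\setminus\{\xi\}$ in the Gromov topology, then the asymptotic $\Dist_{\xi,\zero}(y_1,y_2)\asymp_\times b^{-[\lb y_1|y_2\rb_\zero - \lb y_1|\xi\rb_\zero - \lb y_2|\xi\rb_\zero]}$ together with near-continuity of the Gromov product (Lemma~\ref{lemmanearcontinuity}) gives $\Dist_{\xi,\zero}$-convergence $g_n(\zero)\to\eta$ as well; applying (i) to write $\eta=h(s)$ with $s\in S'$ then gives a uniform upper bound on $\Dist_{\xi,\zero}(h(\zero),g_n(\zero))$, a contradiction. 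The remaining possibility is $g_n(\zero)\to\xi$, but (ii) forces such convergence to be non-horospherical: $\busemann_\xi(g_n(\zero),\zero)\geq -M$ for some uniform $M$.

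The main obstacle, and the heart of the proof, is to exclude this last scenario, in which $g_n(\zero)$ approaches $\xi$ ``tangentially'' while drifting arbitrarily far from $G_\xi(\zero)$ in the Hamenst\"adt metric. To handle it I would use the Big Shadows Lemma (Lemma~\ref{lemmabigshadow}) together with the nonelementarity of $G$ (which forces $\#\Lambda_G\geq 3$) to select, for each sufficiently large $n$ and a fixed large $\sigma$, a limit point $\eta_n\in\Lambda_G\setminus\{\xi\}$ lying in $\Shad(g_n(\zero),\sigma)$. Hypothesis (i) then provides $h_n\in G_\xi$ with $h_n^{-1}(\eta_n)\in S'$, and combining the comparison formulas of Observation~\ref{observationGMVT} with the uniform Busemann lower bound from (ii) shows that $h_n^{-1}(g_n(\zero))$ remains within a fixed $\Dist_{\xi,\zero}$-distance of $h_n^{-1}(\eta_n)\in S'$, producing the final contradiction and so completing the proof.
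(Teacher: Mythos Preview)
Your proposal is correct and rests on the same mechanism as the paper's proof: for each orbit point, exhibit a limit point $\eta\in\Lambda_G\setminus\{\xi\}$ at bounded Hamenst\"adt distance, then invoke (i).  The paper's (B)$\Rightarrow$(A) is more direct: rather than arguing by contradiction, it fixes two distinct $\eta_1,\eta_2\in\Lambda_G$ once and for all, observes that Gromov's inequality forces $\lb g(\eta_i)|\xi\rb_x\asymp_\plus 0$ for some $i$, and then the single chain
\[
\Dist_{\xi,\zero}(x,g(\eta_i))\asymp_\times b^{\busemann_\xi(\zero,x)}\Dist_{\xi,x}(x,g(\eta_i))\lesssim_\times b^{\lb g(\eta_i)|\xi\rb_x}\asymp_\times 1
\]
(using (ii) for the middle inequality) finishes immediately.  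Your contradiction framing introduces a case split that is in fact vacuous: the hypothesis $\inf_{h\in G_\xi}\Dist_{\xi,\zero}(h(\zero),g_n(\zero))\to\infty$ already forces $\lb g_n(\zero)|\xi\rb_\zero\to\infty$ via Lemma~\ref{lemmaeuclideancomparison} (take $h=\id$), so $g_n(\zero)\to\xi$ always holds and only your Case~2 is live.  Your shadow-based selection of $\eta_n$ in that case is exactly the paper's two-point argument rephrased through the Big Shadows / Diameter of Shadows machinery, so the two proofs are equivalent in content; the paper's version is just shorter.
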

\begin{proof}[Proof of \textup{(A) \implies (B)}]
Let $S$ be as in Definition \ref{definitionboundedparabolic}, and let $\w S = \NN_\euc(S,1)$, where $\NN_\euc(S,r) = \{x : \Dist_\euc(x,S) \leq r\}$. Then $\Lambda_G\setminus\{\xi\}\subset G_\xi(\w S)$, demonstrating (B)(i). By (g) of Proposition \ref{propositionbasicidentities}, for $x\in S$ we have
\[
0 \asymp_\plus \lb x|\xi\rb_\zero \gtrsim_\plus \frac{1}{2}\busemann_\xi(\zero,x),
\]
and since $\xi$ is parabolic, we have $\busemann_\xi(\zero,g(x))\lesssim_\plus 0$ for all $g\in G_\xi$. Thus $G_\xi(S)$ is disjoint from some horoball $H$ centered at $\xi$, and so $G(\zero)\subset G_\xi(S)$ is also disjoint from $H$, demonstrating (B)(ii).
\end{proof}
\begin{proof}[Proof of \textup{(B) \implies (A)}]
Fix $x\in G(\zero)$, and we will show that $\Dist_\euc(x,\Lambda_G\butnot\{\xi\})\lesssim_\times 1$. Since $G$ is nonelementary, we may fix $\eta_1,\eta_2\in\Lambda_G$ distinct. Letting $x = g(\zero)$, we have
\[
\lb g(\eta_1)|g(\eta_2)\rb_x \asymp_\plus 0,
\]
and so by Gromov's inequality, there exists $i = 1,2$ such that
\[
\lb g(\eta_i)|\xi\rb_x \asymp_\plus 0;
\]
we have
\begin{align*}
\Dist_\euc(x,g(\eta_i))
&\asymp_\times b^{\busemann_\xi(\zero,x)}\Dist_{\xi,x}(x,g(\eta_i)) \by{\eqref{GMVT3}} \\
&\lesssim_\times\Dist_{\xi,x}(x,g(\eta_i)) \since{$x\notin H$} \\
&\asymp_\times b^{\lb g(\eta_i)|\xi\rb_x} \by{Lemma \ref{lemmaeuclideancomparison}}\\
&\asymp_\times 1.
\end{align*}
Letting $r$ be the implied constant of this asymptotic yields $x\in \NN_\euc(\Lambda_G\butnot \{\xi\}),r)$. Thus if $S$ is a $\xi$-bounded set such that $\Lambda_G\butnot\{\xi\}\subset G_\xi(S)$, then $G(\zero) \subset G_\xi(\NN_\euc(S,C_0 r))$.
\end{proof}

\begin{remark}
If $X = \H^{d + 1}$, then (B)(i) of Proposition \ref{propositionboundedparabolic} implies (B)(ii) of Proposition \ref{propositionboundedparabolic}. (However, it is not clear whether the implication holds in general.) Thus our definition of a bounded parabolic point agrees with the usual one in the literature, which is condition (B)(i).
\end{remark}
\begin{proof}
Suppose that $\xi$ satisfies (B)(i) of Proposition \ref{propositionboundedparabolic}. By \cite[Lemma 4.1]{Bowditch_geometrical_finiteness}, for every $\varepsilon > 0$ there exists a horoball $H_\varepsilon$ centered at $\xi$ such that
\[
H_\varepsilon\cap \CC_G \subset T_\varepsilon(G) := \{x\in\H^{d + 1}: \lb g\in G: \dist(x,g(x))\leq \varepsilon\rb\text{ is infinite}\}.
\]
Here $\CC_G$ denotes the convex hull of the limit set of $G$.

Without loss of generality suppose $\zero\in\CC_G$. Choose $\varepsilon > 0$ small enough so that $\zero\notin T_\varepsilon(G)$; such a $\varepsilon$ exists since $G$ is discrete. Since $T_\varepsilon(G)$ is invariant under the action of $G$, we have $G(\zero)\cap T_\varepsilon(G) = \emptyset$. On the other hand, $G(\zero)\subset \CC_G$, so $G(\zero)\cap H_\varepsilon = \emptyset$.
\end{proof}

\subsection{Quasiconformal measures} \label{subsectionquasiconformal}

\begin{definition}
\label{definitionquasiconformal}
Let $(X,\dist,\zero,b,G)$ be as in \sectionsymbol\ref{standingassumptions}. For each $s\geq 0$, a measure $\mu\in\MM(\del X)$ is called \emph{$s$-quasiconformal} if
\begin{equation}\label{quasiconformal}
\mu(g(A)) \asymp_\times \int_A [g'(\xi)]^s\dee\mu(\xi)
\end{equation}
for all $g\in G$ and for all $A\subset\del X$. It is \emph{ergodic} if for every $G$-invariant set $A\subset\del X$, either $\mu(A) = 0$ or $\mu(\del X\butnot A) = 0$.
\end{definition}

In this subsection we discuss sufficient conditions for the existence and uniqueness of an ergodic $\delta$-quasiconformal measure. The well-known construction of Patterson and Sullivan (\cite{Patterson2} and \cite{Sullivan_density_at_infinity}) can be generalized to show the following:
\begin{theorem}[\cite{Coornaert, GreschonigSchmidt}] \label{theorempattersonsullivangeneral}
Let $(X,\dist,\zero,b,G)$ be as in \sectionsymbol\ref{standingassumptions}, and assume that $X$ is proper and geodesic, and that $\delta = \delta_G < \infty$. Then there exists an ergodic $\delta$-quasiconformal measure supported on $\Lambda$.
\end{theorem}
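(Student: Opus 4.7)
\textbf{Proof Plan for Theorem \ref{theorempattersonsullivangeneral}.} The plan is to carry out the classical Patterson--Sullivan construction in the Gromov hyperbolic setting, using the geometric tools (shadows, Busemann functions, geometric mean value theorem) developed earlier in Section~\ref{sectionhyperbolic}.

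\emph{Step 1 (Construction of a $\delta$-quasiconformal measure).} For each $s > \delta$, if $\Sigma_s(G) < +\infty$, define the probability measure
\[
\mu_s := \frac{1}{\Sigma_s(G)}\sum_{g\in G} b^{-s\dist(\zero,g(\zero))}\, \delta_{g(\zero)}
\]
on $\cl X$. If $G$ is of divergence type at $\delta$, then $\Sigma_s(G)\to\infty$ as $s\searrow\delta$ and this already works; otherwise, by Patterson's trick I will first replace $b^{-s\dist(\zero,g(\zero))}$ by $k(\dist(\zero,g(\zero)))b^{-s\dist(\zero,g(\zero))}$ for a suitable slowly varying nondecreasing $k:\Rplus\to\Rplus$ so that the modified series diverges at $s=\delta$ but converges for all $s>\delta$, and such that $k(t+a)/k(t)\to 1$ as $t\to\infty$ for each fixed $a$. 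Since $X$ is proper, $\cl X$ is compact (by the metrizability results cited in \S\ref{subsubsectiontopologyclX} together with properness), so a standard Helly/diagonal argument extracts a sequence $s_n\searrow\delta$ with $\mu_{s_n}\to\mu$ in the weak-$*$ topology.

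\emph{Step 2 (Support in $\Lambda$).} For any relatively compact open $U\subset X$, strong discreteness forces $\mu_{s_n}(U)\to 0$: the weight of each $g(\zero)\in U$ is bounded, and the normalizing constants blow up (either because $\Sigma_s(G)\to\infty$ by construction, or by Patterson's trick). Hence $\mu(X)=0$, so $\supp(\mu)\subset\del X$, and in fact $\supp(\mu)\subset\Lambda$ since only accumulation points of $G(\zero)$ can receive positive mass.

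\emph{Step 3 ($\delta$-quasiconformality).} For fixed $g\in G$ and a Borel set $A\subset\del X$, I will compare $\mu_{s_n}(g(A))$ with $\int_A [g'(\xi)]^{s_n}\dee\mu_{s_n}(\xi)$. Re-indexing $h\mapsto g h$ in the sum and using
\[
\dist(\zero,g h(\zero)) - \dist(\zero,h(\zero)) \ =\ -\busemann_{h(\zero)}(\zero,g^{-1}(\zero)),
\]
Lemma~\ref{lemmanearcontinuity} (near-continuity of the Busemann function) gives $b^{-s\dist(\zero,gh(\zero))}\asymp_\times [g'(h(\zero))]^s b^{-s\dist(\zero,h(\zero))}$ up to bounded error as $h(\zero)\to\xi\in\Lambda$, and the slow-variation property of $k$ ensures the $k$-weights also agree asymptotically. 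Passing to the weak-$*$ limit and letting $s_n\searrow\delta$ yields \eqref{quasiconformal}; the bounded multiplicative error in the Busemann asymptotic is exactly what produces a \emph{quasi}conformal rather than strictly conformal measure.

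\emph{Step 4 (Ergodicity).} This will be the main obstacle. In the divergence-type case I will adapt the Hopf--Tsuji--Sullivan argument: using Sullivan's Shadow Lemma (Lemma~\ref{Sullivan's Shadow Lemma}, quoted later) one shows that the series $\sum_g \mu(\Shad(g(\zero),\sigma))^1$ diverges, and a Borel--Cantelli/sum-over-orbit argument implies that $\mu$-a.e.\ $\eta\in\Lambda$ is radial, after which a standard divergence-type argument (covering a positive-measure $G$-invariant set by shadows of orbit points) shows any $G$-invariant measurable set has $\mu$-measure $0$ or $1$. In the convergence-type case the measure from Step~1 need not be ergodic a priori; here I will invoke the ergodic decomposition: any $\delta$-quasiconformal measure decomposes into $\delta$-quasiconformal ergodic components, at least one of which must be supported on $\Lambda$, giving the desired ergodic measure. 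The hardest part is arranging Step~4 cleanly in the general Gromov hyperbolic setting without properness of geodesics between boundary points, which is why the theorem assumes $X$ is proper and geodesic.
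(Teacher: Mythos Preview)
Your Steps 1--3 are correct and essentially reproduce the Patterson--Sullivan construction in the Gromov hyperbolic setting as carried out by Coornaert; the paper simply cites \cite{Coornaert} (and \cite{DSU}) for this rather than redoing it. For Step~4 the paper's approach differs from yours in two ways. First, it does not split into divergence and convergence cases: it applies an ergodic decomposition uniformly to whatever $\delta$-quasiconformal measure comes out of the construction. Second, and more substantively, the paper is explicit about what ``ergodic decomposition'' means here. Since $\mu$ is only \emph{quasi}-invariant, the classical ergodic decomposition theorem for invariant measures does not apply directly; the paper invokes the Greschonig--Schmidt decomposition \cite[Theorem~1.4]{GreschonigSchmidt} for quasi-invariant measures with a prescribed Radon--Nikodym cocycle $\varrho$, and then checks that $\varrho$-admissibility of the ergodic components (with $e^{\varrho(g,\xi)}\asymp_\times g'(\xi)^\delta$ $\mu$-a.e.) forces almost every component to be $\delta$-quasiconformal. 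Your sentence ``any $\delta$-quasiconformal measure decomposes into $\delta$-quasiconformal ergodic components'' is exactly the right conclusion but is nontrivial and needs this sort of justification --- it is the one genuine gap in your plan. Your alternative Hopf--Tsuji--Sullivan route for the divergence case would also succeed (this is essentially the content of Theorem~\ref{theoremahlforsthurstongeneral}, which the paper imports from \cite{DSU}), but it is considerably more work than the uniform decomposition argument.
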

\begin{proof}
Everything except ``ergodic'' was proven in \cite{Coornaert}; see also \cite[Theorem 15.4.6]{DSU}. Let $\mu$ be a $\delta$-quasiconformal measure. Let $\varrho:G\times\del X\to\R$ satisfy (1.1)-(1.3) of \cite{GreschonigSchmidt}. Then by \cite[Theorem 1.4]{GreschonigSchmidt}, $\mu$ can be written as a convex combination of ergodic measures which are ``$\varrho$-admissible'' (in the terminology of \cite{GreschonigSchmidt}). But by (1.1) of \cite{GreschonigSchmidt}, we have $e^{\varrho(g,\xi)}\asymp_\times g'(\xi)^\delta$ for $\mu$-almost every $\xi\in\del X$, so $\varrho$-admissibility implies $\delta$-quasiconformality for all measures supported on the complement of a certain $\mu$-nullset. But this means that almost every measure in the convex combination is an ergodic $\delta$-quasiconformal measure.
\end{proof}

It is much more difficult to prove the existence of quasiconformal measures in the case where $X$ is not proper. It turns out that the condition of divergence type, which in the case $X = \H^{d + 1}$ is already known to imply uniqueness of the $\delta$-conformal measure, is the right condition to guarantee existence in a non-proper setting:

\begin{theorem}[{\cite[Theorem 1.4.1]{DSU}}]\label{theoremahlforsthurstongeneral}
Let $(X,\dist,\zero,b,G)$ be as in \sectionsymbol\ref{standingassumptions}. Suppose that $G$ is of divergence type. Then there exists a $\delta$-quasiconformal measure $\mu$ for $G$, where $\delta$ is the Poincar\'e exponent of $G$. It is unique up to a multiplicative constant in the sense that if $\mu_1,\mu_2$ are two such measures then $\mu_1\asymp_\times \mu_2$, meaning that $\mu_1$ and $\mu_2$ are in the same measure class and that the Radon--Nikodym derivative $\dee \mu_1/\dee \mu_2$ is bounded from above and below. In addition, $\mu$ is ergodic and gives full measure to the radial limit set of $G$.
\end{theorem}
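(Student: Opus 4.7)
The plan is to adapt the Patterson--Sullivan construction used for Theorem \ref{theorempattersonsullivangeneral} to the possibly non-proper setting, with divergence type compensating for the loss of standard weak-$*$ compactness. For each $s > \delta$ I would form the probability measure $\mu_s$ on $\cl X$ which places mass $b^{-s\dist(\zero, g(\zero))}/\Sigma_s(G)$ at each orbit point $g(\zero)$. Because $G$ is of divergence type, $\Sigma_s(G) \to \infty$ as $s\downarrow \delta$, which forces any weak-$*$ accumulation point of $(\mu_s)_{s > \delta}$ to be supported on $\del X$, since any individual orbit point receives asymptotically zero mass. The $\delta$-quasiconformality of the limit comes from the near-equivariance of $\mu_s$: a direct computation using the identities of Proposition \ref{propositionbasicidentities} together with Corollary \ref{corollaryGMVT} yields an approximate transformation law $g_* \mu_s \asymp_\times (g')^s \mu_s$ whose multiplicative defect vanishes as $s \downarrow \delta$.

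The principal obstacle is that $\cl X$ is not compact in general, so a weak-$*$ accumulation point in the usual sense need not exist. I would circumvent this by working on the visual metametric space $(\del X, \Dist)$ directly: replace each point mass at $g(\zero)$ by a measure of equal total mass supported on the shadow $\Shad(g(\zero), \sigma)$ (for a fixed large $\sigma$), whose diameter is controlled by the Diameter of Shadows Lemma \ref{lemmadiameterasymptotic}. Tightness of the resulting family on $\del X$ then reduces to observing that shadows of far-away orbit points have small diameter and concentrate on $\Lambda$, and the Bounded Distortion Lemma \ref{lemmaboundeddistortion} ensures that the approximate $\delta$-conformality estimate survives passage to the weak-$*$ limit.

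For ergodicity and full measure on $\Lr$, the key tool is Sullivan's Shadow Lemma, which gives for any $\delta$-quasiconformal $\mu$ and all $\sigma$ sufficiently large the two-sided bound
\[
\mu(\Shad(g(\zero),\sigma)) \asymp_\times b^{-\delta \dist(\zero,g(\zero))}.
\]
Summing over $g \in G$ and using divergence type yields $\sum_g \mu(\Shad(g(\zero),\sigma)) = \infty$. A quasi-independence Borel--Cantelli argument, with pairwise overlaps of shadows controlled by the Intersecting Shadows Lemma \ref{lemmatau}, then gives $\mu(\Lrsigma) = 1$ for all such $\sigma$, and hence $\mu(\Lr) = 1$. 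Ergodicity follows by a Lebesgue-density argument on $(\del X,\Dist)$: given a $G$-invariant measurable set $A$ with $\mu(A) > 0$, a density point $\xi$ of $A$ lies in $\Lr$ almost surely; transporting the density neighborhoods of $\xi$ via a sequence $g_n \in G$ witnessing radial convergence $g_n(\zero)\to\xi$ (and using bounded distortion to compare shadow measures under the action) shows that every shadow has density of $A$ bounded below, which forces $\mu(\Lambda \setminus A) = 0$.

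Uniqueness then follows from ergodicity plus bounded distortion. Given two $\delta$-quasiconformal measures $\mu_1, \mu_2$, the Shadow Lemma gives $\mu_1(\Shad(\cdot,\sigma)) \asymp_\times \mu_2(\Shad(\cdot,\sigma))$ on every shadow, so the two measures are in the same measure class and the Radon--Nikodym derivative $f := \dee\mu_1/\dee\mu_2$ is essentially bounded above and below. The $\delta$-conformality of both measures makes $f$ approximately $G$-invariant; applying ergodicity of $\mu_2$ to super-level sets of $f$ then forces $f$ to be essentially constant up to the multiplicative defect in \eqref{quasiconformal}, yielding $\mu_1 \asymp_\times \mu_2$. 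The main obstacle throughout is non-properness: the standard weak-$*$ compactness argument for existence must be replaced by a shadow-based construction in $(\del X,\Dist)$, and estimates that would be sharp in the proper setting become multiplicative asymptotics that must be tracked carefully through every limit.
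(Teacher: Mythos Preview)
The paper does not contain a proof of this theorem; it is quoted from the companion preprint \cite{DSU} (note the citation in the theorem header), and the authors explicitly state that results quoted from \cite{DSU} are not used in any of their proofs. So there is no ``paper's own proof'' to compare against here.

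That said, your outline has a genuine gap in the existence step. You correctly identify non-properness as the obstacle to weak-$*$ compactness on $\cl X$, but your proposed fix---replacing orbit point masses by measures supported on shadows in $\del X$---does not resolve it. In the non-proper setting $\del X$ is itself not compact (e.g.\ for $X = \H^\infty$ the boundary is an infinite-dimensional sphere), nor is $\Lambda$ in general, so the family of shadow-supported measures need not be tight: small diameter of individual shadows says nothing about whether mass can escape to infinity in $\del X$. You need an actual compact space to take a limit in. The approach taken in \cite{DSU} is to pass to the Stone--\v{C}ech compactification of $X$, extract a limit there, and then show that divergence type forces the limiting measure to live on the Gromov boundary rather than on the corona.

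Your arguments for $\mu(\Lr)=1$, ergodicity, and uniqueness via the Shadow Lemma and a density/bounded-distortion argument are along standard lines and are essentially what one expects; these parts are fine as a sketch, though the quasi-independence step in the Borel--Cantelli argument requires some care to make rigorous in this generality.
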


\draftnewpage\section{Basic facts about Diophantine approximation} \label{sectionbasic}
In this section we prove Theorem \ref{theoremdirichlet} and Proposition \ref{propositionnonlimitpoint}.

\begin{theorem}[Generalization of Dirichlet's Theorem]
\label{theoremdirichlet}
Let $(X,\dist,\zero,b,G)$ be as in \sectionsymbol\ref{standingassumptions}, and fix a distinguished point $\xi\in\del X$. Then for every $\sigma > 0$, there exists a number $C = C_\sigma > 0$ such that for every $\eta\in\Lrsigma(G)$,
\begin{equation}
\label{dirichlet}
\Dist(g(\xi),\eta) \leq C b^{-\dist(\zero,g(\zero))}\text{ for infinitely many }g\in G.
\end{equation}
Here $\Lrsigma(G)$ denotes the $\sigma$-radial limit set of $G$.

Furthermore, there exists a sequence of isometries $(g_n)_1^\infty\subset G$ such that
\[
g_n(\xi) \tendsto n \eta  \text{ and } g_n(\zero) \tendsto n \eta,
\]
and \eqref{dirichlet} holds with $g = g_n$ for all $n\in\N$. 
\end{theorem}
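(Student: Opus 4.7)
The plan is to pick, given $\eta\in\Lrsigma$, an approximating sequence for $\eta$ in $G(\zero)$ and then ``rotate'' $\xi$ into a suitable shadow so that its image lands inside the same shadow as $\eta$. Concretely, since $\eta\in\Lrsigma$, fix a sequence $(h_n)$ in $G$ with $h_n(\zero)\to\eta$ and $\eta\in\Shad(h_n(\zero),\sigma)$ for every $n$; equivalently (by applying $h_n^{-1}$), $h_n^{-1}(\eta)\in\Shad_{h_n^{-1}(\zero)}(\zero,\sigma)$. By Observation \ref{observationnoglobalfixedpoint} (since $G$ is of general type and hence has no global fixed point) fix once and for all an element $k\in G$ with $k(\xi)\neq\xi$, and set $\varepsilon := \tfrac12\Dist(\xi,k(\xi))>0$. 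Now invoke the Big Shadows Lemma (Lemma \ref{lemmabigshadow}) to choose $\sigma'>0$ so large that $\Diam(\del X\setminus\Shad_z(\zero,\sigma'))<\varepsilon$ for every $z\in X$.

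By the choice of $\varepsilon$, at most one of $\xi$ and $k(\xi)$ can lie outside $\Shad_{h_n^{-1}(\zero)}(\zero,\sigma')$. Define
\[
g_n := \begin{cases}h_n & \text{if } \xi\in\Shad_{h_n^{-1}(\zero)}(\zero,\sigma'),\\ h_n\circ k & \text{otherwise.}\end{cases}
\]
In either case $g_n(\xi)\in\Shad(h_n(\zero),\sigma')$, and since also $\eta\in\Shad(h_n(\zero),\sigma)$, the Diameter of Shadows Lemma (Lemma \ref{lemmadiameterasymptotic}) with parameter $\sigma'' := \max(\sigma,\sigma')$ gives
\[
\Dist(g_n(\xi),\eta)\;\leq\;\Diam(\Shad(h_n(\zero),\sigma''))\;\lesssim_{\times,\sigma''}\;b^{-\dist(\zero,h_n(\zero))}.
\]
Because $g_n(\zero)\in\{h_n(\zero),h_n(k(\zero))\}$ and $\dist(h_n(\zero),h_n(k(\zero)))=\dist(\zero,k(\zero))$ depends only on the fixed element $k$, we have $\dist(\zero,g_n(\zero))\asymp_\plus \dist(\zero,h_n(\zero))$. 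Absorbing this bounded shift into the multiplicative constant yields
\[
\Dist(g_n(\xi),\eta)\;\leq\;C\,b^{-\dist(\zero,g_n(\zero))}
\]
for a constant $C=C_\sigma>0$, which is the required inequality \eqref{dirichlet}.

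It remains to verify the ``furthermore'' clause. From $h_n(\zero)\to\eta$ (and Lemma \ref{lemmanearcontinuity} together with the fact that $k$ is a fixed isometry) we immediately get $g_n(\zero)\to\eta$. The estimate $\Dist(g_n(\xi),\eta)\to 0$ established above, combined with the fact that the visual metric induces the Gromov topology on $\del X$, gives $g_n(\xi)\to\eta$. Finally, since $h_n(\zero)\to\eta\in\del X$ forces $\dist(\zero,h_n(\zero))\to\infty$ and hence $\dist(\zero,g_n(\zero))\to\infty$, strong discreteness of $G$ implies that the $g_n$ are eventually pairwise distinct, providing the infinitely many approximants.

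The only non-routine ingredient is the ``rotation'' step: one needs the single extra element $k$ to make sure that regardless of the direction from which $h_n^{-1}(\zero)$ approaches the boundary, the pair $\{\xi,k(\xi)\}$ always has a representative visible from $h_n^{-1}(\zero)$. This is really the place where the general-type hypothesis (in the guise of Observation \ref{observationnoglobalfixedpoint}) and the uniform big-shadows estimate cooperate; everything else is bookkeeping with the Gromov product and the shadow lemmas.
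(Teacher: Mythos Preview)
Your proof is correct and follows essentially the same approach as the paper. The paper works directly with Gromov products (applying Gromov's inequality to $\lb\xi|h(\xi)\rb_\zero$ to deduce that $\lb \w g_n^{-1}(\zero)|\xi_i\rb_\zero \asymp_\plus 0$ for one of $i=1,2$), whereas you package the same argument via the Big Shadows and Diameter of Shadows Lemmas; the underlying idea --- pick one auxiliary element $k$ moving $\xi$, and on each step choose whichever of $\xi,k(\xi)$ lands in the shadow from $h_n^{-1}(\zero)$ --- is identical.
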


\begin{proof}
Since the group $G$ is of general type, by Observation \ref{observationnoglobalfixedpoint} there exists $h\in G$ such that $h(\xi)\neq\xi$. Let $\xi_1 = \xi$ and let $\xi_2 = h(\xi)$. Since $\eta$ is a $\sigma$-radial limit point, there exists a sequence $(\w g_n)_1^\infty$ in $G$ such that $\w g_n(\zero)$ converges to $\eta$ $\sigma$-radially. We have
\[
0 \asymp_{\plus,\xi} \lb \xi_1|\xi_2\rb_\zero
\gtrsim_\plus \min\left(\lb \w g_n^{-1}(\zero)|\xi_1\rb_\zero,\lb \w g_n^{-1}(\zero)|\xi_2\rb_\zero\right).
\]
Let $i_n\in\{1,2\}$ be chosen so as to minimize $\lb \w g_n^{-1}(\zero) | \xi_{i_n} \rb_\zero$. We then have 
\[
\lb \w g_n^{-1}(\zero)|\xi_{i_n}\rb_\zero \asymp_{\plus,\xi} 0,
\]
or equivalently
\[
\lb\w g_n(\zero)|\w g_n(\xi_{i_n})\rb_\zero \asymp_{\plus,\xi} \dist(\zero,\w g_n(\zero)).
\]
On the other hand, since $\w g_n(\zero)$ converges to $\eta$ $\sigma$-radially we have
\[
\lb\w g_n(\zero)|\eta\rb_\zero \asymp_{\plus,\sigma} \dist(\zero,\w g_n(\zero)).
\]
Applying Gromov's inequality yields
\[
\lb\w g_n(\xi_{i_n})|\eta\rb_\zero \gtrsim_{\plus,\xi,\sigma} \dist(\zero,\w g_n(\zero))
\]
and thus
\[
\Dist(\w g_n(\xi_{i_n}),\eta) \lesssim_{\times,\xi,\sigma} b^{-\dist(\zero,\w g_n(\zero))}.
\]
If $i_n = 1$ for infinitely many $n$, then we are done by setting $g_n := \w g_n$. Otherwise,
\[
\Dist(\w g_n\circ h(\xi),\eta)
\lesssim_{\times,\xi,\sigma} b^{-\dist(\zero,\w g_n(\zero))}
\asymp_{\times,\xi} b^{-\dist(\zero,\w g_n\circ h(\zero))} 
\]
and we are done by setting $g_n := \w g_n\circ h$.
\end{proof}

If $\xi\in\del X\butnot\Lambda$, then Theorem \ref{theoremdirichlet} together with the following proposition show that the theory of Diophantine approximation by the orbit of $\xi$ is trivial.

\begin{proposition}
\label{propositionnonlimitpoint}
Let $(X,\dist,\zero,b,G)$ be as in \sectionsymbol\ref{standingassumptions}. For every $\xi\in\del X\butnot\Lambda$, we have $\BA_\xi = \Lr$.
\end{proposition}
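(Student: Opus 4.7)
The plan is to establish the uniform lower bound
\[
\Dist(g(\xi),\eta) \gtrsim_\times b^{-\dist(\zero,g(\zero))}
\]
for every $\eta\in\Lr(G)$ and every $g\in G$, which is exactly the condition $\eta\in\BA_\xi$ from Definition \ref{definitionbadlyapproximable}. The idea is to transport the approximation problem through $g^{-1}$ so as to exploit the assumption that $\xi$ stays visually bounded away from the $G$-invariant set $\Lambda$, and then push the resulting estimate back through $g$ using the geometric mean value theorem.

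First I would observe that since $\Lambda$ is closed in $\cl X$ (Observation \ref{observationlimitsets}) and $\xi\notin\Lambda$, the visual distance $\varepsilon_0:=\Dist(\xi,\Lambda)$ is strictly positive. Because $\Lr\subset\Lambda$ and $\Lambda$ is $G$-invariant, for every $\eta\in\Lr$ and every $g\in G$ the point $g^{-1}(\eta)$ lies in $\Lambda$, hence
\[
\Dist(\xi, g^{-1}(\eta)) \geq \varepsilon_0 .
\]

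Next I would apply the geometric mean value theorem (Corollary \ref{corollaryGMVT}, i.e.\ equation \eqref{GMVT5}) to the pair $(\xi, g^{-1}(\eta))\in\cl X\times\cl X$ to obtain
\[
\Dist(g(\xi),\eta) \;=\; \Dist\bigl(g(\xi),\, g(g^{-1}(\eta))\bigr) \;\asymp_\times\; \bigl(g'(\xi)\,g'(g^{-1}(\eta))\bigr)^{1/2}\,\Dist(\xi, g^{-1}(\eta)).
\]
Combined with the previous step, this reduces the problem to the lower bound $(g'(\xi)\,g'(g^{-1}(\eta)))^{1/2}\gtrsim_\times b^{-\dist(\zero,g(\zero))}$.

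Finally I would establish the trivial pointwise inequality $g'(y)\gtrsim_\times b^{-\dist(\zero,g(\zero))}$ valid for \emph{every} $y\in\cl X$; applied to $y=\xi$ and $y=g^{-1}(\eta)$ it immediately yields the required estimate. Since $g'(y)=b^{\busemann_y(\zero,g^{-1}(\zero))}$ by definition, and since $\dist(\zero,g^{-1}(\zero))=\dist(\zero,g(\zero))$, this reduces to showing $\busemann_y(\zero,g^{-1}(\zero))\gtrsim_\plus -\dist(\zero,g(\zero))$. For $y\in X$ this is the Busemann triangle inequality \eqref{busemanndef}; for $y\in\del X$ it follows from identity (g) of Proposition \ref{propositionbasicidentities} extended additively to the boundary (Corollary \ref{corollaryboundaryasymptotic}) together with the nonnegativity of the Gromov product. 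There is no real obstacle here: the argument is essentially a one-line mean-value-theorem computation, and verifying that the boundary-extended identities give the needed Busemann bound is the only technicality.
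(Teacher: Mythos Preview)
Your proof is correct and follows essentially the same approach as the paper: both pull back through $g^{-1}$ to exploit the $G$-invariance of $\Lambda$ and the positive visual distance $\Dist(\xi,\Lambda)>0$, then use a change-of-basepoint estimate. The only difference is packaging --- the paper works directly with Gromov products and the crude inequality (d) of Proposition~\ref{propositionbasicidentities}, $\lb g(\xi)|\eta\rb_\zero \lesssim_\plus \lb g(\xi)|\eta\rb_{g(\zero)} + \dist(\zero,g(\zero)) = \lb \xi|g^{-1}(\eta)\rb_\zero + \dist(\zero,g(\zero))$, while you reach the same conclusion via the geometric mean value theorem and the trivial derivative bound $g'(y)\gtrsim_\times b^{-\dist(\zero,g(\zero))}$; these are equivalent formulations of the same basepoint-change argument.
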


\begin{proof}
We have $\Dist(\xi,\Lambda) > 0$ and so
\[
\lb\xi|\eta\rb_\zero \asymp_{\plus,\xi} 0
\]
for all $\eta\in\Lambda$. Now fix $\eta\in\Lr(G)$; for all $g\in G$, we have
\[
\lb g(\xi)|\eta\rb_{g(\zero)} = \lb \xi|g^{-1}(\eta)\rb_\zero \asymp_{\plus,\xi} 0.
\]
Thus by (d) of Proposition \ref{propositionbasicidentities} we have
\begin{align*}
\lb g(\xi)|\eta\rb_\zero 
&\lesssim_{\plus\phantom{,\xi}} \lb g(\xi)|\eta\rb_{g(\zero)} + \dist(\zero,g(\zero)) \\
&\asymp_{\plus,\xi} \dist(\zero,g(\zero)).
\end{align*}
So, $\Dist(g(\xi),\eta) \gtrsim_{\times,\xi} b^{-\dist(\zero,g(\zero))}$, i.e. $\eta$ is badly approximable with respect to $\xi$. We are done.
\end{proof}

\draftnewpage\section{Schmidt's game and McMullen's absolute game}
\label{sectiongames}
Throughout this section $(Z,\Dist)$ denotes a metric space. In our applications, $Z$ will not be a hyperbolic metric space; rather, it will be the boundary of a hyperbolic metric space (or some subset thereof), and $\Dist$ will be a visual metric as defined in Proposition \ref{propositionDist}.

We recall that for $0 < c < 1$, the metric space $(Z,\Dist)$ is said to be \emph{$c$-uniformly perfect} if for every $r\in(0,1)$ and for every $z\in Z$, $B(z,r)\setminus B(z,c r)\neq\emptyset$; it is \emph{uniformly perfect} if it is $c$-uniformly perfect for some $0 < c < 1$. Thus every uniformly perfect space is perfect but not necessarily vice versa.\Footnote{Recall that a metric space is called \emph{perfect} if it is compact and has no isolated points.}
\begin{definition}
\label{definitionahlforsregular}
For $s > 0$, a measure $\mu\in\MM(Z)$ is called \emph{Ahlfors $s$-regular} if 
\[
\mu(B(z,r)) \asymp_\times r^s
\]
for all $z\in\Supp(\mu)$\Footnote{Here and from now on $\Supp(\mu)$ denotes the topological support of a measure $\mu$.} and for all $r\in(0,1)$. The set $Z$ itself is called Ahlfors $s$-regular if it is equal to the $\Supp(\mu)$ for some Ahlfors $s$-regular measure $\mu\in\MM(Z)$.
\end{definition}
We note that it is well-known that if $Z$ is an Ahlfors $s$-regular metric space then its Hausdorff, packing, and box-counting dimensions are all equal to $s$ (see e.g. \cite{Young2}). Moreover, every Ahlfors regular space is uniformly perfect, since if $Z$ is Ahlfors $s$-regular then
\[
B(z,r)\setminus B(z,c r) = \emptyset \Rightarrow B(z,r) = B(z,cr) \Rightarrow r^s \asymp_\times (cr)^s \Rightarrow c\asymp_\times 1;
\]
taking the contrapositive yields that for $0 < c < 1$ sufficiently small, $Z$ is $c$-uniformly perfect.

We now define Schmidt's game (introduced by W. M. Schmidt in \cite{Schmidt1}) for the metric space $(Z,\Dist)$. When defining the game in this level of generality, we must specify that when choosing a ball in $(Z,\Dist)$, a player is really choosing a pair $(z,r)\in Z\times (0,\infty)$ which determines to a ball $B(z,r) \subset Z$.
\begin{remark}
\label{remarkformalballs}
Unlike in the case of $Z = \R^d$, it is possible that the same ball may be written in two different ways, i.e. $B(z,r) = B(\w z,\w r\ew\ew)$ for some $(z,r)\neq(\w z,\w r\ew\ew)$. In this case, it is the player's job to disambiguate.
\end{remark}
Now fix $0 < \alpha,\beta < 1$, and suppose that two players, Bob and Alice, take turns choosing balls $B_k = B(z_k,r_k)$ and $A_k = B(\w z_k,\w r_k)$, satisfying
\[
B_1 \supset A_1 \supset B_2 \supset \dots
\]
and
\[
\w r_k = \alpha r_k, \;\; r_{k + 1} = \beta \w r_k.
\]
A set $S \subset Z$ is said to be \emph{$(\alpha,\beta)$-winning} if Alice has a strategy guaranteeing that the unique point in the set $\bigcap_{k = 1}^\infty B_k= \bigcap_{k = 1}^\infty A_k$ belongs to $S$, regardless of the way Bob chooses to play. It is said to be \emph{$\alpha$-winning} if it is $(\alpha,\beta)$-winning for all $\beta>0$, and \emph{winning} if it is $\alpha$-winning for some $\alpha$.

In \cite{McMullen_absolute_winning}, C. T. McMullen introduced the following modification:\Footnote{Strictly speaking, McMullen only defined his game for $Z = \R^d$.} Let $Z$ be a $c$-uniformly perfect metric space. Fix $0 < \beta \leq c/5$,\Footnote{The constant $c/5$ is chosen to ensure that Bob will always have a legal move; see Lemma \ref{lemmalegalmove} below.} and suppose that  Bob and Alice take turns choosing balls $B_k = B(z_k,r_k)$ and $A_k = B(\w z_k,\w r_k)$ so that
\begin{equation}
\label{absolutewinninginclusion}
B_1 \supset B_1 \setminus A_1 \supset B_2\supset B_2 \setminus A_2 \supset B_3 \supset \cdots
\end{equation}
and
\begin{equation}
\label{absolutewinningradii}
\w r_k \leq \beta r_k, \;\; r_{k + 1} \geq \beta r_k.
\end{equation}
A set $S\subset Z$ is said to be \emph{$\beta$-absolute winning} if Alice has a strategy which leads to 
\[
\bigcap_{k = 1}^\infty B_k\cap S\ne\emptyset
\]
regardless of how Bob chooses to play; $S$ is said to be \emph{absolute winning} if $S$ is $\beta$-absolute winning for all $0<\beta \leq c/5$. Note a significant difference between Schmidt's original game and the modified game: now Alice has rather limited control over the situation, since she can block fewer of Bob's possible moves at the next step. Also, in the new version the $r_k$s do not have to tend to zero; therefore $\bigcap_1^\infty B_k$ does not have to be a single point (however the outcome with $r_k\not\tendsto k 0$ is clearly winning for Alice as long as $S$ is dense).

The following lemma guarantees that Bob always has a legal move in the $\beta$-absolute winning game:
\begin{lemma}
\label{lemmalegalmove}
Let $Z$ be a $c$-uniformly perfect metric space and fix $0 < \beta \leq c/5$. Then for any two sets $B_1 = B(z_1,r_1)$ and $A_1 = B(\w z_1,\w r_1)\subset B_1$ with $\w r_1 \leq \beta r_1$, there exists a set $B_2 = B(z_2,r_2)\subset B_1$ with $r_2 = \beta r$ which is disjoint from $A_1$.
\end{lemma}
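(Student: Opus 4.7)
The plan is to produce the center $z_2$ by applying the uniform perfectness of $Z$ at the point $\w z_1$ with a carefully chosen intermediate radius $r$, then verify that the resulting ball of radius $\beta r_1$ around $z_2$ both fits inside $B_1$ and stays away from $A_1$.

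First I would translate the conclusion into two quantitative constraints on a candidate center $z_2\in Z$: the containment $B(z_2,\beta r_1)\subset B(z_1,r_1)$ is ensured by $\Dist(z_1,z_2)\le (1-\beta)r_1$, and disjointness $B(z_2,\beta r_1)\cap B(\w z_1,\w r_1)=\emptyset$ is ensured by $\Dist(\w z_1,z_2)>\beta r_1+\w r_1$. So it suffices to produce $z_2$ lying in an annulus around $\w z_1$ of inner radius slightly greater than $\beta r_1+\w r_1$ and outer radius at most $(1-\beta)r_1-\w r_1$ (the outer bound also controls $\Dist(z_1,z_2)$ via the triangle inequality through $\w z_1\in B_1$).

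Next I would set $r:=(1-\beta)r_1-\w r_1$ and invoke $c$-uniform perfectness to produce $z_2\in B(\w z_1,r)\setminus B(\w z_1,cr)$, which automatically gives $\Dist(z_1,z_2)\le \Dist(z_1,\w z_1)+r\le \w r_1+r=(1-\beta)r_1$, so $B(z_2,\beta r_1)\subset B_1$. For the disjointness, I would check that the lower bound $\Dist(\w z_1,z_2)>cr$ exceeds $\beta r_1+\w r_1$; substituting $r$ this becomes
\[
c(1-\beta)r_1 - c\w r_1 \;>\; \beta r_1 + \w r_1,
\]
and using the hypothesis $\w r_1\le\beta r_1$ it suffices that $c(1-\beta)-\beta\ge(1+c)\beta$, i.e.\ $\beta\le c/(2(1+c))$. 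The bookkeeping step is to verify that the assumption $\beta\le c/5$ implies this inequality for all $c\in(0,1)$, which follows since $2(1+c)\le 4<5$, so $c/5\le c/(2(1+c))$.

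The only genuine obstacle is an implicit range issue: uniform perfectness as stated in the paper requires $r\in(0,1)$, whereas in our application $r=(1-\beta)r_1-\w r_1$ could in principle be larger. I would address this by noting that in all uses of the absolute game in the paper, one works inside a fixed bounded set (the initial ball $B_1$), so we may rescale the metric or equivalently apply uniform perfectness only at small radii, which is the regime that actually matters; alternatively, if $r\ge 1$ one may replace $r$ by any smaller value in the valid range that still satisfies the two inequalities above, and the same arithmetic check using $\beta\le c/5$ goes through.
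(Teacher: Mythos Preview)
Your disjointness estimate is fine, but the containment step has a real error. You write
\[
\Dist(z_1,z_2)\le \Dist(z_1,\w z_1)+r\le \w r_1+r=(1-\beta)r_1,
\]
which uses $\Dist(z_1,\w z_1)\le \w r_1$. Nothing in the hypotheses gives this: from $A_1=B(\w z_1,\w r_1)\subset B_1=B(z_1,r_1)$ you only get $\w z_1\in B_1$, i.e.\ $\Dist(z_1,\w z_1)\le r_1$, not $\le \w r_1$. (In a general metric space, $B(\w z_1,\w r_1)\subset B(z_1,r_1)$ does \emph{not} force $\Dist(z_1,\w z_1)\le r_1-\w r_1$ either.) With only $\Dist(z_1,\w z_1)\le r_1$, the triangle inequality gives $\Dist(z_1,z_2)\le r_1+r$, which is useless for the inclusion $B(z_2,\beta r_1)\subset B_1$. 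So centering the annulus at $\w z_1$ buys you disjointness from $A_1$ for free, but you have no control over containment in $B_1$.

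The paper's argument resolves this by centering the annulus at $z_1$ instead: it picks $w_2\in B(z_1,(1-\beta)r_1)\setminus B(z_1,c(1-\beta)r_1)$, so that both $U_1=B(z_1,\beta r_1)$ and $U_2=B(w_2,\beta r_1)$ sit inside $B_1$ automatically. Of course now neither $U_i$ is guaranteed to miss $A_1$; the key extra idea is to estimate $\Dist(U_1,U_2)\ge c(1-\beta)r_1-2\beta r_1>(c-3\beta)r_1$ and observe $\Diam(A_1)\le 2\beta r_1\le(c-3\beta)r_1$, so $A_1$ can meet at most one of the two candidates. Your single-candidate approach cannot be repaired without this kind of two-ball pigeonhole, because there is no a priori relationship between $\w z_1$ and the point produced by uniform perfectness at $z_1$ (or between $z_1$ and the point produced at $\w z_1$).
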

\begin{proof}
Let $r = r_1$ and $w_1 = z_1$. Since $Z$ is $c$-uniformly perfect, there exists a point $w_2\in B(z_1,(1 - \beta)r)\butnot B(z_1,c(1 - \beta)r)$. Then the balls $U_i = B(w_i,\beta r)$ are contained in $B_1$; moreover,
\[
\Dist(U_1,U_2) \geq \Dist(w_1,w_2) - 2\beta r \geq c(1 - \beta)r - 2\beta r > (c - 3\beta)r.
\]
On the other hand,
\[
\Diam(A_1)\leq 2\w r_1\leq 2\beta r \leq (c - 3\beta)r,
\]
which shows that $A_1$ can intersect at most one of the sets $U_1$ and $U_2$. Letting $B_2$ be one that it does not intersect finishes the proof.
\end{proof}

The following proposition summarizes some important properties of winning and absolute winning subsets of a $c$-uniformly perfect Ahlfors $s$-regular metric space $(Z,\Dist)$:
\begin{proposition}
\label{propositionwinningproperties}~
\begin{itemize}
\item[(i)] Winning sets are dense and have Hausdorff dimension $s$.
\item[(ii)] Absolute winning implies $\alpha$-winning for any $0 < \alpha \leq c/5$.
\item[(iii)] The countable intersection of $\alpha$-winning (resp., 
absolute winning) sets is again $\alpha$-winning (resp., 
absolute winning).
\item[(iv)] The image of an $\alpha$-winning set under a bi-Lipschitz map $f$ is $K^{-2}\alpha$-winning, where $K$ is the bi-Lipschitz constant of $f$.
\item[(v)] 
The image of an absolute winning set under a quasisymmetric map (and in particular any bi-Lipschitz map) is absolute winning.
\end{itemize}
\end{proposition}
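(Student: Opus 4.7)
I propose to prove Proposition \ref{propositionwinningproperties} in five parts, each following the classical strategy-translation template from Schmidt's game \cite{Schmidt1} and McMullen's absolute game \cite{McMullen_absolute_winning}, adapted to the general $c$-uniformly perfect, Ahlfors $s$-regular setting. Density in (i) and interleaving in (iii) are essentially formal; the substantive content lies in the dimension statement of (i) and the game conversions in (ii), (iv), and (v).

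For (i), density follows immediately since Bob may start by playing any prescribed ball $B_1$, forcing Alice's winning strategy to produce a point of $S$ inside $B_1$. For the dimension bound, I would fix an $\alpha$-winning strategy $\sigma$ and use Ahlfors $s$-regularity to produce Bob-branching: inside each Alice ball $A_k = B(\tilde z_k, \alpha r_k)$, regularity gives $\asymp_\times \beta^{-s}$ mutually disjoint legal Bob balls of radius $\beta\alpha r_k$. Running all branches produces a self-similar tree whose endpoints all lie in $S$; the natural branch-weighted probability measure is Frostman of exponent arbitrarily close to $s$ as $\beta\to 0$, and the mass distribution principle yields $\HD(S) \ge s$. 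The reverse inequality is automatic since $Z$ itself has dimension $s$.

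Part (ii) is a ball-avoidance conversion: given Alice's $\beta_0$-absolute strategy $\sigma$ and a Schmidt-game parameter pair $(\alpha, \beta)$ with $\alpha \le c/5$, choose $\beta_0 = \beta_0(\alpha, \beta)$ small enough and play as follows. On seeing Bob's Schmidt ball $B_k$, consult $\sigma$ to obtain an absolute-game deletion ball $A'_k$ with $r(A'_k) \le \beta_0 r(B_k)$, then play a Schmidt ball $A_k \subset B_k \setminus A'_k$ of radius $\alpha r(B_k)$. Uniform perfectness and the bound $\alpha \le c/5$ guarantee that such an $A_k$ exists (a direct adaptation of Lemma \ref{lemmalegalmove} with the roles of Alice and Bob relabeled); choosing $\beta_0 \le \sqrt{\alpha \beta}$ ensures that any Schmidt continuation $B_{k+1} \subset A_k$ is a legal absolute-game continuation, so $\sigma$ continues to apply and the outcome lies in $S$. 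Parts (iv) and (v) follow the same conjugation-by-$f$ template: for (iv), a $K$-bi-Lipschitz map distorts ball radii by at most a factor of $K$ in each direction, so Alice simulates the game on $Z$ by pulling Bob's ball back under $f^{-1}$ (losing one factor of $K$) and pushing her Alice ball forward under $f$ (losing a second factor of $K$), producing $K^{-2}\alpha$-winning play. For (v), the quasisymmetric distortion of balls depends only on radius ratios; since absolute winning allows Alice to choose $\beta$ arbitrarily small, she can absorb the distortion by an initial choice of parameter depending on the quasisymmetry data of $f$.

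Part (iii) is handled by interleaving: fix an enumeration $(n_k)_{k\in\N}$ of $\N$ in which every index appears infinitely often, and have Alice play the $\alpha$-winning strategy $\sigma_{n_k}$ for $S_{n_k}$ at round $k$, with an appropriate parameter-tracking adjustment so that each $\sigma_n$ is applied to a legal subgame. Each $\sigma_n$ thereby forces the overall outcome into $S_n$, so the outcome lies in $\bigcap_n S_n$; the same argument works verbatim in the absolute setting. The main technical obstacle throughout is keeping the constants straight in (ii) and (iv) — particularly tracking the tightest possible loss when moving between game parameters — which in each case reduces to an application of Lemma \ref{lemmalegalmove} or a straightforward variant thereof.
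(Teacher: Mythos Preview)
Your proposal is correct and follows essentially the same approach as the paper, which proves only (ii) directly (via the same Lemma~\ref{lemmalegalmove} conversion you describe, with parameter $\gamma = \min(\alpha,\beta)$ in place of your $\beta_0 \le \sqrt{\alpha\beta}$) and handles (i), (iii), (iv), (v) by citation to \cite{KleinbockWeiss2}, \cite{Schmidt1}, \cite{Dani2}, and \cite{McMullen_absolute_winning} respectively. Your sketches accurately reflect the content of those references; the only minor slip is that for (ii) you also need $\beta_0 \le \alpha$ (not just $\beta_0 \le \sqrt{\alpha\beta}$) in order to apply Lemma~\ref{lemmalegalmove}, but this is covered by your ``$\beta_0$ small enough'' clause.
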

\begin{proof}[Proofs]~
\begin{itemize}
\item[(i)] \cite[Proposition 5.1]{KleinbockWeiss2}
\item[(ii)] Fix $\beta > 0$, and let $\gamma = \min(\alpha,\beta)$. We can convert a winning strategy for Alice in the $\gamma$-absolute winning game into a winning strategy in the $(\alpha,\beta)$-game by using Lemma \ref{lemmalegalmove} with $\beta$ replaced by $\alpha$; the idea is that if Alice would have made the move $A_1$ in the $\gamma$-absolute winning game, then she should make the move $B_2$ in the $(\alpha,\beta$)-game. Details are left to the reader.
\item[(iii)] \cite[Theorem 2]{Schmidt1} (the argument can be easily modified to account for absolute winning)
\item[(iv)] \cite[Proposition 5.3]{Dani2}
\item[(v)] \cite[Theorem 2.2]{McMullen_absolute_winning}\Footnote{Full disclosure: McMullen's proof appears to the casual observer to be using crucially the fact that $Z = \R^d$. In fact, his proof has the following form:
\begin{equation}
\label{temp}
\text{Quasisymmetric}\Rightarrow\text{Conditions 1-4 on p.5-6}\Rightarrow\text{Preserves absolute winning sets}. 
\end{equation}
The first arrow depends on the assumption $Z = \R^d$; the second arrow does not. Moreover, maps satisfying McMullen's conditions 1-4 are what are usually referred to as quasisymmetric maps by those studying spaces beyond $\R^d$ (and by us below in Lemma \ref{lemmaquasisymmetric}). (Maps satisfying McMullen's definition of quasisymmetric are termed ``weakly quasisymmetric''.) Thus the second arrow of \eqref{temp}, which holds for any $Z$, is accurately represented by our statement (v) above.}
\end{itemize}
\end{proof}

Note that winning sets arise naturally in many settings in dynamics and Diophantine approximation \cite{Dani3, Dani1, Dani2, EinsiedlerTseng, Farm, FPS, KleinbockWeiss3, KleinbockWeiss2, Moshchevitin, Schmidt1, Schmidt2, Tseng2, Tseng1}. Several examples of absolute winning sets were exhibited by McMullen in \cite{McMullen_absolute_winning}, most notably the set of badly approximable numbers in $\R$. However absolute winning does not occur as frequently as winning. In particular, the set of badly approximable vectors in $\R^d$ for $d > 1$ is winning but not absolute winning (see \cite{BFKRW}).

For the purposes of this paper, we will introduce a slight modification of McMullen's absolute winning game. The only difference will be that in \eqref{absolutewinningradii} we will require equality, i.e.
\begin{equation}
\label{modifiedabsolutewinningradii}
\w r_k = \beta r_k, \;\; r_{k + 1} = \beta\w r_k.
\end{equation}
For each $0 < \beta\leq c/5$, a set for which Alice can win this modified game will be called \emph{$\beta$-modified absolute winning}.

\begin{proposition}
Fix $0 < \beta\leq c/5$. Then every $\beta$-absolute winning set is $\beta$-modified absolute winning, and every $\beta^2/4$-modified absolute winning set is $\beta$-absolute winning.
\end{proposition}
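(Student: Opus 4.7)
The plan is to prove the two implications separately, treating direction 1 by a direct enlargement trick and direction 2 by simulating a modified game inside an absolute game.

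For the first implication ($\beta$-absolute $\Rightarrow$ $\beta$-modified), suppose Alice has a $\beta$-absolute winning strategy $\sigma$. I will have her play in the $\beta$-modified game by consulting $\sigma$ and enlarging the prescribed balls to the exact modified radius. Specifically, when $\sigma$ calls for a ball $A'_k = B(\w z_k, \w r'_k)$ with $\w r'_k \leq \beta r_k$, Alice plays $A_k := B(\w z_k, \beta r_k) \supset A'_k$ in the modified game. The subsequent modified Bob move $B_{k+1}$ satisfies both $B_{k+1} \subset B_k \setminus A_k \subset B_k \setminus A'_k$ and $r_{k+1} = \beta^2 r_k \geq \beta \w r'_k$, so it is a legal absolute response to $A'_k$. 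Hence the simulated play obeys the absolute rules of $\sigma$, and the outcome lies in $S$.

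For the second implication ($\beta^2/4$-modified $\Rightarrow$ $\beta$-absolute), set $\beta_0 = \beta^2/4$; since $\beta \leq c/5 < 4$, we have $\beta_0 < \beta$. Given a $\beta_0$-modified winning strategy $\tau$, Alice in the $\beta$-absolute game will maintain a simulated $\beta_0$-modified history $(B^*_1,A^*_1,B^*_2,A^*_2,\dots)$, starting with $B^*_1 := B_1$. At each round she feeds the current simulated history to $\tau$, obtains $A^*_k$ of radius $\beta_0 r^*_k$, and plays $A_k := A^*_k$ in absolute; this is legal because $\beta_0 r^*_k \leq \beta_0 r_k \leq \beta r_k$. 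When Bob replies with $B_{k+1}$ satisfying $r_{k+1} \geq \beta \beta_0 r^*_k$, she declares a modified move $B^*_{k+1}$ of the required radius $r^*_{k+1} = \beta_0^2 r^*_k$; the inequality $\beta_0^2 r^*_k \leq \beta \beta_0 r^*_k \leq r_{k+1}$, which uses the key fact $\beta_0 \leq \beta$, provides the room for such a ball inside $B_{k+1}$.

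The main obstacle is verifying the modified containment $B^*_{k+1} \subset B^*_k \setminus A^*_k$. The disjointness $B^*_{k+1} \cap A^*_k = \emptyset$ is automatic from $B^*_{k+1} \subset B_{k+1} \subset B_k \setminus A_k = B_k \setminus A^*_k$, so the delicate point is the nesting $B^*_{k+1} \subset B^*_k$ when $B_{k+1}$ can sit anywhere inside $B_k \setminus A_k$, possibly missing the much smaller $B^*_k$. I plan to handle this by selecting the center of $B^*_{k+1}$ adaptively in $B^*_k \cap B_{k+1}$, exploiting the generous ratio $r_{k+1}/r^*_{k+1} \geq 4/\beta$ built into the choice $\beta_0 = \beta^2/4$ together with the uniform perfectness of the underlying space, which between them produce enough slack to locate a valid center. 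Once the simulation is consistent at every round, $\tau$ guarantees a point $x \in \bigcap_k B^*_k \cap S$, and the invariant $B^*_k \subset B_k$ forces $x \in \bigcap_k B_k$, so Alice wins the $\beta$-absolute game.
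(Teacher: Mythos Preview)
Your first implication is fine and matches the paper's reasoning.

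The second implication, however, has a genuine gap. Your invariant is $B^*_k\subset B_k$, and you need to choose $B^*_{k+1}$ of radius $\beta_0^2 r^*_k$ inside both $B^*_k\setminus A^*_k$ (for the simulated game to be legal) and $B_{k+1}$ (for the invariant). This forces the center of $B^*_{k+1}$ to lie in $B^*_k\cap B_{k+1}$. But after the first round, $B^*_k$ is a ball of radius $r^*_k=\beta_0^{2(k-1)}r_1$, possibly much smaller than $r_k$, sitting somewhere inside $B_k$; and $A_k=A^*_k$ is the output of $\tau$ applied to $B^*_k$, so Alice has no control over where $A_k$ lands. Bob, playing $B_{k+1}\subset B_k\setminus A_k$, is free to center his ball anywhere in $B_k$ that avoids $A_k$---in particular he can place $B_{k+1}$ entirely outside $B^*_k$, making $B^*_k\cap B_{k+1}=\emptyset$. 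Neither the ratio $r_{k+1}/r^*_{k+1}\ge 4/\beta$ nor uniform perfectness helps: those govern how much room there is \emph{inside} a given ball, not whether two unrelated sub-balls of $B_k$ meet.

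The paper resolves this by running the simulation in the opposite direction of containment. Using Schmidt's theorem it first replaces the $\beta^2/4$-modified strategy by a \emph{positional} one $\FF$. Then, given Bob's absolute move $B(z_k,r_k)$, Alice looks up the scale $n$ with $(\beta/2)^{2n+1}\le r_k<(\beta/2)^{2n-1}$ and plays an enlargement of $\FF\bigl(B(z_k,(\beta/2)^{2n})\bigr)$. The simulated balls are $\w B_n=B(z_{k_n},(\beta/2)^{2n})$, \emph{centered at Bob's own centers} and with radius \emph{larger} than $r_{k_n}$, so that $B_{k_n}\subset\w B_n$ rather than the reverse. The nesting $\w B_{n+1}\subset\w B_n\setminus\w A_n$ then follows from the triangle inequality, since $z_{k_{n+1}}$ is already close to $z_{k_n}$ by the absolute-game containment. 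This is the missing idea: let the simulated game enlarge Bob's balls to a fixed grid of scales so that it tracks him automatically.
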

\begin{proof}
First of all, let us notice that it makes no difference whether or not Alice is required to have equality in her choice of radii; choosing the largest possible radius always puts the greatest restriction on Bob's moves, and is therefore most advantageous for Alice. On the other hand, requiring Bob to choose the minimal possible radius is an advantage for Alice, since it restricts Bob's choice of moves. This demonstrates the first statement, that every $\beta$-absolute winning set is $\beta$-modified absolute winning.

Let $S\subset Z$ be a $\beta^2/4$-modified absolute winning set. By \cite[Theorem 7]{Schmidt1}, there exists a \emph{positional} strategy for Alice to win the $\beta^2/4$-modified absolute winning game with target set $S$, i.e. a map $\FF$ which inputs Bob's ball and outputs the next move Alice should make, without any information about previous moves. Based on this map $\FF$, we will give a positional strategy $\GG$ for Alice to win the $\beta$-absolute winning game. Given any ball $B(z,r)$, let $n = n_r$ be the unique integer so that $(\beta/2)^{2n + 1}\leq r < (\beta/2)^{2n - 1}$. Write $\FF(B(z,(\beta/2)^{2n})) = B(w,(\beta/2)^{2n + 2})$ (the radius $(\beta/2)^{2n + 2}$ is forced by the definition of the modified absolute winning game). Let $\GG(B(z,r)) = B(w,2(\beta/2)^{2n + 2})$; this move is legal in the $\beta$-absolute winning game since $2(\beta/2)^{2n + 2}\leq \beta r$.

We now claim that with the positional strategy $\GG$, Alice is assured to win the $\beta$-absolute winning game. Let $\big(B_k = B(z_k,r_k)\big)_1^\infty$ and $\big(A_k = B(\w z_k,\w r_k)\big)_1^\infty$ be a sequence of moves for Bob and Alice which are legal in the $\beta$-absolute winning game, and such that $A_k = \GG(B_k)$ for all $k\in\N$. Let us assume that $r_k\tendsto k 0$, since otherwise Alice wins automatically since $S$ is dense. Let $n_1\in\N$ be such that $(\beta/2)^{2n_1} < r_1$. For each $n\geq n_1$, let $k_n$ be such that $(\beta/2)^{2n + 1}\leq r_{k_n} < \frac{1}{2}(\beta/2)^{2n}$; such a $k_n$ exists by \eqref{absolutewinningradii}, although it may not be unique. Note that $n_{r_{k_n}} = n$. Let $\w B_n = B(z_{k_n},(\beta/2)^{2n})$, and let $\w A_n = \FF(\w B_n) = B(w_n,(\beta/2)^{2n + 2})$.
\begin{claim}
\label{claimlegal}
The sequences $(\w B_n)_{n_1}^\infty$ and $(\w A_n)_{n_1}^\infty$ form a sequence of legal moves in the $\beta$-modified absolute winning game.
\end{claim}
\begin{subproof}
Alice's moves are legal because she chose them according to her positional strategy. Bob's first move $\w B_{n_1}$ is clearly legal, so fix $n\geq n_1$ and let us consider the legality of the move $\w B_{n + 1}$. It suffices to show
\begin{equation}
\label{legal}
\w B_{n + 1}\subset \w B_n \butnot \w A_n.
\end{equation}
Since the sequences $(B_k)_1^\infty$ and $(A_k)_1^\infty$ are legal for the $\beta$-absolute winning game, letting $k = k_n$ we have \eqref{absolutewinninginclusion} and \eqref{absolutewinningradii}; in particular,
\[
z_{k_{n + 1}} \in B_{k_{n + 1}} \subset B_{k_n + 1} \subset B_k\butnot A_k,
\]
i.e.
\[
\Dist(z_{k_{n + 1}},z_k)\leq r_k\text{ and }\Dist(z_{k_{n + 1}},\w z_k) > \w r_k.
\]
Now by the definition of $A_k$,
\[
\w z_k = w_n \text{ and }\w r_k = 2(\beta/2)^{2n + 2},
\]
and by the definition of $k = k_n$,
\[
r_k \leq \frac{1}{2}(\beta/2)^{2n}.
\]
Thus
\[
\Dist(z_{k_{n + 1}},z_k)\leq \frac{1}{2}(\beta/2)^{2n}\text{ and }\Dist(z_{k_{n + 1}},w_n) > 2(\beta/2)^{2n + 2},
\]
from which it follows that
\[
B(z_{k_{n + 1}},(\beta/2)^{2n + 2}) \subset B(z_k,(\beta/2)^{2n})\butnot B(w_n,(\beta/2)^{2n + 2}),
\]
which is just \eqref{legal} expanded.
\end{subproof}
Now clearly, the intersection of the sequence $(B_k)_1^\infty$ is the same as the intersection of the sequence $(\w B_n)_{n_1}^\infty$, which is in $S$ by Claim \ref{claimlegal}. Thus Alice wins the $\beta$-absolute winning game with her positional strategy $\GG$.
\end{proof}

Now since $\beta$-absolute winning implies $\w\beta$-absolute winning for every $\beta\leq\w\beta\leq c/5$, we have the following:
\begin{corollary}
\label{corollarymodifiedabsolutewinning}
If $S\subset Z$ is $\beta_m$-modified absolute winning for a sequence $\beta_m\tendsto m 0$, then $S$ is absolute winning.
\end{corollary}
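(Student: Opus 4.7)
The plan is to combine the second half of the preceding proposition, which says that every $\beta^2/4$-modified absolute winning set is $\beta$-absolute winning, with the monotonicity remark stated immediately before the corollary, namely that $\beta$-absolute winning implies $\widetilde\beta$-absolute winning whenever $\beta \leq \widetilde\beta \leq c/5$. Given these two ingredients, the corollary is essentially a matter of bookkeeping: one converts the hypothesis of modified absolute winning at parameter $\beta_m$ into absolute winning at parameter $2\sqrt{\beta_m}$, and then pushes the parameter upward via monotonicity.

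More precisely, I would fix an arbitrary $\widetilde\beta \in (0, c/5]$ and show that $S$ is $\widetilde\beta$-absolute winning; since $\widetilde\beta$ is arbitrary this is precisely the definition of being absolute winning. Using $\beta_m \tendsto m 0$, I would choose $m$ large enough that $2\sqrt{\beta_m} \leq \min(\widetilde\beta,\,c/5)$, which is possible since the right-hand side is a fixed positive number. Setting $\beta := 2\sqrt{\beta_m}$, we have $\beta^2/4 = \beta_m$, so the hypothesis that $S$ is $\beta_m$-modified absolute winning is exactly the hypothesis of the second assertion of the preceding proposition at parameter $\beta$. Applying that proposition yields that $S$ is $\beta$-absolute winning.

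Finally, since $\beta \leq \widetilde\beta \leq c/5$, the monotonicity remark quoted just before the corollary upgrades this to $\widetilde\beta$-absolute winning, completing the argument.

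There is no real obstacle here beyond checking the inequality $2\sqrt{\beta_m} \leq c/5$, which is immediate from $\beta_m \to 0$, and being careful that the quantifier structure in the definition of ``absolute winning'' (for \emph{every} $\beta \in (0, c/5]$) is handled correctly. The whole proof fits comfortably in a few lines.
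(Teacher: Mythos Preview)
Your proposal is correct and follows exactly the approach the paper takes: combine the second assertion of the preceding proposition (that $\beta^2/4$-modified absolute winning implies $\beta$-absolute winning) with the monotonicity of absolute winning in the parameter, then let $\beta_m\to 0$ to cover every $\widetilde\beta\in(0,c/5]$. The paper compresses this into a single sentence; your version just spells out the bookkeeping explicitly.
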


\draftnewpage\section{Partition structures} \label{sectionstructures}
In this section we introduce the notion of a partition structure, an important technical tool for proving Theorems \ref{theorembishopjones}, \ref{theoremfulldimension}, and \ref{theoremextrinsic}. Moreover, we introduce stronger formulations of the former two theorems, from which they follow as corollaries. Subsection \ref{subsectionstructures} can also be found with some modifications in \cite[\69.1]{DSU}, but we include the proof here as well for completeness.

Throughout this section, $(Z,\Dist)$ denotes a metric space.
\subsection{Partition structures} \label{subsectionstructures}
\begin{notation}
Let
\[
\N^* = \bigcup_{n = 0}^\infty \N^n.
\]
If $\omega\in\N^*\cup\N^\N$, then we denote by $|\omega|$ the unique element of $\N\cup{\infty}$ such that $\omega\in \N^{|\omega|}$ and call $|\omega|$ the \emph{length} of $\omega$. We let $\emptyset$ denote the empty string, so that $|\emptyset| = 0$. For each $N\in\N$, we denote the initial segment of $\omega$ of length $N$ by
\[
\omega_1^N := (\omega_n)_1^N \in \N^N.
\]
For two words $\omega,\tau\in\N^\N$, let $\omega\wedge\tau$ denote their longest common initial segment, and let
\[
\rho_2(\omega,\tau) = 2^{-|\omega\wedge\tau|}.
\]
Then $(\N^\N,\rho_2)$ is a metric space.
\end{notation}
\begin{definition}
\label{definitiontree}
A \emph{tree} on $\N$ is a set $T\subset\N^*$ which is closed under the operation of taking initial segments.
\end{definition}
\begin{notation}
If $T$ is a tree on $\N$, then we denote its set of infinite branches by
\[
T(\infty) := \{\omega\in\N^\N:\omega_1^n\in T\all n\in\N\}.
\]
On the other hand, for $n\in\N$ we let
\[
T(n) := T\cap \N^n.
\]
For each $\omega\in T$, we denote the set of its children by
\[
T(\omega) := \{a\in\N:\omega a\in T\}.
\]
The set of infinite branches in $T(\infty)$ of which $\omega$ is an initial segment will be denoted $[\omega]_T$ or simply $[\omega]$.
\end{notation}

\begin{definition}
\label{definitionpartitionstructure}
A \emph{partition structure} on $Z$ consists of a tree $T\subset\N^*$ together with a collection of closed subsets $(\PP_\omega)_{\omega\in T}$ of $Z$, each having positive diameter and enjoying the following properties:
\begin{itemize}
\item[(I)] If $\omega\in T$ is an initial segment of $\tau\in T$ then $\PP_\tau\subset \PP_\omega$. If neither $\omega$ nor $\tau$ is an initial segment of the other then $\PP_\omega\cap \PP_\tau = \emptyset$.
\item[(II)] For each $\omega\in T$ let
\[
D_\omega = \Diam(\PP_\omega).
\]
There exist $\kappa > 0$ and $0 < \lambda < 1$ such that for all $\omega\in T$ and for all $a\in T(\omega)$, we have 
\begin{equation}\label{kappa}
\Dist(\PP_{\omega a},Z\setminus \PP_\omega) \geq \kappa D_\omega
\end{equation}
and
\begin{equation}\label{lambda}
\kappa D_\omega \leq D_{\omega a} \leq \lambda D_\omega.
\end{equation}
\end{itemize}

Fix $s > 0$. The partition structure $(\PP_\omega)_{\omega\in T}$
is called \emph{$s$-thick} if for all $\omega\in T$, 
\begin{equation}
\label{redistribution}
\sum_{a\in T(\omega)}D_{\omega a}^s \geq D_\omega^s.
\end{equation}
\end{definition}

\begin{definition}
If $(\PP_\omega)_{\omega\in T}$ is a partition structure, a \emph{substructure} of $(\PP_\omega)_{\omega\in T}$ is a partition structure of the form $(\PP_\omega)_{\omega\in \w T}$, where $\w T\subset T$ is a subtree.
\end{definition}

\begin{observation}
Let $(\PP_\omega)_{\omega\in T}$ be a partition structure on a
complete metric space $(Z,\Dist)$. For each $\omega\in T(\infty)$, the set 
\[
\bigcap_{n = 1}^\infty\PP_{\omega_1^n}
\]
is a singleton. If we define $\pi(\omega)$ to be the unique member of this set, then the map $\pi:T(\infty)\to Z$ is continuous (in fact quasisymmetric; see Lemma \ref{lemmaquasisymmetric} below). 
\end{observation}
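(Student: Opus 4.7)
The plan is to verify the singleton claim by a standard nested closed sets argument, then read off continuity from the same geometric decay estimate on diameters. The single quantitative input will be that iterating the right inequality in \eqref{lambda} forces
\[
D_{\omega_1^n} \leq \lambda^{n-1} D_{\omega_1^1} \tendsto n 0
\]
for every $\omega \in T(\infty)$; I anchor at $n = 1$ to sidestep any question of whether the empty word sits in $T$.

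First I would establish that $\bigcap_n \PP_{\omega_1^n}$ is nonempty. Since each $\PP_{\omega_1^n}$ has positive diameter it is nonempty, so pick $x_n \in \PP_{\omega_1^n}$. Property (I) gives $\PP_{\omega_1^m} \subset \PP_{\omega_1^n}$ whenever $m \geq n$, and hence $\Dist(x_m,x_n) \leq D_{\omega_1^n}$, making $(x_n)$ Cauchy in the complete space $Z$. Its limit $x$ lies in every $\PP_{\omega_1^n}$ by closedness, so the intersection is nonempty. Uniqueness then follows because any two points of the intersection are within distance $D_{\omega_1^n}$ for every $n$, and this bound tends to $0$. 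This legitimizes the definition of $\pi(\omega)$.

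For continuity, fix $\omega,\tau \in T(\infty)$ and let $n = |\omega \wedge \tau|$, so that $\rho_2(\omega,\tau) = 2^{-n}$. The common initial segment $\omega \wedge \tau \in T$ satisfies $\pi(\omega),\pi(\tau) \in \PP_{\omega \wedge \tau}$ by property (I), whence
\[
\Dist(\pi(\omega),\pi(\tau)) \leq D_{\omega \wedge \tau} \leq \lambda^{n-1} D_{(\omega\wedge\tau)_1^1}.
\]
As $\rho_2(\omega,\tau) \to 0$ we have $n \to \infty$ and the right hand side tends to $0$, proving continuity. In fact this bound already yields a H\"older estimate with exponent $\log_2(1/\lambda)$, which foreshadows the stronger quasisymmetry claim to be verified in Lemma \ref{lemmaquasisymmetric}. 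There is no real obstacle in this argument: the observation is a routine consequence of the axioms of a partition structure combined with completeness of $Z$, and I expect no subtleties beyond the minor bookkeeping about which index to anchor the geometric decay at.
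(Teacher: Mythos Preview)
Your proof is correct. The paper does not supply a proof for this Observation; it is left implicit as a routine consequence of the partition structure axioms (closed nested sets with diameters decaying geometrically by \eqref{lambda}) together with completeness of $Z$, and your argument is precisely the standard one the authors would have expected a reader to fill in.
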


\begin{definition}
\label{definitionpi}
The set $\pi(T(\infty))$ is called the \emph{limit set} of the partition structure.
\end{definition}

We remark that a large class of examples of partition structures comes from the theory of conformal iterated function systems \cite{MauldinUrbanski1} (or in fact even graph directed Markov systems \cite{MauldinUrbanski2}) satisfying the strong separation condition (also known as the disconnected open set condition \cite{RiediMandelbrot}; see also \cite{FalconerMarsh}, where the limit sets of iterated function systems satisfying the strong separation condition are called \emph{dust-like}). Indeed, the notion of a partition structure was intended primarily to generalize these examples. The difference is that in a partition structure, the sets $(\PP_\omega)_\omega$ do not necessarily have to be defined by dynamical means. We also note that if $Z = \R^d$ for some $d\in\N$, and if $(\PP_\omega)_{\omega\in T}$ is a partition structure on $Z$, then the tree $T$ has bounded degree, meaning that there exists $N < \infty$ such that $\#(T(\omega))\leq N$ for every $\omega\in T$.

\begin{reptheorem}{theoremahlforsgeneral}[Proven below]
Fix $s > 0$. Then any $s$-thick partition structure $(\PP_\omega)_{\omega\in T}$ on a complete metric space $(Z,\Dist)$ has a substructure $(\PP_\omega)_{\omega\in \w T}$ whose limit set is Ahlfors $s$-regular. Furthermore the tree $\w T$ can be chosen so that for each $\omega\in \w T$, we have that $\w T(\omega)$ is an initial segment of $T(\omega)$, i.e. $\w T(\omega) = T(\omega)\cap\{1,\ldots,N_\omega\}$ for some $N_\omega\in\N$. 
\end{reptheorem}

\begin{replemma}{lemmastructure}[Proven below]
Let $(X,\dist,\zero,b,G)$ be as in \sectionsymbol\ref{standingassumptions}. Then for every $0 < s < \delta_G$ and for all $\sigma > 0$ sufficiently large, there exist a tree $T$ on $\N$ and an embedding $T\ni\omega\mapsto x_\omega\in G(\zero)$ such that if 
\[
\PP_\omega := \Shad(x_\omega,\sigma),
\]
then $(\PP_\omega)_{\omega\in T}$ is an $s$-thick partition structure
on $(\del X,\Dist)$, whose limit set is a subset of $\Lur$.
\end{replemma}
Combining these results in the obvious way yields the following corollary:

\begin{corollary}
\label{corollarystructure}
Let $(X,\dist,\zero,b,G)$ be as in \sectionsymbol\ref{standingassumptions}. Then for every $0 < s < \delta_G$ and for all $\sigma > 0$ sufficiently large, there exist a tree $T$ on $\N$ and an embedding $T\ni\omega\mapsto x_\omega\in G(\zero)$ such that if 
\[
\PP_\omega := \Shad(x_\omega,\sigma),
\]
then $(\PP_\omega)_{\omega\in T}$ is a partition structure\comdavid{In general, the partition structure in this theorem will not be $s$-thick.} on $(\del X,\Dist)$, whose limit set $\limitset_s\subset\Lur(G)$ is Ahlfors $s$-regular.
\end{corollary}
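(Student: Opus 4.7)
The plan is to straightforwardly combine Lemma \ref{lemmastructure} and Theorem \ref{theoremahlforsgeneral}. First, I would fix $0 < s < \delta_G$ and invoke Lemma \ref{lemmastructure} to obtain, for all sufficiently large $\sigma > 0$, a tree $T$ on $\N$ together with an embedding $T \ni \omega \mapsto x_\omega \in G(\zero)$ such that the family $(\PP_\omega)_{\omega \in T}$ defined by $\PP_\omega := \Shad(x_\omega, \sigma)$ is an $s$-thick partition structure on $(\del X, \Dist)$ with $\pi(T(\infty)) \subset \Lur(G)$.

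Next I would apply Theorem \ref{theoremahlforsgeneral} to this $s$-thick partition structure. To do so I need $(\del X, \Dist)$ to be complete, which is guaranteed by Proposition \ref{propositionDist}. The theorem then produces a subtree $\w T \subset T$ such that the substructure $(\PP_\omega)_{\omega \in \w T}$ has limit set $\limitset_s := \pi(\w T(\infty))$ which is Ahlfors $s$-regular. Since the indexing map $\omega \mapsto x_\omega$ restricted to $\w T$ is still an embedding into $G(\zero)$ and the sets $\PP_\omega = \Shad(x_\omega, \sigma)$ are unchanged (we have merely thrown away some branches of the tree), this $\w T$ is exactly the tree demanded by the corollary.

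Finally, I would verify that $\limitset_s \subset \Lur(G)$. This is immediate from the fact that $\w T(\infty) \subset T(\infty)$, so
\[
\limitset_s = \pi(\w T(\infty)) \subset \pi(T(\infty)) \subset \Lur(G),
\]
where the last inclusion is the conclusion of Lemma \ref{lemmastructure}. This completes the proof sketch; no step poses a genuine obstacle since all the work has already been done in Lemma \ref{lemmastructure} and Theorem \ref{theoremahlforsgeneral}, and the only thing to check beyond quoting them is the compatibility of the substructure construction with the shadow representation, which is automatic because Theorem \ref{theoremahlforsgeneral}'s conclusion only prunes branches of $T$ rather than modifying the sets $\PP_\omega$ themselves.
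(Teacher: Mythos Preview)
Your proposal is correct and matches the paper's approach exactly: the paper simply states that the corollary follows by ``combining these results in the obvious way,'' referring to Lemma \ref{lemmastructure} and Theorem \ref{theoremahlforsgeneral}, and your sketch spells out precisely that combination. The only addition you make is the explicit observation that $(\del X,\Dist)$ is complete (needed to apply Theorem \ref{theoremahlforsgeneral}), which is indeed guaranteed by Proposition \ref{propositionDist}.
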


Using Corollary \ref{corollarystructure}, we prove the following generalization of the Bishop--Jones theorem:

\begin{theorem}
\label{theorembishopjones}
Let $(X,\dist,\zero,b,G)$ be as in \sectionsymbol\ref{standingassumptions}. Then
\begin{repequation}{bishopjones}
\HD(\Lr) = \HD(\Lur) = \delta;
\end{repequation}
moreover, for every $0 < s < \delta$ there exists an Ahlfors $s$-regular set $\limitset_s\subset\Lur$.
\end{theorem}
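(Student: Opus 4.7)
The plan is to split the statement into a lower bound and an upper bound for $\HD(\Lur)$ and $\HD(\Lr)$, handling the ``moreover'' clause first since it does almost all of the work.

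For the ``moreover'' clause, I would simply invoke Corollary \ref{corollarystructure}: given $0 < s < \delta$, the corollary produces a partition substructure inside $\Lur$ whose limit set $\limitset_s$ is Ahlfors $s$-regular. Since any Ahlfors $s$-regular metric space has Hausdorff dimension equal to $s$, this gives a subset of $\Lur$ of dimension $s$. Letting $s \nearrow \delta$ yields $\HD(\Lur) \geq \delta$, and since $\Lur \subset \Lr$ we also get $\HD(\Lr) \geq \delta$.

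For the matching upper bound, it suffices to show $\HD(\Lr) \leq \delta$. Writing $\Lr = \bigcup_{n \in \N} \Lrsigma(G)$ (with $\sigma = n$) and using countable stability of Hausdorff dimension, it is enough to prove $\HD(\Lrsigma) \leq \delta$ for each fixed $\sigma$. Fix $s > \delta$; by definition of the Poincar\'e exponent the series $\Sigma_s(G) = \sum_{g \in G} b^{-s\dist(\zero,g(\zero))}$ converges. For any $R > 0$, every $\eta \in \Lrsigma$ has an approximating orbit sequence $g_n(\zero) \tendsto n \eta$ with $\eta \in \Shad(g_n(\zero),\sigma)$ for all $n$, so the collection
\[
\CC_R := \{\Shad(g(\zero),\sigma) : g \in G,\ \dist(\zero,g(\zero)) \geq R\}
\]
covers $\Lrsigma$. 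The Diameter of Shadows Lemma (Lemma \ref{lemmadiameterasymptotic}) gives $\Diam(\Shad(g(\zero),\sigma)) \lesssim_{\times,\sigma} b^{-\dist(\zero,g(\zero))}$, so
\[
\sum_{A \in \CC_R} \Diam(A)^s \lesssim_{\times,\sigma,s} \sum_{\substack{g \in G\\ \dist(\zero,g(\zero)) \geq R}} b^{-s\dist(\zero,g(\zero))} \tendsto R 0.
\]
Thus $\HH^s(\Lrsigma) = 0$ for every $s > \delta$, giving $\HD(\Lrsigma) \leq \delta$, and hence $\HD(\Lr) \leq \delta$ by the union argument.

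Combining the two sides yields $\HD(\Lur) = \HD(\Lr) = \delta$. The truly substantive part of the proof is hidden in Corollary \ref{corollarystructure} (via Theorem \ref{theoremahlforsgeneral} and Lemma \ref{lemmastructure}), which constructs the Ahlfors regular subsets; once that machinery is in hand, the dimension bounds themselves reduce to the standard Poincar\'e series covering argument, with the sole non-trivial geometric input being the upper diameter estimate for shadows. The main obstacle is therefore \emph{not} in this proof at all but in the construction of $s$-thick partition substructures supported on orbit shadows—the upper bound is a routine consequence of the definition of $\delta$, while the lower bound is where all the geometry lives.
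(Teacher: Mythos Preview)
Your proposal is correct and matches the paper's proof essentially verbatim: the paper invokes Corollary \ref{corollarystructure} for the ``moreover'' clause and the lower bound, and for the upper bound simply says ``the proof of the remaining inequality $\HD(\Lr)\leq\delta$ is a straightforward adaptation of the argument in the standard case,'' which is exactly the shadow-covering argument you spelled out.
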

\begin{proof}
The ``moreover'' clause follows directly from Corollary \ref{corollarystructure}; applying the mass distribution principle (e.g. \cite[p.55]{Falconer_book}) yields $\HD(\Lur)\geq\delta$. The proof of the remaining inequality $\HD(\Lr)\leq\delta$ is a straightforward adaptation of the argument in the standard case (e.g. \cite[p.6]{BishopJones}).
\end{proof}

\subsection{Diophantine approximation and partition structures}
Recall that for $\xi\in\del X$, $\BA_\xi$ denotes the set of points in $\Lr$ which are badly approximable with respect to $\xi$ (Definition \ref{definitionbadlyapproximable}). We have the following:

\begin{reptheorem}{theoremabsolutewinning}
Let $(X,\dist,\zero,b,G)$ be as in \sectionsymbol\ref{standingassumptions}. Fix $0 < s < \delta_G$ and let $\limitset_s$ be defined as in Corollary \ref{corollarystructure}. Then for each $\xi\in\del X$, the set $\BA_\xi\cap \limitset_s$ is absolute winning on $\limitset_s$.
\end{reptheorem}

\noindent From Theorem \ref{theoremabsolutewinning} we deduce the following corollary, which was stated in the introduction:

\begin{theorem}[Generalization of the Full Dimension Theorem] \label{theoremfulldimension}
Let $(X,\dist,\zero,b,G)$ be as in \sectionsymbol\ref{standingassumptions}, and let $(\xi_k)_1^\ell$ be a countable (finite or infinite) sequence in $\del X$. Then
\[
\HD\left(\bigcap_{k = 1}^\ell\BA_{\xi_k}\cap\Lur\right) = \delta = \HD(\Lr).
\]
In particular $\BA_\xi\neq\emptyset$ for every $\xi\in\del X$, so Theorem \ref{theoremdirichlet} cannot be improved by more than a multiplicative constant.
\end{theorem}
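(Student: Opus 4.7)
The plan is to combine the Ahlfors $s$-regular subsets $\limitset_s \subset \Lur$ produced by Corollary \ref{corollarystructure} with the absolute winning property asserted in Theorem \ref{theoremabsolutewinning} and the standard closure properties of absolute winning sets (Proposition \ref{propositionwinningproperties}). The upper bound is a triviality given Theorem \ref{theorembishopjones}, so the heart of the proof is the lower bound.

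\textbf{Lower bound.} Fix any $0 < s < \delta$. By Corollary \ref{corollarystructure}, there exists an Ahlfors $s$-regular set $\limitset_s \subset \Lur(G)$; in particular $\limitset_s$ is uniformly perfect, so McMullen's absolute game is well-defined on $\limitset_s$. By Theorem \ref{theoremabsolutewinning}, for each $k = 1, \ldots, \ell$ the set $\BA_{\xi_k} \cap \limitset_s$ is absolute winning on $\limitset_s$. Since the sequence $(\xi_k)_1^\ell$ is countable, Proposition \ref{propositionwinningproperties}(iii) gives that
\[
S := \bigcap_{k=1}^\ell \BA_{\xi_k} \cap \limitset_s
\]
is also absolute winning on $\limitset_s$. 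Proposition \ref{propositionwinningproperties}(ii) then yields that $S$ is $\alpha$-winning on $\limitset_s$ for some $\alpha > 0$, and Proposition \ref{propositionwinningproperties}(i), applied to the Ahlfors $s$-regular space $\limitset_s$, gives $\HD(S) = s$. Since $S \subset \bigcap_{k=1}^\ell \BA_{\xi_k} \cap \Lur$, we deduce
\[
\HD\left(\bigcap_{k=1}^\ell \BA_{\xi_k} \cap \Lur\right) \geq s
\]
for every $s < \delta$, and therefore this dimension is at least $\delta$.

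\textbf{Upper bound and non-emptiness.} The inclusion $\bigcap_{k=1}^\ell \BA_{\xi_k} \cap \Lur \subset \Lr$ combined with Theorem \ref{theorembishopjones} gives $\HD(\bigcap_k \BA_{\xi_k} \cap \Lur) \leq \HD(\Lr) = \delta$, completing the equality. For the final ``in particular'' clause, apply the lower bound with $\ell = 1$: the set $\BA_\xi \cap \limitset_s$ is absolute winning on the non-empty complete space $\limitset_s$, hence is non-empty, so $\BA_\xi \neq \emptyset$. The failure of Theorem \ref{theoremdirichlet}'s bound to be improvable by a vanishing factor is then immediate from the very definition of $\BA_\xi$.

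The genuine obstacles in this proof have been outsourced to the two auxiliary results we invoke: the Ahlfors regular substructure construction (Corollary \ref{corollarystructure}, which depends on Lemma \ref{lemmastructure} and Theorem \ref{theoremahlforsgeneral}) and the absolute winning statement (Theorem \ref{theoremabsolutewinning}, which requires building an explicit strategy for Alice inside each $\limitset_s$). Given these inputs, the assembly above is purely formal, and in particular the passage from ``winning on a single regular subspace'' to ``full dimension and optimality of Dirichlet's theorem'' relies only on the countable intersection property (iii) of absolute winning sets.
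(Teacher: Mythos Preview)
Your proof is correct and follows essentially the same approach as the paper's own proof: fix $s<\delta$, use Corollary \ref{corollarystructure} to produce an Ahlfors $s$-regular $\limitset_s\subset\Lur$, invoke Theorem \ref{theoremabsolutewinning} and parts (i)--(iii) of Proposition \ref{propositionwinningproperties} to deduce that $\bigcap_k \BA_{\xi_k}\cap\limitset_s$ has dimension $s$, then let $s\to\delta$ and obtain the upper bound from Theorem \ref{theorembishopjones}. The paper's version is slightly terser but the logical structure is identical.
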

\begin{proof}[Proof of Theorem \ref{theoremfulldimension} assuming Theorem \ref{theoremabsolutewinning}]
For each $0 < s < \delta_G$, applying Theorem \ref{theoremabsolutewinning} (and thus implicitly Corollary \ref{corollarystructure}) and parts (i), (ii), and (iii) of Proposition \ref{propositionwinningproperties} gives that the set $\bigcap_1^\ell\BA_{\xi_k}\cap \limitset_s$ has full Hausdorff dimension in $\limitset_s$, i.e. its dimension is $s$. Letting $s$ tend to $\delta_G$ yields
\[
\HD\left(\bigcap_{k = 1}^\ell \BA_{\xi_k}\cap\Lur\right) \geq \delta.
\]
On the other hand, we clearly have $\HD\left(\bigcap_1^\ell \BA_{\xi_k}\cap\Lur\right) \leq \HD(\Lr)$, and the equation $\delta = \HD(\Lr)$ is given by Theorem \ref{theorembishopjones}. This completes the proof.
\end{proof}

\subsection{Basic facts about partition structures}
In this subsection we prove Theorem \ref{theoremahlforsgeneral}, and the following lemma which will be used in the proof of Theorem \ref{theoremabsolutewinning}:
\begin{lemma}
\label{lemmaquasisymmetric}
Let $(\PP_\omega)_{\omega\in T}$ be a partition structure on a complete metric space $(Z,\Dist)$. Then the map $\pi:T(\infty)\to\pi(T(\infty))$ is quasisymmetric.
\end{lemma}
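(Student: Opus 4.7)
The plan is to verify the definition of quasisymmetry directly, showing in fact that $\pi$ is \emph{power quasisymmetric}, with distortion function of the form $\eta(t) = C\max(t^\alpha, t^\beta)$ for constants $C, \alpha, \beta > 0$ depending only on $\kappa$ and $\lambda$.

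Fix three distinct points $\omega, \tau, \sigma \in T(\infty)$, and set $m = |\omega \wedge \tau|$ and $n = |\omega \wedge \sigma|$, so that $\rho_2(\omega,\tau) = 2^{-m}$ and $\rho_2(\omega,\sigma) = 2^{-n}$. First, the upper bound $\Dist(\pi(\omega),\pi(\tau)) \leq D_{\omega_1^m}$ is immediate from property (I), since both $\pi(\omega)$ and $\pi(\tau)$ lie in $\PP_{\omega_1^m}$. For the matching lower bound on $\Dist(\pi(\omega),\pi(\sigma))$, the condition $\omega_{n+1} \neq \sigma_{n+1}$ together with (I) forces $\pi(\sigma) \in \PP_{\sigma_1^{n+1}} \subset Z \setminus \PP_{\omega_1^{n+1}}$, while $\pi(\omega) \in \PP_{\omega_1^{n+2}}$ since $\omega \in T(\infty)$ has all its initial segments in $T$. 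Applying the separation estimate in (II) with parent $\omega_1^{n+1}$ and child $\omega_{n+2}$ then yields
\[
\Dist(\pi(\omega),\pi(\sigma)) \geq \Dist(\PP_{\omega_1^{n+2}}, Z \setminus \PP_{\omega_1^{n+1}}) \geq \kappa D_{\omega_1^{n+1}}.
\]

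The final step is to compare $D_{\omega_1^m}$ with $\kappa D_{\omega_1^{n+1}}$ using the diameter estimates $\kappa D_{\omega_1^{k-1}} \leq D_{\omega_1^k} \leq \lambda D_{\omega_1^{k-1}}$ from (II). Setting $t = \rho_2(\omega,\tau)/\rho_2(\omega,\sigma) = 2^{n-m}$, $\alpha = \log_2(1/\lambda)$, and $\beta = \log_2(1/\kappa)$, and splitting into the cases $m \geq n$ (iterate the upper bound to obtain $D_{\omega_1^m} \leq \lambda^{m-n} D_{\omega_1^n}$) and $m < n$ (iterate the lower bound to obtain $D_{\omega_1^{n+1}} \geq \kappa^{n-m+1} D_{\omega_1^m}$), one checks that in both cases
\[
\frac{\Dist(\pi(\omega),\pi(\tau))}{\Dist(\pi(\omega),\pi(\sigma))} \leq \kappa^{-2}\max(t^\alpha, t^\beta),
\]
which gives the desired quasisymmetry estimate. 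No serious obstacle is expected here; the one subtlety is descending one extra level in the lower bound (using $\PP_{\omega_1^{n+2}}$ rather than $\PP_{\omega_1^{n+1}}$) in order to make (II) legally applicable, which is possible precisely because $\omega$ is an infinite branch.
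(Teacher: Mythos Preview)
Your proof is correct and follows essentially the same approach as the paper's: both bound $\Dist(\pi(\omega),\pi(\tau))$ above by $D_{\omega_1^m}$, bound $\Dist(\pi(\omega),\pi(\sigma))$ below via the separation estimate \eqref{kappa}, and then iterate the two-sided diameter bounds in \eqref{lambda} to produce a power-type distortion function $\eta(t)=C\max(t^{-\log_2\lambda},t^{-\log_2\kappa})$. Your version is in fact slightly more careful: you descend one extra level (to $\PP_{\omega_1^{n+2}}$) so that \eqref{kappa} applies verbatim, yielding $\Dist(\pi(\omega),\pi(\sigma))\geq \kappa D_{\omega_1^{n+1}}$ and hence the constant $\kappa^{-2}$, whereas the paper writes $\Dist(\pi(\omega),\pi(\tau^{(2)}))\geq \kappa D_{\omega_1^{m_2}}$ directly, which strictly speaking requires the extra step you supply.
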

\begin{proof}
Recall that a surjective map $\Phi:(Z_1,\Dist_1)\to (Z_2,\Dist_2)$ is said to be \emph{quasisymmetric} if there exists an increasing homeomorphism $\eta:(0,\infty)\to(0,\infty)$ such that for every $z_1,z_2,z_3\in Z_1$, we have
\begin{equation}
\label{quasisymmetric}
\frac{\Dist_2(\Phi(z_1),\Phi(z_2))}{\Dist_2(\Phi(z_1),\Phi(z_3))} \leq \eta\left(\frac{\Dist_1(z_1,z_2)}{\Dist_1(z_1,z_3)}\right).
\end{equation}
In our case we have $(Z_1,\Dist_1) = (T(\infty),\rho_2)$, $(Z_2,\Dist_2) = (\pi(T(\infty)),\Dist)$, and $\Phi = \pi$.

Fix $\omega,\tau^{(1)},\tau^{(2)}\in T(\infty)$, and let $m_i = |\omega\wedge\tau^{(i)}|$, $i = 1,2$. Suppose first that $m_1\leq m_2$. By (\ref{kappa}) and the first inequality of (\ref{lambda}), we have
\begin{align*}
\Dist(\pi(\omega),(\tau^{(2)})) \geq \kappa D_{\omega_1^{m_2}}
&\geq \kappa^{m_2 - m_1 + 1} D_{\omega_1^{m_1}}\\
&\geq \kappa^{m_2 - m_1 + 1} \Dist(\pi(\omega),\pi(\tau^{(1)}))\\
\frac{\Dist(\pi(\omega),\pi(\tau^{(1)}))}{\Dist(\pi(\omega),\pi(\tau^{(2)}))} &\leq \kappa^{-1} \left(\frac{\rho_2(\omega,\tau^{(1)})}{\rho_2(\omega,\tau^{(2)})}\right)^{-\log_2(\kappa)}.
\end{align*}
On the other hand, suppose that $m_1\geq m_2$. By (\ref{kappa}) and the second inequality of (\ref{lambda}), we have
\begin{align*}
\Dist(\pi(\omega),(\tau^{(2)})) \geq \kappa D_{\omega_1^{m_2}}
&\geq \kappa\lambda^{m_2 - m_1} D_{\omega_1^{m_1}}\\
&\geq \kappa\lambda^{m_2 - m_1} \Dist(\pi(\omega),\pi(\tau^{(1)}))\\
\frac{\Dist(\pi(\omega),\pi(\tau^{(1)}))}{\Dist(\pi(\omega),\pi(\tau^{(2)}))}
&\leq \kappa^{-1} \left(\frac{\rho_2(\omega,\tau^{(1)})}{\rho_2(\omega,\tau^{(2)})}\right)^{-\log_2(\lambda)}.
\end{align*}
Thus by letting
\[
\eta(t) = \kappa^{-1}\max\left(t^{-\log_2(\kappa)},t^{-\log_2(\lambda)}\right)
\]
we have proven (\ref{quasisymmetric}).
\end{proof}

\begin{theorem}
\label{theoremahlforsgeneral}
Fix $s > 0$. Then any $s$-thick partition structure $(\PP_\omega)_{\omega\in T}$ on a complete metric space $(Z,\Dist)$ has a substructure $(\PP_\omega)_{\omega\in \w T}$ whose limit set is Ahlfors $s$-regular. Furthermore the tree $\w T$ can be chosen so that for each $\omega\in \w T$, we have that $\w T(\omega)$ is an initial segment of $T(\omega)$, i.e. $\w T(\omega) = T(\omega)\cap\{1,\ldots,N_\omega\}$ for some $N_\omega\in\N$. 
\end{theorem}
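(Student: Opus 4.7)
The plan is to build a substructure $(\PP_\omega)_{\omega\in\w T}$ together with a Borel probability measure $\mu$ on its limit set $J:=\pi(\w T(\infty))$ satisfying $\mu([\omega])\asymp_\times D_\omega^s$ uniformly in $\omega\in\w T$; once this is in hand, Ahlfors $s$-regularity of $(J,\mu)$ follows from the separation and shrinking properties (I), (II) of the partition structure.

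For each $\omega\in T$ and $k\in\N$ set $T^\omega_k:=\sum_{a=1}^k D_{\omega a}^s$. By $s$-thickness $\lim_k T^\omega_k\geq D_\omega^s$, while (\ref{lambda}) gives $T^\omega_{k+1}-T^\omega_k \leq\lambda^s D_\omega^s$. Two natural choices of $N_\omega\geq 1$ are therefore available at every $\omega$: the \emph{up-round}, the smallest $N$ with $T^\omega_N\geq D_\omega^s$, which produces $S_\omega:=T^\omega_{N_\omega}\in[D_\omega^s,(1+\lambda^s)D_\omega^s]$; and the \emph{down-round}, the largest $N$ with $T^\omega_N<D_\omega^s$, which produces $S_\omega\in[(1-\lambda^s)D_\omega^s,D_\omega^s)$ (and $N\geq 1$ since $D_{\omega 1}^s\leq\lambda^s D_\omega^s<D_\omega^s$). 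The degenerate case $\sum_a D_{\omega a}^s=D_\omega^s$ with $|T(\omega)|=\infty$ is handled by a trivial limit, which I suppress. I now construct $\w T$ top-down while tracking the running discrepancy $V_\omega:=\sum_{i=0}^{|\omega|-1}\log(S_{\omega_1^i}/D_{\omega_1^i}^s)$, with $V_\emptyset:=0$: at each $\omega$ I choose the up-round if $V_\omega\leq 0$ and the down-round otherwise. A one-line induction then delivers the invariant $|V_\omega|\leq C:=\max(\log(1+\lambda^s),-\log(1-\lambda^s))$.

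Declaring $\w T(\omega):=\{1,\ldots,N_\omega\}$, I define $\mu$ on $\w T(\infty)$ by $\mu([\emptyset]):=1$ together with the Bernoulli rule $\mu([\omega a])/\mu([\omega]):=D_{\omega a}^s/S_\omega$ for $a\in\w T(\omega)$; telescoping gives
\[
\mu([\omega])=\prod_{i=0}^{|\omega|-1}\frac{D_{\omega_1^{i+1}}^s}{S_{\omega_1^i}}=\frac{D_\omega^s}{D_\emptyset^s}\,e^{-V_\omega},
\]
so the bound $|V_\omega|\leq C$ yields $\mu([\omega])\asymp_\times D_\omega^s$. Push $\mu$ forward to $J$ via $\pi$. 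To extract Ahlfors regularity, first observe that for distinct $y,z\in J$ with $n:=|y\wedge z|$ one has $\Dist(y,z)\asymp_\times D_{y\wedge z}$: the upper bound comes from both points lying in $\PP_{y\wedge z}$, and the lower bound from applying (\ref{kappa}) to $\omega':=(y\wedge z)y_{n+1}$ and its child $y_{n+2}$, using that $z\in\PP_{(y\wedge z)z_{n+1}}\subset Z\setminus\PP_{\omega'}$, whence $\Dist(y,z)\geq\kappa D_{\omega'}\geq\kappa^2 D_{y\wedge z}$. Given $z\in J$ and $r\in(0,D_\emptyset]$, the deepest prefix $\omega$ of $z$ in $\w T$ with $D_\omega\leq r$ satisfies $D_\omega\in(\kappa r,r]$ by (\ref{lambda}) and yields $\PP_\omega\cap J\subset B(z,r)\cap J$, hence $\mu(B(z,r))\gtrsim_\times r^s$; dually, the shallowest prefix $\omega^*$ of $z$ with $D_{\omega^*}\leq\kappa^{-2}r$ satisfies $D_{\omega^*}\asymp_\times r$ and traps $B(z,r)\cap J\subset\PP_{\omega^*}$, giving $\mu(B(z,r))\lesssim_\times r^s$.

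The main obstacle is the balancing construction in the second paragraph: a purely greedy pruning (always up-rounding) can let $\mu([\omega])/D_\omega^s$ drift by factor $(1+\lambda^s)^{-|\omega|}$, which destroys Ahlfors regularity. What saves the argument is that $s$-thickness in fact provides \emph{two-sided} slack, with $S_\omega$ free to land anywhere in $[(1-\lambda^s)D_\omega^s,(1+\lambda^s)D_\omega^s]$, so the sign-flip feedback rule on $V_\omega$ cancels the multiplicative errors from level to level and keeps $|V_\omega|$ uniformly bounded.
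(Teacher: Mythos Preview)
Your argument is correct and takes a route that differs in detail from the paper's. Both constructions manufacture a measure with $\mu([\omega]) \asymp_\times D_\omega^s$ on a pruned tree, but they control the multiplicative drift differently. The paper sets $c := 1 - \lambda^s$, initializes $\mu_0(\emptyset) = c D_\emptyset^s$, and at each node chooses $N_\omega$ to be the least integer with $\sum_{a \leq N_\omega} D_{\omega a}^s > \mu_n(\omega)$ --- always an ``up-round'', but relative to the \emph{current measure value} $\mu_n(\omega)$ rather than to the fixed target $D_\omega^s$. The invariant $c D_\omega^s \leq \mu_n(\omega) < D_\omega^s$ is then verified directly on the measure, with no auxiliary bookkeeping variable. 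Your scheme instead rounds relative to the fixed $D_\omega^s$ and corrects drift by flipping the rounding direction according to the sign of the accumulated log-error $V_\omega$. The paper's version is marginally cleaner (the invariant lives on $\mu$ itself, and the degenerate case $\sum_a D_{\omega a}^s = D_\omega^s$ with $|T(\omega)|=\infty$ causes no trouble since the threshold is the strictly smaller $\mu_n(\omega)$); your version makes the error-cancellation mechanism more transparent, and your closing diagnosis of the two-sided slack is exactly the right picture.

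One slip in your Ahlfors paragraph: ``the deepest prefix $\omega$ of $z$ with $D_\omega \leq r$'' should read \emph{shallowest} --- since $D_\omega \to 0$ along every branch, there is no deepest such prefix. With that correction your estimate $D_\omega \in (\kappa r, r]$ follows from \eqref{lambda} applied to the parent, as you indicate. The paper's Ahlfors verification proceeds the same way, sandwiching $B(z,r)$ between $\PP_{n+k}$ and $\PP_n$ with $k$ chosen so that $\lambda^k \leq \kappa^2$.
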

\begin{proof}
This proof will also appear in \cite[Theorem 9.1.8]{DSU}.

We will recursively define a sequence of maps 
\[
\mu_n:T(n)\to[0,1]
\]
with the following consistency property:
\begin{equation}
\label{consistency}
\mu_n(\omega) = \sum_{a\in T(\omega)}\mu_{n + 1}(\omega a).
\end{equation}
The Kolmogorov consistency theorem will then guarantee the existence of a measure $\w\mu\in\MM(T(\infty))$ satisfying 
\begin{equation}
\label{Kolmogorov}
\w\mu([\omega]) = \mu_n(\omega)
\end{equation}
for each $\omega\in T(n)$.

Let $c = 1 - \lambda^s > 0$, where $\lambda$ is as in (\ref{lambda}). For each $n\in\N$, we will demand of our function $\mu_n$ the following property: for all $\omega\in T(n)$, if $\mu_n(\omega) > 0$, then 
\begin{equation}
\label{regularity}
cD_\omega^s \leq \mu_n(\omega) < D_\omega^s.
\end{equation}

We now begin our recursion. For the case $n = 0$, let $\mu_0(\emptyset):= c D_\smallemptyset^s$; (\ref{regularity}) is clearly satisfied.

For the inductive step, fix $n\in\N$ and suppose that $\mu_n$ has been constructed satisfying (\ref{regularity}). Fix $\omega\in T(n)$, and suppose that $\mu_n(\omega) > 0$. Formulas (\ref{redistribution}) and (\ref{regularity}) imply that
\[
\sum_{a\in T(\omega)}D_{\omega a}^s > \mu_n(\omega).
\]
Let $N_\omega\in T(\omega)$ be the smallest integer such that
\begin{equation}\label{regularitya}
\sum_{a\leq N_\omega}D_{\omega a}^s > \mu_n(\omega).\Footnote{Obviously, this and similar sums are restricted to $T(\omega)$.}
\end{equation}
Then the minimality of $N_\omega$ says precisely that
\[
\sum_{a\leq N_\omega - 1}D_{\omega a}^s \leq \mu_n(\omega).
\]
Using the above, (\ref{regularitya}), and (\ref{lambda}), we have
\begin{equation}
\label{sumaNbounds}
\mu_n(\omega) < \sum_{a\leq N_\omega}D_{\omega a}^s \leq \mu_n(\omega) +
D_{\omega N_\omega}^s \leq \mu_n(\omega) + \lambda^s D_\omega^s. 
\end{equation}
For each $a\in T(\omega)$ with $a > N_\omega$, let $\mu_{n + 1}(\omega a) =
0$, and for each $a \leq N_\omega$, let 
\[
\mu_{n + 1}(\omega a) = \frac{D_{\omega a}^s\mu_n(\omega)}{\sum_{b\leq N_\omega}D_{\omega b}^s}. 
\]
Obviously, $\mu_{n + 1}$ defined in this way satisfies (\ref{consistency}). Let us prove that (\ref{regularity}) holds (of course, with $n$ replaced by $n + 1$). The second inequality follows directly from the definition of $\mu_{n + 1}$ and from \eqref{regularitya}. Using \eqref{sumaNbounds}, \eqref{regularity} (with $n = n$), and the equation $c = 1 - \lambda^s$, we deduce the first inequality as follows:
\begin{align*}
\mu_{n + 1}(\omega a)
&\geq \frac{D_{\omega a}^s\mu_n(\omega)}{\mu_n(\omega) + \lambda^sD_\omega^s}
= D_{\omega a}^s\left[1 - \frac{\lambda^sD_\omega^s}{\mu_n(\omega) + \lambda^sD_\omega^s}\right]\\ 
&\geq D_{\omega a}^s\left[1 - \frac{\lambda^s}{c + \lambda^s}\right] \\
&= cD_{\omega a}^s.
\end{align*}
The proof of (\ref{regularity}) (with $n = n + 1$) is complete. This completes the recursive step.

Let
\[
\w T = \bigcup_{n = 1}^\infty\{\omega\in T(n):\mu_n(\omega) > 0\}.
\] 
Clearly, the limit set of the partition structure $(\PP_\omega)_{\omega\in \w T}$ is exactly the topological support of $\mu := \pi[\w\mu]$, where $\w\mu$ is defined by (\ref{Kolmogorov}). Furthermore, for each $\omega\in \w T$, we have $\w T(\omega) = T(\omega)\cap\{1,\ldots,N_\omega\}$. Thus, to complete the proof of Theorem \ref{theoremahlforsgeneral} it suffices to show that the measure $\mu$ is Ahlfors $s$-regular.

To this end, fix $z = \pi(\omega)\in\Supp(\mu)$ and $0 < r \leq \kappa D_\smallemptyset$, where $\kappa$ is as in (\ref{kappa}) and (\ref{lambda}). For convenience of notation let 
\[
\PP_n := \PP_{\omega_1^n}, \;\; D_n := \Diam(\PP_n),
\]
and let $n\in\N$ be the largest integer such that $r < \kappa D_n$. We have
\begin{equation}\label{1fsdk9}
\kappa^2 D_n \leq \kappa D_{n + 1} \leq r < \kappa D_n.
\end{equation}
(The first inequality comes from \eqref{lambda}, whereas the latter two come from the definition of $r$.)

We now claim that
\[
B(z,r)\subset \PP_n.
\]
Indeed, by contradiction suppose that $w\in B(z,r)\butnot \PP_n$. By (\ref{kappa}) we have
\[
\Dist(z,w)\geq \Dist(z,Z\butnot \PP_n)\geq \kappa D_n > r
\]
which contradicts the fact that $w\in B(z,r)$.

Let $k\in\N$ be large enough so that $\lambda^k\leq\kappa^2$. It follows from \eqref{1fsdk9} and repeated applications of the second inequality of (\ref{lambda}) that
\[
D_{n + k} \leq \lambda^k D_n \leq \kappa^2 D_n \leq r,
\]
and thus
\[
\PP_{n + k} \subset B(z,r)\subset \PP_n.
\]
Thus, invoking (\ref{regularity}), we get
\begin{equation}
\label{muomegarbounds}
(1 - \lambda^s)D_{n + k}^s \leq \mu(\PP_{n + k}) \leq \mu(B(z,r))
\leq \mu(\PP_n) \leq D_n^s. 
\end{equation}

On the other hand, it follows from \eqref{1fsdk9} and repeated applications of the first inequality of (\ref{lambda}) that
\begin{equation}
\label{above}
 D_{n + k} \geq \kappa^k D_n \geq \kappa^{k - 1}r.
\end{equation}
Combining (\ref{1fsdk9}), (\ref{muomegarbounds}), and (\ref{above}) yields
\[
(1 - \lambda^s)\kappa^{s(k - 1)}r^s \leq \mu(B(z,r)) \leq \kappa^{-2s}r^s,
\]
i.e. $\mu$ is Ahlfors $s$-regular. This completes the proof of Theorem \ref{theoremahlforsgeneral}.
\end{proof}

\subsection{Proof of Lemma \ref{lemmastructure} (A partition structure on $\del X$)}
\label{sectionlemmastructure}
In this subsection we prove Lemma \ref{lemmastructure}. The entire subsection is repeated with some modification from \cite[\69.2]{DSU}; we include the proof here for completeness.
\begin{lemma}
\label{lemmastructure}
Let $(X,\dist,\zero,b,G)$ be as in \sectionsymbol\ref{standingassumptions}. Then for every $0 < s < \delta_G$ and for all $\sigma > 0$ sufficiently large, there exist a tree $T$ on $\N$ and an embedding $T\ni\omega\mapsto x_\omega\in G(\zero)$ such that if 
\[
\PP_\omega := \Shad(x_\omega,\sigma),
\]
then $(\PP_\omega)_{\omega\in T}$ is an $s$-thick partition structure
on $(\del X,\Dist)$, whose limit set is a subset of $\Lur$.
\end{lemma}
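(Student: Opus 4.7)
The plan is a recursive construction of $T$ and the embedding $\omega \mapsto x_\omega$. Set $x_\emptyset := \zero$ so that $\PP_\emptyset = \Shad(\zero,\sigma) = \del X$ (since $\lb\zero|\eta\rb_\zero = 0$ by (b) of Proposition \ref{propositionbasicidentities}). Inductively, given $x_\omega = g_\omega(\zero) \in G(\zero)$, the children $\{x_{\omega a}\}_{a \in T(\omega)}$ will be selected from among orbit points $h(\zero)$ that lie past $x_\omega$ along the ray from $\zero$ through $x_\omega$ --- precisely, I will demand $R \leq \dist(x_\omega, h(\zero)) \leq R+K$ for a large fixed constant $R$ and a fixed $K$, and $\lb\zero|h(\zero)\rb_{x_\omega} \asymp_\plus 0$, which forces $\dist(\zero,h(\zero)) \asymp_\plus \dist(\zero,x_\omega) + \dist(x_\omega,h(\zero))$. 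Under these conditions, the Intersecting Shadows Lemma \ref{lemmatau} together with identity (i) of Proposition \ref{propositionbasicidentities} give $\Shad(h(\zero),\sigma) \subset \Shad(x_\omega,\sigma)$ with
\[
\Dist\bigl(\Shad(h(\zero),\sigma),\, \del X\setminus \Shad(x_\omega,\sigma)\bigr) \gtrsim_\times b^{-\dist(\zero,x_\omega)}
\]
provided $R$ is taken large enough. By the Diameter of Shadows Lemma \ref{lemmadiameterasymptotic}, $D_{\omega a} \asymp_\times b^{-R} D_\omega$, so conditions (I), \eqref{kappa} and \eqref{lambda} of Definition \ref{definitionpartitionstructure} will hold once the chosen children's shadows are pairwise disjoint.

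The crux of the argument is to ensure that enough such children can be found to make the structure $s$-thick, i.e.\ $\sum_{a} D_{\omega a}^s \geq D_\omega^s$, which after the asymptotics above translates to $\sum_{h \in F_\omega} b^{-s\dist(\zero,h(\zero))} \gtrsim_\times b^{-s\dist(\zero,x_\omega)}$. By $G$-invariance of the Gromov product, conjugation by $g_\omega^{-1}$ reduces this to a statement anchored at the root: find a family $F \subset G$ with $\dist(\zero,g(\zero)) \in [R,R+K]$, with $g(\zero)$ lying in a fixed shadow-cone (the $g_\omega^{-1}$-image of $\Shad(x_\omega,\sigma)$, itself a shadow from the point $g_\omega^{-1}(\zero)$), pairwise disjoint shadows $\Shad(g(\zero),\sigma)$, and $\sum_{g\in F} b^{-s\dist(\zero,g(\zero))}$ bounded below by an absolute positive constant. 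This is exactly where $s < \delta_G$ is used: the Poincar\'e series $\Sigma_s(G)$ diverges, so for $K$ sufficiently large the annular mass $\sum_{R \leq \dist(\zero,g(\zero)) \leq R+K} b^{-s\dist(\zero,g(\zero))}$ is bounded below uniformly in $R$. The cone restriction retains a uniformly positive fraction of this mass, using the Big Shadows Lemma \ref{lemmabigshadow} to show that shadow-cones from interior points cover all but a small-diameter set of $\del X$. To extract a pairwise disjoint subfamily, note that by the Intersecting Shadows Lemma and strong discreteness, the ``shadow-conflict'' graph on $\{g : \dist(\zero,g(\zero)) \in [R,R+K]\}$ has bounded degree $M$, independent of $R$; a greedy selection then yields an independent set carrying at least a $1/(M+1)$-fraction of the mass. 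Taking $R$ sufficiently large absorbs the multiplicative constant and delivers the bare inequality \eqref{redistribution}.

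The limit set of the resulting partition structure lies in $\Lur$ because for any $\omega \in T(\infty)$ the path $\zero = x_{\omega_1^0}, x_{\omega_1^1}, x_{\omega_1^2},\ldots$ has consecutive distances at most $R+K$ and $\pi(\omega) \in \Shad(x_{\omega_1^n},\sigma)$ for every $n$, so $\pi(\omega)$ is $\sigma'$-uniformly radially approximated with $\sigma' := \max(\sigma, R+K)$. The main obstacle is the packing step: one must verify that the multiplicity bound $M$ of the shadow-conflict graph, the fraction of orbit points whose shadows meet the cone, and the implicit constants in the Intersecting Shadows and Diameter of Shadows Lemmas can all be controlled \emph{uniformly} in $\omega$, not merely for a single $\omega$. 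This uniformity is what makes $G$-invariance indispensable: every cone that arises at level $n$ is a shadow from some interior point, so the shadow lemmas apply with constants that depend only on $\sigma$ and not on the viewpoint, and the recursion closes with a single choice of $(\sigma, R, K, \kappa, \lambda)$ valid at every level.
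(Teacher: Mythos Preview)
Your outline is structurally close to the paper's proof (both conjugate by $g_\omega^{-1}$ to reduce child-construction at $x_\omega$ to a statement anchored at $\zero$, then push forward), but there are two genuine gaps.

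First, the sentence ``the Poincar\'e series $\Sigma_s(G)$ diverges, so for $K$ sufficiently large the annular mass $\sum_{R \leq \dist(\zero,g(\zero)) \leq R+K} b^{-s\dist(\zero,g(\zero))}$ is bounded below uniformly in $R$'' is false in general: divergence of $\Sigma_s$ says only that the tail sums are unbounded, not that every annulus of width $K$ carries mass $\geq c$. The paper repairs this by choosing $t\in(s,\delta_G)$: since $\sum_n b^{-(t-s)n}<\infty$ while $\Sigma_t(G)=\infty$, there exist \emph{arbitrarily large} $n$ with $b^{-sn}\#\{g:\dist(\zero,g(\zero))\in[n,n+1)\}\geq M$ for any prescribed $M$; one such $n$ is then fixed as the depth parameter. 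You need this $t>s$ trick (or an equivalent argument) to make the annular mass as large as you like, which is how the multiplicative constants are absorbed.

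Second, and more seriously, the claim ``the cone restriction retains a uniformly positive fraction of this mass, using the Big Shadows Lemma'' is not justified, and your closing paragraph does not close the gap. The Big Shadows Lemma bounds the \emph{diameter} of $\del X\setminus\Shad_z(\zero,\sigma)$ uniformly in $z$, but it says nothing about where the orbit points in a fixed annulus sit: they could all project into a single ball of diameter $\varepsilon$ on $\del X$, and for the particular $z=g_\omega^{-1}(\zero)$ that happens to point at that ball, the cone from $z$ would miss \emph{all} of them. ``The shadow lemmas apply with constants independent of the viewpoint'' is true for the shadows themselves but does not transfer to the distribution of $G(\zero)$. The paper supplies exactly the missing non-concentration input via the notion of a \emph{$t$-divergent point} $\eta$ (a boundary point such that $\Sigma_t(G\mid B)=\infty$ for every neighborhood $B$ of $\eta$); it shows two such points $\eta_1,\eta_2$ exist (using the loxodromic elements furnished by general type), takes disjoint neighborhoods $B_1,B_2$, and builds a \emph{single} children-set $S_i\subset G(\zero)\cap B_i$ that works for \emph{every} light source $z\notin B_i$. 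Since any $z$ lies outside at least one $B_i$, the construction is uniform in $\omega$. Without an analogue of this two-point localization, your cone-fraction step does not go through.
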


We begin by stating our key lemma. This lemma will also be used in the proof of Theorem \ref{theoremjarnikbesicovitch}.

\begin{lemma}[Construction of children]
\label{lemmaDSU}
Let $(X,\dist,\zero,b,G)$ be as in \sectionsymbol\ref{standingassumptions}. Then for every $0 < s < \delta_G$, for every $0 < \lambda < 1$, for all $\sigma > 0$ sufficiently large, and for every $w\in G(\zero)$, there exists a finite subset $T(w)\subset G(\zero)$ (the children of $w$) such that if we let\comdavid{Although this notation gets repeated a lot, I think it is OK because the $\sigma$ is different every time.}
\begin{align*}
\PP_x &:= \Shad(x,\sigma)\\
D_x &:= \Diam(\PP_x)
\end{align*}
then the following hold:
\begin{itemize}
\item[(i)] The family $(\PP_x)_{x\in T(w)}$ consists of pairwise disjoint shadows contained in $\PP_w$.
\item[(ii)] There exists $\kappa > 0$ independent of $w$ such that for all $x\in T(w)$,
\begin{align*}
\Dist(\PP_x,\del X\butnot \PP_w) &\geq \kappa D_w\\
\kappa D_w \leq D_x &\leq \lambda D_w.
\end{align*}
\item[(iii)]
\[
\sum_{x\in T(w)}D_x^s \geq D_w^s.
\]
\end{itemize}
\end{lemma}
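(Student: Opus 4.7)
The plan is a Vitali-type extraction from a shell of orbit points at controlled distance, first for $w=\zero$ and then for general $w$ by the same construction transferred to the new basepoint.

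For $w=\zero$: since $s<\delta_G$, the Poincar\'e series $\sum_{h\in G}b^{-sd(\zero,h(\zero))}$ diverges. Writing $A_N=\{h\in G: d(\zero,h(\zero))\in (N-1,N]\}$, a pigeonhole argument (were $\#A_N<Mb^{sN}$ to hold for all large $N$, then for $s<s'<\delta_G$ the series $\sum_N \#A_N\,b^{-s'N}$ would converge, contradicting divergence at $s'$) produces, for any prescribed $M$, arbitrarily large $N$ with $\#A_N\geq Mb^{sN}$. Fix such $N$ large enough that $b^{-N}$ lies comfortably below $\lambda$. From $A_N$ I greedily extract a maximal subfamily $T(\zero)$ whose shadows are pairwise disjoint. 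For each excluded $h\in A_N$, maximality supplies $h'\in T(\zero)$ with $\PP_{h(\zero)}\cap\PP_{h'(\zero)}\neq\emptyset$; applying Observation \ref{observationmetricshadow} to a common boundary point and then Gromov's inequality gives $\lb h(\zero)|h'(\zero)\rb_\zero\asymp_{\plus,\sigma}N$, whence $d(h(\zero),h'(\zero))\lesssim_{\plus,\sigma}0$. Strong discreteness caps by some $M_\sigma$ the number of $h\in G$ within bounded $\dist$-distance of any orbit point, so $\#T(\zero)\geq \#A_N/M_\sigma$. Since $\#\del X\geq 3$ (as $G$ is of general type), Lemma \ref{lemmadiameterasymptotic} gives $D_{h(\zero)}\asymp_{\times,\sigma}b^{-N}$ and $D_\zero\asymp_{\times,\sigma}1$, and hence
\[
\sum_{h\in T(\zero)}D_{h(\zero)}^s \gtrsim_\times \frac{M}{M_\sigma},
\]
which exceeds $D_\zero^s$ once $M$ is taken large; (i) and (ii) are then immediate with $\kappa$ determined by $b^{-N}$ and the implied constants from Lemmas \ref{lemmatau} and \ref{lemmadiameterasymptotic}.

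For general $w=g(\zero)$, I would run the identical construction about $w$. Children of $w$ come from orbit points $x=g(y)$ with $d(\zero,y)$ in the prescribed annulus and $\PP_x\subset\PP_w$; via conformal invariance of the Gromov product under the isometry $g$, the containment $\PP_x\subset\PP_w$ translates into a ``forward cone'' constraint on $y$ relative to the orbit point $g^{-1}(\zero)$. The two shadow metametrics $\Dist_\zero$ and $\Dist_w$ differ on $\PP_w$ only by a uniform multiplicative factor (Observation \ref{observationGMVT}(i), since $\busemann_y(w,\zero)\asymp_{\plus,\sigma}d(\zero,w)$ for $y\in\PP_w$), so diameters, disjointness, and $s$-content all transfer between the $w$-based and the $\zero$-based pictures up to uniform constants. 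The pigeonhole, Vitali, and multiplicity arguments are then verbatim the same, with all implied constants remaining uniform in $w$ because every ingredient---Poincar\'e divergence, strong discreteness, the Intersecting Shadows and Diameter of Shadows Lemmas---is $G$-invariant.

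The main obstacle is verifying that the ``forward cone'' near $w$ contains a definite proportion of orbit points at each scale: for $w$ far from $\zero$ this cone is narrow in the visual metric, and orbit points could conceivably concentrate along the ``bad'' direction $g^{-1}(\zero)$. Overcoming this uses the Big Shadows Lemma (Lemma \ref{lemmabigshadow}) together with the general-type hypothesis, which between them ensure that $G(\zero)$ spreads sufficiently in $\del X$ that at every scale a positive proportion of orbit points avoids any prescribed bounded visual neighborhood of $g^{-1}(\zero)$; the remaining items are then routine applications of the shadow machinery.
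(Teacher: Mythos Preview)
Your argument for $w=\zero$ is correct and matches the paper's core mechanism (annulus pigeonhole followed by a maximal disjoint extraction, with strong discreteness bounding the multiplicity). The gap is precisely where you locate it: the forward-cone obstacle for general $w$. Your proposed resolution---that the Big Shadows Lemma together with general type force ``at every scale a positive proportion of orbit points avoids any prescribed bounded visual neighborhood of $g^{-1}(\zero)$''---is not justified, and as stated need not hold. The pigeonhole only produces abundance at \emph{some} scale $N$, and nothing you have invoked prevents all of $A_N$ from clustering in a single visual direction; if that direction happens to be the direction of $z=g_w^{-1}(\zero)$, the entire annulus may fail the cone constraint $\lb y|z\rb_\zero\asymp_\plus 0$. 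The Big Shadows Lemma controls the size of $\Shad_z(\zero,\sigma)$, not the distribution of orbit points in $A_N$.

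The paper fills this gap by localizing the pigeonhole \emph{before} varying $w$. Fixing $t\in(s,\delta_G)$, call $\eta\in\del X$ \emph{$t$-divergent} if the restricted series $\sum_{g(\zero)\in B}b^{-t\,\dist(\zero,g(\zero))}$ diverges for every neighborhood $B$ of $\eta$. Using the loxodromic dynamics from the general-type hypothesis, the paper shows there exist two $t$-divergent points $\eta_1,\eta_2$ with disjoint neighborhoods $B_1,B_2$. The annulus-plus-extraction argument is then carried out \emph{once inside each $B_i$}, yielding two fixed finite sets $S_1,S_2\subset G(\zero)$ with $S_i\subset B_i$. For arbitrary $w=g_w(\zero)$, set $z=g_w^{-1}(\zero)$; disjointness of $B_1,B_2$ gives $z\notin B_i$ for some $i$, and then every $y\in S_i$ automatically satisfies $\lb y|z\rb_\zero\asymp_\plus 0$ (since $y$ lies near $\eta_i$ while $z$ does not). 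One sets $T(w)=g_w(S_i)$. Uniformity of $\kappa$ is immediate because only the two fixed sets $S_1,S_2$ are ever used. The missing idea, then, is to pin the abundance to specific boundary directions first, rather than trying to discard the bad direction after the fact.
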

It is not too hard to deduce Lemma \ref{lemmastructure} from Lemma \ref{lemmaDSU}. We do it now:
\begin{proof}[Proof of Lemma \ref{lemmastructure} assuming Lemma \ref{lemmaDSU}]
Let $\lambda = 1/2$, and let $\sigma > 0$ be large enough so that Lemma \ref{lemmaDSU} holds. Let $(x_n)_1^\infty$ be an enumeration of $G(\zero)$. Let
\[
T = \bigcup_{n = 1}^\infty\{\omega\in\N^n:x_{\omega_{j + 1}}\in T(x_{\omega_j})\all j = 1,\ldots,n - 1\},
\]
and for each $\omega\in T$ let
\begin{align*}
x_\omega &= x_{\omega_{|\omega|}}\\
\PP_\omega &= \PP_{x_\omega}.
\end{align*}
Then the conclusion of Lemma \ref{lemmaDSU} precisely implies that $(\PP_\omega)_{\omega\in T}$ is an $s$-thick partition structure on $(\del X,\Dist)$.

To complete the proof, we must show that the limit set of the partition structure $(\PP_\omega)_{\omega\in T}$ is contained in $\Lur(G)$. Indeed, fix $\omega\in T(\infty)$. Then for each $n\in\N$, $\pi(\omega)\in \PP_{\omega_1^n}=\Shad(x_{\omega_1^n},\sigma)$ and $\dist(\zero,x_{\omega_1^n})\to\infty$. So, the sequence $(x_{\omega_1^n})_1^\infty$ converges radially to $\pi(\omega)$. On the other hand,
\begin{align*}
\dist(x_{\omega_1^n},x_{\omega_1^{n + 1}}) &\asymp_{\plus,\sigma} \busemann_\zero(x_{\omega_1^{n + 1}},x_{\omega_1^n}) \by{\eqref{distbusemann}}\\
&\asymp_{\plus,\sigma} -\log_b\left(\frac{D_{\omega_1^{n + 1}}}{D_{\omega_1^n}}\right) \by{the Diameter of Shadows Lemma}\\
&\leq_\pt -\log_b(\kappa) \asymp_{\plus,\kappa} 0. \by {\eqref{lambda}}
\end{align*}
Thus the sequence $(x_{\omega_1^n})_1^\infty$ converges to $\pi(\omega)$ uniformly radially.
\end{proof}

\begin{proof}[Proof of Lemma \ref{lemmaDSU}]
Choose an arbitrary $t\in (s,\delta_G)$. A point $\eta\in \Lambda$ will be called \emph{$t$-divergent} if for every neighborhood $B$ of $\eta$ the restricted Poincar\'e series 
\begin{equation}
\label{tseries}
\Sigma_t(G\given B):=\sum_{\substack{g\in G \\ g(\zero)\in B}}b^{-t\dist(\zero,g(\zero))}
\end{equation}
diverges.

The following lemma was proven in \cite{BishopJones} for the case $X = \H^{d + 1}$, using the fact that the space $\bord{\H^{d + 1}}$ is compact. A new proof is needed for the general case, since in general $\bord X$ will not be compact.
\begin{lemma}
\label{lemmatdivergent}
There exists at least one $t$-divergent point.
\end{lemma}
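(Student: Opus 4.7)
The plan is to argue by contradiction: assume no $t$-divergent point exists, so every $\eta \in \Lambda$ admits an open neighborhood $U_\eta \subset \cl X$ with $\Sigma_t(G \given U_\eta) < +\infty$, and derive $\Sigma_t(G) < +\infty$, contradicting $t < \delta_G$. This adapts the original Bishop-Jones argument, which in the proper setting would invoke compactness of $\cl X$ to reduce to a finite subcover; in our general setting we must replace compactness with separability.

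First, the topology on $\cl X$ defined in \sectionsymbol\ref{subsubsectiontopologyclX} is metrizable, and $\Lambda$ lies in the closure of the countable orbit $G(\zero)$, so $\Lambda$ is separable in the visual metric and therefore Lindel\"of. Second, since each $\eta \in \Lambda$ lies in $\del X$ while orbit points lie in $X$, for each fixed $g$ the indicator $\mathbf{1}_{U_\eta}(g(\zero)) \to 0$ as $U_\eta \searrow \{\eta\}$, so by dominated convergence $\Sigma_t(G \given U_\eta) \searrow 0$ and we may choose $U_\eta$ with $\Sigma_t(G \given U_\eta)$ arbitrarily small.

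Combining these two ingredients by a diagonal cover construction --- extract a countable subcover $\{U_{\eta_i}\}_i$ of $\Lambda$ via Lindel\"of, then shrink each $U_{\eta_i}$ to a smaller open neighborhood $U'_i$ of $\eta_i$ with $\Sigma_t(G \given U'_i) < 2^{-i}$ while maintaining the covering of $\Lambda$ (by covering $\Lambda$ at each dyadic scale and diagonalizing) --- produces a countable open cover $\Lambda \subset \bigcup_i U'_i$ with $\sum_i \Sigma_t(G \given U'_i) \leq 1$. In particular, the total $\Sigma_t$-mass contributed by orbit points lying in $\bigcup_i U'_i$ is at most $1$.

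The main technical obstacle is bounding the $\Sigma_t$-mass of orbit points outside $\bigcup_i U'_i$. In the classical compact case these orbit points form a compact subset of $X$ disjoint from $\Lambda$, hence are finite in number by strong discreteness, so the above cover already bounds essentially all of $\Sigma_t(G)$. In our non-proper setting the ``stray'' orbit points could in principle escape to infinity without accumulating on $\Lambda$ at all; dealing with this will require extra input --- either a Stone-\v{C}ech / ultrafilter argument that produces a $t$-divergent point from any infinite divergent configuration of strays, or an iterative refinement of the covering in which any infinite stray configuration yields yet another $\eta' \in \Lambda$ around which one runs the diagonal shrinking again. Once the strays are handled, the combined cover captures all but finitely many orbit points at bounded $\Sigma_t$-cost, yielding $\Sigma_t(G) < +\infty$ and the desired contradiction.
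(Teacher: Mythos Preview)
Your proposal has a genuine gap --- in fact two, and you identify the second one yourself without closing it.

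First, the ``diagonal cover'' step is not a construction. If you extract a Lindel\"of subcover $\{U_{\eta_i}\}$ and then shrink each $U_{\eta_i}$ to $U_i'$ with $\Sigma_t(G\given U_i') < 2^{-i}$, you lose coverage of $\Lambda$; the parenthetical ``by covering $\Lambda$ at each dyadic scale and diagonalizing'' does not repair this. A dense sequence $\{\zeta_i\}\subset\Lambda$ with shrinking neighborhoods $U_i'\ni\zeta_i$ need not cover $\Lambda$ either, because the $U_i'$ shrink as $i\to\infty$. I do not see how to produce a countable cover of $\Lambda$ with summable restricted masses from the hypothesis alone.

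Second, and more seriously, even granting such a cover you have not handled the orbit points that lie in no $U_i'$. In a non-proper space a sequence $g_n(\zero)$ with $\dist(\zero,g_n(\zero))\to\infty$ need not have any subsequence converging in $\cl X$, so the stray points can carry divergent $\Sigma_t$-mass without producing any new limit point to cover. You acknowledge this and gesture at a Stone--\v Cech or transfinite-recursion fix, but neither is carried out, and it is not clear either would close the gap.

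The paper's proof bypasses all of this with a short dynamical argument. Take a loxodromic $g\in G$ with fixed points $g_\pm$ and disjoint neighborhoods $B_\pm$. Since $\Sigma_t(G)=+\infty$ and $\cl X = (\cl X\setminus B_-)\cup(\cl X\setminus B_+)$, one of the two restricted series diverges, say $\Sigma_t(G\given \cl X\setminus B_-)$. Applying $g^n$ preserves divergence of the restricted series (the map $h\mapsto g^n h$ changes each exponent by at most $\dist(\zero,g^n(\zero))$), and by north--south dynamics $g^n(\cl X\setminus B_-)$ is eventually contained in any prescribed neighborhood of $g_+$. Hence $g_+$ is $t$-divergent. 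No covering, no compactness, no strays.
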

\begin{proof}
Since $G$ is of general type, there exists a loxodromic isometry $g\in G$. Let $g_+$ and $g_-$ be the attracting and repelling fixed points of $g$, respectively. Let $B_+$ and $B_-$ be disjoint neighborhoods of $g_+$ and $g_-$, respectively. Since the series $\Sigma_t(G)$ diverges, it follows that the series (\ref{tseries}) diverges for either $B = \del X\butnot B_-$ or $B = \del X\butnot B_+$. Without loss of generality, let us assume that it diverges for $B = \del X\butnot B_-$. Then $\Sigma_t(G\given g^n(\del X\butnot B_-))$ also diverges for every $n\in\N$. By the definition of a loxodromic isometry, $g^n$ tends to $g_+$ uniformly on $\del X\butnot B_-$, so for any neighborhood $U$ of $g_+$, we eventually have $g^n(\del X\butnot B_-)\subset U$. But then $\Sigma_t(G\given U)$ diverges, which proves that $g_+$ is a $t$-divergent point.
\QEDmod\end{proof}
\begin{remark}
\label{remarktdivergent}
It is not hard to see that the set of $t$-divergent points is invariant under the action of the group. It follows from this and from Observation \ref{observationnoglobalfixedpoint} that there are at least two $t$-divergent points.
\end{remark}

\begin{sublemma}
\label{sublemmaDSU}
Let $\eta$ be a $t$-divergent point, and let $B_\eta$ be a neighborhood of $\eta$. Then for all $\sigma > 0$ sufficiently large, there exists a set $S_\eta\subset G(\zero)\cap B_\eta$ such that for all $z\in X\butnot B_\eta$,
\begin{itemize}
\item[(i)] If
\[
\PP_{z,x} := \Shad_z(x,\sigma),
\]
then the family $(\PP_{z,x})_{x\in S_\eta}$ consists of pairwise disjoint shadows contained in $\PP_{z,\zero}\cap B_\eta$.
\item[(ii)] There exists $\kappa > 0$ independent of $z$ such that for all $x\in S_\eta$,
\begin{align} \label{epsilonnew}
\Dist_{b,z}(\PP_{z,x},\del X\butnot \PP_{z,\zero}) &\geq \kappa\Diam_z(\PP_{z,\zero})\\ \label{lambdanew}
\kappa\Diam_z(\PP_{z,\zero}) \leq \Diam_z(\PP_{z,x}) &\leq \lambda\Diam_z(\PP_{z,\zero}).
\end{align}
\item[(iii)]
\[
\sum_{x\in S_\eta}\Diam_z^s(\PP_{z,x}) \geq \Diam_z^s(\PP_{z,\zero}).
\]
\end{itemize}
\end{sublemma}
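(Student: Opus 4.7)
My proof strategy follows the Bishop-Jones paradigm, adapted to the uniform-in-$z$ formulation. The plan is to use $t$-divergence to produce many orbit points in a subneighborhood $B'_\eta \subset B_\eta$ at a single radial scale, then to extract a mutually hyperbolically separated subfamily $S_\eta$ whose shadows satisfy all three properties simultaneously for every $z \in X\butnot B_\eta$.

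The key ingredient is uniform Busemann control. Writing $B_\eta \supset N_{t_0}(\eta)$ and fixing a deeper subneighborhood $B'_\eta := N_{t_1}(\eta)$ with $t_1$ sufficiently large relative to $t_0$ and the Gromov constant, Gromov's inequality applied to $\lb z|x\rb_\zero$ via $\lb z|\eta\rb_\zero \lesssim_\plus t_0$ (from $z \notin B_\eta$) and $\lb x|\eta\rb_\zero > t_1$ (from $x \in B'_\eta$) gives $\lb z|x\rb_\zero \lesssim_\plus t_0$ uniformly in both variables. Identity (b) of Proposition \ref{propositionbasicidentities} then yields $\busemann_z(x,\zero) \asymp_\plus \dist(\zero,x)$; combined with Lemma \ref{lemmadiameterasymptotic}, the ratio $\Diam_z(\PP_{z,x})/\Diam_z(\PP_{z,\zero}) \asymp_\times b^{-\dist(\zero,x)}$ is $z$-independent. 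Thus condition (ii) will hold precisely when $\dist(\zero,x)$ lies in a bounded window $[R_0, R_{\max}]$, with $\lambda$ dictating $R_0$ and $\kappa$ dictating $R_{\max}$.

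I would then take $\sigma$ large and, by pigeonhole applied to the divergent series $\sum_R \#F_R\, b^{-tR}$ where $F_R := \{g(\zero)\in B'_\eta : \dist(\zero,g(\zero))\in[R,R+1)\}$, pick a single shell $F_{R^*}$ with $R_0 \leq R^* \leq R_{\max}$ satisfying $\#F_{R^*}\, b^{-tR^*} \geq M$ for any prescribed threshold $M$; this forces $R_{\max}$, and therefore $\kappa$, to be determined a posteriori from $M$. A greedy packing within $F_{R^*}$ produces $S_\eta \subset F_{R^*}$ with pairwise hyperbolic distances exceeding a constant $C_\sigma$, losing only a bounded factor in count. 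Uniform disjointness of $\{\PP_{z,x}\}_{x\in S_\eta}$ then follows from the Intersecting Shadows Lemma combined with the bound $|\dist(z,x_1)-\dist(z,x_2)| \lesssim_\plus 1$ coming from the common scale and the uniform Busemann estimate; the containments $\PP_{z,x}\subset\PP_{z,\zero}$ and $\PP_{z,x}\subset B_\eta$ follow respectively from Lemma \ref{lemmatau} (with $\sigma$ chosen large enough to absorb the $\tau$-loss) and from comparing $\Dist_\zero$ to $\Dist_z$ via Observation \ref{observationGMVT} together with the Big Shadows Lemma, once $R^*$ is large. Finally, for (iii),
\[
\sum_{x\in S_\eta} \Diam_z^s(\PP_{z,x}) \asymp_\times b^{-s\dist(z,\zero)}\cdot\#S_\eta\cdot b^{-sR^*} \gtrsim_\times b^{-s\dist(z,\zero)}\cdot b^{(t-s)R^*}\, M,
\]
which exceeds $\Diam_z^s(\PP_{z,\zero}) \asymp_\times b^{-s\dist(z,\zero)}$ once $M$ is chosen sufficiently large; the gap $s<t$ is essential here.

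The main obstacle will be the uniform-in-$z$ bookkeeping, and the linchpin is the Busemann estimate $\busemann_z(x,\zero) \asymp_\plus \dist(\zero,x)$: it decouples the choice of $S_\eta$ from $z$ and reduces both the diameter condition (ii) and the disjointness portion of (i) to a single-scale packing problem in $(X,\dist)$.
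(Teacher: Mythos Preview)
Your outline is essentially the paper's own argument: pass to a deeper neighborhood $\w B_\eta$ so that $\lb x|z\rb_\zero \asymp_\plus 0$ holds uniformly for $z\notin B_\eta$ and $x\in\w B_\eta$; restrict to a single annulus of $G(\zero)\cap\w B_\eta$ selected via the $t$-divergence; take $S_\eta$ to be a maximal $\tau$-separated subset of that annulus; and read off (i)--(iii) from the Intersecting Shadows and Diameter of Shadows Lemmas. The Busemann control you flag as the linchpin is precisely the paper's \eqref{xz0}, and your dependency order ($\lambda\to R_0$, then $M$, then $R^*$, then $\kappa$ a posteriori) matches. One cosmetic difference: the paper obtains $\PP_{z,x}\subset\PP_{z,\zero}$ as a by-product of \eqref{epsilonnew} rather than via Lemma~\ref{lemmatau}, which sidesteps the circularity of $\tau$ depending on $\sigma$.

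There is one concrete slip in your pigeonhole. Divergence of $\sum_R \#F_R\, b^{-tR}$ does \emph{not} yield a shell with $\#F_{R^*}\, b^{-tR^*} \geq M$; the harmonic series is already a counterexample. The correct step, and the one the paper uses, is to compare against the convergent series $\sum_R b^{-(t-s)R}$: since the first diverges and the second converges, the ratio $\#F_R\, b^{-sR}$ is unbounded, so there exist arbitrarily large $R^*$ with $\#F_{R^*}\, b^{-sR^*}\geq \w M$. That is where $s<t$ actually enters; with this correction your estimate for (iii) reads simply $\sum_{x\in S_\eta}\Diam_z^s(\PP_{z,x}) \gtrsim_\times \w M\,\Diam_z^s(\PP_{z,\zero})$, and the factor $b^{(t-s)R^*}$ in your final display becomes superfluous.
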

Sublemma \ref{sublemmaDSU} will be proven below; for now, let us complete the proof of Lemma \ref{lemmaDSU} assuming Sublemma \ref{sublemmaDSU}.

Let $\eta_1$ and $\eta_2$ be two distinct $t$-divergent points. Let $B_1$ and $B_2$ be disjoint neighborhoods of $\eta_1$ and $\eta_2$, respectively, and let $S_1\subset G(\zero)\cap B_1$ and $S_2\subset G(\zero)\cap B_2$ be the sets guaranteed by Sublemma \ref{sublemmaDSU}. Now suppose that $w = g_w(\zero)\in G(\zero)$. Let $z = g_w^{-1}(\zero)$. Then either $z\notin B_1$ or $z\notin B_2$; say $z\notin B_i$. Let $T(w) = g_w(S_i)$; then (i)-(iii) of Sublemma \ref{sublemmaDSU} exactly guarantee (i)-(iii) of Lemma \ref{lemmaDSU}.
\end{proof}

\begin{proof}[Proof of Sublemma \ref{sublemmaDSU}]
Choose $\rho > 0$ large enough so that
\[
\{x\in\bord X:\lb x|\eta\rb_\zero\geq \rho\} \subset B_\eta.
\]
Then fix $\sigma > 0$ large to be announced below, depending only on $\rho$.

For all $x\in X$ we have
\[
0 \asymp_{\plus,\rho} \lb z|\eta\rb_\zero \gtrsim_\plus \min(\lb x|\eta\rb_\zero,\lb x|z\rb_\zero).
\]
Fix $\w\rho\geq\rho$ large to be announced below, depending only on $\rho$ and $\sigma$. Let
\[
\w B_\eta = \{x\in X:\lb x|\eta\rb_\zero \geq \w\rho\ew\}.
\]
It follows that for all $x\in \w B_\eta$, we have 
\begin{equation}\label{xz0}
\lb x|z\rb_\zero \asymp_{\plus,\rho} 0,
\end{equation}
assuming $\w\rho$ is chosen large enough. We emphasize that the implied constants of these asymptotics are independent of $z$.

For each $n\in\N$ let 
\[
A_n := B(\zero,n + 1)\butnot B(\zero,n)
\] 
be the $n$th annulus centered at $\zero$. We shall need the following variant of the Intersecting Shadows Lemma:

\begin{claim}
\label{claimtau}
There exists $\tau > 0$ depending on $\rho$ and $\sigma$ such that for all $n\in\N$ and for all $x,y\in A_n\cap \w B_\eta$, if
\[
\PP_{z,x}\cap \PP_{z,y}\neq\emptyset,
\]
then 
\[
\dist(x,y) < \tau.
\] 
\end{claim}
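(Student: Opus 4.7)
The plan is to apply the Intersecting Shadows Lemma (Lemma \ref{lemmatau}), using (\ref{xz0}) to compare distances from $z$ with distances from $\zero$. Without loss of generality assume $\dist(z,y)\geq\dist(z,x)$. Since $\Shad_z(x,\sigma)\cap\Shad_z(y,\sigma)\neq\emptyset$, Lemma \ref{lemmatau} yields
\[
\dist(x,y) \asymp_{\plus,\sigma} \busemann_z(y,x) = \dist(z,y) - \dist(z,x).
\]
Thus it suffices to bound $\dist(z,y) - \dist(z,x)$ in terms of $\rho$ and the fact that $x,y \in A_n$.

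Next I would convert each distance from $z$ into a distance from $\zero$ using the definition of the Gromov product. Since both $x$ and $y$ lie in $\widetilde{B}_\eta$, the inequality (\ref{xz0}) (already established just above the claim) gives $\lb x|z\rb_\zero \asymp_{\plus,\rho} 0$ and $\lb y|z\rb_\zero \asymp_{\plus,\rho} 0$. Unpacking the Gromov product via $\lb x|z\rb_\zero = \tfrac{1}{2}(\dist(\zero,x) + \dist(\zero,z) - \dist(x,z))$ and similarly for $y$, this becomes
\[
\dist(x,z) \asymp_{\plus,\rho} \dist(\zero,x) + \dist(\zero,z), \qquad \dist(y,z) \asymp_{\plus,\rho} \dist(\zero,y) + \dist(\zero,z).
\]
Subtracting these two asymptotics, the contribution of $\dist(\zero,z)$ cancels, and we obtain
\[
\dist(z,y) - \dist(z,x) \asymp_{\plus,\rho} \dist(\zero,y) - \dist(\zero,x).
\]

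Finally, since $x,y\in A_n$ means $n\leq\dist(\zero,x),\dist(\zero,y) < n+1$, the right-hand side is bounded in absolute value by $1$. Combining with the asymptotic from Lemma \ref{lemmatau} yields $\dist(x,y) \lesssim_{\plus,\rho,\sigma} 1$, so that we may take $\tau$ to be the resulting bound (depending only on $\rho$ and $\sigma$, not on $w$, $z$, $x$, $y$, or $n$). There is no real obstacle here; the only care needed is verifying that the implied constants are genuinely independent of $w$, which is immediate because (\ref{xz0}) was established uniformly over $x \in \widetilde{B}_\eta$ and $z\in X\butnot B_\eta$, with constants depending only on $\rho$.
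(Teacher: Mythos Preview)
Your proof is correct and follows essentially the same approach as the paper: apply the Intersecting Shadows Lemma to reduce to bounding $\busemann_z(y,x)$, then use \eqref{xz0} together with $x,y\in A_n$ to control this quantity. The only cosmetic difference is that the paper uses the identity $\busemann_z(y,x) = \busemann_\zero(y,x) + 2\lb x|z\rb_\zero - 2\lb y|z\rb_\zero$ directly, whereas you unpack the Gromov products and subtract; these are algebraically equivalent.
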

\begin{proof}
Without loss of generality suppose $\dist(z,y)\geq \dist(z,x)$. Then by the Intersecting Shadows Lemma we have
\[
\dist(x,y) \asymp_{\plus,\sigma} \busemann_z(y,x)
= \busemann_\zero(y,x) + 2\lb x|z\rb_\zero - 2\lb y|z\rb_\zero.
\]
Now $\busemann_\zero(y,x) \leq 1$ since $x,y\in A_n$. On the other hand, since $x,y\in\w B_\eta$, we have
\[
\lb x|z\rb_\zero \asymp_{\plus,\rho} \lb y|z\rb_\zero \asymp_{\plus,\rho} 0.
\]
Combining gives
\[
\dist(x,y) \asymp_{\plus,\rho,\sigma} 0,
\]
i.e. there exists $\tau$ depending only on $\rho$ and $\sigma$ such that $\dist(x,y) < \tau$.
\QEDmod\end{proof}
Let 
\[
M = \#\{g\in G: g(\zero)\in B(\zero,\tau)\};
\]
$M$ is finite since $G$ is strongly discrete. Then fix $\w M > 0$ large to be announced below, depending on $\rho$ and $M$ (and thus implicitly on $\sigma$). Since $\eta$ is $t$-divergent, we have
\begin{align*}
\infty
= \Sigma_t(G\given\w B_\eta)
&=_\pt \sum_{n = 1}^\infty\Sigma_t(G\given\w B_\eta\cap A_n)\\
&=_\pt \sum_{n = 1}^\infty\sum_{\substack{g\in G\\ g(\zero)\in\w B_\eta\cap A_n}}b^{-t\dist(\zero,g(\zero))}\\
&\asymp_\times \sum_{n = 1}^\infty b^{-(t - s)n}b^{-sn}\#\{g\in G: g(\zero)\in \w B_\eta\cap A_n\}.
\end{align*}
It follows that there exist arbitrarily large numbers $n\in\N$ such that
\begin{equation}
\label{ndef}
b^{-sn}\#\{g\in G: g(\zero)\in \w B_\eta\cap A_{n}\}\geq \w M.
\end{equation}
Fix such an $n$, also to be announced below, depending on $\lambda$, $\rho$, $\w\rho$, and $\w M$ (and thus implicitly on $M$ and $\sigma$). Let $S_\eta$ be a maximal $\tau$-separated subset of $G(\zero)\cap\w B_\eta\cap A_{n}$.\Footnote{A set $S$ is \emph{$\tau$-separated} if \[x,y\in S\text{ distinct} \Rightarrow \dist(x,y)\geq\tau.\] The existence of a maximal $\tau$-separated set follows from Zorn's lemma.} We have
\[
\#(S_\eta) \geq \frac{\#\{g\in G: g(\zero)\in \w B_\eta\cap A_{n}\}}{M}
\]
and so
\begin{equation}
\label{asdist0a}
\sum_{x\in S_\eta}b^{-s\dist(\zero,x)} \asymp_\times b^{-sn}\#(S_\eta) \geq \frac{\w M}{M} \asymp_{\times,M} \w M.
\end{equation}
\begin{proof}[Proof of \textup{(i)}]
In order to see that the shadows $(\PP_{z,x})_{x\in S_\eta}$ are pairwise disjoint, suppose that $x,y\in S_\eta$ are such that $\PP_{z,x}\cap \PP_{z,y}\neq\emptyset$. By Claim \ref{claimtau} we have $\dist(x,y) < \tau$. Since $S_\eta$ is $\tau$-separated, this implies $x = y$.

Fix $x\in S_\eta$. Using \eqref{xz0} and the fact that $x\in A_{n}$, we have
\[
\lb\zero|z\rb_x \asymp_\plus \dist(\zero,x) - \lb x|z\rb_\zero\asymp_{\plus,\rho} \dist(\zero,x)\asymp_\plus n.
\]
Thus for all $\xi\in\PP_{z,x}$,
\[
0
\asymp_{\plus,\sigma} \lb z|\xi\rb_x
\gtrsim_\plus \min(\lb\zero|z\rb_x,\lb\zero|\xi\rb_x)
\asymp_\plus \min(n,\lb\zero|\xi\rb_x);
\]
taking $n$ sufficiently large (depending on $\sigma$), this gives
\[
\lb\zero|\xi\rb_x \asymp_{\plus,\sigma} 0,
\]
from which it follows that
\[
\lb x|\xi\rb_\zero
\asymp_\plus d(\zero,x)-\lb\zero|\xi\rb_x
\asymp_{\plus,\sigma} n.
\]
Therefore, since $x\in\w B_\eta$, we get
\[
\lb\xi|\eta\rb_\zero
\gtrsim_\plus \min(\lb x|\xi\rb_\zero,\lb x|\eta\rb_\zero)
\gtrsim_{\plus,\sigma} \min(n,\w\rho).
\]
Thus $\xi\in B_\eta$ as long as $\w\rho$ and $n$ are large enough (depending on $\sigma$). Thus $\PP_{z,x}\subset B_\eta$.

Finally, note that we do not need to prove that $\PP_{z,x}\subset \PP_{z,\zero}$, since it is implied by \eqref{epsilonnew} which we prove below.

\QEDmod\end{proof}
\begin{proof}[Proof of \textup{(ii)}]
Take any $x\in S_\eta$. Then by (\ref{xz0}), we have
\begin{equation}
\label{asympan1}
\dist(x,z) - \dist(\zero,z) = \dist(\zero,x) - 2\lb x|z\rb_\zero \asymp_{\plus,\rho} \dist(\zero,x) \asymp_\plus n.
\end{equation}
Combining with the Diameter of Shadows Lemma gives
\begin{equation}
\label{asympan1new}
\frac{\Diam_z(\PP_{z,x})}{\Diam_z(\PP_{z,\zero})}
\asymp_{\times,\sigma} \frac{b^{-\dist(z,x)}}{b^{-\dist(z,\zero)}}
\asymp_{\times,\rho} b^{-n}.
\end{equation}
Thus by choosing $n$ sufficiently large depending on $\sigma$, $\lambda$, and $\rho$ (and satisfying (\ref{ndef})), we guarantee that the second inequality of (\ref{lambdanew}) holds. On the other hand, once $n$ is chosen, (\ref{asympan1new}) guarantees that if we choose $\kappa$ sufficiently small, then the first inequality of (\ref{lambdanew}) holds.

In order to prove (\ref{epsilonnew}), let $\xi\in \PP_{z,x}$ and let $\gamma\in \del X\setminus \PP_{z,\zero}$. We have
\begin{align*}
\lb x|\xi\rb_z &\asymp_\plus \dist(x,z) - \lb z|\xi\rb_x \geq \dist(x,z) - \sigma\\
\lb \zero|\gamma\rb_z &\asymp_\plus \dist(\zero,z) - \lb z|\gamma \rb_\zero \leq \dist(\zero,z) - \sigma.
\end{align*}
Also, by (\ref{xz0}) we have
\[
\lb \zero|x\rb_z \asymp_\plus \dist(\zero,z) - \lb x|z\rb_\zero \asymp_{\plus,\rho} \dist(\zero,z).
\]
Applying Gromov's inequality twice and then applying (\ref{asympan1}) gives
\begin{align*}
\dist(\zero,z) - \sigma \gtrsim_\plus \lb \zero|\gamma\rb_z
&\gtrsim_{\plus\phantom{,\rho}} \min\left(\lb\zero|x\rb_z,\lb x|\xi\rb_z,\lb \xi|\gamma\rb_z\right)\\
&\gtrsim_{\plus,\rho} \min\left(\dist(\zero,z),\dist(x,z) - \sigma,\lb \xi|\gamma\rb_z\right)\\
&\asymp_{\plus\phantom{,\rho}} \min\left(\dist(\zero,z),\dist(\zero,z) + n - \sigma,\lb \xi|\gamma\rb_z\right).
\end{align*}
By choosing $n$ and $\sigma$ sufficiently large (depending on $\rho$), we can guarantee that neither of the first two expressions can represent the minimum without contradicting the inequality. Thus
\[
\dist(\zero,z) - \sigma \gtrsim_{\plus,\rho} \lb\xi|\gamma\rb_z;
\]
exponentiating and the Diameter of Shadows Lemma give
\[
\Dist_{b,z}(\xi,\gamma)
\gtrsim_{\times,\rho} b^{-(\dist(\zero,z) - \sigma)}
\asymp_{\times,\sigma} b^{-\dist(\zero,z)}
\asymp_{\times,\sigma} \Diam_z(\PP_{z,\zero}).
\]
Thus we may choose $\kappa$ small enough, depending on $\rho$ and $\sigma$, so that (\ref{epsilonnew}) holds.

\QEDmod\end{proof}
\begin{proof}[Proof of \textup{(iii)}]
\begin{align*}
\sum_{x\in S_\eta}\Diam_z^s(\PP_{z,x})
&\asymp_{\times\phantom{,M}} \sum_{x\in S_\eta}b^{-s\dist(z,x)} \by{the Diameter of Shadows Lemma}\\
&\asymp_{\times,\rho} \;\; b^{-s\dist(z,\zero)}\sum_{x\in S_\eta}b^{-\dist(\zero,x)} \by{\eqref{asympan1}}\\
&\gtrsim_{\times,M} \w Mb^{-s\dist(z,\zero)} \by{\eqref{asdist0a}}\\
&\asymp_{\times\phantom{,M}} \w M\Diam_z^s(\PP_{z,\zero}). \by{the Diameter of Shadows Lemma}
\end{align*}
Letting $\w M$ be larger than the implied constant yields the result.
\QEDmod\end{proof}
\end{proof}

\draftnewpage\section{Proof of Theorem \ref{theoremabsolutewinning} (Absolute winning of $\BA_\xi$)}
In this section we prove Theorem \ref{theoremabsolutewinning}. We repeat both Theorem \ref{theoremabsolutewinning} and Corollary \ref{corollarystructure} for convenience.

\begin{repcorollary}{corollarystructure}[Proven in Sections \ref{sectionstructures} - \ref{sectionlemmastructure}]
Let $(X,\dist,\zero,b,G)$ be as in \sectionsymbol\ref{standingassumptions}. Then for every $0 < s < \delta_G$ and for all $\sigma > 0$ sufficiently large, there exist a tree $T$ on $\N$ and an embedding $T\ni\omega\mapsto x_\omega\in G(\zero)$ such that if 
\[
\PP_\omega := \Shad(x_\omega,\sigma),
\]
then $(\PP_\omega)_{\omega\in T}$ is a partition structure on $(\del X,\Dist)$, whose limit set $\limitset_s\subset\Lur(G)$ is Ahlfors $s$-regular.
\end{repcorollary}

\begin{theorem}
\label{theoremabsolutewinning}
Let $(X,\dist,\zero,b,G)$ be as in \sectionsymbol\ref{standingassumptions}. Fix $0 < s < \delta_G$ and let $\limitset_s$ be defined as in Corollary \ref{corollarystructure}. Then for each $\xi\in\del X$, the set $\BA_\xi\cap \limitset_s$ is absolute winning on $\limitset_s$.
\end{theorem}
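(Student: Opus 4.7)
The plan is to play Alice's strategy in McMullen's modified absolute winning game on $\limitset_s$, which is Ahlfors $s$-regular (hence $c$-uniformly perfect for some $c > 0$) by Corollary \ref{corollarystructure}. By Corollary \ref{corollarymodifiedabsolutewinning}, it suffices to show for a sequence $\beta_m \to 0$ that $\BA_\xi \cap \limitset_s$ is $\beta_m$-modified absolute winning. Fixing one such small $\beta > 0$ and a much smaller $\varepsilon > 0$, it is enough to force the outcome $\eta$ into
\[
\BA_\xi^{(\varepsilon)} := \big\{\eta\in\Lr : \Dist(g(\xi),\eta) \geq \rho_g \text{ for every } g\in G\big\},\qquad \rho_g := \varepsilon\, b^{-\dist(\zero,g(\zero))},
\]
which is a subset of $\BA_\xi$. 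Write $\tilde B(g) := B(g(\xi),\rho_g)$ for the ``bad ball'' attached to $g$.

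At step $k$, Bob plays $B_k = B(z_k,r_k)$ with $r_k = \beta^{2k}r_0$, and I call $g \in G$ a \emph{threat at step $k$} if $\tilde B(g) \cap B_k \neq \emptyset$ and $\rho_g \in [\beta^2 r_k,\beta r_k]$, with equivalent elements (those sharing the same image $g(\xi)$) identified by choosing the representative of minimal $\dist(\zero,g(\zero))$. For every threat $g$, the ball $\tilde B(g)$ has diameter at most $\beta r_k$, so it can be enclosed in some legal move $A_k \subset B_k$ of radius exactly $\beta r_k$ by using $c$-uniform perfectness together with the construction in Lemma \ref{lemmalegalmove}; playing such an $A_k$ excludes $\tilde B(g)$ from every subsequent ball $B_{k'}$. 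Alice's strategy is: if at step $k$ there exists a threat not yet eliminated, play an $A_k$ eliminating some such threat; otherwise play arbitrarily.

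The main technical obstacle is the following counting estimate, needed to ensure Alice can keep up with the arrival rate of threats: there is a constant $N = N(\sigma, \beta/\varepsilon)$ such that at every step $k$ the number of threats is at most $N$, and moreover $N \to 1$ as $\varepsilon/\beta \to 0$. To prove it, suppose $g_1, g_2$ are distinct threats at step $k$; then both $g_i(\xi)$ lie in $B(z_k,2r_k)$ and both $\rho_{g_i} \in [\beta^2 r_k, \beta r_k]$, so $\dist(\zero,g_i(\zero)) = \log_b(1/\rho_{g_i}) + \log_b \varepsilon$ are equal up to an additive constant $\log_b(1/\beta)$. Applying $g_1^{-1}$, the Bounded Distortion Lemma \ref{lemmaboundeddistortion} together with the geometric mean value theorem \eqref{GMVT5} implies that $g_1^{-1}$ acts on the shadow containing $z_k$ as a near-expansion by factor $b^{\dist(\zero,g_1(\zero))}$. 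This converts the condition $g_2(\xi) \in B(z_k,2r_k)$ into the statement that $g_1^{-1}g_2(\xi)$ lies within bounded visual distance of $\xi$, and a parallel argument confines the point $g_1^{-1}g_2(\zero) \in X$ to a bounded subset of $X$ whose diameter depends only on $\sigma$ and $\beta/\varepsilon$. Strong discreteness of $G$ then bounds the number of possible elements $g_1^{-1}g_2$, which is exactly the claimed count. The ratio bound $N \to 1$ as $\varepsilon/\beta \to 0$ comes from the fact that the allowed region for $g_1^{-1}g_2(\zero)$ shrinks with $\varepsilon/\beta$.

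Granted the counting estimate with $N = 1$ (achievable by taking $\varepsilon/\beta$ sufficiently small), each step presents at most one threat, and Alice eliminates it in the same step it arises, since for each $g$ the window $\rho_g \in [\beta^2 r_k, \beta r_k]$ is reached at a specific step $k$ determined by $\dist(\zero,g(\zero))$. Every $g \in G$ is therefore handled, and the outcome $\eta$ avoids $\tilde B(g)$ for all $g$, so $\eta \in \BA_\xi^{(\varepsilon)} \cap \limitset_s \subset \BA_\xi \cap \limitset_s$. In the residual case $N > 1$, the same conclusion holds after replacing Alice's one-per-step elimination by a priority queue ordered by deadline (the last step $k$ with $r_k \geq \rho_g$), with the counting estimate ensuring that the queue's backlog remains bounded by $N$ and hence is cleared within a uniformly bounded time frame. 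Letting $\beta \to 0$ along a sequence and invoking Corollary \ref{corollarymodifiedabsolutewinning} then yields that $\BA_\xi \cap \limitset_s$ is absolute winning on $\limitset_s$.
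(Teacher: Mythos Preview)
Your direct counting approach is attractive, but the counting estimate has a real gap. You claim that for two threats $g_1,g_2$ at step $k$, the Bounded Distortion Lemma confines $g_1^{-1}g_2(\zero)$ to a bounded region of $X$. But Lemma \ref{lemmaboundeddistortion} applies only to points lying in the shadow $\Shad_{g_1^{-1}(\zero)}(\zero,\sigma)$, and nothing in your hypotheses on a threat places $z_k$ or $g_2(\xi)$ there. Your definition controls $\Dist(g(\xi),z_k)$ and $\dist(\zero,g(\zero))$, but not $\lb g(\zero)\mid g(\xi)\rb_\zero$; this quantity may lie anywhere in $[0,\dist(\zero,g(\zero))]$. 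From $\lb g_1(\xi)\mid g_2(\xi)\rb_\zero\gtrsim -\log_b r_k$ and $\dist(\zero,g_i(\zero))\asymp D$, Gromov's inequality gives only
\[
\lb g_1(\zero)\mid g_2(\zero)\rb_\zero \gtrsim \min\big(\lb g_1(\zero)\mid g_1(\xi)\rb_\zero,\ \lb g_2(\zero)\mid g_2(\xi)\rb_\zero,\ -\log_b r_k\big),
\]
and the first two terms need not be close to $D$. So $\dist(g_1(\zero),g_2(\zero))$ is not bounded in general, and the conclusion $N\to 1$ is unsupported. Since each threat must be eliminated within a window of $O(1)$ steps (after that, $\rho_g$ exceeds Alice's allowed deletion radius $\beta r_k$), the priority-queue fallback does not rescue the argument either when $N>1$. (There is also a minor coverage issue: the ranges $[\beta^2 r_k,\beta r_k]$ over $k$ with $r_{k+1}=\beta^2 r_k$ leave gaps, so some $g$ are never declared threats.)

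The paper's proof avoids exactly this obstruction by stratifying on $\lb g(\zero)\mid g(\xi)\rb_\zero$ rather than on $\dist(\zero,g(\zero))$. After transferring the game to the symbolic space $T(\infty)$ via Lemma \ref{lemmaquasisymmetric}, Alice is guided by a potential
\[
\phi(x)=\sum_{\substack{g\in G\\ \lb g(\zero)\mid g(\xi)\rb_\zero\le \dist(\zero,x)\\ g(\xi)\in\PP_x}} b^{-c\,\busemann_\zero(g(\zero),x)},
\]
and deletes the child maximizing $\phi$; Lemma \ref{lemmaphirecursion} and Lemma \ref{lemmageneralfinite} keep $\phi$ bounded. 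The point is that once $\lb g(\zero)\mid g(\xi)\rb_\zero$ is pinned near $\dist(\zero,x)$, one can show $\xi\in\Shad(g^{-1}(x),\rho)$ for the tree node $x\in G(\zero)$ underlying Bob's ball, and then the Intersecting Shadows Lemma gives the bounded-region conclusion you wanted (this is precisely Lemma \ref{lemmageneralfinite}). Your strategy could plausibly be repaired by redefining threats via $\lb g(\zero)\mid g(\xi)\rb_\zero$ and exploiting the tree structure of $\limitset_s$, but then the geometric weight $b^{-c\,\busemann_\zero(g(\zero),x)}$ becomes essential to handle the accumulation across levels, and one is led back to a potential-function argument rather than a hard per-step count.
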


{\it Reductions.}
First, note that by (iii) of Proposition \ref{propositionwinningproperties}, it is enough to show that $(\BA_\xi\cup G(\xi))\cap \limitset_s$ is absolute winning on $(\limitset_s,\Dist)$. Next, by (v) of Proposition \ref{propositionwinningproperties} together with Lemma \ref{lemmaquasisymmetric}, it is enough to show that $\pi^{-1}(\BA_\xi\cup G(\xi))$ is absolute winning on $(T^\N,\rho_2)$. Finally, by Corollary \ref{corollarymodifiedabsolutewinning}, it is enough to show that $\pi^{-1}(\BA_\xi\cup G(\xi))$ is $2^{-m}$-modified absolute winning for every $m\in\N$.

Thus Theorem \ref{theoremabsolutewinning} is a direct corollary of the following theorem:

\begin{theorem}
\label{theoremmodifiedabsolutewinning}
Let $(X,\dist,\zero,b,G)$ be as in \sectionsymbol\ref{standingassumptions}. Fix $0 < s < \delta_G$, let $(\PP_\omega)_{\omega\in T}$ be defined as in Corollary \ref{corollarystructure}, and let $\pi:T^\N\to \Lur$ be as in Definition \ref{definitionpi}. Then for each $\xi\in\del X$ and for each $m\in\N$, the set $\pi^{-1}(\BA_\xi\cup G(\xi))$ is $2^{-m}$-modified absolute winning on $(T^\N,\rho_2)$.
\end{theorem}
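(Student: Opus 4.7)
The plan is to exhibit Alice's winning strategy in the $2^{-m}$-modified absolute winning game on $(T(\infty),\rho_2)$ with target set $S:=\pi^{-1}(\BA_\xi\cup G(\xi))$. Since a length-$n$ cylinder $[\omega]$ in $T(\infty)$ has $\rho_2$-diameter $2^{-n}$, in this game Bob plays a length-$n_k$ cylinder $[\omega^{(k)}]$, Alice deletes a length-$(n_k+m)$ extension $[\tau^{(k)}]$, and Bob plays a length-$(n_k+2m)$ extension $[\omega^{(k+1)}]$ of $\omega^{(k)}$ with $[\omega^{(k+1)}]\cap[\tau^{(k)}]=\emptyset$. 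Each $\omega\in T$ carries an isometry $g_\omega\in G$ via $x_\omega = g_\omega(\zero)$, and I attach to $\omega$ the canonical approximant $\eta_\omega := g_\omega(\xi)\in\del X$ on the boundary.

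Alice's strategy is the natural one: upon Bob's move $\omega^{(k)}$ set $g_k := g_{\omega^{(k)}}$ and $\eta_k := g_k(\xi)$, and let $\tau^{(k)}$ be any length-$(n_k+m)$ extension of $\omega^{(k)}$ in $T$ with $\eta_k\in\PP_{\tau^{(k)}}$ (if no such extension exists she plays any legal extension). Because Bob's cylinder $[\omega^{(k+1)}]$ is forced to be disjoint from $[\tau^{(k)}]$, iterating the separation axiom \eqref{kappa} of the partition structure together with the Diameter of Shadows Lemma (Lemma \ref{lemmadiameterasymptotic}) yields, for the limit $\pi(\omega)$ of the play,
\[
\Dist(\eta_k,\pi(\omega)) \;\gtrsim_{\times}\; D_{\omega^{(k)}} \;\asymp_{\times}\; b^{-\dist(\zero,g_k(\zero))}
\qquad\text{at every $k$ where the strategy's nontrivial branch fires.}
\]

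To promote this to a lower bound on $\Dist(g(\xi),\pi(\omega))$ for arbitrary $g\in G$, I would argue that the only potentially threatening elements $g\in G$ are of the form $g = g_{\omega^{(k)}}\cdot h$ for some $k\in\N$ and some $h$ in a fixed finite set $H\subset G$. To see this, combine identity (f) of Proposition \ref{propositionbasicidentities} with the Bounded Distortion Lemma (Lemma \ref{lemmaboundeddistortion}) to show that the hypothesis $\Dist(g(\xi),\pi(\omega))<\varepsilon_0\, b^{-\dist(\zero,g(\zero))}$ for suitably small $\varepsilon_0$ forces $\pi(\omega)\in\Shad(g(\zero),\sigma')$ with a controlled $\sigma'$; the Intersecting Shadows Lemma (Lemma \ref{lemmatau}) together with uniform radiality of $(x_{\omega^{(k)}})_k$ then places $g(\zero)$ within bounded hyperbolic distance of some $x_{\omega^{(k)}}$; and strong discreteness of $G$ yields the finite set $H$. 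The $h=\id$ case of this reduction is directly handled by the strategy above.

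The main obstacle is the residual case $h\in H\setminus\{\id\}$: in a single round Alice only deletes a cylinder around $\eta_k = g_k(\xi)$, leaving the finitely many perturbations $g_k\cdot h(\xi)$ unblocked at the same hyperbolic scale. I expect to resolve this by observing that successive depths $\dist(\zero,x_{\omega^{(k)}})$ grow additively (since the partition structure's contraction satisfies $D_{\omega a}\leq\lambda D_\omega$ with $\lambda<1$), so several symbolic rounds fall within a single ``dyadic'' hyperbolic scale; Alice can cycle through the elements of $H$ across these rounds, targeting a different perturbation $g_k\cdot h(\xi)$ per turn so that every threat is deleted at a scale comparable to $b^{-\dist(\zero,g_k(\zero))}$. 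Once the strategy is verified for each $m\in\N$, Corollary \ref{corollarymodifiedabsolutewinning} upgrades it to absolute winning, which via Proposition \ref{propositionwinningproperties}(v) and Lemma \ref{lemmaquasisymmetric} yields Theorem \ref{theoremabsolutewinning}.
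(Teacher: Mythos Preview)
Your reduction to a finite set $H$ is where the argument breaks. The claim that $\Dist(g(\xi),\pi(\omega))<\varepsilon_0\,b^{-\dist(\zero,g(\zero))}$ forces $\pi(\omega)\in\Shad(g(\zero),\sigma')$ is unjustified: by Gromov's inequality this would require $\lb g(\zero)|g(\xi)\rb_\zero \asymp_\plus \dist(\zero,g(\zero))$, i.e.\ $\xi\in\Shad(g^{-1}(\zero),\sigma')$, and there is no reason this should hold for every threatening $g$. Indeed, the set of $g\in G$ that appear as threats at a given symbolic level $\omega$ is parameterized (up to a coarse equivalence) by $\{j\in G:\xi\in\Shad(j(\zero),\rho)\}$ via $g = g_\omega\, j^{-1}$, and this set is infinite in general (Lemma~\ref{lemmageneralfinite} only gives convergence of a weighted sum over it, not finiteness). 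So cycling through a fixed finite $H$ cannot block all threats.

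The paper's approach is genuinely different: rather than targeting individual approximants, Alice minimizes a \emph{potential function}
\[
\phi(x) \;=\; \sum_{\substack{g\in G \\ \lb g(\zero)|g(\xi)\rb_\zero\leq \dist(\zero,x) \\ g(\xi)\in \PP_x}} b^{-c\,\busemann_\zero(g(\zero),x)}
\]
that aggregates \emph{all} threats reaching $\PP_x$, weighted by their severity. Her strategy is simply to delete the length-$(|\omega^{(n)}|+m)$ extension $\tau$ maximizing $\phi(\tau)$. The two key facts are: (a) the new-threat contribution at each level is uniformly bounded (Lemma~\ref{lemmaphirecursion}, using Lemma~\ref{lemmageneralfinite}), so $\sum_\tau\phi(\tau)\lesssim_{\plus,m} b^{cm\alpha}\phi(\omega^{(n)})$; and (b) deleting the maximizer halves the available potential, yielding $\phi(\omega^{(n+1)})\lesssim_{\plus,m}\tfrac{b^{cm\alpha}}{2}\phi(\omega^{(n)})$. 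Choosing $c$ small enough that $b^{cm\alpha}<2$ keeps $\phi$ bounded along the play, and Lemma~\ref{lemmaphiba} shows that boundedness of $\phi$ forces $\pi(\omega)\in\BA_\xi\cup G(\xi)$. The point is that the weights $b^{-c\,\busemann_\zero(g(\zero),x)}$ are exactly what is needed to make the infinite family of threats summable and to propagate the bound through the tree; any strategy that tracks only finitely many approximants per level will miss deep threats with large $\dist(\zero,g(\zero))$ but small $\lb g(\zero)|g(\xi)\rb_\zero$.
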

The remainder of this section will be devoted to the proof of Theorem \ref{theoremmodifiedabsolutewinning}. Notice that a ball in $(T^\N,\rho_2)$ of radius $2^{-n}$ is simply a cylinder of length $n$, so multiplying the radius of a ball by $2^{-m}$ is the same as increasing the length of the corresponding cylinder by $m$.


Let $\sigma > 0$ be large enough so that both Corollary \ref{corollarystructure} and the Diameter of Shadows lemma hold. For each $x\in X$ let $\PP_x = \Shad(x,\sigma)$, so that $\PP_\omega = \PP_{x_\omega}$.

Fix $c > 0$ to be announced below (depending only on $m$ and on the partition structure). We define
\begin{equation}
\label{phidef}
\phi(x) := \sum_{\substack{g\in G \\ \lb g(\zero)|g(\xi)\rb_\zero\leq \dist(\zero,x) \\ g(\xi)\in \PP_x}}b^{-c\busemann_\zero(g(\zero),x)}
\end{equation}
and for each $\omega\in T$, let $\phi(\omega) := \phi(x_\omega)$. Intuitively, $\phi(x)$ measures ``how many points in the orbit of $\xi$ are close to $x$, up to a fixed height''. Since badly approximable points stay away from the orbit of $\xi$, to find a badly approximable point we should minimize $\phi$.

Thus Alice's strategy is as follows: If Bob has just chosen his $n$th ball $B_n = [\omega^{(n)}]$, then Alice will choose (i.e. remove) the ball $A_n = [\tau^{(n)}]$, where $\tau^{(n)}$ is an extension of $\omega^{(n)}$ of length $m + |\omega^{(n)}|$, and has the largest value for $\phi$ among such extensions. We will show that this strategy forces the sequence $(\phi(\omega_1^n))_1^\infty$ to remain bounded, which in turn forces $\pi(\omega)\in\BA_\xi\cup G(\xi)$.

\begin{lemma}
\label{lemmaphiba}
Fix $\omega\in T(\infty)$, and suppose that the sequence $(\phi(\omega_1^n))_1^\infty$ is bounded. Then $\omega\in\pi^{-1}(\BA_\xi\cup G(\xi))$.
\end{lemma}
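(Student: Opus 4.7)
The plan is to argue by contraposition: assuming $\eta := \pi(\omega) \notin \BA_\xi \cup G(\xi)$, we will extract a sequence of isometries $g_k \in G$ witnessing this, and show that each $g_k$ contributes so heavily to $\phi(x_{\omega_1^{n_k}})$ at a well-chosen level $n_k$ that the sequence $(\phi(\omega_1^n))_1^\infty$ must be unbounded.

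First, since $\eta \notin \BA_\xi$, for every $\varepsilon > 0$ there exists $g \in G$ with $\Dist(g(\xi),\eta) < \varepsilon b^{-\dist(\zero,g(\zero))}$; because also $\eta \notin G(\xi)$, the approximations $g(\xi)$ are distinct from $\eta$, so we may choose a sequence $(g_k)$ with $\varepsilon_k := \Dist(g_k(\xi),\eta)\, b^{\dist(\zero,g_k(\zero))} \to 0$ and $g_k(\xi) \to \eta$ with $g_k(\xi) \neq \eta$. Since $\eta \in \PP_{\omega_1^n}$ for every $n$, and the diameters $D_{\omega_1^n} \asymp_\times b^{-\dist(\zero,x_{\omega_1^n})}$ tend to zero by the Diameter of Shadows Lemma, for each sufficiently large $k$ there is a largest index $n_k$ with $g_k(\xi) \in \PP_{\omega_1^{n_k}}$, and $n_k \to \infty$.

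The key calculation is to locate $\dist(\zero,x_{\omega_1^{n_k}})$ relative to $L_k := -\log_b\Dist(g_k(\xi),\eta)$. On the one hand, $g_k(\xi) \in \PP_{\omega_1^{n_k}}$ together with $\eta \in \PP_{\omega_1^{n_k}}$ gives $\Dist(g_k(\xi),\eta) \leq D_{\omega_1^{n_k}} \asymp_\times b^{-\dist(\zero,x_{\omega_1^{n_k}})}$, i.e.\ $L_k \gtrsim_\plus \dist(\zero,x_{\omega_1^{n_k}})$. On the other hand, $g_k(\xi) \notin \PP_{\omega_1^{n_k+1}}$ while $\eta \in \PP_{\omega_1^{n_k+1}}$, so separation property \eqref{kappa} forces $\Dist(g_k(\xi),\eta) \geq \kappa D_{\omega_1^{n_k}}$, yielding $L_k \lesssim_\plus \dist(\zero,x_{\omega_1^{n_k}})$. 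Therefore $\dist(\zero,x_{\omega_1^{n_k}}) \asymp_\plus L_k$, and consequently
\[
\busemann_\zero(g_k(\zero),x_{\omega_1^{n_k}}) = \dist(\zero,g_k(\zero)) - \dist(\zero,x_{\omega_1^{n_k}}) \asymp_\plus \log_b \varepsilon_k,
\]
so the weight $b^{-c\busemann_\zero(g_k(\zero),x_{\omega_1^{n_k}})} \asymp_\times \varepsilon_k^{-c} \to \infty$.

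It remains to confirm the two constraints in the sum defining $\phi$. The containment $g_k(\xi) \in \PP_{x_{\omega_1^{n_k}}}$ holds by construction. For the height constraint, using identity (g) of Proposition \ref{propositionbasicidentities} we rewrite $\lb g_k(\zero)|g_k(\xi)\rb_\zero = \dist(\zero,g_k(\zero)) - \lb g_k^{-1}(\zero)|\xi\rb_\zero$; since the second term is nonnegative and $\dist(\zero,g_k(\zero)) - \dist(\zero,x_{\omega_1^{n_k}}) = \log_b\varepsilon_k + O(1) \to -\infty$, the inequality $\lb g_k(\zero)|g_k(\xi)\rb_\zero \leq \dist(\zero,x_{\omega_1^{n_k}})$ holds for all large $k$. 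Thus the term corresponding to $g_k$ appears in $\phi(x_{\omega_1^{n_k}})$, giving $\phi(\omega_1^{n_k}) \gtrsim_\times \varepsilon_k^{-c} \to \infty$, contradicting the boundedness hypothesis. The main (though routine) obstacle is the careful bookkeeping to pin down $\dist(\zero,x_{\omega_1^{n_k}})$ up to additive constants, which relies on the disjoint-shadow separation in the partition structure and the Diameter of Shadows Lemma.
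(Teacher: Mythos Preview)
Your proof is correct and follows essentially the same route as the paper's: both locate the largest level $n$ with $g(\xi)\in\PP_{\omega_1^n}$, use the partition-structure estimates to pin $\dist(\zero,x_{\omega_1^n})$ against $-\log_b\Dist(g(\xi),\eta)$, verify the height constraint, and read off the resulting term in $\phi$; the paper packages this as a contradiction for a single fixed $\varepsilon$, while you take a sequence $\varepsilon_k\to 0$ and show $\phi\to\infty$, which is the same argument by contraposition. One small imprecision: to invoke \eqref{kappa} you need $\eta\in\PP_{\omega_1^{n_k+2}}$ and $g_k(\xi)\in Z\setminus\PP_{\omega_1^{n_k+1}}$ (since $g_k(\xi)\in\PP_{\omega_1^{n_k}}$ by construction), which yields $\Dist(g_k(\xi),\eta)\geq\kappa D_{\omega_1^{n_k+1}}\geq\kappa^2 D_{\omega_1^{n_k}}$ rather than $\kappa D_{\omega_1^{n_k}}$---this is exactly what the paper does and does not affect your asymptotics.
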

\begin{proof}
Fix $\varepsilon > 0$ to be announced below, and let $\eta = \pi(\omega)$. Fix $g\in G$, and suppose for a contradiction that
\[
\Dist(g(\xi),\eta)\leq \varepsilon b^{-\dist(\zero,g(\zero))}
\]
but $\eta\neq g(\xi)$. For convenience of notation let
\[
x_n = x_{\omega_1^n}, \;\; \PP_n = \PP_{\omega_1^n}.
\]
Since $\eta\neq g(\xi)$, we have $g(\xi)\notin \PP_n$ for all $n$ sufficiently large. Let $n\in\N$ be the largest integer such that $g(\xi)\in \PP_n$. In particular, $g(\xi)\notin \PP_{n + 1}$. On the other hand, $\eta\in \PP_{n + 2}$ and so
\begin{align*}
\varepsilon b^{-\dist(\zero,g(\zero))}
\geq \Dist(g(\xi),\eta)
&\geq_\pt \Dist(\PP_{n + 2},\del X\butnot \PP_{n + 1})\\
&\asymp_\times b^{-\dist(\zero,x_{n + 1})}
\asymp_\times b^{-\dist(\zero,x_n)},\footnotemark
\end{align*}
where the last two asymptotics follow from (\ref{kappa}) and (\ref{lambda}), respectively, together with the Diameter of Shadows Lemma. Taking negative logarithms we have

\footnotetext{Here and from now on we omit the dependence on $\kappa$, $\lambda$, and $\sigma$ when writing asymptotics.}

\begin{equation}
\label{contradiction}
\dist(\zero,g(\zero))\lesssim_\plus \log_b(\varepsilon) + \dist(\zero,x_n).
\end{equation}
Combining with (c) of Proposition \ref{propositionbasicidentities} yields
\[
\lb g(\zero)|g(\xi)\rb_\zero \lesssim_\plus \log_b(\varepsilon) + \dist(\zero,x_n).
\]
If $\varepsilon$ is chosen small enough, then this bound is a regular inequality, i.e.
\[
\lb g(\zero)|g(\xi)\rb_\zero \leq \dist(\zero,x_n).
\]
Together with the fact that $g(\xi)\in \PP_n$, this guarantees that the term $b^{-c\busemann_\zero(g(\zero),x_n)}$ will be included in the summation (\ref{phidef}) (with $x = x_n$). In particular, since $(\phi(x_n))_1^\infty$ is bounded we have
\[
b^{-c\busemann_\zero(g(\zero),x_n)} \lesssim_\times 1
\]
and thus
\[
\dist(\zero,x_n) \lesssim_\plus \dist(\zero,g(\zero)).
\]
If $\varepsilon$ is small enough, then this is a contradiction to (\ref{contradiction}).
\end{proof}

To prove that Alice's strategy forces the sequence $(\phi(x_n))_1^\infty$ to remain bounded, we first show that it begins bounded. Now clearly,
\begin{align*}
\phi(x) &\leq b^{c\dist(\zero,x)}\sum_{\substack{g\in G \\ \lb \zero|\xi\rb_{g^{-1}(\zero)}\leq\dist(\zero,x)}}b^{-c \dist(\zero,g(\zero))}\\
&= b^{c\dist(\zero,x)}\sum_{\substack{g\in G \\ \xi\in \Shad(g^{-1}(\zero),\dist(\zero,x))}}b^{-c \dist(\zero,g(\zero))}.
\end{align*}
Thus, the following lemma demonstrates that $\phi(x)$ is finite for every $x\in X$:
\begin{lemma}
\label{lemmageneralfinite}
For all $\rho > 0$, we have
\[
\sum_{\substack{g\in G \\ \xi\in \Shad(g^{-1}(\zero),\rho)}}b^{-c \dist(\zero,g(\zero))} < \infty.
\]
\end{lemma}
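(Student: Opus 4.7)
I will prove the lemma by partitioning the sum according to the hyperbolic distance $\dist(\zero, g(\zero)) = \dist(\zero, g^{-1}(\zero))$. For each $n\in \N$ let
\[
A_n := B(\zero, n+1) \setminus B(\zero, n), \quad F_n := \{g\in G : g^{-1}(\zero) \in A_n \text{ and } \xi \in \Shad(g^{-1}(\zero),\rho)\}.
\]
Then the series in question is bounded by
\[
\sum_{n=0}^\infty b^{-cn} \#(F_n),
\]
so it suffices to show that $\#(F_n)$ is bounded by a constant $M = M(\rho)$ independent of $n$; the geometric series $\sum_n b^{-cn}$ then converges since $c>0$.

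\textbf{Bounding $\#(F_n)$.} Fix $n\in \N$ and suppose $g_1, g_2 \in F_n$, letting $x_i = g_i^{-1}(\zero)$. Without loss of generality $\dist(\zero,x_2) \geq \dist(\zero,x_1)$. Since $\xi \in \Shad(x_1,\rho) \cap \Shad(x_2,\rho)$, the Intersecting Shadows Lemma (Lemma \ref{lemmatau}) gives
\[
\dist(x_1, x_2) \asymp_{\plus, \rho} \busemann_\zero(x_2, x_1) = \dist(\zero, x_2) - \dist(\zero, x_1) \leq 1,
\]
where the last inequality uses that both points lie in the annulus $A_n$. Hence there exists $R = R(\rho)$ such that $\dist(x_1,x_2) \leq R$ for all $g_1, g_2 \in F_n$, i.e. the set $\{g^{-1}(\zero) : g \in F_n\}$ has diameter at most $R$.

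\textbf{Applying strong discreteness.} If $F_n = \emptyset$ there is nothing to prove, so pick a fixed $g_0 \in F_n$. For any $g \in F_n$, $\dist(g_0^{-1}(\zero), g^{-1}(\zero)) \leq R$, which rewrites (applying the isometry $g_0$) as $\dist(\zero, g_0 g^{-1}(\zero)) \leq R$. By strong discreteness of $G$, the set $\{h \in G : h(\zero) \in B(\zero, R)\}$ is finite, say of cardinality $M$; since the map $g \mapsto g_0 g^{-1}$ is injective on $F_n$ and lands in this set, we conclude $\#(F_n) \leq M$. Note that $M = M(\rho)$ depends only on $\rho$ (through $R$) and not on $n$ or on the chosen $g_0$. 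Combining gives
\[
\sum_{\substack{g\in G \\ \xi\in \Shad(g^{-1}(\zero),\rho)}} b^{-c\dist(\zero,g(\zero))} \leq M \sum_{n=0}^\infty b^{-cn} = \frac{M}{1 - b^{-c}} < +\infty,
\]
which completes the proof. The only subtlety is the application of the Intersecting Shadows Lemma, but the annulus hypothesis makes the Busemann term automatically bounded, so no further delicate estimate is needed.
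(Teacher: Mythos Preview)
Your proof is correct and follows essentially the same approach as the paper's: both decompose the sum over annuli $A_n$, use the Intersecting Shadows Lemma to show that the set of relevant $g^{-1}(\zero)$ in each annulus has bounded diameter, and then invoke strong discreteness to bound the count in each annulus by a constant independent of $n$. The only cosmetic difference is that the paper fixes a point $z_n\in S_\rho\cap A_n\cap G(\zero)$ and counts orbit points in $B(z_n,\tau)$, whereas you translate by a chosen $g_0\in F_n$ to reduce to $B(\zero,R)$; these are equivalent uses of strong discreteness.
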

\begin{proof}
Let
\[
S_\rho := \{z\in X:\xi\in\Shad(z,\rho)\}.
\]
As in the proof of Lemma \ref{lemmastructure} we let
\[
A_n := B(\zero,n + 1)\butnot B(\zero,n).
\]
Then for all $z,w\in S_\rho\cap A_n$, we have $\Shad(z,\rho)\cap\Shad(w,\rho)\neq\emptyset$, and so by the Intersecting Shadows Lemma
\[
\dist(z,w) \asymp_{\plus,\rho} |\busemann_\zero(z,w)| \asymp_\plus 0.
\]
Let $\tau = \tau_\rho > 0$ be the implied constant of this asymptotic, so that
\begin{equation}
\label{diamSrhoAn}
\diam(S_\rho\cap A_n)\leq\tau
\end{equation}
for all $n$. Let $M = \#\{g\in G:g(\zero)\in B(\zero,\tau)\}$; $M < \infty$ since $G$ is strongly discrete.

For each $n\in\N$, choose $z_n\in S_\rho\cap A_n\cap G(\zero)$ if possible, otherwise not. It follows from (\ref{diamSrhoAn}) that $S_\rho\cap G(\zero)\subset \bigcup_n B(z_n,\tau)$. We have
\begin{align*}
\sum_{\substack{g\in G \\ \xi\in\Shad(g^{-1}(\zero),\rho)}}b^{-c \dist(\zero,g(\zero))}
&= \sum_{\substack{g\in G \\ g(\zero)\in S_\rho}}b^{-c \dist(\zero,g(\zero))}\\
&\leq \sum_{n = 0}^\infty\sum_{\substack{g\in G \\ g(\zero)\in B(z_n,\tau)}}b^{-c \dist(\zero,g(\zero))}\\
&\leq \sum_{n = 0}^\infty M b^{-c(\dist(\zero,z_n) - \tau)}
\asymp_\times M b^{c\tau}\sum_{n = 0}^\infty b^{-cn} < \infty,
\end{align*}
which completes the proof.
\end{proof}

Let $\alpha > 0$ be large enough so that $\dist(\zero,x_{\omega a}) \leq \dist(\zero,x_\omega) + \alpha$ for all $\omega\in T$ and for all $a\in T(\omega)$; such an $\alpha$ exists by (\ref{kappa}) and the Diameter of Shadows Lemma.

\begin{lemma}
\label{lemmaphirecursion}
For each $\omega\in T$,
\begin{equation}
\label{phirecursion}
\sum_{a\in T(\omega)} \phi(x_{\omega a}) \lesssim_\plus b^{c\alpha}\phi(x_\omega).
\end{equation}
\end{lemma}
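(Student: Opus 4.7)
The plan is to expand the left side using the definition \eqref{phidef}, re-index via the disjointness of the children's shadows, and bound the resulting discrepancy by a uniform constant. By property (i) of Lemma \ref{lemmaDSU}, the shadows $(\PP_{x_{\omega a}})_{a \in T(\omega)}$ are pairwise disjoint subsets of $\PP_{x_\omega}$, so any $g \in G$ with $g(\xi) \in \PP_{x_\omega}$ contributes to at most one child sum $\phi(x_{\omega a})$. Since $\dist(\zero,x_{\omega a}) \leq \dist(\zero,x_\omega) + \alpha$ by the definition of $\alpha$, we have $\busemann_\zero(g(\zero),x_{\omega a}) \geq \busemann_\zero(g(\zero),x_\omega) - \alpha$, and hence
\[
\sum_{a \in T(\omega)}\phi(x_{\omega a}) \leq b^{c\alpha}\sum_{\substack{g \in G \\ g(\xi) \in \PP_{x_\omega} \\ \lb g(\zero)|g(\xi)\rb_\zero \leq \dist(\zero,x_\omega) + \alpha}} b^{-c\busemann_\zero(g(\zero),x_\omega)}.
\]
Splitting this at the threshold $\lb g(\zero)|g(\xi)\rb_\zero = \dist(\zero,x_\omega)$, the contribution below the threshold is exactly $b^{c\alpha}\phi(x_\omega)$, so it suffices to bound the ``extra'' sum over $g$ satisfying $\dist(\zero,x_\omega) < \lb g(\zero)|g(\xi)\rb_\zero \leq \dist(\zero,x_\omega) + \alpha$ and $g(\xi) \in \PP_{x_\omega}$ by a constant independent of $\omega$. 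This is the main obstacle.

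For an extra $g$, Observation \ref{observationmetricshadow} applied to $g(\xi) \in \PP_{x_\omega}$ gives $\lb x_\omega|g(\xi)\rb_\zero \asymp_\plus \dist(\zero,x_\omega)$; coupling this with the extra condition $\lb g(\zero)|g(\xi)\rb_\zero > \dist(\zero,x_\omega)$ via Gromov's inequality yields $\lb x_\omega|g(\zero)\rb_\zero \gtrsim_\plus \dist(\zero,x_\omega)$, and the reverse inequality is automatic by (c) of Proposition \ref{propositionbasicidentities}, so $\lb x_\omega|g(\zero)\rb_\zero \asymp_\plus \dist(\zero,x_\omega)$, equivalently $\lb \zero|g(\zero)\rb_{x_\omega} \asymp_\plus 0$. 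This is the key geometric statement: $g(\zero)$ lies essentially on the geodesic ray from $\zero$ through $x_\omega$. It follows that $\dist(x_\omega,g(\zero)) \asymp_\plus \busemann_\zero(g(\zero),x_\omega)$, so if $g_\omega \in G$ is any isometry with $g_\omega(\zero) = x_\omega$ and $\tilde g := g_\omega^{-1}g$, then the extra sum is, up to a multiplicative constant, $\sum_{\text{extra } \tilde g} b^{-c\dist(\zero,\tilde g(\zero))}$.

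It remains to show this last sum is bounded uniformly in $\omega$. Using identity (i) of Proposition \ref{propositionbasicidentities} together with $\lb g(\zero)|x_\omega\rb_\zero \asymp_\plus \lb g(\xi)|x_\omega\rb_\zero \asymp_\plus \dist(\zero,x_\omega)$ and the extra condition $\lb g(\zero)|g(\xi)\rb_\zero \asymp_\plus \dist(\zero,x_\omega)$, one obtains $\lb g(\zero)|g(\xi)\rb_{x_\omega} \asymp_\plus 0$. Translating by the isometry $g_\omega^{-1}$ turns this into $\lb \tilde g(\zero)|\tilde g(\xi)\rb_\zero \asymp_\plus 0$, and then (b) of Proposition \ref{propositionbasicidentities} (in its boundary form) together with isometry invariance of the Gromov product forces $\lb \zero|\xi\rb_{\tilde g^{-1}(\zero)} \asymp_\plus 0$; that is, $\xi \in \Shad(\tilde g^{-1}(\zero),\rho_1)$ for a uniform constant $\rho_1$ depending only on the hyperbolicity constant, $\sigma$, and $\alpha$. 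Lemma \ref{lemmageneralfinite} then bounds $\sum_{\tilde g : \xi \in \Shad(\tilde g^{-1}(\zero),\rho_1)} b^{-c\dist(\zero,\tilde g(\zero))}$ by a finite constant, completing the proof of \eqref{phirecursion}.
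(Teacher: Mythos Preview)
Your proof is correct and follows essentially the same route as the paper's own argument: expand via \eqref{phidef}, use disjointness of the children's shadows and the bound $\dist(\zero,x_{\omega a})\leq\dist(\zero,x_\omega)+\alpha$ to reduce to $b^{c\alpha}\phi(x_\omega)$ plus an ``extra'' sum over $g$ with $\dist(\zero,x_\omega)<\lb g(\zero)|g(\xi)\rb_\zero\leq\dist(\zero,x_\omega)+\alpha$, then show via Gromov's inequality that such $g$ satisfy $\lb\zero|g(\zero)\rb_{x_\omega}\asymp_\plus 0$ and $\lb g(\zero)|g(\xi)\rb_{x_\omega}\asymp_\plus 0$, conjugate by $g_\omega$, and invoke Lemma \ref{lemmageneralfinite}. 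The only cosmetic differences are that the paper uses identity (j) of Proposition \ref{propositionbasicidentities} where you use (i), and it works with $g^{-1}(x_\omega)$ directly rather than introducing $\tilde g$ first; your appeal to (b) in the last step is unnecessary since isometry invariance alone gives $\lb\tilde g(\zero)|\tilde g(\xi)\rb_\zero=\lb\zero|\xi\rb_{\tilde g^{-1}(\zero)}$.
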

\begin{proof}
For convenience of notation let $x = x_\omega$. We have
\begin{align*}
b^{-c\alpha}\sum_{a\in T(\omega)} \phi(x_{\omega a})
&\leq \sum_{a\in T(\omega)} b^{-c\busemann_\zero(x_{\omega a},x)}\phi(x_{\omega a})
\since{$\dist(\zero,x_{\omega a}) \leq \dist(\zero,x_\omega) + \alpha$}
\\
&= \sum_{a\in T(\omega)}\sum_{\substack{g\in G \\ \lb g(\zero)|g(\xi)\rb_\zero\leq \dist(\zero,x_{\omega a}) \\ g(\xi)\in \PP_{x_{\omega a}}}}b^{-c\busemann_\zero(g(\zero),x)}
\by{the definition of $\phi$}
\\
&\leq \sum_{\substack{g\in G \\ \lb g(\zero)|g(\xi)\rb_\zero\leq \dist(\zero,x) + \alpha \\ g(\xi)\in \PP_x}}b^{-c\busemann_\zero(g(\zero),x)}
\since{$(\PP_{\omega a})_{a\in T(\omega)}$ are disjoint}
\\
&= \phi(x) + \sum_{\substack{g\in G \\ \dist(\zero,x) < \lb g(\zero)|g(\xi)\rb_\zero\leq \dist(\zero,x) + \alpha \\ g(\xi)\in \PP_x}}b^{-c\busemann_\zero(g(\zero),x)}.
\by{the definition of $\phi$}
\end{align*}
Thus, to prove Lemma \ref{lemmaphirecursion} it is enough to show that the series
\begin{equation}
\label{finitesum}
\sum_{\substack{g\in G \\ \dist(\zero,x) < \lb g(\zero)|g(\xi)\rb_\zero\leq \dist(\zero,x) + \alpha \\ g(\xi)\in \PP_x}}b^{-c\busemann_\zero(g(\zero),x)}
\end{equation}
is bounded independent of $x$. To this end, fix $g\in G$ which gives a term in (\ref{finitesum}), i.e.
\begin{align} \label{asympalpha}
\dist(\zero,x) &< \lb g(\zero)|g(\xi)\rb_\zero\leq \dist(\zero,x) + \alpha\\ \label{gxiin}
g(\xi) &\in \PP_x = \Shad(x,\sigma).
\end{align}
Now by (\ref{gxiin}) we have
\[
\lb x|g(\xi)\rb_\zero \asymp_\plus \dist(\zero,x);
\]
combining with (\ref{asympalpha}) and Gromov's inequality yields
\[
\lb x|g(\zero)\rb_\zero \asymp_\plus \dist(\zero,x),
\]
which together with (b) of Proposition \ref{propositionbasicidentities} gives
\begin{equation}
\label{0g0x0}
\lb \zero|g(\zero)\rb_x \asymp_\plus 0.
\end{equation}
By (j) of Proposition \ref{propositionbasicidentities},
\begin{align*}
\lb \zero|\xi\rb_{g^{-1}(x)} = \lb g(\zero)|g(\xi)\rb_x
&\asymp_\plus \lb g(\zero)|g(\xi)\rb_\zero + \lb \zero|g(\zero)\rb_x - \lb x|g(\xi)\rb_\zero\\
&\asymp_\plus \dist(\zero,x) + 0 - \dist(\zero,x) = 0.
\end{align*}
Thus, if we choose $\rho > 0$ large enough, we see that $\xi\in\Shad(g^{-1}(x),\rho)$ for all $g$ satisfying (\ref{asympalpha}) and (\ref{gxiin}). On the other hand, rearranging (\ref{0g0x0}) yields
\[
\busemann_\zero(g(\zero),x) \asymp_\plus \dist(g(\zero),x) = \dist(\zero,g^{-1}(x)).
\]
We return to our original series; fixing $\rho > 0$ large enough we have
\begin{align*}
b^{-c\alpha}\sum_{a\in T(\omega)}\phi(x_{\omega a}) - \phi(x)
&\leq_\pt \sum_{\substack{g\in G \\ \dist(\zero,x) < \lb g(\zero)|g(\xi)\rb_\zero\leq \dist(\zero,x) + \alpha \\ g(\xi)\in \PP_x}}b^{-c\busemann_\zero(g(\zero),x)}\\
&\lesssim_\times \sum_{\substack{g\in G \\ \xi\in\Shad(g^{-1}(x),\rho)}}b^{-c \dist(\zero,g^{-1}(x))}\\
&=_\pt \sum_{\substack{g\in G \\ \xi\in\Shad(g^{-1}(\zero),\rho)}}b^{-c \dist(\zero,g^{-1}(\zero))},
\end{align*}
since $x = x_\omega \in G(\zero)$ by construction. The sum is clearly independent of $\omega$, and by Lemma \ref{lemmageneralfinite} it is finite. This completes the proof.
\end{proof}

Iterating (\ref{phirecursion}) yields
\begin{equation}
\label{cmalpha}
\sum_\tau \phi(\tau) \lesssim_{\plus,m} b^{cm\alpha}\phi(\omega),
\end{equation}
where the sum is taken over all $\tau$ of which $\omega$ is an initial segment and for which $|\tau| = m + |\omega|$.

Recall that Alice's strategy is to remove the ball $A_n = [\tau^{(n)}]$, where $\tau^{(n)}$ is the extension of $\omega^{(n)}$ of length $m + |\omega^{(n)}|$ which has the largest value for $\phi$. Now let $B_{n + 1} = [\omega^{(n + 1)}]$ be the next ball that Bob plays. By the rules of the game, we must have $|\omega^{(n + 1)}| = m + |\omega^{(n)}|$ and $[\omega^{(n + 1)}]\cap[\tau^{(n)}] = \emptyset$, and so $\omega^{(n + 1)}$ and $\tau^{(n)}$ represent distinct terms in the sum \eqref{cmalpha}. In particular,
\[
\phi(\tau^{(n)}) + \phi(\omega^{(n + 1)}) \lesssim_{\plus,m} b^{cm\alpha}\phi(\omega^{(n)}).
\]
On the other hand, we have
\[
\phi(\omega^{(n + 1)}) \leq \phi(\tau^{(n)})
\]
and so
\begin{equation}
\label{suggestscsmall}
\phi(\omega^{(n + 1)}) \lesssim_{\plus,m} \frac{b^{cm\alpha}}{2}\phi(\omega^{(n)}).
\end{equation}
This inequality suggests that we should choose $c$ small enough so that $b^{cm\alpha} < 2$. As claimed at the beginning of this section, such a choice depends only on $m$ and on the partition structure $(\PP_\omega)_{\omega\in T}$.

It follows that for all $n\in\N$ we have
\[
\phi(\omega^{(n)}) \leq \max\left(C\sum_{i = 0}^\infty \left(\frac{b^{cm\alpha}}{2}\right)^i,\phi(\omega^{(0)})\right) < \infty,
\]
where $C$ is the implied constant in (\ref{suggestscsmall}). Thus by Lemma \ref{lemmaphiba}, $\omega\in\pi^{-1}(\BA_\xi\cup G(\xi))$.


\draftnewpage\section{Proof of Theorem \ref{theoremjarnikbesicovitch} (Generalization of the Jarn\'ik--Besicovitch Theorem)}
\begin{theorem}[Generalization of the Jarn\'ik--Besicovitch Theorem]
\label{theoremjarnikbesicovitch}
Let $(X,\dist,\zero,b,G)$ be as in \sectionsymbol\ref{standingassumptions}, and fix a distinguished point $\xi\in\del X$. Let $\delta$ be the Poincar\'e exponent of $G$, and suppose that $\delta < \infty$. Then for each $c > 0$,
\begin{equation}
\label{jarnikbesicovitch}
\begin{split}
\HD(W_{c,\xi}) &= \sup\left\{s\in(0,\delta): P_\xi(s) \leq \frac{\delta - s}{c}\right\}\\
&= \inf\left\{s\in(0,\delta): P_\xi(s) \geq \frac{\delta - s}{c}\right\}
\leq \frac{\delta}{c + 1}\cdot
\end{split}
\end{equation}
In particular
\begin{equation}
\label{HDVWAxi}
\HD(\VWA_\xi) = \sup\{s\in (0,\delta):P_\xi(s) < +\infty\} = \inf\left\{s\in(0,\delta): P_\xi(s) = +\infty\right\}
\end{equation}
and $\HD(\Liou_\xi) = 0$. In these formulas we let $\sup\emptyset = 0$ and $\inf\emptyset = \delta$.
\end{theorem}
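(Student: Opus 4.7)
\textbf{Proof plan for Theorem \ref{theoremjarnikbesicovitch}.}

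The proof splits into an upper bound (Hausdorff--Cantelli style) and a lower bound (explicit construction). For the upper bound, I would fix $s$ with $P_\xi(s) > (\delta-s)/c$, choose $\varepsilon>0$ small and $c'<c$ close to $c$, and cover $W_{c,\xi} \cap \Lrsigma$ by the balls $B(g(\xi), b^{-(1+c')t})$ over all $g\in G$ with $t:=\dist(\zero,g(\zero))$ exceeding some threshold. For each such $g$, I want to bound $\HH_\infty^s\big(B(g(\xi),b^{-(1+c')t})\cap\Lrsigma\big)$. Choosing $\tau \geq \sigma$ so that $g^{-1}(\Lrsigma)\subset\Lrtau$ (Lemma \ref{lemmanearinvariance}) and applying the Bounded Distortion Lemma together with the Geometric Mean Value Theorem, the pullback of the ball by $g^{-1}$ lies in $B(\xi,C b^{-c't})\cap \Lrtau$; the map $g^{-1}$ is bi-Lipschitz on a neighborhood of $g(\xi)$ with distortion $\asymp_\times b^{-t}$, so
\[
\HH_\infty^s\!\big(B(g(\xi),b^{-(1+c')t})\cap \Lrsigma\big)
\lesssim_\times b^{-st}\,\HH_\infty^s\!\big(B(\xi,Cb^{-c't})\cap \Lrtau\big)
\lesssim_\times b^{-(s+c'(P_\xi(s)-\varepsilon))t}.
\]
Summing over $g$ and using the crude bound $\#\{g\in G:\dist(\zero,g(\zero))\in[T,T+1]\}\lesssim b^{(\delta+\varepsilon)T}$, the total Hausdorff $s$-content converges provided $s+c'(P_\xi(s)-\varepsilon)>\delta+\varepsilon$. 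Letting $\varepsilon\to 0$ and $c'\to c$ yields $\HD(W_{c,\xi})\leq s$, and the inequality $\HD(W_{c,\xi})\leq \delta/(c+1)$ follows from the trivial bound $P_\xi(s)\geq s$ (immediate from $\HH_\infty^s(B(\xi,r)\cap \Lrsigma)\lesssim r^s$).

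For the lower bound, the plan is to construct, for each $s$ with $P_\xi(s)<(\delta-s)/c$, a partition structure contained in $W_{c,\xi}$ and a Frostman-type measure on its limit set of dimension $\geq s-o(1)$. The idea is to interleave two kinds of ``descents'': \emph{Bishop--Jones descents}, using the children from Lemma \ref{lemmaDSU} to maintain $s$-thickness inside $\Lr$, and \emph{$\xi$-descents}, in which at selected stages one replaces a current shadow $\Shad(x_\omega,\sigma) = g_\omega(\Shad_{g_\omega^{-1}(\zero)}(\zero,\sigma))$ by a smaller region $g_\omega\big(B(\xi,b^{-ct_\omega})\cap \Lrtau\big)$, where $t_\omega=\dist(\zero,g_\omega(\zero))$. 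By the Bounded Distortion Lemma this image is contained in $B(g_\omega(\xi),Cb^{-(1+c)t_\omega})$, so every $\eta$ in the final limit set automatically satisfies $\omega_\xi(\eta)\geq 1+c$, i.e.\ lies in $W_{c,\xi}$. Inside each such $\xi$-descent I then continue the construction by pulling back, applying Lemma \ref{lemmaDSU} to $(B(\xi,b^{-ct_\omega})\cap \Lrtau)$ at an appropriate scale, and pushing forward via $g_\omega$; the hypothesis $P_\xi(s)<(\delta-s)/c$ is precisely what guarantees enough Hausdorff content in these balls around $\xi$ to maintain the $s$-thickness condition \eqref{redistribution} throughout. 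Applying Theorem \ref{theoremahlforsgeneral} then produces an Ahlfors $s$-regular subset of $W_{c,\xi}$, giving $\HD(W_{c,\xi})\geq s$.

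The main obstacle will be the bookkeeping in the construction: one must quantify, using the $\liminf$ in the definition of $P_\xi$, that on a suitable sequence of scales $r_n\to 0$ the set $B(\xi,r_n)\cap\Lrtau$ genuinely carries comparable $s$-content, then synchronize those scales with the geometric scales produced by the shadows of successive orbit points, so that the thickness inequality \eqref{redistribution} remains uniformly valid as the construction alternates between ordinary Bishop--Jones descents and $\xi$-descents. Once the main formula is established, the remaining assertions are immediate: $\VWA_\xi = \bigcup_{c>0}W_{c,\xi}$ together with \eqref{jarnikbesicovitch} gives \eqref{HDVWAxi}, while $\Liou_\xi\subset W_{c,\xi}$ for every $c>0$ combined with $\HD(W_{c,\xi})\leq\delta/(c+1)\to 0$ yields $\HD(\Liou_\xi)=0$.
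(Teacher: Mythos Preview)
Your overall architecture matches the paper's---Hausdorff--Cantelli for the upper bound, an interleaved Bishop--Jones/$\xi$-descent tree for the lower bound---but each half has a real gap.

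\textbf{Upper bound.} The assertion that ``$g^{-1}$ is bi-Lipschitz on a neighborhood of $g(\xi)$ with distortion $\asymp_\times b^{-t}$,'' so that the pullback of $B(g(\xi),b^{-(1+c')t})$ lands in $B(\xi,Cb^{-c't})$, is false in general. The distortion is $g'(\xi)=b^{\busemann_\xi(\zero,g^{-1}(\zero))}$, which may lie anywhere in $[b^{-t},b^t]$; the Bounded Distortion Lemma only gives $g'(\xi)\asymp b^{-t}$ when $\xi\in\Shad_{g^{-1}(\zero)}(\zero,\sigma)$, which you have no control over here. The paper keeps $g'(\xi)$ as a free parameter: the pullback radius is $r_g\asymp b^{-(1+\tilde c)t}/g'(\xi)$, the pushed-forward content is $\lesssim (g'(\xi))^s r_g^{P_\xi(s)-\varepsilon}$, and one then sums using only $|\log_b g'(\xi)|\leq t$ together with $P_\xi(s)\geq s$. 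The resulting Poincar\'e-type exponent is $\min\big(\tilde c(P_\xi(s)-\varepsilon)+s,\;(\tilde c+1)P_\xi(s)-(\tilde c+2)\varepsilon\big)$, and both terms must be checked to exceed $\delta$. Your exponent $s+c'(P_\xi(s)-\varepsilon)$ is only the first of these.

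\textbf{Lower bound.} The claim that $P_\xi(s)<(\delta-s)/c$ lets you ``maintain the $s$-thickness condition \eqref{redistribution} throughout'' and then invoke Theorem~\ref{theoremahlforsgeneral} fails at every $\xi$-descent. At such a node the parent has diameter $D_\omega\asymp b^{-t_\omega}$, but all children sit inside $g_\omega(B(\xi,b^{-ct_\omega}))$; even granting a disjoint family there with $\sum D_i^s\gtrsim H_{\xi,\sigma,s}(b^{-ct_\omega})$, the pushforward gives $\sum D_{\omega a}^s\gtrsim b^{-st_\omega}\cdot b^{-ct_\omega(P_\xi(s)+\varepsilon)}$, which is strictly smaller than $D_\omega^s$ whenever $P_\xi(s)>0$. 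So \eqref{redistribution} cannot hold at these nodes, and Theorem~\ref{theoremahlforsgeneral} is inapplicable. The paper replaces thickness by a direct mass-distribution argument: a Frostman-type lemma on ``fuzzy metric spaces'' (Claim~\ref{claimauxiliary}) converts the content bound into an $s$-admissible \emph{measure} $\nu_r$ on a Vitali-disjoint family $A_r\subset B(\xi,r)$; the tree then carries a measure $\mu$ redistributed proportionally to $D^t$ at type-1 (Bishop--Jones) nodes and proportionally to $\nu_{r_k}$ at type-2 ($\xi$-descent) nodes, with the switch to type~2 triggered precisely when $\mu(x)<D_x^{t-\varepsilon}$. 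The hypothesis $P_\xi(s)<(\delta-s)/c$ enters not as thickness but as the inequality that forces $\mu(B(\eta,r))\lesssim r^s$ at type-2 nodes (Claim~\ref{claimsemiregular}), after which the mass distribution principle gives $\HD\geq s$.
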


\begin{remark}
If $\delta = \infty$, then for each $c > 0$, our proof shows that
\[
\HD(W_{c,\xi}) \geq \sup\left\{s\in(0,\infty): P_\xi(s) < +\infty\right\}.
\]
However, we cannot prove any upper bound in this case.
\end{remark}
\begin{remark}
For the remainder of this section, we assume $\delta < \infty$.
\end{remark}

\subsection{Properties of the function $P_\xi$}

Recall that the function $P_\xi:[0,\delta]\to[0,\infty]$ is defined by the equations
\begin{align} \label{Hxisigmadef}
H_{\xi,\sigma,s}(r) &:= \HH_\infty^s(B(\xi,r)\cap \Lrsigma(G))\\ \label{Pxidef}
P_\xi(s) &:= \lim_{\sigma\to\infty}\liminf_{r\to 0}\frac{\log_b(H_{\xi,\sigma,s}(r))}{\log_b(r)}\cdot
\end{align}
where $\HH_\infty^s$ is the $s$-dimensional Hausdorff content.

\begin{proposition}
\label{propositionPxifacts}
The function $s\mapsto P_\xi(s) - s$ is nondecreasing (and thus so is $P_\xi$). Moreover, $P_\xi(s)\geq s$ for all $s\in[0,\delta]$.
\end{proposition}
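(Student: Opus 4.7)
Both assertions will follow from a single elementary observation about how Hausdorff content scales under trivial (single-ball) covers. The key point is that since $B(\xi,r) \cap \Lrsigma(G) \subset B(\xi,r)$, the cover $\{B(\xi,r)\}$ is always admissible in \eqref{Hausdorffcontentdef}, and moreover, for any countable cover $\CC$ of $B(\xi,r) \cap \Lrsigma(G)$, we may replace each $A \in \CC$ by $A \cap B(\xi,r)$ without losing the covering property, thereby assuming every cover element has diameter at most $2r$.

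For the inequality $P_\xi(s)\ge s$: the trivial cover gives
\[
H_{\xi,\sigma,s}(r) \;\le\; (2r)^s.
\]
Taking $\log_b$ and dividing by $\log_b(r)$ (which is negative for $r$ small, reversing the inequality) yields
\[
\frac{\log_b H_{\xi,\sigma,s}(r)}{\log_b r} \;\ge\; s \;+\; \frac{s\log_b 2}{\log_b r},
\]
and the second term tends to $0$ as $r\to 0$. Taking $\liminf_{r\to 0}$ and then $\lim_{\sigma\to\infty}$ gives $P_\xi(s)\ge s$.

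For the monotonicity of $s\mapsto P_\xi(s)-s$: fix $0\le s_1\le s_2\le\delta$ and a cover $\CC$ of $B(\xi,r)\cap \Lrsigma(G)$ with $\Diam(A)\le 2r$ for every $A\in \CC$ (as justified above). Then
\[
\sum_{A\in\CC}\Diam(A)^{s_2} \;=\; \sum_{A\in\CC}\Diam(A)^{s_2-s_1}\Diam(A)^{s_1} \;\le\; (2r)^{s_2-s_1}\sum_{A\in\CC}\Diam(A)^{s_1}.
\]
Infimizing over $\CC$ gives
\[
H_{\xi,\sigma,s_2}(r) \;\le\; (2r)^{s_2-s_1}\,H_{\xi,\sigma,s_1}(r).
\]
Again applying $\log_b$ and dividing by the negative quantity $\log_b r$,
\[
\frac{\log_b H_{\xi,\sigma,s_2}(r)}{\log_b r} \;\ge\; (s_2-s_1)\frac{\log_b(2r)}{\log_b r} + \frac{\log_b H_{\xi,\sigma,s_1}(r)}{\log_b r}.
\]
Since $\log_b(2r)/\log_b(r)\to 1$ as $r\to 0$, passing to $\liminf_{r\to 0}$ (using that adding a convergent sequence to a sequence preserves $\liminf$'s with an equality of shift) and then $\lim_{\sigma\to\infty}$ yields
\[
P_\xi(s_2) \;\ge\; (s_2-s_1) + P_\xi(s_1),
\]
i.e.\ $P_\xi(s_2)-s_2 \ge P_\xi(s_1)-s_1$. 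Since $s\mapsto s$ is nondecreasing, this also shows $P_\xi$ itself is nondecreasing, completing the proposition. There is essentially no obstacle here; the only mild subtlety is that the argument must handle the degenerate case $\xi\notin\Lr$, in which $H_{\xi,\sigma,s}(r)=0$ eventually and $P_\xi(s)=+\infty$, but both conclusions hold vacuously in that case.
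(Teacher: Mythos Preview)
Your proof is correct and follows essentially the same approach as the paper's: both bound $H_{\xi,\sigma,s_2}(r)\le (2r)^{s_2-s_1}H_{\xi,\sigma,s_1}(r)$ by factoring $\Diam(A)^{s_2}=\Diam(A)^{s_2-s_1}\Diam(A)^{s_1}$ for cover elements of diameter at most $2r$, and both obtain $P_\xi(s)\ge s$ from the trivial one-ball cover. The paper compresses the restriction step into the single inequality $\Diam^s(A\cap B(\xi,r))\le\Diam^{s-t}(B(\xi,r))\Diam^t(A)$, while you spell out the intersection-with-$B(\xi,r)$ reduction explicitly, but the content is identical.
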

\begin{proof}
Fix $s,t\in[0,\delta]$ with $s > t$ and notice that for all $A\subset\del X$ and $r > 0$
\[
\Diam^s(A\cap B(\xi,r)) \leq \Diam^{s - t}(B(\xi,r))\Diam^t(A).
\]
Running this inequality through formulas \eqref{Hausdorffcontentdef}, \eqref{Hxisigmadef}, and \eqref{Pxidef} yields $P_\xi(s)\geq (s - t) + P_\xi(t)$, i.e. $s\mapsto P_\xi(s) - s$ is nondecreasing.

For all $s\in[0,\delta]$ and for all $r > 0$,
\[
H_{\xi,\sigma,s}(r) \leq \HH_\infty^s(B(\xi,r)) \leq \Diam^s(B(\xi,r)) \leq (2r)^s;
\]
running this inequality through \eqref{Pxidef} yields $P_\xi(s)\geq s$.
\end{proof}
\begin{corollary}
\label{corollaryjarnikbesicovitcheasyparts}
\begin{align*}
Q_\xi(c) := &\sup\left\{s\in(0,\delta): P_\xi(s) \leq \frac{\delta - s}{c}\right\}
= \inf\left\{s\in(0,\delta): P_\xi(s) \geq \frac{\delta - s}{c}\right\}\\
= &\sup\left\{s\in(0,\delta): P_\xi(s) < \frac{\delta - s}{c}\right\}
= \inf\left\{s\in(0,\delta): P_\xi(s) > \frac{\delta - s}{c}\right\}\\
\leq &\frac{\delta}{c + 1}.
\end{align*}
Furthermore, the function $Q_\xi:(0,\infty)\to [0,\delta]$ is continuous.
\end{corollary}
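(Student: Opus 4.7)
The plan is to reformulate everything in terms of
\[
F(s,c) := cP_\xi(s) + s - \delta,
\]
which is affine (hence continuous) in $c$ and, by Proposition~\ref{propositionPxifacts}, strictly increasing in $s$ for each $c>0$, as the sum of the strictly increasing identity and the nondecreasing function $cP_\xi$. After multiplying the defining inequalities of the four sup/inf expressions by $c$, each becomes a condition on the sign of $F(\cdot,c)$, so the whole statement reduces to analyzing where the strictly increasing function $F(\cdot,c)$ changes sign.

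To see that all four expressions coincide, I would let $s^\ast$ denote the first of them and exploit strict monotonicity of $F(\cdot,c)$. For every $s<s^\ast$ one can choose $s'\in(s,s^\ast)$ with $F(s',c)\leq 0$, which by strict monotonicity forces $F(s,c)<F(s',c)\leq 0$; conversely any $s>s^\ast$ satisfies $F(s,c)>0$, since otherwise $s$ would belong to $\{F\leq 0\}$ and contradict the definition of $s^\ast$. Thus $(0,s^\ast)\subseteq\{F<0\}\subseteq\{F\leq 0\}$ and $(s^\ast,\delta)\subseteq\{F>0\}\subseteq\{F\geq 0\}$, so all four quantities equal $s^\ast$ once the conventions $\sup\emptyset=0$ and $\inf\emptyset=\delta$ are taken into account.

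The upper bound $Q_\xi(c)\leq\delta/(c+1)$ then drops out immediately from the inequality $P_\xi(s)\geq s$ of Proposition~\ref{propositionPxifacts}: any $s$ witnessing $F(s,c)\leq 0$ satisfies $(c+1)s\leq cP_\xi(s)+s\leq\delta$, and taking a supremum yields the bound.

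For continuity at $c_0\in(0,+\infty)$, I will use a squeeze argument. Choose $s_-<Q_\xi(c_0)<s_+$ inside $(0,\delta)$; the monotonicity analysis from the second paragraph gives $F(s_-,c_0)<0<F(s_+,c_0)$. Continuity of $F$ in $c$ propagates these strict inequalities to all $c$ in a neighborhood of $c_0$, and the sign characterization of $Q_\xi$ established above then forces $s_-\leq Q_\xi(c)\leq s_+$ for such $c$. Letting $s_\pm\to Q_\xi(c_0)$ finishes the job; the boundary cases $Q_\xi(c_0)\in\{0,\delta\}$ reduce to one-sided variants of the same argument. The only foreseeable obstacle is the bookkeeping around these boundary values under the $\sup/\inf$ conventions, but this is a routine case check rather than a substantive difficulty.
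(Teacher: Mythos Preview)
Your argument is correct and, for the equality of the four sup/inf expressions and the bound $Q_\xi(c)\leq\delta/(c+1)$, essentially coincides with the paper's: both hinge on the strict monotonicity of $s\mapsto P_\xi(s)-\tfrac{\delta-s}{c}$ (your $F(\cdot,c)/c$) together with $P_\xi(s)\geq s$. One small wording issue: $F(s,c)$ is affine in $c$ only when $P_\xi(s)<+\infty$; when $P_\xi(s)=+\infty$ it is identically $+\infty$, but this is harmless since the persistence of $F(s_+,c)>0$ is then trivial, and $F(s_-,c_0)<0$ forces $P_\xi(s_-)<+\infty$ anyway.

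The continuity proofs genuinely differ. You run a soft squeeze: pick $s_-<Q_\xi(c_0)<s_+$, use strict sign of $F$ at $s_\pm$, and propagate by continuity in $c$. The paper instead extracts a quantitative estimate: for $c_1<c_2$ it sandwiches $P_\xi$ between $\tfrac{\delta-s_2}{c_2}$ and $\tfrac{\delta-s_1}{c_1}$ (using the sup/inf characterizations at the two endpoints) and algebraically deduces $s_1-s_2\leq \delta(c_2-c_1)/c_2$, i.e.\ local Lipschitz continuity with constant $\delta/c_0$ on $[c_0,\infty)$. Your argument is cleaner and avoids the endpoint bookkeeping almost entirely (indeed $Q_\xi(c_0)\leq\delta/(c_0+1)<\delta$ always, so only the case $Q_\xi(c_0)=0$ needs the one-sided variant you mention); the paper's argument yields strictly more, namely an explicit modulus of continuity.
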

\begin{proof}
The three equalities are an easy consequence of the fact that the function $s\mapsto P_\xi(s) - \frac{\delta - s}{c}$ is strictly increasing.\Footnote{Note however that we don't know whether this function is continuous, and so in particular we don't know whether $P_\xi(s) = \frac{\delta - s}{c}$ when $s = Q_\xi(c)$.} The inequality is demonstrated by noting that
\[
P_\xi\left(\frac{\delta}{c + 1}\right) \geq \frac{\delta}{c + 1} = \frac{\delta - \frac{\delta}{c + 1}}{c},
\]
and so $s = \frac{\delta}{c + 1}$ is a member of the first infimum.

To demonstrate the continuity, fix $0 < c_1 < c_2 < \infty$, and let $s_1 = Q_\xi(c_1),s_2 = Q_\xi(c_2)$. Then $s_1\geq s_2$; let us suppose that $s_1 > s_2$. Then for every $\varepsilon > 0$ sufficiently small
\[
\frac{\delta - (s_2 + \varepsilon)}{c_2} \leq P_\xi(s_2 + \varepsilon) \leq P_\xi(s_1 - \varepsilon) \leq \frac{\delta - (s_1 - \varepsilon)}{c_1};
\]
letting $\varepsilon$ tend to zero we have
\[
\frac{\delta - s_2}{c_2} \leq \frac{\delta - s_1}{c_1};
\]
rearranging gives
\[
s_1 - s_2 \leq \frac{(c_2 - c_1)(\delta - s_2)}{c_2} \leq \delta\frac{c_2 - c_1}{c_2}.
\]
This in fact demonstrates that for every $c_0 > 0$, the function $c\mapsto s$ is Lipschitz continuous on $[c_0,\infty)$ with a corresponding constant of $\delta/a$.
\end{proof}

Corollary \ref{corollaryjarnikbesicovitcheasyparts} reduces the proof of Theorem \ref{theoremjarnikbesicovitch} to showing that $\HD(W_{c,\xi}) = Q_\xi(c)$, and then demonstrating the statements about $\HD(\VWA_\xi)$ and $\HD(\Liou_\xi)$. The former will be demonstrated in Subsections \ref{subsectionjarnikbesicovitchleqdirection} - \ref{subsectionjarnikbesicovitchgeqdirection}, and the latter will be left to the reader.

\begin{corollary}
\label{corollaryjarnikbesicovitchcontinuous}
The function $c\mapsto \HD(W_{c,\xi})$ is continuous on $(0,\infty)$.
\end{corollary}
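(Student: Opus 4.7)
The plan is to obtain this corollary as an immediate consequence of results already established in the excerpt. By Theorem \ref{theoremjarnikbesicovitch} (whose proof will be carried out in the subsequent subsections), we have the identity
\[
\HD(W_{c,\xi}) \;=\; Q_\xi(c) \;:=\; \sup\left\{s\in(0,\delta): P_\xi(s) \leq \tfrac{\delta - s}{c}\right\}
\]
for every $c > 0$. Thus the continuity of $c \mapsto \HD(W_{c,\xi})$ reduces to the continuity of the auxiliary function $Q_\xi$.

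The continuity of $Q_\xi$ has, however, already been established in Corollary \ref{corollaryjarnikbesicovitcheasyparts}. Indeed, the argument given there shows the stronger statement that $Q_\xi$ is locally Lipschitz: for any $0 < c_1 < c_2$, writing $s_i = Q_\xi(c_i)$ and exploiting the monotonicity of $s \mapsto P_\xi(s) - s$ from Proposition \ref{propositionPxifacts}, one obtains
\[
\frac{\delta - s_2}{c_2} \;\leq\; P_\xi(s_2) \;\leq\; P_\xi(s_1) \;\leq\; \frac{\delta - s_1}{c_1},
\]
which after rearrangement yields $s_1 - s_2 \leq \delta(c_2 - c_1)/c_2$. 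This estimate shows that $Q_\xi$ is Lipschitz on every interval $[c_0, +\infty)$ with $c_0 > 0$, and in particular continuous on $(0, +\infty)$.

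Combining these two inputs, the proof of the corollary is essentially a one-line invocation: continuity of $c \mapsto \HD(W_{c,\xi})$ follows from Theorem \ref{theoremjarnikbesicovitch} together with the (local Lipschitz) continuity of $Q_\xi$ recorded in Corollary \ref{corollaryjarnikbesicovitcheasyparts}. There is no real obstacle here beyond Theorem \ref{theoremjarnikbesicovitch} itself; once the variational formula for $\HD(W_{c,\xi})$ is in hand, continuity is automatic because $P_\xi$ enters only through the monotone expression $P_\xi(s) - \tfrac{\delta-s}{c}$, and monotonicity in $s$ transfers smoothly to continuity in $c$.
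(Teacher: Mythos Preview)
Your proposal is correct and follows exactly the paper's approach: the paper's proof is the one-line statement ``This follows directly from Theorem \ref{theoremjarnikbesicovitch} and Corollary \ref{corollaryjarnikbesicovitcheasyparts},'' which is precisely what you invoke. One minor quibble: in your recollection of the Lipschitz estimate you write the chain $\frac{\delta - s_2}{c_2} \leq P_\xi(s_2) \leq P_\xi(s_1) \leq \frac{\delta - s_1}{c_1}$ directly, whereas the paper (and the footnote there) is careful to pass through $\varepsilon$-perturbations because $P_\xi$ need not be continuous at $s_1$ or $s_2$; but since you are only summarizing an argument already proven in Corollary \ref{corollaryjarnikbesicovitcheasyparts}, this does not affect the validity of your deduction.
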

\begin{proof}
This follows directly from Theorem \ref{theoremjarnikbesicovitch} and Corollary \ref{corollaryjarnikbesicovitcheasyparts}.
\end{proof}

\subsection{$\leq$ direction}\label{subsectionjarnikbesicovitchleqdirection}
Fix $s\in(0,\delta)$ which is a member of the second infimum in Corollary \ref{corollaryjarnikbesicovitcheasyparts}, i.e. so that $P_\xi(s) > \frac{\delta - s}{c}$. Fix $1 < \w c < c$ so that
\begin{equation}
\label{Pxigtr}
P_\xi(s) > \frac{\delta - s}{\w c}.
\end{equation}
For simplicity of exposition we will assume that $P_\xi(s) < +\infty$. Fix $\sigma > 0$, and let $\tau > 0$ be given by Lemma \ref{lemmanearinvariance}.

For each $g\in G$, let
\begin{equation}
\label{Bgs}
B_{g,\w c} := B\big(g(\xi),b^{-(1 + \w c\,)\dist(\zero,g(\zero))}\big).
\end{equation}
\begin{claim}
\label{claimboundeddistortionjarnik}
For all $g\in G$ and for all $\eta_1,\eta_2\in g^{-1}(B_{g,\w c})$,
\begin{align}\label{boundeddistortion3}
g'(\eta_1) \asymp_\times g'(\eta_2) &\asymp_\times g'(\xi)\\ \label{boundeddistortion4}
\frac{\Dist(g(\eta_1),g(\eta_2))}{\Dist(\eta_1,\eta_2)} &\asymp_\times g'(\xi)
\end{align}
\end{claim}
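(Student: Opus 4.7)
The plan is to reduce both asymptotics to statements about Gromov products via the formula $g'(\eta) = b^{\busemann_\eta(\zero, y)}$, where $y = g^{-1}(\zero)$. Using (h) of Proposition \ref{propositionbasicidentities} (extended to the boundary via Corollary \ref{corollaryboundaryasymptotic}), this rewrites as
\[
g'(\eta) \asymp_\times b^{\lb y|\eta\rb_\zero - \lb \zero|\eta\rb_y},
\]
and similarly for $g'(\xi)$. Thus (a) will follow once I establish the pair of additive asymptotics
\[
\lb y|\eta_i\rb_\zero \asymp_\plus \lb y|\xi\rb_\zero \qquad\text{and}\qquad \lb \zero|\eta_i\rb_y \asymp_\plus \lb \zero|\xi\rb_y, \qquad i = 1,2.
\]

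The starting input comes from translating the hypothesis $\eta_i \in g^{-1}(B_{g,\w c})$. Applying the isometry $g^{-1}$ to the visual metric estimate \eqref{distanceasymptotic} and exploiting $B_{g,\w c}$'s defining radius \eqref{Bgs} yields
\[
\lb \eta_i|\xi\rb_y = \lb g(\eta_i)|g(\xi)\rb_\zero \gtrsim_\plus (1+\w c)\dist(\zero, y).
\]
I may assume $\dist(\zero,y)$ is large, since otherwise all quantities are bounded and the claim is trivial. Now I apply Gromov's inequality at the basepoint $y$ to the triple $\zero, \xi, \eta_i$:
\[
\lb \zero|\eta_i\rb_y \gtrsim_\plus \min\big(\lb \zero|\xi\rb_y,\ \lb \xi|\eta_i\rb_y\big).
\]
The Gromov-product bound $\lb \zero|\eta_i\rb_y \lesssim_\plus \dist(\zero,y)$ (from Proposition \ref{propositionbasicidentities}(c) via Lemma \ref{lemmaboundaryasymptotic}) rules out the second term from being the minimum, since $(1+\w c)\dist(\zero,y) > \dist(\zero,y)$ for large $\dist(\zero,y)$. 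Hence the first term is the minimum and, after a symmetric argument swapping $\xi$ and $\eta_i$, I obtain $\lb \zero|\eta_i\rb_y \asymp_\plus \lb \zero|\xi\rb_y$.

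For the second asymptotic I will instead apply Gromov's inequality at $\zero$: $\lb y|\eta_i\rb_\zero \gtrsim_\plus \min(\lb y|\xi\rb_\zero,\ \lb \xi|\eta_i\rb_\zero)$. Using (i) of Proposition \ref{propositionbasicidentities} to rewrite $\lb \xi|\eta_i\rb_\zero$ in terms of $\lb \xi|\eta_i\rb_y$ (which is very large), I get $\lb \xi|\eta_i\rb_\zero \gtrsim_\plus \w c\, \dist(\zero,y)$. Since $\lb y|\xi\rb_\zero \leq \dist(\zero,y)$ and $\w c > 1$, the term $\lb y|\xi\rb_\zero$ is the smaller one; combined with the symmetric exchange this gives $\lb y|\eta_i\rb_\zero \asymp_\plus \lb y|\xi\rb_\zero$. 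Substituting both asymptotics into the expression for $g'(\eta_i)$ yields (a). Then (b) is immediate from the geometric mean value theorem \eqref{GMVT5} together with (a).

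The main obstacle is the case analysis in the two applications of Gromov's inequality: I need the hypothesis $\w c > 1$ (which is why the proof of the $\leq$ direction of Theorem \ref{theoremjarnikbesicovitch} shrinks the exponent from $c$ to $\w c \in (1, c)$) precisely to force the ``trivial'' term of the minimum to dominate. Once that is handled, the argument is a routine unfolding of definitions.
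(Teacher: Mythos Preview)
Your argument is correct and is essentially the same as the paper's. Both proofs reduce \eqref{boundeddistortion3} to showing that $\lb \zero|\eta_i\rb_y \asymp_\plus \lb \zero|\xi\rb_y$ (where $y = g^{-1}(\zero)$; equivalently, $\lb g(\zero)|g(\eta_i)\rb_\zero \asymp_\plus \lb g(\zero)|g(\xi)\rb_\zero$), and both obtain this from the observation that $\lb \xi|\eta_i\rb_y \gtrsim_\plus (1+\w c)\dist(\zero,y)$ is much larger than either term. The paper phrases this via the ``two of three Gromov products must be asymptotic'' form of Gromov's inequality; you phrase it via two applications of the $\min$-form and a symmetric exchange --- these are equivalent. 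Then \eqref{boundeddistortion4} follows from the geometric mean value theorem in both arguments.

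One point worth noting: your second asymptotic $\lb y|\eta_i\rb_\zero \asymp_\plus \lb y|\xi\rb_\zero$ is redundant. Once you have the first, it follows immediately from (b) of Proposition~\ref{propositionbasicidentities} (extended to the boundary), since $\lb y|\eta_i\rb_\zero + \lb \zero|\eta_i\rb_y \asymp_\plus \dist(\zero,y)$. Equivalently, the paper goes directly through (g) rather than (h), which yields $\busemann_\eta(\zero,y) - \busemann_\xi(\zero,y) \asymp_\plus 2\bigl(\lb \zero|\eta\rb_y - \lb \zero|\xi\rb_y\bigr)$ in a single step. Consequently, the hypothesis $\w c > 1$ that you invoke for the second asymptotic is not actually needed for this claim --- the paper's proof uses only $\w c > 0$. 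So your closing remark attributing the constraint $\w c > 1$ to this step is a slight misdiagnosis.
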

\begin{proof}
Fix $\eta\in g^{-1}(B_{g,\w c})$. We have
\begin{equation}
\label{asymptotictozerojarnik}
\left|\log_b\big(\frac{g'(\eta)}{g'(\xi)}\big)\right|
\asymp_\plus |\busemann_\eta(\zero,g^{-1}(\zero)) - \busemann_\xi(\zero,g^{-1}(\zero))|
\asymp_\plus |\lb g(\zero)|g(\eta)\rb_\zero - \lb g(\zero)|g(\xi)\rb_\zero|.
\end{equation}
Let
\begin{align*}
\textup{(A)} &= \lb g(\zero)|g(\eta)\rb_\zero\\
\textup{(B)} &= \lb g(\zero)|g(\xi)\rb_\zero\\
\textup{(C)} &= \lb g(\xi)|g(\eta)\rb_\zero.
\end{align*}
By Gromov's inequality, at least two of the expressions (A), (B), and (C) must be asymptotic. On the other hand, since $g(\eta)\in B_{g,\w c}$, we have
\[
\lb g(\xi)|g(\eta)\rb_\zero - \dist(\zero,g(\zero)) \asymp_\plus -\log_b(\Dist(g(\xi),g(\eta))) - \dist(\zero,g(\zero)) \gtrsim_\plus \w c\,\dist(\zero,g(\zero)) \tendsto g \infty,
\]
Since (A) and (B) are both less than $\dist(\zero,g(\zero))$, this demonstrates that for all but finitely many $g$, (C) is not asymptotic to either (A) or (B). Thus by Gromov's inequality, (A) and (B) are asymptotic (for the finitely many exceptions we just let the implied constant be $\dist(\zero,g(\zero))$), and so \eqref{asymptotictozerojarnik} is asymptotic to zero. This demonstrates \eqref{boundeddistortion3}. Now \eqref{boundeddistortion4} follows from \eqref{boundeddistortion3} and the geometric mean value theorem.
\end{proof}

Let $C > 0$ be the implied constant of \eqref{boundeddistortion4}, and let
\[
r_g = C b^{-(\w c + 1)\dist(\zero,g(\zero))}/g'(\xi).
\]
Then \eqref{boundeddistortion4} implies that
\[
g^{-1}(B_{g,\w c}) \subset B(\xi,r_g).
\]
Fix $\varepsilon > 0$ small to be determined; by the definition of $P_\xi(s)$ we have
\[
H_{\xi,\tau,s}(r_g) \lesssim_\times r_g^{P_\xi(s) - \varepsilon}.
\]
On the other hand, by the definition of $H_{\xi,\tau,s}(r_g)$, there exists a cover $\CC_g$ of $B(\xi,r_g)\cap \Lrtau$ such that
\[
\sum_{A\in\CC_g}\Diam^s(A) \leq H_{\xi,\tau,s}(r_g) + r_g^{P_\xi(s) - \varepsilon};
\]
thus
\[
\sum_{A\in\CC_g}\Diam^s(A) \lesssim_\times r_g^{P_\xi(s) - \varepsilon}.
\]
For each $A\in \CC_g$, Claim \ref{claimboundeddistortionjarnik} implies that
\[
\Diam(g(A)\cap B_{g,\w c}) \lesssim_\times g'(\xi)\Diam(A).
\]
Let $\CC = \bigcup_{g\in G}\{g(A)\cap B_{g,\w c}:A\in\CC_g\}$.
\begin{claim}
\[
\sum_{A\in\CC}\Diam^s(A) < \infty.
\]
\end{claim}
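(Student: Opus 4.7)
The plan is to estimate $\sum_{A \in \CC} \Diam^s(A)$ by splitting the sum according to $g \in G$ and combining three ingredients: the bound $\sum_{A \in \CC_g}\Diam^s(A) \lesssim_\times r_g^{P_\xi(s) - \varepsilon}$ coming from the definition of $H_{\xi,\tau,s}$, the bounded distortion $\Diam(g(A)\cap B_{g,\w c}) \lesssim_\times g'(\xi)\Diam(A)$ from Claim \ref{claimboundeddistortionjarnik}, and the formula $r_g = Cb^{-(\w c + 1)\dist(\zero,g(\zero))}/g'(\xi)$. Putting these together for each $g$ gives
\[
\sum_{A\in\CC_g}\Diam^s(g(A)\cap B_{g,\w c}) \lesssim_\times (g'(\xi))^{s - P_\xi(s) + \varepsilon}\, b^{-(\w c + 1)(P_\xi(s) - \varepsilon)\dist(\zero,g(\zero))}.
\]

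Next I would eliminate the factor $(g'(\xi))^{s - P_\xi(s) + \varepsilon}$ using the crude bound $|\log_b g'(\xi)| \leq \dist(\zero,g(\zero))$, which follows from the definition $g'(\xi) = b^{\busemann_\xi(\zero,g^{-1}(\zero))}$ and $|\busemann_\xi| \leq \dist$. Since we may assume $P_\xi(s) > s$ (the equality case $P_\xi(s) = s$ is handled identically, with the sign of $s - P_\xi(s)+\varepsilon$ reversed), I would choose $\varepsilon < P_\xi(s) - s$ so that the exponent $s - P_\xi(s) + \varepsilon$ is negative and use $g'(\xi) \geq b^{-\dist(\zero,g(\zero))}$. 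This collapses the per-$g$ bound to $b^{-[\w c P_\xi(s) + s - \w c \varepsilon]\dist(\zero,g(\zero))}$, and summing over $g$ produces a Poincar\'e series in exponent $\w c P_\xi(s) + s - \w c\varepsilon$.

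The crucial step is that hypothesis \eqref{Pxigtr} rewrites as $\w c P_\xi(s) + s > \delta$, so for $\varepsilon$ sufficiently small the exponent of the resulting Poincar\'e series strictly exceeds $\delta$, and convergence follows from the definition of the Poincar\'e exponent. I expect the main obstacle to be less the convergence argument itself than the careful calibration of the small parameter $\varepsilon$ relative to the gap $P_\xi(s) - (\delta - s)/\w c$ and relative to the sign of $s - P_\xi(s) + \varepsilon$; the role of choosing $\w c < c$ rather than $c$ itself is precisely to create this slack. The case $P_\xi(s) = +\infty$ (excluded for expository reasons) would be handled by replacing $P_\xi(s)$ by an arbitrarily large finite value permitted by the definition of $P_\xi$ and re-running the same computation.
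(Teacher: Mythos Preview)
Your proposal is correct and follows essentially the same route as the paper: split over $g\in G$, combine the bounded distortion estimate with the Hausdorff-content bound $\lesssim r_g^{P_\xi(s)-\varepsilon}$ and the formula for $r_g$, then absorb the factor $(g'(\xi))^{s-P_\xi(s)+\varepsilon}$ using $|\log_b g'(\xi)|\le \dist(\zero,g(\zero))$ to land on a Poincar\'e series whose exponent exceeds $\delta$ by \eqref{Pxigtr}. The only cosmetic difference is that the paper treats the two possible signs of $s-P_\xi(s)+\varepsilon$ simultaneously via $\max(P_\xi(s)-s-\varepsilon,\varepsilon)$, whereas you split into the cases $P_\xi(s)>s$ and $P_\xi(s)=s$; the content is identical.
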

\begin{proof}
\begin{align*}
\sum_{A\in\CC}\Diam^s(A)
&= \sum_{g\in G}\sum_{A\in \CC_g}\Diam^s(g(A)\cap B_{g,\w c})\\
&\lesssim_\times \sum_{g\in G}[g'(\xi)]^s \sum_{A\in \CC_g}\Diam^s(A)\\
&\lesssim_\times \sum_{g\in G}[g'(\xi)]^s r_g^{P_\xi(s) - \varepsilon}\\
&\asymp_\times \sum_{g\in G}[g'(\xi)]^{s - (P_\xi(s) - \varepsilon)} b^{-(P_\xi(s) - \varepsilon)(\w c + 1)\dist(\zero,g(\zero))}.
\end{align*}
By Proposition \ref{propositionPxifacts}, $P_\xi(s) \geq s$. Let $B_g = \log_b(g'(\xi))$. Since $|B_g| \leq \dist(\zero,g(\zero))$, we have
\[
(P_\xi(s) - s - \varepsilon)B_g \leq |P_\xi(s) - s - \varepsilon|\cdot |B_g| \leq \max(P_\xi(s) - s - \varepsilon,\varepsilon)\dist(\zero,g(\zero))
\]
and thus
\[
\sum_{A\in\CC}\Diam^s(A) \lesssim_\times \sum_{g\in G}b^{-\min(\w c(P_\xi(s) - \varepsilon) + s,(\w c + 1) P_\xi(s) - (\w c + 2)\varepsilon)\dist(\zero,g(\zero))}.
\]
The right hand side is the Poincar\'e series of $G$ with respect to the exponent
\[
\min(\w c(P_\xi(s) - \varepsilon) + s,(\w c + 1) P_\xi(s) - (\w c + 2)\varepsilon).
\]
It can be seen by rearranging (\ref{Pxigtr}) that this exponent exceeds $\delta$ for all $\varepsilon$ sufficiently small. Therefore the series converges.
\end{proof}
\begin{claim}
For all $\eta\in W_{c,\xi}\cap\Lrsigma$, $\eta\in A$ for infinitely many $A\in\CC$.
\end{claim}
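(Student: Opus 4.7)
The strategy is to unpack the definition of $W_{c,\xi}$, use the slack between $\w c$ and $c$ to produce infinitely many approximations of $\eta$ by orbit points $g(\xi)$, then transport each such approximation into the model cover $\CC_g$ of $B(\xi,r_g)\cap\Lrtau$ via $g^{-1}$, and finally argue that the resulting elements of $\CC$ containing $\eta$ must be infinitely many because their diameters tend to zero.

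\textbf{Step 1 (Producing many $g$ with $\eta\in B_{g,\w c}$).} Fix $\eta\in W_{c,\xi}\cap\Lrsigma$. By definition $\omega_\xi(\eta)\ge 1+c > 1+\w c$, so the limsup formula \eqref{Pxidef-style} for $\omega_\xi$ yields infinitely many $g\in G$ with
\[
\Dist(g(\xi),\eta)\le b^{-(1+\w c)\dist(\zero,g(\zero))},
\]
i.e.\ $\eta\in B_{g,\w c}$ (see \eqref{Bgs}). Denote this infinite set of group elements by $G_\eta$.

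\textbf{Step 2 (Pulling back to a cover element).} Fix $g\in G_\eta$. Since $\eta\in\Lrsigma$, Lemma \ref{lemmanearinvariance} (applied to $g^{-1}$, after enlarging $\tau$ once and for all if necessary) gives $g^{-1}(\eta)\in\Lrtau$. On the other hand, $g^{-1}(B_{g,\w c})\subset B(\xi,r_g)$ by the very definition of $r_g$ and Claim \ref{claimboundeddistortionjarnik}, so $g^{-1}(\eta)\in B(\xi,r_g)\cap\Lrtau$. Since $\CC_g$ was chosen to cover $B(\xi,r_g)\cap\Lrtau$, there exists $A_g\in\CC_g$ with $g^{-1}(\eta)\in A_g$, and hence
\[
\eta \in g(A_g)\cap B_{g,\w c} \;\in\; \CC.
\]

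\textbf{Step 3 (Infinitely many distinct elements).} Each member of $\CC$ of the form $g(A_g)\cap B_{g,\w c}$ is contained in $B_{g,\w c}$ and therefore has diameter at most $2b^{-(1+\w c)\dist(\zero,g(\zero))}$. Ranging $g$ over the infinite set $G_\eta$ (whose displacements $\dist(\zero,g(\zero))$ are unbounded by strong discreteness), these diameters tend to $0$. A single set of $\CC$ cannot serve as $g(A_g)\cap B_{g,\w c}$ for infinitely many $g\in G_\eta$, since any such set has positive diameter. Hence infinitely many distinct elements of $\CC$ contain $\eta$, completing the proof of the claim.

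\textbf{Main obstacle.} There is no serious obstacle: the only subtlety is bookkeeping with the strict/nonstrict inequality in the definition of the $\limsup$ in $\omega_\xi(\eta)$, which is handled by the fixed gap $c-\w c>0$, and the need to enlarge the radial parameter $\sigma\mapsto\tau$ exactly once so that Lemma \ref{lemmanearinvariance} applies uniformly to the $g^{-1}$'s. Once the claim is proved, combining it with $\sum_{A\in\CC}\Diam^s(A)<\infty$ and the Hausdorff--Cantelli lemma gives $\HH^s(W_{c,\xi}\cap\Lrsigma)=0$, and letting $\sigma\to\infty$ then $s\searrow Q_\xi(c)$ yields the desired $\le$ direction $\HD(W_{c,\xi})\le Q_\xi(c)$.
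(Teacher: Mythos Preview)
Your proof is correct and follows essentially the same approach as the paper: Steps~1 and~2 reproduce the paper's argument verbatim (use $\omega_\xi(\eta)\ge 1+c>1+\w c$ to obtain infinitely many $g$ with $\eta\in B_{g,\w c}$, pull back via $g^{-1}$ into $B(\xi,r_g)\cap\Lrtau$ using Lemma~\ref{lemmanearinvariance}, and select $A_g\in\CC_g$). Your Step~3 is extra caution the paper omits; the paper implicitly treats $\CC$ as the indexed family $(g(A)\cap B_{g,\w c})_{g\in G,\,A\in\CC_g}$, so distinct $g\in G_\eta$ automatically yield distinct indices and the Hausdorff--Cantelli lemma applies directly---your ``positive diameter'' argument is therefore unnecessary (and, as stated, would need a word for the singleton case, which the indexed viewpoint avoids entirely).
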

\begin{proof}
Since $\omega_\xi(\eta) \geq 1 + c > 1 + \w c$, for infinitely many $g\in G$, we have $\eta\in B_{g,\w c}$. Thus $g^{-1}(\eta)\in B(\xi,r_g)$. On the other hand, by Lemma \ref{lemmanearinvariance}, $g^{-1}(\eta)\in\Lrtau$. Thus $g^{-1}(\eta)\in\bigcup\CC_g$; fix $A_g\in\CC_g$ for which $g^{-1}(\eta)\in A_g$. Then $\eta\in g(A_g)\cap B_{g,\w c}\in\CC$.
\end{proof}
Now by the Hausdorff--Cantelli lemma (see e.g. \cite[Lemma 3.10]{BernikDodson}), we have $\HH^s(W_{c,\xi}\cap\Lrsigma) = 0$, and thus $\HD(W_{c,\xi}\cap\Lrsigma) \leq s$. But $\sigma$ was arbitrary; taking the union over all $\sigma\in\N$ gives $\HD(W_{c,\xi})\leq s$.

\draftnewpage

\subsection{Preliminaries for $\geq$ direction}
\subsubsection{Multiplying numbers and sets}
\begin{definition}
\label{definitionaS}
Let $(Z,\Dist)$ be a metric space. For each $S\subset Z$ and for each $a\geq 1$, let
\[
aS := \NN\left(S,\frac{a - 1}{2}\Diam(S)\right),
\]
where
\[
\NN(S,r) := \{z:\Dist(z,S)\leq r\}
\]
is the \emph{$r$-thickening} of $S$.
\end{definition}

\begin{observation}
\begin{align*}
\Diam(aS) &\leq a\Diam(S)\\
a B(z,r) &\subset B(z,ar)\\
a(bS) &\subset (ab)S.
\end{align*}
\end{observation}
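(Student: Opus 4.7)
The observation is essentially bookkeeping about thickenings, so I will prove each of the three claims as a direct computation from the definition $aS = S^{((a-1)\Diam(S)/2)}$ of the dilated set, pushing the triangle inequality through. The only subtlety to remember is that membership in $S^{(r)}$ is defined via $\Dist(z,S)\leq r$, so for any $\varepsilon>0$ one can find an actual witness in $S$ at distance at most $r+\varepsilon$, and then let $\varepsilon\to 0$.

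For the first inequality, I would pick $x,y\in aS$ and for any $\varepsilon>0$ choose witnesses $x',y'\in S$ with $\Dist(x,x'),\Dist(y,y')\leq \tfrac{a-1}{2}\Diam(S)+\varepsilon$. Then
\[
\Dist(x,y)\leq \Dist(x,x')+\Dist(x',y')+\Dist(y',y)\leq 2\cdot\tfrac{a-1}{2}\Diam(S)+\Diam(S)+2\varepsilon = a\Diam(S)+2\varepsilon,
\]
and taking $\varepsilon\to 0$ and then the supremum over $x,y$ gives $\Diam(aS)\leq a\Diam(S)$.

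For the second inclusion, note $\Diam(B(z,r))\leq 2r$ so $aB(z,r)\subset B(z,r)^{((a-1)r)}$ by monotonicity of the thickening in the radius. If $x$ lies in this thickening, pick $y\in B(z,r)$ with $\Dist(x,y)\leq (a-1)r+\varepsilon$; then $\Dist(x,z)\leq \Dist(x,y)+\Dist(y,z)\leq ar+\varepsilon$, and letting $\varepsilon\to 0$ gives $x\in B(z,ar)$.

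For the third inclusion, I use the first claim to bound $\Diam(bS)\leq b\Diam(S)$, so $a(bS)\subset (bS)^{((a-1)b\Diam(S)/2)}$. Given $x\in a(bS)$, choose $y\in bS$ at distance at most $\tfrac{(a-1)b}{2}\Diam(S)+\varepsilon$ from $x$, then choose $w\in S$ at distance at most $\tfrac{b-1}{2}\Diam(S)+\varepsilon$ from $y$. The triangle inequality gives
\[
\Dist(x,w)\leq \tfrac{(a-1)b+(b-1)}{2}\Diam(S)+2\varepsilon = \tfrac{ab-1}{2}\Diam(S)+2\varepsilon,
\]
so $x\in (ab)S$ as desired. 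There is no main obstacle here; the whole observation is a routine exercise in the definitions, and the only care needed is to insert the $\varepsilon$'s to handle the infimum in the definition of $\Dist(\cdot,S)$.
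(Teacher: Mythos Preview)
Your proof is correct and is exactly the natural direct verification from the definition of $aS$ via thickenings and the triangle inequality. The paper does not actually supply a proof of this observation (it merely remarks afterward that equality holds in Banach spaces), so your write-up is more detailed than the paper itself; the approach is the obvious one and there is nothing to compare.
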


For each inequality or inclusion, equality holds if $(Z,\Dist)$ is a Banach space.

\begin{proposition}[Variant of the Vitali covering lemma] \label{proposition5r}
Let $(Z,\Dist)$ be a metric space, and let $\CC$ be a collection of subsets of $Z$ with the property that
\[
\sup\{\Diam(S): S\in\CC\} < \infty.
\]
Then there exists a disjoint subcollection $\w\CC\subset\CC$ so that
\[
\bigcup\CC\subset \bigcup_{A\in\w\CC}5A.
\]
\end{proposition}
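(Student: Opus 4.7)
The plan is to adapt the classical greedy argument that underlies the Vitali $5r$-covering theorem, replacing balls by general sets of bounded diameter and comparing sizes via their diameters rather than their radii. Let $D := \sup\{\Diam(S) : S \in \CC\} < +\infty$, and stratify $\CC$ into ``generations'' by diameter: for each $n \geq 1$, let
\[
\CC_n := \{S \in \CC : 2^{-n} D < \Diam(S) \leq 2^{-n+1} D\}.
\]
Then $\CC = \bigcup_{n \geq 1} \CC_n$, and within each generation all sets have comparable size (up to a factor of $2$).

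Next, I would construct $\w\CC$ recursively, generation by generation. Having defined disjoint collections $\w\CC_1, \ldots, \w\CC_{n-1}$, let
\[
\CC_n^* := \Bigl\{ S \in \CC_n : S \cap A = \emptyset \text{ for all } A \in \bigcup_{k < n} \w\CC_k \Bigr\},
\]
and use Zorn's lemma to pick a maximal subcollection $\w\CC_n \subset \CC_n^*$ whose members are pairwise disjoint. Set $\w\CC := \bigcup_{n \geq 1} \w\CC_n$; by construction, $\w\CC$ is disjoint, since sets in different generations are already forced to be disjoint by the definition of $\CC_n^*$, and sets in the same generation are disjoint by maximality.

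The remaining step, and the main content of the proof, is to verify the covering property. Fix $S \in \CC_n$; by maximality of $\w\CC_n$ within $\CC_n^*$, there must exist some $A \in \bigcup_{k \leq n} \w\CC_k$ with $S \cap A \neq \emptyset$ (otherwise $S$ itself could be adjoined to $\w\CC_n$). Since $A$ belongs to a generation of index $\leq n$,
\[
\Diam(A) > 2^{-n} D \geq \tfrac12 \Diam(S).
\]
Now pick $y \in S \cap A$; then for every $z \in S$,
\[
\Dist(z, A) \leq \Dist(z, y) \leq \Diam(S) \leq 2 \Diam(A) = \tfrac{5-1}{2}\Diam(A),
\]
so $z \in A^{(2\Diam(A))} = 5A$ by Definition \ref{definitionaS}. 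Thus $S \subset 5A$, which yields $\bigcup\CC \subset \bigcup_{A \in \w\CC} 5A$.

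I do not expect any serious obstacle here: the only mild subtlety is the use of Zorn's lemma at each generation (the collections $\CC_n^*$ need not be countable), and keeping the bookkeeping straight between ``$S$ was not chosen'' and ``$S$ was ruled out by an earlier generation''. The factor of $5$ comes out cleanly because the ratio $\Diam(A)/\Diam(S) \geq 1/2$ combined with a nonempty intersection forces $S$ to lie within $2\Diam(A)$ of $A$, which matches exactly the $5A$-thickening under the convention of Definition \ref{definitionaS}.
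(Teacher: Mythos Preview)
Your proof is correct and follows exactly the standard greedy diameter-stratification argument for the Vitali $5r$-covering lemma; the paper does not give its own proof but simply refers to this standard argument (citing Evans--Gariepy), so your approach is precisely what is intended. The only trivial edge case your stratification misses is sets of diameter zero when $D>0$, which lie in no $\CC_n$, but this is easily patched and does not arise in the paper's applications.
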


The proof of this proposition is a straightforward generalization of the standard proof of the Vitali covering lemma (see e.g. \cite[Theorem 1.5.1]{EvansGariepy}).

\begin{lemma}
\label{lemmaCShad}
Let $X$ be a hyperbolic metric space. For each $C,\sigma > 0$, there exists $\tau > 0$ such that for all $x\in X$
\[
C\Shad(x,\sigma) \subset \Shad(x,\tau).
\]
\end{lemma}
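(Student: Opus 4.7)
\textbf{Proof proposal for Lemma \ref{lemmaCShad}.} The plan is to unpack the definition of $C\Shad(x,\sigma)$ using the Diameter of Shadows Lemma, convert the resulting Euclidean-type bound into an estimate on the Gromov product at $\zero$, and then transport this estimate back to the Gromov product at $x$ via identity (b) of Proposition \ref{propositionbasicidentities}.

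First, I would fix $\eta \in C\Shad(x,\sigma)$ and choose $\eta' \in \Shad(x,\sigma)$ with
\[
\Dist(\eta,\eta') \leq \tfrac{C-1}{2}\Diam(\Shad(x,\sigma)) \lesssim_{\times,C,\sigma} b^{-\dist(\zero,x)},
\]
where the second inequality uses the Diameter of Shadows Lemma (Lemma \ref{lemmadiameterasymptotic}). Exponentiating via \eqref{distanceasymptotic} gives
\[
\lb\eta\,|\,\eta'\rb_\zero \gtrsim_{\plus,C,\sigma} \dist(\zero,x).
\]
On the other hand, since $\eta' \in \Shad(x,\sigma)$ we have $\lb\zero\,|\,\eta'\rb_x \leq \sigma$, so identity (b) of Proposition \ref{propositionbasicidentities} (extended to the boundary via Corollary \ref{corollaryboundaryasymptotic}) yields
\[
\lb x\,|\,\eta'\rb_\zero \asymp_\plus \dist(\zero,x) - \lb\zero\,|\,\eta'\rb_x \gtrsim_{\plus,\sigma} \dist(\zero,x).
\]

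Next, I would apply Gromov's inequality at $\zero$ to the three points $x$, $\eta'$, $\eta$:
\[
\lb x\,|\,\eta\rb_\zero \gtrsim_\plus \min\bigl(\lb x\,|\,\eta'\rb_\zero,\;\lb\eta'\,|\,\eta\rb_\zero\bigr) \gtrsim_{\plus,C,\sigma} \dist(\zero,x).
\]
Translating back via (b) once more,
\[
\lb\zero\,|\,\eta\rb_x \asymp_\plus \dist(\zero,x) - \lb x\,|\,\eta\rb_\zero \lesssim_{\plus,C,\sigma} 0,
\]
so $\lb\zero\,|\,\eta\rb_x \leq \tau$ for some $\tau = \tau_{C,\sigma}$ independent of $x$ and $\eta$. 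This shows $\eta \in \Shad(x,\tau)$, completing the proof.

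The calculation is essentially a bookkeeping argument and I do not anticipate any serious obstacle; the only subtlety is making sure that the identity $\dist(\zero,x) = \lb\zero\,|\,\eta\rb_x + \lb x\,|\,\eta\rb_\zero$ is used in its boundary-asymptotic form (Corollary \ref{corollaryboundaryasymptotic}), and that the additive implied constants absorb the absolute constant in Gromov's inequality together with the dependence on $C$ and $\sigma$ coming from the Diameter of Shadows Lemma.
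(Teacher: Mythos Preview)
Your proof is correct and follows essentially the same route as the paper's: pick a point $\eta'$ (the paper calls it $\xi$) in $\Shad(x,\sigma)$ close to $\eta$, use the Diameter of Shadows Lemma to bound $\Dist(\eta,\eta')$, convert to a lower bound on $\lb\eta|\eta'\rb_\zero$, combine with $\lb x|\eta'\rb_\zero \asymp_{\plus,\sigma} \dist(\zero,x)$ via Gromov's inequality, and convert back. The only cosmetic difference is that the paper invokes Observation~\ref{observationmetricshadow} for the asymptotic $\lb x|\eta'\rb_\zero \asymp_{\plus,\sigma} \dist(\zero,x)$ rather than spelling out identity~(b) directly, and stops at $\lb x|\eta\rb_\zero \asymp_{\plus,C,\sigma} \dist(\zero,x)$ without writing the final line.
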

\begin{proof}
Suppose $\eta\in C\Shad(x,\sigma)$; then $\eta\in B(\xi,\frac{C - 1}{2}\Diam(\Shad(x,\sigma)))$ for some $\xi\in\Shad(x,\sigma)$. Now by the Diameter of Shadows Lemma
\[
\Dist(\xi,\eta) \lesssim_{\times,C}\Diam(\Shad(x,\sigma)) \lesssim_{\times,\sigma}b^{-\dist(\zero,x)};
\]
thus
\[
\lb\xi|\eta\rb_\zero \gtrsim_{\plus,C,\sigma}\dist(\zero,x);
\]
on the other hand since $\xi\in\Shad(x,\sigma)$
\[
\lb x|\xi\rb_\zero \asymp_{\plus,\sigma} \dist(\zero,x)
\]
and so by Gromov's inequality
\[
\lb x|\eta\rb_\zero \asymp_{\plus,C,\sigma}\dist(\zero,x).
\]
\end{proof}

\subsubsection{Fuzzy metric spaces}
\begin{definition}
A \emph{fuzzy metric space} consists of a metric space $(Y,\Dist)$ together with a function $f:Y\to(0,\infty)$ satisfying
\begin{equation}
\label{goodf}
f(y) \leq \Dist(y,Y\butnot\{y\}).
\end{equation}
Let $(Y,f)$ be a fuzzy metric space. A ball $B(y,r)\subset Y$ is called \emph{admissible} (with respect to $f$) if $r\geq f(y)$.
\end{definition}
The idea is that each point $y\in Y$ represents some subset of a larger metric space $Z$ which is contained in the set $B(y,f(y))$. An admissible ball is supposed to be one that contains this set.

\begin{observation}
\label{observationtwopoints}
Let $B(y,r)\subset Y$ be an admissible ball. Then $r\geq f(\w y\ew)$ for every $\w y\in B(y,r)$.
\end{observation}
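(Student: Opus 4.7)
The plan is to split into two cases based on whether $\tilde y$ equals $y$ or not. If $\tilde y = y$, then the admissibility hypothesis $r \geq f(y)$ gives $r \geq f(\tilde y)$ immediately. The substantive case is $\tilde y \neq y$; here I would use the defining property \eqref{goodf} of a fuzzy metric space, namely $f(\tilde y)\leq \Dist(\tilde y, Y\setminus\{\tilde y\})$, and observe that since $y\in Y\setminus\{\tilde y\}$, this infimum is bounded by $\Dist(\tilde y, y)\leq r$. Chaining these inequalities yields $f(\tilde y)\leq r$.

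There is essentially no obstacle here: the observation is a direct consequence of the hypothesis \eqref{goodf}, which is built into the definition of a fuzzy metric space precisely so that the ``admissibility'' of a ball propagates to all its points. The only thing to be careful about is the degenerate case $\tilde y = y$, where the right-hand side of \eqref{goodf} is $\Dist(y,Y\setminus\{y\})$, which could in principle be $+\infty$ if $Y=\{y\}$, but in any case the admissibility hypothesis handles this case directly without needing \eqref{goodf}. So the proof is a two-line case split.
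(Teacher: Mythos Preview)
Your proposal is correct and essentially identical to the paper's proof: both split into the cases $\w y = y$ (handled by admissibility) and $\w y \neq y$ (handled by the chain $r \geq \Dist(\w y, y) \geq \Dist(\w y, Y\setminus\{\w y\}) \geq f(\w y)$ via \eqref{goodf}).
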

\begin{proof}
Since the case $\w y = y$ holds by definition, let us suppose that $\w y \neq y$. But then by \eqref{goodf} we have
\[
r \geq \Dist(\w y,y) \geq \Dist(\w y,Y\butnot\{\w y\ew\}) \geq f(\w y\ew).
\]
\end{proof}
Now fix $s > 0$. A measure $\mu\in\MM(Y)$ is called \emph{$s$-admissible} if $\mu(B(y,r))\leq r^s$ for every admissible ball $B(y,r)\subset Y$. Consider the quantities
\begin{align*}
\alpha_{Y,f}(s) &:= \sup\{\mu(Y):\mu\in\MM(Y)\text{ is $s$-admissible}\}\\
\beta_{Y,f}(s) &:= \inf\left\{\sum_{k = 1}^\infty r_k^s: \big(B(y_k,r_k)\big)_1^\infty\text{ is a collection of admissible balls which covers $Y$}\right\}.
\end{align*}
\begin{observation}
\[
\alpha_{Y,f}(s) \leq \beta_{Y,f}(s).
\]
\end{observation}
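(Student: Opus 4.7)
The plan is to carry out a direct mass-distribution argument: the inequality $\alpha_{Y,f}(s)\leq\beta_{Y,f}(s)$ is nothing more than the observation that any $s$-admissible measure bounds the $s$-cost of any admissible cover. So I would proceed by fixing an arbitrary $s$-admissible measure $\mu\in\MM(Y)$ and an arbitrary admissible cover $\bigl(B(y_k,r_k)\bigr)_1^\infty$ of $Y$, and then showing directly that $\mu(Y)\leq \sum_k r_k^s$.

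To do this, first I would use countable subadditivity of $\mu$ together with the fact that the balls $B(y_k,r_k)$ cover $Y$ to write
\[
\mu(Y) \leq \sum_{k=1}^\infty \mu(B(y_k,r_k)).
\]
Then, since each ball $B(y_k,r_k)$ in the cover is admissible by hypothesis, the $s$-admissibility of $\mu$ gives $\mu(B(y_k,r_k))\leq r_k^s$ for every $k$. Combining these two inequalities yields $\mu(Y)\leq \sum_k r_k^s$.

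Finally, since $\mu$ and the cover were arbitrary, taking the supremum over all $s$-admissible measures $\mu$ on the left and the infimum over all admissible covers on the right produces $\alpha_{Y,f}(s)\leq \beta_{Y,f}(s)$, which is the desired inequality.

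There is no real obstacle here — the proof is a one-line mass-distribution-type estimate, and the definition of admissibility has been engineered precisely so that the $s$-measure bound applies to every set appearing in a competing cover. The slight subtlety (which is automatic from the definitions but worth noting explicitly in the writeup) is that ``admissible'' is a property of the ball that matches both the measure side and the cover side, so no $5r$-trick or Vitali-style argument is needed for this direction; the reverse inequality (which is not claimed here) is where such arguments would come in.
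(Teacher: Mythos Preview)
Your proposal is correct and is essentially identical to the paper's own proof: fix an $s$-admissible $\mu$ and an admissible cover, use subadditivity to get $\mu(Y)\leq\sum_k\mu(B(y_k,r_k))$, then invoke admissibility for $\mu(B(y_k,r_k))\leq r_k^s$, and conclude by taking sup and inf.
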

\begin{proof}
If $\mu\in\MM(Y)$ is $s$-admissible and if $\big(B(y_k,r_k)\big)_1^\infty$ is a collection of admissible balls covering $Y$, then
\[
\mu(Y) \leq \sum_{k = 1}^\infty\mu(B(y_k,r_k)) \leq \sum_{k = 1}^\infty r_k^s.
\]
\end{proof}
\begin{claim}
\label{claimauxiliary}
If $Y$ is countable, then
\[
\beta_{Y,f}(s) \leq 32^s \alpha_{Y,f}(s).
\]
\end{claim}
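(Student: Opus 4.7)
The proof will be a Frostman-style duality construction adapted to the fuzzy-metric setting. We may assume $\alpha := \alpha_{Y,f}(s) < +\infty$, for otherwise there is nothing to prove. The goal is to exhibit a cover of $Y$ by admissible balls of total $s$-cost at most $32^s \alpha$.

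My first step would be to greedily extract, by Zorn's lemma or transfinite recursion, a disjoint subfamily $(B(y_k, r_k))_k$ of admissible balls chosen at a natural scale derived from $f$ (roughly, radii $r_k$ comparable to $f(y_k)$), maximal among such disjoint subfamilies. Then I would apply Proposition \ref{proposition5r} to conclude that the dilates $5 B(y_k,r_k) \subseteq B(y_k, 5 r_k)$ cover $Y$. Since $5 r_k \geq f(y_k)$, each $B(y_k, 5 r_k)$ is again admissible, so this produces an admissible cover of $s$-cost $5^s \sum_k r_k^s$.

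My second step would be to produce an $s$-admissible measure $\mu$ of mass proportional to $\sum_k r_k^s$, which then witnesses $\sum_k r_k^s \lesssim \alpha$ with an explicit constant. The natural candidate is $\mu = C^{-1} \sum_k r_k^s \delta_{y_k}$ for a suitable constant $C$. The verification of $s$-admissibility, $\mu(B(y,r)) \leq r^s$ for every admissible test ball $B(y,r)$, is the crux. One combines three ingredients: (i) Observation \ref{observationtwopoints}, which forces any $y_k \in B(y,r)$ to satisfy $f(y_k) \leq r$; (ii) the disjointness of the family $(B(y_k, r_k))_k$ together with the fuzzy-metric separation \eqref{goodf}, which prevents the centers $y_k$ from clustering inside $B(y,r)$; and (iii) a summation across dyadic scales of those $y_k$ landing in $B(y,r)$, yielding $\mu(B(y,r)) \lesssim r^s$. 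Dividing out by the resulting constant produces a genuine $s$-admissible measure, and comparing its total mass against $\alpha$ completes the bound.

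The main obstacle is keeping the constants under control throughout the dyadic summation, so that the combined geometric loss (from Vitali inflation and from scale aggregation) fits inside the stated factor $32^s$. Because $Y$ carries no a priori doubling hypothesis, the separation between the chosen centers $y_k$ must come entirely from the disjointness of the selected balls together with the fuzzy-metric condition \eqref{goodf}; this discreteness is what bounds the number of selected centers of radius $\asymp 2^{-j}$ inside a fixed admissible test ball $B(y,r)$ in terms of $r/2^{-j}$, which is precisely the estimate needed to close the admissibility inequality for $\mu$ with a universal constant.
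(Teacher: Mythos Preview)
Your approach has a genuine gap at the point where you try to verify $s$-admissibility of the measure $\mu = C^{-1}\sum_k r_k^s \delta_{y_k}$. Given an admissible test ball $B(y,r)$, you want to bound $\sum_{y_k\in B(y,r)} r_k^s \lesssim r^s$, and you propose to do this by controlling, for each dyadic scale $2^{-j}\leq r$, the number $N_j$ of selected centers $y_k\in B(y,r)$ with $r_k\asymp 2^{-j}$. The disjointness of the balls $B(y_k,r_k)$, together with the fuzzy-metric separation \eqref{goodf}, does force those centers to be $\asymp 2^{-j}$-separated; but since $(Z,\Dist)$ carries no doubling hypothesis, there is no universal bound on $N_j$ in terms of $r/2^{-j}$, let alone one of the form $(r/2^{-j})^s$. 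Even if you had $N_j\lesssim (r/2^{-j})^s$, the dyadic sum $\sum_j N_j 2^{-js}$ would then be $\lesssim r^s\sum_j 1$, which diverges. So the argument does not close with any universal constant, much less $32^s$.

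The paper avoids this packing obstacle by running the argument in the opposite direction. Rather than choosing a cover and then manufacturing a measure, it first takes, on each finite truncation $Y_N$, an $s$-admissible measure $\mu_N$ of \emph{maximal} total mass (this exists by compactness on a finite set). Maximality means that at every point $y_n$ one can perturb $\mu_N$ infinitesimally and break admissibility; the witnessing ball $B(y_{n,N},r_{n,N})$ is then automatically ``tight'' in the sense that $\mu_N(B(y_{n,N},r_{n,N}))$ is essentially $r_{n,N}^s$. Passing to a limit in $N$ and applying the Vitali lemma to these tight balls yields a cover whose $s$-cost is directly compared to $\mu_N(Y_N)\leq\alpha_{Y,f}(s)$, with all constants tracked explicitly. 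The extremality of $\mu_N$ is what replaces the packing estimate you are missing.
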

\begin{proof}
Let $(y_n)_1^\infty$ be an indexing of $Y$. Fix $N\in\N$, and let $Y_N = \{y_1,\ldots,y_N\}$. The collection of $s$-admissible measures on $Y_N$ is compact, and so there exists an $s$-admissible measure $\mu_N\in\MM(Y_N)$ satisfying
\begin{equation}
\label{muNYN}
\mu_N(Y_N) = \sup_{\substack{\mu\in\MM(Y_N) \\ \text{$s$-admissible}}}\mu(Y_N) \leq \alpha_{Y,f}(s).
\end{equation}
Now fix $n\leq N$. It follows from \eqref{muNYN} that the measure $\mu_N + \frac{1}{N}\delta_{y_n}$ is not $s$-admissible. By definition, this means that there is some admissible ball $B(y_{n,N},r_{n,N})\subset Y$ such that
\begin{equation}
\label{muN1N}
\left(\mu_N + \frac{1}{N}\delta_{y_n}\right)\big(B(y_{n,N},r_{n,N})\big) > r_{n,N}^s \geq \mu_N(B(y_{n,N},r_{n,N})).
\end{equation}
Clearly, this implies
\begin{equation}
\label{ynynN}
y_n\in B(y_{n,N},r_{n,N}).
\end{equation}
Thus by Observation \ref{observationtwopoints} we have
\begin{equation}
\label{rnNgeq}
r_{n,N}\geq f(y_n).
\end{equation}
On the other hand, by \eqref{muNYN} and \eqref{muN1N} we have
\[
r_{n,N}\leq \big(\alpha_{Y,f}(s) + 1\big)^{1/s}.
\]
Thus, for each $n\in\N$ the sequence $(r_{n,N})_{N = n}^\infty$ is bounded from above and below.

Choose an increasing sequence $(N_k)_1^\infty$ such that for every $n\in\N$,
\[
r_n^{(k)} := r_{n,N_k}\tendsto k r_n \in \CO{f(y_n)}{\infty}.
\]
Now fix $n\in\N$, and notice that for all $k\in\N$ sufficiently large we have
\begin{equation}
\label{rknbounds}
\frac{1}{2}r_n \leq r_n^{(k)} \leq 2r_n
\end{equation}
and thus by \eqref{ynynN}
\[
B(y_n^{(k)},r_n^{(k)}) \subset B(y_n,2r_n^{(k)}) \subset B(y_n,4r_n).
\]
Thus by \eqref{muN1N} and \eqref{rknbounds} we have
\[
\left(\mu_{N_k} + \frac{1}{N_k}\delta_{y_n}\right)\big(B(y_n,4r_n)\big) > \big(r_n/2\big)^s
\]
i.e.
\begin{equation}
\label{muNk}
\mu_{N_k}\big(B(y_n,4r_n)\big) > \big(r_n/2\big)^s - \frac{1}{N_k}.
\end{equation}

Now by the Vitali covering lemma, there exists a set $A\subset\N$ such that the collection
\[
\big(B(y_n,4r_n)\big)_{n\in A}
\]
is disjoint and such that
\[
Y \subset \bigcup_{n = 1}^\infty B(y_n,4r_n) \subset \bigcup_{n\in A}B(y_n,16r_n).
\]
In particular, for each $k\in\N$ we have
\[
\alpha_{Y,f}(s) \geq \mu_{N_k}(Y) \geq \sum_{n\in A}\mu_{N_k}\big(B(y_n,4r_n)\big) \geq \sum_{\substack{n\in A \\ \text{\eqref{rknbounds} holds}}}\left(\big(r_n/2\big)^s - \frac{1}{N_k}\right)_+,
\]
where $x_+ = \max(0,x)$. Now let us take the limit as $k$ approaches infinity. By the monotone convergence theorem, we may consider the limit of each summand separately. But if $n$ is fixed, then \eqref{rknbounds} holds for all sufficiently large $k$. Thus
\[
\alpha_{Y,f}(s) \geq \sum_{n\in A}\left(\big(r_n/2\big)^s - 0\right)_+ = \frac{1}{2^s}\sum_{n\in A}r_n^s.
\]
On the other hand, note that for each $n\in A$, the ball $B(y_n,16r_n)$ is admissible by \eqref{rnNgeq} and \eqref{rknbounds}. Thus the collection $\big(B(y_n,16r_n)\big)_{n\in A}$ is a collection of admissible balls which covers $Y$, and so
\[
\beta_{Y,f}(s) \leq \sum_{n\in A}(16r_n)^s = 16^s \sum_{n\in A}r_n^s.
\]
This completes the proof.
\end{proof}

\subsection{$\geq$ direction}\label{subsectionjarnikbesicovitchgeqdirection}
Now let $X$ be a hyperbolic metric space, let $G$ be a strongly discrete subgroup of $\Isom(X)$ of general type, and fix distinguished points $\zero\in X$ and $\xi\in\del X$. Fix $c > 0$. Fix $s\in(0,\delta)$ which is a member of the second supremum of Corollary \ref{corollaryjarnikbesicovitcheasyparts}, i.e. so that
\begin{equation}
\label{Pxis}
P_\xi(s) < \frac{\delta - s}{c}.
\end{equation}
Fix $\tau \geq \sigma > 0$ large to be announced below. $\tau$ will depend on $\sigma$.

\subsubsection{A fuzzy metric space for each $r > 0$}
\label{subsubsectionjarnikpreliminaries2}
Let
\begin{align*}
\PP_x &:= \Shad(x,\sigma) &
\w\PP_x &:= \Shad(x,\tau)\\
D_x &:= \Diam(\PP_x) &
\w D_x &:= \Diam(\w\PP_x).
\end{align*}
Now fix $r > 0$ and let
\[
S_{\xi,\tau}(r) := \{x\in G(\zero):10\w\PP_x\subset B(\xi,r)\}.
\]
\begin{observation}
\label{observationjarnikpreliminaries}
\[
\Lrsigma\cap B(\xi,r) \subset \bigcup_{x\in S_{\xi,\tau}(r)}\PP_x.
\]
\end{observation}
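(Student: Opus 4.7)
The plan is to extract, for each $\eta\in\Lrsigma\cap B(\xi,r)$, a radial sequence of group elements whose shadows shrink down to $\eta$, and then argue that eventually one of these shadows sits deep enough inside $B(\xi,r)$ to qualify as an element of $S_{\xi,\tau}(r)$.

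First, I would fix $\eta\in\Lrsigma\cap B(\xi,r)$ and unpack the definition of $\Lrsigma$: there exists a sequence $(g_n)_1^\infty$ in $G$ with $g_n(\zero)\tendsto n \eta$ $\sigma$-radially, so $\lb\zero|\eta\rb_{g_n(\zero)}\leq\sigma$ for every $n$. Translating through the definition of a shadow, this is exactly the statement that $\eta\in\Shad(g_n(\zero),\sigma)=\PP_{g_n(\zero)}$, and in particular $\eta\in\w\PP_{g_n(\zero)}$ since $\tau\geq\sigma$.

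Next, I would use the Diameter of Shadows Lemma to control how the shadows shrink. Since $\dist(\zero,g_n(\zero))\to\infty$ (being necessary for $g_n(\zero)$ to converge to a boundary point), the lemma gives
\[
\w D_{g_n(\zero)}=\Diam(\w\PP_{g_n(\zero)})\lesssim_{\times,\tau} b^{-\dist(\zero,g_n(\zero))}\tendsto n 0,
\]
and by the construction of $10\w\PP_{g_n(\zero)}$ in Definition \ref{definitionaS}, also $\Diam(10\w\PP_{g_n(\zero)})\leq 10\w D_{g_n(\zero)}\tendsto n 0$.

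The third step is the containment. Given any $y\in 10\w\PP_{g_n(\zero)}$, since $\eta\in\w\PP_{g_n(\zero)}\subset 10\w\PP_{g_n(\zero)}$ we have
\[
\Dist(y,\xi)\leq\Dist(y,\eta)+\Dist(\eta,\xi)\leq 10\w D_{g_n(\zero)}+\Dist(\eta,\xi).
\]
Because $\Dist(\eta,\xi)\leq r$ and $10\w D_{g_n(\zero)}\to 0$, for all sufficiently large $n$ we obtain $10\w\PP_{g_n(\zero)}\subset B(\xi,r)$, i.e.\ $g_n(\zero)\in S_{\xi,\tau}(r)$. Combined with $\eta\in\PP_{g_n(\zero)}$, this completes the inclusion.

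The only delicate point is the boundary case $\Dist(\eta,\xi)=r$, where the strict inequality one would like might fail; I would sidestep this either by noting that the statement as used later is insensitive to shrinking $r$ slightly (so it suffices to prove the inclusion for an arbitrary $r'<r$ and take the union), or by remarking that radial limit points arbitrarily close to $\xi$ on the boundary sphere are negligible for the Hausdorff-content arguments in which the observation is deployed. Thus the main obstacle is a routine one and the core of the proof is the three-line combination of the radial condition, the Diameter of Shadows Lemma, and the triangle inequality.
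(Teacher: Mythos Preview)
Your proof is correct and follows essentially the same approach as the paper's: take a $\sigma$-radial sequence for $\eta$, observe that $\eta\in\PP_{x_n}$, use that $\Diam(\w\PP_{x_n})\to 0$ to conclude $10\w\PP_{x_n}\subset B(\xi,r)$ for large $n$. The paper's proof is terser but identical in substance; your concern about the boundary case $\Dist(\eta,\xi)=r$ is legitimate, but the paper glosses over it as well, and as you note it is harmless for the Hausdorff-content estimate \eqref{Hxisigmaleq} that the observation feeds into.
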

\begin{proof}
Fix $\eta\in \Lrsigma\cap B(\xi,r)$. Then there exists a sequence $x_n\tendsto n \eta$ so that $\eta\in\bigcap_{n = 1}^\infty \PP_{x_n}$. Now since $\Diam(\w\PP_{x_n})\tendsto n 0$, we have
\[
10\w\PP_{x_n}\subset B(\xi,r)
\]
for all $n$ sufficiently large. Thus $x_n\in S_{\xi,\tau}(r)$ for all $n$ sufficiently large, which completes the proof.
\end{proof}
Now applying Proposition \ref{proposition5r}, we see that there exists a set $A_r\subset S_{\xi,\tau}(r)$ such that the collection
\begin{equation}
\label{2PxAr}
\big(2\w\PP_x\big)_{x\in A_r}
\end{equation}
is disjoint, and such that
\[
\bigcup_{x\in S_{\xi,\tau}(r)}\PP_x \subset \bigcup_{x\in S_{\xi,\tau}(r)}2\w\PP_x \subset \bigcup_{x\in A_r}10\w\PP_x.
\]
Combining with Observation \ref{observationjarnikpreliminaries} and \eqref{Hxisigmadef} yields
\begin{equation}
\label{Hxisigmaleq}
H_{\xi,\sigma,s}(r) \leq \HH_\infty^s\left(\bigcup_{x\in A_r}10\w\PP_x\right).
\end{equation}
Now for each $x\in A_r$, choose $\rep(x)\in\PP_x$, and let
\begin{equation}
\label{fyxdef}
f(\rep(x)) := \w D_x \leq \Dist(\rep(x),\del X\butnot 2\w\PP_x).
\end{equation}
Let
\[
Y_r = \rep(A_r).
\]
Then by the inequality of \eqref{fyxdef} and the disjointness of the collection \eqref{2PxAr}, the pair $(Y_r,f)$ is a fuzzy metric space.
\begin{claim}
\[
H_{\xi,\sigma,s}(r) \leq 7^s \beta_{Y_r,f}(s) \leq 224^s \alpha_{Y_r,f}(s).
\]
\end{claim}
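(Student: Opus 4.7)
The claim splits into two inequalities, and each is a direct application of machinery already developed. For the second inequality, I would just invoke Claim \ref{claimauxiliary} applied to the fuzzy metric space $(Y_r,f)$, which gives $\beta_{Y_r,f}(s)\leq 32^s\alpha_{Y_r,f}(s)$; multiplying by $7^s$ and using $7\cdot 32=224$ yields $7^s\beta_{Y_r,f}(s)\leq 224^s\alpha_{Y_r,f}(s)$. So the main work is the first inequality.

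For the first inequality, the plan is to start from \eqref{Hxisigmaleq}, which reduces the task to bounding $\HH_\infty^s\bigl(\bigcup_{x\in A_r} 10\w\PP_x\bigr)$ in terms of $\beta_{Y_r,f}(s)$. First I would fix an arbitrary admissible cover $\bigl(B(y_k,r_k)\bigr)_k$ of $Y_r$ in the fuzzy-metric-space sense, then convert it into a cover of $\bigcup_{x\in A_r} 10\w\PP_x$ in $(\del X,\Dist)$ whose total $\Diam^s$-cost is at most $7^s\sum_k r_k^s$; taking the infimum over admissible covers then gives the desired inequality.

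The converted cover is produced as follows. Since the $B(y_k,r_k)$ cover $Y_r$, each $x\in A_r$ admits an index $k(x)$ with $\pi(x)\in B(y_{k(x)},r_{k(x)})$. I would then set
\[
U_k := \bigcup\bigl\{10\w\PP_x : x\in A_r,\ k(x)=k\bigr\},
\]
so that $(U_k)_k$ covers $\bigcup_{x\in A_r} 10\w\PP_x$. The key estimate is a diameter bound of the form $\Diam(U_k)\leq 7 r_k$; its two ingredients are (i) admissibility of $B(y_k,r_k)$ together with Observation \ref{observationtwopoints}, which forces $\w D_x=f(\pi(x))\leq r_k$ for every $x$ with $k(x)=k$, and (ii) the definition of $10\w\PP_x$ from Definition \ref{definitionaS}, which places $10\w\PP_x$ inside a controlled neighborhood of $\pi(x)\in\w\PP_x$. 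Combining these with the triangle inequality (using $\Dist(\pi(x),\pi(x'))\leq 2r_k$ whenever $k(x)=k(x')=k$) yields the diameter bound, and then $\HH_\infty^s\bigl(\bigcup_x 10\w\PP_x\bigr)\leq\sum_k\Diam^s(U_k)\leq 7^s\sum_k r_k^s$ as required.

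The one delicate point, and the main obstacle, is bookkeeping the exact constant $7$: the naive estimate gives $10\w\PP_x\subset B(\pi(x),\tfrac{11}{2}\w D_x)$, and hence $\Diam(U_k)\leq 13\, r_k$, so reaching the constant $7$ will require a sharper argument, either by tightening the containment of $10\w\PP_x$ near $\pi(x)$ or by using a carefully centered ball in $\del X$ in place of $U_k$ itself. Apart from this bookkeeping, the argument is routine and combines the covering $(U_k)_k$ with Claim \ref{claimauxiliary}.
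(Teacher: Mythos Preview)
Your approach is essentially the paper's. For the second inequality the paper also just invokes Claim~\ref{claimauxiliary}. For the first, the paper carries out precisely the ``carefully centered ball'' fix you propose at the end: rather than bounding $\Diam(U_k)$, it shows that whenever $\rep(x)\in B(y_k,r_k)$ one has the chain of inclusions
\[
10\w\PP_x \;=\; \w\PP_x^{(\frac{9}{2}\w D_x)} \;\subset\; B(\rep(x),6\w D_x) \;\subset\; B(\rep(x),6r_k) \;\subset\; B(y_k,7r_k),
\]
using admissibility together with Observation~\ref{observationtwopoints} for the middle step, and then takes $(B(y_k,7r_k))_k$ as the cover of $\bigcup_{x\in A_r}10\w\PP_x$.

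Your worry about the constant $7$ is legitimate, and the paper does not in fact resolve it: a ball of radius $7r_k$ has diameter at most $14r_k$, so the passage $\sum_k(7r_k)^s \geq \HH_\infty^s\big(\bigcup_k B(y_k,7r_k)\big)$ tacitly treats radius as diameter (compare \eqref{Hausdorffcontentdef}). With either your $U_k$ or the paper's balls the honest constant is $14^s$ (or $13^s$ via your sharper $\tfrac{11}{2}$ containment), giving $448^s$ in place of $224^s$. Since only a $\lesssim_\times$ bound is used downstream (see Corollary~\ref{corollarynur}), this is harmless.
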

\begin{proof}
Note that the second inequality is just Claim \ref{claimauxiliary}. To prove the first inequality, we prove the following:
\begin{subclaim}
\label{subclaimadmissiblecover}
Let $\big(B(y_k,r_k)\big)_{k = 1}^\infty$ be a collection of admissible balls which covers $Y_r$. Then
\[
\bigcup_{x\in A_r}10\w\PP_x \subset \bigcup_{k = 1}^\infty B(y_k,7r_k).
\]
\end{subclaim}
\begin{proof}
Fix $\eta\in 10\w\PP_x$ for some $x\in A_r$. Then $\rep(x)\in Y_r$, so $\rep(x)\in B(y_k,r_k)$ for some $k\in\N$. Since $B(y_k,r_k)$ is admissible, by Observation \ref{observationtwopoints} we have
\[
r_k \geq f(\rep(x)) = \w D_x
\]
and thus
\[
10\w\PP_x = \NN\left(\w\PP_x,\frac{9}{2}\w D_x\right) \subset \NN(\w\PP_x,5\w D_x) \subset B(\rep(x),6\w D_x) \subset B(\rep(x),6r_k) \subset B(y_k,7r_k).
\]

\QEDmod\end{proof}
Thus
\begin{align*}
7^s\beta_{Y_r,f}(s) &= \inf\left\{\sum_{k = 1}^\infty(7r_k)^s: \big(B(y_k,r_k)\big)_1^\infty\text{ is a collection of admissible balls which covers $Y_r$}\right\}\hspace{-2.2 in}\\
&\geq \inf\left\{\HH_\infty^s\left(\bigcup_{k = 1}^\infty B(y_k,7r_k)\right):\right.\\
&\hspace{1 in}\left.\big(B(y_k,r_k)\big)_1^\infty\text{ is a collection of admissible balls which covers $Y_r$}\right\}\hspace{-2.2 in}\\
&\geq \HH_\infty^s\left(\bigcup_{x\in A_r}10\w\PP_x\right) \by{Subclaim \ref{subclaimadmissiblecover}}&~\\
&\geq H_{\xi,\sigma,s}(r) \by{\eqref{Hxisigmaleq}}.&~
\qedhere\end{align*}
\end{proof}

Reading out the definition of $\alpha_{Y_r,f}(s)$, we come to the following conclusion:

\begin{corollary}
\label{corollarynur}
There exists a measure $\nu_r\in\MM(A_r)$ with
\begin{equation}
\label{nur}
\nu_r(A_r) \geq \frac{1}{225^s}H_{\xi,\sigma,s}(r),
\end{equation}
and such that the measure $\rep[\nu_r]$ is $s$-admissible.
\end{corollary}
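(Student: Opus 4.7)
The plan is to read off the corollary from the preceding claim, namely $H_{\xi,\sigma,s}(r) \leq 224^s \alpha_{Y_r,f}(s)$, by extracting a near-optimal $s$-admissible measure on $Y_r$ and then transporting it back to $A_r$ along $\rep$. The factor $225^s$ (rather than $224^s$) in the target inequality provides precisely the slack needed to absorb the gap to the supremum defining $\alpha_{Y_r,f}(s)$.

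If $H_{\xi,\sigma,s}(r) = 0$ the zero measure works, so assume otherwise. The inequality $H_{\xi,\sigma,s}(r) \leq 224^s \alpha_{Y_r,f}(s)$ together with the definition of $\alpha_{Y_r,f}(s)$ as a supremum produces, for any $\varepsilon > 0$, an $s$-admissible measure $\mu \in \MM(Y_r)$ with $\mu(Y_r) \geq \alpha_{Y_r,f}(s) - \varepsilon$. Selecting
\[
\varepsilon := H_{\xi,\sigma,s}(r)\bigl(224^{-s} - 225^{-s}\bigr) > 0
\]
yields $\mu(Y_r) \geq H_{\xi,\sigma,s}(r)/225^s$.

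Next I would verify that $\rep : A_r \to Y_r$ is a bijection. Surjectivity holds by definition of $Y_r = \rep(A_r)$. For injectivity, observe that if $x, x' \in A_r$ are distinct then the collection \eqref{2PxAr} gives $2\w\PP_x \cap 2\w\PP_{x'} = \emptyset$, hence in particular $\PP_x \cap \PP_{x'} = \emptyset$, which forces $\rep(x) \neq \rep(x')$ since $\rep(x) \in \PP_x$ and $\rep(x') \in \PP_{x'}$. Setting $\nu_r := (\rep^{-1})_*\mu \in \MM(A_r)$ then gives $\rep[\nu_r] = \mu$, which is $s$-admissible by construction, while $\nu_r(A_r) = \mu(Y_r) \geq H_{\xi,\sigma,s}(r)/225^s$.

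There is no serious obstacle here: the substance of the corollary is concentrated in the preceding claim, and this final step amounts to routine bookkeeping---passing from a supremum to a near-maximizer, transporting it along a bijection, and exploiting the gap between $224^s$ and $225^s$ to avoid any compactness or attainment question for $\alpha_{Y_r,f}(s)$.
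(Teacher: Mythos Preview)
Your proposal is correct and is precisely the intended argument: the paper offers no proof beyond the phrase ``Reading out the definition of $\alpha_{Y_r,f}(s)$, we come to the following conclusion,'' and your elaboration---extracting a near-optimal $s$-admissible measure on $Y_r$, pulling it back along the bijection $\rep$, and using the slack between $224^s$ and $225^s$ to avoid attainment issues---is exactly what that phrase unpacks to.
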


\subsubsection{Construction of a tree}
By \eqref{Pxis}, there exist $t\in(s,\delta)$ and $\varepsilon > 0$ such that
\begin{equation}
\label{Pxits}
P_\xi(s) + \varepsilon \leq \frac{t - s - \varepsilon}{c}.
\end{equation}
Now by \eqref{Pxidef}, if $\sigma$ is large enough, then there exists a sequence $r_k\tendsto k 0$ such that
\[
\frac{\log_b(H_{\xi,\sigma,s}(r_k))}{\log_b(r_k)} \leq P_\xi(s) + \varepsilon
\]
for all $k\in\N$. Rearranging yields
\[
H_{\xi,\sigma,s}(r_k) \geq r_k^{P_\xi(s) + \varepsilon}
\]
which together with (\ref{Pxits}) yields
\begin{equation}
\label{Nrks}
H_{\xi,\sigma,s}(r_k) \geq r_k^{(t - s - \varepsilon)/c}
\end{equation}
for all $k\in\N$.

Now, since $G$ is of general type, we have $h_1(\xi) \neq \xi$ for some $h_1\in G$ (Observation \ref{observationnoglobalfixedpoint}). Choose $\varepsilon_2 > 0$ small enough so that
\[
\Dist\big(B(\xi,\varepsilon_2),h_1(B(\xi,\varepsilon_2))\big) > 0.
\]
Let $\w B = B(\xi,\varepsilon_2)$. Let $h_0 = \id$.

By the Big Shadows Lemma, we may suppose that $\sigma$ is large enough so that for all $z\in X$,
\[
\Diam(\del X\butnot\Shad_z(\zero,\sigma)) < \Dist(\w B,h_1(\w B)).
\]
Thus, there exists $i = i_z = 0,1$ such that $h_i(\w B)\subset \Shad_z(\zero,\sigma)$.

Fix $C > 0$ large and $\lambda > 0$ small to be announced below.

We will define a set $T\subset G(\zero)$,\Footnote{We will think of $T$ as being a ``tree''; however, it is not a tree in the strict sense according to Definition \ref{definitiontree}.} a binary relation $\in_T\subset T\times T$, and a map $\mu:T\to[0,1]$ recursively as follows:\Footnote{You can imagine this as a program being run by an infinite computer which has infinitely many parallel processors. Each time a command of the form ``for all X, do Y'' is performed, the computation splits up, giving each X its own processor. This parallel processing is necessary, because otherwise the program would only compute one branch of the tree. \label{footnoteparallelprocessors}}
\begin{itemize}
\item[1.] Each time a new point is put into $T$, call it $x$ and go to step 3.
\item[2.] Set $\mu(\zero) = 1$ and put $\zero$ into $T$; it is the root node.
\item[3.] Recall that $x$ is a point which has just been put into $T$, with $\mu(x) > 0$ defined. We will follow one of two different possible procedures, using the following test:
\begin{itemize}
\item[i.] If $\mu(x) \geq D_x^{t - \varepsilon}$, then go to step 4.
\item[ii.] If there is no $k\in\N$ such that
\begin{equation}
\label{RkxRk}
C^{-1}r_k \leq b^{-c\dist(\zero,x)} < r_k \leq \varepsilon_2,
\end{equation}
then go to step 4.
\item[iii.] Otherwise, go to step 5.
\end{itemize}
\item[4]
\begin{itemize}
\item[i.] Label $x$ as type 1.
\item[ii.] Apply Lemma \ref{lemmaDSU} (with $s = t$) to get a finite subset $T(x)\subset G(\zero)$.
\item[iii.] For each $y\in T(x)$, set
\[
\mu(y) = \frac{D_y^t}{\sum_{z\in T(x)}D_z^t}\mu(x),
\]
and then put $y$ into $T$, setting $y\in_T x$.
\end{itemize}
\item[5]
\begin{itemize}
\item[i.] Label $x$ as type 2.
\item[ii.] Let $j = j_x \in G$ be the unique element so that $x = j_x(\zero)$.
\item[iii.] Let $i = i_x \in \{0,1\}$ be chosen so that
\[
h_i(\w B) \subset \Shad_{j^{-1}(\zero)}(\zero,\sigma),
\]
and let $h = h_x = h_{i_x}$.
\item[iv.] Let $g = g_x = j \circ h$. Note that applying $j$ to the above equation we get
\begin{equation}
\label{gBPx}
g(\w B) \subset \PP_x.
\end{equation}
\item[v.] Choose $k = k_x \in\N$ so that \eqref{RkxRk} holds.
\item[vi.] Let $\w B_k = B(h(\xi),r_k)$; by the last inequality of \eqref{RkxRk}, we have $\w B_k \subset \w B$. In particular, by \eqref{gBPx} we have
\begin{equation}
\label{gBik}
g(\w B_k) \subset \PP_x.
\end{equation}
\item[vii.] Let
\begin{align*}
\w T(x) &= g(A_{r_k})\\
\nu_x &= g[\nu_{r_k}].
\end{align*}
where $A_{r_k}$, $\nu_{r_k}$ are as in \sectionsymbol\ref{subsubsectionjarnikpreliminaries2}.
\item[viii.] For each $y\in \w T(x)$, set
\[
\mu(y) = \frac{\nu_x(y)}{\nu_x(\w T(x))}\mu(x),
\]
and put $y$ into $T$, setting $y\in_T x$.
\end{itemize}
\end{itemize}
\begin{claim}
\label{claimparty}
Let $x\in T$ be a type 2 point. Then the collection
\[
\big(3\PP_y\big)_{y \in \w T(x)}
\]
is a disjoint collection of shadows which are contained in $g(\w B_k)$, where $k = k_x$, assuming $\tau$ is sufficiently large.
\end{claim}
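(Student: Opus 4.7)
The plan is to derive both conclusions from the single intermediate containment
\[
3\PP_y \subset g(\w\PP_z)
\]
for every $z \in A_{r_k}$ and $y = g(z) \in \w T(x)$, provided $\tau$ is chosen large enough in terms of $\sigma$. Granted this, containment in $g(\w B_k)$ follows from $\w\PP_z \subset 10\w\PP_z \subset \w B_k$ (the latter by membership of $z$ in $A_{r_k}$), while disjointness follows because the collection $(2\w\PP_z)_{z \in A_{r_k}}$ was arranged to be disjoint in the Vitali step of \sectionsymbol\ref{subsubsectionjarnikpreliminaries2}; hence so is its $g$-image, and $3\PP_y \subset g(\w\PP_z) \subset g(2\w\PP_z)$.

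To prove the intermediate containment, first apply Lemma \ref{lemmaCShad} to absorb the thickening: there exists $\sigma_1 = \sigma_1(\sigma)$ such that $3\PP_y = 3\Shad_\zero(y,\sigma) \subset \Shad_\zero(y,\sigma_1)$. Since $g$ is an isometry, $g(\w\PP_z) = g(\Shad_\zero(z,\tau)) = \Shad_{g(\zero)}(y,\tau)$, so matters reduce to the change-of-basepoint containment $\Shad_\zero(y,\sigma_1) \subset \Shad_{g(\zero)}(y,\tau)$, equivalently (after applying $g^{-1}$) $\Shad_{g^{-1}(\zero)}(z,\sigma_1) \subset \Shad_\zero(z,\tau)$: for every $\eta' \in \del X$ with $\lb g^{-1}(\zero)|\eta'\rb_z \leq \sigma_1$, we must show $\lb \zero|\eta'\rb_z \leq \tau$.

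By Gromov's inequality, $\lb g^{-1}(\zero)|\eta'\rb_z \gtrsim_\plus \min(\lb \zero|g^{-1}(\zero)\rb_z,\ \lb \zero|\eta'\rb_z)$, so it suffices to show $\lb \zero|g^{-1}(\zero)\rb_z$ exceeds $\sigma_1$ by a controlled amount. This is the crux of the argument, and it is handled by using both the choice of $h = h_{i_x}$ and the position of $z$. First, since $h(\w B) \subset \Shad_{j^{-1}(\zero)}(\zero,\sigma)$ and $\xi \in \w B$, transporting by $h^{-1}$ and absorbing the bounded quantity $\dist(\zero,h^{-1}(\zero))$ yields $\lb g^{-1}(\zero)|\xi\rb_\zero \lesssim_{\plus,\sigma} 0$: from the basepoint $\zero$, the point $g^{-1}(\zero)$ is misaligned with $\xi$. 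Second, $z \in A_{r_k}$ and $10\w\PP_z \subset B(\xi,r_k)$, combined with the Diameter of Shadows Lemma, give $\dist(\zero,z) \gtrsim_\plus -\log_b(r_k)$, and Gromov's inequality applied to any $\eta \in \w\PP_z \subset B(\xi,r_k)$ gives $\lb z|\xi\rb_\zero \gtrsim_\plus -\log_b(r_k)$. A third application of Gromov's inequality to the triple $(g^{-1}(\zero),z,\xi)$ at $\zero$ then forces $\lb g^{-1}(\zero)|z\rb_\zero \lesssim_{\plus,\sigma} 0$ as soon as $-\log_b(r_k)$ exceeds the relevant constants, and identity (i) of Proposition \ref{propositionbasicidentities} gives
\[
\lb \zero|g^{-1}(\zero)\rb_z = \dist(\zero,z) - \lb g^{-1}(\zero)|z\rb_\zero,
\]
which is at least $-\log_b(r_k)$ up to a constant, hence arbitrarily large provided $r_k$ is sufficiently small. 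This smallness is harmless: it can be arranged by shrinking $\varepsilon_2$ in the condition \eqref{RkxRk} or by discarding finitely many initial terms of $(r_k)$, neither of which affects the rest of the construction. Taking $\tau$ large enough relative to $\sigma_1$ then completes the proof of the intermediate containment and hence of the claim.
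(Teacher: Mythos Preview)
Your proof is correct and follows essentially the same route as the paper's. Both arguments reduce to the containment $g^{-1}(3\PP_{g(z)}) \subset \w\PP_z$ (the paper states it with $2\w\PP_y$ but in fact proves the stronger inclusion), and both establish it by first showing the key estimate $\lb g^{-1}(\zero)\,|\,z\rb_\zero \lesssim_{\plus,\sigma} 0$ via Gromov's inequality with a bridge point, then using (b)/(i) of Proposition~\ref{propositionbasicidentities} and one more Gromov step.

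The only substantive difference is the choice of bridge point. The paper takes $\zeta := \rep(z) \in \PP_z$, observes $\zeta \in \w B \subset g^{-1}(\PP_x) = \Shad_{g^{-1}(\zero)}(h(\zero),\sigma)$, and uses $\lb z|\zeta\rb_\zero \asymp_{\plus,\sigma} \dist(\zero,z)$; this argument goes through uniformly in $r_k$. You instead bridge through $\xi$ itself, deducing $\lb g^{-1}(\zero)|\xi\rb_\zero \lesssim_{\plus,\sigma} 0$ directly from the choice of $h$ and $\lb z|\xi\rb_\zero \gtrsim_\plus -\log_b(r_k)$ from the containment $10\w\PP_z \subset B(\xi,r_k)$. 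This is arguably more transparent but costs the extra hypothesis that $-\log_b(r_k)$ exceed a constant depending on $\sigma$; as you correctly note, this can be absorbed by shrinking $\varepsilon_2$, and since the Big Shadows condition on $\sigma$ only becomes \emph{weaker} as $\varepsilon_2$ shrinks, there is no circularity in the order of choices.
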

Note that by \eqref{gBik}, we therefore have $3\PP_y\subset\PP_x$.
\begin{proof}[Proof of Claim \ref{claimparty}]
It is equivalent to show that the collection
\[
\big(g^{-1}(3\PP_{g(y)})\big)_{y\in A_{r_k}}
\]
is a disjoint collection of sets which are contained in $\w B_k$. But by the construction of $A_{r_k}$, the collection
\[
\big(2\w\PP_y\big)_{y\in A_{r_k}}
\]
is such a collection. Thus to complete the proof it suffices to demonstrate the following:
\begin{subclaim}
\label{subclaimparty}
\begin{equation}
\label{gPgy}
g^{-1}\big(3\PP_{g(y)}\big) \subset 2\w\PP_y
\end{equation}
for every $y\in A_{r_k}$, assuming $\tau$ is sufficiently large.
\end{subclaim}
\begin{proof}
Fix such a $y$. Let $z = g^{-1}(\zero)$, and note that
\begin{align*}
\zeta := \rep(y) &\in \PP_y \subset 10\w\PP_y \subset \w B_k \subset \w B \subset g^{-1}(\PP_x)\\
&= \Shad_z(h_x(\zero),\sigma)\\
&\subset \Shad_z(\zero,\sigma + \dist(\zero,h_x(\zero))),
\end{align*}
and thus by Gromov's inequality
\[
0 \asymp_{\plus,\sigma} \lb z|\zeta\rb_\zero \gtrsim_\plus \min(\lb y|z\rb_\zero, \lb y|z\rb_\zero).
\]
On the other hand, since $\zeta \in \PP_y$, we have
\[
\lb y|\zeta\rb_\zero \asymp_{\plus,\sigma} \dist(\zero,y) \geq \lb y|z\rb_\zero,
\]
and so
\begin{equation}
\label{yz0}
\lb y|z\rb_\zero \asymp_{\plus,\sigma} 0.
\end{equation}
Now let us demonstrate \eqref{gPgy}. Fix $\eta\in g^{-1}\big(3\PP_{g(y)}\big)$; then
\[
\lb y|\eta\rb_z \asymp_{\plus,\sigma} \dist(y,z) \asymp_{\plus,\sigma} \dist(\zero,y) + \dist(\zero,z)
\]
by \eqref{yz0}. Now by (d) of Proposition \ref{propositionbasicidentities}
\[
\lb y|\eta\rb_\zero \gtrsim_\plus \lb y|\eta\rb_z - \dist(\zero,z) \asymp_{\plus,\sigma} \dist(\zero,y).
\]
Letting $\tau$ be the implied constant of this asymptotic, we have $\eta\in\w\PP_y$.
\QEDmod\end{proof}
This completes the proof of Claim \ref{claimparty}.
\end{proof}

Let $T(\infty)$ be the set of all branches through $T$, i.e. the set of all sequences $\xx = (x_n)_0^\infty$ for which
\[
\cdots\in_T x_2\in_T x_1 \in_T x_0 = \zero.
\]
We have a map
\[
\pi:T(\infty)\to\Lrsigma(G)
\]
defined as follows: If $\xx = (x_n)_0^\infty \in T(\infty)$, then the sequence
\[
(\PP_{x_n})_0^\infty
\]
is a decreasing sequence of closed sets whose diameters tend to zero. We define $\pi(\xx)$ to be the unique intersection point and we let $\limitset = \pi(T(\infty)) \subset \Lrsigma$. By the Kolmogorov consistency theorem, there exists a unique measure $\mu\in\MM(\limitset)$ satisfying
\[
\mu(\PP_x) = \mu(x)
\]
for all $x\in T$.
\begin{claim}
\[
\limitset \subset W_{c,\xi}.
\]
\end{claim}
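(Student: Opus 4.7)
\textbf{Proof plan for the claim $\limitset \subset W_{c,\xi}$.}

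Fix a branch $\xx = (x_n)_0^\infty \in T(\infty)$ and let $\eta = \pi(\xx)$. The plan is to produce, for each type 2 node $x_n$ appearing in $\xx$, an element $g_n := g_{x_n} \in G$ that gives a ``good approximation'' to $\eta$ in the sense that $\Dist(g_n(\xi),\eta) \lesssim_\times b^{-(1+c)\dist(\zero,g_n(\zero))}$, and then to argue that infinitely many type 2 nodes occur along $\xx$.

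\emph{Step 1 (good approximation from each type 2 node).} Fix $n$ such that $x = x_n$ is type 2 and let $g = g_x = j_x\circ h_x$, $k = k_x$. Since $h_x\in\{\id,h_1\}$, we have $\dist(\zero,g(\zero))\asymp_\plus \dist(\zero,x)$ and $b^{-\dist(\zero,g(\zero))} \asymp_\times D_x$ by the Diameter of Shadows Lemma. The construction of $g$ ensures $\xi \in \w B \subset \Shad_{g^{-1}(\zero)}(\zero,\sigma')$ for some $\sigma'$ depending only on $\sigma$ and $h_1$ (after enlarging $\sigma$ slightly), so the Bounded Distortion Lemma applies. The child $x_{n+1}\in \w T(x)$ satisfies $3\PP_{x_{n+1}} \subset g(\w B_k)$ by Claim \ref{claimparty}, whence $g^{-1}(\eta)\in \w B_k = B(\xi,r_k)$, which also lies in $\Shad_{g^{-1}(\zero)}(\zero,\sigma')$. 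Applying Lemma \ref{lemmaboundeddistortion} to the pair $\xi, g^{-1}(\eta)$, together with $r_k \leq Cb^{-c\dist(\zero,x)}$, gives
\[
\Dist(g(\xi),\eta) \;\asymp_{\times}\; b^{-\dist(\zero,g(\zero))}\Dist(\xi,g^{-1}(\eta)) \;\lesssim_{\times}\; b^{-\dist(\zero,g(\zero))}\cdot b^{-c\dist(\zero,x)} \;\asymp_{\times}\; b^{-(1+c)\dist(\zero,g(\zero))}.
\]
Taking $-\log_b$ and dividing by $\dist(\zero,g(\zero))$, each type 2 node contributes a term $\geq 1+c - O(1/\dist(\zero,x))$ to the lim sup defining $\omega_\xi(\eta)$.

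\emph{Step 2 (infinitely many type 2 nodes on every branch).} Along $\xx$, a node fails to be type 2 either because $\mu(x)\geq D_x^{t-\varepsilon}$ or because no $k$ fulfills \eqref{RkxRk}. For the first condition, at a type 1 node the children satisfy
\[
\frac{\mu(y)}{D_y^{t-\varepsilon}} \;=\; \frac{\mu(x) D_y^{t}}{\bigl(\sum_{z\in T(x)} D_z^t\bigr)\,D_y^{t-\varepsilon}} \;\leq\; \lambda^{\varepsilon}\,\frac{\mu(x)}{D_x^{t-\varepsilon}},
\]
where the inequality uses $D_y\leq \lambda D_x$ and Lemma \ref{lemmaDSU}(iii). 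Thus the ratio $\mu(x_n)/D_{x_n}^{t-\varepsilon}$ contracts by the factor $\lambda^\varepsilon<1$ at each consecutive type 1 node, so after finitely many steps it drops below $1$, i.e.\ $\mu(x_n)<D_{x_n}^{t-\varepsilon}$. The second obstruction is removed by choosing $C$ large enough, depending on the gaps of $(r_k)$: after passing (once and for all) to a subsequence of $(r_k)$ with $r_{k+1}/r_k \geq C^{-1}$, every sufficiently small positive number falls into some interval $[C^{-1}r_k,r_k)$, and $b^{-c\dist(\zero,x_n)}\to 0$. Combining these two observations, the branch cannot enter a tail of purely type 1 nodes; hence type 2 nodes occur for arbitrarily large $n$.

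\emph{Step 3 (conclusion).} Since type 2 nodes appear infinitely often and $\dist(\zero,g_{x_n}(\zero))\to\infty$, Step 1 furnishes infinitely many distinct $g\in G$ with
\[
\frac{-\log_b\Dist(g(\xi),\eta)}{\dist(\zero,g(\zero))} \;\geq\; 1+c - \frac{O(1)}{\dist(\zero,g(\zero))},
\]
so $\omega_\xi(\eta)\geq 1+c$, i.e.\ $\eta\in W_{c,\xi}$. The main technical hurdle is Step 2: verifying that the ``no valid $k$'' obstruction can be ruled out uniformly along every branch, which forces us to thin the sequence $(r_k)$ before fixing $C$; one must check that \eqref{Nrks} still holds after this thinning, which it does because \eqref{Nrks} holds for all $k$ in the original sequence.
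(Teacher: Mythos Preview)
Your Step 1 is correct and matches the paper's argument essentially verbatim: use Claim \ref{claimparty} to get $g^{-1}(\eta)\in\w B_k$, then the Bounded Distortion Lemma plus $r_k\asymp b^{-c\dist(\zero,x)}$ and $\dist(\zero,g(\zero))\asymp_\plus\dist(\zero,x)$ give the required approximation. Your Step 2(a) is also correct; your contraction inequality $\mu(y)/D_y^{t-\varepsilon}\leq\lambda^\varepsilon\,\mu(x)/D_x^{t-\varepsilon}$ is a cleaner repackaging of the paper's iteration of $\mu(x_{n+1})/\mu(x_n)\leq D_{n+1}^t/D_n^t$.

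The genuine gap is Step 2(b). You propose to thin the sequence $(r_k)$ so that $r_{k+1}/r_k\geq C^{-1}$, making the intervals $[C^{-1}r_k,r_k)$ cover all sufficiently small numbers. But passing to a subsequence of a sequence decreasing to $0$ can only \emph{enlarge} the multiplicative gaps $r_k/r_{k+1}$, never shrink them; if for instance $r_k = 2^{-2^k}$, no subsequence and no fixed $C$ will make those intervals cover $(0,r_1)$. So the obstruction ``no valid $k$'' cannot be removed by any manipulation of $(r_k)$.

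The paper's fix puts the control on the \emph{tree} side, not the sequence side. Along a run of type 1 nodes, Lemma \ref{lemmaDSU}(ii) gives $D_{n+1}\geq\kappa D_n$, hence $\dist(\zero,x_n)$ increases by uniformly bounded increments, so $b^{-c\dist(\zero,x_n)}$ decreases by a bounded factor at each step. Given \emph{any} sufficiently small $r_k$, the first $n$ at which $b^{-c\dist(\zero,x_n)}\leq r_k$ then satisfies $b^{-c\dist(\zero,x_n)}\asymp_{\times,\kappa,\sigma} r_k$; taking $C$ to be this implied constant (which depends only on the partition structure, not on the gaps of $(r_k)$) makes \eqref{RkxRk} hold at that $n$. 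Combined with your Step 2(a), that $x_n$ must be type 2, which is the desired contradiction.
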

\begin{proof}
Fix $\xx = (x_n)_0^\infty\in T(\infty)$ and let $\eta = \pi(\xx) \in \limitset$. For each $n\in\N$ let
\begin{equation}
\label{abbreviations}
\PP_n = \PP_{x_n};\;\; D_n = D_{x_n};\;\; g_n = g_{x_n}.
\end{equation}
\begin{subclaim}
\label{subclaimtype2}
Infinitely many of the points $(x_n)_0^\infty$ are type 2.
\end{subclaim}
\begin{proof}
Suppose not; suppose that the points $x_N,x_{N + 1},\ldots$ are all type 1. Then for each $n\geq N$ we have
\[
\mu(x_{n + 1}) = \frac{D_{n + 1}^t}{\sum_{z\in T(x_n)}D_z^t} \mu(x_n);
\]
by (iii) of Lemma \ref{lemmaDSU} we have
\[
\sum_{z\in T(x_n)}D_z^t \geq D_n^t
\]
and thus
\[
\frac{\mu(x_{n + 1})}{\mu(x_n)} \leq \frac{D_{n + 1}^t}{D_n^t}.
\]
Iterating gives
\[
\frac{\mu(x_n)}{\mu(x_N)} \leq \frac{D_n^t}{D_N^t}.
\]
It follows that for all $n$ sufficiently large say $n\geq N_2\geq N$, we have
\begin{equation}
\label{muxn}
\mu(x_n) < D_n^{t - \varepsilon}.
\end{equation}
Let $k\in\N$ be large enough so that
\[
r_k < \min\big(b^{-c\dist(\zero,x_{N_2})},\varepsilon_2\big).
\]
Let $n > N_2$ be minimal so that
\[
b^{-c\dist(\zero,x_n)} \leq r_k;
\]
by the Diameter of Shadows Lemma and (ii) of Lemma \ref{lemmaDSU} we have
\[
b^{-c\dist(\zero,x_n)} \asymp_{\times,\sigma} r_k.
\]
Let $C > 0$ be the implied constant of this asymptotic; then \eqref{RkxRk} holds. But we also have \eqref{muxn}; it follows that $x_n$ is a type 2 point, contradicting our hypothesis that the points $x_N,x_{N + 1},\ldots$ are all type 1.
\QEDmod\end{proof}
The idea now is to associate each type 2 point in the sequence $(x_n)_0^\infty$ to a good approximation of $\eta$. Fix $n\in\N$ so that $x_n$ is type 2. Write $i_n = i_{x_n}$ and $k_n = k_{x_n}$.

Now by Claim \ref{claimparty} we have
\[
\eta\in \PP_{x_{n + 1}} \subset g_n(\w B_{k_n}),
\]
i.e.
\[
\Dist(\xi,g_n^{-1}(\eta)) \leq r_{k_n}.
\]
Note that $g_n(\xi)\in\PP_{x_n}$ by \eqref{gBPx}, and so
\[
\xi,g_n^{-1}(\eta) \in g_n^{-1}(\PP_{x_n}) \subset \Shad_{g_n^{-1}(\zero)}(\zero,\sigma + \dist(\zero,h_1(\zero)))
\]
Thus by (ii) of the Bounded Distortion Lemma we have
\begin{align*}
\Dist(g_n(\xi),\eta) &\asymp_{\times,\sigma} b^{-\dist(\zero,g_n(\zero))}\Dist(\xi,g_n^{-1}\eta)\\
&\leq_{\phantom{\times,\sigma}} b^{-\dist(\zero,g_n(\zero))} r_{k_n}\\
&\asymp_{\times,\sigma} b^{-\dist(\zero,g_n(\zero))}b^{-c\dist(\zero,x_n)}\\
&\asymp_{\times\phantom{,\sigma}} b^{-(c + 1)\dist(\zero,g_n(\zero))}.
\end{align*}
Thus
\[
\omega_\xi(\eta) \geq \limsup_{\substack{n\to\infty \\ \text{$x_n$ type 2}}} \frac{-\log_b\Dist(g_n(\xi),\eta)}{\dist(\zero,g_n(\zero))} \geq 1 + c,
\]
i.e. $\eta\in W_{c,\xi}$.
\end{proof}

It is left to show that $\HD(\limitset)\geq s$.
\begin{claim}
\label{claimsemiregular}
$\mu(B(\eta,r)) \lesssim_{\times,\sigma} r^s$ for all $r > 0$ and for all $\eta\in \limitset$.
\end{claim}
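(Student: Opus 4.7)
The strategy has two parts: first establish the pointwise invariant $\mu(\PP_x) \leq C D_x^s$ for every $x \in T$, then convert it into the ball bound by a careful choice of ancestor of $\eta$.

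For the invariant, I proceed by induction on tree depth. At a type 1 parent $x$, the formula $\mu(\PP_y) = (D_y^t / \sum_{z \in T(x)} D_z^t)\,\mu(\PP_x)$ combined with Lemma \ref{lemmaDSU}(iii), the inductive hypothesis, and the inequalities $s < t$, $D_y \leq D_x$ gives $\mu(\PP_y) \leq CD_y^s$. At a type 2 parent $x$, the classification rule ensures $\mu(\PP_x) \leq D_x^{t-\varepsilon}$, and I multiply three estimates for each child $y = g_x(y_0)$: (a) the $s$-admissibility of $\rep[\nu_{r_k}]$ yields $\nu_{r_k}(\{y_0\}) \leq D_{y_0}^s$, since the admissible ball $B(\rep(y_0), D_{y_0})$ meets $Y_{r_k}$ only at $\rep(y_0)$ by the fuzzy-metric separation; (b) Corollary \ref{corollarynur}, \eqref{Nrks}, and the equivalence $r_k \asymp D_x^c$ from \eqref{RkxRk} give $\nu_{r_k}(A_{r_k})^{-1} \lesssim D_x^{-(t-s-\varepsilon)}$; (c) the Bounded Distortion Lemma applied to $g_x$ (valid because $\PP_{y_0} \subset \w B \subset \Shad_{g_x^{-1}(\zero)}(\zero, \sigma')$ for an appropriate $\sigma'$) yields $D_y \asymp D_x D_{y_0}$. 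Together these give $\mu(\PP_y) \lesssim D_{y_0}^s \cdot D_x^{-(t-s-\varepsilon)} \cdot D_x^{t-\varepsilon} \asymp (D_x D_{y_0})^s \asymp D_y^s$.

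To convert the invariant into the ball bound, fix $\eta \in \limitset$ with branch $(x_n)$ through $\eta$ and $r > 0$ small (the case of large $r$ is handled by $\mu(B(\eta,r)) \leq 1 \lesssim r^s$). Let $n$ be the smallest index with $D_{x_n} < r/\kappa$. I first claim $B(\eta, r) \cap \limitset \subset \PP_{x_{n-1}}$: if the $x_{n-2} \to x_{n-1}$ split is type 1, Lemma \ref{lemmaDSU}(ii) gives $\Dist(\PP_{x_{n-1}}, \del X \setminus \PP_{x_{n-2}}) \geq \kappa D_{x_{n-2}} \geq (\kappa/\lambda) D_{x_{n-1}} > r$; if it is type 2, the disjointness of $(3\PP_z)_{z \in \w T(x_{n-2})}$ from Claim \ref{claimparty} yields $\Dist(\PP_{x_{n-1}}, \del X \setminus \PP_{x_{n-2}}) \geq D_{x_{n-1}} \geq r/\kappa > r$. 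If the $x_{n-1} \to x_n$ split is of type 1, then Lemma \ref{lemmaDSU}(ii) additionally forces $D_{x_{n-1}} \leq D_{x_n}/\kappa < r/\kappa^2$, so $D_{x_{n-1}} \asymp r$ and Step 1 gives $\mu(B(\eta, r)) \leq \mu(\PP_{x_{n-1}}) \lesssim r^s$.

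The main obstacle is the case where the $x_{n-1} \to x_n$ split is of type 2, in which the type 2 drop can make $D_{x_{n-1}}$ arbitrarily larger than $r$, so the naive bound $\mu(\PP_{x_{n-1}}) \lesssim D_{x_{n-1}}^s$ is far too weak. To handle this I decompose
\[
\mu(B(\eta, r)) \leq \mu(\PP_{x_n}) + \sum_{\substack{y \in \w T(x_{n-1}),\; y \neq x_n \\ \PP_y \cap B(\eta, r) \neq \emptyset}} \mu(\PP_y).
\]
The first term is $\lesssim D_{x_n}^s \lesssim r^s$ by Step 1. For the second, the disjointness of $(3\PP_z)_{z \in \w T(x_{n-1})}$ from Claim \ref{claimparty} implies $\Dist(\PP_y, \PP_{x_n}) \geq \max(D_y, D_{x_n})$, forcing each contributing sibling $y = g_{x_{n-1}}(y_0)$ to have $D_y \leq r$, equivalently $D_{y_0} \leq R := r/D_{x_{n-1}}$. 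Moreover, applying the Bounded Distortion Lemma to $g_{x_{n-1}}$ at the pair $\zeta := g_{x_{n-1}}^{-1}(\eta) \in \w B$ and $g_{x_{n-1}}^{-1}(\eta')$ (for any $\eta' \in B(\eta, r) \cap \PP_y \subset \PP_{x_{n-1}}$) shows that $\rep(y_0)$ lies within $O(R)$ of $\zeta$. A single admissible ball of radius $O(R)$ around any one such $\rep(y_0)$ then covers them all, so $s$-admissibility gives $\sum \nu_{r_k}(\{y_0\}) \lesssim R^s$; multiplying by the ratio $\mu(\PP_{x_{n-1}})/\nu_{r_k}(A_{r_k}) \lesssim D_{x_{n-1}}^s$ (from Step 1, type 2 case) yields the sum $\lesssim (D_{x_{n-1}} R)^s = r^s$, completing the proof.
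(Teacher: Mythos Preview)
Your proof follows essentially the same route as the paper's, with a helpful reorganization: you establish the invariant $\mu(\PP_x)\lesssim D_x^s$ by an upfront induction on tree depth, whereas the paper derives this same bound (as \eqref{muxn2}) inside its Case~2 by bootstrapping from Case~1 applied at the larger radius $\w r$. The core type~2 estimate is identical in both arguments: pull back by the map $g$ attached to the type~2 node and apply $s$-admissibility of $\rep[\nu_{r_k}]$ on a single admissible ball of radius $\asymp b^{\dist(\zero,x)}r$ (your final paragraph is exactly the paper's Subclaim~\ref{subclaimC2} plus \eqref{whereweleftoff3}, just phrased with the contribution of $x_n$ split off first).

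There is one mis-step. Your justification for $B(\eta,r)\cap\limitset\subset\PP_{x_{n-1}}$ does not prove what you claim: the estimates $\Dist(\PP_{x_{n-1}},\del X\setminus\PP_{x_{n-2}})>r$ that you derive in both the type~1 and type~2 cases of $x_{n-2}$ only yield $B(\eta,r)\subset\PP_{x_{n-2}}$, which says nothing about siblings of $x_{n-1}$ inside $\PP_{x_{n-2}}$. The paper handles this point differently and more directly: since $D_{x_{n-1}}\geq r/\kappa>r$ one has $B(\eta,r)\subset B(\eta,D_{x_{n-1}})\subset 3\PP_{x_{n-1}}$, and the paper then invokes $3\PP_{x_{n-1}}\cap\limitset\subset\PP_{x_{n-1}}$ (the sentence ``Since $3\PP_n\cap\limitset\subset\PP_n$'' at the start of the proof). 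That containment is exactly the sibling $3$-separation --- explicit for type~2 parents via Claim~\ref{claimparty}, and implicitly arranged in the type~1 construction. With that replacement your argument goes through unchanged.
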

\begin{proof}
Fix $\xx = (x_n)_0^\infty \in T(\infty)$ so that $\eta = \pi(\xx)$. As before we use the abbreviations \eqref{abbreviations}. Let $n = n(\eta,r)\in\N$ be maximal so that $D_n\geq r$. Then
\[
B(\eta,r) \subset B(\eta,D_n)\subset 3\PP_n.
\]
Since $3\PP_n\cap\limitset \subset \PP_n$, we have $B(\eta,r)\cap\limitset \subset \PP_n$.

On the other hand, the maximality of $n$ implies that
\begin{equation}
\label{rDn1}
r > D_{n + 1}.
\end{equation}
Now we divide into cases depending on whether $x = x_n$ is a type 1 point or a type 2 point.

\begin{itemize}
\item[Case 1:] $x$ is type 2. In this case, let
\[
i = i_n = i_{x_n}, \;\; k = k_n = k_{x_n},\text{ and }g = g_n = g_{x_n}.
\]
Then
\[
\mu(B(\eta,r))
\leq \sum_{\substack{y \in \w T(x) \\ \PP_y \cap B(\eta,r)\neq\smallemptyset}} \mu(y)\\
= \sum_{\substack{y \in A_{r_k} \\ \PP_{g(y)} \cap B(\eta,r)\neq\smallemptyset}}\frac{\nu_{r_k}(y)}{\nu_{r_k}(A_{r_k})}\mu(x).
\]
On the other hand, since $x$ is type 2 we have
\[
\mu(x) \leq D_x^{t - \varepsilon} \asymp_{\times,\sigma} b^{-(t - \varepsilon)\dist(\zero,x)};
\]
moreover, by \eqref{nur} and \eqref{Nrks} we have
\[
\nu_{r_k}(A_{r_l}) \gtrsim_\times H_{\xi,\sigma,s}(r_k) \geq r_k^{(t - s - \varepsilon)/c} \asymp_{\times,\sigma} b^{-(t - s - \varepsilon)\dist(\zero,x)}.
\]
Combining gives
\begin{equation}
\label{whereweleftoff1}
\mu(B(\eta,r)) \lesssim_{\times,\sigma} b^{-s\dist(\zero,x)}\sum_{\substack{y \in A_{r_k} \\ \PP_{g(y)} \cap B(\eta,r)\neq\smallemptyset}}\nu_{r_k}(y).
\end{equation}
\begin{subclaim}
For every $y\in A_{r_k}$ for which $\PP_{g(y)}\cap B(\eta,r)\neq\emptyset$, we have
\[
g(\PP_y) \subset B(\eta,C_1 r),
\]
where $C_1 > 0$ is independent of $n$.
\end{subclaim}
\begin{proof}
We claim first that $r\geq D_{g(y)}$.
\begin{itemize}
\item[Case 1:] $g(y) = x_{n + 1}$. Then
\[
r > D_{n + 1} = D_{g(y)}.
\]
\item[Case 2:] $g(y)\neq x_{n + 1}$. Then
\[
r\geq \Dist(\PP_{g(y)},\eta)\geq \Dist(\PP_{g(y)},\del X\butnot 3\PP_{g(y)}) \geq D_{g(y)}.
\]
\end{itemize}
\begin{subclaim}
For all $\zeta\in \PP_y$ we have
\begin{equation}
\label{ogzeta}
\lb \zero|g(\zeta)\rb_{g(y)} \asymp_{\plus,\sigma} 0.
\end{equation}
\end{subclaim}
\begin{proof}
Since
\[
\zeta\in\PP_y \subset \w B_k \subset \w B \subset g^{-1}(\PP_x),
\]
we have
\[
\lb g^{-1}(\zero)|\zeta\rb_\zero = \lb \zero|g(\zeta)\rb_{g(\zero)} \asymp_\plus \lb \zero|g(\zeta)\rb_x \asymp_{\plus,\sigma} 0.
\]
On the other hand, since $\zeta\in\PP_y$
\[
\lb y|\zeta\rb_\zero \asymp_{\plus,\sigma} \dist(\zero,y) \tendsto n \infty.
\]
Gromov's inequality yields
\[
\lb g^{-1}(\zero)|y\rb_\zero \asymp_{\plus,\sigma} 0.
\]
Thus
\[
\lb \zero|g^{-1}(\zero)\rb_y \asymp_{\plus,\sigma} \dist(\zero,y) \tendsto n \infty.
\]
On the other hand
\[
\lb \zero|\zeta\rb_y \asymp_{\plus,\sigma} 0;
\]
Gromov's inequality yields
\[
\lb \zero|g(\zeta)\rb_{g(y)} = \lb g^{-1}(\zero)|\zeta\rb_y \asymp_{\plus,\sigma} 0.
\]
\QEDmod\end{proof}
Letting $\w\sigma \geq \sigma$ be the implied constant of the asymptotic \eqref{ogzeta}, we have $g(\PP_y)\subset\Shad(g(y),\w\sigma)$. Thus
\[
\Dist(\zeta,\eta) \leq r + \Diam(\Shad(g(y),\w\sigma)) \asymp_\times r + D_{g(y)} \lesssim_\times r,
\]
completing the proof.
\QEDmod\end{proof}
Thus
\begin{equation}
\label{whereweleftoff2}
\sum_{\substack{y \in A_{r_k} \\ \PP_{g(y)} \cap B(\eta,r)\neq\smallemptyset}}\nu_{r_k}(y)
\leq \sum_{\substack{y \in A_{r_k} \\ g(\PP_y) \subset B(\eta,C_1 r)}}\nu_{r_k}(y)
\leq \rep[\nu_{r_k}](g^{-1}(B(\eta,C_1 r))).
\end{equation}
Now,
\begin{align*}
\w D_{g^{-1}(x_{n + 1})}
&\asymp_{\times,\sigma} b^{-\dist(\zero,g^{-1}(x_{n + 1}))}
\asymp_\times b^{-\dist(x_n,x_{n + 1})}\noreason\\
&\asymp_{\times,\sigma} b^{\dist(\zero,x_n) - \dist(\zero,x_{n + 1})} \by{the Intersecting Shadows Lemma}\\
&\asymp_{\times,\sigma} b^{\dist(\zero,x_n)}D_{n + 1} \by{the Diameter of Shadows Lemma}\\
&\leq_{\phantom{\times,\sigma}} b^{\dist(\zero,x_n)}r. \by{\eqref{rDn1}}
\end{align*}
Let $C_2 > 0$ be the implied constant of this asymptotic; then
\begin{equation}
\label{C2def}
\w D_{g^{-1}(x_{n + 1})} \leq C_2 b^{\dist(\zero,x)}r.
\end{equation}

\begin{subclaim}
\label{subclaimC2}
\[
g^{-1}\big(B(\eta,C_1 r)\big) \subset U := B(\rep(g^{-1}(x_{n + 1})),C_3 b^{\dist(\zero,x)}r).
\]
for some $C_3 \geq 2C_2$ independent of $n$.
\end{subclaim}
\begin{proof}
Fix $\zeta\in g^{-1}(B(\eta,C_1 r))$. Then by (d) of Proposition \ref{propositionbasicidentities}, we have
\[
\Dist(g^{-1}(\eta),\zeta) \lesssim_{\times,\sigma} b^{\dist(\zero,g(\zero))}\Dist(\eta,g(\zeta)) \leq C_1 b^{\dist(\zero,g(\zero))}r \asymp_{\times,\sigma} b^{\dist(\zero,x)}r.
\]
On the other hand, since $\eta\in \PP_{x_{n + 1}}$, Subclaim \ref{subclaimparty} shows that
\[
g^{-1}(\eta)\in 2\w\PP_{g^{-1}(x_{n + 1})}
\]
and thus
\[
\Dist\big(g^{-1}(\eta),\rep(g^{-1}(x_{n + 1}))\big)
\leq_\pt 2\w D_{g^{-1}(x_{n + 1})}
\leq 2C_2 b^{\dist(\zero,x)}r.
\]
This completes the proof.
\QEDmod\end{proof}
Combining \eqref{whereweleftoff1}, \eqref{whereweleftoff2}, and Subclaim \ref{subclaimC2} gives
\begin{equation}
\label{whereweleftoff3}
\mu(B(\eta,r)) \lesssim_{\times,\sigma} b^{-s\dist(\zero,x)}\rep[\nu_{r_k}](U).
\end{equation}
Now by \eqref{C2def} we have
\[
f(\rep(g^{-1}(x_{n + 1}))) = D_{g^{-1}(x_{n + 1})} \leq 2\w D_{g^{-1}(x_{n + 1})} \leq 2C_2 b^{\dist(\zero,x)}r \leq C_3 b^{\dist(\zero,x)},
\]
and so the ball $U$ is admissible. On the other hand, the measure $\rep[\nu_{r_k}]$ is $s$-admissible; it follows that
\[
\rep[\nu_{r_k}](U) \leq \big(C_3 b^{\dist(\zero,x)}r\big)^s.
\]
Combining with \eqref{whereweleftoff3}, we have
\[
\mu(B(\eta,r)) \lesssim_\times b^{-s\dist(\zero,x)}\big(b^{\dist(\zero,x)}r\big)^s = r^s.
\]

\item[Case 2:] $x$ is type 1. In this case, by \eqref{rDn1} and (ii) of Lemma \ref{lemmaDSU} we have
\[
r \geq D_{n + 1} \asymp_\times D_n.
\]
On the other hand, since $B(\eta,r)\cap\limitset\subset \PP_n$, we have
\[
\mu(B(\eta,r)) \leq \mu(x_n).
\]
Thus to complete the proof of the claim it suffices to show
\begin{equation}
\label{muxn2}
\mu(x_n) \lesssim_\times D_n^s.
\end{equation}
Let $m < n$ be the largest number such that $x_m$ is a type 2 point. (If no such number exists, let $m = 0$.) Let
\[
\w r = \min(2D_{m + 1},D_m) > D_{m + 1};
\]
then $n(\eta,\w r\ew\ew) = m$. Thus since $x_m$ is type 2, the ball $B(\eta,\w r\ew\ew)$ falls into Case 1: So by our earlier argument, we have
\begin{equation}
\label{muetar}
\mu(B(\eta,\w r\ew\ew)) \lesssim_\times \w r\ew\ew^s.
\end{equation}
(If $m = 0$, then \eqref{muetar} holds just because the right hand side is asymptotic to $1$.) On the other hand, $\PP_{m + 1}\subset B(\eta,D_{m + 1})\subset B(\eta,\w r\ew\ew)$, and so
\[
\mu(x_{m + 1}) \leq \mu(B(\eta,\w r\ew\ew)) \lesssim_\times D_{m + 1}^s,
\]
i.e. \eqref{muxn2} is satisfied for $n = m + 1$. On the other hand, since the points $x_{m + 1},\ldots,x_{n - 1}$ are all type 1, the argument used in the proof of Subclaim \ref{subclaimtype2} shows that
\[
\frac{\mu(x_n)}{\mu(x_{m + 1})} \leq \frac{D_n^t}{D_{m + 1}^t} \leq \frac{D_n^s}{D_{m + 1}^s},
\]
since $t > s$ by \eqref{Pxits}. This demonstrates \eqref{muxn2}.
\end{itemize}
\end{proof}

Now by the mass distribution principle we have $\HD(\limitset)\geq s$, completing the proof.

\draftnewpage

\subsection{Special cases} \label{subsectionspecialcasesjarnik}
In this subsection we prove the assertions made in \sectionsymbol\ref{subsubsectionspecialcasesjarnik} concerning special cases of Theorem \ref{theoremjarnikbesicovitch}, except for Theorem \ref{theoremboundedparabolicjarnik} which will be proven in Subsection \ref{subsectionboundedparabolicjarnik} below.

\begin{example}
\label{exampleradialjarnik}
If $\xi\in\Lr$ then $P_\xi(s) = s$. Thus in this case (\ref{jarnikbesicovitch}) reduces to
\begin{equation}
\label{radialjarnik}
\HD(W_{c,\xi}) = \frac{\delta}{c + 1},
\end{equation}
i.e. the inequality of (\ref{jarnikbesicovitch}) is an equality.
\end{example}
\begin{proof}
In fact, this is a special case of Example \ref{examplehorosphericaljarnik} below, since if $\xi\in\Lr$ then $\beta(\xi) = 1$.
\end{proof}
\begin{example}
\label{examplehorosphericaljarnik}
If $\xi$ is a horospherical limit point of $G$ then $P_\xi(s) \leq 2s$. Thus in this case we have
\[
\frac{\delta}{2c + 1}\leq \HD(W_{c,\xi}) \leq \frac{\delta}{c + 1}.
\]
More generally, let $\lb\cdot|\cdot\rb$ denote the Gromov product. If we let
\begin{equation}
\label{betadef}
\beta = \beta(\xi) := \limsup_{\substack{g\in G \\ \dist(\zero,g(\zero))\to\infty}}\frac{\lb g(\zero)|\xi\rb_\zero}{\dist(\zero,g(\zero))}
\end{equation}
then $P_\xi(s)\leq \beta^{-1}s$ and so
\[
\frac{\delta}{\beta^{-1} c + 1}\leq \HD(W_{c,\xi}) \leq \frac{\delta}{c + 1}.
\]
In particular, if $\beta > 0$ then $\HD(\VWA_\xi) = \delta$.
\end{example}
\begin{proof}
Fix $s\in(0,\delta)$. Since $\HD(\Lr) = \delta > s$, there exists $\sigma > 0$ so that $\HD(\Lrsigma) > s$. Then $\HH_\infty^s(\Lrsigma) > 0$. Let $\tau > 0$ be guaranteed by Lemma \ref{lemmanearinvariance}.

Let $h_1,h_2\in G$ be as in the definition of general type. Then there exists $\varepsilon > 0$ so that for all $\xi\in\del X$,
\begin{equation}
\label{minimumtranslationdistance}
\max_{i = 1}^2 \Dist(\xi,h_i(\xi)) > \varepsilon.
\end{equation}
Let $\w\sigma > 0$ be given by the Big Shadows Lemma.

Now fix $g\in G$. By the Big Shadows Lemma, we have
\[
\Diam(\del X\butnot\Shad_{g^{-1}(\zero)}(\zero,\w\sigma)) \leq \varepsilon,
\]
and thus by \eqref{minimumtranslationdistance} we have
\[
\del X = S_g \cup h_1^{-1}(S_g)\cup h_2^{-1}(S_g), \text{ where } S_g = \Shad_{g^{-1}(\zero)}(\zero,\w\sigma).
\]
Let $h_0 = \id$. Then there exists $i = i_g = 0,1,2$ such that
\[
\HH_\infty^s(\Lrsigma\cap h_i^{-1}(S_g)) \geq \frac{1}{3}\HH_\infty^s(\Lrsigma) \asymp_{\times,\sigma} 1.
\]
Thus
\begin{align*}
b^{-s\dist(\zero,g(\zero))}
&\asymp_{\times\phantom{,\sigma}} b^{-s\dist(\zero,g\circ h_i(\zero))}\\
&\lesssim_{\times,\sigma} \HH_\infty^s\left(g\circ h_i\big(\Lrsigma\cap h_i^{-1}(S_g)\big)\right)\\
&\leq_\ptsigma \HH_\infty^s(\Lrtau\cap g(S_g)) \by{Lemma \ref{lemmanearinvariance}}\\
&=_\ptsigma \HH_\infty^s(\Lrtau\cap \Shad(g(\zero),\w\sigma)).
\end{align*}
Now fix $\eta\in\Shad(g(\zero),\w\sigma)$. We have
\begin{align*}
\lb \eta|\xi\rb_\zero &\gtrsim_{\plus\phantom{,\sigma}} \min(\lb g(\zero)|\eta\rb_\zero,\lb g(\zero)|\xi\rb_\zero)\\
&\asymp_{\plus,\w\sigma} \min(\dist(\zero,g(\zero)),\lb g(\zero)|\xi\rb_\zero) \since{$\eta\in\Shad(g(\zero),\w\sigma)$}\\
&=_{\phantom{\plus,\sigma}} \lb g(\zero)|\xi\rb_\zero,
\end{align*}
i.e.
\[
\Shad(g(\zero),\w\sigma) \subset B(\xi,C b^{-\lb g(\zero)|\xi\rb_\zero})
\]
for some $C > 0$ independent of $G$. Thus
\[
b^{-s\dist(\zero,g(\zero))}\lesssim_{\times,\sigma} \HH_\infty^s\big(\Lrtau\cap B(\xi,C b^{-\lb g(\zero),\xi\rb_\zero})\big)
= H_{\xi,\tau,s}(C b^{-\lb g(\zero),\xi\rb_\zero}).
\]
Let $(g_n)_1^\infty$ be a sequence so that
\[
\frac{\lb g_n(\zero)|\xi\rb_\zero}{\dist(\zero,g_n(\zero))} \tendsto n \beta \text{ and }\dist(\zero,g_n(\zero))\tendsto n \infty.
\]
In particular, $\lb g_n(\zero)|\xi\rb_\zero \tendsto n \infty$, and thus
\begin{align*}
P_\xi(s)
&\leq \liminf_{r\to 0} \frac{\log_b(H_{\xi,\tau,s}(r))}{\log_b(r)}\\
&\leq \liminf_{n\to\infty}\frac{\log_b(H_{\xi,\tau,s}(C b^{-\lb g_n(\zero),\xi\rb_\zero}))}{\log_b(C b^{-\lb g_n(\zero),\xi\rb_\zero})}\\
&\leq \liminf_{n\to\infty}\frac{-s\dist(\zero,g_n(\zero))}{-\lb g_n(\zero),\xi\rb_\zero}
= \beta^{-1}s.
\end{align*}
Finally, if $\xi$ is a horospherical limit point, let $(g_n(\zero))_1^\infty$ be a sequence tending horospherically to $\xi$; then
\begin{align*}
\beta(\xi) &\geq \limsup_{n\to\infty}\frac{\lb g_n(\zero)|\xi\rb_\zero}{\dist(\zero,g_n(\zero))}\\
&= \limsup_{n\to\infty}\frac{1}{2}\frac{\dist(\zero,g_n(\zero)) + \busemann_\xi(\zero,g_n(\zero))}{\dist(\zero,g_n(\zero))} \by{(g) of Proposition \ref{propositionbasicidentities}}\\
&\geq \limsup_{n\to\infty}\frac{1}{2}\frac{\dist(\zero,g_n(\zero))}{\dist(\zero,g_n(\zero))} \since{$x_n\tendsto n \xi$ horospherically}\\
&= 1/2.
\end{align*}

\end{proof}

\begin{proposition}
\label{propositionVWAHDzero}
There exists a (discrete) nonelementary Fuchsian group $G$ and a point $\xi\in\Lambda$ such that $\HD(\VWA_\xi) = 0$.
\end{proposition}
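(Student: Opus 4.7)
Plan: I will construct $G$ as an infinitely generated Fuchsian Schottky group whose limit set is extremely thin near a distinguished limit point $\xi$, thereby forcing $P_\xi(s)=+\infty$ for every $s\in(0,\delta)$. By the variational formula \eqref{HDVWAxi} of Theorem \ref{theoremjarnikbesicovitch} (which applies since $\delta\le 1<+\infty$ for any Fuchsian group), this immediately yields $\HD(\VWA_\xi)=0$.

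Working in the upper half-plane model, take $\xi = 0\in\del \H^2$. Fix a slowly decaying sequence $(\varepsilon_n)$, say $\varepsilon_n = 1/n$, and a super-polynomially decaying sequence $(\rho_n)$, say $\rho_n = 2^{-2^n}$, with $\rho_n$ much smaller than $\varepsilon_n-\varepsilon_{n+1}$. For each $n\in\N$, pick two disjoint closed Euclidean disks $D_n^+, D_n^-\subset \C$, both of diameter at most $\rho_n$ and centered on distinct real points in the interval $[\varepsilon_{n+1},\varepsilon_n]$; the collection $\{D_n^\pm:n\in\N\}$ is then pairwise disjoint. For each $n$, let $h_n\in\PSL_2(\R)$ be a hyperbolic Möbius transformation whose two isometric circles are $\del D_n^+$ and $\del D_n^-$, so that $h_n$ maps the exterior of $D_n^-$ onto the interior of $D_n^+$, and both fixed points of $h_n$ lie in $D_n^+\cup D_n^-$. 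By Klein's ping-pong theorem, the group $G := \langle h_n : n\in\N\rangle$ is free, of general type, and discrete (hence strongly discrete), and its limit set satisfies
\[
\Lambda(G) \subset \{0\}\cup \bigcup_{n\ge 1}(D_n^+\cup D_n^-),
\]
because any element of $\Lambda(G)\setminus \bigcup_n(D_n^+\cup D_n^-)$ would arise as a limit of orbit points whose ``first letter'' index tends to infinity, which forces the limit to be $0$ since $D_n^\pm\subset B_\euc(0,2\varepsilon_n)$. Moreover $0\in\Lambda(G)$, as it is the limit of the fixed points of the $h_n$'s.

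For the content estimate, fix $s\in(0,\delta)$ and $r>0$, and let $k=k(r)\in\N$ be the largest integer with $2\varepsilon_k\ge r$. Every disk $D_n^\pm$ that intersects $B_\euc(0,r)$ then has $n\ge k+1$, so
\[
\HH_\infty^s\big(\Lambda(G)\cap B_\euc(0,r)\big) \le \sum_{n\ge k+1} 2\Diam(D_n^+\cup D_n^-)^s \le C\sum_{n\ge k+1}\rho_n^s \le C_s\, \rho_{k+1}^s,
\]
the series converging thanks to the super-exponential decay of $\rho_n$. Since the visual metric $\Dist$ is locally bi-Lipschitz equivalent to the Euclidean metric on compact subsets of $\R\subset\del\H^2$ (cf.\ Observation \ref{observationSL2Z two}), the same bound, up to a fixed multiplicative constant, holds for $\HH_\infty^s(\Lambda(G)\cap B(0,r))$ in the visual metric, and hence for $H_{\xi,\sigma,s}(r)$ for every $\sigma>0$. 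With our choice $\varepsilon_n=1/n$ and $\rho_n=2^{-2^n}$, letting $r\downarrow 0$ along $r=2\varepsilon_k$ yields
\[
\frac{\log_b H_{\xi,\sigma,s}(r)}{\log_b r} \ge \frac{s\cdot 2^{k+1}\log_b 2 - O(1)}{\log_b k} \xrightarrow[k\to\infty]{} +\infty.
\]
Hence $P_\xi(s)=+\infty$ for every $s\in(0,\delta)$, and \eqref{HDVWAxi} yields $\HD(\VWA_\xi)=0$.

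The main technical obstacle is confirming that the limit set really enjoys the claimed sparse structure: the concern is that images of high-index generators under words in low-index generators might leak limit points into $B_\euc(0,r)$ at scales comparable to $r$ rather than much smaller than $r$. This is ruled out by the ping-pong structure: since the disks $(D_n^\pm)_n$ are pairwise disjoint, every nontrivial reduced word $w$ satisfies $w(\zero)\in D_{n(w)}^\pm$, where $n(w)$ is the index of the first letter of $w$; thus the disk containing an orbit point is determined entirely by the first letter of the word, and the inclusion $\Lambda(G)\cap B_\euc(0,r)\subset \{0\}\cup \bigcup_{n\ge k(r)+1}(D_n^+\cup D_n^-)$ follows.
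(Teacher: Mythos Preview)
Your construction is essentially the same as the paper's: both build an infinitely generated Schottky group whose pairing disks accumulate at a single boundary point $\xi$ and shrink super-polynomially fast, then bound $H_{\xi,\sigma,s}(r)$ by the sum of the $s$-powers of the remaining disk diameters to force $P_\xi(s)=+\infty$ (the paper takes $\xi=\infty$ with positions $a_n=2^{n!}$ and radii $1/a_{n+2}$, you take $\xi=0$ with positions in $[\varepsilon_{n+1},\varepsilon_n]$ and radii $\rho_n=2^{-2^n}$). Two small slips to fix: with your definition of $k(r)$ the disks meeting $B_\euc(0,r)$ satisfy only $n\gtrsim k/4$ rather than $n\ge k+1$ (harmless, since $\rho_{\lfloor k/4\rfloor}$ is still super-polynomially small in $r$), and you must take the $\liminf$ over \emph{all} $r\to 0$, not just along $r=2\varepsilon_k$---but your content bound holds for every $r$ with $k=k(r)$, so this is only a matter of phrasing.
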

\begin{proof}
Let $\Dist_\euc$ denote the Euclidean metric on $\C$, and let $\Dist_\sph$ denote the spherical metric on $\del\H^2$, i.e. the metric $|\dee z|/(|z|^2 + 1)$. For each $n\in\N$, let $a_n = 2^{n!}$ and let $g_n\in\Isom(\H^2)$ be a M\"obius transformation such that
\[
g_n\left(\what\C\butnot B_\euc(-a_n,1/a_{n + 2})\right) = B_\euc(a_n,1/a_{n + 2}).
\]
The group $G$ generated by the sequence $(g_n)_1^\infty$ is a Schottky group and is therefore discrete. Clearly, $\infty$ is a limit point of $G$.

We claim that $\HD(\VWA_\infty) = 0$. By \eqref{HDVWAxi}, it suffices to show that
\[
P_\infty(s) = +\infty \all s\in(0,\delta).
\]
Fix $s\in(0,\delta)$ and $0 < r \leq 1$. Computation shows that
\[
B_\sph(\infty,r) \subset \what\C\butnot B_\euc(0,1/(2r)).
\]
There exists $n = n_r\in\N$ so that
\[
a_n \leq \frac{1}{2r} < a_{n + 1}.
\]
Then
\[
B_\sph(\infty,r)\cap\Lambda\subset \{\infty\}\cup\bigcup_{m\geq n} \big[B_\euc(a_m,1/a_{m + 2})\cup B_\euc(-a_m,1/a_{m + 2})\big]
\]
and thus for all $\sigma > 0$
\begin{align*}
H_{\xi,\sigma,s}(r)
&\leq \sum_{m\geq n}\big[\Diam_\sph(B_\euc(a_m,1/a_{m + 2})) + \Diam_\sph(B_\euc(-a_m,1/a_{m + 2}))\big]\\
&\asymp_\times \sum_{m\geq n}\frac{1}{a_{m + 2} a_m^2}\\
&\lesssim_\times \frac{1}{a_{n + 2}}\\
&= \frac{1}{a_{n + 1}^{n + 2}} \leq (2r)^{n_r + 2}.
\end{align*}
Thus
\[
P_\infty(s) \geq \liminf_{r\to 0} \frac{\log((2r)^{n_r + 2})}{\log(r)} = \liminf_{r\to 0}[n_r + 2] = +\infty.
\]
\end{proof}

\draftnewpage
\subsection{Proof of Theorem \ref{theoremboundedparabolicjarnik}}
\label{subsectionboundedparabolicjarnik}
In this subsection we prove Theorem \ref{theoremboundedparabolicjarnik}. The proof requires several lemmas.

\begin{theorem}
\label{theoremboundedparabolicjarnik}
Let $(X,\dist,\zero,b,G)$ be as in \sectionsymbol\ref{standingassumptions}, and let $\xi$ be a bounded parabolic point of $G$. Let $\delta$ and $\delta_\xi$ be the Poincar\'e exponents of $G$ and $G_\xi$, respectively. Then for all $s\in(0,\delta)$
\begin{equation}
\label{prevelanihill}
P_\xi(s) = \max(s,2s - 2\delta_\xi)
= \begin{cases}
s &\text{if } s \geq 2\delta_\xi\\
2s - 2\delta_\xi &\text{if } s\leq 2\delta_\xi
\end{cases}.
\end{equation}
Thus for each $c > 0$
\begin{equation}
\label{velanihillgeneral}
\HD(W_{c,\xi}) = \begin{cases}
\delta/(c + 1) &\text{if } c + 1 \geq \delta/(2\delta_\xi)\\
\displaystyle\frac{\delta + 2\delta_\xi c}{2c + 1} &\text{if } c + 1\leq \delta/(2\delta_\xi)
\end{cases}.
\end{equation}
\end{theorem}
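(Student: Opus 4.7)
The overall plan is to establish \eqref{prevelanihill} and then deduce \eqref{velanihillgeneral} by substituting into the variational formula of Theorem \ref{theoremjarnikbesicovitch}. Concretely, Corollary \ref{corollaryjarnikbesicovitcheasyparts} gives $\HD(W_{c,\xi}) = \sup\{s\in(0,\delta) : P_\xi(s) \leq (\delta-s)/c\}$; substituting $P_\xi(s) = \max(s, 2s - 2\delta_\xi)$ and solving the piecewise inequality yields the two-case formula in \eqref{velanihillgeneral}, with the transition occurring exactly at $s = 2\delta_\xi$, equivalently $c+1 = \delta/(2\delta_\xi)$. All the real work is therefore in proving \eqref{prevelanihill}, which I will split into complementary bounds on $P_\xi(s)$.

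For the bound $P_\xi(s) \geq \max(s, 2s-2\delta_\xi)$, equivalently $H_{\xi,\sigma,s}(r) \lesssim_\times r^{\max(s,2s-2\delta_\xi)}$, the inequality $P_\xi(s) \geq s$ is immediate from Proposition \ref{propositionPxifacts}. For $P_\xi(s)\geq 2s-2\delta_\xi$ I will construct an efficient cover of $B(\xi,r)\cap\Lrsigma$ from the bounded parabolic structure. By Proposition \ref{propositionboundedparabolic} there is a $\xi$-bounded $S_0$ with $\Lambda\setminus\{\xi\}\subset G_\xi(S_0)$, so every $\eta\in B(\xi,r)\cap(\Lambda\setminus\{\xi\})$ has the form $\eta = h(\zeta)$ with $h\in G_\xi$ and $\zeta\in S_0$. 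Combining Lemma \ref{lemmaeuclideancomparison} with Observation \ref{observationeuclideanparabolicasymp}, the condition $h(\zeta)\in B(\xi,r)$ forces $\dist(\zero,h(\zero))\gtrsim_\plus 2\log_b(1/r)$, while the geometric mean value theorem \eqref{GMVT5} combined with the parabolic condition \eqref{parabolic} gives $\Diam(h(S_0))\asymp_\times b^{-\dist(\zero,h(\zero))}$. Covering by the sets $h(S_0)$ then yields
\[
H_{\xi,\sigma,s}(r) \lesssim_\times \sum_{\substack{h\in G_\xi\\ \dist(\zero,h(\zero))\geq 2\log_b(1/r)-C}} b^{-s\dist(\zero,h(\zero))}.
\]
Factoring the summand as $b^{-(s-\delta_\xi-\varepsilon)\dist(\zero,h(\zero))}\cdot b^{-(\delta_\xi+\varepsilon)\dist(\zero,h(\zero))}$ and pulling the tail factor out of the sum, the remaining series converges by the definition of $\delta_\xi$, giving $H_{\xi,\sigma,s}(r)\lesssim_\times r^{2(s-\delta_\xi-\varepsilon)}$ for every $\varepsilon>0$ and hence $P_\xi(s)\geq 2s-2\delta_\xi$.

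The reverse bound $P_\xi(s) \leq \max(s, 2s-2\delta_\xi)$ is the main obstacle and requires, along a sequence $r_k\to 0$, a measure $\mu_k$ supported in $B(\xi,r_k)\cap\Lur$ with $\mu_k(B)\lesssim_\times \Diam(B)^s$ uniformly and $\mu_k(B(\xi,r_k))\gtrsim_\times r_k^{\max(s,2s-2\delta_\xi)}$. Starting from an Ahlfors $s$-regular set $\limitset_s\subset\Lur$ (Corollary \ref{corollarystructure}) with Ahlfors measure $\mu_s$ concentrated in a set $\w B$ having positive visual distance from $\xi$, the candidate is $\mu_k := \sum_{h\in\mathcal H(r_k)} h_*(\mu_s|_{\w B})$ for a carefully chosen finite collection $\mathcal H(r_k)\subset G_\xi$, suitably rescaled so that each summand matches the scale $b^{-s\dist(\zero,h(\zero))}$ of its support. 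Observation \ref{observationuniformlyLipschitz} and the geometric mean value theorem make the images $h(\w B)$ essentially disjoint at the correct visual scale and force $\mu_k$ to be $s$-admissible, so the total mass is proportional to $\sum_{h\in\mathcal H(r_k)} b^{-s\dist(\zero,h(\zero))}$, and it remains to choose $\mathcal H(r_k)$ so this sum matches $r_k^{\max(s,2s-2\delta_\xi)}$. The key difficulty is that the orbital counting function of $G_\xi$ need not be doubling (Remark \ref{remarksurprising} and Appendix \ref{appendixorbitalcounting}), so a naive lower bound $\#\{h:\dist(\zero,h(\zero))\leq T\}\gtrsim b^{\delta_\xi T}$ is unavailable. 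The trick will be to invoke only the definition of $\delta_\xi$, noting that $\sum_{h\in G_\xi} b^{-s'\dist(\zero,h(\zero))}=\infty$ for every $s'<\delta_\xi$, and to use a weighted pigeonhole argument to extract infinitely many dyadic windows in $\dist(\zero,h(\zero))$ that carry enough mass to give the required lower bound along a subsequence $r_k\to 0$, which is exactly what the $\liminf$ defining $P_\xi$ needs.
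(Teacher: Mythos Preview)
Your upper bound $P_\xi(s) \geq \max(s, 2s-2\delta_\xi)$ is correct and is essentially the paper's argument. For the lower bound your mass-distribution approach is dual to the paper's direct Hausdorff-content estimate (Lemma \ref{lemmaHRlocal}(ii)), and when $s > 2\delta_\xi$ it goes through, since then $f_\xi(Q)/Q^s$ is bounded and the admissibility of $\mu_k$ at all scales is automatic. The real gap is in the case $s < 2\delta_\xi$, where you assert that disjointness of the images $h(\w B)$ ``forces $\mu_k$ to be $s$-admissible''. Disjointness is irrelevant at intermediate scales: a spherical ball $B(\eta,\rho)$ with $\eta$ at Euclidean distance $\asymp R_k$ from $\zero$ corresponds to a Euclidean ball of radius $Q \asymp \rho R_k^2$, and the number of pieces $h(\w B)$ meeting it is $\lesssim f_\xi(C_0 Q)$ by Observation \ref{observationuniformlyLipschitz}. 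Each piece carries mass $\asymp R_k^{-2s}$, so $s$-admissibility with a uniform constant would require $f_\xi(C_0 Q)/Q^s \lesssim 1$ for all $Q \lesssim R_k$; but for $s < 2\delta_\xi$ one has $\limsup_Q f_\xi(Q)/Q^s = \infty$, so this fails precisely along any subsequence on which the total mass is large. Carried out honestly, your construction (like the paper's) only yields
\[
H^*_{\xi,\sigma,s}(R) \gtrsim \frac{f_\xi(\alpha R)\,R^{-2s}}{\sup_{Q \lesssim R}\big(f_\xi(Q)/Q^s\big)},
\]
where the denominator is the admissibility constant; this is \eqref{HRasymp2}.

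The missing step is therefore exactly the paper's Claim \ref{claimC7}: along some $R_n \to \infty$, the ratio $\sup_{Q \leq C_5 R_n}(f_\xi(Q)/Q^s)\big/(f_\xi(\alpha R_n)/R_n^s)$ stays bounded. This is \emph{not} a pigeonhole consequence of the divergence of the Poincar\'e series of $G_\xi$. Divergence does give record values of $h_{\xi,s}(R) := f_\xi(R)/R^s$, and at a record $R_n$ one controls $\sup_{Q \leq R_n} h_{\xi,s}$; but one needs $\sup_{Q \leq C_5 R_n} h_{\xi,s}$, and the overshoot constant $C_5 > 1$ (forced by the Lipschitz constant $C_0$ of the $G_\xi$-action) is exactly the obstruction. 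The paper's proof of Claim \ref{claimC7} is an iterative growth argument: if $h_{\xi,s}$ jumped by a fixed factor $C_7$ on every window $[R, C_8 R]$, then $f_\xi$ would grow faster than any power of $R$, contradicting $\delta_\xi \leq \delta < \infty$. That argument, not a pigeonhole, is what you still have to supply.
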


In this subsection, $\Dist_\euc$ denotes the Euclidean metametric $\Dist_{\xi,\zero}$, and $\Dist_\sph$ denotes the spherical metametric $\Dist_\zero$.

\begin{definition}
Let $G_\xi$ be the stabilizer of $\xi$ relative to $G$. We define the \emph{orbital counting function} of $G_\xi$ with respect to the metametric $\Dist_\euc$ to be the function $f_\xi:\Rplus\to\N$ defined by
\begin{equation}
\label{orbitalcountingfunction}
f_\xi(R) := \#\{g\in G_\xi:\Dist_\euc(\zero,g(\zero))\leq R\}.
\end{equation}
Note that $f_\xi$ is different from the orbital counting function of $G_\xi$ with respect to the hyperbolic metric $\dist$, which we denote by $f_{G_\xi}$.
\end{definition}

Before proceeding further, let us remark on the function \eqref{orbitalcountingfunction}. In the case where $X$ is a real, complex, or quaternionic hyperbolic space, $f_\xi$ satisfies a power law, i.e. $f_\xi(R)\asymp_\times R^{2\delta_\xi}$, where $\delta_\xi$ is the Poincar\'e exponent of $G_\xi$ \cite[Lemma 3.5]{Newberger} (see also \cite[Proposition 2.10]{Schapira}). 
However, this is not the case for more general spaces, including the infinite-dimensional space $\H^\infty$ and proper $\R$-trees; indeed, there are essentially no restrictions on the orbital counting function of a parabolic group acting on such spaces. See Appendix \ref{appendixorbitalcounting} for details.

Let us now give a rough outline of the proof of Theorem \ref{theoremboundedparabolicjarnik}. Let
\[
H_{\xi,\sigma,s}^*(R) = \HH_{\sph,\infty}^s(\Lrsigma\butnot B_\euc(\zero,R)).
\]
Then by Lemma \ref{lemmaeuclideancomparison}, \eqref{Pxidef} can be rewritten as
\begin{equation}
\label{Pxidef2}
P_\xi(s) = \lim_{\sigma\to\infty}\liminf_{R\to\infty}\frac{\log(H_{\xi,\sigma,s}^*(R))}{\log(1/R)}.
\end{equation}
So to compute $P_\xi(s)$, we will compute $H_{\xi,\sigma,s}^*(R)$ for all $R\geq 1$ sufficiently large. Since the spherical metric is locally comparable to the Euclidean metric via \eqref{GMVT2}, we will compute $H_{\xi,\sigma,s}^*(R)$ by first computing $\HH_{\euc,\infty}^s$ of appropriately chosen subsets of $\Lrsigma\butnot B_\euc(\zero,R)$. Specifically, we will consider the sets
\[
S_{R,\alpha} := \bigcup_{\substack{g\in G_\xi \\ \alpha R < \Dist_\euc(\zero,g(\zero)) \leq R}}g(S\cap\Lrsigma),
\]
where $R\geq 1$ and $\alpha\in[0,1]$. Our first step is to estimate $\HH_{\euc,\infty}^s(S_{R,\alpha})$:

\begin{lemma}
\label{lemmaHRlocal}
Let $S$ be a $\xi$-bounded set containing $\zero$, and let $\sigma > 0$. Fix $R\geq C_0\Diam_\euc(S)$ and $\alpha\in[0,1]$. Then
\begin{itemize}
\item[(i)]
\begin{equation}
\label{HRlocal1}
\HH_{\euc,\infty}^s(S_{R,0}) \lesssim_{\times,S} \frac{f_\xi((C_0 + 1)R)}{\sup_{Q\in[1,R]}(f_\xi(Q)/Q^s)}.
\end{equation}
\item[(ii)] If
\begin{equation}
\label{HSbig}
\HH_{\euc,\infty}^s(S\cap\Lrsigma) > 0,
\end{equation}
then for all $\alpha\in[0,1]$
\begin{equation}
\label{HRlocal2}
\HH_{\euc,\infty}^s(S_{R,\alpha}) \gtrsim_{\times,S,\sigma}  \frac{f_\xi(R) - f_\xi(\alpha R)}{\sup_{Q\in[1,12C_0 R]}(f_\xi(Q)/Q^s)}.
\end{equation}
\end{itemize}
\end{lemma}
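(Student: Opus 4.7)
The key inputs are Observation~\ref{observationuniformlyLipschitz} (the $G_\xi$-action on $(\EE_\xi,\Dist_\euc)$ is $C_0$-bi-Lipschitz) and a two-sided count of orbit points in Euclidean balls. Concretely, if $z\in X$ is any orbit point, then
\[
f_\xi(\rho/C_0) \;\leq\; \#\{h\in G_\xi : \Dist_\euc(z,h(\zero))\leq \rho\} \;\leq\; f_\xi(C_0\rho),
\]
by applying bi-Lipschitzness to the stabilizer $g^{-1}G_\xi g = G_\xi$. For each $g\in G_\xi$, we have $\Diam_\euc(g(S))\asymp_{\times,C_0}\Diam_\euc(S)$, so every translate $g(S)$ is contained in a Euclidean ball of radius $C_0\Diam_\euc(S)$ around $g(\zero)$.

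\textbf{Part (i).} Fix a scale $Q\in[C_0\Diam_\euc(S),R]$ to be chosen below, and let $F$ be a maximal $Q$-separated subset of $\{g(\zero):g\in G_\xi,\,\Dist_\euc(\zero,g(\zero))\leq R\}$. The balls $B_\euc(g(\zero),Q/2)$ for $g\in F$ are pairwise disjoint and each contains at least $f_\xi(Q/(2C_0))$ orbit points, all lying inside $B_\euc(\zero,(C_0+1)R)$; hence
\[
|F|\;\lesssim_{\times}\;\frac{f_\xi((C_0+1)R)}{f_\xi(Q/(2C_0))}\cdot
\]
By maximality, for every $h\in G_\xi$ with $\Dist_\euc(\zero,h(\zero))\leq R$ there exists $g\in F$ with $\Dist_\euc(g(\zero),h(\zero))\leq Q$; since $Q\geq C_0\Diam_\euc(S)$, the translate $h(S)$ is contained in $B_\euc(g(\zero),2Q)$. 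Thus $S_{R,0}$ is covered by $|F|$ balls of radius $2Q$, which gives $\HH_{\euc,\infty}^s(S_{R,0})\leq |F|(4Q)^s\lesssim_\times f_\xi((C_0+1)R)\cdot Q^s/f_\xi(Q/(2C_0))$. Choosing $Q$ so that $f_\xi(Q/(2C_0))/(Q/(2C_0))^s$ is within a factor of $2$ of $\sup_{Q'\in[1,R]}f_\xi(Q')/Q'^s$ (with small-scale $Q'\in[1,\Diam_\euc(S)]$ handled by absorbing $f_\xi(C\Diam_\euc(S))$ into the $S$-dependent constant) yields~\eqref{HRlocal1}.

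\textbf{Part (ii).} By Frostman's lemma, the hypothesis $\HH_{\euc,\infty}^s(S\cap\Lrsigma)>0$ gives a measure $\mu_S$ supported on $S\cap\Lrsigma$ with $\mu_S(S\cap\Lrsigma)\asymp_\times\HH_{\euc,\infty}^s(S\cap\Lrsigma)\gtrsim_{\times,S,\sigma} 1$ and $\mu_S(B)\lesssim_\times\Diam_\euc(B)^s$ for every ball $B$. Set
\[
\nu \;=\; \sum_{g\in G_\xi^{R,\alpha}} g_*\mu_S,\qquad G_\xi^{R,\alpha}:=\{g\in G_\xi : \alpha R<\Dist_\euc(\zero,g(\zero))\leq R\},
\]
which is supported on $S_{R,\alpha}$ and has total mass $\gtrsim_{\times,S,\sigma} f_\xi(R)-f_\xi(\alpha R)$. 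For any Euclidean ball $B$ of radius $r$, the only summands that contribute are those for which $g(\zero)$ lies within $r+C_0\Diam_\euc(S)$ of the center of $B$; centering at the nearest orbit point (using that $R\geq C_0\Diam_\euc(S)$) shows the number of such $g$ is at most $f_\xi(C_0(r+C_0\Diam_\euc(S)))\leq f_\xi(C_0(r+R))$. On the other hand, $g^{-1}(B)$ is a set of diameter $\leq 2C_0 r$, so $\mu_S(g^{-1}(B))\leq(2C_0 r)^s$. Combining,
\[
\nu(B) \;\leq\; f_\xi(C_0(r+R))\cdot(2C_0 r)^s.
\]
Restricting to $r\leq R$ (the only interesting case, since otherwise one covers $S_{R,\alpha}$ trivially) we have $C_0(r+R)\leq 2C_0 R$, hence via $f_\xi(2C_0 R)\leq (2C_0 R)^s\cdot\sup_{Q\in[1,12C_0 R]}f_\xi(Q)/Q^s$ (where the generous $12 C_0$ absorbs the $2C_0$ both in the bound on $r+R$ and from the Lipschitz factors) we obtain
\[
\nu(B) \;\lesssim_{\times,S}\; r^s\cdot\sup_{Q\in[1,12C_0 R]}\frac{f_\xi(Q)}{Q^s}.
\]
The mass distribution principle now yields~\eqref{HRlocal2}.

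\textbf{Main obstacle.} The delicate point is the constant $12C_0$: one must simultaneously handle the bi-Lipschitz distortion at both ends (passing between $\Diam_\euc(g^{-1}(B))$ and $\Diam_\euc(B)$, and between counts of orbit points in a ball centered at an arbitrary point versus at an orbit point), while also controlling the transition region where $r\asymp\Diam_\euc(S)$. Tracking these factors carefully, and keeping the supremum range uniform in $\alpha$ and $R$, is the main bookkeeping challenge; once this is done, both the upper and lower bounds fit into the same scale-optimization framework.
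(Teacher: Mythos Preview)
Your argument for part~(i) is correct and essentially identical to the paper's: both use a maximal separated net in $G_\xi(\zero)\cap B_\euc(\zero,R)$, bound its cardinality by an orbit-count ratio, cover $S_{R,0}$ by the corresponding balls, and optimise over the scale $Q$.

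For part~(ii) you take a genuinely different route from the paper. The paper argues directly with covers: given an arbitrary cover $\CC$ of $S_{R,\alpha}$, it splits $\CC$ into small sets ($\Diam_\euc\leq C_0\Diam_\euc(S)$), which hit boundedly many translates, and large sets, whose multiplicity is controlled by $f_\xi$; summing yields the lower bound on $\sum_{A\in\CC}\Diam_\euc^s(A)$. Your approach instead builds a measure $\nu=\sum_g g_*\mu_S$ from a Frostman measure $\mu_S$ and appeals to the mass distribution principle. This is a legitimate and in some ways cleaner strategy, and the two arguments are dual (your small/large-$r$ split on balls corresponds to the paper's small/large split on covering sets).

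However, your derivation of the key estimate $\nu(B)\lesssim_{\times,S} r^s\sup_{Q}f_\xi(Q)/Q^s$ contains a real gap. From $\nu(B)\leq f_\xi(C_0(r+R))\,(2C_0 r)^s$ and $f_\xi(2C_0 R)\leq (2C_0 R)^s\sup_Q f_\xi(Q)/Q^s$ you can only conclude
\[
\nu(B)\ \lesssim\ R^s r^s \sup_Q\frac{f_\xi(Q)}{Q^s},
\]
with an extra factor of $R^s$; as written you have silently dropped it. The problem is that for $r\gtrsim\Diam_\euc(S)$ the per-translate Frostman bound $\mu_S(g^{-1}(B))\leq(2C_0 r)^s$ is far too generous. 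The fix is to use instead the trivial total-mass bound $\mu_S(g^{-1}(B))\leq\mu_S(S)\asymp_{\times,S,\sigma}1$ in that regime: then the number of contributing translates, which is $\lesssim f_\xi(O(r))\leq O(r)^s\sup_Q f_\xi(Q)/Q^s$, supplies the factor $r^s$ by itself, and the desired bound follows. (For $r\lesssim\Diam_\euc(S)$ the multiplicity is $\lesssim_{\times,S}1$ and the Frostman bound $\lesssim r^s$ does the job.) With this two-regime correction your approach goes through; without it the argument is incomplete.
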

\begin{remark}
In the proof of Lemma \ref{lemmaHRlocal}, we will not indicate the dependence of implied constants on the $\xi$-bounded set $S$.
\end{remark}

\begin{proof}[Proof of \textup{(i)}]
Fix $Q\in[1,R]$, and let $A_Q$ be a maximal $2C_0 Q$-separated subset of $B_\euc(\zero,R)\cap G_\xi(\zero)$. Then the sets $\big(B_\euc(x,C_0 Q)\cap G_\xi(\zero)\big)_{x\in A_Q}$ are disjoint, so
\begin{align*}
\#(B_\euc(\zero,R + C_0 Q)\cap G_\xi(\zero))
&\geq \sum_{x\in A_Q}\#(B_\euc(x,C_0 Q)\cap G_\xi(\zero))\\
&\geq \sum_{x\in A_Q}\#(B_\euc(\zero,Q)\cap G_\xi(\zero)) \by{Observation \ref{observationuniformlyLipschitz}}\\
&= f_\xi(Q)\#(A_Q),
\end{align*}
and thus
\begin{align*}
\#(A_Q) &\leq \frac{f_\xi(R + C_0 Q)}{f_\xi(Q)} \leq \frac{f_\xi((C_0 + 1)R)}{f_\xi(Q)}\cdot
\end{align*}
On the other hand, the maximality of $A_Q$ implies
\[
B_\euc(\zero,R)\cap G_\xi(\zero) \subset \bigcup_{x\in A_Q}B_\euc(x,2C_0 Q)
\]
and thus by Observation \ref{observationuniformlyLipschitz}
\[
S_R := S_{R,0} \subset \bigcup_{x\in B_\euc(\zero,R)\cap G_\xi(\zero)}B_\euc(x,C_0\Diam_\euc(S))
\subset \bigcup_{x\in A_Q} B_\euc(x,2C_0 Q + C_0\Diam_\euc(S)),
\]
i.e. $\big(B_\euc(x,2C_0 Q + C_0\Diam_\euc(S))\big)_{x\in A_Q}$ is a cover of $S_R$. Thus
\begin{align*}
\HH_{\euc,\infty}^s(S_R)
&\leq_\pt \sum_{x\in A_Q}\big(2C_0 Q + C_0\Diam_\euc(S)\big)^s\\
&=_\pt \#(A_Q)\big(2C_0 Q + C_0\Diam_\euc(S)\big)^s\\
&\lesssim_\times \frac{f_\xi((C_0 + 1)R)}{f_\xi(Q)} Q^s,
\end{align*}
since $Q\geq 1$ by assumption, and $C_0$ and $\Diam_\euc(S)$ are constants. Thus
\[
R^{-2s}\HH_{\euc,\infty}^s(S_R) \lesssim_\times \frac{f_\xi((C_0 + 1)R)/((C_0 + 1)R)^{2s}}{f_\xi(Q)/Q^s},
\]
and taking the infimum over all $Q\in[1,R]$ finishes the proof.
\end{proof}
\begin{proof}[Proof of \textup{(ii)}]
Let $\CC$ satisfy $\bigcup(\CC) = S_{R,\alpha}$.
\begin{claim}
\label{claimgSAnonempty}
For all $A\subset\EE_\xi$,
\[
\#\{g\in G_\xi:g(S)\cap A\neq\emptyset\} \leq f_\xi(C_0\Diam_\euc(A) + 2C_0^2\Diam_\euc(S)).
\]
\end{claim}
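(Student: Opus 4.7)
The plan is to reduce the count to an estimate on a single $G_\xi$-orbit in the $\Dist_\euc$ metametric, by translating everything by a single reference element. The point is that each $g$ with $g(S)\cap A\neq\emptyset$ puts $g(\zero)$ close (in $\Dist_\euc$) to $A$, so after translating by a fixed such element $g_0^{-1}$, all the $g(\zero)$'s land in a ball around $\zero$ whose $\Dist_\euc$-radius we can compute explicitly. The uniformly Lipschitz action of $G_\xi$ (Observation \ref{observationuniformlyLipschitz}) is exactly the tool needed to push estimates through $g_0^{-1}$.

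First I would dispense with the trivial case: if no $g\in G_\xi$ satisfies $g(S)\cap A\neq\emptyset$, then the count is $0$ and the inequality is vacuous. Otherwise, fix one such $g_0$. Then for every $g\in G_\xi$ with $g(S)\cap A\neq\emptyset$ I pick some $s_g\in S$ with $g(s_g)\in A$. Since $\zero\in S$, Observation \ref{observationuniformlyLipschitz} gives $\Dist_\euc(g(\zero),g(s_g))\leq C_0\Diam_\euc(S)$, so $g(\zero)$ lies within $\Dist_\euc$-distance $C_0\Diam_\euc(S)$ of the set $A$. Applying this to both $g$ and $g_0$ and using the triangle inequality on $\Dist_\euc$ across $A$ yields
\[
\Dist_\euc(g_0(\zero),g(\zero)) \leq 2C_0\Diam_\euc(S)+\Diam_\euc(A).
\]

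Next I apply $g_0^{-1}\in G_\xi$ to both sides and use Observation \ref{observationuniformlyLipschitz} once more (this time for $g_0^{-1}$, which is also in $G_\xi$) to get
\[
\Dist_\euc(\zero,g_0^{-1}g(\zero))\leq C_0\bigl(2C_0\Diam_\euc(S)+\Diam_\euc(A)\bigr)=2C_0^2\Diam_\euc(S)+C_0\Diam_\euc(A).
\]
The map $g\mapsto g_0^{-1}g$ is an injection of $\{g\in G_\xi:g(S)\cap A\neq\emptyset\}$ into $\{h\in G_\xi:\Dist_\euc(\zero,h(\zero))\leq C_0\Diam_\euc(A)+2C_0^2\Diam_\euc(S)\}$, whose cardinality is by definition $f_\xi(C_0\Diam_\euc(A)+2C_0^2\Diam_\euc(S))$. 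This gives the claimed bound. The only step that requires any care is checking that $C_0$ serves as the uniform Lipschitz constant for both $g$ and $g_0^{-1}$, which is immediate from Observation \ref{observationuniformlyLipschitz} since $G_\xi$ is a group.
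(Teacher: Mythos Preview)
Your proof is correct and follows essentially the same approach as the paper's: fix a reference $g_0$, bound $\Dist_\euc(g_0(\zero),g(\zero))$ by $\Diam_\euc(A)+2C_0\Diam_\euc(S)$ via the uniform Lipschitz property of the $G_\xi$-action (Observation \ref{observationuniformlyLipschitz}), then translate by $g_0^{-1}$ and apply the Lipschitz bound once more to land inside the defining set of $f_\xi$. The paper's version is simply more terse, compressing your steps 3--6 into two displayed inequalities.
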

\begin{subproof}
Suppose that the left hand side is nonzero; then there exists $g_0\in G_\xi$ such that $g_0(S)\cap A\neq\emptyset$. Then
\begin{align*}
\#\{g\in G_\xi:g(S)\cap A\neq\emptyset\}
&\leq \#\Big\{g\in G_\xi: \Dist_\euc(g_0(\zero),g(\zero)) \leq \Diam_\euc(A) + 2C_0\Diam_\euc(S)\Big\}\\
&\leq f_\xi(C_0\Diam_\euc(A) + 2C_0^2\Diam_\euc(S)).
\end{align*}
\end{subproof}
\noindent Let
\[
\CC_1 = \{A\in\CC:\Diam_\euc(A)\leq C_0\Diam_\euc(S)\},
\]
and let $\CC_2 = \CC\butnot\CC_1$. Then by Claim \ref{claimgSAnonempty}, we have
\[
A\in\CC_1 \Rightarrow \#\{g\in G_\xi:g(S)\cap A\neq\emptyset\} \leq f_\xi(3C_0^2\Diam_\euc(S)) \asymp_\times 1,
\]
and thus
\begin{align*}
\sum_{A\in\CC_1}\Diam_\euc^s(A)
&\gtrsim_{\times\phantom{,\sigma}} \sum_{g\in G_\xi}\sum_{\substack{A\in\CC_1 \\ g(S)\cap A\neq\smallemptyset}}\Diam_\euc^s(A)\\
&\geq_{\phantom{\times,\sigma}} \sum_{\substack{g\in G_\xi \\ g(S\cap\Lrsigma)\subset \bigcup(\CC_1)}}\HH_{\euc,\infty}^s(g(S\cap\Lrsigma))\\
&\asymp_{\times\phantom{,\sigma}} \sum_{\substack{g\in G_\xi \\ g(S\cap\Lrsigma)\subset \bigcup(\CC_1)}}\HH_{\euc,\infty}^s(S\cap\Lrsigma) \by{Observation \ref{observationuniformlyLipschitz}}\\
&\asymp_{\times,\sigma} \#\left\{g\in G_\xi: g(S\cap\Lrsigma)\subset\bigcup(\CC_1)\right\}. \by{\eqref{HSbig}}
\end{align*}
On the other hand, Claim \ref{claimgSAnonempty} implies that for all $A\in\CC_2$, we have
\[
\#\{g\in G_\xi:g(S)\cap A\neq\emptyset\}
\leq f_\xi(3C_0\Diam_\euc(A))
\]
Moreover, since $\bigcup(\CC) = S_R$, we have $A\subset S_R$ and so
\[
3C_0\Diam_\euc(A) \leq 3C_0\Diam_\euc(S_R) \leq 3C_0(2R + 2C_0\Diam_\euc(S)) \leq 12C_0 R,
\]
since $R\geq C_0\Diam_\euc(S)$. Thus
\begin{align*}
\#\{g\in G_\xi:g(S)\cap A\neq\emptyset\}
&\leq_\pt f_\xi(3C_0\Diam_\euc(A))\\
&\leq_\pt (3C_0 \Diam_\euc(A))^s\sup_{Q\in[1,12C_0 R]}(f_\xi(Q)/Q^s)\\
&\asymp_\times \Diam_\euc^s(A)\sup_{Q\in[1,12C_0 R]}(f_\xi(Q)/Q^s).
\end{align*}
Thus
\begin{align*}
f_\xi(R) - f_\xi(\alpha R)
&\leq_{\phantom{\times,\sigma}} \#\left\{g\in G_\xi: g(S\cap\Lrsigma)\subset\bigcup(\CC)\right\} \since{$\bigcup(\CC) = S_{R,\alpha}$}\\
&\leq_{\phantom{\times,\sigma}} \#\left\{g\in G_\xi: g(S\cap\Lrsigma)\subset\bigcup(\CC_1)\right\} + \sum_{A\in\CC_2}\#\{g\in G_\xi:g(S)\cap A\neq\emptyset\} \noreason \\
&\lesssim_{\times,\sigma} \sum_{A\in\CC_1}\Diam_\euc^s(A) + \sum_{A\in\CC_2}\Diam_\euc^s(A)\sup_{Q\in[1,12C_0 R]}(f_\xi(Q)/Q^s) \noreason \\
&\lesssim_{\times\phantom{,\sigma}} \sum_{A\in\CC}\Diam_\euc^s(A)\sup_{Q\in[1,12C_0 R]}(f_\xi(Q)/Q^s).
\end{align*}
In the last inequality, we have used the inequality
\[
1 \asymp_\times f_\xi(1)/1^2 \leq \sup_{Q\in[1,12C_0 R]}(f_\xi(Q)/Q^s)\cdot
\]
Taking the infimum over all collections $\CC$ such that $\bigcup(\CC) = S_{R,\alpha}$ gives
\[
f_\xi(R) - f_\xi(\alpha R) \lesssim_{\times,\sigma} \HH_{\euc,\infty}^s(S_{R,\alpha}) \sup_{Q\in[1,12C_0 R]}(f_\xi(Q)/Q^s),
\]
and rearranging finishes the proof.
\end{proof}

Having estimated $\HH_{\euc,\infty}^s(S_{R,\alpha})$, we proceed to estimate $H_{\xi,\sigma,s}^*(R)$:

\begin{lemma}
\label{lemmaHRasymp}
There exist $C_5,\alpha > 0$ such that for all $\sigma > 0$ sufficiently large, for all $R > 0$ sufficiently large, and for all $s\geq 0$,
\begin{itemize}
\item[(i)]
\begin{equation}
\label{HRasymp1}
H_{\xi,\sigma,s}^*(R) \lesssim_\times \sum_{\ell = 0}^\infty \frac{f_\xi\big(2^\ell (C_0 + 1) R\big)(2^\ell R)^{-2s}}{\sup_{Q\in[1,2^\ell R]}(f_\xi(Q)/Q^s)}.
\end{equation}
\item[(ii)] If $f_\xi(\alpha R) > f_\xi(\alpha R/2)$, then
\begin{equation}
\label{HRasymp2}
H_{\xi,\sigma,s}^*(R) \gtrsim_\times \frac{f_\xi\big(\alpha R\big) R^{-2s}}{\sup_{Q\in[1,C_5 R]}(f_\xi(Q)/Q^s)}.
\end{equation}
\end{itemize}
\end{lemma}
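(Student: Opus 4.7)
\textbf{Proof proposal for Lemma \ref{lemmaHRasymp}.}

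The key bridge between Lemma \ref{lemmaHRlocal} and the stated conclusions is a uniform comparison of the Euclidean and spherical metametrics on subsets of the ``shell'' $\{x \in \EE_\xi : \Dist_\euc(\zero,x) \asymp R\}$. Indeed, combining Lemma \ref{lemmaeuclideancomparison} (which gives $b^{\lb x|\xi\rb_\zero} \asymp R$ on the shell) with \eqref{GMVT2} (which expresses $\Dist_\sph/\Dist_\euc$ in terms of those Gromov products) shows that the identity map $(T, \Dist_\euc) \to (T, R^2\Dist_\sph)$ is bi-Lipschitz with uniform constants for any $T$ inside such a shell. Consequently $\HH_{\sph,\infty}^s(T) \asymp_\times R^{-2s}\HH_{\euc,\infty}^s(T)$. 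Before running the argument, fix via Proposition \ref{propositionboundedparabolic} a $\xi$-bounded set $S \ni \zero$ with $\Lambda\setminus\{\xi\}\subset G_\xi(S)$, enlarging $S$ so that both (a) $\Lrsigma\setminus\{\xi\} \subset G_\xi(S)$ for every $\sigma$ in question, and (b) the hypothesis \eqref{HSbig} of Lemma \ref{lemmaHRlocal}(ii) holds (for $\sigma$ large enough, Theorem \ref{theorembishopjones} gives $\HD(\Lrsigma) \geq s$, so a small enlargement of $S$ meets $\Lrsigma$ in a set of positive $s$-content).

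For (i), decompose $\Lrsigma \setminus B_\euc(\zero, R) \subset \bigcup_{\ell\geq 0}(\Lrsigma \cap A_\ell)$, where $A_\ell := B_\euc(\zero, 2^{\ell+1}R)\setminus B_\euc(\zero, 2^{\ell} R)$. Observation \ref{observationuniformlyLipschitz} lets us pass from a point of $g(S)\cap A_\ell$ back to a bound on $\Dist_\euc(\zero, g(\zero))$, absorbing the $\Diam_\euc(S)$ defect into the constant $C_0+1$, so $\Lrsigma\cap A_\ell \subset S_{2^{\ell+1}(C_0+1)R,0}$. Apply Lemma \ref{lemmaHRlocal}(i) at each level, then convert to spherical content with the level-dependent factor $(2^\ell R)^{-2s}$, and sum. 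Reindexing the resulting sum (and using monotonicity of the sup in $\ell$ to dominate the running denominator by $\sup_{Q\in[1,2^\ell R]}$) produces precisely \eqref{HRasymp1}.

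For (ii), choose $\alpha > 0$ as a fixed constant large enough that once $R$ is large, every $g \in G_\xi$ with $\Dist_\euc(\zero, g(\zero)) \in (\alpha R/2, \alpha R]$ satisfies $g(S) \cap B_\euc(\zero, R) = \emptyset$; by Observation \ref{observationuniformlyLipschitz}, it suffices to take $\alpha R/2 > R + C_0\Diam_\euc(S)$, e.g.\ $\alpha = 3$ for $R$ large. Apply Lemma \ref{lemmaHRlocal}(ii) with radius $\alpha R$ and cutoff $1/2$: the Euclidean content of $S_{\alpha R, 1/2}$ is $\gtrsim_\times (f_\xi(\alpha R) - f_\xi(\alpha R/2))/\sup_{Q\in[1,6 C_0 \alpha R]}(f_\xi(Q)/Q^s)$. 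By construction $S_{\alpha R, 1/2} \subset \Lrsigma \setminus B_\euc(\zero, R)$ and lies in the shell $\{\Dist_\euc(\zero,\cdot)\asymp R\}$, so the bridge converts the bound to $\HH_{\sph,\infty}^s \gtrsim_\times$ the same expression multiplied by $R^{-2s}$; set $C_5 := 6 C_0 \alpha$. To upgrade the numerator $f_\xi(\alpha R) - f_\xi(\alpha R/2)$ to $f_\xi(\alpha R)$, split into cases: if $f_\xi(\alpha R) \geq 2 f_\xi(\alpha R/2)$ then the difference dominates $f_\xi(\alpha R)/2$ directly; otherwise $f_\xi(\alpha R/2)\asymp f_\xi(\alpha R)$, and then the term $f_\xi(\alpha R/2)/(\alpha R/2)^s$ sits inside the sup in the denominator, forcing $f_\xi(\alpha R)R^{-2s}/\sup \lesssim_\times R^{-s}$; this matches the trivial upper bound $\HH_{\sph,\infty}^s(\Lrsigma\setminus B_\euc(\zero,R)) \leq \Diam_\sph^s \lesssim_\times R^{-s}$ from below, and in fact even a single shift $g(S\cap\Lrsigma)$ has spherical content $\asymp R^{-2s}$, which together with the $f_\xi(\alpha R)\asymp f_\xi(\alpha R/2)$ essentially-disjoint shifts (whose near-disjointness is precisely what the sup in the denominator measures) yields the same $R^{-s}$ lower bound.

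\textbf{Main obstacle.} Part (i) is straightforward once the metric conversion is set up. The delicate step is the numerator upgrade in (ii) — it is essentially a duality between the two ways Lemma \ref{lemmaHRlocal}(ii) can yield a meaningful lower bound (large genuine growth in the annulus vs.\ concentration captured by the denominator), and making this rigorous requires tracking the sup denominator carefully rather than discarding information when passing from $f_\xi(\alpha R) - f_\xi(\alpha R/2)$ to $f_\xi(\alpha R)$.
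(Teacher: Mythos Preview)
Your argument for (i) is correct and matches the paper's: dyadic decomposition into Euclidean annuli, the metric-conversion bridge via \eqref{GMVT2} and Lemma \ref{lemmaeuclideancomparison}, and Lemma \ref{lemmaHRlocal}(i) on each level.

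Part (ii) has a genuine gap in your ``numerator upgrade.'' In your Case~2 you reduce to showing $H_{\xi,\sigma,s}^*(R)\gtrsim_\times R^{-s}$, but this claim is simply false for $s>2\delta_\xi$: the upper bound \eqref{HRasymp1} (together with $f_\xi(R)\lesssim_\times R^{2\delta_\xi+\varepsilon}$) forces $H_{\xi,\sigma,s}^*(R)\lesssim_\times R^{-(2s-2\delta_\xi-\varepsilon)}\ll R^{-s}$ in that regime, and nothing about Case~2 prevents $s>2\delta_\xi$ since the case split concerns only $f_\xi$. Your sketch involving ``essentially-disjoint shifts'' is in effect re-deriving Lemma \ref{lemmaHRlocal}(ii), and applying that lemma to $S_{\alpha R,0}$ is useless here because $S_{\alpha R,0}$ is not contained in the complement of $B_\euc(\zero,R)$. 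So the loss in passing from the target inequality to $H^*\gtrsim R^{-s}$ cannot be recovered.

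The paper avoids any case split by a translation trick that you are missing. One takes $\alpha=4(C_0+1)$; the hypothesis $f_\xi(\alpha R)>f_\xi(\alpha R/2)$ then supplies some $g_0\in G_\xi$ with $\Dist_\euc(\zero,g_0(\zero))\in(2(C_0+1)R,\,4(C_0+1)R]$. By Observation \ref{observationuniformlyLipschitz}, left translation by $g_0$ carries $B_\euc(\zero,2R)\cap G_\xi(\zero)$ injectively into the annulus $\{2R<\Dist_\euc(\zero,\cdot)\leq (6C_0+4)R\}$, giving $f_\xi((6C_0+4)R)\geq 2f_\xi(2R)$. Now apply Lemma \ref{lemmaHRlocal}(ii) at radius $(6C_0+4)R$ with cutoff $1/(3C_0+2)$: the numerator is $f_\xi((6C_0+4)R)-f_\xi(2R)\asymp_\times f_\xi((6C_0+4)R)\geq f_\xi(\alpha R)$ \emph{unconditionally}, and the set $S_{(6C_0+4)R,\,1/(3C_0+2)}$ lies in the right shell for the metric conversion and outside $B_\euc(\zero,R)$. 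The point is that the hypothesis is used not to control the numerator at the same scale, but to manufacture doubling of $f_\xi$ at a nearby scale where Lemma \ref{lemmaHRlocal}(ii) then applies cleanly.
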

\begin{proof}[Proof of \textup{(i)}]
Since $\xi$ is a bounded parabolic point, by Proposition \ref{propositionboundedparabolic} there exists a $\xi$-bounded set $S\subset \EE_\xi$ such that $\Lr\subset\Lambda\butnot\{\xi\}\subset G_\xi(S)$. Without loss of generality, we may assume that $\zero\in S$.

Fix $R\geq 4C_0\Diam_\euc(S)$. Then
\begin{align*}
\Lr\butnot B_\euc(\zero,R)
&\subset \bigcup_{g\in G_\xi}g(S)\butnot B_\euc(\zero,R)\\
&\subset \bigcup_{\substack{g\in G_\xi \\ g(S)\butnot B_\euc(\zero,R)\neq\smallemptyset}}g(S)\\
&\subset \bigcup_{\substack{g\in G_\xi \\ \Dist_\euc(\zero,g(\zero))\geq R - C_0\Diam_\euc(S)}}g(S)\\
&\subset \bigcup_{\substack{g\in G_\xi \\ \Dist_\euc(\zero,g(\zero))\geq R/2}}g(S).
\end{align*}
Thus for all $\sigma > 0$ and for all $s\geq 0$,
\begin{equation}
\label{dividingintorings}
\begin{split}
H_{\xi,\sigma,s}^*(r) = \HH_{\sph,\infty}^s(\Lrsigma\butnot B_\euc(\zero,R))
&\leq \HH_{\sph,\infty}^s\left(\bigcup_{\substack{g\in G_\xi \\\Dist_\euc(\zero,g(\zero)) \geq R/2}}g(S)\right)\\
&\leq \sum_{\ell = 0}^\infty \HH_{\sph,\infty}^s\left(\bigcup_{\substack{g\in G_\xi \\ 2^{\ell - 1} R\leq\Dist_\euc(\zero,g(\zero)) < 2^\ell R}}g(S)\right).
\end{split}
\end{equation}
Fix $\ell\in\N$, and let
\[
A_\ell = B_\euc(\zero,2^{\ell + 1}R)\butnot B_\euc(\zero,2^{\ell - 2}R).
\]
Fix $g\in G_\xi$ such that $2^{\ell - 1} R\leq\Dist_\euc(\zero,g(\zero)) < 2^\ell R$. Then
\[
\Dist_\euc(g(\zero),\EE_\xi\butnot A_\ell) \geq \frac{R}{4}\geq C_0\Diam_\euc(S),
\]
and so
\[
g(S)\subset A_\ell.
\]
Now, for $y_1,y_2\in A_\ell$, by \eqref{GMVT2} and \eqref{euclideancomparison} we have
\[
\Dist_\sph(y_1,y_2) \asymp_\times (2^\ell R)^{-2}\Dist_\euc(y_1,y_2),
\]
and so
\begin{align*}
&\HH_{\sph,\infty}^s\left(\bigcup_{\substack{g\in G_\xi \\ 2^{\ell - 1}R\leq\Dist_\euc(\zero,g(\zero)) < 2^\ell R}}g(S)\right)\\
&\asymp_\times (2^\ell R)^{-2s} \HH_{\euc,\infty}^s\left(\bigcup_{\substack{g\in G_\xi \\ 2^{\ell - 1}R\leq\Dist_\euc(\zero,g(\zero)) < 2^\ell R}}g(S)\right) \noreason \\
&\leq_\pt  (2^\ell R)^{-2s} \HH_{\euc,\infty}^s\left(\bigcup_{\substack{g\in G_\xi \\ \Dist_\euc(\zero,g(\zero)) < 2^\ell R}}g(S)\right) \noreason \\
&\lesssim_\times (2^\ell R)^{-2s}\frac{f_\xi(2^\ell(C_0 + 1)R)}{\sup_{Q\in[1,2^\ell R]}(f_\xi(Q)/Q^s)}\cdot \by{Lemma \ref{lemmaHRlocal}}
\end{align*}
Combining with \eqref{dividingintorings} finishes the proof.
\end{proof}

\begin{proof}[Proof of \textup{(ii)}]
Since $G$ is of general type, there exists $h\in G$ so that $h(\xi)\neq\xi$. Let $S$ be a $\xi$-bounded set so that $\bord X = S\cup h(S)$. Without loss of generality, we may suppose that $\zero\in S$.
\begin{claim}
\label{claimHSbig}
There exists $\sigma > 0$ satisfying \eqref{HSbig}.
\end{claim}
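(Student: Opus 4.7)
The plan is to first show $\HH^s_{\sph,\infty}(S\cap\Lrsigma)>0$ for some $\sigma>0$ in the visual metric $\Dist_\sph=\Dist_\zero$, and then convert to $\HH^s_{\euc,\infty}$ using that $\Dist_\euc$ and $\Dist_\sph$ are bi-Lipschitz equivalent on the $\xi$-bounded set $S$. This last comparison is immediate from \eqref{GMVT2} together with Observation \ref{observationxibounded}(B), which pins $\lb y|\xi\rb_\zero$ to a uniform bound for $y\in S$; the resulting $\Dist_\euc\asymp_\times\Dist_\sph$ on $S$ makes positivity of $s$-content in the two metrics equivalent for subsets of $S$.

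For the spherical version, I would invoke Theorem \ref{theorembishopjones} to get $\HD(\Lr)=\delta>s$, so $\HH^s_\sph(\Lr)=+\infty$. Writing $\Lr=\bigcup_\sigma\Lrsigma$ and using $\sigma$-additivity of $\HH^s_\sph$, I pick $\sigma_0$ with $\HH^s_\sph(\Lr_{\sigma_0})>0$; since $\HH^s_\infty>0$ is equivalent to $\HH^s>0$, this gives $\HH^s_{\sph,\infty}(\Lr_{\sigma_0})>0$. The hypothesis $\cl X=S\cup h(S)$ and subadditivity of Hausdorff content then yield
\[
\HH^s_{\sph,\infty}(\Lr_{\sigma_0}\cap S)+\HH^s_{\sph,\infty}(\Lr_{\sigma_0}\cap h(S))\geq \HH^s_{\sph,\infty}(\Lr_{\sigma_0})>0,
\]
so at least one of the two summands is positive. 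If it is the first, take $\sigma=\sigma_0$ and we are done.

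If instead the second summand is positive, I would transfer mass from $h(S)$ to $S$ via $h^{-1}$. The key input is that any single isometry $g\in\Isom(X)$ acts globally bi-Lipschitz on $(\del X,\Dist_\sph)$: combining (g) of Proposition \ref{propositionbasicidentities} with the standard bound $|\busemann_y(\zero,g^{-1}(\zero))|\leq \dist(\zero,g(\zero))$ (valid also at $y\in\del X$ by Lemma \ref{lemmaboundaryasymptotic}) shows that $g'(y)=b^{\busemann_y(\zero,g^{-1}(\zero))}$ is bounded above and below on all of $\del X$ in terms of $\dist(\zero,g(\zero))$, and the geometric mean value theorem \eqref{GMVT5} converts this into a genuine bi-Lipschitz bound for $g$. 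Applied to $g=h^{-1}$, Hausdorff content is distorted by at most a multiplicative constant:
\[
\HH^s_{\sph,\infty}\bigl(h^{-1}(\Lr_{\sigma_0}\cap h(S))\bigr)\asymp_\times \HH^s_{\sph,\infty}(\Lr_{\sigma_0}\cap h(S))>0.
\]
By Lemma \ref{lemmanearinvariance} there exists $\tau>0$ with $h^{-1}(\Lr_{\sigma_0})\subset \Lrtau$, so $h^{-1}(\Lr_{\sigma_0}\cap h(S))=h^{-1}(\Lr_{\sigma_0})\cap S\subset \Lrtau\cap S$, giving $\HH^s_{\sph,\infty}(\Lrtau\cap S)>0$; take $\sigma=\tau$. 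The main step requiring care is this global bi-Lipschitz property of individual isometries on the boundary, since in the non-proper setting one cannot appeal to compactness of $\del X$ and must instead rely on the uniform Busemann bound.
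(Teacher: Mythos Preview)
Your proof is correct and follows essentially the same route as the paper: use Bishop--Jones to get positive $s$-dimensional size on some $\Lrsigma$, split via $\cl X = S\cup h(S)$, and in the second case transfer back to $S$ using Lemma \ref{lemmanearinvariance} together with the global bi-Lipschitz action of $h^{-1}$ on $(\del X,\Dist_\sph)$, then convert between $\Dist_\sph$ and $\Dist_\euc$ using that they are comparable on the $\xi$-bounded set $S$. The only cosmetic difference is that the paper carries Hausdorff \emph{dimension} as the invariant (deducing $\HD(\Lrtau\cap S)>s$ and only at the end noting this gives positive content), whereas you track Hausdorff \emph{content} directly throughout; both are equivalent here and rely on the same bi-Lipschitz fact for a single isometry that you spell out explicitly.
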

\begin{subproof}
Since $\HD(\Lr) = \delta > s$, there exists $\sigma > 0$ so that $\HD(\Lrsigma) > s$. Letting $\w h = \id$ or $h$, we have $\HD(\Lrsigma\cap \w h(S)) > s$. By Lemma \ref{lemmanearinvariance}, we have $\HD(\Lrtau\cap S) > s$ for some $\tau > 0$,  finishing the proof. (Hausdorff dimension is the same in the Euclidean and spherical metrics.)
\end{subproof}
Let $\alpha = 4(C_0 + 1)$. Fix $R\geq C_0\Diam_\euc(S)$, and assume that $f_\xi(\alpha R) > f_\xi(\alpha R/2)$. Then there exists $g_0\in G_\xi$ such that
\begin{equation}
\label{gzerodef}
\alpha R/2 = 2(C_0 + 1) R < \Dist(\zero,g_0(\zero)) \leq \alpha R = 4(C_0 + 1)R.
\end{equation}
Now for every $x\in B_\euc(\zero,2R)\cap G_\xi(\zero)$,
\[
\Dist_\euc(g_0(\zero),g_\zero(x)) \leq 2C_0 R
\]
and thus
\[
2 R < \Dist_\euc(\zero,g_0(x)) \leq (6C_0 + 4) R.
\]
So
\begin{equation}
\label{fxiincreasingfast}
f_\xi((6C_0 + 4) R) \geq 2 f_\xi(2 R),
\end{equation}
and thus by Lemma \ref{lemmaHRlocal},
\begin{align*}
\HH_{\euc,\infty}^s(S_{(6C_0 + 4) R,1/(3C_0 + 2)})
&\gtrsim_\times  \frac{f_\xi((6C_0 + 4) R) - f_\xi(\beta R)}{\sup_{Q\in[1,12C_0 (6C_0 + 4) R]}(f_\xi(Q)/Q^s)}\\
&\asymp_\times \frac{f_\xi((6C_0 + 4) R)}{\sup_{Q\in[1,12C_0 (6C_0 + 4) R]}(f_\xi(Q)/Q^s)}\\
&\geq_\pt \frac{f_\xi(\alpha R)}{\sup_{Q\in[1,12C_0 (6C_0 + 4) R]}(f_\xi(Q)/Q^s)}\\
\end{align*}
On the other hand,
\begin{equation}
\label{S6Cetc}
\begin{split}
S_{(6C_0 + 4) R,1/(3C_0 + 2)}
&\subset B_\euc(\zero,(6C_0 + 4) R + C_0 \Diam_\euc(S)) \butnot B_\euc(\zero,2 R - C_0 \Diam_\euc(S))\\
&\subset B_\euc(\zero,(6C_0 + 4) R + C_0 \Diam_\euc(S)) \butnot B_\euc(\zero,R).
\end{split}
\end{equation}
Let $\tau$ be as in Lemma \ref{lemmanearinvariance}. Then
\begin{align*}
H_{\xi,\tau,s}^*(R) = \HH_{\sph,\infty}^s(\Lrtau\butnot B_\euc(\zero,R))
&\geq_\pt \HH_{\sph,\infty}^s(S_{(6C_0 + 4)R,1/(3C_0 + 2)}) \by{\eqref{S6Cetc}}\\
&\asymp_\times R^{-2s}\HH_{\euc,\infty}^s(S_{(6C_0 + 4)R,1/(3C_0 + 2)}) \by{\eqref{S6Cetc}}\\
&\gtrsim_\times R^{-2s}\frac{f_\xi(\alpha R)}{\sup_{Q\in[1,12C_0 (6C_0 + 4) R]}(f_\xi(Q)/Q^s)}\cdot
\end{align*}
\end{proof}

We now begin the proof of \eqref{prevelanihill}. It suffices to consider the two cases $s\in (0,2\delta_\xi)$ and $s\in(2\delta_\xi,\delta)$; the case $s = 2\delta_\xi$ follows by continuity.

\begin{remark}
By Observation \ref{observationeuclideanparabolicasymp},
\begin{equation}
\label{deltaxilimsup}
2\delta_\xi = \limsup_{R\to\infty} \frac{\log f_\xi(R)}{\log(R)}\cdot
\end{equation}
\end{remark}

\begin{itemize}
\item[Case 1:] $s > 2\delta_\xi$. In this case, fix $\varepsilon > 0$ so that $2\delta_\xi + \varepsilon < s$; then by \eqref{deltaxilimsup}, for all $R\geq 1$ we have
\begin{equation}
\label{fxiRbound}
f_\xi(R) \lesssim_{\times,\varepsilon} R^{2\delta_\xi + \varepsilon},
\end{equation}
so by \eqref{HRasymp1},
\begin{align*}
H_{\xi,\sigma,s}^*(R) &\lesssim_{\times\phantom{,\varepsilon}} \sum_{\ell = 0}^\infty \frac{f_\xi\big(2^\ell C_5 R\big)(2^\ell R)^{-2s}}{\sup_{Q\in[1,2^\ell R]}(f_\xi(Q)/Q^s)}\\
&\lesssim_{\times,\varepsilon} \sum_{\ell = 0}^\infty \big(2^\ell C_5 R\big)^{2\delta_\xi + \varepsilon}(2^\ell R)^{-2s}\\
&\asymp_{\times\phantom{,\varepsilon}}  R^{-(2s - 2\delta_\xi - \varepsilon)}\sum_{\ell = 0}^\infty (2^\ell)^{2\delta_\xi + \varepsilon - 2s}\\
&\asymp_{\times,\varepsilon} R^{-(2s - 2\delta_\xi - \varepsilon)}.
\end{align*}
Applying \eqref{Pxidef2} gives
\[
P_\xi(s) \geq 2s - 2\delta_\xi - \varepsilon,
\]
and taking the limit as $\varepsilon\to 0$ gives $P_\xi(s)\geq 2s - 2\delta_\xi$.

To demonstrate the other direction, note that by \eqref{fxiRbound} we have
\[
\sup_{Q\in[1,R]}(f_\xi(Q)/Q^s) \asymp_{\times,\varepsilon} 1
\]
and so by \eqref{HRasymp2} we have
\[
H_{\xi,\sigma,s}^*(R) \gtrsim_\times \frac{f_\xi\big(\alpha R\big) R^{-2s}}{\sup_{Q\in[1,C_5 R]}(f_\xi(Q)/Q^s)}
\asymp_{\times,\varepsilon} f_\xi\big(\alpha R\big) R^{-2s}
\]
whenever $f_\xi(\alpha R) > f_\xi(\alpha R/2)$. Thus
\begin{align*}
P_\xi(s) &= \lim_{\sigma\to\infty}\liminf_{R\to\infty}\frac{\log(H_{\xi,\sigma,s}^*(R))}{\log(1/R)}\\
&\leq \lim_{\sigma\to\infty}\liminf_{\substack{R\to\infty \\ f_\xi(\alpha R) > f_\xi(\alpha R/2)}}\frac{\log(R^{-2s} f_\xi(\alpha R))}{\log(1/R)}\\
&= 2s + \liminf_{\substack{R\to\infty \\ f_\xi(R) > f_\xi(R/2)}}\frac{\log f_\xi(R)}{\log(1/R)}\\
&= 2s - \limsup_{\substack{R\to\infty \\ f_\xi(R) > f_\xi(R/2)}}\frac{\log f_\xi(R)}{\log(R)}\\
&= 2s - \limsup_{R\to\infty}\frac{\log f_\xi(R)}{\log(R)} & \textup{(see below)}\\
&= 2s - 2\delta_\xi. \by{\eqref{deltaxilimsup}}
\end{align*}
To justify removing the restriction on the limsup when moving to the penultimate line, simply notice that for each $R$, if we let $\w R$ be the smallest value so that $f_\xi(\w R) = f_\xi(R)$, then $\w R$ satisfies the restriction, and moreover
\[
\frac{\log f_\xi(\w R)}{\log(\w R)} = \frac{\log f_\xi(R)}{\log(\w R)} \geq \frac{\log f_\xi(R)}{\log(R)}.
\]
So letting $R_n\tendsto n \infty$ be a sequence such that $\frac{\log f_\xi(R_n)}{\log(R_n)}$ tends to the lim sup, the sequence $(\w R_n)_1^\infty$ also tends to the lim sup; moreover, $\w R_n\tendsto n \infty$ because otherwise $f_\xi$ would be eventually constant, $G_\xi$ would be finite, and $\xi$ would fail to be a parabolic point.

\item[Case 2:] $s < 2\delta_\xi$. In this case, by \eqref{deltaxilimsup}, we have
\begin{equation}
\label{toinfinity}
\limsup_{R\to\infty}f_\xi(R)/R^s = \infty.
\end{equation}
\begin{claim}
\label{claimC7}
There exists a constant $C_7 > 0$ and a sequence $R_n\tendsto n \infty$ such that for all $n\in\N$,
\[
\sup_{Q\in[1,C_5 R_n]}(f_\xi(Q)/Q^s) \leq C_7 f_\xi(\alpha R_n)/R_n^s
\]
and
\[
f_\xi(\alpha R_n) > f_\xi(\alpha R_n/2).
\]
\end{claim}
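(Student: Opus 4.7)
The plan is to reparametrize in the natural variables. Writing $T := \alpha R$, $\gamma := C_5/\alpha$, $F(Q) := f_\xi(Q)/Q^s$, and $G(T) := \sup_{Q \in [1,T]} F(Q)$, the first inequality the claim demands becomes $G(\gamma T_n) \leq C F(T_n)$ for an absolute constant $C$, from which one reads off $C_7 := C/\alpha^s$. I would arrange moreover for $T_n$ to be an $F$-record-breaker, meaning $F(T_n) = G(T_n)$: the point of doing so is that the second conclusion of the claim, $f_\xi(T_n) > f_\xi(T_n/2)$, then comes for free, since $F(T_n) \geq F(T_n/2)$ rearranges to $f_\xi(T_n) \geq 2^s f_\xi(T_n/2) > f_\xi(T_n/2)$ for $T_n$ large. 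So everything boils down to producing arbitrarily large record-breakers $T$ at which $G(\gamma T) \leq C G(T)$.

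I would argue by contradiction. By \eqref{toinfinity}, $F$ is unbounded, so its record-breakers $R_1 < R_2 < \cdots$ form an infinite sequence tending to infinity. Suppose some prescribed $C$ fails eventually: then for every sufficiently large $R_k$, the inequality $G(\gamma R_k) > C F(R_k)$ would supply some $Q \in (R_k, \gamma R_k]$ with $F(Q) > C F(R_k) \geq F(R')$ for all $R' \leq R_k$, making $Q$ itself a later record-breaker $R_{k'}$. Iterating this step would build a subsequence $R_{k_j}$ satisfying $R_{k_j} \leq \gamma^{j-1} R_{k_1}$ and $F(R_{k_j}) > C^{j-1} F(R_{k_1})$. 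Unwinding these estimates via $\log f_\xi = \log F + s \log R$ gives
\[
\limsup_{j \to \infty} \frac{\log f_\xi(R_{k_j})}{\log R_{k_j}} \geq s + \frac{\log C}{\log \gamma},
\]
so picking $C$ any larger than $\gamma^{2\delta_\xi - s}$ contradicts \eqref{deltaxilimsup}. Fixing such a $C$, the record-breakers $T_n$ it produces, with $R_n := T_n/\alpha$ and $C_7 := C/\alpha^s$, satisfy both conclusions.

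The main obstacle is choosing the right candidate for $T_n$. It is tempting to try to take $T_n$ to be the argmax of $F$ on $[1, \gamma T_n]$, but this is circular; conversely, if one simply iterates record-breakers without the geometric-series bookkeeping, one has no control over the sup of $F$ just to the right of $T_n$. The key insight is that a global $F$-record-breaker already controls the sup to the left, and the polynomial envelope of $f_\xi$ extracted from \eqref{deltaxilimsup} then rules out the pathological scenario in which $F$ multiplies by a huge factor across the short window $[T_n, \gamma T_n]$.
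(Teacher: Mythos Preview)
Your proposal is correct and follows essentially the same route as the paper. The paper introduces $h_{\xi,s}(R)=f_\xi(R)/R^s$ and $C_8=C_5/\alpha$ (your $F$ and $\gamma$), argues by contradiction, picks an initial record-breaker $R_1$, and iterates exactly as you do to produce $h_{\xi,s}(R_{n+1})\geq C_7\,h_{\xi,s}(R_n)$ with $R_{n+1}\leq C_8 R_n$, then contradicts \eqref{deltaxilimsup}; your observation that the record-breaker property already yields $f_\xi(T_n)>f_\xi(T_n/2)$ is also how the paper dispatches the second conclusion. One small wording issue: the $Q\in(R_k,\gamma R_k]$ with $F(Q)>C\,F(R_k)$ need not itself be a record-breaker, but the minimal such $Q$ is (this is exactly what the paper does when it takes $R_2$ minimal).
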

\begin{subproof}
For convenience let $h_{\xi,s}(R) = f_\xi(R)/R^s$ for all $R\geq 1$ and let $C_8 = C_5/\alpha$. Fix $C_7 > 0$ large to be announced below, and by contradiction suppose that there exists $R_0$ such that for all $R\geq R_0$, either
\begin{equation}
\label{C71}
\sup_{[1,C_8 R]} h_{\xi,s} > C_7 h_{\xi,s}(R)
\end{equation}
or
\begin{equation}
\label{C72}
f_\xi(R) = f_\xi(R/2).
\end{equation}
(In each equation, we have changed variables replacing $R$ by $R/\alpha$.) By \eqref{toinfinity}, there exists $R_1\geq R_0$ so that
\begin{equation}
\label{R1def}
h_{\xi,s}(R_1) = \sup_{[1,R_1]}h_{\xi,s},
\end{equation}
and in particular $h_{\xi,s}(R_1)\geq h_{\xi,s}(R_2)$, which implies $f_\xi(R_1) > f_\xi(R_1/2)$. Thus $R_1$ does not satisfy \eqref{C72}, so it does satisfy \eqref{C71}; there exists $R_2\in[1,C_8 R_1]$ such that
\begin{equation}
\label{R2def}
h_{\xi,s}(R_2) \geq C_7 h_{\xi,s}(R_1).
\end{equation}
By \eqref{R1def}, such an $R_2$ must be larger than $R_1$. Letting $R_2$ be minimal satisfying \eqref{R2def}, we see that
\[
h_{\xi,s}(R_2) = \sup_{[1,R_2]}h_{\xi,s}.
\]
We can continue this process indefinitely to get a sequence $R_1 < R_2 < \cdots$ satisfying
\[
h_{\xi,s}(R_{n + 1}) \geq C_7 h_{\xi,s}(R_n) \text{ and } R_{n + 1} \leq C_8 R_1.
\]
By iterating, we see that
\[
\frac{h_{\xi,s}(R_n)}{h_{\xi,s}(R_1)} \geq C_7^{n - 1} = C_8^{\log_{C_8}(C_7)(n - 1)} \geq \left(\frac{R_n}{R_1}\right)^{\log_{C_8}(C_7)}
\]
and thus
\[
f_\xi(R_n) \gtrsim_{\times,C_7,R_1} R_n^{\log_{C_8}(C_7) + s}.
\]
Applying \eqref{deltaxilimsup} gives
\[
\delta_\xi \geq \log_{C_8}(C_7) + s,
\]
but if $C_7$ is large enough, then this is a contradiction, since $\delta_\xi\leq\delta < \infty$ by assumption.
\end{subproof}

\noindent Applying \eqref{HRasymp2} gives
\[
H_{\xi,\sigma,s}^*(R_n) \gtrsim_\times R_n^{-s}
\]
and applying \eqref{Pxidef2} shows that $P_\xi(s)\leq s$. On the other hand, the direction $P_\xi(s)\geq s$ holds in general  (Proposition \ref{propositionPxifacts}).
\end{itemize}
This completes the proof of \eqref{prevelanihill}. Deducing \eqref{velanihillgeneral} from \eqref{jarnikbesicovitch} and \eqref{prevelanihill} is trivial, so this completes the proof of Theorem \ref{theoremboundedparabolicjarnik}.

\draftnewpage\section{Proof of Theorem \ref{theoremkhinchin} (Generalization of Khinchin's Theorem)}

\begin{theorem}[Generalization of Khinchin's Theorem]
\label{theoremkhinchin}
Let $(X,\dist,\zero,b,G)$ be as in \sectionsymbol\ref{standingassumptions}, and fix a distinguished point $\xi\in\del X$. Let $\delta$ be the Poincar\'e exponent of $G$, and suppose that $\mu\in\MM(\Lambda)$ is an ergodic $\delta$-quasiconformal probability measure satisfying $\mu(\xi) = 0$. Let
\[
\Delta_{\mu,\xi}(r) = \mu(B(\xi,r)).
\]
Let $\Phi:\Rplus\to(0,\infty)$ be a function such that the function $t\mapsto t\Phi(t)$ is nonincreasing. Then:
\begin{itemize}
\item[(i)] If the series
\begin{equation}
\label{khinchindiverges}
\sum_{g\in G}b^{-\delta\dist(\zero,g(\zero))} \Delta_{\mu,\xi} \left(b^{\dist(\zero,g(\zero))} \Phi(K b^{\dist(\zero,g(\zero))})\right)
\end{equation}
diverges for all $K > 0$, then $\mu(\WA_{\Phi,\xi}) = 1$.
\item[(iiA)] If the series
\begin{equation}
\label{khinchinconverges}
\sum_{g\in G}(g'(\xi))^\delta \Delta_{\mu,\xi} \left(\Phi(K b^{\dist(\zero,g(\zero))})/g'(\xi)\right)
\end{equation}
converges for some $K > 0$, then $\mu(\WA_{\Phi,\xi}) = 0$.
\item[(iiB)] If $\xi$ is a bounded parabolic point of $G$ (see Definition \ref{definitionboundedparabolic}), and if the series \eqref{khinchindiverges} converges for some $K > 0$, then $\mu(\WA_{\Phi,\xi}) = 0$.
\end{itemize}
\end{theorem}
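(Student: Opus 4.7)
The plan is to frame the question in Borel-Cantelli language. For each $K > 0$ and $g \in G$, set
\[
E_g^{(K)} := \{\eta \in \Lambda : \Dist(g(\xi),\eta) \leq \Phi(Kb^{\dist(\zero,g(\zero))})\},
\]
so that $\WA_{\Phi,\xi} = \bigcap_{K > 0} \bigcup_g E_g^{(K)}$. A Geometric Mean Value Theorem calculation \eqref{GMVT5} shows that $\WA_{\Phi,\xi}$ is exactly $G$-invariant (the individual sets $\bigcup_g E_g^{(K)}$ are $G$-invariant only up to a change of $K$, but the intersection over all $K$ restores exact invariance), so ergodicity of $\mu$ immediately yields $\mu(\WA_{\Phi,\xi}) \in \{0,1\}$.

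The central local estimate is
\[
\mu(E_g^{(K)}) \asymp_\times [g'(\xi)]^\delta\, \Delta_{\mu,\xi}\!\big(\Phi(Kb^{\dist(\zero,g(\zero))})/g'(\xi)\big),
\]
proved by pulling $E_g^{(K)}$ back by $g^{-1}$: the Bounded Distortion Lemma sandwiches $g^{-1}(E_g^{(K)})$ between two balls of the form $B(\xi, \text{const}\cdot r/g'(\xi))$ with $r = \Phi(Kb^{\dist(\zero,g(\zero))})$, and $\delta$-quasiconformality \eqref{quasiconformal} applied on these balls (where $g'$ is nearly constant, equal to $g'(\xi)$ up to a multiplicative constant, again by Bounded Distortion) delivers the estimate. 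The right-hand side is exactly the summand of \eqref{khinchinconverges}, so Part (iiA) is immediate: convergence of \eqref{khinchinconverges} gives $\sum_g \mu(E_g^{(K)}) < \infty$, Borel-Cantelli forces $\mu$-a.e. $\eta$ to lie in only finitely many $E_g^{(K)}$, and then monotonicity of $t \mapsto t\Phi(t)$ lets us raise $K$ past these finitely many exceptions to exclude $\eta$ from $\WA_{\Phi,\xi}$ entirely.

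Part (i) is the genuinely hard case. Since \eqref{khinchindiverges} is in general strictly larger than $\sum_g \mu(E_g^{(K)}) \asymp$ \eqref{khinchinconverges} (the two summands coincide only when $g'(\xi) \asymp b^{-\dist(\zero,g(\zero))}$), divergence of \eqref{khinchindiverges} does not directly yield $\sum_g \mu(E_g^{(K)}) = \infty$. The remedy is to restrict to the shadow-adapted subfamily
\[
G^* := \{g \in G : \xi \in \Shad_{g^{-1}(\zero)}(\zero,\sigma)\}
\]
for a fixed large $\sigma$; on $G^*$ the Bounded Distortion Lemma yields $g'(\xi) \asymp b^{-\dist(\zero,g(\zero))}$, so the summands of \eqref{khinchindiverges} and \eqref{khinchinconverges} are termwise comparable. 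The Big Shadows Lemma shows that every $g \in G$ differs from an element of $G^*$ only by post-composition with one of a bounded collection of isometries, so divergence of \eqref{khinchindiverges} transfers to its $G^*$-restriction, giving $\sum_{g \in G^*}\mu(E_g^{(K)}) = \infty$. Borel-Cantelli with quasi-independence then finishes the job, provided the bound
\[
\mu\!\big(E_g^{(K)} \cap E_h^{(K)}\big) \lesssim_\times \mu(E_g^{(K)})\,\mu(E_h^{(K)}) \quad (g \neq h \text{ in } G^*)
\]
holds; this follows from the pullback measure estimate combined with the Intersecting Shadows Lemma, which controls how the shadows $\Shad(g(\zero),\sigma)$ and $\Shad(h(\zero),\sigma)$ -- containing $g(\xi)$ and $h(\xi)$ respectively -- can overlap.

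For Part (iiB), the bounded-parabolic hypothesis makes the two series comparable, reducing (iiB) to (iiA). Every $g \in G$ factors as $g = g_0 h$ with $g_0 \in G^*$ and $h \in G_\xi$, giving $g(\xi) = g_0(\xi)$. Using Proposition \ref{propositionboundedparabolic} together with the Euclidean asymptotic $\Dist_{\xi,\zero}(\zero,h(\zero)) \asymp b^{\dist(\zero,h(\zero))/2}$ from Observation \ref{observationeuclideanparabolicasymp}, the sum of the \eqref{khinchindiverges}-summands over a single coset $g_0 G_\xi$ becomes an integral against the orbital counting function of $G_\xi$, which evaluates to the single \eqref{khinchinconverges}-summand at $g_0$; summing over cosets yields \eqref{khinchindiverges} $\asymp$ \eqref{khinchinconverges}, and Part (iiA) finishes the job. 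The principal obstacle throughout is the quasi-independence estimate in Part (i): proving the product bound on $\mu(E_g^{(K)} \cap E_h^{(K)})$ uniformly in $g,h \in G^*$ requires a case analysis based on whether the shadows $\Shad(g(\zero),\sigma), \Shad(h(\zero),\sigma)$ are essentially disjoint, nested, or properly overlapping, with the geometric heart of the argument supplied by the Intersecting Shadows Lemma and Sullivan's Shadow Lemma.
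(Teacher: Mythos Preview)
Your treatment of (iiA) is correct and matches the paper's argument. The problems are in (i) and (iiB).

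\textbf{Part (iiB).} Your reduction is wrong: the two series are \emph{not} comparable under the bounded-parabolic hypothesis. The paper itself supplies a counterexample (Proposition~\ref{propositionparabolic43}): there exist geometrically finite Kleinian groups with a bounded parabolic point $\xi$ and a function $\Phi$ for which \eqref{khinchindiverges} converges while \eqref{khinchinconverges} diverges. So ``\eqref{khinchindiverges} converges $\Rightarrow$ \eqref{khinchinconverges} converges'' fails, and you cannot reduce (iiB) to (iiA). The paper's actual argument exploits the fact that $E_g^{(K)}$ depends only on $g(\xi)$, so one may restrict the Borel--Cantelli sum to a single representative from each coset $gG_\xi$, chosen to minimize $\dist(\zero,g(\zero))$. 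For such a \emph{minimal} $g$, the bounded-parabolic structure forces $g'(\xi)\asymp b^{-\dist(\zero,g(\zero))}$, so the $\mu(E_g^{(K)})$ bound becomes the summand of \eqref{khinchindiverges}, and the restricted sum is dominated by \eqref{khinchindiverges}. No comparison with \eqref{khinchinconverges} is needed or available.

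\textbf{Part (i).} The pointwise quasi-independence bound $\mu(E_g^{(K)}\cap E_h^{(K)})\lesssim\mu(E_g^{(K)})\mu(E_h^{(K)})$ for $g\neq h$ in $G^*$ is much too strong; it fails already when $g(\xi)=h(\xi)$ (your $G^*$ is not a transversal of $G_\xi$), and even for distinct images one would need pointwise regularity of $\mu$ that is nowhere assumed. An averaged quasi-independence inequality is what the standard Borel--Cantelli lemma actually requires, but proving even that in this generality---arbitrary hyperbolic metric space, arbitrary $\delta$-quasiconformal $\mu$ with no doubling or Ahlfors regularity---is not addressed by the shadow lemmas you cite. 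The paper avoids quasi-independence entirely: it argues by contradiction, assuming $\mu(\bigcup_g E_g^{(K)})\leq\lambda$ for some small $\lambda$, and then builds a hierarchical stopping-time tree on $G(\zero)$ (classifying orbit points into three types according to how their shadows interact with the set $U=\bigcup_g E_g^{(K)}$). Geometric estimates on the tree force $\sum_{x\in A}\mu(B_x)<\infty$ over a shadow-adapted set $A$, and your own Lemma~\ref{lemmahalf}-type observation then transfers this to convergence of \eqref{khinchindiverges}, contradicting the hypothesis. This Ahlfors-style argument is what does the work, not a second-moment estimate.
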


Let us begin by introducing some notation. First of all, we will assume that the map $G\ni g\mapsto g(\zero)$ is injective; avoiding this assumption would only change the notation and not the arguments. For each $x\in G(\zero)$, let $g_x\in G$ be the unique element so that $g_x(\zero) = x$, and for each $K > 0$ let
\begin{equation}
\label{BxKdef}
B_{x,K} := B(g_x(\xi),\Phi(K b^{\dist(\zero,x)}));
\end{equation}
then
\begin{equation}
\label{WAphiequals}
\WA_{\Phi,\xi} = \bigcap_{K > 0}\bigcup_{x\in G(\zero)}B_{x,K}.
\end{equation}

We will also need the following:

\begin{lemma}[Sullivan's Shadow Lemma]\label{Sullivan's Shadow Lemma}
Let $X$ be a hyperbolic metric space, and let $G$ be a subgroup of $\Isom(X)$. Fix $s\geq 0$, and let $\mu\in\MM(\del X)$ be an $s$-quasiconformal measure which is not a pointmass. Then for all $\sigma > 0$ sufficiently large and for all $x\in G(\zero)$,
\begin{equation}
\label{Sullivan's shadow}
\mu(\Shad(x,\sigma)) \asymp_{\times,\sigma} b^{-s \dist(\zero,x)}.
\end{equation}
\end{lemma}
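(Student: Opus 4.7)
The plan is to reduce the statement to a uniform lower bound $\mu(\Shad_z(\zero,\sigma)) \gtrsim_\sigma 1$ valid for every $z \in X$, and then to establish that uniform bound by combining the Big Shadows Lemma with the hypothesis that $\mu$ is not a pointmass.

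First I would write $x = g(\zero)$ for some $g \in G$ and invoke the identity
\[
\Shad(x,\sigma) = g\bigl(\Shad_{g^{-1}(\zero)}(\zero,\sigma)\bigr),
\]
which is immediate from the invariance of the Gromov product under $g$. Applying $s$-quasiconformality (Definition \ref{definitionquasiconformal}) to $g$ gives
\[
\mu(\Shad(x,\sigma)) \asymp_\times \int_{\Shad_{g^{-1}(\zero)}(\zero,\sigma)} [g'(\eta)]^s \,d\mu(\eta),
\]
and, by the Bounded Distortion Lemma (Lemma \ref{lemmaboundeddistortion}), $g'(\eta) \asymp_{\times,\sigma} b^{-\dist(\zero,g(\zero))}$ for every $\eta \in \Shad_{g^{-1}(\zero)}(\zero,\sigma)$. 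Substituting,
\[
\mu(\Shad(x,\sigma)) \asymp_{\times,\sigma} b^{-s\,\dist(\zero,x)}\, \mu\bigl(\Shad_{g^{-1}(\zero)}(\zero,\sigma)\bigr),
\]
so it is enough to find constants $0 < c_1 \leq c_2$ (depending only on $\sigma$ and $\mu$) such that $c_1 \leq \mu(\Shad_z(\zero,\sigma)) \leq c_2$ uniformly in $z \in X$.

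The upper bound is trivial since $\mu(\del X) < \infty$. For the lower bound, the non-pointmass hypothesis implies that the topological support of $\mu$ contains at least two distinct points $\eta_1,\eta_2 \in \del X$. Choose $\rho \in \bigl(0,\tfrac{1}{3}\Dist(\eta_1,\eta_2)\bigr)$ and set $c_0 := \min_{i=1,2}\mu(B(\eta_i,\rho)) > 0$. By the Big Shadows Lemma (Lemma \ref{lemmabigshadow}), for all $\sigma$ sufficiently large and every $z \in X$ we have $\Diam(\del X \setminus \Shad_z(\zero,\sigma)) < \rho$; hence $\del X \setminus \Shad_z(\zero,\sigma)$ is contained in a single ball of radius $\rho$ and therefore disjoint from at least one of $B(\eta_1,\rho), B(\eta_2,\rho)$. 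This yields $\mu(\Shad_z(\zero,\sigma)) \geq c_0$ for every $z$, completing the reduction.

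The main obstacle is exactly this conversion of the global non-pointmass assumption on $\mu$ into a uniform lower bound on shadow mass as the light source $z$ varies over $X$. The Big Shadows Lemma supplies the required uniform geometric smallness of the complement of $\Shad_z(\zero,\sigma)$, and the separation between two distinct support points of $\mu$ translates that geometric smallness into uniform measure-theoretic smallness; the rest is algebraic manipulation with quasiconformality and bounded distortion.
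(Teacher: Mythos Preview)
Your argument is correct and is exactly the standard proof of Sullivan's Shadow Lemma. The paper does not actually supply its own proof here; it merely notes that the argument is as in Sullivan's original paper and refers to \cite{Coornaert} and \cite{DSU}, and the proof in those references proceeds precisely as you outline: pull back the shadow via $g$, apply quasiconformality together with the Bounded Distortion Lemma to reduce to a uniform two-sided bound on $\mu(\Shad_z(\zero,\sigma))$, and obtain the nontrivial lower bound from the Big Shadows Lemma plus the existence of two separated points in $\Supp(\mu)$.
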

The proof of this theorem is similar to the proof in the case of standard hyperbolic space \cite[Proposition 3]{Sullivan_density_at_infinity}, and can be found in \cite{Coornaert} or \cite[Lemma 15.4.1]{DSU}.

\subsection{The convergence case}
Suppose that either
\begin{itemize}
\item[(1)] the series \eqref{khinchinconverges} converges for some $K > 0$, or that
\item[(2)] $\xi$ is a bounded parabolic point, and the series \eqref{khinchindiverges} converges for some $K > 0$.
\end{itemize}
By \eqref{WAphiequals} and by the easy direction of the Borel--Cantelli lemma,\Footnote{The condition $\mu(\xi) = 0$ guarantees that for $\mu$-almost every $\eta$, we have $\eta\in\WA_{\Phi,\xi}$ if and only if for all $K > 0$ there exist infinitely many $x\in G(\zero)$ such that $\eta\in B_{x,K}$.} to show that $\mu(\WA_{\Phi,\xi}) = 0$ it suffices to show that there exists $K > 0$ so that the series
\begin{equation}
\label{muBxK}
\sum_{x\in G(\zero)} \mu(B_{x,K})
\end{equation}
converges. Fix $K > 0$ large to be announced below. Fix $x\in G(\zero)$ and let $g = g_x$. Note that
\[
\mu(B_{x,K}) \asymp_\times \int_{g^{-1}(B_{x,K})} (g')^\delta \dee\mu.
\]
\begin{claim}
\label{claimboundeddistortionkhinchin}
By choosing $K$ sufficiently large, we may ensure that for all $x\in G(\zero)$ and for all $\eta\in g^{-1}(B_{x,K})$,
\begin{align} \label{boundeddistortion5}
g'(\eta) &\asymp_\times g'(\xi)\\ \label{distetaxi}
\Dist(\xi,\eta) &\lesssim_\times \frac{\Phi(K b^{\dist(\zero,x)})}{g'(\xi)}\cdot
\end{align}
In both equations $g = g_x$.
\end{claim}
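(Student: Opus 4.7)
The plan is to prove this claim in the spirit of the Bounded Distortion Lemma (Lemma \ref{lemmaboundeddistortion}) and the analogous Claim \ref{claimboundeddistortionjarnik}, but without assuming a shadow condition a priori; instead, the largeness of $K$ will force $\xi$ and $\eta$ into a common shadow where distortion is controlled.

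First I would translate the membership $\eta\in g^{-1}(B_{x,K})$ into a Gromov product estimate. The condition reads $\Dist(g(\xi),g(\eta))\leq \Phi(Kb^{\dist(\zero,x)})$, which via the visual metric gives
\[
\lb g(\xi)|g(\eta)\rb_\zero \gtrsim_\plus -\log_b\Phi(Kb^{\dist(\zero,x)}).
\]
Since $t\mapsto t\Phi(t)$ is nonincreasing, we have $\Phi(t)\leq \Phi(1)/t$ for all $t\geq 1$, so for $K$ large enough that $Kb^{\dist(\zero,x)}\geq 1$ for every $x\in G(\zero)$ (which can be arranged independently of $x$), we obtain
\[
\lb g(\xi)|g(\eta)\rb_\zero \gtrsim_\plus \dist(\zero,x) + \log_b K.
\]

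Next I would apply Gromov's inequality twice, once each way, to the triple $\{x, g(\xi), g(\eta)\}$ at the basepoint $\zero$. Using $\lb x|g(\xi)\rb_\zero \leq \dist(\zero,x)$, the quantity $\lb g(\xi)|g(\eta)\rb_\zero$ exceeds $\lb x|g(\xi)\rb_\zero$ by at least $\log_b K - O(1)$, which is positive once $K$ is taken sufficiently large (this is the only step where the choice of $K$ enters). The minimum in Gromov's inequality is therefore realized by $\lb x|g(\xi)\rb_\zero$ in one direction and by $\lb x|g(\eta)\rb_\zero$ in the other, yielding
\[
\lb x|g(\xi)\rb_\zero \asymp_\plus \lb x|g(\eta)\rb_\zero.
\]

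Now I would extract \eqref{boundeddistortion5} using the identity $g'(y) = b^{\busemann_y(\zero,g^{-1}(\zero))}$ and (g) of Proposition \ref{propositionbasicidentities}, which together give $g'(y)\asymp_\times b^{\dist(\zero,g(\zero)) - 2\lb g(\zero)|g(y)\rb_\zero}$. Applying this with $y=\xi$ and $y=\eta$ and using the asymptotic equality of the two Gromov products above yields $g'(\eta)\asymp_\times g'(\xi)$. Finally, \eqref{distetaxi} follows immediately by combining \eqref{boundeddistortion5} with the geometric mean value theorem \eqref{GMVT5}:
\[
\Dist(\xi,\eta) \asymp_\times \frac{\Dist(g(\xi),g(\eta))}{(g'(\xi)g'(\eta))^{1/2}} \asymp_\times \frac{\Dist(g(\xi),g(\eta))}{g'(\xi)} \leq \frac{\Phi(Kb^{\dist(\zero,x)})}{g'(\xi)}.
\]

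I do not anticipate a real obstacle here: the content is essentially bookkeeping with Gromov products, and the only subtle point is confirming that the implied constants in the first step (which come from the metric asymptotic $\Dist\asymp_\times b^{-\lb\cdot|\cdot\rb}$ and from the inequality $\Phi(t)\leq \Phi(1)/t$) are universal and independent of $x$, so that a single choice of $K$ works uniformly over $G(\zero)$.
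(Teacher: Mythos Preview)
Your proof is correct and follows essentially the same approach as the paper's own proof: both arguments use Gromov's inequality on the triple $\{g(\zero),g(\xi),g(\eta)\}$ (note $x=g(\zero)$, so your triple is the same), show that $\lb g(\xi)|g(\eta)\rb_\zero$ is too large to be the minimum once $K$ is big, and conclude $\lb g(\zero)|g(\xi)\rb_\zero \asymp_\plus \lb g(\zero)|g(\eta)\rb_\zero$, from which \eqref{boundeddistortion5} and then \eqref{distetaxi} follow via the geometric mean value theorem. The only cosmetic difference is that the paper phrases the Gromov step as ``two of the three must be asymptotic'' whereas you apply the inequality twice directly.
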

\begin{subproof} (cf. proof of Claim \ref{claimboundeddistortionjarnik}\comdavid{I figured it was still better to repeat the proofs since they are slightly different... Should I add a similar note to the proof of Claim \ref{claimboundeddistortionjarnik}?})
Fix $\eta\in g^{-1}(B_{x,K})$. We have
\begin{equation}
\label{asymptotictozerokhinchin}
\left|\log_b\big(\frac{g'(\eta)}{g'(\xi)}\big)\right|
= |\busemann_\eta(\zero,g^{-1}(\zero)) - \busemann_\xi(\zero,g^{-1}(\zero))|
\asymp_\plus |\lb g(\zero)|g(\eta)\rb_\zero - \lb g(\zero)|g(\xi)\rb_\zero|.
\end{equation}
Let
\begin{align*}
\textup{(A)} &= \lb g(\zero)|g(\eta)\rb_\zero\\
\textup{(B)} &= \lb g(\zero)|g(\xi)\rb_\zero\\
\textup{(C)} &= \lb g(\xi)|g(\eta)\rb_\zero.
\end{align*}
By Gromov's inequality, at least two of the expressions (A), (B), and (C) must be asymptotic. On the other hand, since $g(\eta)\in B_{x,K}$, we have
\begin{align*}
\lb g(\xi)|g(\eta)\rb_\zero - \dist(\zero,g(\zero)) &\asymp_\plus -\log_b(\Dist(g(\xi),g(\eta))) - \dist(\zero,g(\zero))\\
&\geq_\pt -\log_b(\Phi(K b^{\dist(\zero,x)})) - \dist(\zero,x) \by{\eqref{BxKdef}}\\
&\geq_\pt -\log_b(\Phi(1)/(K b^{\dist(\zero,x)})) - \dist(\zero,x) \since{$t\mapsto t\Phi(t)$ is nonincreasing}\\
&=_\pt \log_b(K/\Phi(1))
\end{align*}
Since (A) and (B) are both less than $\dist(\zero,x)$, this demonstrates that (C) is not asymptotic to either (A) or (B) if $K$ is sufficiently large. Thus by Gromov's inequality, (A) and (B) are asymptotic, and so \eqref{asymptotictozerokhinchin} is asymptotic to zero. This demonstrates \eqref{boundeddistortion5}. Finally, \eqref{distetaxi} follows from \eqref{boundeddistortion5}, \eqref{BxKdef}, and the geometric mean value theorem.
\end{subproof}
Let $C > 0$ be the implied constant in \eqref{distetaxi}. Then
\begin{equation}
\label{muBxKbound}
\begin{split}
\mu(B_{x,K})
&\asymp_\times \int_{g^{-1}(B_{x,K})} (g')^\delta \dee\mu\\
&\asymp_\times (g'(\xi))^\delta \mu\big(g^{-1}(B_{x,K})\big)\\
&\leq_\pt (g'(\xi))^\delta \mu\left(B\big(\xi, C\frac{\Phi(K b^{\dist(\zero,x)})}{g'(\xi)}\big)\right)\\
&=_\pt (g'(\xi))^\delta \Delta_{\mu,\xi}\left(C \Phi(K b^{\dist(\zero,x)}) / g'(\xi) \right)\\
&\leq_\pt (g'(\xi))^\delta \Delta_{\mu,\xi}\left(\Phi(C^{-1}K b^{\dist(\zero,x)}) / g'(\xi) \right),
\end{split}
\end{equation}
the last inequality due to the fact that the function $t\mapsto t\Phi(t)$ is nonincreasing. Now the argument splits according to the cases (1) and (2):
\begin{itemize}
\item[(1)] In this case, let $\w K > 0$ be a value of $K$ for which \eqref{khinchinconverges} converges, and choose $K > 0$ large enough so that $K \geq C\w K$ and so that Claim \ref{claimboundeddistortionkhinchin} is satisfied. Then we have \eqref{muBxK} $\lesssim_\times$ \eqref{khinchinconverges} $< \infty$, completing the proof.
\item[(2)] In this case, let us call a point $x\in G(\zero)$ \emph{minimal} if
\[
\dist(\zero,x) = \min\{\dist(\zero,\w g(\zero)):\w g(\xi) = g_x(\xi)\}.
\]
Then
\[
\WA_{\Phi,\xi} = \bigcap_{K > 0}\bigcup_{\substack{x\in G(\zero) \\ \text{minimal}}}B_{x,K}.
\]
Again, by the easy direction of the Borel--Cantelli lemma, to show that $\mu(\WA_{\Phi,\xi}) = 0$ it suffices to show that there exists $K > 0$ so that the series
\begin{equation}
\label{muBxKminimal}
\sum_{\substack{x\in G(\zero) \\ \text{minimal}}}\mu(B_{x,K})
\end{equation}
converges.

Since $\xi$ is a bounded parabolic point, there exists a $\xi$-bounded set $S$ so that $G(\zero)\subset G_\xi(S)$, where $G_\xi$ is the stabilizer of $\xi$ relative to $G$. Fix $x\in G(\zero)$ minimal. Then there exists $h\in G_\xi$ such that $h\circ g^{-1}(\zero)\in S$; by Observation \ref{observationxibounded} we have
\begin{equation}
\label{boundedparabolicnew}
\lb h\circ g^{-1}(\zero)|\xi\rb_\zero \asymp_\plus 0.
\end{equation}
Then
\begin{align*}
\dist(\zero,x) &\leq_\pt \dist(\zero,g\circ h^{-1}(\zero)) \since{$x$ is minimal}\\
&=_\pt \dist(h\circ g^{-1}(\zero),\zero)\\
&\asymp_\plus \busemann_\xi(h\circ g^{-1}(\zero),\zero) \by{\eqref{boundedparabolicnew}}\\
&\asymp_\plus \busemann_\xi(g^{-1}(\zero),\zero); \since{$\xi$ is parabolic}
\end{align*}
exponentiating gives
\[
g'(\xi) \asymp_\times b^{-\dist(\zero,x)}.
\]
Let $C_2 > 0$ be the implied constant of this asymptotic. Then by \eqref{muBxKbound}, we have
\[
\mu(B_{x,K}) \lesssim_\times b^{-\delta\dist(\zero,x)} \Delta_{\mu,\xi}\left(b^{\dist(\zero,x)}\Phi((CC_2)^{-1}K b^{\dist(\zero,x)}) \right).
\]
Let $\w K > 0$ be a value of $K$ for which \eqref{khinchindiverges} converges, and choose $K > 0$ large enough so that $K \geq CC_2\w K$ and so that Claim \ref{claimboundeddistortionkhinchin} is satisfied. Then we have \eqref{muBxKminimal} $\lesssim_\times$ \eqref{khinchindiverges} $< \infty$, completing the proof.
\end{itemize}

\subsection{The divergence case}\Footnote{This proof was heavily influenced by the proof of \cite[Theorem 4 of Section VII]{Ahlfors}.}
By contradiction, suppose that $\mu(\WA_{\Phi,\xi}) = 0$. Fixing $\lambda > 0$ small to be announced below, we have
\[
\mu\left(\bigcup_{x\in G(\zero)}B_{x,K}\right) \leq \lambda
\]
for some $K > 0$. For convenience of notation let
\[
U := \bigcup_{x\in G(\zero)}B_{x,K}.
\]
Let $\sigma > 0$ be large enough so that \eqref{Sullivan's shadow} holds for all $x\in G(\zero)$, and fix $\varepsilon > 0$ small to be announced below. Let $\phi(t) = t\Phi(t)$, so that the function $\phi:\Rplus\to(0,\infty)$ is nonincreasing. For each $x\in G(\zero)$ let
\begin{align*}
\PP_x &= \Shad(x,\sigma)\\
B_x &= g_x(B(\xi,\varepsilon \phi(Kb^{\dist(\zero,x)}))).
\end{align*}
Let
\begin{equation}
\label{Gsigmadef}
A := \{x\in G(\zero):B_x\subset \PP_x\}.
\end{equation}
In a specific sense $A$ comprises ``at least half'' of $G(\zero)$; see Lemma \ref{lemmahalf} below.

Let $d:A\to\N$ be a bijection with the following property: $d_x \leq d_y$ \implies $\dist(\zero,x) \leq \dist(\zero,y)$. Such a bijection is possible since $G$ is strongly discrete. Note that $d_\zero = 0$.

We will define three sets $T_1,T_2,T_3\subset A$, two binary relations $\in_1\subset T_1\times T_1$ and $\in_2\subset T_2\times T_1$, and one ternary relation $\in_3 \subset T_3\times T_1\times T_2$ via the following procedure. Points in $T_i$ will be called \emph{type i}.\Footnote{See footnote \ref{footnoteparallelprocessors}.} \comdavid{Do not allow the numbers in this enumeration to change.}
\begin{enumerate}[1.]
\item Each time a new point is put into $T_1$, call it $x$ and go to step 3.
\item Put $\zero$ into $T_1$; it is the root node.
\item Recall that $x$ is a point which has just been put into $T_1$. Let
\begin{equation}
\label{Dxdef}
S_x = \{y\in A:\PP_x\cap \PP_y\neq\emptyset\text{ and }d_y > d_x\}
\end{equation}
and
\begin{equation}
\label{wDxdef}
\w S_x = \{y\in S_x: \PP_y\subset g_x(U)\}.
\end{equation}
\item Construct a sequence of points $z_n\in\w S_x$ recursively as follows: If $z_1,\ldots,z_{n - 1}$ have been chosen, then let $z_n\in\w S_x$ be such that
\begin{equation}
\label{SznSzj}
\PP_{z_n}\cap \PP_{z_j} = \emptyset \all j = 1,\ldots,n - 1.
\end{equation}
If more than one such $z_n$ exists, then choose $z_n$ so as to minimize $d_{z_n}$. Note that the sequence may terminate in finitely many steps if \eqref{SznSzj} is not satisfied for any $z_n\in\w S_x$.
\item For each $n$, put $z_n$ into $T_1$, setting $z_n\in_1 x$.
\item Construct a sequence of points $y_n\in S_x\butnot\w S_x$ recursively as follows: If $y_1,\ldots,y_{n - 1}$ have been chosen, then let $y_n\in\w S_x$ be such that
\begin{equation}
\label{BynByj}
B_{y_n}\cap B_{y_j} = \emptyset \all j = 1,\ldots,n - 1.
\end{equation}
If more than one such $y_n$ exists, then choose $y_n$ so as to minimize $d_{y_n}$.
\item For each $n$,
\begin{enumerate}[i.]
\item Put $y = y_n$ into $T_2$, setting $y\in_2 x$.
\item Let
\[
S_{x,y} = \{z\in S_x\butnot\w S_x:B_y\cap B_z\neq\emptyset\text{ and }d_z > d_y\}.
\]
\item For each $z\in S_{x,y}$, put $z$ into $T_3$, setting $z\in_3 (x,y)$.
\end{enumerate}
\end{enumerate}

\begin{notation}
Write $y > x$ if $y\in S_x$. Note that by construction $y > x$ \implies $d_y > d_x$.

Let us warn that the relation $>$ is not necessarily transitive, due to the fact that the relation of having nonempty intersection is not an equivalence relation.
\end{notation}
\begin{lemma}
\label{lemmaeverypoint}
\[
T_1\cup T_2\cup T_3 = A.
\]
\end{lemma}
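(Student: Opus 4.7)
The plan is to show that every $w \in A$ is placed into $T_1$, $T_2$, or $T_3$ by the very procedure that constructs these sets, by locating an ``ancestor'' $x \in T_1$ with $w \in S_x$ that is maximal with respect to $d$, and then tracking which branch of the algorithm catches $w$.

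First I will observe that $\zero$ is placed into $T_1$ in step 2, handling the base case $w = \zero$ immediately. For $w \neq \zero$, I note that $\lb \zero|\eta\rb_\zero = 0$ for every $\eta \in \del X$, so $\PP_\zero = \Shad(\zero,\sigma) = \del X$; hence $\PP_\zero \cap \PP_w = \PP_w \neq \emptyset$ and $d_w > 0 = d_\zero$, which gives $w \in S_\zero$. Consequently the set $\{x \in T_1 : w \in S_x\}$ is nonempty, and by the definition \eqref{Dxdef} of $S_x$ it sits inside the finite set $\{x \in A : d_x < d_w\}$. Thus I may pick $x \in T_1$ with $w \in S_x$ and $d_x$ maximal.

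Next I will split on whether $w \in \w S_x$ or $w \in S_x \setminus \w S_x$. In the first case I examine the execution of step 4 during the processing of $x$: the rule of always minimizing $d_{z_n}$ subject to disjointness of $\PP_{z_n}$ from the previously chosen shadows forces the sequence $(d_{z_n})$ to be strictly increasing. If $w$ itself is selected then $w \in T_1$; otherwise, the only way a candidate $w \in \w S_x$ can fail to be selected is if some already chosen $z_n$ satisfies $\PP_{z_n} \cap \PP_w \neq \emptyset$ with $d_{z_n} < d_w$ (else $w$ would have been chosen in preference to $z_n$ at step $n$). Since $z_n \in_1 x$ gives $z_n \in \w S_x$ and hence $d_{z_n} > d_x$, and since $w \in S_{z_n}$, this contradicts the maximality of $d_x$. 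So $w \in T_1$.

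Finally, if $w \in S_x \setminus \w S_x$, the completely analogous enumeration in step 6 — with the sets $B_y$ in place of $\PP_y$ — either selects $w$, giving $w \in T_2$, or produces a selected $y_n \in T_2$ with $y_n \in_2 x$, $d_{y_n} < d_w$, and $B_{y_n} \cap B_w \neq \emptyset$. In the latter case $w$ meets the defining conditions of the set $S_{x,y_n}$ introduced in step 7(ii), and step 7(iii) therefore places $w$ into $T_3$ via the relation $w \in_3 (x, y_n)$. The only real obstacle is the bookkeeping in this case analysis: one must check that the greedy rule really does translate ``$w$ not chosen'' into ``$w$ overlaps a previously chosen element of strictly smaller $d$-value,'' which uses the injectivity of $d$ together with the monotonicity of $(d_{z_n})$ and $(d_{y_n})$. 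Everything else is a direct reading of the construction.
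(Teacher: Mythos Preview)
Your proof is correct and uses the same core observation as the paper: if $w$ is not selected at a type~1 node $x$ in step~4 (resp.\ step~6), the greedy minimization rule forces some earlier-chosen $z_n$ (resp.\ $y_n$) with smaller $d$-value to overlap $w$. The paper packages this as a proof by contradiction, iteratively building an infinite chain $\zero = x_0 \in_1 x_1 \in_1 \cdots$ of type~1 ancestors of $w$ with strictly increasing $d$-values below $d_w$; you instead take the maximal such ancestor directly and derive an immediate contradiction from step~4, which is a cleaner formulation of the same idea (and you are also a bit more explicit about verifying $\zero \in A$ and $w \in S_\zero$).
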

\begin{proof}
By contradiction, suppose that $w\in A\butnot(T_1\cup T_2\cup T_3)$. We claim that there is an infinite sequence of type 1 points $(x_n)_0^\infty$ so that $\cdots \in_1 x_2\in_1 x_1\in_1 x_0 = \zero$ and $x_n < w$ for all $n$. Then $d_{x_0} < d_{x_1} < \cdots < d_w$, which is absurd since these are natural numbers.

Let $x_0 = \zero$. By induction, suppose that $x = x_n < w$ is a type 1 point.
\begin{itemize}
\item[Case 1:] $w\in \w S_x$. In this case, note that $w$ could have been chosen in step 4, but was not (otherwise it would have been put into $T_1$). The only explanation for this is that $\PP_w\cap \PP_z \neq \emptyset$ for some $z\in_1 x$ for which $d_z < d_w$. But then $x_n < z < w$; letting $x_{n + 1} = z$ completes the inductive step.
\item[Case 2:] $w\notin \w S_x$. In this case, note that $w$ could have been chosen in step 6, but was not (otherwise it would have been put into $T_2$). The only explanation for this is that $B_w\cap B_z \neq \emptyset$ for some $y\in_2 x$ for which $d_y < d_w$. But then $w\in_3 (x,y)$, which demonstrates that $w\in T_3$, contradicting our hypothesis.
\end{itemize}
\end{proof}

\begin{lemma}
\label{lemmaSyStau}
There exists $\tau > 0$ (depending on $\sigma$) so that for all $x,y\in A$ with $y > x$ we have
\[
\PP_y\subset \Shad(x,\tau).
\]
\end{lemma}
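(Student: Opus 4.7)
The plan is to observe that the hypothesis $y > x$ is almost exactly the hypothesis of the Intersecting Shadows Lemma (Lemma \ref{lemmatau}), so the statement follows essentially as an immediate corollary. I would first unpack the relation $y > x$: by definition \eqref{Dxdef}, this means $\PP_x \cap \PP_y \neq \emptyset$ and $d_y > d_x$. Since $d$ was chosen so that $d_y \geq d_x$ implies $\dist(\zero,y) \geq \dist(\zero,x)$, the second condition gives the height ordering $\dist(\zero,y) \geq \dist(\zero,x)$.

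Now I would apply the Intersecting Shadows Lemma directly: setting $z = \zero$, the two shadows $\PP_x = \Shad(x,\sigma) = \Shad_\zero(x,\sigma)$ and $\PP_y = \Shad_\zero(y,\sigma)$ satisfy the required hypotheses (nonempty intersection and $\dist(\zero,y) \geq \dist(\zero,x)$), so that lemma produces a constant $\tau = \tau_\sigma > 0$, depending only on $\sigma$, with
\[
\Shad_\zero(y,\sigma) \subset \Shad_\zero(x,\tau),
\]
i.e. $\PP_y \subset \Shad(x,\tau)$, as desired. Since $\tau$ depends only on $\sigma$ (not on $x,y$), this is a uniform conclusion for all pairs with $y > x$.

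There is no real obstacle here; the work has already been done in Lemma \ref{lemmatau}. The only thing worth double-checking is the translation between the ``$d_y > d_x$'' condition and the height ordering needed to invoke the Intersecting Shadows Lemma, which is immediate from how the enumeration $d:A\to\N$ was defined.
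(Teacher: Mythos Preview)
Your proof is correct and essentially identical to the paper's: the paper simply states that the lemma follows directly from the Intersecting Shadows Lemma, \eqref{Dxdef}, and the implication $d_y > d_x \Rightarrow \dist(\zero,y)\geq\dist(\zero,x)$, which is exactly the argument you spell out.
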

\begin{proof}
This follows directly from the Intersecting Shadows Lemma, \eqref{Dxdef}, and the fact that $d_y > d_x$ \implies $\dist(\zero,y)\geq\dist(\zero,x)$.
\end{proof}

\begin{lemma}
\label{lemma3By}
For all $x,y\in A$ such that $y > x$, we have
\[
3B_y\subset g_x(U),
\]
if $\varepsilon$ is chosen small enough.
\end{lemma}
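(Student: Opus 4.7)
The natural choice for exhibiting $3B_y\subset g_x(U)$ is to identify a single $w\in G(\zero)$ such that $3B_y\subset g_x(B_{w,K})$. Since $g_y = g_x\cdot (g_x^{-1}g_y)$ and the map $g\mapsto g(\zero)$ is injective, there is a unique $w\in G(\zero)$ with $g_w = g_x^{-1}g_y$, namely $w = g_x^{-1}(y)$; it satisfies $g_x(g_w(\xi)) = g_y(\xi)$ and $\dist(\zero,w) = \dist(x,y)$. Moreover, the Intersecting Shadows Lemma applied to the hypothesis $\PP_x\cap\PP_y\neq\emptyset$ with $\dist(\zero,y)\geq\dist(\zero,x)$ gives $\PP_y\subset\Shad(x,\tau)$ together with $\dist(x,y)\asymp_{\plus,\sigma}\dist(\zero,y)-\dist(\zero,x)$, hence $b^{\dist(\zero,w)}\asymp_\times b^{\dist(\zero,y)-\dist(\zero,x)}$.

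The first step is to locate $3B_y$ inside a Euclidean ball around $g_y(\xi)$. Since $y\in A$, the pre-image $B(\xi,\varepsilon\phi(Kb^{\dist(\zero,y)}))$ lies inside $\Shad_{g_y^{-1}(\zero)}(\zero,\sigma)$, so the Bounded Distortion Lemma makes $g_y$ act as an $\asymp b^{-\dist(\zero,y)}$-similarity there. This yields $B_y\subset B(g_y(\xi),r_y)$ with $r_y\asymp \varepsilon K\Phi(Kb^{\dist(\zero,y)})$ and thus $3B_y\subset B(g_y(\xi),3r_y)$ up to a constant factor absorbed into $r_y$.

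Next, I will show $3B_y\subset\Shad(x,\tau')$ for some $\tau'$ independent of $y$, provided $\varepsilon$ is small. Using that $t\mapsto t\Phi(t)$ is nonincreasing, for $y$ with $Kb^{\dist(\zero,y)}\geq 1$ one obtains $K\Phi(Kb^{\dist(\zero,y)})\lesssim b^{-\dist(\zero,y)}\leq b^{-\dist(\zero,x)}$, so $3r_y\lesssim\varepsilon\, b^{-\dist(\zero,x)}$; the finitely many $y$ with $Kb^{\dist(\zero,y)}<1$ are handled by strong discreteness and shrinking $\varepsilon$. Then for $\eta\in B(g_y(\xi),3r_y)$ one has $\lb g_y(\xi)|\eta\rb_\zero\gtrsim_{\plus}\dist(\zero,x)$, while $g_y(\xi)\in\Shad(x,\tau)$ gives $\lb x|g_y(\xi)\rb_\zero\asymp_{\plus,\tau}\dist(\zero,x)$; Gromov's inequality and identity (b) of Proposition~\ref{propositionbasicidentities} then yield $\lb\zero|\eta\rb_x\asymp_{\plus,\tau}0$, i.e.\ $\eta\in\Shad(x,\tau')$.

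Having placed $3B_y$ in $\Shad(x,\tau') = g_x(\Shad_{g_x^{-1}(\zero)}(\zero,\tau'))$, applying the Bounded Distortion Lemma to $g_x$ gives
\[
\Dist(g_x^{-1}(\eta_1),g_x^{-1}(\eta_2))\asymp_{\times,\tau'} b^{\dist(\zero,x)}\Dist(\eta_1,\eta_2)
\]
on the pre-image of $3B_y$, so $g_x^{-1}(3B_y)\subset B(g_w(\xi),\, C\, b^{\dist(\zero,x)}\cdot 3r_y)$. The nonincreasing nature of $t\Phi(t)$ combined with $b^{\dist(\zero,w)}\asymp b^{\dist(\zero,y)-\dist(\zero,x)}$ yields $\Phi(Kb^{\dist(\zero,w)})\gtrsim_\times b^{\dist(\zero,x)}\Phi(Kb^{\dist(\zero,y)})$, so $C\,b^{\dist(\zero,x)}\cdot 3r_y\lesssim_\times \varepsilon\, \Phi(Kb^{\dist(\zero,w)}) = \varepsilon R_w$. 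Choosing $\varepsilon$ small (depending only on $K$, $\sigma$, and the constants from the Intersecting Shadows and Bounded Distortion Lemmas) makes this at most $R_w$, giving $g_x^{-1}(3B_y)\subset B_{w,K}$ and hence $3B_y\subset g_x(B_{w,K})\subset g_x(U)$. The main obstacle is the intermediate shadow bound of Step 2, where one must exploit the monotonicity hypothesis on $t\Phi(t)$ together with $\dist(\zero,y)\geq\dist(\zero,x)$ to promote the scale $r_y$ at height $y$ into a scale at height $x$; without this, the constants would drift with $y$.
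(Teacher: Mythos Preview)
Your proof is correct and follows essentially the same route as the paper: identify $w = g_x^{-1}(y)$, use Bounded Distortion on $g_y$ to bound $\Diam(B_y)$, place $3B_y$ inside a shadow $\Shad(x,\widetilde\tau)$, apply Bounded Distortion on $g_x^{-1}$, and conclude via the monotonicity of $t\Phi(t)$ together with the Intersecting Shadows estimate $\dist(x,y)\asymp_{\plus,\sigma}\dist(\zero,y)-\dist(\zero,x)$. The only notable difference is your Step~2: the paper obtains $3B_y\subset\Shad(x,\widetilde\tau)$ in one line from $B_y\subset\PP_y\subset\Shad(x,\tau)$ (Lemma~\ref{lemmaSyStau}) and Lemma~\ref{lemmaCShad}, whereas you re-derive it by hand via a diameter bound and Gromov's inequality --- valid, but the shadow-enlargement lemma is the cleaner tool here and avoids the unnecessary case split on $Kb^{\dist(\zero,y)}\gtrless 1$.
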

\begin{proof}
By Lemma \ref{lemmaSyStau} and \eqref{Gsigmadef} we have
\[
B_y \subset \PP_y \subset \Shad(x,\tau),
\]
where $\tau$ is as in Lemma \ref{lemmaSyStau}. Thus by Lemma \ref{lemmaCShad}, there exists $\w\tau > 0$ such that
\[
3B_y \subset \Shad(x,\w\tau).
\]
Suppose $\eta\in B_y$; then
\[
\Dist(\xi,g_y^{-1}(\eta)) \leq \varepsilon \phi(Kb^{\dist(\zero,y)});
\]
since $B_y\subset\PP_y$, the Bounded Distortion Lemma gives
\[
\Dist(g_y(\xi),\eta) \lesssim_{\times,\sigma} b^{-\dist(\zero,y)} \varepsilon \phi(Kb^{\dist(\zero,y)}).
\]
Thus
\[
\Diam(3B_y) \leq 3\Diam(B_y) \lesssim_{\times,\sigma} b^{-\dist(\zero,y)} \varepsilon \phi(Kb^{\dist(\zero,y)}).
\]
Now fix $\eta\in 3B_y$; then
\begin{align*}
\Dist(g_y(\xi),\eta) &\lesssim_{\times,\sigma} b^{-\dist(\zero,y)} \varepsilon \phi(Kb^{\dist(\zero,y)})\\
\Dist(g_x^{-1}\circ g_y(\xi),g_x^{-1}(\eta))
&\lesssim_{\times,\w\tau} b^{\dist(\zero,x) - \dist(\zero,y)} \varepsilon \phi(Kb^{\dist(\zero,y)}) \by{the Bounded Distortion Lemma}\\
&\asymp_{\times,\sigma} b^{-\dist(x,y)}\varepsilon \phi(Kb^{\dist(\zero,y)}) \by{the Intersecting Shadows Lemma}\\
&\leq_{\phantom{\times,\sigma}} b^{-\dist(x,y)}\varepsilon \phi(Kb^{\dist(x,y)}) \since{$\phi$ is nonincreasing}\\
&=_{\phantom{\times,\sigma}} \varepsilon K \Phi(Kb^{\dist(x,y)})
\asymp_{\times,K} \varepsilon\Phi(K b^{\dist(x,y)}).\noreason
\end{align*}
If we choose $\varepsilon$ to be less than the reciprocal of the implied constant, then we have
\[
\Dist(g_x^{-1}\circ g_y(\xi),g_x^{-1}(\eta)) \leq \Phi(K b^{\dist(x,y)})
\]
and thus
\[
g_x^{-1}(\eta) \in B\big(g_x^{-1}\circ g_y(\xi),\Phi(Kb^{\dist(\zero,g_x^{-1}\circ g_y(\zero))})\big) = B_{g_x^{-1}(y),K} \subset U,
\]
i.e. $\eta\in g_x(U)$. Since $\eta\in 3B_y$ was arbitrary, this demonstrates that $3B_y\subset g_x(U)$.
\end{proof}

\begin{lemma}
\label{lemmatype2type3}
If $x\in T_1$, $y\in_2 x$, and $z\in_3 (x,y)$, then $B_y \subset \Shad(z,\rho)$ for some $\rho > 0$ large independent of $z$. In particular $g_y(\xi)\in\Shad(z,\rho)$.
\end{lemma}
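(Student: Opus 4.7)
The plan is to extract from the intersection $B_y \cap B_z \neq \emptyset$ a common boundary point $\eta_0$ and to propagate, via Gromov's inequality, the ``shadow membership'' of $\eta_0$ in $\Shad(z,\sigma)$ to every point of $B_y$. First I would fix $\eta_0 \in B_y \cap B_z$; since $z \in A$ gives $B_z \subset \PP_z = \Shad(z,\sigma)$, we have $\lb \zero | \eta_0 \rb_z \lesssim_\plus \sigma$. For any $\eta \in B_y$ (in particular $\eta = g_y(\xi)$), the desired conclusion $\eta \in \Shad(z,\rho)$ reduces to an upper bound on $\lb \zero | \eta \rb_z$, equivalently to a lower bound on $\lb z | \eta \rb_\zero$ near $\dist(\zero,z)$.

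The workhorse is Gromov's inequality
\[
\lb z | \eta \rb_\zero \gtrsim_\plus \min\big( \lb z | \eta_0 \rb_\zero, \; \lb \eta_0 | \eta \rb_\zero \big).
\]
The first term satisfies $\lb z | \eta_0 \rb_\zero \gtrsim_\plus \dist(\zero,z) - \sigma$ by the above. For the second, combining the Bounded Distortion Lemma with the hypothesis that $t \mapsto t\Phi(t)$ is nonincreasing (so $\phi(Kb^{\dist(\zero,y)}) \leq \phi(1) = \Phi(1)$) yields
\[
\Dist(\eta_0,\eta) \leq \Diam(B_y) \asymp_\times \varepsilon K \Phi(Kb^{\dist(\zero,y)}) \lesssim_\times \varepsilon\,\Phi(1)\,b^{-\dist(\zero,y)},
\]
whence $\lb \eta_0 | \eta \rb_\zero \gtrsim_\plus \dist(\zero,y) - \log_b \varepsilon$, with an additive constant depending only on $K$ and $\Phi(1)$.

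Combining the two bounds and using $\dist(\zero,y) \leq \dist(\zero,z)$ (which comes from $d_z > d_y$ in the enumeration) gives $\lb z | \eta \rb_\zero \gtrsim_\plus \dist(\zero,y) - \log_b\varepsilon$, which translates to $\eta \in \Shad(z,\rho)$ with $\rho$ of size $\dist(\zero,z) - \dist(\zero,y) + \log_b(1/\varepsilon) + O(1)$. The principal obstacle is to make $\rho$ uniform in $z$, i.e.\ to control the gap $\dist(\zero,z) - \dist(\zero,y)$. This is the delicate step: I expect it to follow from the fact that both $B_y$ and $B_z$ have a bounded ``relative size'' inside their shadows, since the identity
\[
\Diam(B_w)\cdot b^{\dist(\zero,w)} \asymp_\times \varepsilon\,\phi(Kb^{\dist(\zero,w)}) \leq \varepsilon\,\Phi(1) \qquad (w\in\{y,z\})
\]
holds uniformly; applying an Intersecting-Shadows-style argument to the pair $(B_y,B_z)$ in the visual metametric should then force $\dist(\zero,z) - \dist(\zero,y)$ to be bounded in terms of $\sigma$, $\varepsilon$, $K$, and $\Phi(1)$. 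Once that gap bound is in hand, the estimates above provide $\rho$ explicitly, and the ``in particular'' clause is automatic since $g_y(\xi) \in B_y$.
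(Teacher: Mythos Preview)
Your argument has a genuine gap at exactly the point you flag as ``the principal obstacle'': the gap $\dist(\zero,z)-\dist(\zero,y)$ is \emph{not} bounded by the information you are using. From $B_y\cap B_z\neq\emptyset$ alone (together with the uniform upper bound $\Diam(B_w)\,b^{\dist(\zero,w)}\lesssim_\times \varepsilon\Phi(1)$), nothing prevents $z$ from being arbitrarily deep: $B_z$ can be an extremely small ball touching $B_y$, with $\dist(\zero,z)$ as large as you like. There is no Intersecting-Shadows-style mechanism here, because you only have an \emph{upper} bound on the diameters $\Diam(B_w)$, not a lower bound on $\Diam(B_z)$ or $\Diam(\PP_z)$ in terms of $\Diam(B_y)$.

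The missing ingredient is the part of the hypothesis $z\in_3(x,y)$ that you never invoke: $z\in S_{x,y}\subset S_x\setminus\w S_x$, which says precisely that $\PP_z\nsubseteq g_x(U)$. The paper combines this with Lemma~\ref{lemma3By} ($3B_y\subset g_x(U)$) to get $\PP_z\nsubseteq 3B_y$; together with $B_y\cap\PP_z\neq\emptyset$ this forces $\Diam(\PP_z)\geq\Diam(B_y)$, whence $B_y\subset 3\PP_z\subset\Shad(z,\rho)$ by Lemma~\ref{lemmaCShad}. Note that even with this information the gap $\dist(\zero,z)-\dist(\zero,y)$ need not be bounded (it is controlled only by $-\log_b\phi(Kb^{\dist(\zero,y)})$, which grows if $\phi\to 0$); the paper sidesteps this by comparing diameters directly rather than distances. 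Your Gromov-product route can be salvaged, but only if you feed in the diameter comparison $\Diam(\PP_z)\geq\Diam(B_y)$: then $\lb\eta_0|\eta\rb_\zero\gtrsim_\plus -\log_b\Diam(B_y)\geq -\log_b\Diam(\PP_z)\asymp_\plus\dist(\zero,z)$, and Gromov's inequality finishes the job without any gap estimate.
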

\begin{proof}
Since $z\in_3 (x,y)$, we have $B_y\cap B_z\neq \emptyset$, but by \eqref{Gsigmadef}, $B_z\subset \PP_z$, so
\begin{equation}
\label{BySz}
B_y\cap \PP_z\neq\emptyset.
\end{equation}
Since $z\in S_{x,y}\subset S_x\butnot\w S_x$, we have
\[
\PP_z\nsubseteq g_x(U),
\]
but on the other hand, $3B_y\subset g_x(U)$ by Lemma \ref{lemma3By}, so
\[
\PP_z\nsubseteq 3B_y.
\]
Combining with \eqref{BySz}, we have
\[
\Diam(\PP_z) \geq \Dist(B_y,\del X\butnot 3B_y) \geq \Diam(B_y)
\]
and thus by Lemma \ref{lemmaCShad}, we have
\[
B_y \subset 3\PP_z \subset \Shad(z,\rho)
\]
if $\rho > 0$ is large enough.
\end{proof}

\begin{lemma}
\label{lemmatypebounds}
For each $x\in T_1$,
\begin{align} \label{type1bound}
\sum_{z\in_1 x}\mu(\PP_z) &\lesssim_{\times,\sigma} \lambda\mu(\PP_x)\\ \label{type2bound}
\sum_{y\in_2 x}\mu(B_y) &\lesssim_{\times,\sigma} \mu(\PP_x).
\end{align}
For each $x\in T_1$ and for each $y\in_2 x$,
\begin{equation} \label{type3bound}
\sum_{z\in_3 (x,y)}\mu(B_z) \lesssim_{\times,\sigma} \mu(B_y).
\end{equation}
\end{lemma}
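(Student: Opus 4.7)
The plan is to extract each of the three inequalities from the combinatorial construction of the types, using Sullivan's Shadow Lemma to convert hyperbolic distances into $\mu$-measures of shadows, the Bounded Distortion Lemma together with $\delta$-quasiconformality to control the action of isometries on $\mu$, and the disjointness properties built into steps 4 and 6 of the recursion.

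For \eqref{type1bound}, I would use that the shadows $\PP_z$ for $z \in_1 x$ are pairwise disjoint by step 4 and that each sits in $g_x(U) \cap \Shad(x,\tau)$ (the first inclusion coming from $z \in \w S_x$, the second from Lemma \ref{lemmaSyStau}). Writing $g_x(U) \cap \Shad(x,\tau) = g_x\bigl(U \cap \Shad_{g_x^{-1}(\zero)}(\zero,\tau)\bigr)$ and applying $\delta$-quasiconformality with the Bounded Distortion Lemma on this shadow (where $g_x' \asymp_\times b^{-\dist(\zero,x)}$), together with $\mu(U) \leq \lambda$ and Sullivan's Shadow Lemma, gives
\[
\sum_{z \in_1 x} \mu(\PP_z) \leq \mu(g_x(U) \cap \Shad(x,\tau)) \lesssim_{\times,\sigma} \lambda\, b^{-\delta\dist(\zero,x)} \asymp_{\times,\sigma} \lambda\,\mu(\PP_x).
\]

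For \eqref{type2bound}, the balls $B_y$ for $y \in_2 x$ are pairwise disjoint by step 6, and each $B_y \subset \PP_y \subset \Shad(x,\tau)$ (using $y \in A$ and Lemma \ref{lemmaSyStau}), so $\sum_{y \in_2 x} \mu(B_y) \leq \mu(\Shad(x,\tau)) \asymp_{\times,\sigma} b^{-\delta\dist(\zero,x)} \asymp_{\times,\sigma} \mu(\PP_x)$ by Sullivan's Shadow Lemma.

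The main obstacle is \eqref{type3bound}, since the type-3 construction in step 7 enforces no disjointness of its own. My plan is to first observe, using Lemma \ref{lemmatype2type3} to place $B_y$ inside $\Shad(z,\rho)$ (and hence $g_z^{-1}(B_y)$ inside $\Shad_{g_z^{-1}(\zero)}(\zero,\rho)$), that the chain rule combined with the Bounded Distortion Lemma applied to $g_z$ yields $(g_z^{-1})' \asymp_\times b^{\dist(\zero,z)}$ on $B_y$; hence $\mu(B_y) \asymp_\times b^{-\delta\dist(\zero,z)}\mu(g_z^{-1}(B_y))$. A parallel computation using $z \in A$ gives $\mu(B_z) \asymp_\times b^{-\delta\dist(\zero,z)}\mu(B(\xi,r_z))$, and the monotonicity of $t \mapsto t\Phi(t)$ together with $d_z > d_y$ forces $\Diam(B_z) \lesssim_\times b^{-(\dist(\zero,z)-\dist(\zero,y))}\Diam(B_y)$, so in particular $B_z \subset C'B_y$ for a constant $C'$ independent of $z$. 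To convert the per-$z$ bound $\mu(B_z) \lesssim_\times \mu(B_y)$ into a summable bound I would stratify by annuli $Z_n := \{z \in_3(x,y) : \dist(\zero,z) \in [\dist(\zero,y)+n,\dist(\zero,y)+n+1)\}$, extract a Vitali-disjoint subfamily of $\{\PP_z\}_{z \in Z_n}$ via Proposition \ref{proposition5r}, and use the Intersecting Shadows Lemma to force all such $z$ into a narrow tube above $B_y$; strong discreteness of $G$ then caps $\#Z_n$ by a constant, so the per-annulus contribution decays like $b^{-\delta n}\mu(B_y)$ and the geometric sum yields \eqref{type3bound}. The most delicate point is bounding $\#Z_n$ uniformly when $\xi$ is a bounded parabolic point (the case driving (iiB)), where Observation \ref{observationparabolic} and the Euclidean metametric $\Dist_{\xi,\zero}$ will be needed to transfer strong discreteness of $G(\zero)$ into a usable count on the orbit of $\xi$.
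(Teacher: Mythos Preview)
Your arguments for \eqref{type1bound} and \eqref{type2bound} are essentially the paper's: disjointness from steps 4 and 6, containment in $\Shad(x,\tau)$ from Lemma \ref{lemmaSyStau}, and Sullivan's Shadow Lemma plus Bounded Distortion to finish.

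For \eqref{type3bound} your overall outline (stratify by annuli, bound the number of $z$ in each annulus, sum a geometric series) matches the paper, but the way you derive the per-$z$ estimate has a gap. You write $\mu(B_y)\asymp_\times b^{-\delta\dist(\zero,z)}\mu(g_z^{-1}(B_y))$ and $\mu(B_z)\asymp_\times b^{-\delta\dist(\zero,z)}\mu(B(\xi,r_z))$; these two formulas do not combine to give anything useful, because $g_z^{-1}(B_y)$ is not a ball about $\xi$ with a controllable radius. The inclusion $B_z\subset C'B_y$ you deduce from the diameter bound only yields $\mu(B_z)\leq\mu(C'B_y)$, and without Ahlfors regularity there is no reason $\mu(C'B_y)\asymp_\times\mu(B_y)$. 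More seriously, the bound you state, $\mu(B_z)\lesssim_\times\mu(B_y)$, is too weak: with $\#Z_n\lesssim 1$ it gives a constant per-annulus contribution, not a geometrically decaying one.

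The fix is much simpler than the detour through $g_z^{-1}(B_y)$. Apply the Bounded Distortion Lemma \emph{to both} $B_y$ and $B_z$ (both lie in their own shadows since $y,z\in A$) to get
\[
\mu(B_y)\asymp_{\times,\sigma} b^{-\delta\dist(\zero,y)}\Delta_{\mu,\xi}\bigl(\varepsilon\phi(Kb^{\dist(\zero,y)})\bigr),\qquad
\mu(B_z)\asymp_{\times,\sigma} b^{-\delta\dist(\zero,z)}\Delta_{\mu,\xi}\bigl(\varepsilon\phi(Kb^{\dist(\zero,z)})\bigr).
\]
Since $\phi$ is nonincreasing and $\dist(\zero,z)\geq\dist(\zero,y)$, the $\Delta_{\mu,\xi}$-factor for $z$ is at most that for $y$, and dividing gives the sharp decay $\mu(B_z)\lesssim_{\times,\sigma} b^{-\delta(\dist(\zero,z)-\dist(\zero,y))}\mu(B_y)$. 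Your annulus count is then correct: by Lemma \ref{lemmatype2type3} every $z\in_3(x,y)$ satisfies $g_y(\xi)\in\Shad(z,\rho)$, so the Intersecting Shadows Lemma forces any two such $z$ in a unit annulus to lie within bounded distance of each other, and strong discreteness caps $\#Z_n$ uniformly (this is exactly the mechanism in Lemma \ref{lemmageneralfinite}, which the paper invokes). No Vitali covering is needed, and there is nothing special about the bounded-parabolic case here---the argument is uniform in $\xi$.
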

\begin{proof}~
\begin{itemize}
\item[(\ref{type1bound}):]
By Lemma \ref{lemmaSyStau}, \eqref{wDxdef}, and \eqref{SznSzj}, the collection $(\PP_z)_{z\in_1 x}$ is a disjoint collection of shadows contained in $g_x(U)\cap\Shad(x,\tau)$. By the Bounded Distortion Lemma,
\begin{align*}
\sum_{z\in_1 x}\mu(\PP_z)
&\leq_{\phantom{\times,\tau}} \mu(g_x(U)\cap \Shad(x,\tau))\\
&\asymp_{\times,\tau} b^{-\delta\dist(\zero,x)}\mu(U\cap g^{-1}(\Shad(x,\tau)))\\
&\leq_{\phantom{\times,\tau}} \lambda b^{-\delta\dist(\zero,x)}
\asymp_{\times,\sigma} \lambda \mu(\PP_x).
\end{align*}
\item[(\ref{type2bound}):]
By Lemma \ref{lemmaSyStau}, \eqref{Gsigmadef}, and \eqref{BynByj}, the collection $(B_y)_{y\in_2 x}$ is a disjoint collection of balls contained in $\Shad(x,\tau)$. Thus
\[
\sum_{y\in_2 x}\mu(B_y)
\leq \mu(\Shad(x,\tau))
\asymp_{\times,\tau} b^{-\delta\dist(\zero,x)} \asymp_{\times,\sigma} \mu(\PP_x).
\]
\item[(\ref{type3bound}):]
By the Bounded Distortion Lemma, we have for each $z\in A$
\[
\mu(B_z) \asymp_{\times,\sigma} b^{-\delta\dist(\zero,z)}\mu(B(\xi,\varepsilon \phi(Kb^{\dist(\zero,z)}))).
\]
Since $\phi$ is nonincreasing, we have for each $z\in_3 (x,y)$
\[
\mu(B(\xi,\varepsilon \phi(Kb^{\dist(\zero,z)}))) \leq \mu(B(\xi,\varepsilon \phi(Kb^{\dist(\zero,y)})))
\]
and thus
\[
\mu(B_z) \lesssim_{\times,\sigma} b^{-\delta\busemann_\zero(z,y)}\mu(B_y),
\]
and so
\begin{equation}
\label{zxymuBz}
\sum_{z\in_3 (x,y)}\mu(B_z) \lesssim_{\times,\sigma} \mu(B_y)b^{\delta\dist(\zero,y)}\sum_{z\in_3 (x,y)}b^{-\delta\dist(\zero,z)}.
\end{equation}
Recall that by Lemma \ref{lemmatype2type3}, there exists $\rho > 0$ so that $g_y(\xi)\in\Shad(z,\rho)$ for every $z\in_3 (x,y)$; moreover, $\dist(\zero,z)\geq\dist(\zero,y)$ for every such $z$. Thus, modifying the proof of Lemma \ref{lemmageneralfinite}, we see that
\begin{align*}
\sum_{z\in_3 (x,y)}b^{-\delta\dist(\zero,z)} \leq M b^{\delta\tau}\sum_{n = \lfloor \dist(\zero,y)\rfloor}^\infty b^{-\delta n},
\end{align*}
where $\tau > 0$ is large enough so that \eqref{diamSrhoAn} is satisfied for every $n\in\N$, and where $M = \#\{g\in G:g(\zero)\in B(\zero,\tau)\}$. Summing the geometric series, we see
\[
\sum_{z\in_3 (x,y)}b^{-\delta\dist(\zero,z)} \lesssim_{\times,\rho} b^{-\delta\dist(\zero,y)}.
\]
Applying this to \eqref{zxymuBz} yields \eqref{type3bound}.
\end{itemize}
\end{proof}
Next, choose $\lambda$ small enough so that (\ref{type1bound}) becomes
\[
\sum_{z\in_1 x}\mu(\PP_z) \leq \frac{1}{2}\mu(\PP_x).
\]
Iterating yields
\[
\sum_{z\in T_1}\mu(\PP_z) \leq 2\mu(\PP_\zero) = 2.
\]
Combining with \eqref{type2bound}, \eqref{type3bound}, \eqref{Gsigmadef}, and Lemma \ref{lemmaeverypoint} yields
\begin{equation}
\label{muBxconverges}
\sum_{x\in A}\mu(B_x) < \infty.
\end{equation}

Recall that we are trying to derive a contradiction by finding a value of $K$ for which the series \eqref{khinchindiverges} converges. (It won't necessarily be the same $K$ that we chose earlier.) For each $\w K > 0$ and $x\in X$ let
\[
f_{\w K}(x) := b^{-\delta\dist(\zero,x)}\Delta_{\mu,\xi}(\varepsilon \phi(Kb^{\dist(\zero,x)})).
\]
Then if $x\in A$, then by the Bounded Distortion Lemma we have
\[
\mu(B_x) \asymp_{\times,\sigma} f_K(x).
\]
Thus by \eqref{muBxconverges}, $\sum_A f_K < \infty$. To complete the proof we need to show that $\sum_{G(\zero)} f_{\w K} < \infty$ for some $\w K > 0$.

By Observation \ref{observationnoglobalfixedpoint}, there exists $h\in G$ so that $h(\xi)\neq \xi$.
\begin{lemma}
\label{lemmahalf}
For every $g\in G$, either $g(\zero)\in A$ or $g\circ h(\zero)\in A$, assuming $\sigma > 0$ is chosen large enough and $\varepsilon > 0$ is chosen small enough.
\end{lemma}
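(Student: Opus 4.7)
The plan is to reformulate the condition $x \in A$ for $x = g(\zero)$ by applying the isometry $g^{-1}$. Writing $z = g^{-1}(\zero)$, the definitions $\PP_x = \Shad(x,\sigma)$ and $B_x = g_x(B(\xi,r))$ give $B_x \subset \PP_x$ if and only if $B(\xi, r) \cap \del X \subset \Shad_z(\zero, \sigma)$, where $r = \varepsilon\phi(Kb^{\dist(\zero,x)})$. Since $\phi$ is nonincreasing and $\dist(\zero,x) \geq 0$, this yields the uniform upper bound $r \leq \varepsilon\phi(K)$, depending on $\varepsilon$ and $K$ but not on $g$. This uniformity is crucial: it will allow a single small $\varepsilon$ to work simultaneously for every $g \in G$.

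Next, I would exploit the hypothesis $h(\xi) \neq \xi$ by setting $d_0 := \Dist(\xi, h(\xi)) > 0$ and invoking the Big Shadows Lemma to choose $\sigma_1$ large enough that $\Diam(\del X \butnot \Shad_w(\zero, \sigma_1)) < d_0$ for every $w \in X$. Then for any $g \in G$ with $z = g^{-1}(\zero)$, the complement $\del X \butnot \Shad_z(\zero, \sigma_1)$ is too small to contain both $\xi$ and $h(\xi)$, so at least one of them lies in $\Shad_z(\zero, \sigma_1)$. In the first alternative ($\xi \in \Shad_z(\zero, \sigma_1)$), I would conclude $g(\zero) \in A$ directly; in the second ($h(\xi) \in \Shad_z(\zero, \sigma_1)$), I would transfer the statement to one about $(gh)^{-1}(\zero) = h^{-1}(z)$ and conclude $gh(\zero) \in A$.

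The mechanism for upgrading ``$\xi$ lies in $\Shad_w(\zero, \sigma_1)$'' to ``a small ball around $\xi$ lies in $\Shad_w(\zero, \sigma)$'' is Gromov's inequality on the boundary (Corollary \ref{corollaryboundaryasymptotic}): for any $\eta \in \del X$,
\[
\min\big(\lb w|\eta\rb_\zero,\ \lb\xi|\eta\rb_\zero\big) \lesssim_\plus \lb w|\xi\rb_\zero \leq \sigma_1.
\]
When $\Dist(\xi,\eta)$ is smaller than a fixed multiple of $b^{-\sigma_1}$, the second argument of the minimum exceeds $\sigma_1 + O(1)$, forcing the first argument to realize the minimum, i.e., $\lb w|\eta\rb_\zero \leq \sigma_1 + C$ for a universal constant $C$. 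Taking $\sigma$ large enough (in terms of $\sigma_1$) and $\varepsilon$ small enough (in terms of $\sigma_1$ and $K$) then yields the desired inclusion, and hence $x \in A$ in the first alternative.

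The only remaining subtlety is the basepoint shift in the second alternative: applying $h^{-1}$ to $\lb z|h(\xi)\rb_\zero \leq \sigma_1$ gives $\lb h^{-1}(z)|\xi\rb_{h^{-1}(\zero)} \leq \sigma_1$, whereas the Gromov-inequality argument is phrased in terms of $\lb \cdot | \cdot \rb_\zero$. Part (d) of Proposition \ref{propositionbasicidentities} handles this change of basepoint with additive error $\dist(\zero, h^{-1}(\zero)) = \dist(\zero, h(\zero))$, a constant depending only on the fixed element $h$. We therefore get $\xi \in \Shad_{(gh)^{-1}(\zero)}(\zero, \sigma_1 + \dist(\zero,h(\zero)))$ and can run the Gromov argument with this slightly enlarged $\sigma_1$. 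The main obstacle is the bookkeeping needed to verify that every threshold in the argument depends only on the fixed data $(h, K)$, never on $g$; this comes out cleanly precisely because of the uniform bound $r \leq \varepsilon\phi(K)$ established at the outset.
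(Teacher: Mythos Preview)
Your proof is correct and uses essentially the same ingredients as the paper: the Big Shadows Lemma, the separation $\Dist(\xi,h(\xi))>0$, the uniform radius bound $r\le\varepsilon\phi(K)$, and an additive correction of size $\dist(\zero,h(\zero))$ to absorb the $h$-shift. The packaging differs slightly: you argue directly by a case split (either $\xi$ or $h(\xi)$ lies in $\Shad_{g^{-1}(\zero)}(\zero,\sigma_1)$) and upgrade ``$\xi$ in the shadow'' to ``ball around $\xi$ in a slightly larger shadow'' via Gromov's inequality, handling the second case by applying $h^{-1}$ and changing basepoint with Proposition~\ref{propositionbasicidentities}(d); the paper instead argues by contradiction, keeps the light source at $g^{-1}(\zero)$ throughout, and absorbs the $h$-shift via the containment $\Shad_{g^{-1}(\zero)}(h(\zero),\sigma)\supset\Shad_{g^{-1}(\zero)}(\zero,\sigma-\dist(\zero,h(\zero)))$, then derives $\Dist(\xi,h(\xi))\le\tfrac34\Dist(\xi,h(\xi))$ from the diameter bound on the shadow's complement. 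These are two phrasings of the same geometric argument.
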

\begin{proof}
By contradiction, suppose $g(\zero),g\circ h(\zero)\notin A$. Then
\begin{align*}
B(\xi,\varepsilon \phi(Kb^{\dist(\zero,g(\zero))})) &\nsubseteq \Shad_{g^{-1}(\zero)}(\zero,\sigma)\\
h(B(\xi,\varepsilon \phi(Kb^{\dist(\zero,g\circ h(\zero))}))) &\nsubseteq \Shad_{g^{-1}(\zero)}(h(\zero),\sigma).
\end{align*}
On the other hand,
\[
\Shad_{g^{-1}(\zero)}(h(\zero),\sigma) \supset \Shad_{g^{-1}(\zero)}(\zero,\sigma - \dist(\zero,h(\zero)))
\]
and thus
\[
h(B(\xi,\varepsilon \phi(Kb^{\dist(\zero,g\circ h(\zero))}))) \nsubseteq \Shad_{g^{-1}(\zero)}(\zero,\sigma - \dist(\zero,h(\zero))).
\]
But by the Big Shadows Lemma, we may choose $\sigma$ large enough so that
\[
\Diam\big(\del X\butnot \Shad_{g^{-1}(\zero)}(\zero,\sigma - \dist(\zero,h(\zero)))\big) \leq \frac{1}{4}\Dist(\xi,h(\xi)),
\]
and so
\[
\Dist(B(\xi,\varepsilon \phi(Kb^{\dist(\zero,g(\zero))})),h(B(\xi,\varepsilon \phi(Kb^{\dist(\zero,g\circ h(\zero))})))) \leq \frac{1}{4}\Dist(\xi,h(\xi)).
\]
On the other hand, we may choose $\varepsilon$ small enough so that
\begin{align*}
\varepsilon\phi(K) &\leq \frac{1}{4}\Dist(\xi,h(\xi))\\
\big(\sup h'\big)\varepsilon \phi(K) &\leq \frac{1}{4}\Dist(\xi,h(\xi)),
\end{align*}
which implies
\[
\Dist(\xi,h(\xi)) \leq \frac{3}{4}\Dist(\xi,h(\xi)),
\]
a contradiction.
\end{proof}

Fix $\w K \geq K$ to be announced below, and note that $f_{\w K}\leq f_K$ since $\phi$ and $\Delta_{\mu,\xi}$ are both monotone (nonincreasing and nondecreasing, respectively). Now
\begin{align*}
\sum_{G(\zero)}f_{\w K} &\leq \sum_{\substack{g\in G \\ g(\zero)\in A}}f_{\w K}(g(\zero)) + \sum_{\substack{g\in G \\ g\circ h(\zero)\in A}}f_{\w K}(g(\zero))\\
&= \sum_A f_{\w K} + \sum_{\substack{g\in G \\ g\circ h(\zero)\in A}}b^{-\delta\dist(\zero,g(\zero))}\Delta_{\mu,\xi}(\varepsilon \phi(\w K b^{\dist(\zero,g(\zero))})).
\end{align*}
Let $\w K = K b^{\dist(\zero,h(\zero))}$; then
\[
\w K b^{\dist(\zero,g(\zero))} \geq K b^{\dist(\zero,g\circ h(\zero))}
\]
and so
\begin{align*}
\sum_{G(\zero)}f_{\w K}
&\leq \sum_A f_K + b^{\delta\dist(\zero,h(\zero))}\sum_{\substack{g\in G \\ g\circ h(\zero)\in A}} b^{-\delta\dist(\zero,g\circ h(\zero))}\Delta_{\mu,\xi}(\varepsilon \phi(K b^{\dist(\zero,g\circ h(\zero))}))\\
&\leq \sum_A f_K + b^{\delta\dist(\zero,h(\zero))}\sum_A f_K < \infty.
\end{align*}

\subsection{Special cases} \label{subsectionspecialcaseskhinchin}
In this subsection we prove the assertions made in \sectionsymbol\ref{subsubsectionspecialcaseskhinchin}-\sectionsymbol\ref{subsubsectionverifying} concerning special cases of Theorem \ref{theoremkhinchin}. We will be interested in the following asymptotic formulas:

\begin{align} \label{Deltamuxiasymp}
\Delta_{\mu,\xi}(r) &\asymp_\times r^\delta \all r > 0 \\ \label{orbitalcountingfunctionasymp}
f_G(t) := \#\{g\in G:\dist(\zero,g(\zero))\leq t\} &\asymp_\times b^{\delta t} \all t > 0.
\end{align}

\begin{proposition}[Reduction of \eqref{khinchindiverges} and \eqref{khinchinconverges} assuming power law asymptotics]
\label{propositionkhinchinpowerlaw}
If \eqref{orbitalcountingfunctionasymp} holds then for each $K > 0$
\begin{equation}
\label{orbitalcountingfunctionasympimplies}
\eqref{khinchindiverges} \asymp_{\plus,\times} \int_{t = 0}^\infty \Delta_{\mu,\xi}(K^{-1} e^t \Phi(e^t))\dee t.
\end{equation}
If \eqref{Deltamuxiasymp} holds then for each $K > 0$
\begin{equation}
\label{Deltamuxiasympimplies}
\eqref{khinchindiverges} \asymp_\times \eqref{khinchinconverges} \asymp_\times \sum_{g\in G}\Phi^\delta(K b^{\dist(\zero,g(\zero))}).
\end{equation}
If both \eqref{orbitalcountingfunctionasymp} and \eqref{Deltamuxiasymp} hold then for each $K > 0$
\begin{equation}
\label{perfectkhinchin}
\eqref{khinchindiverges} \asymp_\times \eqref{khinchinconverges} \asymp_{\plus,\times} \int_{t = 0}^\infty e^{\delta t} \Phi^\delta(e^t) \dee t,
\end{equation}
and so by Theorem \ref{theoremkhinchin}
\begin{equation}
\label{perfectkhinchin2}
\mu(\WA_{\Phi,\xi}) = 0 \;\;\Leftrightarrow\;\; \int_{t = 0}^\infty e^{\delta t} \Phi^\delta(e^t) \dee t < \infty.
\end{equation}
\end{proposition}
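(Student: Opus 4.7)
I'd handle the three asymptotics in the order \eqref{Deltamuxiasympimplies}, \eqref{orbitalcountingfunctionasympimplies}, then combine for \eqref{perfectkhinchin} and deduce \eqref{perfectkhinchin2}. For \eqref{Deltamuxiasympimplies}, the hypothesis \eqref{Deltamuxiasymp} gives term-by-term
\[
b^{-\delta\dist(\zero,g(\zero))}\Delta_{\mu,\xi}\bigl(b^{\dist(\zero,g(\zero))}\Phi(Kb^{\dist(\zero,g(\zero))})\bigr) \asymp_\times \Phi^\delta(Kb^{\dist(\zero,g(\zero))}),
\]
and the same substitution in the summand of \eqref{khinchinconverges} makes the factor $(g'(\xi))^\delta$ cancel with the $(g'(\xi))^{-\delta}$ produced by $\Delta_{\mu,\xi}(r/g'(\xi))\asymp_\times r^\delta(g'(\xi))^{-\delta}$. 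Both series therefore reduce to $\sum_{g\in G}\Phi^\delta(Kb^{\dist(\zero,g(\zero))})$.

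For \eqref{orbitalcountingfunctionasympimplies} I would partition $G$ into the shells $A_n := \{g\in G : \dist(\zero,g(\zero))\in[n,n+1)\}$. On $A_n$, $b^{-\delta\dist(\zero,g(\zero))} \asymp_\times b^{-\delta n}$, and since $t\mapsto t\Phi(t)$ is nonincreasing and $\Delta_{\mu,\xi}$ is nondecreasing, the factor $\Delta_{\mu,\xi}(b^{\dist(\zero,g(\zero))}\Phi(Kb^{\dist(\zero,g(\zero))}))$ is sandwiched between $\Delta_{\mu,\xi}(b^{n+1}\Phi(Kb^{n+1}))$ and $\Delta_{\mu,\xi}(b^n\Phi(Kb^n))$. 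Combined with $\#(A_n)\asymp_\times b^{\delta n}$ from \eqref{orbitalcountingfunctionasymp}, the shell-by-shell estimate sandwiches \eqref{khinchindiverges} between $\sum_{n\geq 1}\Delta_{\mu,\xi}(b^n\Phi(Kb^n))$ and $\sum_{n\geq 0}\Delta_{\mu,\xi}(b^n\Phi(Kb^n))$ up to multiplicative constants. Each of these sums is $\asymp_{\plus,\times}$ to the integral $\int_0^\infty\Delta_{\mu,\xi}(b^s\Phi(Kb^s))\,ds$ by the standard comparison for a nonincreasing integrand, and the change of variables $t = s\ln b + \ln K$ rewrites this integral as a multiplicative constant times $\int_{\ln K}^\infty\Delta_{\mu,\xi}(K^{-1}e^t\Phi(e^t))\,dt$. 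The mismatch between the lower limits $\ln K$ and $0$ contributes only a bounded additive error since $\Delta_{\mu,\xi}\leq 1$, giving \eqref{orbitalcountingfunctionasympimplies}.

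For \eqref{perfectkhinchin} I would substitute \eqref{Deltamuxiasymp} into the integrand of \eqref{orbitalcountingfunctionasympimplies}, yielding $\Delta_{\mu,\xi}(K^{-1}e^t\Phi(e^t))\asymp_\times K^{-\delta}e^{\delta t}\Phi^\delta(e^t)$; the $K^{-\delta}$ is absorbed into the multiplicative constant of $\asymp_\times$, and the same identity for \eqref{khinchinconverges} follows from \eqref{Deltamuxiasympimplies}. Finally, \eqref{perfectkhinchin2} is immediate from Theorem \ref{theoremkhinchin}: if $\int_0^\infty e^{\delta t}\Phi^\delta(e^t)\,dt < +\infty$ then by \eqref{perfectkhinchin} the series \eqref{khinchinconverges} converges for every $K > 0$, and part (iiA) gives $\mu(\WA_{\Phi,\xi}) = 0$; if the integral is infinite, then \eqref{khinchindiverges} diverges for every $K > 0$, and part (i) gives $\mu(\WA_{\Phi,\xi}) = 1$.

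The step that requires care is the sum-to-integral comparison in \eqref{orbitalcountingfunctionasympimplies}: one has to track how the constant $K$ propagates through the change of variables, but once one observes that $s\mapsto\Delta_{\mu,\xi}(b^s\Phi(Kb^s))$ is monotonic in $s$ (so the integral comparison is tight despite $\Phi$ being only indirectly controlled via $t\mapsto t\Phi(t)$) and that $\Delta_{\mu,\xi}\leq 1$ absorbs the mismatched limit of integration into the additive constant of $\asymp_{\plus,\times}$, the argument is routine.
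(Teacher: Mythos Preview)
Your handling of \eqref{Deltamuxiasympimplies}, \eqref{perfectkhinchin}, and \eqref{perfectkhinchin2} is correct and matches what the paper does (the paper simply declares these ``obvious''). The one genuine gap is in your shell argument for \eqref{orbitalcountingfunctionasympimplies}: the assertion $\#(A_n)\asymp_\times b^{\delta n}$ does \emph{not} follow directly from \eqref{orbitalcountingfunctionasymp}. From $C_1 b^{\delta t}\leq f_G(t)\leq C_2 b^{\delta t}$ one only gets $\#(A_n)=f_G(n+1)-f_G(n)\geq C_1 b^{\delta(n+1)}-C_2 b^{\delta n}=b^{\delta n}(C_1 b^\delta-C_2)$, which need not be positive. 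So the lower bound on \eqref{khinchindiverges} in your sandwich is unjustified as written. The fix is easy (take shells of width $k$ large enough that $C_1 b^{\delta k}>C_2$, or run an Abel summation so that the asymptotic is applied to $f_G$ itself rather than to its increments), but it should be said.

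The paper sidesteps this entirely by a continuous Abel/Fubini argument: for a nonincreasing $f\to 0$ it writes
\[
\sum_{g\in G}f(\dist(\zero,g(\zero)))=\int_0^\infty(-f'(t))\,f_G(t)\,\dee t \asymp_\times \int_0^\infty(-f'(t))\,b^{\delta t}\,\dee t = f(0)+\delta\log b\int_0^\infty f(t)\,b^{\delta t}\,\dee t,
\]
first for $\CC^1$ functions and then by approximation. Here the asymptotic \eqref{orbitalcountingfunctionasymp} is applied to $f_G(t)$ against the nonnegative weight $-f'(t)$, so no control on differences $f_G(n+1)-f_G(n)$ is needed. Setting $f(t)=b^{-\delta t}\Delta_{\mu,\xi}(b^t\Phi(Kb^t))$ and performing the same substitution $s=t\log b+\log K$ you describe then yields \eqref{orbitalcountingfunctionasympimplies}. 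Your discrete shell decomposition reaches the same destination once patched, but the paper's route is cleaner precisely because it avoids the increment issue.
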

\begin{proof}
We prove only \eqref{orbitalcountingfunctionasympimplies}, as \eqref{Deltamuxiasympimplies} is obvious, and \eqref{perfectkhinchin} is obvious given \eqref{orbitalcountingfunctionasympimplies}.

If $f:\Rplus\to(0,\infty)$ is any $\CC^1$ decreasing function for which $f(t)\tendsto t 0$, then
\begin{align*}
\sum_{g\in G}f(\dist(\zero,g(\zero)))
&=_\pt \sum_{g\in G}\int_{t = \dist(\zero,g(\zero))}^\infty (-f'(t))\dee t \since{$f(t)\tendsto t 0$}\\
&=_\pt \sum_{g\in G} \int_{t = 0}^\infty (-f'(t))\chi\big(t\geq \dist(\zero,g(\zero))\big)\dee t\\
&=_\pt \int_{t = 0}^\infty \sum_{g\in G} (-f'(t))\chi\big(\dist(\zero,g(\zero))\leq t\big)\dee t \by{Tonelli's theorem}\\
&=_\pt \int_{t = 0}^\infty (-f'(t))\#\{g\in G:\dist(\zero,g(\zero))\leq t\}\dee t\\
&\asymp_\times \int_{t = 0}^\infty (-f'(t))b^{\delta t}\dee t \by{\eqref{orbitalcountingfunctionasymp}}\\
&=_\pt f(0) + \log(b^\delta)\int_{t = 0}^\infty f(t) b^{\delta t}\dee t. & \textup{(integration by parts)}
\end{align*}
Thus
\begin{equation}
\label{fdogo}
\sum_{g\in G}f(\dist(\zero,g(\zero))) \asymp_\times f(0) + \int_{t = 0}^\infty f(t) b^{\delta t}\dee t.
\end{equation}
The implied constant is independent of $f$. Now \eqref{fdogo} makes sense even if $f$ is not $\CC^1$. An approximation argument shows that \eqref{fdogo} holds for any nonincreasing function $f:\Rplus\to(0,\infty)$ for which $f(t)\tendsto t 0$. Now let
\[
f(t) = b^{-\delta t}\Delta_{\mu,\xi}(b^t\Phi(K b^t)).
\]
Since by assumption $t\mapsto t\Phi(t)$ is nonincreasing, it follows that $f$ is also nonincreasing. On the other hand, clearly $f(t)\leq b^{-\delta t}$, so $f(t)\tendsto t 0$. Thus \eqref{fdogo} holds; substituting yields
\[
\eqref{khinchindiverges} \asymp_{\plus,\times} \int_{t = 0}^\infty \Delta_{\mu,\xi}(b^t \Phi(K b^t))\dee t.
\]
Substituting $s = t\log(b) + \log(K)$, we have
\begin{align*}
\eqref{khinchindiverges} \asymp_{\plus,\times}
\int_{t = 0}^\infty \Delta_{\mu,\xi}(b^t \Phi(K b^t))\dee t &=_\pt  \int_{s = \log(K)}^\infty \Delta_{\mu,\xi}(K^{-1}e^s\Phi(e^s))\log(b)\dee s\\
&\asymp_{\plus} \int_{s = 0}^\infty \Delta_{\mu,\xi}(K^{-1} e^s \Phi(e^s))\dee s.
\qedhere\end{align*}
\end{proof}

\begin{corollary}[Measures of $\BA_\xi$ and $\VWA_\xi$ assuming power law asymptotics]
\label{corollarykhinchinpowerlaw}
Assume $\xi\in\Lambda_G$. If \eqref{Deltamuxiasymp} holds, then $\mu(\VWA_\xi) = 0$, and $\mu(\BA_\xi) = 0$ if and only if $G$ is of divergence type. If \eqref{orbitalcountingfunctionasymp} holds, then $\mu(\BA_\xi) = 0$. If both hold, then $\mu(\BA_\xi) = \mu(\VWA_\xi) = 0$.
\end{corollary}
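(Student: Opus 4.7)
The plan is to deduce all four assertions from Theorem~\ref{theoremkhinchin} together with Proposition~\ref{propositionkhinchinpowerlaw}, applied with two carefully chosen test functions: $\Phi(t) = t^{-1}$ (the critical Dirichlet rate, used to recover $\BA_\xi$) and $\Phi_c(t) = t^{-(1+c)}$ for $c > 0$ (used to control $W_{c,\xi}$). Both are admissible in the sense of Theorem~\ref{theoremkhinchin}, since $t\mapsto t\Phi(t)\equiv 1$ and $t\mapsto t\Phi_c(t) = t^{-c}$ are nonincreasing. A standing preliminary observation is that $\xi\in\Lambda = \mathrm{supp}(\mu)$, because $\mu$ is a nonzero ergodic $\delta$-quasiconformal measure supported in the $G$-invariant closed set $\Lambda$; consequently $\Delta_{\mu,\xi}(r) > 0$ for every $r > 0$.

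For $\mu(\VWA_\xi) = 0$ under \eqref{Deltamuxiasymp}, I would fix $c > 0$ and apply Theorem~\ref{theoremkhinchin}(iiA) with $\Phi_{c/2}(t) = t^{-(1+c/2)}$. By \eqref{Deltamuxiasympimplies} the series \eqref{khinchinconverges} is $\asymp_\times K^{-\delta(1+c/2)}\sum_g b^{-\delta(1+c/2)\dist(\zero,g(\zero))}$, which converges since $\delta(1+c/2) > \delta$ (and $\delta<+\infty$ is forced by \eqref{Deltamuxiasymp}). Hence $\mu(\WA_{\Phi_{c/2},\xi}) = 0$. Unpacking $\omega_\xi$ yields $W_{c,\xi}\subset\WA_{\Phi_{c/2},\xi}$: if $\omega_\xi(\eta)\geq 1+c$ then $\Dist(g(\xi),\eta)\leq b^{-(1+3c/4)\dist(\zero,g(\zero))}$ for infinitely many $g$, and for such $g$ with $\dist(\zero,g(\zero))$ sufficiently large (automatic along any infinite subsequence, by strong discreteness of $G$) the surplus factor $b^{-(c/4)\dist(\zero,g(\zero))}$ absorbs any fixed $K^{-(1+c/2)}$. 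Consequently $\mu(W_{c,\xi}) = 0$ for every $c > 0$, and $\mu(\VWA_\xi) = \mu\big(\bigcup_{n\in\N} W_{1/n,\xi}\big) = 0$.

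For the assertions about $\mu(\BA_\xi)$ I would take $\Phi(t) = t^{-1}$, so that $b^{\dist(\zero,g(\zero))}\Phi(Kb^{\dist(\zero,g(\zero))}) = K^{-1}$ is independent of $g$. Under \eqref{orbitalcountingfunctionasymp}, \eqref{orbitalcountingfunctionasympimplies} reduces \eqref{khinchindiverges} to $\int_0^\infty \Delta_{\mu,\xi}(K^{-1})\,\dee t = +\infty$, so Theorem~\ref{theoremkhinchin}(i) yields $\mu(\WA_{\Phi,\xi}) = 1$. The definition of $\BA_\xi$ forces $\BA_\xi\cap\WA_{\Phi,\xi} = \emptyset$ (if $\eta\in\BA_\xi$ with constant $\varepsilon$, choosing $K = 2/\varepsilon$ falsifies the $\WA_{\Phi,\xi}$ condition), hence $\mu(\BA_\xi) = 0$. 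Under \eqref{Deltamuxiasymp}, \eqref{Deltamuxiasympimplies} instead identifies both \eqref{khinchindiverges} and \eqref{khinchinconverges} with constant multiples of the Poincar\'e series $\sum_g b^{-\delta\dist(\zero,g(\zero))}$ evaluated at $\delta$. In the divergence case this diverges for every $K$ and the preceding argument gives $\mu(\BA_\xi) = 0$; in the convergence case it converges for every $K$, Theorem~\ref{theoremkhinchin}(iiA) yields $\mu(\WA_{\Phi,\xi}) = 0$, and the identification $\BA_\xi = \Lr\butnot\WA_{\Phi,\xi}$ (immediate from the two definitions once $\varepsilon$ and $K$ are matched) gives $\mu(\BA_\xi) = \mu(\Lr)$. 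The final ``if both hold'' item then follows immediately.

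The main obstacle I anticipate is the ``only if'' direction of the IFF under \eqref{Deltamuxiasymp}: concluding $\mu(\BA_\xi) > 0$ in the convergence regime requires knowing $\mu(\Lr) > 0$, which is a Hopf-Tsuji-Sullivan-type positivity statement for the radial limit set. In the proper geodesic setting this is standard, and more generally it should follow from ergodicity combined with the Ahlfors behaviour \eqref{Deltamuxiasymp} at $\xi$, which forces $\mu$-almost every point of $\Lambda$ to land in $G$-translates of a ball around $\xi$ contained in $\Lr$. Apart from this point, the argument is routine once the correct two test functions have been identified.
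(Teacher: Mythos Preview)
Your approach is essentially the paper's: both plug the test functions $\Phi_c(t)=t^{-(1+c)}$ and $\Phi(t)=t^{-1}$ into Theorem~\ref{theoremkhinchin} through Proposition~\ref{propositionkhinchinpowerlaw}, reducing the relevant series to the Poincar\'e series at exponents $\delta(1+c)$ and~$\delta$. The paper is terser on the $\VWA_\xi$ part --- it just writes $\mu(\WA_{\Phi_c,\xi})=0$ and says ``since $c$ was arbitrary'' --- but the inclusion $W_{c,\xi}\subset\WA_{\Phi_{c'},\xi}$ for $c'<c$ that you spell out with $c'=c/2$ is exactly what that remark relies on.

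Regarding the obstacle you flag in the ``only if'' direction: the paper does not resolve it via $\mu(\Lr)>0$. It simply writes $\mu(\BA_\xi)=\mu(\BA_{\Phi,\xi})$ and notes that in the convergence case Theorem~\ref{theoremkhinchin}(iiA) gives $\mu(\WA_{\Phi,\xi})=0$, hence $\mu(\BA_{\Phi,\xi})=1$. In effect the paper reads $\BA_\xi$ here as $\BA_{\Phi,\xi}$, the complement of $\WA_{\Phi,\xi}$ in $\Lambda$ rather than in $\Lr$. Your proposed fix via Hopf--Tsuji--Sullivan would in fact go the wrong way: for convergence-type groups the Borel--Cantelli argument with Sullivan's shadow lemma (as in the proof of Proposition~\ref{propositionmuLurzero}) yields $\mu(\Lr)=0$, not $\mu(\Lr)>0$, so under the restricted definition $\BA_\xi\subset\Lr$ one would get $\mu(\BA_\xi)=0$ in the convergence case as well. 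The paper's tacit identification is the intended reading.
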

\begin{proof}
Fix $c > 0$, and let $\Phi_c(t) = t^{-(1 + c)}$. If \eqref{Deltamuxiasymp} holds, then
\[
\eqref{khinchinconverges} \asymp_\times \Sigma_{\delta(1 + c)}(G) < \infty,
\]
so by Theorem \ref{theoremkhinchin} we have $\mu(\WA_{\Phi_c,\xi}) = 0$. Since $c$ was arbitrary we have $\mu(\VWA_\xi) = 0$.

Now let $\Psi(t) = t^{-1}$. If \eqref{Deltamuxiasymp} holds, then
\[
\eqref{khinchindiverges} \asymp_\times \eqref{khinchinconverges} \asymp_\times \Sigma_\delta(G),
\]
so by Theorem \ref{theoremkhinchin}, $\mu(\BA_\xi) = \mu(\BA_{\Phi,\xi}) = 0$ if and only if $G$ is of divergence type. If \eqref{orbitalcountingfunctionasymp} holds rather than \eqref{Deltamuxiasymp}, then
\[
\eqref{khinchindiverges} \asymp_{\plus,\times} \int_{t = 0}^\infty \Delta_{\mu,\xi}(K^{-1})\dee t = \infty
\]
since $\xi\in\Lambda \;\;\Rightarrow\;\;\Delta_{\mu,\xi}(1/K) > 0$. Thus $\mu(\BA_\xi) = \mu(\BA_{\Phi,\xi}) = 0$.
\end{proof}

\begin{example}
\label{exampleuniformlyradialkhinchin}
If $\xi$ is a uniformly radial limit point, then \eqref{Deltamuxiasymp} holds.
\end{example}
The proof will show that if $\xi\in\Lursigma$, then the implied constant of \eqref{Deltamuxiasymp} depends only on $\sigma$.
\begin{proof}[Proof of Example \ref{exampleuniformlyradialkhinchin}]
Let $(g_n)_1^\infty$ be a sequence such that $g_n(\zero)\tendsto n \xi$ $\sigma$-uniformly radially. Fix $0 < r < 1$, and let $n = n_r\in\N$ be maximal such that
\[
b^{-\dist(\zero,g_n(\zero))}\geq r.
\]
Now for each $\eta\in B(\xi,r)$, we have
\[
\lb g_n(\zero)|\eta\rb_\zero \gtrsim_\plus \min(\lb g_n(\zero)|\xi\rb_\zero, \lb\eta|\xi\rb_\zero)\gtrsim_{\plus,\sigma} \min(\dist(\zero,g_n(\zero)),-\log_b(r)) = \dist(\zero,g_n(\zero)).
\]
Letting $\tau > 0$ be the implied constant of this asymptotic, we have
\[
B(\xi,r) \subset \Shad(g_n(\zero),\tau).
\]
Now by Sullivan's Shadow Lemma,
\[
\mu(B(\xi,r)) \leq \mu(\Shad(g_n(\zero),\tau)) \asymp_{\times,\tau} b^{-\delta\dist(\zero,g_n(\zero))}.
\]
On the other hand, by \eqref{sigmauniformlyradially}, we have
\[
b^{-\dist(\zero,g_n(\zero))} \asymp_{\times,\sigma} b^{-\dist(\zero,g_{n + 1}(\zero))} < r,
\]
and so
\[
\mu(B(\xi,r)) \lesssim_{\times,\sigma} r^\delta.
\]
The opposite direction is similar.
\end{proof}

\begin{example}[Quasiconvex-cocompact group]
\label{examplequasiconvexcocompact}
Let $X$ be a proper and geodesic hyperbolic metric space, and let $G\leq\Isom(X)$. The group $G$ is called \emph{quasiconvex-cocompact} if the set $G(\zero)$ is \emph{quasiconvex}, i.e. there exists $\rho > 0$ such that for all $x,y\in G(\zero)$,
\begin{equation}
\label{quasiconvexcocompact}
\geo xy \subset B(G(\zero),\rho).
\end{equation}
Any quasiconvex-cocompact group is perfect.
\end{example}
\begin{proof}
By Lemma \ref{lemmanonhorospherical} below, we have $\Lambda = \Lursigma$ for some $\sigma > 0$; without loss of generality, assume that $\sigma$ is large enough so that Sullivan's Shadow Lemma holds. In particular, \eqref{Deltamuxiasymp} follows from  Example \ref{exampleuniformlyradialkhinchin}. To obtain \eqref{orbitalcountingfunctionasymp}, we generalize an argument of Sullivan \cite[p. 180]{Sullivan_density_at_infinity}. Fix $t\geq 0$, and let $A_t = \{x\in G(\zero): (t - \sigma)\leq \dist(\zero,x) < t\}$. Then for all $\xi\in\Lambda = \Lursigma$, there exists $x\in A_t$ such that $\xi\in\Shad(x,\sigma)$. Let $A_{t,\xi} := \{x\in A_t: \xi\in\Shad(x,\sigma)\}$, so that $A_{t,\xi}\neq\emptyset$. On the other hand, by the Intersecting Shadows Lemma, we have for $x_1,x_2\in A_{t,\xi}$
\[
\dist(x_1,x_2) \asymp_{\plus,\sigma} |\busemann_\zero(x_1,x_2)| \asymp_{\plus,\sigma} 0,
\]
i.e. the set $A_{t,\xi} := \{x\in A_t: \xi\in\Shad(x,\sigma)\}$ has diameter bounded depending only on $\sigma$. Since $G$ is strongly discrete, this means that the cardinality of $A_{t,\xi}$ is bounded depending only on $\sigma$; since $A_{t,\xi}\neq\emptyset$, we have
\[
\#(A_{t,\xi}) \asymp_{\times,\sigma} 1.
\]
Thus
\begin{align*}
1 = \mu(X) &\asymp_{\times,\sigma} \int \#(A_{t,\xi}) \dee \mu(\xi)\\
&=_\ptsigma \sum_{x\in A_t}\mu(\Shad(x,\sigma))\\
&\asymp_{\times,\sigma} \sum_{x\in A_t}b^{-\delta \dist(\zero,x)}\by{Sullivan's Shadow Lemma}\\
&\asymp_{\times,\sigma} b^{-\delta t}\#(A_t).
\end{align*}
So $\#(A_t) \asymp_{\times,\sigma} b^{\delta t}$; thus for all $n\in\N$
\begin{align*}
f_G(n\sigma) = \sum_{i = 1}^n \#(A_{i\sigma})
&\asymp_{\times,\sigma} \sum_{i = 1}^n b^{\delta i\sigma}\\
&\asymp_{\times,\sigma} b^{\delta n\sigma},
\end{align*}
the last asymptotic following from the fact that $\sigma\delta > 0$. This proves \eqref{orbitalcountingfunctionasymp} in the case $t \in\N\sigma$; the general case follows from an easy approximation argument.
\end{proof}

\begin{proposition}
\label{propositionparabolic43}
Let $X = \H^{d + 1}$ for some $d\in\N$, and let $G\leq\Isom(X)$ be geometrically finite. If $\xi$ is a bounded parabolic point whose rank is strictly greater than $4\delta/3$, then there exists a function $\Phi$ such that $t\mapsto t\Phi(t)$ is nonincreasing and such that for every $K > 0$, \eqref{khinchinconverges} diverges but \eqref{khinchindiverges} converges.
\end{proposition}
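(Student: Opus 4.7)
The plan is to exhibit an explicit $\Phi$ of power-law type, namely $\Phi(t) = t^{-(1+\varepsilon)}$ for a sufficiently small $\varepsilon > 0$, and then to reduce both series to Poincar\'e series evaluated at carefully chosen exponents. The monotonicity requirement is immediate: $t\Phi(t) = t^{-\varepsilon}$ is nonincreasing. Throughout I will write $T_g := \dist(\zero,g(\zero))$ and make essential use of the global measure formula for the Patterson--Sullivan measure on $\H^{d+1}$, which for the bounded parabolic point $\xi$ of rank $k_\xi$ yields
\[
\Delta_{\mu,\xi}(r) \asymp_\times r^{2\delta - k_\xi} \quad\text{for $r$ small}
\]
(the exponent $2\delta - k_\xi$ is strictly positive since $\delta > \delta_\xi = k_\xi/2$ by Beardon). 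Since the arguments of $\Delta_{\mu,\xi}$ appearing in both \eqref{khinchindiverges} and \eqref{khinchinconverges} tend to $0$ as $T_g \to \infty$, the tail behavior of each series (which is all that matters for convergence) falls in the regime where this asymptotic applies; the finitely many small-$T_g$ terms are bounded by strong discreteness.

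For \eqref{khinchindiverges}, the computation is direct. We have $b^{T_g}\Phi(Kb^{T_g}) = K^{-(1+\varepsilon)} b^{-\varepsilon T_g}$, so
\[
b^{-\delta T_g}\Delta_{\mu,\xi}\bigl(b^{T_g}\Phi(Kb^{T_g})\bigr) \asymp_{\times,K} b^{-[\delta+\varepsilon(2\delta-k_\xi)]T_g},
\]
and summing over $g\in G$ gives $\Sigma_{\delta+\varepsilon(2\delta-k_\xi)}(G)$ up to a constant. Since $\varepsilon(2\delta-k_\xi)>0$, this exponent strictly exceeds $\delta$, so the series converges by the very definition of the Poincar\'e exponent; this will work for every $K>0$.

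For \eqref{khinchinconverges}, I will show divergence by restricting the sum to the single coset $g \in G_\xi$ (i.e. $g_0 = e$). Since $h'(\xi) \asymp_\times 1$ for $h\in G_\xi$ by \eqref{parabolic}, the $h$-th term is $\asymp_\times \Delta_{\mu,\xi}(\Phi(Kb^{T_h}))$. For $T_h$ large, $\Phi(Kb^{T_h}) = K^{-(1+\varepsilon)}b^{-(1+\varepsilon)T_h}$ is small, so applying the global measure formula again gives
\[
\sum_{h\in G_\xi} h'(\xi)^\delta\Delta_{\mu,\xi}(\Phi(Kb^{T_h})/h'(\xi)) \asymp_{\times,K} \Sigma_{(1+\varepsilon)(2\delta-k_\xi)}(G_\xi).
\]
The parabolic group $G_\xi$ is a Euclidean crystallographic group of rank $k_\xi$ acting on the horosphere at $\xi$; via Observation \ref{observationeuclideanparabolicasymp} its orbital counting function satisfies $\#\{h\in G_\xi : T_h \leq T\}\asymp_\times b^{k_\xi T/2}$, so its Poincar\'e exponent is $\delta_\xi = k_\xi/2$ and it is of divergence type. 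The above restricted series therefore diverges as soon as $(1+\varepsilon)(2\delta-k_\xi) \leq k_\xi/2$, which rearranges to
\[
\varepsilon \leq \frac{3k_\xi/2 - 2\delta}{2\delta - k_\xi};
\]
the right-hand side is strictly positive precisely because of the hypothesis $k_\xi > 4\delta/3$ (the numerator is positive) together with $k_\xi < 2\delta$ (the denominator is positive). Fixing any $\varepsilon > 0$ in this range delivers both the convergence and divergence conclusions, for every $K > 0$ simultaneously.

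The main obstacle, conceptually, is verifying that the sharp global measure formula $\Delta_{\mu,\xi}(r)\asymp_\times r^{2\delta-k_\xi}$ (rather than just one-sided bounds) is what gives the matching exponents in Steps 2 and 3; once this formula is accepted (it is classical for geometrically finite Kleinian groups), the remaining work is essentially bookkeeping on Poincar\'e series, and the hypothesis $k_\xi > 4\delta/3$ appears naturally as the threshold at which the trivial-coset Poincar\'e exponent of $G_\xi$ drops below its critical value.
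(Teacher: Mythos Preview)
Your proof is correct, and in fact considerably simpler than the one in the paper. The paper takes a more elaborate route: it decomposes $G = G_\xi \cdot A$ via a transversal $A$ with $\#\{a \in A : \dist(\zero,a(\zero)) \leq t\} \asymp e^{\delta t}$, then sums \eqref{khinchinconverges} over all of $G$ to obtain an asymptotic of the form
\[
\eqref{khinchinconverges} \gtrsim_{\plus,\times} \int_0^\infty \max(s^{3k_\xi - 4\delta},1)\, e^{(2\delta - k_\xi)s}\,\Phi^{2\delta - k_\xi}(Ke^s)\,ds,
\]
and only then chooses the logarithmically-corrected function $\Phi(e^s) = e^{-s} s^{[-1-(3k_\xi-4\delta)]/(2\delta-k_\xi)}$, so that the extra polynomial factor $s^{3k_\xi-4\delta}$ is precisely what separates divergence of \eqref{khinchinconverges} from convergence of \eqref{khinchindiverges}.

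Your observation that it suffices to restrict \eqref{khinchinconverges} to the single coset $G_\xi$ (where $g'(\xi) \asymp 1$) bypasses the transversal counting lemma entirely and allows a pure power-law $\Phi(t)=t^{-(1+\varepsilon)}$. What the paper's approach buys is a sharper quantitative picture of the full series \eqref{khinchinconverges}---the transversal asymptotic $\#(A_t)\asymp e^{\delta t}$ is of independent interest---but for the proposition as stated your argument is cleaner and makes the role of the threshold $k_\xi = 4\delta/3$ more transparent: it is exactly where the $G_\xi$-Poincar\'e exponent $(2\delta-k_\xi)$ of the restricted series drops below $\delta_\xi = k_\xi/2$.
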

\begin{proof}
Let $k = k_\xi$ be the rank of $\xi$. Fix a function $\Phi:(0,\infty)\to(0,\infty)$ to be announced below and fix any $K > 0$. Then by \cite[Lemma 3.2]{StratmannVelani} (a special case of the global measure formula),
\[
\eqref{khinchinconverges}
\asymp_\times \sum_{g\in G}(g'(\xi))^{\delta - k}\Phi^{2\delta - k}(K e^{\dist(\zero,g(\zero))}).
\]
Let $G_\xi$ be the stabilizer of $\xi$ in $G$, and let $S$ be a $\xi$-bounded set satisfying $G(\zero)\subset G_\xi(S)$ (see Definition \ref{definitionboundedparabolic}). Let $A$ be a transversal\Footnote{Recall that a \emph{transversal} of a partition is a set which intersects each partition element exactly once.} of $G_\xi\backslash G$ satisfying $a(\zero)\in S$ for all $a\in A$.
\begin{claim}
\label{claimSGxi}
For $a\in A$ and $h\in G_\xi$,
\[
\lb a(\zero)|h(\zero)\rb_\zero \asymp_\plus 0.
\]
\end{claim}
\begin{subproof}
By contradiction, suppose $\lb a_n(\zero)|h_n(\zero)\rb_\zero \tendsto n \infty$ with $a_n\in A$, $h_n\in G_\xi$. Then $\dist(\zero,h_n(\zero))\tendsto n \infty$, so by Observation \ref{observationparabolic} we have $h_n(\zero)\tendsto n \xi$. But then $a_n(\zero)\tendsto n \xi$, contradicting that $S$ is $\xi$-bounded.
\end{subproof}
Clearly, the same holds with $h$ replaced by $h^{-1}$, i.e.
\[
0 \asymp_\plus 2\lb a(\zero)|h^{-1}(\zero)\rb_\zero = \dist(\zero,a(\zero)) + \dist(\zero,h(\zero)) - \dist(\zero,h\circ a(\zero)).
\]
Writing $g = h\circ a$, we have
\begin{equation}
\label{gah}
\dist(\zero,g(\zero))\asymp_\plus \dist(\zero,a(\zero)) + \dist(\zero,h(\zero)).
\end{equation}
Note that every $g\in G$ can be written uniquely as $h\circ a$ with $h\in G_\xi$, $a\in A$ by the transversality of $A$. On the other hand,
\[
\busemann_\xi(g(\zero),\zero) = \busemann_\xi(h(\zero),\zero) + \busemann_{h^{-1}(\xi)}(a(\zero),\zero) = \busemann_\xi(a(\zero),\zero) = \dist(\zero,a(\zero)) - 2\lb a(\zero)|\xi\rb_\zero \asymp_\plus \dist(\zero,a(\zero)),
\]
the last asymptotic being a restatement of the fact that $\xi\notin\cl{A(\zero)}$. Thus
\begin{align*}
\eqref{khinchinconverges}
&\asymp_\times \sum_{g\in G}e^{(\delta - k)\busemann_\xi(g(\zero),\zero)}\Phi^{2\delta - k}(K e^{\dist(\zero,g(\zero))})\\
&\asymp_\times \sum_{a\in A}\sum_{h\in G_\xi}e^{(\delta - k)\dist(\zero,a(\zero))}\Phi^{2\delta - k}(K e^{\dist(\zero,h\circ a(\zero))})\\
&\geq_\pt \sum_{a\in A}\sum_{h\in G_\xi}e^{(\delta - k)\dist(\zero,a(\zero))}\Phi^{2\delta - k}(K e^{\dist(\zero,a(\zero)) + \dist(\zero,h(\zero))}). \since{$\Phi$ is nonincreasing}
\end{align*}
\begin{claim}
\[
\#(A_t) := \#\{a\in A:\dist(\zero,a(\zero))\leq t\} \asymp_\times e^{\delta t}.
\]
\end{claim}
\begin{proof}
By Example \ref{examplepinched}, we have $f_G(t)\asymp_\times e^{\delta t}$; in particular, since $\#(A_t)\leq f_G(t)$ we have
\begin{equation}
\label{Atupperbound}
\#(A_t) \lesssim_\times e^{\delta t}.
\end{equation}
To establish the lower bound, let $C$ be the implied constant of \eqref{gah}. We have
\begin{align*}
e^{\delta t} \asymp_\times e^{\delta(t - C)} \asymp_\times f_G(t - C)
&=_\pt \#\{(h,a)\in G_\xi\times A: \dist(\zero,h\circ a(\zero))\leq t - C\}\\
&\leq_\pt \#\{(h,a)\in G_\xi\times A: \dist(\zero,h(\zero)) + \dist(\zero,a(\zero))\leq t\} \by{\eqref{gah}}\\
&\leq_\pt \sum_{n = 0}^{\lfloor t\rfloor}\#\{(h,a)\in G_\xi\times A: n\leq \dist(\zero,h(\zero)) < n + 1, \dist(\zero,a(\zero)) \leq t - n\} \noreason\\
&=_\pt \sum_{n = 0}^{\lfloor t\rfloor}\#\{h\in G_\xi: n\leq \dist(\zero,h(\zero)) < n + 1\}\#(A_{t - n}) \noreason\\
&\leq_\pt \sum_{n = 0}^{\lfloor t\rfloor}\#\{h\in G_\xi: \dist(\zero,h(\zero)) < n + 1\}\#(A_{t - n})\\
&\asymp_\times \sum_{n = 0}^{\lfloor t\rfloor} e^{\delta_\xi n}\#(A_{t - n}).
\end{align*}
The last asymptotic follows from \eqref{powerlaw} and \eqref{euclideanparabolicasymp}. Now fix $N\in\N$ large to be determined. We have
\begin{align*}
e^{\delta t} &\lesssim_\times \sum_{n = 0}^{N - 1} e^{\delta_\xi n}\#(A_{t - n}) + \sum_{n = N}^{\lfloor t\rfloor} e^{\delta_\xi n}\#(A_{t - n})\\
&\lesssim_\times \sum_{n = 0}^{N - 1} e^{\delta_\xi n}\#(A_{t - n}) + \sum_{n = N}^{\lfloor t\rfloor} e^{\delta_\xi n}e^{\delta(t - n)} \by{\eqref{Atupperbound}}\\
&\leq_\pt \sum_{n = 0}^{N - 1} e^{\delta_\xi n}\#(A_{t - n}) + e^{\delta t}\sum_{n = N}^\infty e^{(\delta_\xi - \delta) n}\\
&\asymp_\times \sum_{n = 0}^{N - 1} e^{\delta_\xi n}\#(A_{t - n}) + e^{\delta t} e^{(\delta_\xi - \delta) N}. \since{$\delta_\xi < \delta$}
\end{align*}
Let $C_2$ be the implied constant, and let $N$ be large enough so that
\[
e^{(\delta - \delta_\xi)N} \geq 2C_2.
\]
Then
\[
e^{\delta t} \leq C_2 \sum_{n = 0}^{N - 1} e^{\delta_\xi (n + 1)}\#(A_{t - n}) + \frac{1}{2}e^{\delta t};
\]
rearranging gives
\[
e^{\delta t}
\lesssim_\times \sum_{n = 0}^{N - 1} e^{\delta_\xi (n + 1)}\#(A_{t - n})
\lesssim_{\times,N} \#(A_t).
\]
\end{proof}

Thus by an argument similar to the argument used in the proof of Proposition \ref{propositionkhinchinpowerlaw}, we have
\[
\sum_{a\in A}f(\dist(\zero,a(\zero))) \asymp_\times f(0) + \int_{t = 0}^\infty f(t) e^{\delta t}\dee t
\]
for every nonincreasing function $f\to 0$. Thus
\begin{align*}
\eqref{khinchinconverges}
&\gtrsim_\times \sum_{a\in A}\sum_{h\in G_\xi}e^{(\delta - k)\dist(\zero,a(\zero))}\Phi^{2\delta - k}(K e^{\dist(\zero,a(\zero)) + \dist(\zero,h(\zero))})\\
&\gtrsim_\times \int_{t = 0}^\infty e^{\delta t}\left[\sum_{h\in G_\xi}
e^{(\delta - k)t}\Phi^{2\delta - k}(K e^{t + \dist(\zero,h(\zero))})\right]\dee t.
\end{align*}
Substituting $s = t + \dist(\zero,h(\zero))$ (and letting $\|h\| = \dist(\zero,h(\zero))$),
\[
\eqref{khinchinconverges} \gtrsim_{\plus,\times} \int_{s = 0}^\infty\sum_{\substack{h\in G_\xi \\ \|h\|\leq s}}
e^{(2\delta - k)[t - \|h\|]}\Phi^{2\delta - k}(K e^s)\dee s.
\]
Now
\begin{align*}
\sum_{\substack{h\in G_\xi \\ \|h\|\leq s}}e^{(2\delta - k)[-\|h\|]}
&\asymp_\times \int_{\substack{\xx\in\R^k \\ \|\xx\| \leq s}}(1 + \|\xx\|^2)^{2\delta - k}\dee\xx\\
&\asymp_\times \max(s^{3k - 4\delta},1)
\end{align*}
and so
\[
\eqref{khinchinconverges}
\gtrsim_{\plus,\times} \int_{s = 0}^\infty\max(s^{3k - 4\delta},1)
e^{(2\delta - k)s}\Phi^{2\delta - k}(K e^s)\dee s.
\]
In particular, since $3k - 4\delta > 0$, $\Phi$ may be chosen in such a way so that this integral diverges whereas the integral
\[
\eqref{khinchindiverges}
\asymp_\times \int_{s = 0}^\infty
e^{(2\delta - k)s}\Phi^{2\delta - k}(K e^s)\dee s
\]
converges. For example, let
\[
\Phi(e^s) = e^{-s}s^{[-1 - (3k - 4\delta)]/(2\delta - k)};
\]
then the series reduce to
\begin{align*}
\eqref{khinchinconverges} &\gtrsim_{\plus,\times} \int_{s = 1}^\infty s^{-1}\dee s = \infty,\\
\eqref{khinchindiverges} &\asymp_{\times\phantom{,\plus}} \int_{s = 1}^\infty s^{-1 - (3k - 4\delta)}\dee s < \infty.
\end{align*}
\end{proof}

\subsection{The measure of $\Lur$}
We end this section by proving the following proposition, which was stated in the introduction:
\begin{proposition}
\label{propositionmuLurzero}
Let $(X,\dist,\zero,b,G)$ be as in \sectionsymbol\ref{standingassumptions}, with $X$ proper and geodesic. Let $\mu$ be a $\delta_G$-quasiconformal measure on $\Lambda$. Then $\mu(\Lur) = 0$ if and only if $G$ is not quasiconvex-cocompact.
\end{proposition}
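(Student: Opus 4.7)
The plan is to prove the two directions of the biconditional separately. For the easy direction, if $G$ is quasiconvex-cocompact then the forthcoming Lemma \ref{lemmanonhorospherical} (already cited in the proof of Example \ref{examplequasiconvexcocompact}) gives $\Lambda = \Lursigma$ for some $\sigma > 0$, whence $\mu(\Lur) = \mu(\Lambda) = 1 \neq 0$.

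For the other direction—that $G$ not quasiconvex-cocompact implies $\mu(\Lur) = 0$—I would split into two cases according to whether $G$ is of convergence or divergence type. In the convergence case $\Sigma_\delta(G) < +\infty$: every uniformly radial approximating sequence places $\eta \in \Lursigma$ in infinitely many of the shadows $\Shad(g(\zero),\sigma)$, so $\Lursigma \subset \limsup_{g \in G}\Shad(g(\zero),\sigma)$; by Sullivan's Shadow Lemma, $\mu(\Shad(g(\zero),\sigma)) \asymp_{\times,\sigma} b^{-\delta \dist(\zero,g(\zero))}$ for $\sigma$ large, and summing over $g$ gives $\asymp \Sigma_\delta(G) < +\infty$. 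The first Borel--Cantelli lemma then yields $\mu(\Lursigma) = 0$ for every such $\sigma$, and taking the union gives $\mu(\Lur) = 0$.

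In the divergence case I would apply Theorem \ref{theoremkhinchin}. Here Theorem \ref{theoremahlforsthurstongeneral} ensures that $\mu$ is ergodic and $\mu(\Lr) = 1$. Since $G$ is not quasiconvex-cocompact, $\Lambda \neq \Lursigma$ for every $\sigma$, so $\Lambda \setminus \Lur \neq \emptyset$; fix $\xi \in \Lambda \setminus \Lur$. Applying Theorem \ref{theoremkhinchin}(i) with $\Phi(t) = 1/t$ (so $t\Phi(t) = 1$ is nonincreasing), the series \eqref{khinchindiverges} collapses to $\Delta_{\mu,\xi}(1/K) \cdot \Sigma_\delta(G)$, which is $+\infty$ for every $K > 0$ because $\Supp(\mu) = \Lambda$ forces $\Delta_{\mu,\xi}(1/K) > 0$, while $\Sigma_\delta(G) = +\infty$ by divergence type. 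Hence $\mu(\WA_{\Phi,\xi}) = 1$; unwinding the definition of $\WA_{\Phi,\xi}$ and using $\mu(\Lr) = 1$, this is equivalent to $\mu(\BA_\xi) = 0$. I would then finish by establishing the set-theoretic inclusion $\Lur \subset \BA_\xi$, yielding $\mu(\Lur) \leq \mu(\BA_\xi) = 0$.

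The main obstacle is this last inclusion. The mechanism I have in mind: if some $\eta \in \Lur \cap \WA_\xi$, then there are group elements $g_m \in G$ with $\Dist(g_m(\xi),\eta) < \varepsilon_m b^{-\dist(\zero,g_m(\zero))}$ and $\varepsilon_m \to 0$; the Bounded Distortion Lemma together with the geometric mean value theorem \eqref{GMVT5} rescales this to $\Dist(\xi, g_m^{-1}(\eta)) \lesssim_\times \varepsilon_m$, while $g_m^{-1}(\eta) \in \Lur$ by $G$-invariance (Observation \ref{observationlimitsets}). Splicing the uniformly radial approximating sequences for the points $g_m^{-1}(\eta)$ should then produce a single uniformly radial sequence in $G(\zero)$ converging to $\xi$, contradicting $\xi \notin \Lur$. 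The delicate point will be that $\Lursigma$ is not known to be $G$-invariant with a uniform parameter—there is no clean $\Lursigma$-analogue of the near-invariance Lemma \ref{lemmanearinvariance} for $\Lrsigma$—so making the spliced sequence uniformly radial (bounded jumps, a fixed shadow parameter, starting at $\zero$) will require careful use of the Intersecting Shadows Lemma to chain consecutive orbit points along the geodesic to $\xi$ as the parameters of the approximations vary with $m$.
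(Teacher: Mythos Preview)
Your overall architecture matches the paper exactly: the easy direction via Lemma \ref{lemmanonhorospherical}, the convergence-type case via Sullivan's Shadow Lemma plus Borel--Cantelli, and the divergence-type case via Theorem \ref{theoremkhinchin}(i) with $\Phi(t)=1/t$. The difference, and the gap, is in the choice of $\xi$ and the inclusion $\Lur\subset\BA_\xi$.

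You take $\xi\in\Lambda\setminus\Lur$ and then try to prove $\Lur\subset\BA_\xi$ by showing that $\eta\in\Lur\cap\WA_\xi$ forces $\xi\in\Lur$. The splicing you outline runs into exactly the obstacle you name and does not recover: if $\eta\in\Lursigma$ and $h_n$ are the approximating maps, the candidate orbit points $h_n^{-1}g_{m_n}(\zero)$ satisfy $\busemann_\xi(\zero,h_n^{-1}g_{m_n}(\zero))\gtrsim_\plus n$, but $\dist(\zero,h_n^{-1}g_{m_n}(\zero))$ can be of order $\dist(\zero,h_n(\zero))+n$, so the shadow parameter $\lb\zero|\xi\rb_{h_n^{-1}g_{m_n}(\zero)}$ need not stay bounded. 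In other words, the construction produces a \emph{horospherical} approach to $\xi$, not a uniformly radial one; there is no uniform $\Lursigma$-analogue of Lemma \ref{lemmanearinvariance} to rescue this.

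The paper sidesteps the issue by using the full strength of Lemma \ref{lemmanonhorospherical}, namely the equivalence (A) $\Leftrightarrow$ (C): $G$ not quasiconvex-cocompact gives a point $\xi\in\Lambda\setminus\Lh$ (not merely $\Lambda\setminus\Lur$). Then Lemma \ref{lemmaLurBA} establishes $\Lur\subset\BA_\xi$ under the weaker hypothesis $\xi\notin\Lh$, precisely because its proof only needs to manufacture a horospherical sequence to $\xi$, which is what the calculation above actually delivers. Incidentally, your deduction ``$\Lambda\neq\Lursigma$ for every $\sigma$, so $\Lambda\setminus\Lur\neq\emptyset$'' is itself a gap (an increasing union over $\sigma$ could still exhaust $\Lambda$); it too is repaired by passing through $\Lh$, since $\Lur\subset\Lh$.
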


To prove Proposition \ref{propositionmuLurzero} we will need the following lemmas:

\begin{lemma}
\label{lemmaLurBA}
Let $(X,\dist,\zero,b,G)$ be as in \sectionsymbol\ref{standingassumptions}. If $\xi\in\del X\butnot\Lh$, then
\[
\Lur \subset \BA_\xi.
\]
\end{lemma}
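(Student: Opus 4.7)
The plan is to argue by contradiction: assume $\eta\in\Lur\cap\WA_\xi$ and derive $\xi\in\Lh$. The assumption produces a sequence $(g_n)\subset G$ with $\Dist(g_n(\xi),\eta)\leq\varepsilon_n b^{-\dist(\zero,g_n(\zero))}$ and $\varepsilon_n\to 0$. Strong discreteness of $G$ forces $\dist(\zero,g_n(\zero))\to\infty$: otherwise, infinitely many of the $g_n$ would coincide, producing some $g\in G$ with $g(\xi)=\eta$, so that $\xi=g^{-1}(\eta)\in\Lur\subseteq\Lh$ by $G$-invariance (Observation \ref{observationlimitsets})---contradicting $\xi\notin\Lh$. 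Unwinding the visual-metric estimate \eqref{distanceasymptotic} then yields $\lb g_n(\xi)|\eta\rb_\zero\gtrsim_\plus\dist(\zero,g_n(\zero))+|\log_b\varepsilon_n|$.

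Next I would fix a $\sigma$-uniformly radial approximating sequence $(h_m)$ for $\eta$, set $L_n:=\tfrac{1}{2}|\log_b\varepsilon_n|\to+\infty$, and exploit the $\sigma$-step property of $(h_m)$ to choose $m=m(n)$ with $\dist(\zero,h_m(\zero))\asymp_{\plus,\sigma}\dist(\zero,g_n(\zero))+L_n$. Applying (d) of Proposition \ref{propositionbasicidentities} to shift the basepoint from $\zero$ to $h_m(\zero)$ gives $\lb g_n(\xi)|\eta\rb_{h_m(\zero)}\gtrsim_{\plus,\sigma}L_n\to+\infty$, whereas $\lb\zero|\eta\rb_{h_m(\zero)}\leq\sigma$ by the definition of $\Shad(h_m(\zero),\sigma)$. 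Gromov's inequality applied to $\zero,g_n(\xi),\eta$ based at $h_m(\zero)$ then forces the minimum on the right-hand side to be realized by the first term for all large $n$, giving $\lb\zero|g_n(\xi)\rb_{h_m(\zero)}\lesssim_\plus\sigma$; equivalently, $g_n(\xi)\in\Shad(h_{m(n)}(\zero),\sigma')$ for some $\sigma'$ independent of $n$.

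Formula (g) of Proposition \ref{propositionbasicidentities} rewrites this shadow containment as $\busemann_{g_n(\xi)}(h_{m(n)}(\zero),\zero)\leq 2\sigma'-\dist(\zero,h_{m(n)}(\zero))$. The $G$-invariance of Busemann functions identifies the left-hand side with $\busemann_\xi(g_n^{-1}h_{m(n)}(\zero),g_n^{-1}(\zero))$, and the cocycle identity for $\busemann_\xi$ together with the triangle-inequality bound $\busemann_\xi(g_n^{-1}(\zero),\zero)\leq\dist(\zero,g_n(\zero))$ yields $\busemann_\xi(\zero,g_n^{-1}h_{m(n)}(\zero))\gtrsim_{\plus,\sigma}L_n\to+\infty$. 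Since $g_n^{-1}h_{m(n)}\in G$, this exhibits $\xi$ as a horospherical limit point, contradicting $\xi\notin\Lh$. The main obstacle is the calibration of $L_n$: a bounded choice leaves the final Busemann estimate only $O(1)$, which is consistent with $\xi\notin\Lh$ and produces no contradiction; the argument hinges on exploiting the freedom $\varepsilon_n\to 0$ to let $L_n$ drift to infinity slowly enough that the Gromov-inequality step still succeeds.
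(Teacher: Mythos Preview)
Your proof is correct and follows essentially the same approach as the paper's: both arguments pick the well-approximating sequence and the uniformly radial sequence, calibrate the depth of the radial point to exceed $\dist(\zero,g_n(\zero))$ by a quantity tending to infinity (your $L_n$; the paper simply takes $\varepsilon_n=b^{-n}$ so that $L_n=n$), and then pull back by the inverse of the approximating element to produce orbit points penetrating every horoball at $\xi$. The only cosmetic differences are that the paper swaps the roles of your $g_n$ and $h_m$, and computes the Busemann estimate directly via Gromov products rather than passing through the shadow containment and formula (g); your explicit handling of the case $\dist(\zero,g_n(\zero))\not\to\infty$ is a nice addition, though in fact the final bound $\busemann_\xi(\zero,g_n^{-1}h_{m(n)}(\zero))\gtrsim_{\plus,\sigma}L_n$ already goes to infinity regardless.
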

\begin{proof}
By contradiction fix $\eta\in \Lur\cap\WA_\xi$. Let $g_m(\zero)\tendsto m \eta$ uniformly radially, and let $(h_n)_1^\infty$ be such that
\[
\Dist(h_n(\xi),\eta) \leq b^{-\dist(\zero,h_n(\zero)) - n}.
\]
For each $n$, there exists $m_n$ such that $\dist(\zero,g_{m_n}(\zero)) \asymp_\plus \dist(\zero,h_n(\zero)) + n$. Let $h = h_n$ and $g = g_{m_n}$; we have
\begin{align*}
\busemann_\xi(\zero,h^{-1}\circ g(\zero))
&\asymp_\plus \busemann_{h(\xi)}(h(\zero),g(\zero))\\
&\asymp_\plus \busemann_{h(\xi)}(h(\zero),\zero) + \busemann_{h(\xi)}(\zero,g(\zero))\\
&\gtrsim_\plus 2\lb h(\xi),g(\zero)\rb_\zero - \dist(\zero,h(\zero)) - \dist(\zero,g(\zero))\\
&\asymp_\plus 2\lb h(\xi),g(\zero)\rb_\zero - 2\dist(\zero,h(\zero)) - n.
\end{align*}
Now,
\begin{align*}
\lb h(\xi),g(\zero)\rb_\zero
&\gtrsim_\plus \min(\lb h(\xi),\eta\rb_\zero,\lb g(\zero),\eta\rb_\zero) \by{Gromov's inequality}\\
&\gtrsim_\plus \min(\dist(\zero,h(\zero)) + n,\dist(\zero,g(\zero))) \since{$\eta\in\Shad(g(\zero),\sigma)$}\\
&\asymp_\plus \dist(\zero,h(\zero)) + n,
\end{align*}
and so
\[
\busemann_\xi(\zero,h^{-1}\circ g(\zero)) \gtrsim_\plus n.
\]
Thus $h_n^{-1}\circ g_{m_n}(\zero)\tendsto n \xi$ horospherically, contradicting that $\xi\notin\Lh$.
\end{proof}

\begin{lemma}
\label{lemmanonhorospherical}
Let $(X,\dist,\zero,b,G)$ be as in \sectionsymbol\ref{standingassumptions}, with $X$ proper and geodesic. The following are equivalent:
\begin{itemize}
\item[(A)] $G$ is quasiconvex-cocompact.
\item[(B)] $\Lambda = \Lursigma$ for some $\sigma > 0$.
\item[(C)] $\Lambda = \Lh$.
\end{itemize}
\end{lemma}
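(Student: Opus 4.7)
The plan is to establish the cyclic chain (A) $\Rightarrow$ (B) $\Rightarrow$ (C) $\Rightarrow$ (A). The first two implications are routine; the third is the substantive one.

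For (A) $\Rightarrow$ (B): fix $\xi\in\Lambda$ and a sequence $h_n(\zero)\to\xi$. Since $X$ is proper and geodesic, the segments $\geo\zero{h_n(\zero)}$ subconverge (uniformly on compact sets) to a geodesic ray $\geo\zero\xi$. Quasiconvexity with parameter $\rho$ forces this ray into the $\rho$-neighborhood of $G(\zero)$, so for each integer $k$ I pick $x_k\in G(\zero)$ within distance $\rho$ of the point at parameter $k$ on the ray. Then $\dist(x_k,x_{k+1})\leq 2\rho+1$, and since $x_k$ is within $\rho$ of $\geo\zero\xi$, a standard hyperbolic estimate gives $\lb\zero|\xi\rb_{x_k}\lesssim_\plus\rho$. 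Thus $(x_k)$ converges $\sigma$-uniformly radially to $\xi$ with $\sigma$ depending only on $\rho$.

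For (B) $\Rightarrow$ (C): this follows immediately from the inclusion chain $\Lursigma\subset\Lur\subset\Lr\subset\Lh\subset\Lambda$ noted after Definition \ref{definitionlimitset}.

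For (C) $\Rightarrow$ (A): I argue the contrapositive. Assuming $G(\zero)$ is not quasiconvex, I want to produce a point of $\Lambda\setminus\Lh$. For each $n\in\N$ there exist $x_n,y_n\in G(\zero)$ and $z_n\in\geo{x_n}{y_n}$ with $\dist(z_n,G(\zero))\geq n$; since $\dist(\cdot,G(\zero))$ vanishes at the endpoints of the segment, the intermediate value theorem lets me arrange $\dist(z_n,G(\zero))=n$ exactly. Translating by a group element, I assume $\zero$ is a nearest orbit point to $z_n$, so $\dist(\zero,z_n)=n$ and every $g(\zero)$ satisfies $\dist(z_n,g(\zero))\geq n$. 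Since $\dist(z_n,x_n),\dist(z_n,y_n)\geq n$ forces $\dist(x_n,y_n)\geq 2n$, after a subsequence and possibly relabeling I get $\dist(\zero,y_n)\to\infty$ and $y_n\to\xi$ for some $\xi\in\Lambda$. I then claim $\xi\notin\Lh$: for any $g\in G$, using $\busemann_\xi(\zero,g(\zero))=\lim_n\busemann_{y_n}(\zero,g(\zero))=\lim_n[2\lb g(\zero)|y_n\rb_\zero-\dist(\zero,g(\zero))]$ together with the constraint $\dist(z_n,g(\zero))\geq n$ and the hyperbolic identities of Proposition \ref{propositionbasicidentities}, I analyze the nearest-point projection of $g(\zero)$ onto $\geo\zero{y_n}$. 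Thin-triangle geometry applied to $\zero,z_n,y_n$ shows that $g(\zero)$ cannot simultaneously fellow-travel with $y_n$ past parameter $\approx n$ and stay at distance $\geq n$ from $z_n$, producing a uniform upper bound on $\busemann_\xi(\zero,g(\zero))$ as $n$ ranges over the construction.

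The main obstacle will be executing this last Busemann estimate cleanly: the ``gap'' around $z_n$ drifts to infinity with $n$, so one must choose $n$ carefully in terms of $g$ (essentially matching the putative horoball depth of $g(\zero)$ at $\xi$ with the gap radius $n$) and then exploit that $z_n$ lies within bounded error of the geodesic $\geo\zero{y_n}$ at parameter $n$. Properness of $X$, strong discreteness of $G$, and the thin triangles inequality all enter to convert the local geometric obstruction at $z_n$ into a uniform bound on Busemann depth at $\xi$.
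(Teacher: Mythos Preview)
Your implications (A) $\Rightarrow$ (B) and (B) $\Rightarrow$ (C) are essentially the paper's arguments; the only cosmetic difference is that for (A) $\Rightarrow$ (B) the paper works with finite segments $\geo\zero{z_n}$ with $\lb z_n|\xi\rb_\zero\ge n$ rather than passing to a limiting ray, but the content is identical.

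For (C) $\Rightarrow$ (A), your setup is correct and is in fact a translation of the paper's argument into different language. The paper introduces the $1$-approximate Dirichlet domain $\DD_{\zero,1}=\{x:\busemann_x(\zero,g(\zero))\le 1\ \forall g\}$ and the approximate convex core $\CC_G=\bigcup_{x,y\in G(\zero)}\geo{x}{y}$, and shows that if $\CC_G\cap\DD_{\zero,1}$ is unbounded then any boundary accumulation point lies in $\Lambda\setminus\Lh$. Your points $z_n$ are precisely an unbounded sequence in $\CC_G\cap\DD_{\zero,0}$ (the translation step ``$\zero$ is a nearest orbit point'' says exactly $\busemann_{z_n}(\zero,g(\zero))\le 0$ for every $g$).

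The gap is in your final paragraph. Your proposed heuristic---matching $n$ to the horoball depth of $g(\zero)$ and analysing the projection onto $\geo\zero{y_n}$---is off-target: a point $g(\zero)$ deep in the horoball at $\xi$ need not be close to $\geo\zero\xi$, so the ball $B(z_n,n)$ at the matched depth need not contain it. The clean completion is much simpler and does not require any matching. After your thin-triangle relabeling (choose $y_n$ so that $z_n$ lies within the hyperbolicity constant of $\geo\zero{y_n}$), one has $\lb z_n|y_n\rb_\zero\asymp_\plus n\to\infty$, and hence $z_n\to\xi$ along the same subsequence. Now for \emph{every} $g\in G$ and every $n$,
\[
\busemann_{z_n}(\zero,g(\zero)) \;=\; \dist(\zero,z_n)-\dist(g(\zero),z_n) \;=\; n-\dist(g(\zero),z_n)\;\le\;0,
\]
which by Lemma~\ref{lemmanearcontinuity} passes to the limit to give $\busemann_\xi(\zero,g(\zero))\lesssim_\plus 0$ with implied constant depending only on the hyperbolicity of $X$. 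This is exactly $\xi\notin\Lh$. So the key point you were missing is to take the Busemann limit along $z_n$ rather than along $y_n$; once you do that, the Dirichlet-type inequality is immediate and uniform in $g$, and no case analysis on depths is needed.
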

\begin{proof}
(B) \implies (C) is obvious as $\Lursigma \subset \Lur \subset \Lh$. To demonstrate (A) \implies (B), suppose that $G$ is quasiconvex-cocompact, let $\rho > 1$ be as in the definition of quasiconvex-cocompact, and fix $\xi\in\Lambda$. Fix $n\in\N$, and let $z_n\in G(\zero)$ be such that
\[
\lb z_n|\xi\rb_\zero \geq n;
\]
such a $z_n$ exists since $\xi\in\Lambda$. Let $y_n\in\geo\zero{z_n}$ be the unique point so that $\dist(\zero,y_n) = n$. Fix $x_n\in G(\zero)\cap B(y_n,\rho)$; such a point exists by \eqref{quasiconvexcocompact}. Without loss of generality, we let $x_1 = \zero$; this is possible since $\zero\in G(\zero)\cap B(y_1,\rho)$.
\begin{claim}
$x_n \tendsto n \xi$ $\sigma$-uniformly radially, with $\sigma > 0$ independent of $\xi$.
\end{claim}
\begin{subproof}
\begin{align*}
\lb x_n|\xi\rb_\zero &\gtrsim_\plus \min(\lb x_n|y_n\rb_\zero,\lb y_n|z_n\rb_\zero,\lb z_n|\xi\rb_\zero)\\
&\geq_\pt \min(\dist(\zero,y_n) - \dist(\zero,x_n), \dist(\zero,y_n),n)\\
&\geq_\pt \min(n - \rho,n,n) = n - \rho \asymp_\rho n \asymp_\rho \dist(\zero,x_n).
\end{align*}
This demonstrates that $x_n \tendsto n \xi$ radially. Now the Intersecting Shadows Lemma together with the asymptotic $\dist(\zero,x_n) \asymp_{\plus,\rho} n$ implies that $x_n \tendsto n \xi$ uniformly radially. The implied constants depend only on $\rho$ and not on $\xi$.
\end{subproof}

Finally, we demonstrate (C) \implies (A). Let
\[
\DD_{\zero,1} = \{x\in X: \busemann_x(\zero,g(\zero)) \leq 1 \all g\in G\}
\]
be the \emph{$1$-approximate Dirichlet domain centered at $\zero$}, and let
\[
\CC_G = \bigcup_{x,y\in G(\zero)} \geo xy
\]
be the \emph{approximate convex core} of $G$.
\begin{claim}
$\CC_G\cap \DD_{\zero,1}$ is bounded.
\end{claim}
\begin{subproof}
Since $X$ is proper and geodesic, $\bord X$ is compact \cite[Exercise III.H.3.18(4)]{BridsonHaefliger}, and so if $\CC_G\cap \DD_{\zero,1}$ is unbounded, then there is a point $\xi\in \cl{\CC_G\cap \DD_{\zero,1}} \cap \del X$. We claim that $\xi\in\Lambda\butnot\Lh$, contradicting our hypothesis. Indeed, by Lemma \ref{lemmanearcontinuity}, since $\xi\in\cl{\DD_{\zero,1}}$ we have $\busemann_\xi(\zero,g(\zero))\lesssim_\plus 1 \all g\in G$, which demonstrates that $\xi\notin\Lh$. On the other hand, since $\xi\in\cl{\CC_G}$, there exists a sequence $x_n\tendsto n \xi$ in $\CC_G$; for each $n\in\N$, there exist $y_n,z_n\in G(\zero)$ such that $x_n\in \geo{y_n}{z_n}$. Without loss of generality suppose that $\lb x_n|y_n\rb_\zero \leq \lb x_n|z_n\rb_\zero$ Now
\begin{align*}
2\lb x_n|z_n\rb_\zero &\geq \lb x_n|y_n\rb_\zero + \lb x_n|z_n\rb_\zero\\
&= \dist(\zero,x_n) + \lb y_n|z_n\rb_\zero \since{$\lb y_n|z_n\rb_{x_n} = 0$}\\
&\geq \dist(\zero,x_n) \tendsto n \infty.
\end{align*}
Thus $z_n \tendsto n \xi$, and so $\xi\in\Lambda$.
\end{subproof}
Write $\CC_G\cap \DD_{\zero,1}\subset B(\zero,\rho)$ for some $\rho > 0$. We claim that \eqref{quasiconvexcocompact} holds for all $x,y\in G(\zero)$. Indeed, fix $z\in\geo xy$, and let $g\in G$ be such that
\[
\dist(z,g(\zero)) < \dist(z,G(\zero)) + 1.
\]
Then $g^{-1}(z)\in \geo{g^{-1}(x)}{g^{-1}(y)}\subset\CC_G$; moreover, for all $h\in G$
\begin{align*}
\busemann_{g^{-1}(z)}(\zero,h(\zero)) &\leq \dist(\zero,g^{-1}(z)) - \dist(g^{-1}(z),G(\zero))\\
&= \dist(z,g(\zero)) - \dist(z,G(\zero)) \leq 1,
\end{align*}
i.e. $g^{-1}(z)\in\DD_{\zero,1}$. Thus $g^{-1}(z)\in B(\zero,)$, and in particular
\[
z\in B(g(\zero),\rho) \subset B(G(\zero),\rho).
\]
\end{proof}

\begin{proof}[Proof of Proposition \ref{propositionmuLurzero}]
Suppose first that $G$ is quasiconvex-cocompact. Then by Lemma \ref{lemmanonhorospherical}, $\Lambda = \Lur$, so $\mu(\Lur) = 1$.

Next, suppose that $G$ is not quasiconvex-cocompact. By Lemma \ref{lemmanonhorospherical}, there exists a point $\xi\in\Lambda\butnot\Lh$, and by Lemma \ref{lemmaLurBA} we have $\Lur \subset \BA_\xi$. Now if $G$ is of divergence type, we are done, since by (i) of Theorem \ref{theoremkhinchin} we have $\mu(\BA_\xi) = 0$. On the other hand, if $G$ is of convergence type, then for each $\sigma > 0$ we have
\[
\sum_{g\in G}\mu(\Shad(g(\zero),\sigma) \lesssim_\times \sum_{g\in G} b^{-\delta\dist(\zero,g(\zero))} < \infty,
\]
and so by the Borel--Cantelli lemma, we have $\mu(\Lrsigma) = 0$. Since $\sigma$ was arbitrary we have $\mu(\Lr) = 0$, and in particular $\mu(\Lur) = 0$.
\end{proof}

\draftnewpage\section{Proof of Theorem \ref{theoremextrinsic} ($\BA_d$ has full dimension in $\Lr(G)$)} \label{sectionextrinsic}
In this section, fix $d\in\N$, let $X = \H^{d + 1}$, let $Z = \R^d$, and let $G\leq\Isom(X)$ be a discrete group. We are interested in approximating the points of $\Lr(G)$ by rational vectors. We recall the following definitions: 

\begin{definition}
A vector $\xx\in\R^d$ is called \emph{badly approximable} if for all $\pp/q\in\Q^d$, we have
\[
\left\|\xx - \frac{\pp}{q}\right\| \gtrsim_\times \frac{1}{q^{1 + 1/d}}.
\]
The set of badly approximable vectors in $\R^d$ will be denoted $\BA_d$.
\end{definition}

\begin{definition}
We define the \emph{geodesic span} of a set $S\subset\bord{\H^{d + 1}}$ to be the smallest totally geodesic subspace of $\H^{d + 1}$ whose closure contains $S$.

We say that $G$ \emph{acts irreducibly on $\H^{d + 1}$} if it is nonelementary and if there is no nonempty proper\Footnote{In this section ``proper'' means ``not equal to the entire space'', not ``bounded closed sets are compact''.} totally geodesic subspace $V\subset \H^{d + 1}$ such that $G(V) = V$. Equivalently, $G$ acts irreducibly if there is no proper totally geodesic subspace of $\H^{d + 1}$ whose closure contains $\Lambda_G$.
\end{definition}

The remainder of this section will be devoted to proving the following theorem:

\begin{theorem}
\label{theoremextrinsic}
Fix $d\in\N$, let $X = \H^{d + 1}$, and let $G\leq\Isom(X)$ be a discrete group acting irreducibly on $X$. Then
\[
\HD(\BA_d\cap \Lur) = \delta = \HD(\Lr),
\]
where $\BA_d$ denotes the set of badly approximable vectors in $\R^d$.
\end{theorem}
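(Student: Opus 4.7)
The plan is to reduce Theorem \ref{theoremextrinsic} to a statement about winning games on an auxiliary Ahlfors regular fractal inside $\Lur(G)$, then to apply a general intersection theorem. Fix $s\in(0,\delta)$ and invoke Corollary \ref{corollarystructure} to obtain an Ahlfors $s$-regular set $\limitset_s\subset\Lur(G)$, realized as the limit set of a partition structure $(\PP_\omega)_{\omega\in T}$ of shadows cast from points of $G(\zero)$. Since $\HD(\Lr)=\delta$ by Theorem \ref{theorembishopjones}, it suffices to show $\HD(\BA_d\cap\limitset_s)=s$ for every $s<\delta$, and then let $s\nearrow\delta$.

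Unlike in the proof of Theorem \ref{theoremfulldimension}, one cannot directly invoke absolute winning, since $\BA_d$ is \emph{not} absolute winning in $\R^d$ for $d>1$ (see \cite{BFKRW}). Instead I would use the \emph{hyperplane absolute winning} (HAW) game of Broderick--Fishman--Kleinbock--Reich--Weiss, under which $\BA_d$ is winning: in each round Alice deletes a neighborhood of an affine hyperplane of $\R^d$. The target equality $\HD(\BA_d\cap\limitset_s)=s$ would then follow from a standard intersection theorem asserting that HAW sets retain full Ahlfors dimension in Ahlfors regular fractals that are sufficiently non-planar --- concretely, \emph{hyperplane diffuse}, meaning that for every $\omega\in T$, every affine hyperplane $H\subset\R^d$, and every $\varepsilon>0$, the mass of the $\varepsilon\Diam(\PP_\omega)$-neighborhood of $H$ inside $\PP_\omega$ is $\lesssim\varepsilon^\eta\mu(\PP_\omega)$ for some exponent $\eta>0$ independent of $\omega,H$.

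Hyperplane diffuseness of $\limitset_s$ is where the irreducibility hypothesis enters, and establishing it is the principal technical obstacle. The idea is to strengthen Sublemma \ref{sublemmaDSU}: rather than extracting a single $t$-divergent point $\eta$ as in the proof of Lemma \ref{lemmaDSU}, one selects finitely many $t$-divergent points $\eta_0,\ldots,\eta_N\in\Lambda_G$ (with $N$ depending on $d$) whose tangent directions at infinity span $\R^d$. Such a selection is possible precisely because $\Lambda_G$ is not contained in any proper totally geodesic subspace of $\H^{d+1}$, which is the irreducibility hypothesis applied to the closed $G$-invariant set of $t$-divergent points (cf. Remark \ref{remarktdivergent}). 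Running the construction simultaneously near each $\eta_i$ produces child sets $T(w)\subset G(\zero)$ whose shadows are distributed transversally to every affine hyperplane, with a uniform quantitative bound; this upgrades $\limitset_s$ to a hyperplane-diffuse Ahlfors regular fractal.

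The main difficulty lies in quantifying the transversality \emph{uniformly in} $\omega$ and in every affine hyperplane $H$, to obtain the polynomial decay rate $\varepsilon^\eta$ required by the intersection theorem; this calls for careful bookkeeping in the modified Sublemma \ref{sublemmaDSU}, and it is where irreducibility must be used in a quantitative way rather than merely to produce two transverse $t$-divergent points. Once hyperplane diffuseness of $\limitset_s$ is established, the combination of Ahlfors $s$-regularity with the HAW property of $\BA_d$ yields $\HD(\BA_d\cap\limitset_s)=s$, and letting $s\nearrow\delta$ completes the proof.
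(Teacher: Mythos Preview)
Your proposal is correct and takes essentially the same route as the paper: build an Ahlfors $s$-regular $\limitset_s\subset\Lur$ via Corollary \ref{corollarystructure}, use irreducibility to select finitely many $t$-divergent points in general position in $\Lambda_G$ so that the modified partition structure is hyperplane diffuse, invoke the BFKRW intersection theorem for HAW sets (Theorem \ref{theoremBFKRW}), and let $s\nearrow\delta$.

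One simplification worth noting: your quantitative formulation of hyperplane diffuseness (polynomial mass decay $\lesssim\varepsilon^\eta$) is stronger than what BFKRW actually requires, and you flag the uniformity of this exponent as the ``principal technical obstacle''. The paper sidesteps this by using the standard \emph{qualitative} definition (Definition \ref{definitionhyperplanediffuse}): one only needs that $\limitset_s\cap B(\xx,r)\setminus\LL^{(\gamma r)}\neq\emptyset$ for some fixed $\gamma>0$. This is verified by an elementary compactness argument (Observation \ref{observationcompactness}): a finite collection of $(d+2)$ points in general position cannot all lie in a single $\varepsilon$-neighborhood of a totally geodesic hyperplane, uniformly in the hyperplane. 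Concretely, the paper picks $2d+4$ points $\eta_1,\ldots,\eta_{2d+4}\in\Lambda_G$ so that each of $d+2$ specified subcollections of size $d+2$ is in general position, applies Sublemma \ref{sublemmaDSU} at every $\eta_i$, and orders the children so that the corresponding $g_w(p_i)$ always survive the pruning of Theorem \ref{theoremahlforsgeneral}. No polynomial decay rate is ever needed.
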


\begin{notation}
There are two natural metrics to put on $\R^d$: the usual Euclidean metric
\[
\Dist_\euc(\xx,\yy) := \|\xx - \yy\|,
\]
and the \emph{spherical metric}
\[
\Dist_\sph := \Dist_{e,\zero},
\]
where $\zero = (1,0,\ldots,0)\in \H^{d + 1}$. From now on, we will use the subscripts $\euc$ and $\sph$ to distinguish between these metrics.
\end{notation}

We recall the following definition and theorem:

\begin{definition}
\label{definitionhyperplanediffuse}
A closed set $K\subset\R^d$ is said to be \emph{hyperplane diffuse} if there exists $\gamma > 0$ such that for every $\xx\in K$, for every $0 < r\leq 1$, and for every affine hyperplane $\LL\subset\R^d$, we have
\[
K\cap B_\euc(\xx,r)\setminus \NN_\euc(\LL,\gamma r) \neq \emptyset.
\]
Here $\NN_\euc(\LL,\gamma r)$ denotes the $r$-thickening of $\LL$ with respect to the Euclidean metric $\Dist_\euc$.
\end{definition}



\begin{theorem} \label{theoremBFKRW}
Let $K$ be a subset of $\R^d$ which is Ahlfors regular and hyperplane diffuse. Then
\[
\HD(\BA_d\cap K) = \HD(K).
\]
\end{theorem}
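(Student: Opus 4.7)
The plan is to deduce this from the theory of the \emph{hyperplane absolute winning} (HAW) game, a variant of McMullen's absolute game in which Alice, instead of removing a sub-ball, is permitted to remove a $\beta r_k$-neighborhood of an arbitrary affine hyperplane of $\R^d$. I would proceed in three steps: (a) verify that $\BA_d$ is HAW on $\R^d$; (b) show that the hyperplane diffuse hypothesis on $K$ lets HAW strategies be converted into winning strategies for a Schmidt-style game played inside $K$; and (c) use Ahlfors regularity to upgrade ``winning on $K$'' to ``full Hausdorff dimension in $K$''.

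For step (a), I would give Alice a positional strategy: at Bob's move $B_k=B(x_k,r_k)$, Alice chooses a frequency scale $Q_k \asymp r_k^{-d/(d+1)}$ and deletes a thin neighborhood of a single affine hyperplane carefully chosen so as to block the ``bad'' rationals $\pp/q$ with $q$ in the band around $Q_k$ that intersect $B_k$. A standard pigeonhole/geometry-of-numbers argument shows there is essentially one such obstructing hyperplane at each scale (so a single hyperplane neighborhood of width $\sim \beta r_k$ suffices), and iterating yields a point $\eta\in\bigcap_k B_k$ satisfying $\|\eta-\pp/q\|\gtrsim_\times q^{-(d+1)/d}$ for every $\pp/q$, i.e., $\eta\in\BA_d$. (This is essentially McMullen's argument for $d=1$ modified to delete hyperplane slabs rather than intervals.)

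For step (b), I would modify the game so that Bob's balls are required to be centered in $K$ (or at least to meet $K$), and translate the HAW strategy for $\BA_d$ into an $(\alpha,\beta)$-strategy for the standard Schmidt game on $K$. Given Alice's hyperplane neighborhood $\LL^{(\beta r_k)}$, the hyperplane diffuseness of $K$ with constant $\gamma$ guarantees that for all $\beta$ smaller than a fixed multiple of $\gamma$, the set $K\cap B(x_k,r_k)\setminus \LL^{(\gamma r_k)}$ is nonempty; any point there is the center of an admissible sub-ball of radius proportional to $\beta r_k$ disjoint from Alice's removed slab. A short argument (analogous to Lemma \ref{lemmalegalmove} and the passage between the $\beta$-modified absolute winning game and the standard $(\alpha,\beta)$-game in Section \ref{sectiongames}) then converts Alice's HAW strategy into an $\alpha$-winning strategy on $(K,\Dist_\euc)$ for all $\alpha$ sufficiently small in terms of $\gamma$.

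For step (c), since $K$ is Ahlfors $s$-regular with $s=\HD(K)$, it is $c$-uniformly perfect, so Proposition \ref{propositionwinningproperties}(i) — or, alternatively, the partition-structure machinery of Section \ref{sectionstructures} (use Theorem \ref{theoremahlforsgeneral} to build $s'$-thick substructures of $K$ for every $s'<s$ and then intersect with the $\alpha$-winning set $\BA_d\cap K$) — yields $\HD(\BA_d\cap K)\geq s=\HD(K)$. The reverse inequality is trivial. The main obstacle is step (b): one must confirm that hyperplane diffuseness is strong enough to guarantee a legal response for Bob whose sub-ball is comparable in size to $\beta r_k$ (not much smaller), which is what keeps the transfer of winning strategies from $\R^d$ down to $K$ quantitative and hence dimension-preserving.
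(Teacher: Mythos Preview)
Your proposal is correct and is precisely the route taken in \cite{BFKRW}, which is all the paper invokes: the paper's ``proof'' of Theorem \ref{theoremBFKRW} consists of the single line ``See Theorems 2.5, 4.7, and 5.3 from \cite{BFKRW},'' and those three theorems correspond exactly to your steps (a), (b), (c). In step (a) the key geometric input you allude to (``there is essentially one such obstructing hyperplane at each scale'') is the \emph{simplex lemma}: all rationals $\pp/q$ with $q$ in a dyadic band near $Q_k\asymp r_k^{-d/(d+1)}$ that lie in $B_k$ are affinely dependent, hence contained in a single hyperplane; this deserves explicit mention but is exactly what \cite{BFKRW} uses.
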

\begin{proof}
See Theorems 2.5, 4.7, and 5.3 from \cite{BFKRW}.
\end{proof}

\begin{remark}
The conclusion of Theorem \ref{theoremBFKRW}, and thus of Theorem \ref{theoremextrinsic}, holds when $\BA_d$ is replaced by any subset of $\R^d$ which is hyperplane absolute winning (see \cite{BFKRW} for the definition).
\end{remark}

Fix $0 < s < \delta_G$. According to Corollary \ref{corollarystructure}, there exists an Ahlfors $s$-regular set $\limitset_s\subset\Lur(G)$.
\begin{claim}
\label{claimmodification}
The proof of Corollary \ref{corollarystructure} can be modified so that the resulting set $\limitset_s$ is hyperplane diffuse.
\end{claim}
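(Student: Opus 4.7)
The plan is to refine the construction in Lemma \ref{lemmaDSU} (specifically Sublemma \ref{sublemmaDSU}) so that the children of each node in the tree $T$ are spread across its shadow in enough distinct ``directions'' to avoid any affine hyperplane. The key input is the irreducibility of the $G$-action.

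First, I would use irreducibility to produce a finite collection of distinct $t$-divergent points $\eta_0,\ldots,\eta_N\in\Lambda$ whose closure does not lie in any proper totally geodesic subspace of $\H^{d+1}$. This can be done by iterated application of Lemma \ref{lemmatdivergent} together with the $G$-invariance of the $t$-divergent set (Remark \ref{remarktdivergent}): as long as all known $t$-divergent points lie in some proper totally geodesic subspace $V$, irreducibility provides an $h\in G$ with $h(V)\neq V$, and then applying $h$ to any $t$-divergent point produces a new one not lying in $\partial V$. In the ball model $\B^{d+1}$, the resulting configuration $\{\eta_0,\ldots,\eta_N\}$ does not lie on any generalized $(d-1)$-sphere of $\R^d=\partial\B^{d+1}$.

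Second, I would modify the construction of children in Lemma \ref{lemmaDSU}: choose pairwise disjoint neighborhoods $B_0,\ldots,B_N$ of the $\eta_i$, apply Sublemma \ref{sublemmaDSU} to each to obtain sets $S_i\subset G(\zero)\cap B_i$, and then define the children of $w=g_w(\zero)$ to be
\[
T(w)=\bigcup_{i\,:\,B_i\subset\Shad_{g_w^{-1}(\zero)}(\zero,\sigma)}g_w(S_i).
\]
The Big Shadows Lemma guarantees that for $\sigma$ large enough, at least $N$ indices $i$ satisfy this inclusion, so all conclusions (i)--(iii) of Lemma \ref{lemmaDSU} (and thus of Corollary \ref{corollarystructure}, yielding an Ahlfors $s$-regular set $\limitset_s\subset\Lur$) continue to hold.

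Third, to verify hyperplane diffuseness, note that by compactness and general position there exists $\gamma_0>0$ such that for every generalized $(d-1)$-sphere $\Sigma\subset\R^d$ there is some $i$ with $\Dist_\sph(\eta_i,\Sigma)\geq\gamma_0$. Since M\"obius transformations preserve the family of generalized spheres, this transfers under $g_w$ and, via bounded distortion (Lemma \ref{lemmaboundeddistortion}), shows that for every $w\in G(\zero)$ and every generalized sphere $\Sigma$, at least one child of $w$ lies at Euclidean distance $\gtrsim\gamma_0\Diam_\euc(\PP_w)$ from $\Sigma$. Given an affine hyperplane $\LL$, a point $\xx\in\limitset_s$, and $r>0$, one selects $\omega\in T$ with $\xx\in\PP_\omega\subset B_\euc(\xx,r)$ and $\Diam_\euc(\PP_\omega)\asymp r$ (possible by Ahlfors regularity), and then the above applied to $\LL$ and $w=x_\omega$ produces a child whose descendant in $\limitset_s$ lies in $B_\euc(\xx,r)\setminus\LL^{(\gamma r)_\euc}$ for a uniform $\gamma>0$.

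The principal obstacle is that an affine hyperplane in $\R^d$ pulled back by a generic $g_w^{-1}\in G$ is in general only a generalized $(d-1)$-sphere, not itself an affine hyperplane. This is precisely why Step 1 requires the $\eta_i$ to avoid \emph{all} generalized spheres (equivalently, all proper totally geodesic subspaces of $\H^{d+1}$), which is exactly the content of irreducibility. A secondary technical issue is maintaining uniform constants across all $\omega$ and all scales, which is handled by the compactness of the space of generalized spheres together with the bounded distortion lemma.
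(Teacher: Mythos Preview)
Your overall strategy matches the paper's: pick finitely many $t$-divergent points in general position (with respect to totally geodesic subspaces, equivalently generalized spheres on the boundary), build the children of each $w$ from the corresponding $S_i$'s, and use a compactness argument to show that no thickened geodesic-sphere can swallow all the relevant children. Your identification of the reason one must work with \emph{all} generalized spheres rather than just affine hyperplanes is exactly right, and mirrors the paper's passage through ``hyperbolically hyperplane diffuse''.

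However, there is a genuine gap. Corollary~\ref{corollarystructure} does not just run Lemma~\ref{lemmaDSU}; it then applies Theorem~\ref{theoremahlforsgeneral} to \emph{prune} the $s$-thick tree $T$ down to a subtree $\w T$ whose limit set is Ahlfors $s$-regular. This pruning keeps only an initial segment $\{1,\ldots,N_\omega\}$ of each $T(\omega)$, and there is no a priori reason the surviving children of $\omega$ should still be spread across the different $B_i$. Your Step~3 argues that ``at least one child of $w$'' avoids the thickened sphere, but that child may have been discarded when passing from $T$ to $\w T$. The paper handles this by (a) fixing distinguished witness points $p_i\in S_i$, (b) ordering the children of $w$ so that the $g_w(p_i)$ come first in the enumeration of $T(w)$, (c) invoking the ``initial segment'' clause of Theorem~\ref{theoremahlforsgeneral}, and (d) increasing the $n_i$ in Sublemma~\ref{sublemmaDSU} so that each $D_{g_w(p_i)}^s$ is small enough that \eqref{regularitya} cannot yet hold with $N_\omega < 2d+4$; this forces all the witnesses to survive in $\w T(w)$. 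Without some device of this kind, the diffuseness conclusion does not follow.

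A secondary point: your general-position hypothesis must be robust to the loss of one index (the $i$ for which $z=g_w^{-1}(\zero)\in B_i$). Simply asking that $\{\eta_0,\ldots,\eta_N\}$ not lie on any generalized sphere does not guarantee that the same holds after deleting one $\eta_i$; indeed $d+1$ generic points in $\R^d$ always lie on some $(d-1)$-sphere. The paper resolves this by taking $2d+4$ points $\eta_1,\ldots,\eta_{2d+4}$ arranged so that for each $i\le d+2$ there is a designated backup $\eta_{d+2+i}$, with the property that $\{\eta_1,\ldots,\widehat{\eta_i},\ldots,\eta_{d+2},\eta_{d+2+i}\}$ is in general position. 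You would need an analogous strengthening.
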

\begin{proof}[Proof of Theorem \ref{theoremextrinsic} assuming Claim \ref{claimmodification}]
By Theorem \ref{theoremBFKRW}, we have $\HD(\BA_d\cap\Lur)\geq \HD(\limitset_s) = s$ for every $0 < s < \delta_G$; letting $s\to\delta$ finishes the proof.
\end{proof}
\begin{proof}[Proof of Claim \ref{claimmodification}]
The first modification to be made is in the proof of Lemma \ref{lemmaDSU} assuming Sublemma \ref{sublemmaDSU}. Instead of picking just two $t$-divergent points $\eta_1$ and $\eta_2$, let us notice that the set of $t$-divergent points is closed and invariant under the action of the group. Thus by a well-known theorem, the set of $t$-divergent points contains the entire limit set of $G$.

Recall that a collection of $(d + 2)$ points $\eta_1,\ldots,\eta_{d + 2}\in\del X$ is in \emph{general position} if there is no totally geodesic subspace $V\subset \H^{d + 1}$ such that $\eta_1,\ldots,\eta_{d + 2}\in \cl V$.

\begin{claim}
There exist $(2d + 4)$ points $\eta_1,\ldots,\eta_{2d + 4}\in\Lambda_G$, such that for each $i = 1,\ldots,d + 2$, the collection
\begin{equation}
\label{etad2i}
\eta_1,\ldots,\what{\eta_i},\ldots,\eta_{d + 2},\eta_{d + 2 + i}
\end{equation}
is in general position. (Here the hat indicates that $\eta_i$ is omitted.)
\end{claim}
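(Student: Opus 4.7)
The plan is to construct the points inductively using irreducibility. First I would build $\eta_1,\ldots,\eta_{d+2} \in \Lambda_G$ in general position one at a time as follows. Pick any $\eta_1 \in \Lambda_G$. Having chosen $\eta_1,\ldots,\eta_k$ with $k \leq d+1$ so that they lie in $\Lambda_G$ and are in general position (meaning their geodesic span is a $(k-1)$-dimensional totally geodesic subspace $W_k \subset \H^{d+1}$, which is automatic for $k \leq 2$), observe that $W_k$ is a proper totally geodesic subspace. By the irreducibility hypothesis, $\Lambda_G \not\subset \cl{W_k}$, so we may choose $\eta_{k+1} \in \Lambda_G \setminus \cl{W_k}$. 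This ensures $\eta_1,\ldots,\eta_{k+1}$ are in general position and span a $k$-dimensional totally geodesic subspace. Continuing up through $k = d+2$ produces the desired initial batch.

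Next, for each $i = 1,\ldots,d+2$, consider the $(d+1)$-tuple $\eta_1,\ldots,\widehat{\eta_i},\ldots,\eta_{d+2}$. Since the original $(d+2)$-tuple was in general position, removing one point leaves a collection whose geodesic span is a totally geodesic subspace $V_i \subset \H^{d+1}$ of dimension exactly $d$, hence proper. Invoking irreducibility once more, $\Lambda_G \not\subset \cl{V_i}$, so we may choose
\[
\eta_{d+2+i} \in \Lambda_G \setminus \cl{V_i}.
\]
By construction $\eta_1,\ldots,\widehat{\eta_i},\ldots,\eta_{d+2},\eta_{d+2+i}$ has its first $d+1$ entries spanning $V_i$ and its last entry outside $\cl{V_i}$, so no totally geodesic subspace whose closure contains all $d+2$ of them can be proper; that is, the collection is in general position.

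The main subtle point (and essentially the only one) is the equivalence between the two forms of the irreducibility hypothesis stated in the definition—namely that the absence of a $G$-invariant proper totally geodesic subspace is the same as the statement that $\cl{\Lambda_G}$ is not contained in any proper totally geodesic subspace. This is what licenses the step ``by irreducibility, $\Lambda_G \not\subset \cl{W_k}$'' at each stage, since the $W_k$ and $V_i$ we construct are not themselves $G$-invariant. The equivalence follows from the fact that the geodesic span of a $G$-invariant set is $G$-invariant, so if $\Lambda_G \subset \cl{W}$ for some proper totally geodesic $W$, then the geodesic span of $\Lambda_G$ would be a proper $G$-invariant totally geodesic subspace, contradicting the first formulation. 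Once this is in place, the argument is just two rounds of ``pick a point outside a proper totally geodesic subspace,'' applied $(d+2)$ times in the first round and $(d+2)$ times in the second.
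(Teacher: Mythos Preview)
Your argument is correct and follows essentially the same two-step outline as the paper, but the second step differs in a genuine way. The paper obtains $\eta_{d+2+i}$ by appealing to the fact that $\Lambda_G$ is perfect: one simply takes $\eta_{d+2+i}\in\Lambda_G$ sufficiently close to $\eta_i$, and then the openness of the general-position condition ensures that the collection \eqref{etad2i} remains in general position. You instead reuse irreducibility directly, observing that the $d+1$ remaining points span a proper $d$-dimensional totally geodesic subspace $V_i$ and picking $\eta_{d+2+i}\in\Lambda_G\setminus\cl{V_i}$. Your route is arguably cleaner here: it avoids invoking perfectness of the limit set and the implicit continuity argument, at the small cost of needing to check that any $(d+1)$-element subset of a $(d+2)$-tuple in general position spans exactly a $d$-dimensional subspace (which is immediate in the hyperboloid model, since general position is just linear independence of the corresponding lines in $\R^{d+2}$).
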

\begin{proof}
First, pick $(d + 2)$ points $\eta_1,\ldots,\eta_{d + 2}\in \Lambda_G$ in general position; this is possible since $G$ acts irreducibly on $\H^{d + 1}$. Then, for each $i = 1,\ldots,d + 2$, we may choose $\eta_{d + 2 + i}\in\Lambda_G$ so that the collection \eqref{etad2i} is in general position by choosing $\eta_{d + 2 + i}\in\Lambda_G$ sufficiently close to $\eta_i$; this is possible since by a well-known theorem, the limit set $\Lambda_G$ is perfect.
\end{proof}

The following observation can be proven by an elementary compactness argument:
\begin{observation}
\label{observationcompactness}
For each $i = 1,\ldots,d + 2$, there exists $\varepsilon_i > 0$ such that if $V$ is a proper geodesic subspace of $X$, if $\del V := \cl V\cap \del X$, and if $\NN_\sph(\del V,\varepsilon_i)$ is the $\varepsilon_i$-thickening of $\del V$ with respect to the spherical metric, then $\NN_\sph(\del V,\varepsilon_i)$ does not contain all $(d + 2)$ of the points \eqref{etad2i}.
\end{observation}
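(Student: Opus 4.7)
The plan is to argue by contradiction using a compactness argument on the space of proper geodesic subspaces of $\H^{d+1}$. Suppose no such $\varepsilon_i$ exists. Then for each $n \in \N$ there would be a proper geodesic subspace $V_n \subset X$ and points $\zeta_j^{(n)} \in \del V_n$ (indexed by $j$ in the set $\{1,\ldots,\widehat{\imath},\ldots,d+2,\,d+2+i\}$) such that $\Dist_\sph(\zeta_j^{(n)},\eta_j) \to 0$ as $n \to \infty$ for every $j$.

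First I would observe that each $V_n$ has dimension $k_n \in \{0,1,\ldots,d\}$, and $\del V_n$ is a metric subsphere of $S^d = \del X$ of dimension $k_n - 1$ (a pair of points when $k_n = 1$, the empty set when $k_n = 0$). Passing to a subsequence, I may assume $k_n = k$ is constant. In the upper half-space model, such boundaries correspond to $(k-1)$-dimensional generalized spheres in $\R^d \cup \{\infty\}$ (i.e.\ ordinary Euclidean spheres or affine subspaces together with the point at infinity), and this family is parametrized by finitely many data (center/radius, or an orthogonal frame). The resulting parameter space is compact after one-point compactification, so passing to a further subsequence, $\del V_n \to K$ in the Hausdorff metric on closed subsets of $S^d$, where $K$ is itself the boundary of some geodesic subspace $V \subset \H^{d+1}$ of dimension at most $k \leq d$; in particular $V$ is proper.

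Finally, since $\zeta_j^{(n)} \to \eta_j$ and $\zeta_j^{(n)} \in \del V_n \to K$ in the Hausdorff metric, we get $\eta_j \in K = \del V$ for every $j$ in the index set. Thus all the points of the collection \eqref{etad2i} lie in the closure of a proper geodesic subspace $V$, contradicting the fact that this collection was chosen to be in general position. This contradiction establishes the existence of $\varepsilon_i > 0$.

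The main technical obstacle is the compactness and closedness statement: that the collection of boundaries $\del V$ of proper (i.e.\ dimension $\leq d$) geodesic subspaces of $\H^{d+1}$ forms a closed subset of the space of closed subsets of $S^d$ under the Hausdorff metric, and that such boundaries are precisely the metric subspheres of $S^d$ of dimension $\leq d-1$. The cleanest route is to identify geodesic subspaces with generalized spheres in the upper half-space model orthogonal to the boundary; the limiting behavior then reduces to the elementary (and standard) fact that a Hausdorff limit of generalized $(k-1)$-spheres in $\R^d \cup \{\infty\}$ is a generalized sphere of dimension at most $k-1$.
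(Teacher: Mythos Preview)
Your argument is correct and is precisely the ``elementary compactness argument'' that the paper invokes without supplying details. The only minor imprecision is that the Hausdorff limit $K$ of the $(k-1)$-spheres $\del V_n$ could in principle degenerate to a single point rather than a lower-dimensional sphere, but this is harmless here since $K$ must contain the $d+2$ distinct points $\eta_j$ (alternatively, it suffices to have $K \subset \del V$ for some proper geodesic $V$ rather than $K = \del V$).
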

Let
\[
\varepsilon = \frac{1}{2}\min_{i = 1}^{d + 2}\varepsilon_i.
\]
We observe that $\Dist_\sph(\eta_i,\eta_j)\geq 2\varepsilon$ for all $i\neq j$. For each $i = 1,\ldots,d + 2$ let
\[
B_i = B_\sph(\eta_i,\varepsilon)\cup\{x\in X:\Shad(x,\sigma)\subset B_\sph(\eta_i,\varepsilon)\};
\]
the sets $(B_i)_1^{2d + 4}$ are open and disjoint.

Now apply Sublemma \ref{sublemmaDSU} to each pair $(\eta_i,B_i)$ for $i = 1,\ldots,2d + 4$ to get sets $S_i\subset G(\zero)\cap B_i$. For each $i = 1,\ldots,2d + 4$, fix a point $p_i\in S_i$. Suppose that $w = g_w(\zero)\in G(\zero)$, and let $z = g_w^{-1}(\zero)$. Let
\[
T(w) = \bigcup_{\substack{i = 1 \\ z\notin B_i}}^{2d + 4}g_w(S_i).
\]
Note that at most one index $i$ is omitted from the union. Now (i)-(iii) of Sublemma \ref{sublemmaDSU} directly imply (i)-(iii) of Lemma \ref{lemmaDSU}. Note that here we need the fact that $\PP_{z,x}\subset B_i$ for $x\in S_i$ to prove disjointness in (i) of Lemma \ref{lemmaDSU} (whereas we did not need it in the earlier proof).

The next change to be made in the proof of Corollary \ref{corollarystructure} is in the application of Theorem \ref{theoremahlforsgeneral} to Lemma \ref{lemmastructure}. This time, we will want to use the second part of Theorem \ref{theoremahlforsgeneral} - the tree $\w T$ can be chosen so that for each $\omega\in \w T$, we have that $\w T(\omega)$ is an initial segment of $T(\omega)$. Obviously, the application of this additional information depends on the ordering which is given to the set $\w T(\omega)$. In the original proof of Lemma \ref{lemmastructure}, we used an ordering which had no meaning - any enumeration of $G(\zero)$. Now, we will choose some ordering so that the points $g_w(p_1),\ldots,g_w(p_{2d + 4})$ (possibly minus one) mentioned above are first in the ordering.

By increasing $n_1,\ldots,n_{2d + 4}$ if necessary, we can ensure that the points $g_w(p_1),\ldots,g_w(p_{2d + 4})$ are all in $\w T(w)$ for every $w$, since there is no way that \eqref{regularitya} could hold if $N_w < 2d + 4$.

Now we have constructed the set $\limitset_s$, and we come to the crucial point: with these modifications, the new set $\limitset_s$ is hyperplane diffuse.
\begin{definition}
A set $K\subset \what{\R^d}$ is \emph{hyperbolically hyperplane diffuse} if there exists $\gamma > 0$ so that every $\eta\in K$, for every $0 < r \leq 1$, and for every totally geodesic subspace $V\subset X$, we have
\[
K\cap B_\sph(\eta,r)\setminus \NN_\sph(\del V,\gamma r) \neq \emptyset.
\]
\end{definition}
Here (and from now on), balls and thickenings are all assumed to be with respect to the spherical metric $\Dist_\sph$.
\begin{proposition}
Hyperbolically hyperplane diffuse sets are hyperplane diffuse.
\end{proposition}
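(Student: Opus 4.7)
The plan is to transfer the hyperbolic hyperplane diffuse property to the Euclidean one via two ingredients: (a) the correspondence between affine hyperplanes in $\R^d$ and vertical totally geodesic hyperplanes in the upper half-space model of $\H^{d+1}$, and (b) the local bi-Lipschitz equivalence between the Euclidean and spherical metrics on bounded subsets of $\R^d$. Since hyperplane diffuseness is defined only for $K\subset\R^d$ (and only concerns radii $r\leq 1$, hence only local behavior), I will assume that the portion of $K$ of interest is contained in a bounded set $B_\euc(\zero,R)$; this is the relevant case for the application to $\limitset_s$.

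First I would set up the geometric correspondence. Given any affine hyperplane $\LL\subset\R^d$, the vertical half-hyperplane $V:=\{(\xx,t)\in\H^{d+1}:\xx\in\LL,\ t>0\}$ is a totally geodesic subspace of $\H^{d+1}$, and its boundary in $\what{\R^d}=\del\H^{d+1}$ is $\del V=\LL\cup\{\infty\}$. Next I would invoke the standard formula
\[
\Dist_\sph(\xx,\yy)\asymp_\times \frac{\|\xx-\yy\|}{\sqrt{1+\|\xx\|^2}\sqrt{1+\|\yy\|^2}},
\]
which yields a uniform bi-Lipschitz constant $C=C(R)\geq 1$ between $\Dist_\sph$ and $\Dist_\euc$ on the $1$-neighborhood of $B_\euc(\zero,R)$, together with a lower bound $c_0=c_0(R)>0$ on the spherical distance from any point of this neighborhood to $\infty\in\what{\R^d}$.

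Now fix $\xx\in K$, $0<r\leq 1$, and an affine hyperplane $\LL\subset\R^d$. Construct $V$ as above and apply the hyperbolically hyperplane diffuse hypothesis at the point $\eta=\xx$ with spherical radius $r':=r/C\leq 1$; this produces $\yy\in K$ with
\[
\Dist_\sph(\xx,\yy)\leq r/C \qquad\text{and}\qquad \Dist_\sph(\yy,\del V)\geq \gamma r/C.
\]
The first inequality and bi-Lipschitz equivalence give $\|\yy-\xx\|\leq r$. For the second, note that $\yy$ still lies in the $1$-neighborhood of $B_\euc(\zero,R)$, hence $\Dist_\sph(\yy,\infty)\geq c_0$; choosing $\gamma$ small enough that $\gamma/C\leq c_0$ (which is harmless) forces the point of $\del V=\LL\cup\{\infty\}$ realizing the distance to $\yy$ to lie in $\LL$, so $\Dist_\sph(\yy,\LL)\geq\gamma r/C$. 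Applying bi-Lipschitz equivalence once more yields $\|\yy-\LL\|\geq \gamma'' r$ for a uniform $\gamma''=\gamma''(R,\gamma,C)>0$, establishing hyperplane diffuseness of $K$ with constant $\gamma''$.

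The only non-routine point is the subtle one about $\del V$ containing both $\LL$ and $\{\infty\}$: a priori the spherical distance from $\yy$ to $\del V$ could be realized at $\infty$ rather than at $\LL$, which would give no information about $\|\yy-\LL\|$. This is handled precisely by the boundedness hypothesis on $K$, which keeps $\yy$ spherically bounded away from $\infty$; this is the step where we use that hyperplane diffuseness is only asked of points at bounded Euclidean location.
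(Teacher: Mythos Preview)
The paper does not give a proof of this proposition; it simply writes ``(Proof left to the reader.)''  Your argument via vertical totally geodesic hyperplanes and the local bi-Lipschitz equivalence of $\Dist_\sph$ and $\Dist_\euc$ on bounded sets is exactly the intended routine verification, and it is correct.

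One simplification: your final paragraph about the point $\infty\in\del V$ is unnecessary.  Since $\LL\subset\del V$, one has $\Dist_\sph(\yy,\LL)\geq\Dist_\sph(\yy,\del V)\geq\gamma r/C$ directly, regardless of where the infimum defining $\Dist_\sph(\yy,\del V)$ is attained.  The boundedness of $K$ is still needed, but only to get a uniform bi-Lipschitz constant between the two metrics, not to rule out $\infty$ as the nearest point.
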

(Proof left to the reader.)

\noindent Thus we are left with showing that $\limitset_s$ is hyperbolically hyperplane diffuse.

Fix $\gamma > 0$ to be announced below, $\eta\in \limitset_s$, $0 < r \leq \varepsilon S_\smallemptyset$, and a totally geodesic subspace $V\subset X$.
\begin{notation}
For each $x\in G(\zero)$ let
\[
\PP_x := \Shad(x,\sigma).
\]
Let $(x_n)_1^\infty$ be an enumeration of $G(\zero)$. Let
\[
T = \bigcup_{n = 1}^\infty\{\omega\in\N^n:x_{\omega_{j + 1}}\in T(x_{\omega_j})\all j = 1,\ldots,n - 1\},
\]
and for each $\omega\in T$ let
\[
x_\omega = x_{\omega_{|\omega|}} \text{ and } \PP_\omega = \PP_{x_\omega}.
\]
Let $\eta = \pi(\omega)$; for each $n\in\N$ let
\[
x_n = x_{\omega_1^n}, \;\;
\PP_n = \PP_{\omega_1^n}, \text{ and }
D_n = D_{\omega_1^n}.
\]
\end{notation}

Let $n\in\N$ be the largest integer such that $r < \kappa D_n$. Then by the argument used in the proof of Theorem \ref{theoremahlforsgeneral}, we have
\[
\kappa^2 D_n \leq r \leq \kappa D_n
\]
and
\[
\PP_{n + k} \subset B(\eta,r) \subset \PP_n,
\]
where $k$ is large enough so that $\lambda^k \leq \kappa^2$. In particular
\[
\limitset_s\cap B(\eta,r)\butnot \NN(\del V,\gamma r) \supset \limitset_s\cap \PP_{n + k}\butnot \NN(\del V,\gamma r),
\]
so to complete the proof it suffices to show
\begin{equation}
\label{JsBnk}
\limitset_s\cap \PP_{n + k}\nsubset \NN(\del V,\gamma r).
\end{equation}
By contradiction suppose not. Let $g\in G$ be such that $g(\zero) = x_{n + k}$, and let $z = g^{-1}(\zero)$. Then for some $j = 1,\ldots,d + 2$ we have
\[
g(p_1),\ldots,\what{g(p_j)},\ldots,g(p_{d + 2}),g(p_{d + 2 + j})\in \w T(x_{n + k}).
\]
For each $i = 1,\ldots,\what{j},\ldots,d + 2,d + 2 + j$, we have
\[
\emptyset \neq \limitset_s\cap\PP_{g(p_i)} \subset \limitset_s\cap\PP_{n + k} \subset \NN(\del V,\gamma r),
\]
which implies
\[
\PP_{g(p_i)}\cap \NN(\del V,\gamma r)\neq \emptyset
\]
and thus
\[
\Shad_z(p_i,\sigma) \cap g^{-1}\left(\NN(\del V,\gamma r)\right) \neq \emptyset.
\]
On the other hand, by Sublemma \ref{sublemmaDSU} we have
\[
\Shad_z(p_i,\sigma)\subset B_i,
\]
so
\[
B_i\cap g^{-1}\left(\NN(\del V,\gamma r)\right) \neq \emptyset.
\]
Now,
\[
g^{-1}\left(\NN(\del V,\gamma r)\right) \subset \NN\left(\del g^{-1}(V),e^{\dist(\zero,g(\zero))}\gamma r)\right);
\]
on the other hand
\[
r \asymp_\times D_n \asymp_\times D_{n + k} \asymp_{\times,\sigma} e^{-\dist(\zero,x_{n + k})} = e^{-\dist(\zero,g(\zero))};
\]
letting $C_4 > 0$ be the implied asymptotic, we have
\[
g^{-1}\left(\NN(\del V,\gamma r)\right) \subset \NN(\del W,C_4\gamma),
\]
where $W = g^{-1}(V)$. Thus
\[
B_i\cap \NN(\del W,C_4\gamma) \neq\emptyset.
\]
Letting $\gamma = \varepsilon/C_4$, this contradicts Observation \ref{observationcompactness}.
\end{proof}

\draftnewpage
\appendix
\section{Any function is an orbital counting function for some parabolic group} \label{appendixorbitalcounting}
Let $(X,\dist,\zero,b,G)$ be as in \sectionsymbol\ref{standingassumptions}. In Subsection \ref{subsectionboundedparabolicjarnik}, we defined the \emph{orbital counting function} of a parabolic fixed point $\xi$ of $G$ to be the function
\begin{repequation}{orbitalcountingfunction}
f_\xi(R) := \#\{g\in G_\xi:\Dist_\euc(\zero,g(\zero))\leq R\},
\end{repequation}
where $G_\xi$ is the stabilizer of $\xi$ in $G$ and $\Dist_\euc = \Dist_{\xi,\zero}$. We remarked that when $X$ is a real, complex, or quaternionic hyperbolic space, the orbital counting function $f_\xi$ satisfies a power law, but that this is not true in general. Since the proof of Theorem \ref{theoremboundedparabolicjarnik} would be simplified somewhat if $f_\xi$ were assumed to satisfy a power law, we feel that it is important to give some examples where it is not satisfied, to justify the extra work. Moreover, these examples corroborate the statement made in Remark \ref{remarksurprising} that the fact that \eqref{velanihillgeneral} depends only on the Poincar\'e exponents $\delta$ and $\delta_\xi$ is surprising.

To begin with, let us give some background on the infinite-dimensional hyperbolic space $\H^\infty$. Its boundary $\HH := \del\H^\infty$ is itself a Hilbert space. As in finite dimensions, any isometry of $\HH$ with respect to the Euclidean metric extends uniquely to an isometry of $\H^\infty$ which $\infty$. So immediately, there is a correspondence between parabolic subgroups of $\Stab(\Isom(\H^\infty);\infty)$ and subgroups of $\Isom(\HH)$ whose orbits are unbounded. However, unlike in finite dimensions, strongly discrete subgroups of $\Isom(\HH)$ are not necessarily virtually nilpotent. Groups which embed as strongly discrete subgroups of $\Isom(\HH)$ are said to have the \emph{Haagerup property}, and they include both the amenable groups and the free groups. Moreover, even cyclic subgroups of $\Isom(\HH)$ are quite different from cyclic subgroups of $\Isom(\R^d)$ for $d < \infty$; for example, a well-known example of M. Edelstein \cite{Edelstein} is a cyclic subgroup of $\Isom(\HH)$ whose orbits are unbounded but which is not strongly discrete.

To relate the orbital counting function $f_\xi$ with the intrinsic structure of the Hilbert space $\HH$, we need the following lemma:
\begin{lemma}
\label{lemmaDistH}
For $g\in\Stab(\Isom(\H^\infty);\infty)$,
\[
\Dist_\euc(\ee_0,g(\ee_0)) \asymp_\times \max(1,\|g(\ee_0) - \ee_0\|) = \max(1,\|g(\0)\|).
\]
\end{lemma}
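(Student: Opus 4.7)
The plan is to reduce the claim to a single hyperbolic-distance computation and then appeal to the general asymptotic formula from Observation \ref{observationeuclideanparabolicasymp}. Working in the upper half-space model of $\H^\infty$, we identify $\del\H^\infty\butnot\{\infty\}$ with a Hilbert space $\HH$, take $\ee_0$ to be the distinguished basepoint on the height-one horosphere at $\infty$, and identify the boundary point $\0\in\HH$ with the origin. Any $g\in\Stab(\Isom(\H^\infty);\infty)$ preserves this horosphere (the condition $g'(\infty)\asymp_\times 1$ forces the Euclidean scale factor to be $\asymp_\times 1$, and the resulting multiplicative error can be absorbed into the implied constants), so $g(\ee_0)$ again has height one and $g(\ee_0)-\ee_0$ is the horizontal vector whose $\HH$-norm equals $\|g(\0)\|$.

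First I would apply Observation \ref{observationeuclideanparabolicasymp} — which uses only $g'(\infty)\asymp_\times 1$, valid on $\Stab(\Isom(\H^\infty);\infty)$ — to obtain
\[
\Dist_\euc(\ee_0,g(\ee_0)) \asymp_\times e^{(1/2)\dist(\ee_0,g(\ee_0))}.
\]
Next, for two points at height one in the upper half-space model, the standard formula for hyperbolic distance reads
\[
\cosh\dist(\ee_0,g(\ee_0)) \;=\; 1 + \tfrac{1}{2}\|g(\ee_0)-\ee_0\|^2 \;=\; 1 + \tfrac{1}{2}\|g(\0)\|^2,
\]
which transplants the entire problem onto elementary calculus.

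It then remains to observe that $\cosh(t)=1+u^2/2$ gives $t\asymp_\times u$ for $u\leq 1$ and $e^{t/2}\asymp_\times u$ for $u\geq 1$, so $e^{\dist(\ee_0,g(\ee_0))/2}\asymp_\times \max(1,\|g(\0)\|)$. Combining with the display above yields the lemma. I expect no serious obstacle: the only subtle point is the verification that $g'(\infty)\asymp_\times 1$ on the full stabilizer so that Observation \ref{observationeuclideanparabolicasymp} applies, but this is immediate from the form of a conformal similarity of $\HH$ fixing $\infty$ together with the definition of the derivative in terms of the Busemann function.
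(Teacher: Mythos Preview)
Your approach matches the paper's exactly: the paper likewise invokes \eqref{euclideanparabolicasymp} (Observation \ref{observationeuclideanparabolicasymp}) for the first asymptotic, leaves the remaining $e^{(1/2)\dist(\ee_0,g(\ee_0))} \asymp_\times \max(1,\|g(\ee_0)-\ee_0\|)$ as ``a geometric calculation \ldots\ left to the reader'' (which you carry out via the $\cosh$ formula), and deduces the final equality from $g$ being affine with linear part fixing $\ee_0$. One small caveat: $g'(\infty)\asymp_\times 1$ is \emph{not} automatic on the full stabilizer (dilations violate it), but the paper tacitly works with isometries of $\HH$ just as you do, so this imprecision is inherited from the statement rather than introduced by your argument.
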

\begin{proof}
By \eqref{euclideanparabolicasymp}, we have
\[
\Dist_\euc(\ee_0,g(\ee_0)) \asymp_\times e^{(1/2)\dist(\ee_0,g(\ee_0))}.
\]
The remaining asymptotic is a geometric calculation which is left to the reader, and the equality follows from the fact that $g$ is an affine map whose linear part preserves $\ee_0$.
\end{proof}

For the remainder of the appendix, rather than working with $f_\infty$, we will work with the modification
\[
\w f_\infty(R) := \{g\in G_\infty:\|g(\0)\|\leq R\}.
\]
The relation between $f_\infty$ and $\w f_\infty$ can be deduced easily from Lemma \ref{lemmaDistH}.

\begin{proposition}
\label{propositionorbitalcounting2}
For any nondecreasing function $f: 2^\N\to 2^\N$ with $f\to\infty$, there exists a parabolic group $G_\infty\leq\Stab(\Isom(\H^\infty);\infty)$ such that
\[
\w f_\infty(2^n) = f(2^n)
\]
for all $n\in\N$.
\end{proposition}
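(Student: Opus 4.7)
The construction will realize $G_\infty$ as a direct sum of cyclic translation subgroups of $\Isom(\del\H^\infty)$ acting along mutually orthogonal axes. Fix an orthonormal basis $(\ee_k)_{k\geq 1}$ of the boundary Hilbert space $\del\H^\infty$, and for a sequence of positive reals $(\lambda_k)_{k\geq 1}$ to be determined, let $T_k$ denote the translation $x\mapsto x+\lambda_k\ee_k$. Each $T_k$ extends to a parabolic isometry of $\H^\infty$ fixing $\infty$ with derivative $1$ there, and the pairwise orthogonality of the $\ee_k$'s means the $T_k$'s commute without relations. The generated group $G_\infty:=\langle T_k:k\geq 1\rangle\cong\bigoplus_k\Z$ has orbit $G_\infty(\0)=\bigoplus_k\lambda_k\Z\ee_k\subset\del\H^\infty$; provided $\inf_k\lambda_k>1$ this orbit is Euclidean-discrete, so $G_\infty$ is strongly discrete, and since every nontrivial element is a translation, $G_\infty$ is a parabolic subgroup of $\Stab(\Isom(\H^\infty);\infty)$. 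By Pythagoras,
\[
\w f_\infty(R)=\#\Bigl\{(a_k)\in\bigoplus_k\Z:\sum_k a_k^2\lambda_k^2\leq R^2\Bigr\}.
\]

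I will choose the $\lambda_k$'s stage-by-stage so this count matches $f(2^n)$ at every dyadic scale. Assume inductively that stages $<n$ have produced $\w f_\infty(2^m)=f(2^m)$ for all $m<n$. At stage $n$, I adjoin further generators $T_k$ each of translation length $\lambda_k=(1-\delta_n)2^n$ for a sufficiently small $\delta_n>0$. The point of choosing $\lambda_k\in(2^{n-1},2^n)$ close to $2^n$ is threefold: (i) each new $T_k^{\pm1}$ contributes a new orbit point of norm $\leq 2^n$; (ii) the powers $T_k^{\pm2}$ have norm exceeding $2^n$; and (iii) for $\delta_n$ small enough, any word combining a stage-$n$ generator with some other nontrivial word built from stage-$\leq n$ generators has norm strictly $>2^n$, by an elementary orthogonality estimate using the boundedness of the finitely many earlier lengths. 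Hence each new stage-$n$ generator contributes exactly two orbit points to the ball of radius $2^n$ and none at smaller dyadic scales, so adjoining $\tfrac12(f(2^n)-f(2^{n-1}))$ generators at stage $n$ yields $\w f_\infty(2^n)=f(2^n)$.

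The main obstacle is to verify condition (iii) uniformly across stages. This is handled inductively: at the end of stage $n$ only finitely many generators have been fixed, so (iii) amounts to a bounded geometric estimate that determines how small $\delta_n$ must be; meanwhile any generator adjoined at a later stage $m>n$ automatically has length $>2^{m-1}\geq 2^n$ and therefore contributes norm $>2^n$ in any nontrivial combination. The parity requirement that $\tfrac12(f(2^n)-f(2^{n-1}))$ be a nonnegative integer is forced by the hypothesis $f\colon 2^\N\to 2^\N$: the base case $\w f_\infty(1)=1$ (the identity alone, since $\inf_k\lambda_k>1$) forces $f(1)=1$, and each subsequent difference of positive powers of $2$ is even. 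With all stages completed, the group $G_\infty$ has the desired orbital counting function.
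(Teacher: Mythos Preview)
Your construction has a fatal parity obstruction. Since $G_\infty\cong\bigoplus_k\Z$ acts by translations, every nonidentity $g$ satisfies $\|g^{-1}(\0)\|=\|{-g(\0)}\|=\|g(\0)\|$, and $g\neq g^{-1}$ because the group is torsion-free. Hence nonidentity elements in any ball pair off, and $\w f_\infty(R)$ is \emph{odd} for every $R$. But the hypothesis is that $f$ takes values in $2^\N$, so $\w f_\infty(2^n)=f(2^n)$ forces $f(2^n)=1$ for all $n$, contradicting $f\to\infty$. Your parity remark that differences of powers of $2$ at least $2$ are even is correct but beside the point: the very first step, from $f(1)=1$ to any $f(2)\geq 2$, already requires an odd increment, which a translation lattice cannot supply.

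Even setting parity aside, your inductive count is wrong. You add $\tfrac12(f(2^n)-f(2^{n-1}))$ generators at stage $n$ on the assumption that the pre-stage-$n$ value of $\w f_\infty(2^n)$ equals $f(2^{n-1})$. It does not: stage-$m$ generators with $m<n$ contribute not only their $\pm1$ powers but also higher powers and cross-combinations with norm in $(2^{n-1},2^n]$. For instance a single stage-$1$ generator $T_1$ of length $\lambda_1<2$ already gives $\{0,\pm\lambda_1,\pm2\lambda_1\}$ inside radius $4$, so $\w f_\infty(4)\geq 5$ before any stage-$2$ generator is added. With several stage-$1$ generators the count becomes a genuine lattice-point problem, not a simple additive update.

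The paper sidesteps both issues by using \emph{finite} cyclic pieces rather than copies of $\Z$: at stage $n$ it takes $T_n\in\Isom(\R^{m_n})$ of order $m_n=f(2^{n+1})/f(2^n)$ arranged so that every nontrivial power of $T_n$ moves $\0$ by exactly $2^{n-1/2}$. Then $\|T(\0)\|^2=\sum_{q_n\neq 0}4^{n-1/2}$ depends only on $n_{\max}=\max\{n:q_n\neq 0\}$, the count becomes the telescoping product $\w f_\infty(2^N)=\prod_{n\leq N}m_n$, and no parity or cross-term bookkeeping is needed.
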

\begin{remark}
A similar statement holds for proper $\R$-trees.
\end{remark}
\begin{proof}
For each $n\in\N$, let
\[
m_n = \frac{f(2^{n + 1})}{f(2^n)} \in 2^\N
\]
and let $T_n:\R^{m_n}\to\R^{m_n}$ be an isometry of order $m_n$ such that any two points in the orbit of $\0$ have a distance of exactly $2^{n - 1/2}$. For example, we can let
\[
T_n(\xx) = \big(x_{k - 1}\big)_{k = 1}^{m_n} + 2^{n - 1}(\ee_2 - \ee_1).
\]
(Here by convention $x_0 = x_{m_n}$.) We will also denote by $T_n$ the extension of $T_n$ to the product
\[
\HH := \bigoplus_{n = 1}^\infty \R^{m_n}
\]
obtained by letting $T_n$ act as the identity in the other dimensions. By construction, the $T_n$s commute. Let $G = \lb T_n\rb_{n\in\N}$. Then each element of $G$ can be written in the form $T = \prod_{n = 1}^\infty T_n^{q_n}$, where $(q_n)_1^\infty$ is a sequence of integers satisfying $0\leq q_n < m_n$, only finitely many of which are nonzero. Such an isometry satisfies
\[
\|T(\0)\|^2 = \sum_{n = 1}^\infty \|T_n^{q_n}\|^2
= \sum_{\substack{n\in\N \\ q_n\neq 0}} 4^{n - 1/2}
\in \left[\frac{1}{2} 4^{n_{\max}}, 4^{n_{\max}}\right],
\]
where
\[
n_{\max} = \max\{n\in\N:q_n\neq 0\}.
\]
In particular, for all $N\in\N$
\[
\|T(\0)\| \leq 2^N \;\;\Leftrightarrow\;\; N \geq n_{\max},
\]
and thus
\[
\w f_\infty(2^N) = \#\{(q_n)_1^\infty: n_{\max} \leq N\} = \#\{(q_n)_1^N\} = \prod_{n = 1}^N m_n = f(2^N).
\]
\end{proof}

\begin{corollary}
For any nondecreasing function $f: (0,\infty) \to (0,\infty)$ with $f\to\infty$, there exists a parabolic group $G\leq\Stab(\Isom(\H^\infty);\infty)$ such that
\[
\w f_\infty(2^n) \asymp_\times f(2^n)
\]
for all $n\in\N$.

If the function $f$ satisfies $f(2^{n + 1}) \asymp_\times f(2^n)$, then
\[
\w f_\infty(R) \asymp_\times f(R) \all R\geq 1.
\]
\end{corollary}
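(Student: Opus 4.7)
The plan is to deduce this corollary from Proposition \ref{propositionorbitalcounting2} by approximating the arbitrary nondecreasing function $f$ by one whose values lie in powers of $2$. First I would construct, from the given $f$, an auxiliary function $\tilde f : 2^\N \to 2^\N$ that is nondecreasing, tends to infinity, and satisfies $\tilde f(2^n) \asymp_\times f(2^n)$ for all $n \in \N$. The natural choice is to round $f(2^n)$ down to the nearest power of $2$, i.e.
\[
\tilde f(2^n) = 2^{\lfloor \log_2 f(2^n)\rfloor}
\]
(with the obvious modification setting $\tilde f = 1$ for the finitely many small $n$ where $f(2^n) < 1$). This loses at most a multiplicative factor of $2$, preserves monotonicity since the floor function is nondecreasing, and leaves us with $f \to \infty \Rightarrow \tilde f \to \infty$.

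Applying Proposition \ref{propositionorbitalcounting2} to $\tilde f$ then produces a parabolic group $G_\infty \leq \Stab(\Isom(\H^\infty); \infty)$ with $\w f_\infty(2^n) = \tilde f(2^n) \asymp_\times f(2^n)$ for every $n \in \N$, which is the first conclusion. For the second conclusion, I would interpolate: given $R \geq 1$, sandwich it between consecutive powers $2^n \leq R < 2^{n+1}$ and use monotonicity of both $\w f_\infty$ and $f$ together with the doubling hypothesis $f(2^{n+1}) \asymp_\times f(2^n)$ to deduce that $\w f_\infty(R)$ and $f(R)$ are both multiplicatively comparable to $f(2^n)$, with constants depending only on the doubling constant and the constant from the first conclusion.

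Since Proposition \ref{propositionorbitalcounting2} does all the geometric heavy lifting --- it is precisely what allows one to realize any power-of-$2$-valued nondecreasing function as an exact orbital counting function --- there is no real obstacle here beyond bookkeeping. The one minor subtlety is verifying that the rounded function $\tilde f$ actually satisfies the hypotheses of Proposition \ref{propositionorbitalcounting2}, namely that the consecutive ratios $\tilde f(2^{n+1})/\tilde f(2^n)$ lie in $2^\N$; but this is automatic once both numerator and denominator are powers of $2$ and $\tilde f$ is nondecreasing.
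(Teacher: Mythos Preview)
Your proposal is correct and follows essentially the same approach as the paper: round $f(2^n)$ down to $2^{\lfloor \log_2 f(2^n)\rfloor}$, apply Proposition \ref{propositionorbitalcounting2}, and for the second assertion interpolate using monotonicity of both $f$ and $\w f_\infty$ together with the doubling hypothesis. The paper's proof is two sentences long and you have filled in the details accurately.
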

\begin{proof}
The first assertion is immediate upon applying the above example to the function $\w f(2^n) = 2^{\lfloor \log_2 f(2^n)\rfloor}$, and the second follows from the fact that the functions $f$ and $f_\xi$ are both nondecreasing.
\end{proof}

\draftnewpage
\section{Real, complex, and quaternionic hyperbolic spaces}\label{appendixsymmetricspaces}
In this appendix, we give some background on the ``non-exceptional'' rank one symmetric spaces of noncompact type, i.e. real, complex, and quaternionic hyperbolic spaces, and prove the assertions made about them in Remarks \ref{remarkreductionjarnik} and \ref{remarkreductionkhinchin} and Example \ref{examplekhinchinlattice1}. Our presentation loosely follows \cite[\sectionsymbol II.10]{BridsonHaefliger}.

Let $\R$, $\C$, and $\H$ denote the sets of real, complex, and quaternionic numbers, respectively. Fix $\F \in \{\R,\C,\H\}$ and $d\in\N$. Note that since the noncommutative $\F = \H$ is a possibility, we will have to be careful about the order products are written in. We define a sesquilinear inner product on $\F^{d + 1}$ by
\[
\lb\xx,\yy\rb := \sum_{i = 1}^{d + 1}\wbar{x_i}y_i.
\]
Let $\|\xx\| = \sqrt{\lb\xx,\xx\rb}$.

We define (cf. \cite[II.10.24]{BridsonHaefliger}) \emph{$(d + 1)$-dimensional $\F$-hyperbolic space} to be the set
\[
\H_\F^{d + 1} = \{\xx \in \F^{d + 1}: \|\xx\| < 1\}
\]
equipped with the metric
\[
\cosh\dist(\xx,\yy) = \frac{|1 - \lb\xx,\yy\rb|}{\sqrt{1 - \|\xx\|^2}\sqrt{1 - \|\yy\|^2}}\cdot
\]
\begin{proposition}[{\cite[Theorem II.10.10]{BridsonHaefliger}}]
$\H_\F^{d + 1}$ is a CAT(-1) space, and in particular (by \cite[Proposition III.H.1.2]{BridsonHaefliger}) is a hyperbolic metric space.
\end{proposition}

Since $X := \H_\F^{d + 1}$ is an open subset of $\F^{d + 1}$, it has a natural topological boundary $\del_T X = \{\xx\in \F^{d + 1}:\|\xx\| = 1\}$. On the other hand, since $X$ is a hyperbolic metric space, it has a Gromov boundary $\del_G X$. The following relation between them is more or less a direct consequence of the fact that every Gromov sequence in $X$ converges to a point in $\del_T X$ (for details see \cite[Proposition 3.5.3]{DSU}):

\begin{proposition}
There is a unique homeomorphism $j:\del_G X\to \del_T X$ so that the extension $\id\cup j:X\cup\del_G X\to X\cup\del_T X$ is a homeomorphism.
\end{proposition}

From now on we will not distinguish between the two boundaries and will simply write $\del X$ for either one.

Let $\lambda_X$ denote normalized Lebesgue measure on $\del X$. Let $K = \dim_\R(\F)$, and let
\begin{equation}
\label{deltaXdef}
\delta_X := dK + 2(K - 1).
\end{equation}

\begin{proposition}
\label{propositionlebesgue}
~
\begin{itemize}
\item[(i)] $\lambda_X$ is an Ahlfors $\delta_X$-regular measure with respect to the visual metric on $\del X$.
\item[(ii)] For any group $G\leq\Isom(X)$, $\lambda_X$ is $\delta_X$-conformal with respect to $G$.
\end{itemize}
\end{proposition}
\begin{remark}
If $\F\neq\R$, then (i) is very different from saying that $\lambda_X$ is Ahlfors regular with respect to the metric inherited from $\F^{d + 1}$, since the visual metric on $\del X$ is not bi-Lipschitz equivalent to the usual metric as we will see below (Remark \ref{remarkballbox}).

If $\F = \R$, then the visual metric on $\del X$ is bi-Lipschitz equivalent to the usual metric, so the proofs below can be simplified somewhat in this case.
\end{remark}

\begin{proof}[Proof of Proposition \ref{propositionlebesgue}]
~
\begin{itemize}
\item[(i)] The first step is to write the visual metric on $\del X$ in terms of the sesquilinear form $B$. Indeed, direct calculation (for details see \cite[p. 32]{MackayTyson}) yields that for $\xx,\yy\in\del X$,
\[
\Dist_{\0}(\xx,\yy) = \Dist_{\0,e}(\xx,\yy) = \sqrt{\frac{1}{2}|1 - \lb\xx,\yy\rb|}.
\]
We may define a coordinate chart on $\del X$ as follows: Let $W = \{\xx\in\F^{d + 1}:\mathrm{Re}[x_1] = 0\}$, and let $g:W\to \del X$ be stereographic projection through $-\ee_1$, i.e.
\[
g(\xx) = -\ee_1 + \frac{2}{\|\ee_1 + \xx\|^2}(\ee_1 + \xx).
\]
Then for $\xx\in W$
\begin{align*}
\Dist_{\0}(\ee_1,g(\xx))
&= \sqrt{\frac{1}{2}|1 - g(\xx)_1|}\\
&= \sqrt{\frac{1}{2}\left|1 - \left(-1 + \frac{2}{\|\ee_1 + \xx\|^2}(1 + x_1)\right)\right|}\\
&= \sqrt{\left|1 - \frac{1}{1 + \|\xx\|^2}(1 + x_1)\right|}\\
&= \sqrt{\left|\frac{1}{1 + \|\xx\|^2}(\|\xx\|^2 - x_1)\right|}\\
&= \frac{\sqrt[4]{\|\xx\|^4 + |x_1|^2}}{\sqrt{1 + \|\xx\|^2}}\cdot
\end{align*}
Now fix $r > 0$ small enough so that $g^{-1}(B(\ee_1,r))$ is bounded. Then for $\xx\in g^{-1}(B(\ee_1,r))$,
\[
r \geq \Dist_{\0}(\ee_1,g(\xx)) \asymp_\times \max(\|\xx\|,\sqrt{|x_1|}).
\]
Letting $C$ be the implied constant, we have
\begin{equation}
\label{ballbox1}
g^{-1}(B(\ee_1,r)) \subset \{\xx\in W:|x_1|\leq (Cr)^2,\; |x_i|\leq Cr\text{ for }i\neq 1\}.
\end{equation}
Now, since the Jacobian of $g$ is bounded from above and below on $g^{-1}(B(\ee_1,r))$, we have
\[
\lambda_X(g^{-1}(B(\ee_1,r))) \asymp_\times \lambda_W(B(\ee_1,r)),
\]
where $\lambda_W$ is Lebesgue measure on $W$. Thus
\[
\lambda_X(B(\ee_1,r)) \lesssim_\times \lambda_W(\{\xx\in W:|x_1|\leq (Cr)^2,\; |x_i|\leq Cr\text{ for }i\neq 1\}) \asymp_\times r^{\delta_X}.
\]
On the other hand, we have
\begin{equation}
\label{ballbox2}
\{\xx\in W:|x_1|\leq (r/(d + 2))^2,\; |x_i|\leq r/(d + 2)\text{ for }i\neq 1\} \subset g^{-1}(B(\ee_1,r))
\end{equation}
since if $\xx$ is a member of the left hand side, then
\[
\Dist_{\0}(\ee_1,g(\xx)) \leq \|\xx\| + \sqrt{|x_1|} \leq (d + 2) \frac{r}{d + 2} = r.
\]
A similar argument yields
\[
\lambda_X(B(\ee_1,r)) \gtrsim_\times r^{\delta_X}.
\]
Now since the group $\scrO(\F^{d + 1})$ (i.e. the group of $\F$-linear isometries of $\F^{d + 1}$) preserves both $\lambda_X$ and $\Dist_{\0}$ and acts transitively on $\del X$, the expression $\lambda_X(B([\xx],r))$ depends only on $r$ and not on $[\xx]$. Thus $\lambda_X$ is Ahlfors $\delta_X$-regular.
\item[(ii)] By (i), $\lambda_X$ is Ahlfors $\delta_X$-regular and so $\lambda_X\asymp_\times\HH^{\delta_X}$, where $\HH^{\delta_X}$ is Hausdorff $\delta_X$-dimensional measure on $\del X$. Let
\[
f = \frac{\dee \lambda_X}{\dee \HH^{\delta_X}}
\]
be the Radon--Nikodym derivative of $\lambda_X$ with respect to $\HH^{\delta_X}$. Since both $\lambda_X$ and $\HH^{\delta_X}$ are invariant under $\scrO(\F^{d + 1})$, for each $g\in\scrO(\F^{d + 1})$, the equation $f = f\circ g$ holds $\lambda_X$-almost everywhere. 
\begin{claim}
$f$ is constant $\lambda_X$-almost everywhere.
\end{claim}
\begin{subproof}
By contradiction, suppose that there exists $\alpha$ such that $\lambda_X(A_1),\lambda_X(A_2) > 0$ where $A_1 = \{\xi\in\del X: f(\xi) > \alpha\}$ and $A_2 = \{\xi\in\del X:f(\xi) < \alpha\}$. Let $\xi_1,\xi_2\in \del X$ be density points of $A_1$ and $A_2$, respectively. Since $\scrO(\F^{d + 1})$ acts transitively on $\del X$, there exists $g\in \scrO(\F^{d + 1})$ such that $g(\xi_1) = \xi_2$. Since $f =_{\lambda_X} f\circ g$, we have $g(A_1) =_{\lambda_X} A_1$ and thus $\xi_2$ is a density point of $A_1$, which contradicts that it is a density point of $A_2$.
\end{subproof}
Thus $\lambda_X$ and $\HH^{\delta_X}$ are proportional; since $\HH^{\delta_X}$ is $\delta_X$-conformal with respect to any group $G\leq\Isom(X)$ (by abstract metric space theory), it follows that $\lambda_X$ is $\delta_X$-conformal for any such group as well.
\end{itemize}
\end{proof}
\begin{remark}
\label{remarkballbox}
The inclusions \eqref{ballbox1} and \eqref{ballbox2} are reminiscent of the Ball-Box theorem of sub-Riemannian geometry (see e.g. \cite[Theorem 0.5.A]{Gromov5}). The difference is that while the Ball-Box theorem applies to a Carnot--Carath\'eodory metric, the above argument is about visual metrics. We remark that it is possible to prove using \eqref{ballbox1}-\eqref{ballbox2} together with the Ball-Box theorem that the visual metric is bi-Lipschitz equivalent with the Carnot--Carath\'eodory metric of an appropriate sub-Riemannian metric on $\del X$. This is irrelevant for our purposes since \eqref{ballbox1}-\eqref{ballbox2} already contain the information we need to analyze the visual metric.

Note further that \eqref{ballbox1}-\eqref{ballbox2} show that $\Dist_{\0}$ is bi-Lipschitz equivalent to the Euclidean metric on $\del X$ if and only if $\F = \R$.
\end{remark}

Now we are able to prove the assertions about real, complex, and quaternionic hyperbolic spaces made in Remarks \ref{remarkreductionjarnik} and \ref{remarkreductionkhinchin} and Example \ref{examplekhinchinlattice1}.

\begin{proposition}
\label{propositionreduction}
Let $X$ be a real, complex, or quaternionic hyperbolic space, and let $G\leq\Isom(X)$ be a lattice. Then
\begin{itemize}
\item[(i)] For all $\xi\in\Lbp(G)$, we have
\begin{equation}
\label{deltaxidelta}
\delta_\xi = \delta/2.
\end{equation}
\item[(ii)] The Patterson--Sullivan measure of $G$ is Ahlfors regular.
\end{itemize}
\end{proposition}
\begin{proof}
~
\begin{itemize}
\item[(i)] Let $\delta_X$ be defined by \eqref{deltaXdef}. Then
\begin{align*}
\delta &= \HD(\Lr(G)) \by{Theorem \ref{theorembishopjones}}\\
&= \HD(\del X) \since{$G$ is a lattice}\\
&= \delta_X. \by{Proposition \ref{propositionlebesgue}}
\end{align*}
On the other hand, by \cite[Lemma 3.5]{Newberger} together with \eqref{deltaxilimsup}, we have
\[
2\delta_\xi = k + 2\ell,
\]
where $(\ell,k)$ is the \emph{rank} of $\xi$ (see \cite[p. 226]{Newberger} for the definition). It is readily checked that if $G$ is a lattice, then $(\ell,k) = (dK,K - 1)$ (in the notation of \cite{Newberger}, we have $H = N$), and thus
\[
2\delta_\xi = dK + 2(K - 1) = \delta_X = \delta,
\]
demonstrating \eqref{deltaxidelta}.
\item[(ii)] As demonstrated above, $\delta = \delta_X$, so by (ii) of Proposition \ref{propositionlebesgue}, $\lambda_X$ is $\delta$-conformal, and so it is the Patterson--Sullivan measure for $G$. On the other hand, $\lambda_X$ is Ahlfors regular by (i) of Proposition \ref{propositionlebesgue}.
\end{itemize}
\end{proof}

\draftnewpage
\section{The potential function game}
In this appendix, we give a short and self-contained exposition of the technique used to prove Theorem \ref{theoremabsolutewinning} (absolute winning of $\BA_\xi$), and prove a general theorem about when it can be applied, which can be used to simplify the proof of Theorem \ref{theoremabsolutewinning} (see Subsection \ref{subsectionsimplification}). Our hope is that this theorem can be used as a basis for future results.

Let $(Z,\Dist)$ be a complete metric space, and let $\HH$ be a collection of closed subsets of $Z$. We define the \emph{$\HH$-absolute game} as follows:

\begin{definition}
Given $\beta > 0$, Alice and Bob play the \emph{$(\beta,\HH)$-absolute game} on $Z$ as follows:
\begin{itemize}
\item[1.] Bob begins by choosing a ball $B_0 = B(z_0,r_0)\subset Z$.\Footnote{Cf. Remark \ref{remarkformalballs}.}
\item[2.] On Alice's $n$th turn, she chooses a set of the form $\NN(A_n,\w r_n)$ with $A_n\in\HH$, $0 < \w r_n \leq \beta r_n$, where $r_n$ is the radius of Bob's $n$th move $B_n = B(z_n,r_n)$. Here $\NN(A_n,\w r_n)$ denotes the $\w r_n$-thickening of $A_n$. We say that Alice \emph{deletes} her choice $\NN(A_n,\w r_n)$.
\item[3.] On Bob's $(n + 1)$st turn, he chooses a ball $B_{n + 1} = B(z_{n + 1},r_{n + 1})$ satisfying
\begin{equation}
\label{bobrulesabsolute}
r_{n + 1}\geq\beta r_n \text{ and } B_{n + 1} \subset B_n \butnot \NN(A_n,\w r_n),
\end{equation}
where $B_n = B(z_n,r_n)$ was his $n$th move, and $\NN(A_n,\w r_n)$ was Alice's $n$th move.
\item[4.] If at any time Bob has no legal move, Alice wins by default. If $r_n\not\to 0$, then Alice also wins by default. Otherwise, the balls $(B_n)_1^\infty$ intersect at a unique point which we call the \emph{outcome} of the game.
\end{itemize}
If Alice has a strategy guaranteeing that the outcome lies in a set $S$ (or that she wins by default), then the set $S$ is called \emph{$(\beta,\HH)$-absolute winning}. If a set $S$ is $(\beta,\HH)$-absolute winning for all $\beta > 0$ then it is called \emph{$\HH$-absolute winning}.
\end{definition}

\begin{example}
If $\HH = \{\{z\}:z\in Z\}$, then this game corresponds to the absolute winning game described in Section \ref{sectiongames}. If $Z = \R^d$ and $\HH = \{\text{affine hyperplanes in $\R^d$}\}$, then this game corresponds to the hyperplane game introduced in \cite{BFKRW}. We will abbreviate these special cases as $\HH = \text{points}$ and $\HH = \text{hyperplanes}$.
\end{example}
\begin{remark}
If $\HH = \text{points}$, then $\HH$-absolute winning on $Z$ implies winning for Schmidt's game on all uniformly perfect subsets of $Z$, and is invariant under quasisymmetric maps ((ii) and (v) of Proposition \ref{propositionwinningproperties}). If $\HH = \text{hyperplanes}$, then $\HH$-absolute winning on $\R^d$ implies winning for Schmidt's game on all hyperplane diffuse subsets of $\R^d$ (see Definition \ref{definitionhyperplanediffuse}), and is invariant under $\CC^1$ diffeomorphisms \cite[Propositions 2.3(c) and 4.7]{BFKRW}. Moreover, the $\HH$-absolute winning game is readily seen to have the countable intersection property.
\end{remark}

We now proceed to describe another game which is equivalent to the $\HH$-absolute game, but for which in some cases it is easier to describe a winning strategy. It is not immediately obvious, for example, that sets which are winning for the new game are nonempty, but this (and in fact full dimension of winning sets) follows from the equivalence of the two games, which we prove below (Theorem \ref{theoremgamesequivalent}).

\begin{definition}
Given $\beta,c > 0$, Alice and Bob play the \emph{$(\beta,c,\HH)$-potential game} as follows:
\begin{itemize}
\item[1.] Bob begins by choosing a ball $B(z_0,r_0)\subset Z$.
\item[2.] On Alice's $n$th turn, she chooses a countable collection of sets of the form $\NN(A_{i,n},r_{i,n})$, with $A_{i,n}\in\HH$ and $r_{i,n} > 0$, satisfying
\begin{equation}
\label{alicerules}
\sum_i r_{i,n}^c \leq (\beta r_n)^c,
\end{equation}
where $r_n$ is the radius of Bob's $n$th move.
\item[3.] On Bob's $(n + 1)$st turn, he chooses a ball $B_{n + 1} = B(z_{n + 1},r_{n + 1})$ satisfying
\begin{equation}
\label{bobrules}
r_{n + 1}\geq\beta r_n \text{ and } B_{n + 1} \subset B_n,
\end{equation}
where $B_n = B(z_n,r_n)$ was his $n$th move.
\item[4.] If $r_n\not\to 0$, then Alice wins by default. Otherwise, the balls $(B_n)_1^\infty$ intersect at a unique point which we call the \emph{outcome} of the game. If the outcome is an element of any of the sets $\NN(A_{i,n},r_{i,n})$ which Alice chose during the course of the game, she wins by default.
\end{itemize}
If Alice has a strategy guaranteeing that the outcome lies in a set $S$ (or that she wins by default), then the set $S$ is called \emph{$(\beta,c,\HH)$-potential winning}. If a set is $(\beta,c,\HH)$-potential winning for all $\beta,c > 0$, then it is \emph{$\HH$-potential winning}.
\end{definition}

We remark that in this version of the game Alice is not deleting balls or hyperplane-neighborhoods per se, in that Bob can make moves which intersect Alice's choices, but he must eventually avoid them in order to avoid losing.

\begin{remark}
We call this game the \emph{potential} game because of the use of a potential function $\phi$ in the proof of equivalence (Theorem \ref{theoremgamesequivalent}). We call $\phi$ a potential function because Alice's strategy is to act so as to minimize $\phi$ of Bob's balls, in loose analogy with physics where nature acts to minimize potential energy.
\end{remark}

In order to guarantee that the two games are equivalent, we make the following assumption:

\begin{assumption}
\label{assumptionNF}
For all $\beta > 0$, there exists $N = N_\beta\in\N$ such that for any ball $B(z,r)\subset Z$, there exists $\FF = \FF_{B(z,r)} \subset\HH$ with $\#(\FF)\leq N$ such that for all $A_1\in\HH$,
\begin{equation}
\label{A1A2}
\NN(A_1,\beta r/3)\cap B(z,r) \subset \NN(A_2,\beta r) \text{ for some $A_2\in\FF$.}
\end{equation}
\end{assumption}
\begin{observation}
Assumption \ref{assumptionNF} is satisfied if either
\begin{itemize}
\item[(1)] $\HH = \text{points}$ and $Z$ is a doubling metric space, or
\item[(2)] $\HH = \text{hyperplanes}$.
\end{itemize}
\end{observation}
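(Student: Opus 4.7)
The plan is to verify each case directly by constructing the witnessing family $\FF$ explicitly; neither case is hard, but one must be careful about what Alice's thickening parameter $\beta r/3$ buys us versus her budget $\beta r$. In both cases, the key observation is that the ratio between these two radii, $1/3$, gives slack that will be consumed by approximation errors.

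For case (1), with $\HH = \{\{z\}:z\in Z\}$ and $Z$ doubling, fix $\beta>0$ and a ball $B(z,r)$. Let $\FF \subset \HH$ correspond to a maximal $(\beta r/3)$-separated subset of $B(z,r)$; the doubling property bounds $\#(\FF) \leq N_\beta$ independently of $z$ and $r$. For $A_1=\{x\}\in\HH$, if $A_1^{(\beta r/3)}\cap B(z,r)$ is empty the containment is vacuous, so pick $w$ in this intersection. Maximality of $\FF$ yields $y\in\FF$ with $\Dist(w,y)\leq\beta r/3$, and then for any $u\in A_1^{(\beta r/3)}$ the triangle inequality applied along $u\to x\to w\to y$ gives $\Dist(u,y)\leq 3\cdot(\beta r/3)=\beta r$, i.e.\ $u\in\{y\}^{(\beta r)}$.

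For case (2), with $\HH$ the set of affine hyperplanes in $\R^d$, parametrize hyperplanes by unit normal and signed offset. Fix $\beta>0$, choose $\epsilon = \epsilon(\beta) > 0$ small (to be determined), and let $\NN\subset S^{d-1}$ be a maximal $\epsilon$-separated set, so $\#(\NN)\lesssim \epsilon^{-(d-1)}$. Given $B(z,r)$, for each $v\in\NN$ and each integer $k$ with $|k|\leq C/\epsilon$ (where $C$ depends only on $\beta$), include in $\FF$ the hyperplane $\{x:v\cdot(x-z)=k\epsilon r\}$; then $\#(\FF)\lesssim \epsilon^{-d}$. Given $A_1\in\HH$ with unit normal $v_1$ and offset $c_1$, choose $v\in\NN$ within $\epsilon$ of $v_1$ and an integer $k$ approximating $(c_1-v_1\cdot z)/(\epsilon r)$ to within $1/2$. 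A routine estimate bounding $|(v_1-v)\cdot y|$ for $y=x-z\in B(0,r)$ shows that any $x\in A_1^{(\beta r/3)}\cap B(z,r)$ satisfies $|v\cdot(x-z)-k\epsilon r|\leq \beta r/3+\tfrac32\epsilon r$. Choosing $\epsilon\leq 4\beta/9$ makes this at most $\beta r$, placing $x$ in the $\beta r$-thickening of the chosen hyperplane.

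The main thing to be careful about is the second case: one must check that (i) the choice of finitely many offsets $k\epsilon r$ actually suffices (only hyperplanes meeting a bounded thickening of $B(z,r)$ are relevant, which bounds $|c_1-v_1\cdot z|$), and (ii) the error from replacing $v_1$ by $v$ is controlled on $B(z,r)$ rather than on all of $\R^d$ (this is the point of intersecting with $B(z,r)$ in Assumption \ref{assumptionNF}). I do not anticipate any real obstacle; both constructions are essentially covering arguments, and the factor $1/3$ in the hypothesis is tuned precisely to absorb the net-approximation losses.
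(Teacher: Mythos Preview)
Your proposal is correct and follows essentially the same approach as the paper: in case (1) you both take a maximal $(\beta r/3)$-separated net in $B(z,r)$ and apply the triangle inequality, and in case (2) you both parametrize hyperplanes by $(\text{unit normal},\text{offset})$ and take a net in this parameter space. The only cosmetic differences are that the paper first normalizes $B(z,r)$ to $B(\0,1)$ by similarity and uses net spacing $\beta/3$ directly, whereas you keep $B(z,r)$ general and introduce a free parameter $\epsilon$ to be optimized at the end; both routes yield the same estimate.
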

\begin{proof}[Proof if $\HH$ = points and $Z$ is doubling]
Let $F\subset B(z,r)$ be a maximal $\beta r/3$-separated subset, and let $\FF = \{\{z\}:z\in F\}$. Since $Z$ is doubling, $\#(\FF) = \#(F)\leq N$ for some $N$ depending only on $\beta$. Given any $\{z_1\}\in\HH$, if $B(z_1,\beta r/3)\cap B(z,r)\neq\emptyset$, then by the maximality of $F$, there exists $z_2\in F$ such that $B(z_1,\beta r/3)\cap B(z_2,\beta r/3)\neq\emptyset$. This implies \eqref{A1A2}.
\end{proof}
\begin{proof}[Proof if $\HH$ = hyperplanes]
By applying a similarity, we may without loss of generality assume that $B(z,r) = B(\0,1)$. Consider the surjective map $\pi:S^{d - 1}\times\R\to \HH$ defined by $\pi(\vv,t) = \{\xx\in\R^d:\vv\cdot\xx = t\}$. Let $F$ be a maximal $\beta/3$-separated subset of $S^{d - 1}\times [-(1 + \beta/3),1 + \beta/3]$, and let $\FF = \pi(F)$. We claim that $\FF$ satisfies the conclusion of Assumption \ref{assumptionNF}. Clearly, $N := \#(\FF) < \infty$. Indeed, fix $\LL_1 = \pi(\vv_1,t_1)\in\HH$. If $|t_1| > 1 + \beta/3$, then $\NN(\LL_1,\beta/3)\cap B(\0,1) = \emptyset$, so \eqref{A1A2} holds trivially. Otherwise, fix $\LL_2 = \pi(\vv_2,t_2)\in\FF$ with $\|\vv_1 - \vv_2\|,|t_1 - t_2| \leq \beta/3$. For $\xx\in \NN(\LL_2,\beta/3)\cap B(\0,1)$, we have
\[
|\vv_1\cdot\xx - t_1| \leq |\vv_2\cdot\xx - t_2| + \|\vv_1 - \vv_2\|\cdot\|\xx\| + |t_1 - t_2| \leq \frac{\beta}{3} + \frac{\beta}{3} + \frac{\beta}{3} = \beta,
\]
and so $\xx\in\NN(\LL_1,\beta)$, demonstrating \eqref{A1A2}.
\end{proof}

\begin{theorem}
\label{theoremgamesequivalent}
Suppose that Assumption \ref{assumptionNF} is satisfied. Then $S\subset Z$ is $\HH$-absolute winning if and only if $S$ is $\HH$-potential winning.
\end{theorem}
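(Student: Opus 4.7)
The plan is to prove each direction of the equivalence in turn.

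\emph{For Potential $\Rightarrow$ Absolute:} Given $\beta > 0$, I would choose auxiliary parameters $\beta' \leq \beta$ and $c > 0$, and let $\sigma$ be a $(\beta', c, \HH)$-potential winning strategy for $S$. Alice's absolute strategy maintains a queue $Q$ of pending threats produced by $\sigma$: at each turn $n$, she simulates $\sigma$ with the same Bob-history to obtain a collection $\CC_n$ with $\sum_i r_{i,n}^c \leq (\beta' r_n)^c$, adds $\CC_n$ to $Q$, and plays in the absolute game the element of $Q$ with the largest radius (which, by the $c$-sum constraint, is bounded by $\beta' r_n \leq \beta r_n$, hence a legal move). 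The winning verification has two parts: first, one shows that in the parallel simulated potential game, $\sigma$'s winning property forces the outcome to lie in $S$ or in some threat $A_{i,n}^{(r_{i,n})}$; second, one shows that every such threat was eventually deleted by Alice in the absolute game, so Bob's moves avoided it, forcing the outcome into $S$.

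\emph{For Absolute $\Rightarrow$ Potential:} This direction is where Assumption \ref{assumptionNF} is essential. Given $\beta, c > 0$, I would choose $\beta^*$ small (depending on $\beta, c$), let $\tau$ be a $(\beta^*, \HH)$-absolute strategy, and define $\sigma$ by ``unrolling'' $\tau$ along the finite branching provided by Assumption \ref{assumptionNF}. At each turn $n$, $\sigma$ applies Assumption \ref{assumptionNF} to $B_n$ with parameter $\beta^*$ to get a finite canonical family $\FF_{B_n}$ of size $\leq N_{\beta^*}$, and threatens $\{A^{(\beta^* r_n)} : A \in \FF_{B_n}\}$, with $c$-sum at most $N_{\beta^*} (\beta^* r_n)^c$. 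Choosing $\beta^*$ so that $N_{\beta^*}(\beta^*)^c \leq \beta^c$ fits the potential-game constraint. If the outcome avoids all threats, the coverage property in Assumption \ref{assumptionNF} implies that Bob's play corresponds to a legal absolute game against $\tau$, so the outcome lies in $S$.

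\emph{The main obstacle} is the delicate parameter accounting in both directions. For Absolute $\Rightarrow$ Potential, the condition $N_{\beta^*}(\beta^*)^c \leq \beta^c$ can fail for small $c$ in spaces where $N_{\beta^*}$ grows rapidly (e.g., $N_{\beta^*} \sim (\beta^*)^{-d}$ in a doubling space of dimension $d$ requires $c > d$ for the one-step argument). I would address this by unrolling $\tau$ to greater depth $k$: the number of ``leaf'' canonical deletions is $N_{\beta^*}^k$ each of radius $(\beta^*)^k r_n$, yielding $c$-sum controlled by $(N_{\beta^*}(\beta^*)^c)^k$, which can be made arbitrarily small once $N_{\beta^*}(\beta^*)^c < 1$; the price is that threats at all depths must be combined, but the total forms a geometric series. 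For Potential $\Rightarrow$ Absolute, the queueing argument must show that the largest pending threat in $Q$ is always playable — naively, $Q$ could accumulate in cardinality while the radii $r_n$ decrease geometrically. Resolving this requires either a pigeonhole bound on how many ``large'' threats $\sigma$ produces per turn (few, when the $c$-sum is small relative to $\beta' r_n$), or restricting Alice's attention to threats intersecting Bob's current ball $B_n$, so irrelevant small threats never need be played. Reconciling Bob's avoidance obligation in the absolute game with the threat-containment win condition of the potential game — via Assumption \ref{assumptionNF}'s discretization of $\HH$ — is the conceptual crux.
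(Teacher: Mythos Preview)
You have the two directions reversed in terms of where Assumption \ref{assumptionNF} is needed, and this causes genuine problems in both halves of your plan.

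\textbf{Absolute $\Rightarrow$ Potential.} This direction does not need Assumption \ref{assumptionNF} at all, and the paper's argument is much simpler than your unrolling scheme. Given $\beta,c>0$, take a winning strategy $\tau$ for the $(\beta^2/3,\HH)$-absolute game. In the potential game, when Bob plays $B(z,r)$, threaten the \emph{single} set $A^{(\beta r)}$ where $A$ is what $\tau$ would delete, then make dummy moves until Bob's radius drops below $\beta r/3$. Bob's ball $B(w,r')$ then satisfies $r'\geq\beta^2 r/3$, so it is a legal absolute-game move for $\tau$ provided it avoids $A^{(\beta^2 r/3)}$; if it does not avoid it, then $B(w,r')\subset A^{(\beta r)}$ and Alice wins by default. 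Your proposed approach runs into the parameter obstruction you yourself identify (for small $c$, $N_{\beta^*}(\beta^*)^c$ need not be $<1$), and more fundamentally, threatening all of $\FF_{B_n}$ does not let you simulate $\tau$: in the potential game Bob is not obliged to avoid threats turn-by-turn, so his intermediate moves need not be legal absolute-game moves, and you cannot feed them to $\tau$.

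\textbf{Potential $\Rightarrow$ Absolute.} Here is where Assumption \ref{assumptionNF} is essential. Your queueing idea (play the largest pending threat) is the right intuition, but as you note, threats can accumulate faster than one per turn, and neither of your proposed fixes closes the gap. The paper's key idea is to replace ``largest threat'' by ``heaviest bucket'': define a potential $\phi(B_n)=\sum r_{i,m}^c$ summed over all pending threats meeting $B_n$, and have Alice delete the set $A_n^{(\beta r_n)}$ (with $A_n\in\HH$) that maximizes the sub-sum $\phi(B_n;A_n^{(\beta r_n)})$ over threats contained in it. Assumption \ref{assumptionNF} guarantees that as long as every pending threat has radius $\leq \beta r_n/3$, the $N=N_\beta$ sets in $\FF_{B_n}$ cover all of them, so the best bucket captures at least a $1/N$ fraction of $\phi(B_n)$. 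One then shows by induction on $n$ that $\phi(B_n)\leq(\epsilon r_n)^c$ for suitable $\epsilon$ (which in particular keeps all threat radii small enough for the covering to apply), choosing $c$ small so that $(1-1/N)\beta^{-c}<1$ and $\w\beta$ small to control the new threats added each turn. Since $\phi(B_n)\to 0$, every threat is eventually disjoint from $B_n$, so the outcome avoids all threats and Alice's potential strategy forces it into $S$.
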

\begin{proof}
Suppose that $S$ is $\HH$-absolute winning. Fix $\beta,c > 0$, and consider a strategy of Alice which is winning for the $(\beta^2/3,\HH)$-absolute game. Each time Bob makes a move $B(z,r)$, Alice deletes a set of the form $\NN(A,\beta^2 r/3)$, $A\in \HH$. Alice's corresponding strategy in the $(\beta,c,\HH)$-potential game will be to choose the set $\NN(A,\beta r)$, and then to make dummy moves until the radius of Bob's ball $B(w,r')$ is less than $\beta r/3$. By \eqref{bobrules}, we have $r'\geq\beta^2 r/3$, allowing us to interpret the move $B(w,r')$ as Bob's next move in the $(\beta^2/3,\HH)$-absolute game. If this move is disjoint from $\NN(A,\beta^2 r/3)$, then it is legal in the $(\beta^2/3,\HH)$-absolute game and the process can continue. Otherwise, we have
\[
B(w,r') \subset \NN(A,\beta^2 r/3 + 2r') \subset \NN(A,\beta r).
\]
Thus Alice can make dummy moves until the game ends, and the outcome will lie in $\NN(A,\beta r)$, and so Alice will win by default.

On the other hand, suppose that $S$ is $\HH$-potential winning. Let $\beta > 0$. Fix $\w\beta,c > 0$ small to be determined, and consider a strategy of Alice which is winning for the $(\w\beta,c,\HH)$-potential game. Each time Bob makes a move $B_n = B(z_n,r_n)$, Alice chooses a collection of sets $\{\NN(A_{i,n},r_{i,n})\}_{i = 1}^{N_n}$ (with $N_n\in\N\cup\{\infty\}$) satisfying \eqref{alicerules}. Alice's corresponding strategy in the $(\beta,\HH)$-absolute game will be to choose her set $\NN(A_n,\beta r_n)\subset Z$ so as to maximize
\[
\phi(B_n;\NN(A_n,\beta r_n)) := \sum_{m = 0}^n \sum_{\substack{i \\ \NN(A_{i,m},\beta r_{i,m})\subset \NN(A_n,\beta r_n) \\ \NN(A_{i,m},\beta r_{i,m})\cap B_n\neq\smallemptyset}} r_{i,m}^c. 
\]
After Alice deletes the set $\NN(A_n,\beta r_n)$, Bob will choose his ball $B_{n + 1}\subset B_n\butnot \NN(A_n,\beta r_n)$, and we will consider this also to be the next move in the $(\w\beta,c,\HH)$-potential game. This gives us an infinite sequence of moves for both games. We will show that if Alice wins the $(\w\beta,c,\HH)$-potential game, then she also wins the $(\beta,\HH)$-absolute game.

\begin{claim}
\label{claiminductive}
For each $n$ let
\[
\phi(B_n) = \sum_{m = 0}^n \sum_{\substack{i \\ \NN(A_{i,m},\beta r_{i,m})\cap B_n\neq\smallemptyset}} r_{i,m}^c.
\]
Then there exists $\epsilon > 0$ independent of $n,\w\beta,c$ (but depending on $\beta$) such that if
\begin{equation}
\label{inductivehypothesis}
\phi(B_n) \leq (\epsilon r_n)^c
\end{equation}
then
\begin{equation}
\label{AnBn}
\phi(B_n;\NN(A_n,\beta r_n)) \geq \epsilon \phi(B_n).
\end{equation}
\end{claim}
\begin{subproof}[Proof of Claim \ref{claiminductive}]
Let $N = N_\beta$ and $\FF = \FF_{B_n}$ be as in Assumption \ref{assumptionNF}. For each pair $(i,m)$ for which $\NN(A_{i,m},\beta r_{i,m})\cap B_n\neq\emptyset$, we have
\[
r_{i,m}^c \leq \phi(B_n) \leq (\epsilon r_n)^c;
\]
letting $\epsilon \leq \beta/3$, we have $\NN(A_{i,m},\beta r_{i,m})\subset \NN(A_{i,m},\beta r_n/3)$. Thus by Assumption \ref{assumptionNF}, there exists $A\in\FF$ for which $\NN(A_{i,m},\beta r_{i,m})\subset \NN(A,\beta r_n)$. It follows that
\[
\phi(B_n) \leq \sum_{A\in\FF}\phi(B_n;\NN(A,\beta r_n)) \leq N \phi(B_n;\NN(A_n,\beta r_n)),
\]
and letting $\epsilon \leq 1/N$ finishes the proof.
\end{subproof}

\begin{claim}
If $\w\beta$ and $c$ are chosen sufficiently small, then \eqref{inductivehypothesis} holds for all $n\in\N$.
\end{claim}
\begin{subproof}
We proceed by strong induction. Indeed, fix $N\in\N$ suppose that \eqref{inductivehypothesis} holds for all $n < N$. Fix such an $n$. We observe that since $B_{n + 1}\subset B_n\butnot \NN(A_n,\beta r_n)$, we have
\[
\phi(B_{n + 1})\leq \phi(B_n) - \phi(B_n;\NN(A_n,\beta r_n)) + \sum_{\substack{i \\ \NN(A_{i,n + 1},r_{i,n + 1})\cap B_{n + 1}\neq\smallemptyset}} r_{i,n + 1}^c.
\]
Combining with \eqref{AnBn} and \eqref{alicerules} gives
\[
\phi(B_{n + 1}) \leq (1 - \epsilon)\phi(B_n) + (\w\beta r_n)^c.
\]
On the other hand, $r_{n + 1}\geq \beta r_n$ since Bob originally played $B_{n + 1}$ as a move in the $(\beta,\HH)$-absolute game. Thus, dividing by $(\beta r_n)^c$ gives
\begin{equation}
\label{iterated}
\frac{\phi_{n + 1}(B_{n + 1})}{r_{n + 1}^c} \leq \beta^{-c} \left[(1 - \epsilon) \frac{\phi_n(B_n)}{r_n^c} + \w\beta^c \right].
\end{equation}
For $c$ sufficiently small,
\[
\frac{1 - \epsilon}{\beta^c} < 1,
\]
and iterating \eqref{iterated} yields
\[
\frac{\phi_N(B_N)}{r_N^c} \lesssim_{\times,\beta} \w\beta^c .
\]
Let $C$ be the implied constant. Setting $\w\beta = \epsilon/C^{1/c}$, we see that \eqref{inductivehypothesis} holds for $N$.
\end{subproof}

Now suppose that $r_n\to 0$; otherwise Alice wins the $(\beta,\HH)$-absolute game by default. Then \eqref{inductivehypothesis} implies that $\phi(B_n)\to 0$. In particular, for each $(i,m)$, $r_{i,m}^c > \phi(B_n)$ for all $n$ sufficiently large which implies $\NN(A_{i,m},\beta r_{i,m})\cap B_n = \emptyset$. Thus the outcome of the game does not lie in $\NN(A_{i,m},\beta r_{i,m})$ for any $(i,m)$, so Alice does not win the $(\w\beta,c,\HH)$-potential game by default. Thus if she wins, then she must win by having the outcome lie in $S$. Since the outcome is the same for the $(\w\beta,c,\HH)$-potential game and the $(\beta,\HH)$-absolute game, this implies that she also wins the $(\beta,\HH)$-absolute game.
\end{proof}

\draftnewpage

\subsection{Proof of Theorem \ref{theoremabsolutewinning} using the $\HH$-potential game, where $\HH$ = points}
\label{subsectionsimplification}
In this subsection, we give a simpler proof of Theorem \ref{theoremabsolutewinning} using Theorem \ref{theoremgamesequivalent}. This proof uses Corollary \ref{corollarystructure} and Lemma \ref{lemmageneralfinite}, but not Lemma \ref{lemmaquasisymmetric} or Corollary \ref{corollarymodifiedabsolutewinning}.

Fix $\beta,c > 0$, and let us play the $(\beta,c,\HH)$-potential game, where $\HH$ = points and $Z = \JJ_s$, where $\JJ_s$ is as in Corollary \ref{corollarystructure}. Fix $\epsilon > 0$ small to be determined. Alice's strategy is as follows: When Bob makes a move $B = B(z,r)$, then for each $g\in G$ satisfying
\begin{align} \label{gcondition1}
-\log_b(r/r_0) \leq \lb g(\zero)|g(\xi)\rb_\zero &< -\log_b(\beta r/r_0)\\ \label{gcondition2}
B(g(\xi),\epsilon b^{-\dist(\zero,g(\zero))})\cap B(z,r) &\neq\emptyset,
\end{align}
Alice chooses the set $B(g(\xi),\epsilon b^{-\dist(\zero,g(\zero))})$. Here $r_0$ represents the radius of Bob's first move.
\begin{claim}
For $\epsilon > 0$ sufficiently small, the move described above is legal.
\end{claim}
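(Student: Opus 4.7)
The legality condition $\sum_{g}(\epsilon b^{-\dist(\zero,g(\zero))})^c \le (\beta r)^c$ is equivalent to the uniform estimate
\[
\sum_{g} b^{-c \dist(\zero,g(\zero))} \le C r^c \qquad (\ast)
\]
(sum over $g$ satisfying \eqref{gcondition1} and \eqref{gcondition2}), with constant $C = C(\beta,c)$ independent of Bob's move $B(z,r)$; one then sets $\epsilon := \beta C^{-1/c}$. The plan is to establish $(\ast)$ by a scale-changing reduction to Lemma \ref{lemmageneralfinite}, in the spirit of the calculations carried out in the proof of Lemma \ref{lemmaphirecursion}.

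First I would note that \eqref{gcondition1}, combined with part (c) of Proposition \ref{propositionbasicidentities}, forces $\dist(\zero,g(\zero)) \ge \lb g(\zero)|g(\xi)\rb_\zero \ge \log_b(r_0/r)$ for every $g$ in the sum, so each summand is bounded by $(r/r_0)^c$; condition \eqref{gcondition2} additionally confines $g(\xi)$ to a ball of radius $O(r)$ about $z$. This identifies the $g$'s in the sum as ``approximations to $z$ at scale $r$.''

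Next I would choose an auxiliary point $\ww \in X$ with $\dist(\zero,\ww) \asymp_\plus \log_b(r_0/r)$ and $z \in \Shad(\ww,\sigma)$ for some fixed $\sigma$; such a $\ww$ exists because $z \in \limitset_s \subset \Lur$, and in fact may be taken in $G(\zero)$ by Corollary \ref{corollarystructure}. Gromov-product asymptotics of the same style as in the proof of Lemma \ref{lemmaphirecursion} should then yield, for every $g$ in the summation, (i) the distance splitting $\dist(\zero,g(\zero)) \asymp_\plus \dist(\zero,\ww) + \dist(\ww,g(\zero))$, whence $b^{-c\dist(\zero,g(\zero))} \asymp_\times r^c \cdot b^{-c\dist(\ww,g(\zero))}$, and (ii) the fact that the pair $(g,\ww)$ satisfies the shadow hypothesis of Lemma \ref{lemmageneralfinite} with some $\rho = \rho(\beta,\sigma)$ that does not depend on $g$, $z$, or $r$. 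Applying Lemma \ref{lemmageneralfinite} from the shifted basepoint $\ww$ (which alters neither the Poincar\'e exponent nor the finiteness of the series) then controls $\sum_g b^{-c\dist(\ww,g(\zero))}$ by a constant depending only on $\beta,\sigma,c$, and combining with (i) gives $(\ast)$.

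The hard part will be the Gromov-product verification of (i) and (ii), namely tracking how the conditions \eqref{gcondition1} and \eqref{gcondition2}, formulated at the basepoint $\zero$ and the scale $r$, translate into a uniform shadow containment and a uniform distance splitting at $\ww$. All necessary identities live in Proposition \ref{propositionbasicidentities}, and the pattern of argument mirrors the treatment of the finite annular sum \eqref{finitesum} in the proof of Lemma \ref{lemmaphirecursion}, but is now performed at the arbitrary continuous scale $r$ set by Bob's ball rather than at the discrete partition-structure scales.
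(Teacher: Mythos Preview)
Your proposal is correct and follows essentially the same route as the paper. The auxiliary point $\ww \in G(\zero)$ you select is exactly the point $x$ the paper extracts from the partition structure of Corollary~\ref{corollarystructure}; your distance splitting (i) and shadow condition (ii) (which should be read precisely as $\xi \in \Shad(g^{-1}(\ww),\rho)$, i.e.\ $\lb \zero|\xi\rb_{g^{-1}(\ww)} \lesssim_\plus 0$) correspond to the paper's computations $\lb \zero|g(\zero)\rb_x \asymp_\plus 0$ and $\lb \zero|\xi\rb_{g^{-1}(x)} \asymp_\plus 0$, after which both arguments invoke Lemma~\ref{lemmageneralfinite} via the group substitution $g \mapsto g_x^{-1}g$ rather than a literal basepoint change --- this is what makes the requirement $\ww \in G(\zero)$ essential, as you correctly note.
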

\begin{proof}
By the construction of $\JJ_s$, there exists $x\in G(\zero)$ such that $z\in \PP_x\subset B(z,r)$ but $\Diam(\PP_x) \asymp_\times r$. In particular, $\dist(\zero,x) \asymp_\plus -\log_b(r)$. Moreover,
\begin{equation}
\label{xzasymp}
\lb x|z\rb_\zero \asymp_\plus \dist(\zero,x).
\end{equation}
Suppose that $g\in G$ satisfies \eqref{gcondition1} and \eqref{gcondition2}. Then
\[
\Dist(g(\xi),z) \leq r + \epsilon b^{-\dist(\zero,g(\zero))} \lesssim_\times \max(r, b^{-\lb g(\zero)|g(\xi)\rb_\zero}) \asymp_\times r;
\]
combining with \eqref{gcondition1}, \eqref{xzasymp}, and Gromov's inequality yields
\[
\lb x|g(\zero)\rb_\zero \asymp_\plus \dist(\zero,x),
\]
which together with (b) of Proposition \ref{propositionbasicidentities} gives
\begin{equation}
\label{0g0x0v2}
\lb \zero|g(\zero)\rb_x \asymp_\plus 0.
\end{equation}
By (j) of Proposition \ref{propositionbasicidentities},
\begin{align*}
\lb \zero|\xi\rb_{g^{-1}(x)} = \lb g(\zero)|g(\xi)\rb_x
&\asymp_\plus \lb g(\zero)|g(\xi)\rb_\zero + \lb \zero|g(\zero)\rb_x - \lb x|g(\xi)\rb_\zero\\
&\asymp_\plus \dist(\zero,x) + 0 - \dist(\zero,x) = 0.
\end{align*}
Thus, if we choose $\rho > 0$ large enough, we see that $\xi\in\Shad(g^{-1}(x),\rho)$ for all $g$ satisfying (\ref{gcondition1}) and (\ref{gcondition2}). On the other hand, rearranging (\ref{0g0x0v2}) yields
\[
\dist(\zero,g(\zero)) \asymp_\plus \dist(g(\zero),x) + \dist(\zero,x) \asymp_\plus \dist(\zero,g^{-1}(x)) - \log_b(r),
\]
and thus for $\rho > 0$ sufficiently large,
\begin{align*}
\sum_{\substack{g\in G \\ \text{\eqref{gcondition1} and \eqref{gcondition2} hold}}}\left(\epsilon b^{-\dist(\zero,g(\zero))}\right)^c
&\lesssim_\times \sum_{\substack{g\in G \\ \xi\in\Shad(g^{-1}(x),\rho)}}\left(\epsilon r b^{-\dist(\zero,g^{-1}(x))}\right)^c\\
&=_\pt (\epsilon r)^c \sum_{\substack{g\in G \\ \xi\in\Shad(g^{-1}(\zero),\rho)}}b^{-c \dist(\zero,g^{-1}(\zero))},
\end{align*}
since $x \in G(\zero)$ by construction. The sum is clearly independent of $B(z,r)$, and by Lemma \ref{lemmageneralfinite} it is finite. Thus the right hand side is asymptotic to $(\epsilon r)^c$, and so for $\epsilon$ sufficiently small, \eqref{alicerules} holds.
\end{proof}

\begin{claim}
The strategy above guarantees that $\BA_\psi$ is absolute winning.
\end{claim}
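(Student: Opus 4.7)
The plan is to show that Alice's strategy above is winning for the $(\beta,c,\HH)$-potential game on $\limitset_s$ with target set $\BA_\xi\cap\limitset_s$; then Theorem \ref{theoremgamesequivalent} (applicable since $\HH$ consists of singletons and $\limitset_s$, being Ahlfors $s$-regular, is doubling) will let us conclude that $\BA_\xi\cap\limitset_s$ is absolute winning on $\limitset_s$, which is exactly Theorem \ref{theoremabsolutewinning}. Since $\beta,c>0$ are arbitrary and the preceding claim establishes the legality of Alice's moves whenever $\epsilon$ is chosen small enough, it remains only to analyze the outcome of play.

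First I would dispose of the case $r_n\not\to 0$, in which Alice wins by default, and so assume $r_n\to 0$. The key combinatorial observation is that Bob's constraints $B_{n+1}\subset B_n$ and $r_{n+1}\geq\beta r_n$ force $(r_n/r_0)_{n\geq 0}$ to be nonincreasing, to start at $1$, to satisfy $r_{n+1}/r_n\in[\beta,1]$, and to tend to $0$. Hence for every $v\in(0,1]$, if $n$ is the largest index with $r_n/r_0\geq v$, then $\beta r_n/r_0\leq r_{n+1}/r_0<v$, so $r_n/r_0\in[v,v/\beta)$. Applied to $v=b^{-\lb g(\zero)|g(\xi)\rb_\zero}\in(0,1]$ for an arbitrary $g\in G$, this produces a turn $n=n_g$ at which condition \eqref{gcondition1} on $g$ is satisfied.

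Now let $\eta\in\limitset_s$ denote the outcome of the game, and suppose for contradiction that $\eta$ lies neither in $\BA_\xi$ nor in any of the balls chosen by Alice. Since $\eta\notin\BA_\xi$, there exists $g\in G$ with $\Dist(g(\xi),\eta)<\epsilon\, b^{-\dist(\zero,g(\zero))}$, i.e.\ $\eta\in B(g(\xi),\epsilon\, b^{-\dist(\zero,g(\zero))})$. At the turn $n_g$ constructed above, condition \eqref{gcondition1} on $g$ holds by definition, and condition \eqref{gcondition2} holds because $\eta$ lies in both $B_{n_g}$ (as the outcome) and in this ball. Hence Alice did choose $B(g(\xi),\epsilon\, b^{-\dist(\zero,g(\zero))})$ at turn $n_g$, and $\eta$ belongs to this chosen set --- a contradiction. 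The hardest part of the overall argument (legality of Alice's strategy for small $\epsilon$) has already been handled via Lemma \ref{lemmageneralfinite}; the remaining verification is the elementary combinatorics of the radii $r_n$ sketched above.
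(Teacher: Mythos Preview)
Your proof is correct and follows essentially the same approach as the paper: both arguments observe that since $\lb g(\zero)|g(\xi)\rb_\zero\geq 0$ and $r_n\to 0$ with $r_{n+1}\geq\beta r_n$, condition \eqref{gcondition1} is met at some turn $n_g$, and then use the outcome $\eta\in B_{n_g}$ to conclude $\eta\notin B(g(\xi),\epsilon b^{-\dist(\zero,g(\zero))})$ for every $g$ (you phrase this by contradiction, the paper does it directly). One minor remark: the game rules do not force $r_{n+1}\leq r_n$, so the sequence need not be nonincreasing, but your argument does not actually rely on this---the existence of a largest $n$ with $r_n/r_0\geq v$ follows already from $r_n\to 0$.
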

\begin{proof}
Suppose that Alice does not win by default, and let $\eta$ be the outcome of the game. Fix $g\in G$. Since $\lb g(\zero)|g(\xi)\rb_\zero\geq 0$, \eqref{gcondition1} holds for one of Bob's moves $B(z,r)$. If \eqref{gcondition2} also holds, then Alice chose the ball $B(g(\xi),\epsilon b^{-\dist(\zero,g(\zero))})$ on her $n$th turn; otherwise $B(g(\xi),\epsilon b^{-\dist(\zero,g(\zero))})\cap B(z,r) = \emptyset$. Either way, since we assume that Alice did not win by default, we have $\eta\notin B(g(\xi),\epsilon b^{-\dist(\zero,g(\zero))})$. Since $g$ was arbitrary, this demonstrates that $\eta\in\BA_\xi$, so Alice wins.
\end{proof}

\section{Winning sets and partition structures}
In this paper, when we proved Theorem \ref{theoremabsolutewinning}, we first constructed a partition structure, and then we played Schmidt's game on it. It is also possible to do this in the reverse order, using Schmidt's game to prove the existence of certain partition structures. This fact can be used to show that any winning subset of an Ahlfors regular metric space contains subsets which are Ahlfors regular. Specifically, we have the following:

\begin{proposition}
Let $(Z,\Dist)$ be an Ahlfors $s$-regular metric space, and let $S$ be winning on $Z$. Then for every $t < s$, there exists a $t$-thick partition structure on $Z$ whose limit set is contained in $S$. In particular, for every $t < s$, there exists an Ahlfors $t$-regular set contained in $S$.
\end{proposition}
\begin{proof}
We will need the following:
\begin{lemma}[{\cite[Theorem 5.1]{Fishman}}]
\label{lemmafishman}
There exists a constant $\epsilon > 0$ such that for every $0 < r,\beta < 1$ and for every $z\in Z$, there exists a disjoint collection of balls $\big(B(z_i,\beta r)\big)_{i = 1}^{N_\beta}$ contained in $B(z,r)$, where
\[
N_\beta = \lfloor \epsilon\beta^{-s}\rfloor.
\]
\end{lemma}
Now since $S$ is winning, there exists $\alpha > 0$ such that for every $0 < \beta < 1$, $S$ is $(\alpha,\beta)$-winning. Fix $0 < t < s$, fix $0 < \beta < 1/2$ small to be determined, and let $N = N_{2\beta}$.

Now $S$ is an $(\alpha,\beta)$-winning set. To simplify notation, we will use the fact \cite[Theorem 7]{Schmidt1} that Alice can win using a \emph{positional strategy}, i.e. a strategy where her move depends only on Bob's last move. Let $f$ be such a positional strategy, so that $f(B)$ is Alice's response when Bob plays the move $B$.

Let $T^* = \bigcup_{n\in\N}\{1,\ldots,N\}^n$. For each $\omega\in T^*$, we define the balls $B_\omega$ and $A_\omega$ recursively as follows:
\begin{itemize}
\item $B_\smallemptyset$ is any ball.
\item If $B_\omega = B(z,r)$ has been defined, let $A_\omega = f(B_\omega) = B(\w z,\w r)$. Note that $\w r = \alpha r$. Let $\big(B(z_i,2\beta\w r)\big)_{i = 1}^N$ be the disjoint collection of balls guaranteed by Lemma \ref{lemmafishman}. For each $i = 1,\ldots,N$ let
\[
B_{\omega i} = B(z_i,\beta\w r) = B(z_i,\alpha\beta r).
\]
\end{itemize}
It is readily verified that the collection of sets $(\PP_\omega = B_\omega)_{\omega\in T^*}$ is a partition structure on $Z$. Note that this verification requires the use of the asymptotic $\Diam(B(z,r))\asymp_\times r$, which follows from the fact that $Z$ is Ahlfors regular and therefore uniformly perfect.

For each $\omega\in T^\N$, $\pi(\omega)$ is the unique intersection point of the sequence $(B_{\omega_1^n})_{n = 1}^\infty$, which is a sequence of possible moves that Bob could make while Alice is using her positional strategy $f$. Thus $\pi(\omega)\in S$. Since $\omega$ was arbitrary, $\pi(T^\N)\subset S$.

Finally, we will show that the partition structure $(\PP_\omega)_{\omega\in T^*}$ is $t$-thick if $\beta$ is sufficiently small. Indeed, for any $\omega\in T^*$ we have
\[
\frac{\sum_{a\in E_\omega}D_{\omega a}^t}{D_\omega^t} \asymp_\times \frac{N (\alpha\beta r)^t}{r^t} = N_{2\beta} (\alpha\beta)^t \asymp_{\times,\alpha} \beta^{t - s}.
\]
Since $t < s$, we can choose $\beta$ small enough so that $\beta^{t - s}$ is greater than the implied constant of this asymptotic. This completes the proof of the existence of a $t$-thick partition structure; the second part of the theorem follows from Theorem \ref{theoremahlforsgeneral}.
\end{proof}

\draftnewpage

\bibliographystyle{amsplain}

\bibliography{bibliography}

\end{document}